\theoremstyle{plain}
\newtheorem{thm}{Theorem}[section]
\newtheorem{cor}[thm]{Corollary}
\newtheorem{lem}[thm]{Lemma}
\newtheorem{prop}[thm]{Proposition}
\newtheorem{conj}[thm]{Conjecture}
\theoremstyle{definition}
\newtheorem{rem}[thm]{Remark}
\newtheorem{ex}[thm]{Example}
\newtheorem{exe}[thm]{Example}
\theoremstyle{definition}
\newtheorem{defn}[thm]{Definition}
\newenvironment{citethm}[1]{%
	\thm}{\endthm\addtocounter{thm}{-1}}
\newenvironment{citecor}[1]{%
	\cor}{\endthm\addtocounter{thm}{-1}}
\newenvironment{citelem}[1]{%
	\lem}{\endthm\addtocounter{thm}{-1}}
\def\makeautorefname#1#2{\expandafter\def\csname#1autorefname\endcsname{#2}}
\crefname{equation}{}{}
\newcommand{\cA}{\mathcal{A}}
\newcommand{\cB}{\mathcal{B}}
\newcommand{\cC}{\mathcal{C}}
\newcommand{\cD}{\mathcal{D}}
\newcommand{\cF}{\mathcal{F}}
\newcommand{\cL}{\mathcal{L}}
\newcommand{\cM}{\mathcal{M}}
\newcommand{\cP}{\mathcal{P}}
\newcommand{\cQ}{\mathcal{Q}}
\newcommand{\cT}{\mathcal{T}}
\newcommand{\cV}{\mathcal{V}}
\newcommand{\bN}{\mathbb{N}}
\newcommand{\bQ}{\mathbb{Q}}
\newcommand{\bR}{\mathbb{R}}
\newcommand{\bZ}{\mathbb{Z}}
\newcommand{\be}{{\boldsymbol{e}}}
\newcommand{\bg}{{\boldsymbol{g}}}
\newcommand{\BP}{{\boldsymbol{P}}}
\newcommand{\vsimeq}{\rotatebox{-90}{\(\simeq\)}}
\newcommand{\pp}[1]{(\!( #1 )\!)}
\newcommand{\opalg}[1]{{#1}^{\opp}}
\newcommand{\und}[1]{{\underline{#1}}}
\newcommand{\xrightarrowdbl}[2][]{%
  \xrightarrow[#1]{#2}\mathrel{\mkern-14mu}\rightarrow
}
\newcommand{\sssum}[1]{{\sum\limits_{\substack{#1}}}}
\newcommand{\ssbigoplus}[1]{{\bigoplus\limits_{\substack{#1}}}}
\newcommand{\bV}{\raisebox{0.03cm}{\mbox{\footnotesize$\textstyle{\bigwedge}$}}}
\DeclareMathOperator{\id}{Id}
\DeclareMathOperator{\Hom}{Hom}
\DeclareMathOperator{\End}{End}
\DeclareMathOperator{\Ind}{Ind}
\DeclareMathOperator{\Res}{Res}
\DeclareMathOperator{\HOM}{HOM}
\DeclareMathOperator{\END}{END}
\DeclareMathOperator{\RHOM}{RHOM}
\DeclareMathOperator{\cRHom}{\mathcal{RH}\mathit{om}}
\DeclareMathOperator{\Lderiv}{L}
\newcommand{\Lotimes}{\otimes^{\Lderiv}}
\DeclareMathOperator{\opp}{{op}}
\DeclareMathOperator{\gdim}{gdim}
\DeclareMathOperator{\Image}{im}
\DeclareMathOperator{\mcolim}{MColim}
\DeclareMathOperator{\cone}{Cone}
\newcommand{\bKO}{\boldsymbol{K}_0}
\newcommand{\slt}{\mathfrak{sl}_2}
\newcommand{\g}{{\mathfrak{g}}}
\newcommand{\p}{{\mathfrak{p}}}
\newcommand{\bo}{{\mathfrak{b}}}
\newcommand\B{{\sf{B}}}
\newcommand\E{{\sf{E}}}
\newcommand\F{{\sf{F}}}
\newcommand{\oB}{\overline{B}}
\newcommand{\cyclicMod}{Y}
\newcommand{\dgCyclicMod}{G}
\newcommand{\predgCyclicMod}{\tilde G}
\newcommand{\quiverSchur}{Q}
\newcommand{\minquiverSchur}{{}^{red}Q}
\newcommand{\dgQuiverSchur}{{}_{dg}Q}
\newcommand{\mindgQuiverSchur}{{}_{dg}^{red}Q}
\newcommand{\antimapslt}{\bar \tau}
\newcommand{\qbinom}[2]{\genfrac{[}{]}{0pt}{}{#1}{#2}_q}
\newcommand{\br}{\mathrm{ \mathbf p}} 
\newcommand{\rb}{\mathrm{ \mathbf q}} 
\newcommand{\fr}{\mathrm{ \mathbf i}} 
\DeclareMathOperator{\amod}{\mathrm{-}mod}
\DeclareMathOperator{\Hqe}{Hqe}
\DeclareMathOperator{\nh}{NH}
\newcommand{\MV}{\cM\cV}
\newcommand{\TL}{\cT\cL}
\DeclareMathOperator{\Pol}{Pol} 
\definecolor{myblue}{rgb}{0,.5,1}
\definecolor{mygreen}{rgb}{.3,.75,.1}
\newcommand{\stikzdiag}[1][]{\tikz[#1, thick, scale=.25,baseline={([yshift=-.5ex]current bounding box.center)}]}
\newcommand{\tikzdiagh}[2][]{\tikz[#1,very thick,baseline={([yshift=1ex+#2]current bounding box.center)}]}
\newcommand{\tikzdiag}[1][]{\tikzdiagh[#1]{-1.5ex}}
\tikzstyle{tikzdot}=[fill, circle, inner sep=2pt]
\tikzstyle{stdhl}=[red,double=red!30,double distance=1pt]
\tikzstyle{vstdhl}=[myblue,double=myblue!30,double distance=1pt]
\tikzstyle{nail}=[draw=myblue,fill=white!30,circle,inner sep=2pt]
\newcommand{\tikzRBR}{{\stikzdiag{
\draw[stdhl](0,0) -- (0,1);\draw (1,0) -- (1,1);\draw[stdhl](2,0) -- (2,1);
}}}
\newcommand{\tikzBRR}{{\stikzdiag{
\draw(0,0) -- (0,1);\draw[stdhl] (1,0) -- (1,1);\draw[stdhl](2,0) -- (2,1);
}}}
\newcommand{\tikzRRB}{{\stikzdiag{
\draw[stdhl](0,0) -- (0,1);\draw[stdhl] (1,0) -- (1,1);\draw(2,0) -- (2,1);
}}}
\newcommand{\tikzbrace}[4]{\draw[decoration={brace,mirror,raise=-8pt},decorate]  (#1-.1,#3 -.35) -- node {#4} (#2+.1,#3-.35)}
\newcommand{\tikzbraceop}[4]{\draw[decoration={brace,raise=-8pt},decorate]  (#1-.1,#3 +.35) -- node {#4} (#2+.1,#3+.35)}
\newcommand{\thmasympKO}{Theorem 9.15}
\newcommand{\propcblfbim}{Proposition 9.18}
\title[Tensor product categorifications and the blob 2-category]{Tensor product categorifications, Verma modules and the blob 2-category}
\author{Abel Lacabanne}
\address{Institut de Recherche en Math\'ematique et Physique\\
Universit\'e Catholique de Louvain\\ 
Chemin du Cyclotron 2\\ 
1348 Louvain-la-Neuve\\ 
Belgium}
\email{abel.lacabanne@uclouvain.be}
\author{Gr\'egoire Naisse}
\address{Max-Planck Institute for Mathematics\\
 Vivatsgasse 7 \\ 
53111 Bonn\\ 
Germany}
\email{gregoire.naisse@gmail.com}
\author{Pedro Vaz}
\address{Institut de Recherche en Math\'ematique et Physique\\
Universit\'e Catholique de Louvain\\ 
Chemin du Cyclotron 2\\ 
1348 Louvain-la-Neuve\\ 
Belgium}
\email{pedro.vaz@uclouvain.be}
\begin{document}


\begin{abstract}
  We construct a dg-enhancement of KLRW algebras that categorifies the tensor product of a universal $\mathfrak{sl}_2$ Verma module and several integrable irreducible modules. 
  When the integrable modules are two-dimensional, we construct
a categorical action of the blob algebra on derived categories of these dg-algebras which
intertwines the categorical action of $\mathfrak{sl}_2$.
  From the above we derive a categorification of the blob algebra.
\end{abstract}


\maketitle

\setcounter{tocdepth}{1}
\tableofcontents



\section{Introduction}\label{sec:intro}

Dualities are fundamental tools in mathematics in general and in higher representation theory in particular.
For example, Stroppel's version of Khovanov homology~\cite{Str05,Str09} and Khovanov's HOMFLY--PT homology~\cite{Kh-soergel} can be seen as instances of higher Schur--Weyl duality (see also~\cite{Str-ICM} for further explanations).  
In this paper we construct an instance of higher Schur--Weyl duality between $U_q(\slt)$ and the blob algebra of Martin and Saleur~\cite{Martin-Saleur} by using a categorification of the tensor product of a Verma module and several two-dimensional irreducibles.

\subsection{State of the art}

\subsubsection{Schur--Weyl duality, $U_q(\mathfrak{sl}_2)$ and the Temperley--Lieb algebra }
Schur--Weyl duality connects finite-dimensional modules of the general linear and symmetric groups.    
In particular, it states that over an algebraically closed field the actions of $GL_m$ and $\mathfrak{S}_r$ on the $r$-folded tensor power of the natural module $V$ of $GL_m$ commute and are the centralizers of each other. In the quantum version, $GL_m$ and $\mathfrak{S}_r$ are replaced respectively by the quantum general linear algebra $U_q(\mathfrak{gl}_m)$ and the Hecke algebra $\mathcal{H}_r(q)$.
We note that these consequences of (quantum) Schur--Weyl duality remain true if one replaces the general linear with the special linear group. 
For example, in the case of $m=2$, the centralizer of the action of $U_q(\slt)$ on $V^{\otimes r}$ is the Temperley--Lieb algebra $TL_r$, a well-known quotient of the Hecke algebra. One of the applications of this connection is the construction of the Jones--Witten--Reshetikhin--Turaev $U_q(\slt)$-tangle invariant as a state-sum model (a linear combination of elements of $TL_r$) called the Kauffman bracket, which was the version categorified by Khovanov~\cite{Kh-jones} in the particular case of links.

\subsubsection{The blob algebra}
It was shown in~\cite{swTLB} that, for a projective $U_q(\slt)$ Verma module $M$ with highest weight $\lambda$ (in the sense that $\lambda$ is the eigenvalue), the endomorphism algebra of $M\otimes V^{\otimes r}$ is the blob algebra $\cB_r=\mathcal{B}_r(q,\lambda)$ of Martin--Saleur~\cite{Martin-Saleur}. 
This algebra $\cB_r$, which was unfortunately called Temperley--Lieb algebra of type B in~\cite{swTLB}, is in fact a quotient of the Temperley--Lieb algebra of type B~\cite{Graham,Green}. 
Note that the parameters $\lambda$ and $q$ in~\cite{swTLB} are not algebraically independent but can be easily made independent by working with a universal Verma module as in~\cite{LV}. 

The blob algebra $\mathcal{B}_r$ can be given a diagrammatic presentation in terms of $\bQ(q,\lambda)$-linear combinations of flat tangle diagrams~\cite{swTLB} on $r+1$ strands, with generators 
\begin{align*}
u_i &:= \ 
\tikzdiagh{0}{
	\draw[ultra thick,myblue] (-.5,0) -- (-.5,1);
	\draw[red] (0,0) -- (0,1);
	\node at (.5,.5){\small $\dots$};
	\draw[red] (1,0)  -- (1,1);
	\draw[red] (1.5, 0)  .. controls (1.5,.5) and (2,.5) .. (2,0)  ;
	\draw[red] (1.5, 1) .. controls (1.5,.5) and (2,.5) .. (2,1);
	\draw[red] (2.5,0) -- (2.5,1);
	\node at (3,.5){\small $\dots$};
	\draw[red] (3.5,0)  -- (3.5,1);
	\tikzbrace{-.5}{1}{0}{$i$};
}
\intertext{for $i=1,\dotsc ,r-1$, and}
\xi &:= \ 
\tikzdiag{
      \draw[red] (0,0) .. controls (0,.25) .. (-.5,.5) .. controls (0,.75) ..  (0,1);
      \draw[fill=white, color=white] (-.52,.5) circle (.02cm);
      \draw[red] (.5,0)    -- (.5,1);
      \node at (1,.5){\small $\dots$};
      \draw[red] (1.5,0) -- (1.5,1);
      \draw[ultra thick,myblue] (-.5,0)  -- (-.5,1);
}
\end{align*}
taken up to planar isotopy fixing the endpoints, and subject to the usual Temperley--Lieb relation of type A:
\begin{align*}\allowdisplaybreaks
\tikzdiag[yscale=.75]{
	\draw[red] (0, 0)  .. controls (0,.5) and (.5,.5) .. (.5,0) .. controls (.5,-.5) and (0,-.5) .. (0,0);
}
\ &= \ -(q+q^{-1}),
  \intertext{and the blob relations:}
    \tikzdiag[yscale=.75]{
	\draw[red] (0,0)   .. controls (0,.25) .. (-.5,.5) .. controls (0,.75) ..  
		(0,1) .. controls (0,1.5) and  (.5,1.5) .. 
		(.5,1) --
		(.5,0) .. controls (.5,-.5) and (0,-.5) ..
  (0,0);
  \draw[fill=white, color=white] (-.52,.5) circle (.02cm);
	\draw[ultra thick,myblue] (-.5,-.5) -- (-.5,1.5);
}
\ &= -(\lambda q+\lambda^{-1}q^{-1}) \ 
\tikzdiag[yscale=.75]{
	\draw[ultra thick,myblue] (-.5,-.5)  -- (-.5,1.5);
    }
    \\
q^{-1} \ 
\tikzdiag{
	\draw[red] (0,0) .. controls (0,.25) .. (-.5,.5) .. controls (0,.75) ..  
		(0,1) .. controls (0,1.25) .. (-.5,1.5) .. controls (0,1.75) ..
    (0,2);
    \draw[fill=white, color=white] (-.52,.5) circle (.02cm);
    \draw[fill=white, color=white] (-.52,1.5) circle (.02cm);
    \draw[ultra thick,myblue] (-.5,0) -- (-.5,2);
}
\ &= \ (\lambda q+\lambda^{-1}q^{-1}) \ 
\tikzdiag{
	\draw[red] (0,0)  --
		(0,.5) .. controls (0,.75) .. (-.5,1) .. controls (0,1.25) ..  
		(0,1.5)  --
    (0,2);
    \draw[fill=white, color=white] (-.52,1) circle (.02cm);
	\draw[ultra thick,myblue] (-.5,0) -- (-.5,2);
}
\ - q \ \tikzdiag{
	\draw[red] (0,0)  -- (0,2);
	\draw[ultra thick,myblue] (-.5,0)-- (-.5,2);
}  
\end{align*}
Note that this generators-relations definition of the blob algebra makes also sense over $\bZ[q^{\pm 1},\lambda^{\pm 1}]$.

\begin{rem}
In \cite{Martin-Saleur}, the blob algebra is given a different presentation, where the generator of type B is pictured as a dot on the left-most strand, and is an idempotent. We use the presentation given in \cite{swTLB}, which is isomorphic to the one in \cite{Martin-Saleur} over $\bZ(q, \lambda)$ (but not over $\bZ[q^{\pm1}, \lambda^{\pm 1}]$). 
This presentation is closer to the representation theory of $U_q(\slt)$ and is the one that arises from our categorification construction. 
\end{rem}

More generally, we consider the category $\cB$ with objects given by $M \otimes V^{\otimes r}$ for various $r \in \bN$, and hom-spaces given by $U_q(\slt)$-intertwiners. 
This category, that we call the \emph{blob category}, has a very similar diagrammatic description as the blob algebra, where objects are collections of $r+1$ points on the horizontal line. The hom-spaces are presented by flat tangles connecting these points, with the left-most point of the source always connected to the left-most point of the target, allowing 4-valent intersections between the first two strands. These diagrams are subject to the same relations as the blob algebra.
We stress that, in contrast to the Temperley--Lieb category of type A, the blob category is not monoidal w.r.t. juxtaposition of diagrams since the blue strand in the pictures above needs to be on the left-hand side of any diagram.

\subsubsection{Webster categorification}
In a seminal paper~\cite{webster}, Webster has constructed categorifications of tensor products of integrable modules for symmetrizable Kac--Moody algebras, generalizing Lauda's~\cite{L1}, Khovanov--Lauda~\cite{KL1,KL2} and Chuang--Rouquier~\cite{CR} and Rouquier's~\cite{rouquier} categorification of quantum groups, and their integrable modules. 
Webster further used his categorifications to give a link homology theory categorifying the Witten--Reshetikhin--Turaev invariant of tangles.  
The construction in~\cite{webster} involves algebras, called KLRW algebras (or tensor product algebras), that are finite-dimensional algebras presented diagrammatically, generalizing cyclotomic KLR algebras.  
Categories of  finitely generated modules over KLRW algebras come equipped with an action of  Khovanov--Lauda--Rouquier's 2-Kac--Moody category, and their Grothendieck groups are isomorphic to tensor products of integrable modules. 
Link invariants and categorifications of intertwiners are
constructed using functors given by the derived tensor product with certain bimodules over KLRW algebras.

\subsubsection{Verma categorification: dg-enhancements}

In~\cite{naissevaz1,naissevaz2,naissevaz3}, the second and third authors have given a categorification of (universal, parabolic) Verma modules for (quantized) symmetrizable Kac--Moody algebras. In its more general form~\cite{naissevaz3}, the categorification is given as a derived category of dg-modules over a certain dg-algebra, similar to a KLR algebra but containing an extra generator in homological degree $1$. 
 This dg-algebra can also be endowed with a collection of different differentials, each of them turning it into a dg-algebra whose homology is isomorphic to a cyclotomic KLR algebra.
This can be interpreted as a categorification of the projection of a universal Verma module onto an integrable module. Categorification of Verma modules was used by the second and third authors in~\cite{naissevaz4} to give a quantum group higher representation theory construction of Khovanov--Rozansky's HOMFLY--PT link homology.

\subsection{The work in this paper}

For $\lambda$ a formal parameter, let $M(\lambda)$ be the universal $U_q(\slt)$-Verma module with highest weight $\lambda$, and
$V(\und{N}):=V(N_1)\otimes \dotsm \otimes V(N_r)$, where $V(N_j)$ is the irreducible of highest weight $q^{N_j}$, $N_j \in\bN$. In this paper we combine Webster's categorification with the Verma categorification to give a categorification of $M(\lambda)\otimes V(\und{N})$. 
Then we construct a categorification of the blob algebra by 
categorifying the intertwiners of $M(\lambda)\otimes V(\und{N})$  where all the $N_j$ are 1. 

\subsubsection{Dg-enhanced KLRW algebras and categorification of tensor products
(Sections~\ref{sec:dgWebster} and~\ref{sec:catTensProd})}

Fix a commutative unital ring $\Bbbk$.
The KLRW algebra is the $\Bbbk$-algebra spanned by planar isotopy classes of braid-like diagrams whose strands are of two types: there are black strands labeled by simple roots of a symmetrizable Kac--Moody algebra $\g$ and carrying dots, and there are red strands labeled by dominant integral weights. 
KLRW algebras are cyclotomic algebras in the sense that they generalize cyclotomic KLR algebras to a string of dominant integral weights, 
where the ``\emph{violating condition}''~\cite[Definition 4.3]{webster} plays the role of the cyclotomic condition. 
KLRW algebras were also defined without the violating condition, in which case we call them non-cyclotomic or affine KLRW algebras. 
In the case of $\slt$, for $b\in\bN$ and $\und{N}\in\bN^r$, we denote by $T_b^{\und{N}}$ (resp. $\widetilde T_b^{\und N}$) 
 the (resp. affine) KLRW algebra spanned by $b$ black strands (all labeled by the simple root of $\slt$) and $r$ red strands, labeled in order $N_1,\dotsc, N_r$ from left to right.

Following a procedure analogous to~\cite{naissevaz2,naissevaz3}, we construct in~\cref{sec:dgWebster} an algebra $T_b^{\lambda,\und{N}}$, with $\lambda$ a formal parameter, that contains the affine KLRW algebra $\widetilde T_b^{\und N}$ as a subalgebra.
In a nutshell, $T_b^{\lambda,\und{N}}$ is defined by 
putting a vertical blue strand labeled by $\lambda$ on the left of the diagrams of $\widetilde T_b^{\und N}$, and adding a new generator that we call a nail (this corresponds with the ``tight floating dots'' of \cite{naissevaz2,naissevaz3}). We draw this new generator as:
\[
	\tikzdiagh{0}{
		\draw (.5,-.5) .. controls (.5,-.25)  .. 
			(0,0) .. controls (.5,.25)  .. (.5,.5);
  \draw[vstdhl] (0,-.5) node[below]{\small $\lambda$} -- (0,.5) node [midway,nail]{};
\node at (1.2,0) {$\dotsm$};
  \draw[stdhl] (1.8,-.5) node[below]{\small $N_1$} -- (1.8,.5) ;
  \node at (2.65,0) {$\dotsm$};
 \draw[stdhl] (3.5,-.5) node[below]{\small $N_r$} -- (3.5,.5) ;
 \node at (4.3,0) {$\dotsm$};
}
\]
Note that a nail can only be placed on the left-most strand, which is always blue. The nails are subject to the following local relations:
\begin{align*}
	\tikzdiagh{0}{
		\draw (.5,-.5) .. controls (.5,-.25)  ..  
			(0,0) node[midway, tikzdot]{}
			 .. controls (.5,.25)  .. (.5,.5);
	           \draw[vstdhl] (0,-.5) node[below]{\small $\lambda$} -- (0,.5) node [midway,nail]{};
  	}
  	\ &= \ 
	\tikzdiagh{0}{
		\draw (.5,-.5) .. controls (.5,-.25)  .. 
			(0,0) 
			.. controls (.5,.25)  .. (.5,.5) node[midway, tikzdot]{};
	           \draw[vstdhl] (0,-.5) node[below]{\small $\lambda$} -- (0,.5) node [midway,nail]{};
            }
          &
	\tikzdiagh{0}{
		\draw (.5,-1) .. controls (.5,-.75) .. (0,-.4)
				.. controls (.5,-.05) .. (.5,.2)
				-- (.5,1);
		\draw (1,-1) .. controls (1,0) .. (0, .4)
			.. controls (1,.75) .. (1,1); 
	     	\draw [vstdhl]  (0,-1) node[below]{\small $\lambda$} --  (0,1) node [pos=.3,nail] {} node [pos=.7,nail] {} ;
	 } \ &= \ -
	\tikzdiagh[yscale=-1]{0}{
		\draw (.5,-1) .. controls (.5,-.75) .. (0,-.4)
				.. controls (.5,-.05) .. (.5,.2)
				-- (.5,1);
		\draw (1,-1) .. controls (1,0) .. (0, .4)
			.. controls (1,.75) .. (1,1); 
	     	\draw [vstdhl]  (0,-1)  --  (0,1) node[below]{\small $\lambda$} node [pos=.3,nail] {} node [pos=.7,nail] {} ;
	 }
&
	\tikzdiagh{0}{
		\draw (.5,-1) .. controls (.5,-.75) .. (0,-.4)
			.. controls (.5,-.4) and (.5,.4) ..
			(0,.4) .. controls (.5,.75) .. (.5,1);
	     	\draw [vstdhl]  (0,-1) node[below]{\small $\lambda$} --  (0,1) node [pos=.3,nail] {} node [pos=.7,nail] {} ;
	 }
	\ &= \ 
	0. 
\end{align*}
When $\und{N}=\varnothing$ is the empty sequence, we recover the dg-enhanced nilHecke algebra from~\cite{naissevaz2}. 
The subalgebra spanned by all diagrams without a nail is isomorphic to the affine KLRW algebra $\widetilde T_b^{\und N}$.

As we will see, the algebra $T^{\lambda, \und N}_b$ can be equipped with three ($\bZ$-)gradings: two internal gradings, one as in Webster's original definition and an additional grading (see~\cref{def:dgwebsteralg}), as well as a homological grading. The first two of these gradings categorify the parameters $q$ and $\lambda$ respectively, and we call them $q$- and $\lambda$-gradings. As usual, the homological grading allows us to categorify relations involving minus signs. 
We write $q^k$ (resp. $\lambda^k$) for a grading shift up by $k$ in the $q$- (resp. $\lambda$-)grading, and $[k]$ for a grading shift up by $k$ in the homological grading, for $k \in \bZ$. 

We let the nail be in homological degree $1$, while diagrams without a nail are in homological degree $0$. 
As in the categorification of Verma modules, if we endow the algebra $T^{\lambda,\underline{N}}_b$ with a trivial differential, then it becomes a dg-algebra categorifying $M(\lambda)\otimes V(\und{N})$ (see below). 
We can also equip $T^{\lambda,\underline{N}}_b$ with a differential $d_N$, for $N\geq 0$, which acts trivially on diagrams without a nail, while 
\[
d_N\left(
	\tikzdiagh{-1ex}{
		\draw (.5,-.5) .. controls (.5,-.25)  .. 
			(0,0) .. controls (.5,.25)  .. (.5,.5);
	           \draw[vstdhl] (0,-.5) node[below]{\small $\lambda$} -- (0,.5) node [midway,nail]{};
  	}
  	\right) 
\ := \ 
	\tikzdiagh{-1ex}{
		\draw (.5,-.5) -- (.5,.5) node[midway,tikzdot]{} node[midway,xshift=1.75ex,yshift=.75ex]{\small $N$};
	           \draw[vstdhl] (0,-.5) node[below]{\small $\lambda$} -- (0,.5);
  	}
\]
and extending using the graded Leibniz rule.
The dg-algebra $(T^{\lambda,\underline{N}}_b,d_N)$ is formal with
homology isomorphic to the KLRW algebra $T^{(N,\underline{N})}_b$ (see~\cref{thm:dNformal}). 

The usual framework using the algebra map $T^{\lambda,\underline{N}}_b \rightarrow T^{\lambda,\underline{N}}_{b+1}$ that adds a black strand at the right of a diagram gives rise to induction and restriction dg-functors $\E_b$ and $\F_b$ between the derived dg-categories 
$\cD_{dg}(T^{\lambda,\underline{N}}_{b},0)$ and $\cD_{dg}(T^{\lambda,\underline{N}}_{b+1},0)$. 
The following describes the categorical $U_q(\slt)$-action:
\newtheorem*{thma}{\cref{thm:sl2comqi}}
\begin{thma}
There is a quasi-isomorphism
\[
\cone(\F_{b-1}\E_{b-1} \rightarrow \E_b\F_b) \xrightarrow{\cong} \oplus_{[\beta+|\underline{N}|-2b]_q} \id_b,
\]
of dg-functors.
\end{thma}
\noindent As usual in the context of categorification, the notation $\oplus_{[\beta+|\underline{N}|-2b]_q}$ on the right-hand side is an infinite coproduct categorifying multiplication by the rational fraction $(\lambda q^{|\underline{N}|-2b}-\lambda^{-1} q^{-|\underline{N}|+2b})/(q-q^{-1})$ interpreted as a Laurent series.

Turning on the differential $d_N$ gives functors $\E_b^N$, $\F_b^N$ on $\cD_{dg}(\oplus_{b\geq 0}T^{\lambda,\underline{n}}_{b},d_N)$. In this case, the right-hand side in \cref{thm:sl2comqi} becomes quasi-isomorphic to a finite sum and we recover the usual action on categories of modules over KLRW algebras (see~\cref{prop:actionN}).

In~\cite{asympK0}, the second author introduced the notion of an asymptotic Grothendieck group, which is a notion of a Grothendieck group for (multi)graded categories of objects admitting 
infinite iterated extensions (like infinite composition series or infinite resolutions) whose gradings satisfy some mild conditions. Denote by  ${}_\bQ\bKO^\Delta(-)$ the asymptotic Grothendieck group (tensored over $\bZ\pp{q,\lambda}$ with $\bQ\pp{q,\lambda}$). 
The categorical $U_q(\slt)$-actions on the derived categories $\cD_{dg}(\oplus_{b\geq 0}T^{\lambda,\underline{N}}_{b},0)$ and
$\cD_{dg}(\oplus_{b\geq 0}T^{\lambda,\underline{N}}_{b},d_N)$ descend to the asymptotic Grothendieck group and we have the main result of~\cref{sec:catTensProd}, which  reads as following: 
\newtheorem*{thmb}{\cref{thm:K0}}
\begin{thmb}
There are isomorphisms of $U_q(\slt)$-modules
\[
{}_\bQ\bKO^\Delta(T^{\lambda,\underline{N}},0) \cong M(\lambda) \otimes V(\underline{N}),
\]
and
\[
{}_\bQ\bKO^\Delta(T^{\lambda,\underline{N}},d_N)\cong V(N) \otimes V(\underline{N}),
\]
for all $N \in \bN$. 
\end{thmb}

In~\cref{sec:zigazag} we prove that in the case of $b=1$, $\und{N}={1,\dotsc ,1}$ and $N=1$, the dg-algebra $(T^{\lambda,1,\dotsc ,1}_{1},d_1)$ is isomorphic to a dg-enhanced zigzag algebra, generalizing~\cite[\S4]{qi-sussan}.

\subsubsection{The blob 2-category (Sections~\ref{sec:bimod} and~\ref{sec:catTLB})}

We study the case of $\und{N}=1,\dotsc ,1$ in more detail.
We define several functors on $\cD_{dg}(T^{\lambda,\und{N}},0)$ commuting with the categorical action of $U_q(\slt)$.
As in~\cite{webster}, these are defined as a first step via (dg-)bimodules over the abovementioned dg-enhancements of KLRW-like algebras.
To simplify matters, let $T^{\lambda,r}$ be the dg-enhanced KLRW algebra with $r$ strands labeled 1 and a blue strand labeled $\lambda$.
  The categorical Temperley--Lieb action is realized 
by a pair of biadjoint functors, constructed in the same way as in~\cite{webster}. 
They are given by derived tensoring with the $(T^{\lambda,r},T^{\lambda,r\pm 2})$-bimodules $B_i$ and $\overline{B}_i$ generated respectively by 
the diagram 
\[
	\tikzdiag{
		\draw[vstdhl] (0,0) node[below]{\small $\lambda$} -- (0,1);
		\draw[stdhl] (1,0) node[below]{\small $1$} -- (1,1);
		\node[red] at(2,.5) { $\dots$};
		\draw[stdhl] (3,0) node[below]{\small $1$} -- (3,1);
		\draw[decoration={brace,mirror,raise=-8pt},decorate]  (-.1,-1) -- node {$i$} (3.1,-1);
		\draw (4.5,.5) -- (4.5,1);
		\draw [stdhl] (4,1) .. controls (4,.25) and (5,.25) .. (5,1);
		\draw[stdhl] (6,0) node[below]{\small $1$} -- (6,1);
		\node[red] at(7,.5) { $\dots$};
		\draw[stdhl] (8,0) node[below]{\small $1$} -- (8,1);
	}
\]
and its mirror along a horizontal axis. We stress again that the blue strand is on the left. Moreover, these diagrams are subjected to some local relations (see \cref{sec:Tlaction}).
Taking the derived tensor product with these bimodules defines the coevaluation and evaluation dg-functors as 
\begin{align*}
\B_i := B_i \Lotimes_T - : \cD_{dg}(T^{\lambda,r},0) \rightarrow \cD_{dg}(T^{\lambda,r+2},0), \\
\overline{\B}_i := \overline{B}_i \Lotimes_T - : \cD_{dg}(T^{\lambda,r+2},0) \rightarrow \cD_{dg}(T^{\lambda,r},0). 
\end{align*}
In~\cref{sec:catTLaction} we extend~\cite{webster} and prove that these functors satisfy the relations of the Temperley--Lieb algebra: 

\newtheorem*{thmtla}{Corollaries~\ref{cor:TLbiadj} and~\ref{cor:TLloop}}
\begin{thmtla}
There are natural isomorphisms
\begin{align*}
\bar \B_{i \pm 1} \circ \B_i &\cong \id,
&
\overline \B_i \circ \B_i  &\cong q \id [1] \oplus q^{-1} \id [-1].
\end{align*}
\end{thmtla}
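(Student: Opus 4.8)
The plan is to follow the bimodule strategy of \cite{webster}. Since $\B_i = B_i\Lotimes_{T}-$ and $\overline{\B}_i = \overline{B}_i\Lotimes_{T}-$, and derived tensoring with bimodules composes, both statements reduce to quasi-isomorphisms of $(T^{\lambda,r},T^{\lambda,r})$-dg-bimodules
\[
\overline{B}_{i\pm1}\Lotimes_{T}B_i \;\simeq\; T^{\lambda,r},
\qquad
\overline{B}_i\Lotimes_{T}B_i \;\simeq\; q\,T^{\lambda,r}[1]\,\oplus\,q^{-1}\,T^{\lambda,r}[-1].
\]
The first step is to replace the derived tensor products by ordinary ones. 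For this I would prove that $B_i$ and $\overline{B}_i$ are $H$-projective (cofibrant) as one-sided dg-modules: since the algebras carry the zero differential, it is enough to exhibit a homogeneous basis of $B_i$ as a graded left $T^{\lambda,r+2}$-module and as a graded right $T^{\lambda,r}$-module. Such a ``standard monomial'' basis should be built from the diagrammatic basis of the affine KLRW algebra $\widetilde T_b^{\und N}$, the nail relations, and the local relations imposed on the cup-shaped generator of $B_i$ (\cref{sec:Tlaction}), by pushing all crossings, dots and nails to one side of the cup. This is the part I expect to be the main obstacle; everything after it is diagram combinatorics.

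Granting one-sided projectivity, the composites become ordinary tensor products of bimodules, which I would compute diagrammatically by stacking generators. For $\overline{B}_{i\pm1}\otimes_{T^{\lambda,r+2}}B_i$, placing the cap-shaped generator of $\overline{B}_{i\pm1}$ above the cup-shaped generator of $B_i$ produces a picture containing a single ``zigzag'' built from one new strand of each generator. Using isotopy invariance of KLRW diagrams together with the straightening (snake) relations of \cref{sec:Tlaction}, this zigzag can be pulled straight; one then checks, using the one-sided basis from the first step, that the resulting bimodule is free of rank one over $T^{\lambda,r}$, generated by the straightened identity diagram, with no shift in the $q$-, $\lambda$-, or homological gradings. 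For $i=1$ one has to verify in addition that the straightening is compatible with a nail sitting on the left-most (blue) strand; this is the only place the dg/Verma enhancement enters, and it should follow from the nail relations displayed earlier together with those of \cref{sec:Tlaction}.

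For $\overline{B}_i\otimes_{T^{\lambda,r+2}}B_i$, stacking the cap on top of the cup at the same position $i$ creates a closed red circle beside the identity diagram, away from the blue strand. Evaluating this circle by the bubble relation of the nilHecke/KLR calculus — sliding a dot around the loop — decomposes the bimodule as a direct sum of a copy of $T^{\lambda,r}$ generated by the bare circle and a copy generated by the dotted circle, carrying the shifts $q[1]$ and $q^{-1}[-1]$ respectively; in the asymptotic Grothendieck group this recovers $-q-q^{-1}$, matching the value of a red loop in $\cB_r$. Finally, translating back through $(B\Lotimes_{T}-)\circ(B'\Lotimes_{T}-)\cong(B'\Lotimes_{T}B)\Lotimes_{T}-$ gives the asserted natural isomorphisms of dg-functors. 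Apart from the one-sided basis theorem, all of this is a mild dg-enhancement — localized near the nail — of computations already carried out in \cite{webster}.
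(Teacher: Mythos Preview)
Your plan has a genuine gap at precisely the step you flag as the main obstacle: $B_i$ is \emph{not} projective as a left $(T^{\lambda,r+2},0)$-module, so you cannot replace the derived tensor by the ordinary one. The basis of \cref{thm:basisBi} shows $B_i$ is free only as a \emph{right} module; on the left it is the proper quotient of $T_{i,\tikzRBR}$ by the images of $T_{i,\tikzBRR}$ and $T_{i,\tikzRRB}$, and the three-term complex $\br B_i$ of \cref{prop:cofBi} is a non-split resolution. This is fatal for the second isomorphism. Computing $\overline B_i\rb\otimes_T B_i$, the two middle terms vanish by \eqref{eq:killcup}, leaving $q\,T^{\lambda,r}[1]\oplus q^{-1}T^{\lambda,r}[-1]$ with zero differential; the underived $\overline B_i\otimes_T B_i$, however, is only $q^{-1}T^{\lambda,r}[-1]$. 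The summand $q\,T^{\lambda,r}[1]$ is genuine higher Tor. Your proposed generator for it, the ``dotted circle'', is in fact zero: by \eqref{eq:redR2} a dot on the black strand inside the cup equals a double crossing with one red leg of the cup, and a single such crossing already vanishes by \eqref{eq:killcup}.

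The diagrammatic heuristics you invoke are also not available here. There are no snake relations for red strands, and red strands carry no dots. The only local relations on the cup are \eqref{eq:killcup} and \eqref{eq:cupstrandslides}. The zigzag in $\overline B_{i\pm1}\Lotimes_T B_i$ does not straighten by isotopy; rather one tensors $\overline B_{i\pm1}$ with the resolution $\br B_i$ and uses \eqref{eq:killcup} to kill three of the four terms. For $\overline B_i\Lotimes_T B_i$ one proceeds as in \cref{prop:distinguishedBitriangle}: the same vanishing leaves two shifted copies of $T^{\lambda,r}$, and one identifies them with the targets of the adjunction unit $\eta_i$ and counit $\overline\varepsilon_i$ to see that the connecting morphism in the resulting triangle is zero, giving the splitting.
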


We define the double braiding functor in the same vein,  using the $(T^{\lambda,r},T^{\lambda,r})$-bimodule $X$ generated by the diagram 
\[
	\tikzdiag{
		\draw[stdhl] (1,0) node[below]{\small $1$} .. controls (1,.25) .. (0,.5)
				.. controls (1,.75) .. (1,1);
		\draw[fill=white, color=white] (-.1,.5) circle (.1cm);
		\draw[vstdhl] (0,0)  node[below]{\small $\lambda$} -- (0,1);
		\draw[stdhl] (2,0) node[below]{\small $1$} -- (2,1);
		\node[red] at(3,.5) { $\dots$};
		\draw[stdhl] (4,0) node[below]{\small $1$} -- (4,1);
	}
\]
modulo the defining relations of $T^{\lambda,r}$, and the extra local relations
\begin{align*}
	\tikzdiagh{0}{
		\draw (.5,-.5) .. controls (.5,-.3) .. (0,-.1) .. controls (.5,.1) ..  (.5,.3) -- (.5,1.5);
		\draw[stdhl] (1,-.5) node[below]{\small $1$} -- (1,0) .. controls (1,.25) .. (0,.5)
				.. controls (1,.75) .. (1,1) -- (1,1.5);
		\draw[fill=white, color=white] (-.1,.5) circle (.1cm);
		\draw[vstdhl] (0,-.5)  node[below]{\small $\lambda$} -- (0,1.5) node[pos=.2,nail]{};
	}
\ &= \ 
	\tikzdiagh[yscale=-1]{0}{
		\draw (.5,-.5) .. controls (.5,-.3) .. (0,-.1) .. controls (.5,.1) ..  (.5,.3) -- (.5,1.5);
		\draw[stdhl] (1,-.5)-- (1,0) .. controls (1,.25) .. (0,.5)
				.. controls (1,.75) .. (1,1) -- (1,1.5) node[below]{\small $1$} ;
		\draw[fill=white, color=white] (-.1,.5) circle (.1cm);
		\draw[vstdhl] (0,-.5)   -- (0,1.5) node[pos=.2,nail]{} node[below]{\small $\lambda$};
	}
&
	\tikzdiagh{0}{
		\draw (1,-.5) .. controls (1,-.3) .. (0,-.1) .. controls (1,.1) ..  (1,.3) -- (1,1.5);
		\draw[stdhl] (.5,-.5) node[below]{\small $1$} -- (.5,0) .. controls (.5,.25) .. (0,.5)
				.. controls (.5,.75) .. (.5,1) -- (.5,1.5);
		\draw[fill=white, color=white] (-.1,.5) circle (.1cm);
		\draw[vstdhl] (0,-.5)  node[below]{\small $\lambda$} -- (0,1.5) node[pos=.2,nail]{};
	}
\ &= \ 
	\tikzdiagh[yscale=-1]{0}{
		\draw (1,-.5) .. controls (1,-.3) .. (0,-.1) .. controls (1,.1) ..  (1,.3) -- (1,1.5);
		\draw[stdhl] (.5,-.5) -- (.5,0) .. controls (.5,.25) .. (0,.5)
				.. controls (.5,.75) .. (.5,1) -- (.5,1.5)  node[below]{\small $1$};
		\draw[fill=white, color=white] (-.1,.5) circle (.1cm);
		\draw[vstdhl] (0,-.5)  -- (0,1.5) node[pos=.2,nail]{}  node[below]{\small $\lambda$};
	}
\end{align*}
The \emph{double braiding functor} is then defined as the derived tensor product 
\[
\Xi := X \Lotimes_T - : \cD_{dg}(T^{\lambda,r},0) \rightarrow \cD_{dg}(T^{\lambda,r},0). 
\]

The functors $B_i$, $\overline{B}_i$ and $\Xi$ intertwine the
categorical $U_q(\slt)$-action on $\cD_{dg}(T^{\lambda,r},0)$:  
\newtheorem*{thmc}{\cref{prop:catactioncommutes}}
\begin{thmc}
  We have natural isomorphisms $\E \circ \Xi \cong \Xi \circ \E$ and $\F \circ \Xi \cong \Xi \circ \F$, and also $\E \circ\B_i  \cong \B_i \circ\E$, $\F \circ\B_i \cong \B_i \circ\F$, and similarly for $\overline\B_i$.
\end{thmc}

The first  main result of~\cref{sec:catTLB}  is that the blob algebra acts on $\cD_{dg}(T^{\lambda,r},0)$. This follows from the Temperley--Lieb action in~\cref{sec:catTLaction} and~\cref{cor:qi-Xquadratic},~\cref{prop:Xi-autoequiv} and~\cref{cor:qi-bubbleremv}, summarized below. 
\newtheorem*{thmd}{\cref{cor:qi-Xquadratic},  \cref{prop:Xi-autoequiv} and \cref{cor:qi-bubbleremv}}
\begin{thmd}
  The functor $\Xi :  \cD_{dg}(T^{\lambda,r},0) \rightarrow \cD_{dg}(T^{\lambda,r},0)$ is an autoequivalence, with inverse given by
  \[
    \Xi^{-1} := \RHOM_T(X,-):  \cD_{dg}(T^{\lambda,r},0) \rightarrow \cD_{dg}(T^{\lambda,r},0) .
  \]
There are quasi-isomorphisms 
\begin{align*}
\cone\bigl(\lambda q^2 \Xi [1] \rightarrow q^2 \id [1]\bigr)[1] &\xrightarrow{\simeq} \cone( \Xi \circ \Xi \rightarrow \lambda^{-1} \Xi),
\intertext{and}
\lambda q (\id)[1] \oplus \lambda^{-1} q^{-1} (\id) [-1] &\xrightarrow{\simeq} \bar \B_1 \circ \Xi \circ \B_1,
\end{align*}
of dg-functors
\end{thmd}

 One of the main difficulties in establishing the results above is that,  in order to compute derived tensor products,  we have to take left (resp. right) cofibrant replacements of several dg-bimodules. 
As observed in~\cite[\S 2.3]{MW}, while the left (resp. right) module structure remains unchanged when passing to the  left (resp. right) cofibrant replacement, the right (resp. left) module structure is preserved only in the $A_\infty$ sense. 
As a consequence, constructing natural transformations between compositions of derived tensor product functors often requires to use $A_\infty$-bimodules maps.
We have tried to avoid as much as possible to end up in this situation, replacing the potentially unwieldy $A_\infty$-bimodules by quasi-isomorphic dg-bimodules. 

Let $\mathfrak B_r$ be a certain subcategory (see \cref{sec:blob2cat}) of the derived dg-category of $(T^{\lambda,r},0)$-$(T^{\lambda,r},0)$-bimodules generated by the dg-bimodules corresponding with the dg-functors identity, $\Xi^{\pm 1}$ and $\B_i \circ \overline{\B}_i $. 
Given two dg-bimodules in $\mathfrak B_r$, we can compose them in the derived sense by replacing both of them with a bimodule cofibrant replacement (i.e. a cofibrant replacement as dg-bimodule, and not only left or right dg-module), and taking the usual tensor product. This gives a dg-bimodule, isomorphic to the derived tensor product of the two initial dg-bimodules. In particular, it equips ${}_\bQ\bKO^\Delta(\mathfrak{B}_r)$ with a ring structure. 
We show that $\mathfrak B_r$ is a categorification of the blob algebra $\cB_r$ with ground ring extended to $\bQ\pp{q,\lambda}$: 
\begin{citecor}{cor:twoblobalg} 
There is an isomorphism of $\bQ\pp{q,\lambda}$-algebras
\[
{}_\bQ\bKO^\Delta(\mathfrak{B}_r) \cong \cB_r(q,\lambda).
\]
\end{citecor}
This result generalizes to the blob category. However, a technical issue we find here is that dg-categories up to quasi-equivalence do not form a 2-category, but rather an $(\infty,2)$-category~\cite{faonte}. Concretely in our case, we consider a sub-$(\infty,2)$-category of this $(\infty,2)$-category, where the objects are the derived dg-categories $\cD_{dg}(T^{\lambda,r},0)$ for various $r \in \bN$, and the $1$-hom are generated by the dg-functors  identity, $\Xi^{\pm 1}$, $\overline{\B}_i $ and $\B_i$. 
Moreover, these 1-hom are stable $(\infty,1)$-categories, and thus their homotopy categories are triangulated (see \cite{lurie}). 
In particular, we write ${}_\bQ\bKO^\Delta(\mathfrak{B}) $ for the category with the same objects as $\mathfrak{B}$ and with hom-spaces given by the asymptotic Grothendieck groups of the homotopy category of the $1$-hom of $\mathfrak{B}$. 
By~\cite{faonte} and~\cite{toen}, we can compute these hom-spaces by considering usual derived categories of dg-bimodules, and we obtain the following, again after extending the ground rings to $\bQ\pp{q,\lambda}$:
\begin{citecor}{cor:twoblob} 
There is an equivalence of categories
\[
{}_\bQ\bKO^\Delta(\mathfrak{B}) \cong \cB.
\]
\end{citecor}

\subsubsection{The general case: symmetrizable $\g$}

The definition of dg-enhanced KLRW algebras in~\cref{sec:dgWebster} generalizes immediately to any symmetrizable $\g$. We indicate this generalization in~\cref{sec:dgWebstergeneral}. We expect that the results of~\cref{sec:dgWebster} and \cref{sec:catTensProd} extend to this case without difficulty. 

\subsubsection{Quiver Schur algebras}

Quiver Schur algebras were introduced geometrically by Stroppel and Webster in~\cite{SW} to give a graded version of
the cyclotomic $q$-Schur algebras of Dipper, James and Mathas~\cite{DJM}.
Independently, Hu and Mathas~\cite{HM} constructed a graded Morita equivalent variant of the quiver Schur algebras in~\cite{SW} as graded quasi-hereditary covers of cyclotomic KLR algebras for linear quivers.
While the construction in~\cite{SW} is geometric, the construction in~\cite{HM} is combinatorial/algebraic.

More recently, Khovanov, Qi and Sussan~\cite{KSY} gave a variant of the quiver Schur algebras in~\cite{HM} for the case of cyclotomic nilHecke algebras, and showed that Grothendieck groups of their algebras can be identified with tensor products of integrable modules of $U_q(\mathfrak{sl}_2)$. 
Following similar ideas, in~\cref{sec:dgqSchur} we construct a dg-algebra, which we conjecture to be the quiver Schur variant of the dg-enhanced KLRW algebra of~\cref{sec:dgWebster}
(Conjectures~\ref{conj:dgSchurCycSchur} and~\ref{conj:WebQSchur}).

\subsubsection{Appendix}

We have moved the most computational proofs to \cref{sec:computations}, leaving only a sketch of some of the proofs in the main text.
The reader can also find in \cref{sec:dgcat} some explanations and results about homological algebra, $A_\infty$-structures and asymptotic Grothendieck groups.

\subsection{Possible future directions and applications}

\subsubsection{Khovanov homology for tangles of type B}

The topological interpretation of the blob algebra in~\cite[\S3.4]{swTLB} gives rise to a Jones polynomial for tangles of type B (i.e. tangles in the annulus). 
We expect that by introducing braiding functors as in \cite{webster}, we obtain a link homology of type B, yielding invariants of links in the annulus akin to ones introduced by Asaeda--Przytycki--Sikora~\cite{APS} (see also~\cite{AGW,GLW,QR-annular}).

Given a link in the annulus, the invariant obtained from our construction would be a dg-endofunctor of the derived dg-category of dg-modules over the dg-enhanced KLRW algebra $(T^{\lambda,\emptyset},0)$. This means that the empty link is sent to the dg-endomorphism space of the identity functor, which coincides with the Hochschild cohomology of $T^{\lambda,\emptyset}$, and is infinite-dimensional (the center of $T^{\lambda,\emptyset}$ is already infinite-dimensional). 
By  restricting to the subcategory of dg-modules over $(T^{\lambda,\emptyset}_0,0)$, it becomes $1$-dimensional since $T^{\lambda,\emptyset}_0 \cong \Bbbk$. 
With this restriction, we conjecture that our ``would-be'' invariant coincides with the usual annular Khovanov homology.

The following is a work in progress with A. Wilbert. As it is the case of using Webster's machinery~\cite{webster}, computing the tangle invariant of type B using our framework could be unwieldy. 
A more computation-friendly alternative could be 
to use dg-bimodules over annular arc algebras constructed using the annular TQFT of~\cite{APS}, as done in~\cite[\S5.3]{annulararcalg} (see also \cite[\S5]{ehrigtubbenhauer}). 
Furthermore, evidences show there is a (at least weak) categorical action of the blob algebra on the derived category of dg-modules over these annular arc algebras. 

In a different direction, one could try to extend our results to construct a Khovanov invariant for links in handlebodies, in the spirit of the handlebody HOMFLY--PT-link homology of Rose--Tubbenhauer in~\cite{RT}.

\subsubsection{Constructions using homotopy categories}

KLRW algebras are given diagrammatically, which is the often an appropriate framework for constructions with an additive flavor. 
Nevertheless, the various functors realizing the various intertwiners and the braiding need to pass to derived categories of modules. 
This makes it harder to describe explicitly the 2-categories realizing these symmetries since a bimodule for two of those algebras induces an $A_\infty$-bimodule on the level of derived categories. This was pointed out by Mackaay and Webster in~\cite{MW}, who gave explicit constructions of categorified intertwiners in order to prove the equivalence between the several existing $\mathfrak{gl}_n$-link homologies. 
One of the things~\cite{MW} tells us is how to construct homotopy versions of Webster's categorifications. 

A construction using homotopy categories for the results in this paper seems desirable from our point of view. We hope it can be done either by mimicking~\cite{MW}, which can turn out to be a technically challenging problem, or alternatively, by a construction of dg-enhancements for redotted Webster algebras, as considered in~\cite{KhS} and \cite{KhLSY} to give a homotopical version of some of the above, but whose low-tech presentation might hide difficulties.

\subsubsection{Generalized blob algebras and variants}

The results of~\cite{swTLB} were extended in~\cite{LV}, where the first and third authors have computed the endomorphism algebra of the $U_q(\mathfrak{gl}_m)$-module $M^{\p}(\Lambda)\otimes V^{\otimes n}$ for $M^{\p}(\Lambda)$ a parabolic universal Verma modules and $V$ the natural module of $U_q(\mathfrak{gl}_m)$, 
which is always a quotient of an Ariki--Koike algebra. 
As particular cases (depending on $\p$ and the relation between $n$ and $m$) we obtain Hecke algebras of type $B$ with two parameters, the generalized blob algebra of Martin and Woodcock~\cite{generalized_blob} or the Ariki--Koike algebra itself. 
With this result in mind it is tantalizing to ask for an extension to $\mathfrak{gl}_m$ of the work in this paper.
Modulo technical difficulties the methods in this paper could work for $\mathfrak{gl}_m$ in the case of a parabolic Verma module for a 2-block parabolic subalgebra, which is the case where the generators of the endomorphism algebra satisfy a quadratic relation.
Constructing a categorification of the Ariki--Koike algebra or the generalized blob algebra as the blob 2-category in~\cref{sec:catTLB} looks quite challenging at the moment, in particular for a functor-realization of the cyclotomic relation and the relation $\tau=0$ (for the generalized blob algebra in the presentation given in~\cite[Theorem 2.24]{LV}).

\smallskip


\subsection*{Acknowledgments}
The authors thank Catharina Stroppel for interesting discussions, and for pointing us~\cite{Martin-Saleur}, helping to clarify the confusion with the terminology of ``blob algebra'' and ``Temperley--Lieb algebra of type B''. 
The authors would also like to thank the referee for his/her numerous, detailed and helpful comments. 
A.L. was supported by the Fonds de la Recherche Scientifique - FNRS under Grant no.~MIS-F.4536.19. 
G.N. was a Research Fellow of the Fonds de la Recherche Scientifique - FNRS, under Grant no.~1.A310.16 when starting working on this project. G.N. is also grateful to the Max Planck Institute for Mathematics in Bonn for its hospitality and financial support.
P.V. was supported by the Fonds de la Recherche Scientifique - FNRS under Grant no.~MIS-F.4536.19.
%




\section{Quantum $\slt$ 
and the blob algebra}\label{sec:qsltTLB}

\subsection{Quantum $\slt$} 

Recall that \emph{quantum $\slt$} can be defined as the $\bQ(q)$-algebra $U_q(\slt)$, with generic $q$, generated by $K,K^{-1}, E$ and $F$ with relations
\begin{align*}
&KE = q^2EK, &  &KK^{-1} = 1 = K^{-1}K, \\
&KF = q^{-2}FK, & &EF - FE = \frac{K-K^{-1}}{q-q^{-1}}.
\end{align*}
Quantum $\slt$ becomes a bialgebra when endowed with comultiplication
\begin{align*}
\Delta(K^{\pm 1}) &:= K^{\pm 1} \otimes K^{\pm 1}, &
\Delta(E) &:= E \otimes 1 + K^{-1} \otimes E, &
 \Delta(F) &:= F \otimes K + 1 \otimes F,
\end{align*}
and with counit $\varepsilon(K^{\pm 1}) := 1$, $\varepsilon(E) := \varepsilon(F) := 0$.

There is a $\mathbb{Q}(q)$-linear anti-involution $\antimapslt$ of $U_q(\slt)$ defined on the generators by
\begin{align}
  \antimapslt(E) &:= q^{-1}K^{-1}F, & \antimapslt(F) &:= q^{-1}EK, & \antimapslt(K) &:= K.
\end{align}
It is easily checked that
\begin{equation}
  \label{eq:tauCoprod}
  \Delta\circ \antimapslt = (\antimapslt \otimes \antimapslt) \circ \Delta.
\end{equation}

\subsubsection{Integrable modules}
For each $N \in \bN$, there is a finite-dimensional irreducible $U_q(\slt)$-module $V(N)$, called \emph{integrable module},  with basis  $v_{N,0}, v_{N,1}, \dots, v_{N,N}$ and 
\begin{align*}
K \cdot v_{N,i} &:= q^{N-2i} v_{N,i}, \\
F \cdot v_{N,i} &:= v_{N,i+1}, \\
E \cdot v_{N,i} &:= [i]_q [N-i+1]_q v_{N,i-1},
\end{align*}
where $[n]_q$ is the $n$-th \emph{quantum integer}
\[
[n]_q := \frac{q^n-q^{-n}}{q-q^{-1}} = q^{n-1} + q^{n-1-2} + \cdots + q^{1-n}.
\]
In particular, let $V := V(1)$ be the \emph{fundamental $U_q(\slt)$-module}. 

\smallskip

The module $V(N)$ can be equipped with the \emph{Shapovalov form} 
\[
(-,- )_N : V(N) \times V(N) \rightarrow \bQ(q),
\]
 which is a non-degenerate bilinear form such that $(v_{N,0}, v_{N,0})_N = 1$ and which is $\antimapslt$-Hermitian: for any $v,v' \in V(N)$ and $u \in U_q(\slt)$, we have $(u\cdot v, v')_N= (v, \antimapslt(u)\cdot v')_N$. A computation shows that
\[
  (v_{N,i}, v_{N,j})_N = \delta_{i,j}q^{i(N-i)}\frac{[i]_q![N]_q!}{[N-i]_q!},
\]
where $[0]_q! := 1$ and $[n]_q! := [n]_q[n-1]_q\ldots [2]_q[1]_q$.

\subsubsection{Verma modules}

Let $\beta$ be a formal parameter and write $\lambda := q^{\beta}$ as a formal variable. Let $\bo$ be the standard upper Borel subalgebra of $\slt$ and $U_q(\bo)$ be its quantum version. It is the $U_q(\slt)$-subalgebra generated by $K,K^{-1}$ and $E$. 
Let $K_\lambda$ be a $1$-dimensional $\bQ(\lambda,q)$-vector space, with fixed basis element $v_\lambda$. 
We endow $K_\lambda$ with an  $U_q(\bo)$-action by declaring that: 
\begin{align*}
K^{\pm 1} v_\lambda &:= \lambda^{\pm 1} v_\lambda, & E v_\lambda &:= 0,
\end{align*}
extending linearly through the obvious inclusion $\bQ(q) \hookrightarrow \bQ(q,\lambda)$. 
The universal \emph{Verma module $M(\lambda)$} is the induced module
\[
M(\lambda) := U_q(\slt) \otimes_{U_q(\bo)} K_\lambda. 
\]
It is irreducible and infinite-dimensional with $\bQ(q,\lambda)$-basis $v_{\lambda,0} := v_\lambda, v_{\lambda,1}, \dots , v_{\lambda,i} , \dots$ and
\begin{align*}
K \cdot v_{\lambda,i} &:= \lambda q^{-2i} v_{\lambda,i}, \\
F \cdot v_{\lambda,i} &:= v_{\lambda, i+1}, \\
E \cdot v_{\lambda,i} &:=  [i]_q [\beta-i+1]_q v_{\lambda, i-1},
\end{align*}
where
\[
[\beta+k]_q := \frac{\lambda q^k - \lambda^{-1} q^{-k}}{q-q^{-1}}.
\]

\smallskip

The Verma module $M(\lambda)$ can also be equipped with a Shapovalov form $(\cdot,\cdot)_\lambda$, which is again a non-degenerate bilinear form such that $(v_\lambda, v_\lambda)_\lambda = 1$ and which is $\antimapslt$-Hermitian: for any $v,v' \in M(\lambda)$ and $u \in U_q(\slt)$, we have $(u\cdot v, v')_\lambda = (v, \antimapslt(u)\cdot v')_\lambda$. One easily calculates that
\[
  (v_{\lambda,i}, v_{\lambda,j})_\lambda = \delta_{i,j}\lambda^iq^{-i^2}[i]_q![\beta]_q[\beta-1]_q\cdots[\beta-i+1]_q.
\]

\subsubsection{Tensor products}

Given $W$ and $W'$ two $U_q(\slt)$-modules, their tensor product $W \otimes W'$ is again a $U_q(\slt)$-module with the action induced by $\Delta$. Explicitly, 
\begin{align*}
K^{\pm 1} \cdot (w \otimes w') &:= K^{\pm 1} w \otimes K^{\pm 1} w', \\
F \cdot (w \otimes w') &:= F w \otimes K w' + w \otimes Fw', \\
E \cdot (w \otimes w') &:= E w \otimes w' + K^{-1} w \otimes E w',
\end{align*}
for all $w \in W$ and $w' \in W'$.

\smallskip

For $\underline{N}=(N_1,\ldots,N_r)\in \mathbb{N}^r$ we write $V(\underline{N}) :=
V(N_1) \otimes \cdots \otimes V(N_r)$ and $M\otimes V(\underline{N}):=M(\lambda)\otimes V(N_1) \otimes \cdots \otimes V(N_r)$. 
In the particular case $N_1=\dotsm  =N_r=1$, we write
$V^r$ for the $r$-th folded tensor product $V \otimes V \otimes \cdots \otimes V$.

\subsubsection{Weight spaces} 

The module $M\otimes V(\underline{N})$ decomposes into \emph{weight spaces}
\[
M\otimes V(\underline{N})_{\lambda q^{k}} := \{ v \in M\otimes V(\underline{N}) | Kv = \lambda q^k v \}. 
\]
Note that we have $M\otimes V(\underline{N})  \cong \bigoplus_{\ell \geq 0} M\otimes V(\underline{N})_{\lambda q^{|\und N| - 2\ell}} $, where $|\und N| := \sum_i N_i$. 

\subsubsection{Basis} 

Let $\mathcal{P}_{b}^{r}$ be the set of weak  compositions of $b$ into $r+1$ parts, that is:
\[
  \mathcal{P}_{b}^{r}:=\left\{(b_0,b_1,\dots,b_r)\in\mathbb{N}^{r+1}\ \middle\vert\ \sum_{i=0}^r b_i = b\right\}.
\]
Consider also 
\[
\mathcal{P}_{b}^{r, \und N} := \left\{ (b_0, b_1, \dots, b_r)\in\mathcal{P}_{b}^{r} | b_i \leq N_i  \text{ for $1\leq i \leq r$}\right\} \subset \mathcal{P}_{b}^{r}.
\]

In addition to the induced basis by the tensor product, the space $M\otimes V(\underline{N})$ admits a basis that will be particularly useful for categorification. For $\rho = (b_0, \dots, b_r) \in  \mathcal{P}_{b}^{r}$, we write
\[
 v_\rho := F^{b_r}\left( \cdots F^{b_1} \left( F^{b_0}(v_\lambda) \otimes v_{N_1,0}  \right) \cdots \otimes v_{N_r,0}  \right).
\]
Then, $M\otimes V(\underline{N})$ has a basis given by
\[
\left\{
v_\rho 
| \rho \in \mathcal{P}_{b}^{r, \und N}, b \geq 0
\right\}.
\]
In particular, we have that $M\otimes V(\underline{N})_{\lambda q^{|\und N|- 2b}}$ has a basis given by $\{ v_\rho \}_{\rho \in \mathcal{P}_{b}^{r, \und N}}$. 

One can describe inductively the change of basis from $\{ v_\rho \}_{\rho \in \mathcal{P}_{b}^{r, \und N}}$ to the induced basis as follows:
\[
  v_{(b_0,\ldots,b_r)}=\sum_{k=0}^{\min(b_r,N_r)}q^{(1-k)(b_r-k)}\qbinom{b_r}{k}v_{(b_0,\ldots,b_{r-1}+b_r-k)}\otimes v_{N_r,k},
\]
for any $(b_0,\ldots,b_r)\in\mathcal{P}_{b}^{r}$ and
\[
  v_{(b_0,\ldots,b_{r-1})}\otimes v_{N_r,n} = \sum_{k=0}^n (-1)^{n-k}q^{(n-k)(n-2)}\qbinom{n}{k}v_{(b_0,\ldots,b_{r-1}+n-k,k)},
\]
for any $(b_0,\ldots,b_{r-1})\in \mathcal{P}_{b}^{r-1}$ and $0\leq n \leq N_r$, with $\qbinom{n}{k}:=\frac{[n]_q!}{[k]_q![n-k]_q!}$. 

We can also use these formulas to inductively rewrite a vector $v_{\rho}$ with $\rho\in \mathcal{P}_{b}^{r}$ in terms of various $v_{\kappa}$ for $\kappa\in\mathcal{P}_{b}^{r,\und N}$. Indeed, we have
\[
  v_{(b_0,\ldots,b_r)} = \sum_{k=0}^{\min(b_r,N_r)}q^{(1-N_r)(b_r-k)}\frac{\displaystyle\prod_{j=1,j\neq k}^{N_r}[b_r-j]_q}{\displaystyle\prod_{j=1,j\neq k}^{N_r}[k-j]_q}v_{(b_0,\ldots,b_{r-1}+b_r-k,k)},
\]
for any $(b_0,\ldots,b_r)\in\mathcal{P}_{b}^{r}$.

\subsubsection{Shapovalov forms for tensor products}\label{sec:shepfortensor}

Following \cite[\S4.7]{webster}, we consider a family of bilinear forms $(\cdot,\cdot)_{\lambda,\underline{N}}$ on tensor products of the form $M(\lambda)\otimes V(\underline{N})$  satisfying the following properties:
\begin{enumerate}
\item each form $(\cdot,\cdot)_{\lambda,\underline{N}}$ is non-degenerate;
\item for any $v,v'\in M(\lambda)\otimes V(\underline{N})$ and $u\in U_q(\slt)$ we have $(u \cdot v,v')_{\lambda,\underline{N}} = (v, \antimapslt(u)\cdot v')_{\lambda,\underline{N}}$;
\item for any $f\in \mathbb{Q}(q,\lambda)$ and $v,v'\in M(\lambda)\otimes V(\underline{N})$, we have $(f v,v')_{\lambda,\underline{N}} = (v,fv')_{\lambda,\underline{N}} = f(v,v')_{\lambda,\underline{N}}$;
\item if $v,v'\in M(\lambda)\otimes V(\underline{N})$, then we have $(v,v')_{\lambda,\underline{N}} = (v\otimes v_{N,0},v'\otimes v_{N,0})_{\lambda,\underline{N'}}$ where $\underline{N'}=(N_1,\ldots,N_r,N)$.
\end{enumerate}

Similarly to \cite[Proposition 4.33]{webster} we have:

\begin{prop}
  There exists a unique system of such bilinear forms which are given by
  \[
    (v , v')_{\lambda,\underline{N}} = (v, v')_{\lambda,\underline{N}}^{\Pi}, 
  \]
for every $v,v'\in M(\lambda)\otimes V(\underline{N})$ where $(\cdot,\cdot)_{\lambda,\underline{N}}^{\Pi}$ is the product of the universal Shapovalov form on $M(\lambda)$ and of the Shapovalov forms on the various $V(N_i)$.
\end{prop}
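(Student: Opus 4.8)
The plan is to establish existence and uniqueness separately, following the template of \cite[Proposition 4.33]{webster} but with the Verma factor treated as in \cite[\S4.7]{webster} (where one extra module in the tensor string is allowed to be non-integrable). First I would verify that the candidate form $(\cdot,\cdot)_{\lambda,\underline{N}}^{\Pi}$, defined as the tensor product of the universal Shapovalov form $(\cdot,\cdot)_\lambda$ on $M(\lambda)$ with the Shapovalov forms $(\cdot,\cdot)_{N_i}$ on the factors $V(N_i)$, satisfies properties (1)--(4). Property (1) is immediate: a tensor product of non-degenerate forms is non-degenerate, and the relevant field extension $\bQ(q) \hookrightarrow \bQ(q,\lambda)$ does not affect this. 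Property (3) (bilinearity over $\bQ(q,\lambda)$) is clear by construction. Property (4) is a compatibility of the tensor construction: appending $v_{N,0}$ and using $(v_{N,0},v_{N,0})_N = 1$ leaves the value unchanged, so this is essentially a tautology once the form is defined as a product.

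The substance is property (2), the $\antimapslt$-Hermitian property for the combined form. The key input is \eqref{eq:tauCoprod}, namely $\Delta\circ\antimapslt = (\antimapslt\otimes\antimapslt)\circ\Delta$. Using this and the fact that each factor form is $\antimapslt$-Hermitian, one computes, for a generator $u \in \{K^{\pm1}, E, F\}$ acting via $\Delta(u) = \sum u_{(1)}\otimes u_{(2)}$ on $v\otimes w$:
\[
(u\cdot(v\otimes w),\ v'\otimes w') = \sum (u_{(1)}v, v') \cdot (u_{(2)}w, w') = \sum (v, \antimapslt(u_{(1)})v')\cdot(w,\antimapslt(u_{(2)})w'),
\]
and then recognize $\sum \antimapslt(u_{(1)})\otimes\antimapslt(u_{(2)}) = (\antimapslt\otimes\antimapslt)\Delta(u) = \Delta(\antimapslt(u))$, which is exactly the action of $\antimapslt(u)$ on $v'\otimes w'$. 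Iterating over the factors of $M(\lambda)\otimes V(N_1)\otimes\cdots\otimes V(N_r)$ (with a straightforward induction on $r$, using coassociativity of $\Delta$) then gives property (2) for the full product form. One should be slightly careful about the coproduct conventions — with $\Delta(F) = F\otimes K + 1\otimes F$ the bookkeeping of $K$-factors must be tracked — but no genuine difficulty arises here; it is the same computation as in \cite{webster}, the Verma factor behaving formally like an integrable factor of ``highest weight $\lambda$''.

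For uniqueness, I would argue by induction on $r$. When $r = 0$ the form is a non-degenerate $\antimapslt$-Hermitian form on $M(\lambda)$ normalized by $(v_\lambda,v_\lambda) = 1$; since $M(\lambda)$ is irreducible, such a form is unique — this is the standard Shapovalov argument, as the radical of any invariant form is a submodule and the normalization pins down the scalar. For the inductive step, properties (2) and (4) together force the value of $(v\otimes v_{N_r,0},\, v'\otimes v_{N_r,0})_{\lambda,\underline{N}}$ to equal $(v,v')_{\lambda,\underline{N'}}$ with $\underline{N'} = (N_1,\dots,N_{r-1})$, which is determined by induction; then property (2) applied to lowering operators $F$ propagates the value to all of $M(\lambda)\otimes V(\underline{N})$, since every vector is obtained from the ``lowest-tensor-slot'' vectors by acting with elements of $U_q(\slt)$, and property (2) transports the form along such actions. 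Non-degeneracy (1) is then automatically inherited because the unique form so determined coincides with $(\cdot,\cdot)^{\Pi}_{\lambda,\underline{N}}$, which we already know is non-degenerate.

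The main obstacle I anticipate is not conceptual but organizational: making the inductive propagation in the uniqueness argument fully rigorous requires knowing that the span of $\{u\cdot(v\otimes v_{N_r,0}) : u\in U_q(\slt),\ v\in M\otimes V(\underline{N'})\}$ is all of $M\otimes V(\underline{N})$, which one sees from the explicit basis $\{v_\rho\}_{\rho\in\mathcal{P}_b^{r,\underline N}}$ and the change-of-basis formulas recorded above (the vectors $v_\rho$ are built by applying $F$'s, and reduce to the $\underline{N'}$ case when $b_r = 0$). Assembling this carefully, together with the coproduct bookkeeping in property (2), is where the actual work lies; everything else follows the well-trodden path of \cite[Proposition 4.33]{webster}.
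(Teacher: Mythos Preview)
Your proposal is correct and follows precisely the approach the paper has in mind: the paper gives no proof of its own, simply stating ``Similarly to \cite[Proposition 4.33]{webster}'' before the proposition, and your sketch is exactly the adaptation of that argument to the Verma setting. The key ingredients you identify --- the compatibility \eqref{eq:tauCoprod} for the existence/Hermitian check, and the inductive propagation via properties (2) and (4) for uniqueness --- are the expected ones.
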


\subsection{The blob algebra}\label{sec:blobalgebra}

Recall that the \emph{blob algebra} $\cB_r$ is the $\bQ(\lambda,q)$-algebra with generators $u_1, \dots, u_{r-1}$ and $\xi$, and with the relations of type A:
\begin{align}
u_i u_j &= u_j u_i,  & \text{for $|i-j| > 1$,}& \label{eq:TLrels}\\
u_i u_{i+1} u_i &= u_i, &  \text{for $1 \leq i \leq r-2$,}& \label{eq:TLrels2} \\
u_i u_{i-1} u_i &= u_i, &  \text{for $2 \leq i \leq r-1$,}& \label{eq:TLrels3} \\
u_i^2 &= -(q+q^{-1}), & \text{for $1 \leq i \leq r-1$,}& \label{eq:TLloopremov}
\end{align}
and the blob relations:
\begin{align}
\xi u_i &= u_i \xi, &\text{for $2 \leq i \leq r$,} \label{eq:TLBrels}  \\
u_1 \xi u_1 &= -(\lambda q + \lambda^{-1} q^{-1}) u_1, \label{eq:TLBloopremov} \\
q^{-1} \xi^2 &= (\lambda q + \lambda^{-1} q^{-1}) \xi - q. \label{eq:TLBdoublebraid}
\end{align}
Note that $\xi$ is invertible, with inverse given by $\xi^{-1} = \lambda+q^{-2}\lambda^{-1} -q^{-2}\xi$, and that the relations \eqref{eq:TLrels}-\eqref{eq:TLloopremov} imply that the generators $u_1,\dotsc, u_{r-1}$ generate a subalgebra isomorphic to the Temperley--Lieb algebra of type $A$.

The blob algebra has several well-known diagrammatic presentations. The most classical one already appeared in~\cite{Martin-Saleur}, but (a slight modification of) the one in~\cite{swTLB} is more convenient for our purposes. This presentation is given by setting
\begin{align*}
u_i &= 
\tikzdiagh{0}{
      \draw[ultra thick,myblue] (-.5,0) -- (-.5,1);
      \draw[red] (0,0) -- (0,1);
      \node at (.5,.5){\small $\dots$};
      \draw[red] (1,0)  -- (1,1);
      \draw[red] (1.5, 0)  .. controls (1.5,.5) and (2,.5) .. (2,0);
      \draw[red] (1.5, 1) .. controls (1.5,.5) and (2,.5) .. (2,1);
      \draw[red] (2.5,0) -- (2.5,1);
      \node at (3,.5){\small $\dots$};
      \draw[red] (3.5,0) -- (3.5,1);
      \tikzbrace{-.5}{1}{0}{$i$};
}
\\
\xi &=
\tikzdiag{
      \draw[red] (0,0) .. controls (0,.25) .. (-.5,.5) .. controls (0,.75) ..  (0,1);
      \draw[fill=white, color=white] (-.52,.5) circle (.02cm);
      \draw[red] (.5,0) -- (.5,1);
      \node at (1,.5){\small $\dots$};
      \draw[red] (1.5,0) -- (1.5,1);
      \draw[ultra thick,myblue] (-.5,0) -- (-.5,1);
}
\end{align*}
where diagrams are taken up to planar isotopy and read from bottom to top, 
and with local relations
\begin{align}\allowdisplaybreaks
\tikzdiag{
	\draw[red] (0, 0)  .. controls (0,.5) and (.5,.5) .. (.5,0) .. controls (.5,-.5) and (0,-.5) .. (0,0);
}
\ &= \ -(q+q^{-1}), \tag{\ref{eq:TLloopremov}} \\
\tikzdiag{
  \draw[red] (0,0)   .. controls (0,.25) .. (-.5,.5) .. controls (0,.75) ..  
  (0,1) .. controls (0,1.5) and  (.5,1.5) .. 
  (.5,1) --
  (.5,0) .. controls (.5,-.5) and (0,-.5) ..
  (0,0);
  \draw[fill=white, color=white] (-.52,.5) circle (.02cm);
  \draw[ultra thick,myblue] (-.5,-.5) -- (-.5,1.5);
}
\ &= -(\lambda q+\lambda^{-1}q^{-1}) \ 
\tikzdiag{
	\draw[ultra thick,myblue] (-.5,-.5) -- (-.5,1.5);
} \tag{\ref{eq:TLBloopremov}}  \\
q^{-1}
\tikzdiag{
  \draw[red] (0,0) .. controls (0,.25) .. (-.5,.5) .. controls (0,.75) ..  
  (0,1) .. controls (0,1.25) .. (-.5,1.5) .. controls (0,1.75) ..
  (0,2);
  \draw[fill=white, color=white] (-.52,.5) circle (.02cm);
  \draw[fill=white, color=white] (-.52,1.5) circle (.02cm);
  \draw[ultra thick,myblue] (-.5,0) -- (-.5,2);
}
\ &= \ (\lambda q+\lambda^{-1}q^{-1})
\tikzdiag{
    \draw[red] (0,0) --
    (0,.5) .. controls (0,.75) .. (-.5,1) .. controls (0,1.25) ..  
    (0,1.5)  --
    (0,2); 
    \draw[fill=white, color=white] (-.52,1) circle (.02cm);
    \draw[ultra thick,myblue] (-.5,0) -- (-.5,2);
}
\ - q \ \tikzdiag{
	\draw[red] (0,0) -- (0,2);
	\draw[ultra thick,myblue] (-.5,0) -- (-.5,2);
} \tag{\ref{eq:TLBdoublebraid}} 
\end{align}
corresponding to \eqref{eq:TLloopremov}, \eqref{eq:TLBloopremov} and \eqref{eq:TLBdoublebraid} (explaining why we kept the same numbering). 
Note that the relations  \eqref{eq:TLrels} -- \eqref{eq:TLrels3}  and \eqref{eq:TLBrels} are encoded by the planar isotopies.

\begin{rem}\label{rem:dbbraiding}
  In the graphical description of $\cB_r$ given in~\cite{swTLB} the generator $\xi$ is presented as a double braiding (see~\cite[Figure 1]{swTLB}). We don't follow that interpretation in our diagrammatics in order to simplify pictures, but we keep the terminology (see~\S\ref{def:dbbraiding} ahead). 
\end{rem}

\begin{rem}
With respect to~\cite{swTLB} our conventions switch $(\lambda,q)$ and $(\lambda^{-1},q^{-1})$, which can be interpreted as exchanging the double braiding by the double inverse braiding. 
\end{rem}

There is an action of $\cB_r$ on $M \otimes V^r$ that intertwines with the quantum $\slt$-action. 
This action can be described locally, identifying the first vertical strand in $\cB_r$ with the identity on $M(\lambda)$, and the $i$th vertical strand with the identity on the $i$-th copy of $V$ in $M \otimes V^r$.  Then the action is given using the following maps
\begin{align*}
\tikzdiag{
	\draw[red] (1.5, 0)  .. controls (1.5,.5) and (2,.5) .. (2,0);
} 
 &: V \otimes V \rightarrow \bQ(q,\lambda),\  
\begin{cases}
v_{1,0} \otimes v_{1,0} &\mapsto 0, \\
v_{1,0} \otimes v_{1,1} &\mapsto 1, \\
v_{1,1} \otimes v_{1,0} &\mapsto -q^{-1}, \\
v_{1,1} \otimes v_{1,1} &\mapsto 0, \\
\end{cases}
\\
\tikzdiag[yscale=-1]{
	\draw[red] (1.5, 0)  .. controls (1.5,.5) and (2,.5) .. (2,0);
} 
 &:  \bQ(q,\lambda)\rightarrow V \otimes V, \ 
1 \mapsto -q v_{1,0} \otimes v_{1,1} + v_{1,1} \otimes v_{1,0},
\\
\tikzdiag{
  \draw[red] (0,0) .. controls (0,.25) .. (-.5,.5) .. controls (0,.75) ..  (0,1);
  \draw[fill=white, color=white] (-.52,.5) circle (.02cm);
  \draw[ultra thick,myblue] (-.5,0) -- (-.5,1);
}
 &: M \otimes V \rightarrow M \otimes V,\ 
 \begin{cases}
 v_{\lambda,k} \otimes v_{1,0} &\mapsto \lambda^{-1}q^{2k}v_{\lambda,k} \otimes v_{1,0}\\ &\qquad - q(q-q^{-1})[k]_q [\beta-k+1]_q v_{\lambda,k-1} \otimes v_{1,1},\\
 v_{\lambda,k} \otimes v_{1,1} &\mapsto (\lambda^{-1}+\lambda q^2-\lambda^{-1}q^{2(k+1)})v_{\lambda,k} \otimes v_{1,1}\\
 &\qquad-\lambda^{-1}q^{2(k+1)}(q-q^{-1})v_{\lambda,k+1} \otimes v_{1,0},
 \end{cases}
\end{align*}
where the formula for $\xi$ is obtained by acting twice with an $R$-matrix. 
In our conventions, we have $\xi=f\circ \Theta_{21} \circ f \circ \Theta$ where $\Theta$ is given by the action of
  \[
    \sum_{n=0}^{+\infty}(-1)^nq^{-n(n-1)/2}\frac{(q-q^{-1})^n}{[n]_q!}F^n\otimes E^n,
  \]
  $\Theta_{21}$ by the action of
  \[
    \sum_{n=0}^{+\infty}(-1)^nq^{-n(n-1)/2}\frac{(q-q^{-1})^n}{[n]_q!}E^n\otimes F^n,
  \]
  $f(v_{\lambda,k}\otimes v_{1,0}) := \lambda^{-1/2}q^{k}v_{\lambda,k}\otimes v_{1,0}$ and $f(v_{\lambda,k}\otimes v_{1,1}) := \lambda^{1/2}q^{-k}v_{\lambda,k}\otimes v_{1,1}$ for any $k\in\mathbb{N}$.
  
  The following will be useful later:
  
\begin{lem}\label{lem:explicitaction}
  The action of $\cB_r$ translates in terms of $v_\rho$-vectors of $M \otimes V^r$ as
  \begin{align}
    \label{eq:caponk}
    \tikzdiagh[scale=0.75]{2}{
	\draw[ultra thick,myblue] (-.5,0) -- (-.5,1);
	\draw[red] (0,0) -- (0,1);
	\node at (.5,.5){\small $\dots$};
	\draw[red] (1,0)  -- (1,1);
	\draw[red] (1.5, 0)  .. controls (1.5,.5) and (2,.5) .. (2,0)  ;
	\draw[red] (2.5,0) -- (2.5,1);
	\node at (3,.5){\small $\dots$};
	\draw[red] (3.5,0)  -- (3.5,1);
	\tikzbrace{-.5}{1}{-0.2}{$i$};
}
    :& v_{(\dots, b_{i-1}, b_i, b_{i+1}, b_{i+2}, \dots)} \mapsto -q^{-1} [b_i]_q  v_{(\dots, b_{i-1} + b_i + b_{i+1} - 1, b_{i+2}, \dots)},
    \\      
    \label{eq:cuponk}
    \tikzdiagh[scale=0.75,yscale=-1]{2}{
	\draw[ultra thick,myblue] (-.5,0) -- (-.5,1);
	\draw[red] (0,0) -- (0,1);
	\node at (.5,.5){\small $\dots$};
	\draw[red] (1,0)  -- (1,1);
	\draw[red] (1.5, 0)  .. controls (1.5,.5) and (2,.5) .. (2,0)  ;
	\draw[red] (2.5,0) -- (2.5,1);
	\node at (3,.5){\small $\dots$};
	\draw[red] (3.5,0) -- (3.5,1);
	\tikzbrace{-.5}{1}{1.9}{$i$};
}
    :&v_{\rho} \mapsto q[2]_q v_{(\dots, b_{i-1},1 ,0 ,b_{i}, \dots)} - q v_{(\dots, b_{i-1}+1, 0, 0,b_{i}, \dots)} -q v_{(\dots, b_{i-1},0 , 1,b_{i}, \dots)},
    \\
    \label{eq:xionk}
    \tikzdiag[scale=0.75]{
    \draw[red] (0,0) .. controls (0,.25) .. (-.5,.5) .. controls (0,.75) ..  (0,1);
    \draw[fill=white, color=white] (-.52,.5) circle (.02cm);
    \draw[red] (0.5,0) -- (0.5,1);
    \node at (1,0.5){\small $\dots$};
    \draw[red] (1.5,0) -- (1.5,1); 
    \draw[ultra thick,myblue] (-.5,0) -- (-.5,1);
    }
    :&v_{(b_0 , b_1 ,\dots)} \mapsto (\lambda^{-1}q^{b_0} - \lambda q [b_0]_q) v_{(0,b_0+b_1,\dots)} + \lambda q^2 [b_0]_q v_{(1,b_0+b_1-1,\dots)}.
  \end{align}
\end{lem}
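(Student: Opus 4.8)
The plan is to derive each of the three formulas by combining the explicit action of the generators on the induced tensor-product basis (given just above the lemma) with the change-of-basis formulas relating the $v_\rho$ to the induced basis $v_{(b_0,\ldots,b_{r-1})}\otimes v_{N_r,n}$ (stated in the ``Basis'' subsection, specialised to all $N_j=1$). Since the displayed diagrammatic generators act locally — the blue strand as the identity on $M(\lambda)$ and the $j$-th red strand as the identity on the $j$-th copy of $V$ — it suffices in each case to understand what happens on the two or three tensor factors that the diagram actually touches, then tensor with identities.

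\medskip

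First I would treat \eqref{eq:caponk}: the cap $\stikzdiag{\draw[red] (1.5, 0) .. controls (1.5,.5) and (2,.5) .. (2,0);}$ acts on the factors $V_{i}\otimes V_{i+1}$ via the evaluation map listed above ($v_{1,0}\otimes v_{1,1}\mapsto 1$, $v_{1,1}\otimes v_{1,0}\mapsto -q^{-1}$, others $\mapsto 0$). I would first expand $v_{(\ldots,b_{i-1},b_i,b_{i+1},b_{i+2},\ldots)}$ using the change-of-basis formula so that the $i$-th and $(i+1)$-st tensor slots are written in the induced basis $v_{1,k_i}\otimes v_{1,k_{i+1}}$ with $k_i,k_{i+1}\in\{0,1\}$; since the cap kills $v_{1,0}\otimes v_{1,0}$ and $v_{1,1}\otimes v_{1,1}$, only the two ``mixed'' terms survive. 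Collecting the $q$-binomial coefficients and the powers of $q$ produced by the two successive applications of the change-of-basis formula, and then re-expressing the result back in terms of a $v_{(\ldots)}$ vector (now with the two slots merged into one of total weight $b_{i-1}+b_i+b_{i+1}-1$), should produce exactly the factor $-q^{-1}[b_i]_q$. This is a finite, bookkeeping computation once the bases are set up carefully.

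\medskip

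For \eqref{eq:cuponk}, the cup $\stikzdiag[yscale=-1]{\draw[red] (1.5, 0) .. controls (1.5,.5) and (2,.5) .. (2,0);}$ inserts $1\mapsto -q\,v_{1,0}\otimes v_{1,1}+v_{1,1}\otimes v_{1,0}$ into a pair of new tensor slots between position $i-1$ and $i$. So I would simply insert these two induced-basis vectors and then apply the change-of-basis formula (the second one, from induced basis to $v_\kappa$) to rewrite $v_{(\ldots,b_{i-1},k_i,k_{i+1},b_i,\ldots)}$ in the $v_\rho$-basis for the two cases $(k_i,k_{i+1})=(0,1)$ and $(1,0)$; summing with the coefficients $-q$ and $1$ and collecting terms gives the three-term answer on the right. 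For \eqref{eq:xionk}, I would use the explicit formula for the double-braiding endomorphism of $M\otimes V$ stated right before the lemma, apply it with the first tensor factor $v_{\lambda,k}$ replaced via the change of basis that expresses $v_{(b_0,b_1,\ldots)}$ in the induced basis on the first two slots $M\otimes V_1$, push the resulting $v_{\lambda,\bullet}\otimes v_{1,\epsilon}$ back to $v_\rho$-vectors, and simplify using $[\beta-k+1]_q=(\lambda q^{k-1}-\lambda^{-1}q^{1-k})/(q-q^{-1})$ and standard quantum-integer identities; the telescoping of the $\lambda^{\pm1}$-powers should collapse to the stated coefficients $\lambda^{-1}q^{b_0}-\lambda q[b_0]_q$ and $\lambda q^2[b_0]_q$.

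\medskip

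I expect the main obstacle to be purely computational: controlling the powers of $q$ and the $q$-binomials that accumulate when the change-of-basis formulas are applied twice (once to expand in the induced basis, once to contract back), and in the $\xi$ case, recognising that the messy combination of $\lambda$-powers and quantum integers coming from the $R$-matrix formula really does simplify to the compact expression claimed. Since these are exactly the sort of ``most computational proofs'' the authors say they have relegated to \cref{sec:computations}, I would carry out the detailed verification there and record only the strategy above in the main text; conceptually nothing beyond the locality of the action and the two change-of-basis identities is needed.
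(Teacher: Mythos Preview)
Your approach is correct in principle, but the paper's proof is organised differently and is considerably less computational. The key device you do not use is that the cap, cup, and $\xi$ are $U_q(\slt)$-intertwiners, so they commute with the outer powers of $F$ in the recursive definition $v_\rho = F^{b_r}(\cdots)$. For the cup the paper invokes this to reduce to $i=r+1$; for $\xi$ it reduces to $r=1$. Then, instead of expanding both relevant slots into the induced basis, the paper uses the rewriting identity $v_{(\ldots,b_i,\ldots)} = [b_i]_q\,v_{(\ldots,1,\ldots)} - [b_i-1]_q\,v_{(\ldots,0,\ldots)}$ (the $N_j=1$ case of the last displayed formula in the ``Basis'' subsection) to force the critical entry to be $0$ or $1$, after which the action is read off immediately from $\xi(v_{(0,0)})=\lambda^{-1}v_{(0,0)}$ and $\xi(v_{(1,0)})=\lambda q^2 v_{(1,0)}-q(\lambda-\lambda^{-1})v_{(0,1)}$ (and similarly trivial base cases for the cap).

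What your route buys is conceptual uniformity: only locality plus the two change-of-basis formulas, no appeal to intertwining. What it costs is exactly the bookkeeping you anticipate---to isolate slots $i$ and $i+1$ in the induced basis you must recursively peel off all slots to their right, accumulating a product of $q$-binomials, apply the cap, and then contract everything back; the paper avoids this entirely. (Minor point: your expression for $[\beta-k+1]_q$ has the exponents on $q$ swapped; it should read $(\lambda q^{1-k}-\lambda^{-1}q^{k-1})/(q-q^{-1})$.)
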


\begin{proof}
A computational proof is given in \cref{sec:computations}.
\end{proof}

As a matter of fact, this completely determines $ \End_{U_q(\slt)}(M \otimes V^r)$:

\begin{thm}[{\cite[Theorem 4.9]{swTLB}}]\label{thm:BcongMV}
There is an isomorphism
\begin{equation}\label{eq:BcongMV}
\cB_r \cong \End_{U_q(\slt)}(M \otimes V^r).
\end{equation}
\end{thm}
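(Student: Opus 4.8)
The statement to prove is \cref{thm:BcongMV}, namely that the natural map $\cB_r \to \End_{U_q(\slt)}(M\otimes V^r)$ sending the diagrammatic generators $u_i$ and $\xi$ to the endomorphisms described in \cref{lem:explicitaction} is an algebra isomorphism. This is attributed to \cite[Theorem 4.9]{swTLB}, so the role here is mainly to recall how the argument goes and to check that the conventions (in particular the switch $(\lambda,q)\leftrightarrow(\lambda^{-1},q^{-1})$ noted above) are compatible.

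The plan is as follows. First I would check that the assignment is a well-defined algebra homomorphism, i.e. that the images of $u_i$ and $\xi$ in $\End_{U_q(\slt)}(M\otimes V^r)$ satisfy the relations \eqref{eq:TLrels}--\eqref{eq:TLBdoublebraid}. The type $A$ relations \eqref{eq:TLrels}--\eqref{eq:TLloopremov} are the standard Temperley--Lieb check on the $V\otimes V$ factors and follow from the explicit cap/cup formulas in \cref{lem:explicitaction}; the relation \eqref{eq:TLBrels} holds because $\xi$ acts only on the $M(\lambda)\otimes V$-part indexed by the first strand and hence commutes with the $u_i$ for $i\geq 2$. The blob relations \eqref{eq:TLBloopremov} and \eqref{eq:TLBdoublebraid} are verified directly on $v_\rho$-vectors using \eqref{eq:caponk}, \eqref{eq:cuponk}, \eqref{eq:xionk}: for \eqref{eq:TLBdoublebraid} one computes $\xi^2$ on $v_{(b_0,b_1,\dots)}$ and matches it against $(\lambda q+\lambda^{-1}q^{-1})\xi - q$, and for \eqref{eq:TLBloopremov} one composes cup, $\xi$, cap. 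That $\xi$ is genuinely an intertwiner follows from its construction as $f\circ\Theta_{21}\circ f\circ\Theta$ out of (the square of) the $R$-matrix, which is a $U_q(\slt)$-module map on each tensor factor.

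Next, surjectivity. Here I would invoke the structure of $M(\lambda)\otimes V^r$ as a $U_q(\slt)$-module: since $M(\lambda)$ is a universal (generic) Verma module, $M(\lambda)\otimes V^r$ is multiplicity-free with a filtration (or direct sum, after suitable localization) by Verma modules $M(\lambda q^{r-2k})$, $0\leq k\leq r$, and $\End_{U_q(\slt)}$ of each such Verma is just the ground field. So $\End_{U_q(\slt)}(M\otimes V^r)$ has dimension equal to the number of pairs of equal-weight Verma summands, which is the $r$-th Catalan-type count $\binom{2r}{r}/(r+1)$-style combinatorics matching $\dim \cB_r$; more precisely one shows the images of the standard monomial basis of $\cB_r$ (cup--cap diagrams with blob decorations) span, by decomposing an arbitrary intertwiner according to its action between Verma subquotients of matching weights. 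Finally, injectivity: one compares dimensions of weight-graded pieces, using that $\dim(\cB_r)$ and $\dim\End_{U_q(\slt)}(M\otimes V^r)$ agree (both equal $\sum_k (\dim \mathcal P^{r,\underline 1}_k)^2$ after identifying $\cB_r$ with its diagrammatic cellular basis). Alternatively, injectivity can be deduced from faithfulness of the diagrammatic action, which follows from the non-degeneracy of the Shapovalov-type form on $M\otimes V^r$ together with cellularity of $\cB_r$.

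The main obstacle is the surjectivity/dimension count: one must pin down $\End_{U_q(\slt)}(M\otimes V^r)$ precisely, which requires knowing that over $\bQ(q,\lambda)$ the module $M(\lambda)\otimes V^r$ decomposes (up to the relevant completion or localization at $\lambda$) into a direct sum of universal Vermas with the expected multiplicities, and that no "exotic" intertwiners appear between subquotients of equal weight beyond those produced by the $u_i,\xi$. Since the genericity of $\lambda$ makes all the relevant Verma modules non-isomorphic and simple, this is exactly where one uses that $\lambda=q^\beta$ is a formal parameter; the bookkeeping is then a Bratteli-diagram computation matching cup--cap--blob diagrams to paths. As this is quoted from \cite{swTLB}, I would mostly cite that reference for the dimension count and limit the exposition to verifying that our sign/parameter conventions send the generators to the maps in \cref{lem:explicitaction}.
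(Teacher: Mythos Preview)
The paper does not give its own proof of \cref{thm:BcongMV}; it simply cites \cite[Theorem~4.9]{swTLB} (and, in the remark following the next theorem, notes that the universal Verma case is treated in \cite{LV}). So there is nothing in the paper to compare your argument against beyond that citation.

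That said, your sketch has a genuine error in the surjectivity/dimension step. The module $M(\lambda)\otimes V^r$ is \emph{not} multiplicity-free: iterating $M(\lambda)\otimes V\cong M(\lambda q)\oplus M(\lambda q^{-1})$ gives
\[
M(\lambda)\otimes V^r\cong\bigoplus_{k=0}^{r}M(\lambda q^{\,r-2k})^{\oplus\binom{r}{k}},
\]
so that $\dim_{\bQ(q,\lambda)}\End_{U_q(\slt)}(M\otimes V^r)=\sum_{k}\binom{r}{k}^{2}=\binom{2r}{r}$, the central binomial coefficient, not the Catalan number. This is exactly the dimension of $\cB_r$ (blob diagrams on $r$ strands are counted by $\binom{2r}{r}$, not by Catalan numbers, precisely because of the extra blob generator). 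Your ``multiplicity-free'' claim and the Catalan-type count would both make the two sides too small and would not match each other either. Once you replace this by the correct multiplicity decomposition, the dimension comparison goes through, and the rest of your outline (well-definedness via \cref{lem:explicitaction}, injectivity via faithfulness/non-degeneracy) is the standard route.
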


The \emph{blob category $\cB$} is the $\bQ(\lambda,q)$-linear category given by 
\begin{itemize}
\item objects are non-negative integers $r \in \bN$;
\item $\Hom_{\cB}(r,r')$ is given by $\bQ(\lambda,q)$-linear combinations of string diagrams connecting $r+1$ points on the bottom to $r'+1$ points on the top, with the first strand always connecting the left-most point to the left-most point, where the strings cannot intersect each other except for diagrams like $\xi$. Diagrams are considered up to planar isotopy and subject to the relations \eqref{eq:TLloopremov}, \eqref{eq:TLBloopremov} and \eqref{eq:TLBdoublebraid}.
\end{itemize}
Let $\TL$ be the Temperley--Lieb category of type $A$, defined diagrammatically. It is a $\bQ(q)$-linear monoidal category equivalent to $\cF und(\mathfrak{sl}_2)$, the full monoidal subcategory of $ U_q(\slt)\amod$ generated by $V$. 
Note that $\cB$ can be endowed with a structure of module category over $\TL$, by gluing diagrams on the right. 

Also consider the full subcategory $\MV \subset U_q(\slt)\amod$ given by the modules $M(\lambda) \otimes V^{\otimes r}$ for all $r \in \bN$. It is a module category over $\cF und(\mathfrak{sl}_2)$ by acting on the right with tensor product of $U_q(\slt)$-modules. 

\begin{thm}[{\cite[Theorem 4.9]{swTLB}}]
There are equivalences of categories such that
\[
\begin{tikzcd}[ampersand replacement=\&]
\cB  \ar{d}{\vsimeq} \& \ar[swap]{l}{\otimes \text{acts}} \cT\cL \ar{d}{\vsimeq}
\\
\cM\cV \&  \ar{l}{\otimes \text{acts}} \cF und(\mathfrak{sl}_2)
\end{tikzcd}
\]
commutes.
\end{thm}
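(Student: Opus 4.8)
The plan is to build the two vertical equivalences separately and then check that they are compatible with the module-category structures. For the right-hand functor $\TL\xrightarrow{\sim}\cF und(\slt)$, I would invoke (and, if needed, reprove) the classical diagrammatic presentation of the full monoidal subcategory of $U_q(\slt)\amod$ generated by $V$: send the generating object of $\TL$ to $V$, a cup to the coevaluation $\bQ(q)\to V\otimes V$ and a cap to the evaluation $V\otimes V\to\bQ(q)$ written in \S\ref{sec:blobalgebra}. The Temperley--Lieb relation \eqref{eq:TLloopremov} holds because cap$\,\circ\,$cup$\,=-(q+q^{-1})\id$ on the explicit formulas; the functor is monoidal by construction and essentially surjective by definition of $\cF und(\slt)$. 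Full faithfulness is a dimension count: $\Hom_{\TL}(m,n)$ has a basis of crossingless matchings, whose number equals $\dim_{\bQ(q)}\Hom_{U_q(\slt)}(V^{\otimes m},V^{\otimes n})$, and the images of the matching morphisms are linearly independent (e.g.\ by pairing with weight vectors, or via non-degeneracy of the Shapovalov forms).

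For the left-hand functor $\cB\xrightarrow{\sim}\MV$, define it on objects by $r\mapsto M(\lambda)\otimes V^{\otimes r}$ and on the generators of the diagrammatic presentation of $\cB$ by sending $u_i$ and $\xi$ to the $U_q(\slt)$-intertwiners of $M\otimes V^r$ recorded in \cref{lem:explicitaction} (equivalently: inserting the evaluation/coevaluation of $V$, resp.\ the square of the $R$-matrix acting on the first two tensor factors). That this assignment kills the relations \eqref{eq:TLloopremov}, \eqref{eq:TLBloopremov}, \eqref{eq:TLBdoublebraid} and respects planar isotopy is verified on the $v_\rho$-vectors using \cref{lem:explicitaction}; this is the computational core already behind \cref{thm:BcongMV}. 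Essential surjectivity is the definition of $\MV$. For full faithfulness one compares $\Hom_{\cB}(r,r')$ with $\Hom_{U_q(\slt)}(M\otimes V^r,M\otimes V^{r'})$ for all $r,r'$: the case $r=r'$ is \cref{thm:BcongMV}, and the general case follows by passing to the additive/Karoubi closure, applying the $r=r'$ statement to $\End_{\cB}(r\oplus r')\cong\End_{U_q(\slt)}\bigl((M\otimes V^r)\oplus(M\otimes V^{r'})\bigr)$, and reading off the off-diagonal blocks; alternatively one produces an explicit $\bQ(q,\lambda)$-basis of $\Hom_{\cB}(r,r')$ indexed by half-diagrams and matches it with a $v_\rho$-type basis on the representation side.

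To see the square commutes it suffices to check compatibility of the two equivalences with the right actions on generators. The $\TL$-action on $\cB$ glues a Temperley--Lieb diagram to the right of a blob diagram, while the $\cF und(\slt)$-action on $\MV$ tensors an intertwiner of $V^{\otimes\bullet}$ on the right of an intertwiner of $M\otimes V^\bullet$. Under the chosen functors, a blob diagram whose red strands all lie to the right of the $i$th position maps to $\id_{M\otimes V^{\otimes i}}$ tensored with the image of its Temperley--Lieb part; hence gluing a $\TL$-generator on the right corresponds exactly to tensoring its image on the right. Thus the two composites $\TL\to\MV$ agree on objects and on generating morphisms, so they are naturally isomorphic, which is the asserted commutativity (up to natural isomorphism).

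The main obstacle is the full faithfulness of $\cB\to\MV$ on the mixed Hom-spaces $\Hom_{\cB}(r,r')$ with $r\neq r'$: \cref{thm:BcongMV} (hence \cite{swTLB}) records only the endomorphism-algebra isomorphism, so one must either bootstrap to all Hom-spaces by the idempotent/Morita argument inside the Karoubi envelope sketched above, or establish an explicit diagrammatic basis of $\Hom_{\cB}(r,r')$ and identify its image. Everything else is bookkeeping with the formulas of \cref{lem:explicitaction} and the classical equivalence $\TL\simeq\cF und(\slt)$.
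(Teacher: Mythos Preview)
The paper does not prove this theorem; it is quoted from \cite[Theorem~4.9]{swTLB} (with the universal-Verma adaptation noted in the remark that follows). So there is no ``paper's own proof'' to compare against, and your outline is a reasonable reconstruction of what such a proof must contain.

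That said, your bootstrap for full faithfulness on the off-diagonal blocks is circular as written. You propose to deduce $\Hom_{\cB}(r,r')\xrightarrow{\sim}\Hom_{U_q(\slt)}(M\otimes V^r,M\otimes V^{r'})$ by ``applying the $r=r'$ statement to $\End_{\cB}(r\oplus r')\cong\End_{U_q(\slt)}\bigl((M\otimes V^r)\oplus(M\otimes V^{r'})\bigr)$''. But $r\oplus r'$ lives only in the additive closure, and \cref{thm:BcongMV} says nothing about its endomorphism algebra: the displayed isomorphism already \emph{contains} the off-diagonal comparison you are trying to establish. Knowing that the diagonal blocks agree places no constraint on the off-diagonal ones.

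The clean fix uses exactly the module-category structure you invoke at the end. Since $V$ is self-dual via the cup and cap you wrote down, and your functor sends cups and caps to cups and caps, it intertwines the natural bijections
\[
\Hom_{\cB}(r,r')\;\cong\;\Hom_{\cB}(r+1,r'-1),\qquad
\Hom_{\MV}(M\otimes V^r,M\otimes V^{r'})\;\cong\;\Hom_{\MV}(M\otimes V^{r+1},M\otimes V^{r'-1}),
\]
coming from the zig-zag relations. Iterating reduces the case $r\equiv r'\pmod 2$ to the diagonal case $r=r'=(r+r')/2$, which is \cref{thm:BcongMV}; when $r\not\equiv r'\pmod 2$ both Hom-spaces vanish (no crossingless matching on an odd number of red endpoints on the $\cB$ side, disjoint sets of highest weights of the Verma summands on the $\MV$ side). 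Your alternative of producing explicit half-diagram bases would also work, but the self-duality argument is shorter and is the mechanism implicitly behind the cited result.
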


\begin{rem}
 Note that~\cite{swTLB} considers projective Verma modules with integral highest weight.
 The case of universal Verma modules was studied in~\cite{LV}, albeit not in the categorical setup. 
\end{rem}




\section{Dg-enhanced  KLRW algebras}\label{sec:dgWebster}

In~ \cite{naissevaz2} and~\cite{naissevaz3} it was explained how to construct a `dg-enhancement' of cyclotomic nilHecke algebras to pass from a categorification of the integrable module $V(N)$ to a categorification of the Verma module $M(\lambda)$.
This suggests that one might try to go from a categorification of $V(N) \otimes V(\underline{N})$ to a categorification of $M(\lambda) \otimes V(\underline{N})$ by constructing a dg-enhancement of KLRW algebras~\cite[\S4]{webster}, which we do next.

\subsection{Preliminaries and conventions} \label{sec:conventions}

Before defining the various algebras, we fix some conventions, and we recall some common facts about dg-structures (a reference for this is~\cite{keller}). 
First, let $\Bbbk$ be a commutative unital ring for the remaining of the paper. 

\subsubsection{Dg-algebras}

A \emph{$\bZ^n$-graded dg-($\Bbbk$-)algebra} $(A,d_A)$ is a unital $\bZ \times \bZ^n$-graded ($\Bbbk$-)algebra $A = \bigoplus_{(h,\bg) \in \bZ \times \bZ^n} A_\bg^h$, where we refer to the $\bZ$-grading as homological (or $h$-degree) and the $\bZ^n$-grading as $\bg$-degree, with a differential $d : A \rightarrow A$ such that:
\begin{itemize}
\item $d_A(A_\bg^h) \subset A_{\bg}^{h-1}$ for all $\bg \in \bZ^n, h \in \bZ$;
\item $d_A(xy) = d_A(x)y + (-1)^{\deg_h(x)} x d_A(y)$;
\item $d_A^2 = 0$. 
\end{itemize}
The \emph{homology} of $(A,d_A)$ is $H(A,d_A) := \ker(d)/\Image(d)$, 
which is a $\bZ \times \bZ^n$-graded algebra that decomposes as $\bigoplus_{h \in \bZ,\bg \in \bZ^n} H^h_\bg(A, d_A)  := H^h(A_\bg, d_A)$. 
A morphism of dg-algebras $f: (A,d_A) \rightarrow (A', d_{A'})$ is a morphism of algebras that preserves the $\bZ \times \bZ^n$-grading and commutes with the differentials. 
Such a morphism induces a morphism $f^* : H(A,d_A) \rightarrow H(A',d_{A'})$. We say that $f$ is a \emph{quasi-isomorphism} whenever $f^*$ is an isomorphism. Also, we say that $(A,d_A)$ is formal if there is a quasi-isomorphism $(A,d_A) \xrightarrow{\simeq} (H(A,d_A), 0)$. 

\begin{rem}
Note that, in contrast to \cite{keller}, the differential decreases the homological degree instead of increasing it.
\end{rem}

Similarly, a $\bZ^n$-graded left dg-module is a $\bZ \times \bZ^{n}$-graded module $M$ with a differential $d_M$ such that:
\begin{itemize}
\item $d_M(M_\bg^h) \subset M_{\bg}^{h-1}$ for all $\bg \in \bZ^n, h \in \bZ$;
\item $d_M(x \cdot m) = d_A(x) \cdot y + (-1)^{\deg_h(x)} x  \cdot d_M(y)$;
\item $d_M^2 = 0$. 
\end{itemize}
Homology, maps between dg-modules and quasi-isomorphisms are defined as above, and there are similar notions of $\bZ^n$-graded right dg-modules and dg-bimodules.

In our convention,  a $\bZ^m$-graded category is a category with a collection of $m$ autoequivalences, strictly commuting with each others. 
The category $(A,d_A)\amod$ of (left) $\bZ^n$-graded dg-modules over a dg-algebra $(A,d_A)$ is a $\bZ \times \bZ^n$-graded abelian category, with kernels and cokernels defined as usual. The action of $\bZ$ is given by the \emph{homological shift functor} $[1] : (A,d_A)\amod \rightarrow (A,d_A)\amod$ acting by:
\begin{itemize}
\item increasing the degree of all elements in a module $M$ up by $1$, i.e. $\deg_h(m[1]) = \deg_h(m) + 1$;
\item switching the sign of the differential $d_{M[1]} := -d_M$;
\item introducing a sign in the left-action $r \cdot (m[1])  := (-1)^{\deg_h(r)} (r \cdot m)[1]$.
\end{itemize}
The action of $\bg \in \bZ^n$ is given by increasing the $\bZ^n$-degree of elements up by $\bg$, in the sense that
\[
(\bg M)_{\bg_0 + \bg} := (M)_{\bg_0},
\]
or in other terms, an element $x \in M$ with degree $\bg_0$ becomes of degree $\bg_0+\bg$ in $\bg M$.
There are similar definitions for categories of right dg-modules and dg-bimodules, with the subtlety that the homological shift functor does not twist the right-action:
\[
(m[1]) \cdot r := (m \cdot r)[1].
\]
As usual, a short exact sequence of dg-(bi)modules induces a long exact sequence in homology. 

\smallskip

Let $f : (M,d_M) \rightarrow (N,d_N)$ be a morphism of dg-(bi)modules. Then, one constructs the \emph{mapping cone} of $f$ as 
\begin{align} \label{eq:cone}
\cone(f) &:= (M[1] \oplus N, d_C), & 
d_C &:= \begin{pmatrix} -d_M & 0 \\ f & d_N \end{pmatrix}.
\end{align}
The mapping cone is a dg-(bi)module, and it fits in a short exact sequence:
\[
0 \rightarrow N \xrightarrow{\imath_N} \cone(f) \xrightarrow{\pi_{M[1]}} M[1] \rightarrow 0,
\]
where $\imath_N$ and $\pi_{M[1]}$ are the inclusion and projection $N \xrightarrow{\imath_N} M[1] \oplus N \xrightarrow{\pi_{M[1]}} M[1]$. 

\subsubsection{Hom and tensor functors}\label{sec:classicalhomandtensor}

Given a left dg-module $(M,d_M)$ and a right dg-module $(N,d_N)$, one constructs the tensor product
\begin{equation}\label{eq:dgtens}
\begin{split}
(N,d_N) \otimes_{(A,d_A)} (M,d_M) &:= \bigl( (M \otimes_A N), d_{M \otimes N} \bigr), \\
d_{M \otimes N}(m \otimes n) &:= d_M(m) \otimes n + (-1)^{\deg_h(m)} m \otimes d_N(n).
\end{split}
\end{equation}
If $(N,d_N)$ (resp. $(M,d_M)$) has the structure of a dg-bimodule, then the tensor product inherits a left (resp. right) dg-module structure. 

Given a pair of left dg-modules $(M,d_M)$ and $(N,d_N)$, one constructs the dg-hom space
\begin{equation}\label{eq:dghom}
\begin{split}
\HOM_{(A,d_A)}\bigl( (M,d_M), (N,d_N) \bigr) &:= \bigl( \HOM_A(M,N), d_{\HOM(M,N)} \bigr), \\
d_{\HOM(M,N)}(f) &:= d_N \circ f - (-1)^{\deg_h(f)} f \circ d_M,
\end{split}
\end{equation}
where $\HOM_A$ is the $\bZ\times \bZ^n$-graded hom space of maps between $\bZ\times \bZ^n$-graded $A$-modules. Again, if $(M,d_M)$ (resp. $(N,d_N)$) has the structure of a dg-bimodule, then it inherits a left (resp. right) dg-module structure.

In particular, given a dg-bimodule $(B,d_B)$ over a pair of dg-algebras $(S,d_{S})$-$(R,d_R)$, we obtain tensor and hom functors
\begin{align*}
B \otimes_{(R,d_R)} (-) &: (R,d_R)\amod \rightarrow(S,d_S)\amod, \\
\HOM_{(S, d_{S})}(B, -) &: (S,d_{S})\amod \rightarrow(R,d_R)\amod,
\end{align*}
which form a adjoint pair $(B \otimes_{(R,d_R)} -) \vdash \HOM_{(S,d_{S})}(B, -)$. 
Explicitly, the natural bijection
\begin{equation}\label{eq:homtensajd}
\Phi_{M,N}^B : \Hom_{(S,d_S)}( B \otimes_{(R,d_R)} M, N  ) \xrightarrow{\simeq} \Hom_{(R,d_R)}(M, \HOM_{(S,d_S)}(B,N)), 
\end{equation}
is given by $(f : B \otimes_{(R,d_R)} M \rightarrow  N) \mapsto \bigl(m \mapsto (b \mapsto f(b \otimes m))) \bigr)$.

\subsubsection{Diagrammatic algebras}

We always read diagram from bottom to top. We say that a diagram is braid-like when it is given by strands connecting a collection of points on the bottom to a collection of points on the top, without being able the turnback. Suppose these diagrams can have singularities (like dots, 4-valent crossings, or other similar decorations). 

A \emph{braid-like planar isotopy} is an isotopy fixing the endpoints and that does not create any critical point, in particular it means we can exchange distant singularities $f$ and $g$:
 \[
 \tikzdiag{
	\draw (0,-1) -- (0,0) ..controls (0,.5) and (1,.5) .. (1,1);
	\draw (1,-1) -- (1,0) ..controls (1,.5) and (0,.5) .. (0,1);
		\filldraw [fill=white, draw=black,rounded corners] (.5-.25,.5-.25) rectangle (.5+.25,.5+.25) node[midway] { $g$};
}
\quad
\cdots
\quad
\tikzdiag{
	\draw (0,0) ..controls (0,.5) and (1,.5) .. (1,1) -- (1,2);
	\draw (1,0) ..controls (1,.5) and (0,.5) .. (0,1) -- (0,2);
		\filldraw [fill=white, draw=black,rounded corners] (.5-.25,.5-.25) rectangle (.5+.25,.5+.25) node[midway] { $f$};
}
\ =  \ 
\tikzdiag{
	\draw (0,0) ..controls (0,.5) and (1,.5) .. (1,1) -- (1,2);
	\draw (1,0) ..controls (1,.5) and (0,.5) .. (0,1) -- (0,2);
		\filldraw [fill=white, draw=black,rounded corners] (.5-.25,.5-.25) rectangle (.5+.25,.5+.25) node[midway] { $g$};
}
\quad
\cdots
\quad
\tikzdiag{
	\draw (0,-1) -- (0,0) ..controls (0,.5) and (1,.5) .. (1,1);
	\draw (1,-1) -- (1,0) ..controls (1,.5) and (0,.5) .. (0,1);
		\filldraw [fill=white, draw=black,rounded corners] (.5-.25,.5-.25) rectangle (.5+.25,.5+.25) node[midway] { $f$};
}
 \]

Suppose that the diagrams carry a homological degree (associated to singularities), and consider linear combination of such diagrams. Then, a \emph{graded braid-like planar isotopy} is an isotopy fixing the endpoints, that does not create any critical point and such that we get a sign whenever we exchange two distant singularities $f$ and $g$:
 \[
 \tikzdiag{
	\draw (0,-1) -- (0,0) ..controls (0,.5) and (1,.5) .. (1,1);
	\draw (1,-1) -- (1,0) ..controls (1,.5) and (0,.5) .. (0,1);
		\filldraw [fill=white, draw=black,rounded corners] (.5-.25,.5-.25) rectangle (.5+.25,.5+.25) node[midway] { $g$};
}
\quad
\cdots
\quad
\tikzdiag{
	\draw (0,0) ..controls (0,.5) and (1,.5) .. (1,1) -- (1,2);
	\draw (1,0) ..controls (1,.5) and (0,.5) .. (0,1) -- (0,2);
		\filldraw [fill=white, draw=black,rounded corners] (.5-.25,.5-.25) rectangle (.5+.25,.5+.25) node[midway] { $f$};
}
\ = (-1)^{|f||g|} \ 
\tikzdiag{
	\draw (0,0) ..controls (0,.5) and (1,.5) .. (1,1) -- (1,2);
	\draw (1,0) ..controls (1,.5) and (0,.5) .. (0,1) -- (0,2);
		\filldraw [fill=white, draw=black,rounded corners] (.5-.25,.5-.25) rectangle (.5+.25,.5+.25) node[midway] { $g$};
}
\quad
\cdots
\quad
\tikzdiag{
	\draw (0,-1) -- (0,0) ..controls (0,.5) and (1,.5) .. (1,1);
	\draw (1,-1) -- (1,0) ..controls (1,.5) and (0,.5) .. (0,1);
		\filldraw [fill=white, draw=black,rounded corners] (.5-.25,.5-.25) rectangle (.5+.25,.5+.25) node[midway] { $f$};
}
 \]
 where $|f|$ (resp. $|g|$) is the homological degree of $f$ (resp. $g$).


\subsection{Dg-enhanced KLRW algebras}

Let $\underline{N} =  (N_1,\ldots,N_r)$.  Recall the KLRW algebra~\cite[\S4]{webster}  (also called tensor product algebra) on $b$ strands $T_b^{\underline{N}}$ is the diagrammatic $\Bbbk$-algebra generated by braid-like diagrams on $b$ black strands and $r$ red strands.
Red strands are labeled from left to right by $N_1, \dots, N_r$ and cannot intersect each other, while black strands can intersect red strands transversely, they can intersect transversely among themselves and can carry dots.
Diagrams are taken up to braid-like planar isotopy, and satisfy local relations~\eqref{eq:nhR2andR3}-\eqref{eq:redR3}
which are given below, together with the \emph{violating condition} that a black strand in the leftmost region is $0$:
\[
\tikzdiagh{0}{
	\draw (0,0) -- (0,1)  ;
	\draw[stdhl] (1,0) node[below]{\small $N_1$}-- (1,1)  ;
} 
\quad \cdots \quad
\ = 0.
\]
We write $\widetilde T_b^{\underline{N}}$ for the same construction but without the violating condition.

The following are the defining (local) relations of $T_b^{\underline{N}}$:
\begin{itemize}
\item The \emph{nilHecke relations}:
\begin{align}
\label{eq:nhR2andR3}
\tikzdiag{
	\draw (0,0) ..controls (0,.25) and (1,.25) .. (1,.5) ..controls (1,.75) and (0,.75) .. (0,1)  ;
	\draw (1,0) ..controls (1,.25) and (0,.25) .. (0,.5)..controls (0,.75) and (1,.75) .. (1,1)  ;
} 
\ &=\  
0
&
\tikzdiag{
	\draw  (0,0) .. controls (0,0.25) and (1, 0.5) ..  (1,1);
	\draw  (1,0) .. controls (1,0.5) and (0, 0.75) ..  (0,1);
	\draw  (0.5,0) .. controls (0.5,0.25) and (0, 0.25) ..  (0,0.5)
		 	  .. controls (0,0.75) and (0.5, 0.75) ..  (0.5,1);
} 
\ &= \ 
\tikzdiag[xscale=-1]{
	\draw  (0,0) .. controls (0,0.25) and (1, 0.5) ..  (1,1);
	\draw  (1,0) .. controls (1,0.5) and (0, 0.75) ..  (0,1);
	\draw  (0.5,0) .. controls (0.5,0.25) and (0, 0.25) ..  (0,0.5)
		 	  .. controls (0,0.75) and (0.5, 0.75) ..  (0.5,1);
} \\
\label{eq:nhdotslide}
\tikzdiag{
	\draw (0,0) ..controls (0,.5) and (1,.5) .. (1,1) node [near start,tikzdot]{};
	\draw (1,0) ..controls (1,.5) and (0,.5) .. (0,1);
}
\ &= \ 
\tikzdiag{
	\draw (0,0) ..controls (0,.5) and (1,.5) .. (1,1) node [near end,tikzdot]{};
	\draw (1,0) ..controls (1,.5) and (0,.5) .. (0,1);
}
\ + \ 
\tikzdiag{
	\draw (0,0) -- (0,1)  ;
	\draw (1,0)-- (1,1)  ;
}
&
\tikzdiag{
	\draw (0,0) ..controls (0,.5) and (1,.5) .. (1,1);
	\draw (1,0) ..controls (1,.5) and (0,.5) .. (0,1) node [near end,tikzdot]{};
}
\ &= \ 
\tikzdiag{
	\draw (0,0) ..controls (0,.5) and (1,.5) .. (1,1);
	\draw (1,0) ..controls (1,.5) and (0,.5) .. (0,1) node [near start,tikzdot]{};
}
\ + \ 
\tikzdiag{
	\draw (0,0) -- (0,1)  ;
	\draw (1,0)-- (1,1)  ;
} 
\end{align}

\item The \emph{black/red relations}:
\begin{align}
\tikzdiagh{0}{
	\draw (1,0) ..controls (1,.5) and (0,.5) .. (0,1) node [near end,tikzdot]{};
	\draw[stdhl] (0,0) node[below]{\small $N_i$} ..controls (0,.5) and (1,.5) .. (1,1);
}
\ &= \ 
\tikzdiagh{0}{
	\draw (1,0) ..controls (1,.5) and (0,.5) .. (0,1) node [near start,tikzdot]{};
	\draw[stdhl] (0,0) node[below]{\small $N_i$} ..controls (0,.5) and (1,.5) .. (1,1);
}
&
\tikzdiagh{0}{
	\draw (0,0) ..controls (0,.5) and (1,.5) .. (1,1) node [near start,tikzdot]{};
	\draw[stdhl] (1,0) node[below]{\small $N_i$} ..controls (1,.5) and (0,.5) .. (0,1);
}
\ &= \ 
\tikzdiagh{0}{
	\draw (0,0) ..controls (0,.5) and (1,.5) .. (1,1) node [near end,tikzdot]{};
	\draw[stdhl] (1,0) node[below]{\small $N_i$} ..controls (1,.5) and (0,.5) .. (0,1);
} 
\label{eq:dotredstrand}
\\
\tikzdiagh{0}{
	\draw (1,0) ..controls (1,.25) and (0,.25) .. (0,.5)..controls (0,.75) and (1,.75) .. (1,1)  ;
	\draw[stdhl] (0,0) node[below]{\small $N_i$} ..controls (0,.25) and (1,.25) .. (1,.5) ..controls (1,.75) and (0,.75) .. (0,1)  ;
} 
\ &= \ 
\tikzdiagh{0}{
	\draw[stdhl] (0,0) node[below]{\small $N_i$} -- (0,1)  ;
	\draw (1,0) -- (1,1)  node[midway,tikzdot]{}  node[midway,xshift=1.75ex,yshift=.75ex]{\small $N_i$} ;
} 
&
\tikzdiagh{0}{
	\draw (0,0) ..controls (0,.25) and (1,.25) .. (1,.5) ..controls (1,.75) and (0,.75) .. (0,1)  ;
	\draw[stdhl] (1,0) node[below]{\small $N_i$} ..controls (1,.25) and (0,.25) .. (0,.5)..controls (0,.75) and (1,.75) .. (1,1)  ;
} 
\ &= \ 
\tikzdiagh{0}{
	\draw (0,0) -- (0,1)  node[midway,tikzdot]{}   node[midway,xshift=1.75ex,yshift=.75ex]{\small $N_i$} ;
	\draw[stdhl] (1,0) node[below]{\small $N_i$} -- (1,1)  ;
} 
\label{eq:redR2}
\\
\tikzdiagh{0}{
	\draw  (0.5,0) .. controls (0.5,0.25) and (0, 0.25) ..  (0,0.5)
		 	  .. controls (0,0.75) and (0.5, 0.75) ..  (0.5,1);
	\draw (1,0)  .. controls (1,0.5) and (0, 0.75) ..  (0,1);
	\draw  [stdhl] (0,0) node[below]{\small $N_i$} .. controls (0,0.25) and (1, 0.5) ..  (1,1);
} 
\ &= \ 
\tikzdiagh[xscale=-1]{0}{
	\draw  (0,0) .. controls (0,0.25) and (1, 0.5) ..  (1,1);
	\draw  (0.5,0) .. controls (0.5,0.25) and (0, 0.25) ..  (0,0.5)
		 	  .. controls (0,0.75) and (0.5, 0.75) ..  (0.5,1);
	\draw [stdhl] (1,0) node[below]{\small $N_i$} .. controls (1,0.5) and (0, 0.75) ..  (0,1);
} 
&
\tikzdiagh{0}{
	\draw  (0,0) .. controls (0,0.25) and (1, 0.5) ..  (1,1);
	\draw  (0.5,0) .. controls (0.5,0.25) and (0, 0.25) ..  (0,0.5)
		 	  .. controls (0,0.75) and (0.5, 0.75) ..  (0.5,1);
	\draw [stdhl] (1,0) node[below]{\small $N_i$} .. controls (1,0.5) and (0, 0.75) ..  (0,1);
} 
\ &= \ 
\tikzdiagh[xscale=-1]{0}{
	\draw  (0.5,0) .. controls (0.5,0.25) and (0, 0.25) ..  (0,0.5)
		 	  .. controls (0,0.75) and (0.5, 0.75) ..  (0.5,1);
	\draw (1,0)  .. controls (1,0.5) and (0, 0.75) ..  (0,1);
	\draw  [stdhl] (0,0) node[below]{\small $N_i$} .. controls (0,0.25) and (1, 0.5) ..  (1,1);
} 
\label{eq:crossingslidered}
\\
\tikzdiagh{0}{
	\draw  (0,0) .. controls (0,0.25) and (1, 0.5) ..  (1,1);
	\draw  (1,0) .. controls (1,0.5) and (0, 0.75) ..  (0,1);
	\draw [stdhl] (0.5,0)node[below]{\small $N_i$}  .. controls (0.5,0.25) and (0, 0.25) ..  (0,0.5)
		 	  .. controls (0,0.75) and (0.5, 0.75) ..  (0.5,1);
} 
\ &= \ 
\tikzdiagh[xscale=-1]{0}{
	\draw  (0,0) .. controls (0,0.25) and (1, 0.5) ..  (1,1);
	\draw (1,0)  .. controls (1,0.5) and (0, 0.75) ..  (0,1);
	\draw [stdhl]  (0.5,0) node[below]{\small $N_i$} .. controls (0.5,0.25) and (0, 0.25) ..  (0,0.5)
		 	  .. controls (0,0.75) and (0.5, 0.75) ..  (0.5,1);
} 
\ + \sssum{k+\ell=\\N_i-1} \ 
\tikzdiagh{0}{
	\draw  (0,0) -- (0,1) node[midway,tikzdot]{} node[midway,xshift=-1.5ex,yshift=.75ex]{\small $k$};
	\draw  (1,0) --  (1,1) node[midway,tikzdot]{} node[midway,xshift=1.5ex,yshift=.75ex]{\small $\ell$};
	\draw [stdhl] (0.5,0)node[below]{\small $N_i$}  --  (0.5,1);
} \label{eq:redR3}
\end{align}
\end{itemize}

Multiplication is given by vertical concatenation of diagrams if the labels and colors of the strands agree, and is zero otherwise. 
As explained  in \cite[\S4]{webster}, the algebra $T_b^{\underline{N}}$ is finite-dimensional and $\bZ$-graded (we refer to this grading as $q$-grading), with
\begin{align}\label{eq:KLRWgrading}
\deg_q \left(
\ 
\tikzdiag{
	\draw (0,0)  ..controls (0,.5) and (1,.5) .. (1,1);
	\draw (1,0)  ..controls (1,.5) and (0,.5) .. (0,1);
}
\ 
\right)
&= -2,
&
\deg_q \left(
\ 
\tikzdiag{
	\draw (0,0) -- (0,1) node [midway,tikzdot]{};
}
\ 
\right)
& = 2,
&
\deg_q \left(
\ 
\tikzdiag{
	\draw (1,0)  ..controls (1,.5) and (0,.5) .. (0,1);
	\draw[stdhl] (0,0) node[below]{\small $N_i$}  ..controls (0,.5) and (1,.5) .. (1,1);
}
\ 
\right)
=
\deg_q \left(
\ 
\tikzdiag{
	\draw (0,0)  ..controls (0,.5) and (1,.5) .. (1,1);
	\draw[stdhl] (1,0) node[below]{\small $N_i$}  ..controls (1,.5) and (0,.5) .. (0,1);
}
\ 
\right)
&= N_i.
\end{align} 

In the case of $ \underline{N}=(N)$ the algebra $T_b^{(N)}$ contains a single red strand labeled $N$, and is isomorphic to the cyclotomic nilHecke algebra $\nh_b^N$. 

\begin{defn}\label{def:dgwebsteralg}
  The \emph{dg-enhanced KLRW algebra}  $T_b^{\lambda,\underline{N}}$ is the diagrammatic $\Bbbk$-algebra carrying an homological degree generated by braid-like diagrams on $b$ black strands, $r$ red strands and a blue strand on the left. Red strands are labeled from left to right by $N_1, \dots, N_r$ and the blue strand is labeled $\lambda$. Black strands can carry dots and intersect transversely with black and red strands. Moreover, the left-most black strand can be \emph{nailed} on the blue strand, giving a 4-valent vertex as follows:
\[
	\tikzdiagh{0}{
		\draw (.5,-.5) .. controls (.5,-.25)  .. 
			(0,0) .. controls (.5,.25)  .. (.5,.5);
	           \draw[vstdhl] (0,-.5) node[below]{\small $\lambda$} -- (0,.5) node [midway,nail]{};
  	}
\]
We put the crossings and the dot in homological degree $0$, while the nail is in homological degree $1$. 
These diagrams are taken modulo graded braid-like planar isotopy, and subject to the local relations~\eqref{eq:nhR2andR3}-\eqref{eq:redR3} of $T_b^{\underline{N}}$, together with the local relations: 
\begin{align}
  \label{eq:relNail}
	\tikzdiagh{0}{
		\draw (.5,-.5) .. controls (.5,-.25)  ..  
			(0,0) node[midway, tikzdot]{}
			 .. controls (.5,.25)  .. (.5,.5);
	           \draw[vstdhl] (0,-.5) node[below]{\small $\lambda$} -- (0,.5) node [midway,nail]{};
  	}
  	\ &= \ 
	\tikzdiagh{0}{
		\draw (.5,-.5) .. controls (.5,-.25)  .. 
			(0,0) 
			.. controls (.5,.25)  .. (.5,.5) node[midway, tikzdot]{};
	           \draw[vstdhl] (0,-.5) node[below]{\small $\lambda$} -- (0,.5) node [midway,nail]{};
  	}
&
	\tikzdiagh{0}{
		\draw (.5,-1) .. controls (.5,-.75) .. (0,-.4)
				.. controls (.5,-.05) .. (.5,.2)
				-- (.5,1);
		\draw (1,-1) .. controls (1,0) .. (0, .4)
			.. controls (1,.75) .. (1,1); 
	     	\draw [vstdhl]  (0,-1) node[below]{\small $\lambda$} --  (0,1) node [pos=.3,nail] {} node [pos=.7,nail] {} ;
	 } \ &= \ -
	\tikzdiagh[yscale=-1]{0}{
		\draw (.5,-1) .. controls (.5,-.75) .. (0,-.4)
				.. controls (.5,-.05) .. (.5,.2)
				-- (.5,1);
		\draw (1,-1) .. controls (1,0) .. (0, .4)
			.. controls (1,.75) .. (1,1); 
	     	\draw [vstdhl]  (0,-1)  --  (0,1) node[below]{\small $\lambda$} node [pos=.3,nail] {} node [pos=.7,nail] {} ;
	 }
&
	\tikzdiagh{0}{
		\draw (.5,-1) .. controls (.5,-.75) .. (0,-.4)
			.. controls (.5,-.4) and (.5,.4) ..
			(0,.4) .. controls (.5,.75) .. (.5,1);
	     	\draw [vstdhl]  (0,-1) node[below]{\small $\lambda$} --  (0,1) node [pos=.3,nail] {} node [pos=.7,nail] {} ;
	 }
	\ &= \ 
	0. 
\end{align}
\end{defn}

\begin{rem}
Note that there can be no black or red strand at the left of the blue strand. 
\end{rem}

\begin{rem}
Note that since nails are stuck on the left, we can not exchange them using a graded braid-like planar isotopy. Thus, because nails are the only generators carrying a non-zero homological degree, we could  consider diagrams up to usual braid-like planar isotopy. 
However, the homological degree of the nail will play an important role in the categorification of the structure constant $[\beta + k]_q$ appearing in $M(\lambda) \otimes V(\underline{N})$, and graded braid-like planar isotopy will play an important role in \cref{sec:bimod}.
\end{rem}

Clearly, there is an injection of algebra $\widetilde T_b^{\underline{N}} \hookrightarrow T_b^{\lambda,\underline{N}}$ given by adding a vertical blue strand at the left of a diagram in $\widetilde T_b^{\underline{N}}$.

We endow $T_b^{\lambda,\underline{N}}$ with an extra $\bZ^2$-grading, the first one being inherited from $\widetilde T_b^{\underline{N}}$ and denoted $q$, the second is written $\lambda$. We declare that
\begin{align*}
\deg_{q,\lambda}  \left(
	\tikzdiagh{-1ex}{
		\draw (.5,-.5) .. controls (.5,-.25)  .. 
			(0,0) .. controls (.5,.25)  .. (.5,.5);
	           \draw[vstdhl] (0,-.5) node[below]{\small $\lambda$} -- (0,.5) node [midway,nail]{};
  	}
  	\right) &:= (0,2),
\end{align*}
and the elements without a nail are all in degree $\lambda$ zero and have the same $q$-degree as in \cref{eq:KLRWgrading}, so that the inclusion $\widetilde T_b^{\underline{N}} \hookrightarrow T_b^{\lambda,\underline{N}}$ preserves the $q$-grading. 
One easily checks that it is well-defined.

In the case of $ \underline{N}=\varnothing$, the algebra $T_b^{\lambda,\varnothing}$ contains only a blue strand labeled $\lambda$ and no red strands, and is isomorphic to the dg-enhanced nilHecke algebra introduced in~\cite[Definition 2.3]{naissevaz2}. To match with the notation from~\cite{naissevaz2}, we write $A_b := T_b^{\lambda,\varnothing}$. 

We will often endow $T_b^{\lambda,\underline{N}}$ with a trivial differential, turning it into a $\bZ^2$-graded dg-algebra $(T_b^{\lambda,\underline{N}}, 0)$. 

\subsection{Basis theorem}

For any $\rho=(b_0,b_1,\dots,b_r)\in \mathcal{P}_b^r$, define the idempotent 
\[
1_{\rho} := \tikzdiagh{0}{
	\draw[vstdhl] (0,0) node[below]{\small $\lambda$} --(0,1);
	\draw (.5,0) -- (.5,1);
	\node at(1,.5) {\tiny$\dots$};
	\draw (1.5,0) -- (1.5,1);
	\draw[decoration={brace,mirror,raise=-8pt},decorate]  (.4,-.35) -- node {$b_0$} (1.6,-.35);
	\draw[stdhl] (2,0)  node[below]{\small $N_1$} --(2,1);
	\draw (2.5,0) -- (2.5,1);
	\node at(3,.5) {\tiny$\dots$};
	\draw (3.5,0) -- (3.5,1);
	\draw[decoration={brace,mirror,raise=-8pt},decorate]  (2.4,-.35) -- node {$b_1$} (3.6,-.35);
	\draw[stdhl] (4,0)  node[below]{\small $N_2$} --(4,1);
	\node[red] at  (5,.5) {\dots};
	\draw[stdhl] (6,0)  node[below]{\small $N_{r}$} --(6,1);
	\draw (6.5,0) -- (6.5,1);
	\node at(7,.5) {\tiny$\dots$};
	\draw (7.5,0) -- (7.5,1);
	\draw[decoration={brace,mirror,raise=-8pt},decorate]  (6.4,-.35) -- node {$b_r$} (7.6,-.35);
}
\]
of $T_b^{\lambda,\underline{N}}$. Note that $T_b^{\lambda,\underline{N}} \cong \bigoplus_{\kappa, \rho \in \cP_b^r} 1_\kappa T_b^{\lambda,\underline{N}} 1_\rho$ as $\bZ\times\bZ^2$-graded $\Bbbk$-module.

\subsubsection{Polynomial action}
We now define an action of the dg-algebra $T_b^{\lambda,\underline{N}}$ on $\Pol_b^r := \bigoplus_{\rho \in \cP_b^r} \Pol_b  \varepsilon_\rho$, the free module over the ring $\Pol_b:=\mathbb{Z}[x_1,\dots,x_b]\otimes \bV^{\bullet}(\omega_1,\dots,\omega_b)$ generated by $\varepsilon_\rho$ for each $\rho\in\mathcal{P}_b^r$.

We recall the action of the symmetric group $S_b$ on $\Pol_b$ used in~\cite[\S2.2]{naissevaz2}. We view $S_b$ as a Coxeter group with generators $\sigma_i = (i\ i+1)$. The generator $\sigma_i$ acts on $\Pol_b$ as follows,
\begin{align*}
  \sigma_i(x_j) &:= x_{\sigma_i(j)},\\
  \sigma_i(\omega_j) &:= \omega_j + \delta_{i,j}(x_i-x_{i+1})\omega_{i+1}.
\end{align*}

For $\kappa,\rho \in \mathcal{P}_b^r$, an element of $1_{\kappa}T_b^{\lambda,\und N}1_{\rho}$ acts by zero on any $\Pol_b\varepsilon_{\rho'}$ for $\rho'\neq \rho$ and sends $\Pol_b\varepsilon_{\rho}$ to $\Pol_b\varepsilon_{\kappa}$. It remains to describe the action of the local generators of $T^{\lambda,\und N}_b$ on a polynomial $f\in \Pol_b$. First, similarly as in~\cite[Lemma 4.12]{webster}, we put
\begin{align*}
  \tikzdiagh{-1.5ex}{
  \node at(0,.5) {\tiny$\dots$};
  \draw (0.5,0) -- (0.5,1) node [midway,tikzdot]{};
  \node at(1,.5) {\tiny$\dots$};
  }\cdot f &:= x_if,
             &
  \tikzdiagh{-1.5ex}{
  \node at(0,.5) {\tiny$\dots$};
  \draw (0.5,0) ..controls (0.5,.5) and (1.5,.5) .. (1.5,1);
  \draw (1.5,0) ..controls (1.5,.5) and (0.5,.5) .. (0.5,1);
  \node at(2,.5) {\tiny$\dots$};
  }\cdot f &:= \frac{f-\sigma_i(f)}{x_i-x_{i+1}},\\
  \tikzdiagh{0}{
  \node at(0,.5) {\tiny$\dots$};
  \draw (0.5,0) ..controls (0.5,.5) and (1.5,.5) .. (1.5,1);
  \draw[stdhl] (1.5,0) node[below]{\small $N$} ..controls (1.5,.5) and (0.5,.5) .. (0.5,1);
  \node at(2,.5) {\tiny$\dots$};
  }\cdot f &:= f,
             &
  \tikzdiagh{0}{
  \node at(0,.5) {\tiny$\dots$};
  \draw (1.5,0) ..controls (1.5,.5) and (0.5,.5) .. (0.5,1);
  \draw[stdhl] (0.5,0) node[below]{\small $N$} ..controls (0.5,.5) and (1.5,.5) .. (1.5,1);
  \node at(2,.5) {\tiny$\dots$};
  }\cdot f &:= x_i^{N}f,
\end{align*}
where we identify $x_i \in \Pol_b\varepsilon_{\rho}$ with $x_i \in \Pol_b\varepsilon_{\kappa}$, and 
where we only have drawn the $i$-th or the $i$-th and $(i+1)$-th black strands, counting from left to right. Furthermore, we put
\[
  \\
  \tikzdiagh{0}{
  \draw (.5,0) .. controls (.5,.25) .. (0,0.5) .. controls (.5,.75)  .. (.5,1);
  \draw[vstdhl] (0,0) node[below]{\small $\lambda$} -- (0,1) node [midway,nail]{};
  \node at(1,.5) {\tiny$\dots$};
}\cdot f := \omega_1 f.
\]

\begin{lem}
  The rules above define an action of $T_b^{\lambda,\underline{N}}$ on $\Pol_b^r$.
\end{lem}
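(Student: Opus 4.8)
The strategy is to check that the operators attached to the local generators --- dot, black--black crossing, black--red crossing, and nail --- respect every defining relation of $T_b^{\lambda,\underline N}$, so that by the usual universal property of a diagrammatically presented algebra they assemble into a well-defined $\Bbbk$-linear action on $\Pol_b^r$. One first records compatibility with the gradings: $x_i$ sits in $q$-degree $2$ with trivial $\lambda$- and homological degree, while $\omega_i$ (the image of a nail on the $i$-th black strand) sits in $\lambda$-degree $2$ and homological degree $1$ with trivial $q$-degree, so that the relations $\omega_i\omega_j=-\omega_j\omega_i$ and $\omega_i^2=0$ in $\bV^{\bullet}(\omega_1,\dots,\omega_b)$ are exactly the sign conventions imposed by the homological grading of \cref{def:dgwebsteralg}.

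For the nail-free generators, the formulas for dots, black--black crossings and black--red crossings on each summand $\Pol_b\varepsilon_\rho$ are an analogue of Webster's polynomial representation of KLRW algebras (as in \cite[Lemma 4.12]{webster}), built on the divided-difference action $\partial_i(f)=(f-\sigma_i(f))/(x_i-x_{i+1})$ of $S_b$ on $\Pol_b$. I would begin by recalling from \cite[\S2.2]{naissevaz2} that this $S_b$-action is well defined, i.e.\ that the $\sigma_i$ together with their twists on the $\omega_j$ satisfy the braid relations; granting this, the nilHecke relations \eqref{eq:nhR2andR3}--\eqref{eq:nhdotslide}, the black/red relations \eqref{eq:dotredstrand}--\eqref{eq:redR3}, and the fact that a black--red crossing intertwines $\Pol_b\varepsilon_\rho$ with $\Pol_b\varepsilon_\kappa$ for the expected $\kappa$ (so that the action respects $T_b^{\lambda,\underline N}\cong\bigoplus_{\kappa,\rho\in\cP_b^r}1_\kappa T_b^{\lambda,\underline N}1_\rho$) are verified exactly as for $\widetilde T_b^{\underline N}$ and $\nh_b^{N}$, since none of these relations involve a nail.

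It remains to handle the nail relations \eqref{eq:relNail} and the braid-like planar isotopy moves that involve a nail. For isotopy: because $\sigma_i(\omega_1)=\omega_1$ for $i\geq 2$, multiplication by $\omega_1$ commutes with $\partial_i$ and with every generator not touching the first black strand, which is precisely the statement that a distant singularity can be slid past a nail; and since the nail has homological degree $1$ while every other generator has degree $0$, exchanging them carries the sign $(-1)^{1\cdot 0}=1$, as required by graded braid-like planar isotopy. As for the three local relations in \eqref{eq:relNail}: the first reads $x_1\omega_1 f=\omega_1 x_1 f$ and holds because $x_1$ is even; the third concerns a single black strand carrying two nails, hence reduces to $\omega_1^2=0$; and the second concerns two black strands each carrying one nail, and upon resolving each crossing as $\partial_1$ it becomes an identity between compositions of $\partial_1$ and multiplication by $\omega_1$ which, after substituting $\sigma_1(\omega_1)=\omega_1+(x_1-x_2)\omega_2$ and using $\omega_1^2=\omega_2^2=0$ together with $\omega_1\omega_2=-\omega_2\omega_1$, produces precisely the prescribed minus sign. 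These are the same computations carried out for the dg-enhanced nilHecke algebra $A_b=T_b^{\lambda,\varnothing}$ in \cite[\S2]{naissevaz2}; they are unaffected by the red strands since all three relations of \eqref{eq:relNail} are supported in the leftmost region, where no black--red crossing occurs.

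I expect the only point requiring genuine care to be this last verification of the second nail relation: one has to track the order in which multiplication by $\omega_1$ and the twisted operator $\partial_1$ are applied, since it is the failure of $\partial_1$ to commute with $\sigma_1(\omega_1)$ --- together with the exterior relations among the $\omega_j$ --- that yields the minus sign. Everything else is a routine check.
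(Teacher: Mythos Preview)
Your approach is correct and is exactly the one the paper takes: the paper's proof reads in its entirety ``We easily check that the relations \eqref{eq:nhR2andR3}--\eqref{eq:redR3} and \eqref{eq:relNail} are satisfied.'' You have simply supplied the details that the paper suppresses, correctly reducing the nail-free relations to the known polynomial actions of $\widetilde T_b^{\underline N}$ and $\nh_b$, and correctly identifying the three relations of \eqref{eq:relNail} with the identities $x_1\omega_1=\omega_1 x_1$, the composite identity $\partial_1(\omega_1\partial_1(\omega_1 f))=-\omega_1\partial_1(\omega_1\partial_1(f))$, and $\omega_1^2=0$ in $\Pol_b$.
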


\begin{proof}
  We easily check that the relations \eqref{eq:nhR2andR3}-\eqref{eq:redR3} and \eqref{eq:relNail} are satisfied.
\end{proof}

Fix $\rho=(b_0,\dots,b_r) \in \mathcal{P}_b^r$.
Let $\nh_n$ be the nilHecke algebra on $n$-strands (it is described as a diagrammatic algebra with only black strands having dots and relations \cref{eq:nhR2andR3} and \cref{eq:nhdotslide}). 
There is a map $\eta_\rho \colon A_{b_0} \otimes \nh_{b_1} \otimes \cdots \otimes \nh_{b_r} \rightarrow T_b^{\lambda,\underline{N}}$, diagrammatically given by 
\begin{equation}\label{eq:defetainclusion}
\tikzdiagh[xscale=1.5]{0}{
	\draw [vstdhl] (-.25,-.5) node[below]{\small $\lambda$} -- (-.25,.5);
	\draw (0,-.5) -- (0,.5);
	\node at(.25,-.4) {\tiny $\dots$};
	\node at(.25,.4) {\tiny $\dots$};
	\draw (.5,-.5) -- (.5,.5);
	\filldraw [fill=white, draw=black] (-.35,-.25) rectangle (.6,.25) node[midway] {$A_{b_0}$};
}
\otimes
\tikzdiag[xscale=1.5]{
	\draw (0,-.5) -- (0,.5);
	\node at(.25,-.4) {\tiny $\dots$};
	\node at(.25,.4) {\tiny $\dots$};
	\draw (.5,-.5) -- (.5,.5);
	\filldraw [fill=white, draw=black] (-.1,-.25) rectangle (.6,.25) node[midway] { $\nh_{b_{1}}$};
}
\otimes
\cdots
\otimes
\tikzdiag[xscale=1.5]{
	\draw (0,-.5) -- (0,.5);
	\node at(.25,-.4) {\tiny $\dots$};
	\node at(.25,.4) {\tiny $\dots$};
	\draw (.5,-.5) -- (.5,.5);
	\filldraw [fill=white, draw=black] (-.1,-.25) rectangle (.6,.25) node[midway] { $\nh_{b_r}$};
}
\ \xmapsto{\eta_\rho} \ 
\tikzdiagh[xscale=1.5]{0}{
	\draw [vstdhl] (-.25,-.5) node[below]{\small $\lambda$} -- (-.25,.5);
	\draw (0,-.5) -- (0,.5);
	\node at(.25,-.4) {\tiny $\dots$};
	\node at(.25,.4) {\tiny $\dots$};
	\draw (.5,-.5) -- (.5,.5);
	%
	\draw [stdhl] (.75,-.5)  node[below]{\small $N_1$} -- (.75,.5);
	\draw (1,-.5) -- (1,.5);
	\node at(1.25,-.4) {\tiny $\dots$};
	\node at(1.25,.4) {\tiny $\dots$};
	\draw (1.5,-.5) -- (1.5,.5);
	%
	\draw [stdhl] (1.75,-.5) node[below]{\small $N_2$}   -- (1.75,.5);
	\node[red] at(2.125,0) { $\dots$};
	\draw [stdhl] (2.5,-.5)   node[below]{\small $N_r$} -- (2.5,.5);
	\draw (2.75,-.5) -- (2.75,.5);
	\node at(3,-.4) {\tiny $\dots$};
	\node at(3,.4) {\tiny $\dots$};
	\draw (3.25,-.5) -- (3.25,.5);
	%
	\filldraw [fill=white, draw=black] (-.35,-.25) rectangle (.6,.25) node[midway] {$A_{b_0}$};
	\filldraw [fill=white, draw=black] (1-.1,-.25) rectangle (1+.6,.25) node[midway] { $\nh_{b_1}$};
	\filldraw [fill=white, draw=black] (2.75-.1,-.25) rectangle (2.75+.6,.25) node[midway] { $\nh_{b_r}$};
}
\end{equation}
where we recall that $A_{b_0}$ is isomorphic to the dg-enhanced nilHecke algebra of \cite{naissevaz2}, identifying the nilHecke generators with each other and the the nail with the ``leftmost floating dot''. 
The tensor product $A_{b_0}\otimes \nh_{b_1} \otimes \cdots \nh_{b_r}$ acts on $\Pol_b^r$ through $\eta_\rho$. This action is only non-zero on $\Pol_b\varepsilon_{\rho}$ and it is readily checked that this action coincides with the tensor product of the polynomial actions of $A_{b_0}$ on $\mathbb{Z}[x_1,\dots,x_{b_0}]\otimes \bV^{\bullet}(\omega_1,\dots,\omega_{b_0}) \subset \Pol_b$ from \cite[\S2.2]{naissevaz2}, and of the usual action of the nilHecke algebra $\nh_{b_i}$ on $\mathbb{Z}[x_{b_0+\dots+b_{i-1}+1},\dots,x_{b_0+\dots+b_i}] \subset \Pol_b$ (see for example \cite[\S2.3]{KL1}).

\begin{lem}\label{lem:injectstoANH}
  The map $\eta_\rho$ is injective.
\end{lem}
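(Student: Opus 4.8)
The plan is to detect injectivity of $\eta_\rho$ by means of the polynomial action constructed above. Write $P_0 := \mathbb{Z}[x_1,\dots,x_{b_0}]\otimes\bV^\bullet(\omega_1,\dots,\omega_{b_0})$ and, for $1\leq i\leq r$, $P_i := \mathbb{Z}[x_{b_0+\dots+b_{i-1}+1},\dots,x_{b_0+\dots+b_i}]$, regarded as tensor factors of $\Pol_b$, so that $\Pol_b\varepsilon_\rho\cong P_0\otimes P_1\otimes\dots\otimes P_r\otimes\bV^\bullet(\omega_{b_0+1},\dots,\omega_b)$ as $\mathbb{Z}$-modules. By the computation recorded just before the statement, composing $\eta_\rho$ with the polynomial action of $T_b^{\lambda,\underline{N}}$ on $\Pol_b^r$ produces an action of $A_{b_0}\otimes\nh_{b_1}\otimes\dots\otimes\nh_{b_r}$ that vanishes on every summand $\Pol_b\varepsilon_{\rho'}$ with $\rho'\neq\rho$, leaves the exterior factor $\bV^\bullet(\omega_{b_0+1},\dots,\omega_b)$ untouched, and on $P_0\otimes\dots\otimes P_r$ agrees with the external tensor product of the polynomial action of $A_{b_0}$ on $P_0$ from \cite[\S2.2]{naissevaz2} with the standard polynomial actions of the $\nh_{b_i}$ on the $P_i$. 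Hence it will suffice to prove that this external tensor product representation is faithful.

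For that I would use that the factors are individually faithful: for $A_{b_0}$ this is part of the basis theorem for the dg-enhanced nilHecke algebra \cite[\S2.2]{naissevaz2}, and for $\nh_{b_i}$ it is the classical identification $\nh_n\cong\End_{\mathbb{Z}[x_1,\dots,x_n]^{S_n}}\bigl(\mathbb{Z}[x_1,\dots,x_n]\bigr)$ (cf. \cite[\S2.3]{KL1}), both remaining valid after reduction modulo a prime. Since $A_{b_0}$, the $\nh_{b_i}$ and the $P_j$ are all free over $\mathbb{Z}$, given $0\neq z\in A_{b_0}\otimes\bigotimes_i\nh_{b_i}$ one may rescale $z$ to be primitive and choose a prime $p$ for which the reduction $\bar z$ is nonzero; over the field $\mathbb{F}_p$ the external tensor product of the (still faithful) reduced factors is faithful by the classical argument over a field, so $\bar z$ acts non-trivially on $(P_0\otimes\dots\otimes P_r)\otimes\mathbb{F}_p$, and therefore $z$ acts non-trivially over $\mathbb{Z}$. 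Consequently $\ker\eta_\rho$ is contained in the kernel of a faithful action, so $\eta_\rho$ is injective; over a general base ring one argues the same way after invoking freeness of the relevant modules.

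The step I expect to demand the most care is the passage from faithfulness of the separate polynomial representations to faithfulness of their external tensor product: over a field this is classical, but integrally it needs the primitivity/reduction argument just sketched (or, equivalently, the observation that each factor acts by a $\mathbb{Z}$-split injection into its relevant endomorphism ring). Everything else is bookkeeping, the essential input — that $(\text{polynomial action})\circ\eta_\rho$ is exactly the external tensor product of the individual polynomial actions — having already been established in the discussion preceding the statement.
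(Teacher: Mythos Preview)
Your proposal is correct and takes essentially the same approach as the paper: both deduce injectivity of $\eta_\rho$ from the faithfulness of the individual polynomial representations of $A_{b_0}$ and of the $\nh_{b_i}$, using that the action through $\eta_\rho$ coincides with their external tensor product. The paper's proof is a one-liner citing \cite[Corollary~3.9]{naissevaz2} and \cite[Corollary~2.6]{KL1}; you supply the extra justification (via reduction modulo a prime) for why the tensor product of faithful actions remains faithful over a general base ring, a step the paper leaves implicit.
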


\begin{proof}
  It follows immediately from the faithfulness of the polynomial actions of $A_{b_0}$~\cite[Corollary 3.9]{naissevaz2} and of $\nh_{b_i}$ \cite[Corollary 2.6]{KL1}.
\end{proof}

\subsubsection{Left-adjusted expressions}
We recall the notion of a left-adjusted expression as in~\cite[Section 2.2.1]{naissevaz2}: a reduced expression $\sigma_{i_1}\cdots\sigma_{i_k}$ of an element $w\in S_{r+b}$ is said to be \emph{left-adjusted} if $i_1+\cdots+i_k$ is minimal. One can obtain a left-adjusted expression of any element of $S_{r+b}$ by taking recursively its representative in the left coset decomposition
\[
  S_n = \bigsqcup_{t=1}^{n}S_{n-1}\sigma_{n-1}\cdots\sigma_{t}.
\]

As one easily confirms, if we think of permutations in terms of string diagrams, then a left-reduced expression is obtained by pulling every strand as far as possible to the left.

\subsubsection{A basis of $T^{\lambda,\protect\underline{N}}_b$}\label{ssec:basisTl}
We now turn to the diagrammatic description of a basis of $T^{\lambda,\underline{N}}_b$ similar to~\cite[Section 3.2.3]{naissevaz3}. For an element $\rho \in \mathcal{P}^r_b$ and $1 \leq k \leq b$, we define the tightened nail $\theta_k \in 1_\rho T^{\lambda,\underline{N}}_b 1_\rho$ as the following element:
\[
\theta_k:=\tikzdiagh[xscale=1.25]{0}{
	\draw (0,-1) -- (0,1);
	\node at(.25,.85) {\tiny $\dots$};
	\node at(.25,-.85) {\tiny $\dots$};
	\draw (.5,-1) -- (.5,1);
	\draw[decoration={brace,mirror,raise=-8pt},decorate]  (-.1,-1.35) -- node {\small $b_0$} (.6,-1.35);
	\node[red] at(1.125,-.85) { $\dots$};
	\node[red] at(1.125,.85) { $\dots$};
	\draw (1.75,-1) -- (1.75,1);
	\node at(2,-.85) {\tiny $\dots$};
	\node at(2,.85) {\tiny $\dots$};
	\draw (2.25,-1) -- (2.25,1);
	\draw (2.5,-1) .. controls (2.5,-.25) and (-.5,-.25) ..  (-.25,0) .. controls (-.5,.25) and (2.5,.25) ..  (2.5,1);
	\draw (2.75,-1) -- (2.75,1);
	\node at(3,-.85) {\tiny $\dots$};
	\node at(3,.85) {\tiny $\dots$};
	\draw (3.25,-1) -- (3.25,1);
	\draw [stdhl]  (3.5,-1)  node[below,yshift={-1ex}]{\small $N_{i+1}$} -- (3.5,1);
	\node[red] at(3.875,-.85) { $\dots$};
	\node[red] at(3.875,.85) { $\dots$};
	\draw [stdhl]  (4.25,-1)  node[below,yshift={-1ex}]{\small $N_{r}$} -- (4.25,1);
	\draw (4.5,-1) -- (4.5,1);
	\node at(4.75,-.85) {\tiny $\dots$};
	\node at(4.75,.85) {\tiny $\dots$};
	\draw (5,-1) -- (5,1);
	\draw[decoration={brace,mirror,raise=-8pt},decorate]  (4.4,-1.35) -- node {\small $b_r$} (5.1,-1.35);
	\draw [stdhl] (1.5,-1)  node[below,yshift={-1ex}]{\small $N_i$} -- (1.5,1);
	\draw [stdhl] (.75,-1)  node[below,yshift={-1ex}]{\small $N_1$} -- (.75,1);
	\draw [vstdhl] (-.25,-1) node[below,yshift={-1ex}]{\small $\lambda$} -- (-.25,1)  node[midway,nail]{};
	}
\]
where the nailed strand is the $k$-th black strand 
counting from left to right. This element has degree $\deg_{h,q,\lambda}(\theta_k) = (1,-4(k-1)+2(N_1+\cdots+N_i),2)$.

\begin{lem}
  \label{lem:anticom_theta}
  Tightened nails anticommute with each other, up to terms with a smaller number of crossings:
  \begin{align*}
      \theta_k \theta_\ell &= -\theta_\ell \theta_k + R & \theta_k^2 &= 0 + R',
  \end{align*}
  where $R$ (resp. $R'$) is a sum of diagrams with strictly fewer crossings than $\theta_k\theta_\ell$ (resp. $\theta_k^2$), for all $1 \leq k, \ell \leq b$. 
\end{lem}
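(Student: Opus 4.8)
The plan is to argue diagrammatically, by sliding one tightened nail past the other. Equip $T_b^{\lambda,\underline N}$ with the filtration by number of crossings; each of the defining relations~\eqref{eq:nhR2andR3}--\eqref{eq:redR3} either preserves the number of crossings or strictly decreases it, and among all the defining relations the \emph{only} one producing a minus sign is the middle relation of~\eqref{eq:relNail}. Hence it suffices to exhibit, for $k<\ell$ (the case $k>\ell$ being symmetric), a sequence of moves turning the product diagram $\theta_k\theta_\ell$ into $-\theta_\ell\theta_k$ in which exactly one move is an application of the middle relation of~\eqref{eq:relNail} and all other moves are graded braid-like planar isotopies or crossing-non-increasing relations; whatever crossing-decreasing terms are generated along the way are collected into $R$. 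A convenient way to organize this is an induction on the number of strands (black or red) lying strictly between the $k$-th and $\ell$-th black strands, the base case being the situation where the two nailed strands sit next to the blue strand, which is essentially the middle relation of~\eqref{eq:relNail}.

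Concretely, the steps are: (i) using graded braid-like planar isotopy, the nilHecke $R2$ relation (the first relation of~\eqref{eq:nhR2andR3}), the dot-slide relations~\eqref{eq:nhdotslide},~\eqref{eq:dotredstrand},~\eqref{eq:redR2},~\eqref{eq:redR3} and the remaining black/red relations~\eqref{eq:crossingslidered}, bring $\theta_k\theta_\ell$ into a normal form in which the two nailed tendrils meet near the blue strand exactly as in the left-hand side of the middle relation of~\eqref{eq:relNail} and are otherwise pulled apart; every use of~\eqref{eq:nhdotslide} or of the error term of~\eqref{eq:redR3} (and every black--red bigon removed via~\eqref{eq:redR2}) trades a crossing for (dotted) diagrams of strictly smaller crossing number, which go into $R$; (ii) apply the middle relation of~\eqref{eq:relNail}, which swaps the roles of the two nails at the cost of the sign $-1$; (iii) run step (i) in reverse to recognize the result as $\theta_\ell\theta_k$, again up to lower-crossing terms. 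This gives $\theta_k\theta_\ell=-\theta_\ell\theta_k+R$. For the square, the same normalization applied to $\theta_k\theta_k$ produces a single strand bounding a bigon on the blue strand carrying two nails, i.e.\ exactly the left-hand side of the rightmost relation of~\eqref{eq:relNail}; that relation makes it vanish, so $\theta_k^2=0+R'$ with $R'$ of strictly smaller crossing number. When $k,\ell\le b_0$, both identities can alternatively be imported from the corresponding statements for the dg-enhanced nilHecke algebra $A_{b_0}$ of~\cite{naissevaz2} through the inclusion $\eta_\rho$ of~\eqref{eq:defetainclusion}, so the genuinely new point is the presence of red strands between the two nailed strands.

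The main obstacle is bookkeeping rather than conceptual: one must check that \emph{every} move needed to reach the normal form in step (i) — in particular the repeated use of~\eqref{eq:nhR2andR3},~\eqref{eq:nhdotslide} and the red relations~\eqref{eq:redR2},~\eqref{eq:redR3}, which exchange a crossing for dots — is crossing-non-increasing, so that the induction (equivalently, the filtration argument) genuinely closes and all discarded terms legitimately land in a strictly lower filtration piece. A useful sanity check is the polynomial picture of~\cref{lem:injectstoANH}: under the action on $\Pol_b^r$ the top-crossing part of $\theta_k$ acts, up to invertible scalars and lower-order corrections, by the exterior generator $\omega_k$, and in $\bV^{\bullet}(\omega_1,\dots,\omega_b)$ one has $\omega_k\omega_\ell=-\omega_\ell\omega_k$ and $\omega_k^2=0$, which is exactly the content of the lemma modulo lower-crossing terms.
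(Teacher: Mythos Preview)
Your proposal is correct and is exactly the standard argument the paper has in mind: the paper's own proof is simply ``Similar to \cite[Lemma~3.12]{naissevaz3}, and omitted,'' and the argument there is precisely the crossing-filtration reduction you describe, using the nail-swap relation in~\eqref{eq:relNail} once to produce the sign and the bigon relation in~\eqref{eq:relNail} for the square, with all other moves being crossing-non-increasing. Your additional remarks (the $A_{b_0}$ case via $\eta_\rho$, and the sanity check through the polynomial action and the exterior generators $\omega_k$) are not in the paper's omitted proof but are consistent with the setup and helpful intuition.
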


\begin{proof}
  Similar to \cite[Lemma 3.12]{naissevaz3}, and omitted.
\end{proof}

\begin{rem}
  If $k,\ell \leq b_0$, then we have $\theta_k\theta_\ell = -\theta_\ell \theta_k$. Moreover, if $k \not \in\{b_0+1,b_0+b_1+1,\ldots,b_0+\ldots+b_r+1\}$, then we have $\theta_k^2=0$.
\end{rem}

Now fix $\kappa,\rho\in\mathcal{P}^r_b$ and consider the subset of permutations ${}_\kappa S_\rho\subset S_{r+b}$, viewed as diagrams with a blue strand, $b$ black strands and $r$ red strands, such that:
\begin{itemize}
\item the blue strand is always on the left of the diagram,
\item the strands are ordered at the bottom by $1_\rho$ and at the top by $1_\kappa$,
\item for any reduced expression of $w\in {}_\kappa S_\rho$, there are no red/red crossings.
\end{itemize}

\begin{ex}
  If $\kappa=\rho=(0,1,1)$, then the set ${}_\kappa S_\rho$ has two elements, namely
  \[
    \tikzdiagh{-1.5ex}{
      \draw[vstdhl] (0,0) -- (0,1);
      \draw[stdhl] (.25,0) -- (.25,1);
      \draw (0.5,0) -- (.5,1);
      \draw[stdhl] (.75,0) -- (.75,1);
      \draw (1,0) -- (1,1);
    }
    \quad\text{and}\quad
    \tikzdiagh{-1.5ex}{
      \draw[vstdhl] (0,0) -- (0,1);
      \draw[stdhl] (.25,0) -- (.25,1);
      \draw (.5,0) ..controls (.5,.5) and (1,.5) .. (1,1);
      \draw (1,0) ..controls (1,.5) and (.5,.5) .. (.5,1);
      \draw[stdhl] (.75,0) ..controls (.75,.35) and (1,.15) .. (1,.5);
      \draw[stdhl] (1,.5) ..controls (1,.85) and (.75,.65) ..(.75,1);
    }
  \]
  Note that the second element is not left-adjusted.
\end{ex}

For each $w\in {}_\kappa S_\rho,\ \underline{l}=(l_1,\ldots,l_b)\in \{0,1\}^b$ and $\underline{a}=(a_1,\ldots,a_b)\in\mathbb{N}^b$ we define an element $b_{w,\underline{l},\underline{a}}\in 1_\kappa T^{\lambda,\underline{N}}_m 1_\rho$ as follows:
\begin{enumerate}
\item we choose a left-reduced expression of $w$ in terms of diagrams as above;
\item for each $1\leq i \leq b$, if $l_i=1$, then we nail the $i$-th black strand at the top, counting from the left, on the blue strand by pulling it from its leftmost position;
\item finally, for each $1\leq i \leq b$, we add $a_i$ dots on the $i$-th black strand at the top.
\end{enumerate}
Let ${}_\kappa B_\rho := \{ b_{w,\underline{l},\underline{a}} | w\in {}_\kappa S_\rho,\ \underline{l}\in \{0,1\}^b, \underline{a}\in\mathbb{N}^b\}$.

\begin{ex}
  We continue the example of $\kappa=\rho=(0,1,1)$. If we choose $\underline{l}=(1,0)$ and $\underline{a}=(0,1)$ for $w$ the permutation with a black/black crossing, after left-adjusting it, then we obtain
  \[
    b_{w,\underline{l},\underline{a}}  =
      \tikzdiagh{-1.5ex}{
      \draw (.5,0) ..controls (.5,.5) and (1,.5) .. (1,1) node [pos=.85,tikzdot]{};
      \draw (.5,1) ..controls (.5,.85) and (0,.90) .. (0,.75);
      \draw (0,.75) ..controls (0,.60) and (1,.75) .. (1,0);
      \draw[stdhl] (.25,0) -- (.25,1);
      \draw[stdhl] (.75,0) ..controls (.75,.35) and (.5,.15) .. (.5,.5);
      \draw[stdhl] (.5,.5) ..controls (.5,.85) and (.75,.65) ..(.75,1);
      \draw[vstdhl] (0,0) -- (0,1)  node [pos=.75,nail]{};
    }
  \] 
\end{ex}

\begin{thm}\label{thm:Tbasis}
The set ${}_\kappa B_\rho$ is a basis of $1_\kappa T^{\lambda,\underline{N}}_b 1_\rho$ as a $\bZ\times\bZ^2$-graded $\Bbbk$-module.
\end{thm}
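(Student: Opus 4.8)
The plan is to follow the template of the basis theorems for KLRW algebras~\cite[\S4]{webster} and for the dg-enhanced nilHecke algebra~\cite[\S3]{naissevaz2} (compare~\cite[\S3.2.3]{naissevaz3}): first show that ${}_\kappa B_\rho$ spans $1_\kappa T^{\lambda,\underline{N}}_b 1_\rho$, then establish $\Bbbk$-linear independence via the polynomial action on $\Pol^r_b$. Since every defining relation of $T^{\lambda,\underline{N}}_b$ has integer coefficients and $\Pol_b$ is free over $\bZ$, it is enough to treat $\Bbbk=\bZ$, the general case following by base change; the elements of ${}_\kappa B_\rho$ are visibly homogeneous for the $\bZ\times\bZ^2$-grading, so only the underlying $\Bbbk$-module statement needs proof.

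For the spanning part, I would start from an arbitrary braid-like diagram $D\in 1_\kappa T^{\lambda,\underline{N}}_b 1_\rho$ and induct on its number of crossings. Using the relations~\eqref{eq:relNail}, one pulls each nail to its leftmost position and slides it above all crossings, so that $D$ becomes, modulo a sum of diagrams with strictly fewer crossings, a diagram of the form $\theta_{k_s}\cdots\theta_{k_1}\cdot D'$ with $D'$ nail-free and all dots moved to the top (using the nilHecke and black/red dot-slide relations, which again cost only lower-crossing terms). By \cref{lem:anticom_theta} the $\theta_{k_i}$ can be reordered so that $k_1<\cdots<k_s$, any repeated index being absorbed into lower-crossing terms, so this data is packaged as a tuple $\underline{l}\in\{0,1\}^b$. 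The nail-free diagram $D'$ lies in the image of $\widetilde T^{\underline{N}}_b\hookrightarrow T^{\lambda,\underline{N}}_b$, so the basis theorem for affine KLRW algebras (the nail-free analogue of the present statement, essentially~\cite[\S4]{webster}) rewrites it as a $\Bbbk$-combination of left-adjusted permutation diagrams for elements $w\in{}_\kappa S_\rho$ decorated with dots on top. Combining these, every term is of the shape $b_{w,\underline{l},\underline{a}}$ up to diagrams with fewer crossings, and the induction closes, the crossingless base case being immediate.

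For linear independence, suppose $\sum c_{w,\underline{l},\underline{a}}\,b_{w,\underline{l},\underline{a}}=0$ and evaluate on $\varepsilon_\rho\in\Pol^r_b$. In the polynomial action, dots, crossings and the black/red operators all preserve the exterior ($\omega$-)degree while each nail raises it by exactly one, so $b_{w,\underline{l},\underline{a}}(\varepsilon_\rho)$ has $\omega$-degree $|\underline{l}|$; hence I may assume that all $\underline{l}$ occurring in the sum have the same number $s$ of nonzero entries. Then I would filter $\Pol_b$ by total polynomial degree and the algebra by number of crossings: on the associated graded, $b_{w,\underline{l},\underline{a}}$ acts on $\varepsilon_\rho$ as the composite of the divided-difference operator attached to a left-adjusted word for $w$ (with the black/red crossings contributing factors $x_j^{N_i}$), followed by multiplication by $x^{\underline{a}}$ and by the product of exterior generators selected by $\underline{l}$. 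These leading operators are $\Bbbk$-linearly independent by the faithfulness of the polynomial actions of the nilHecke algebra~\cite[Corollary 2.6]{KL1} and of the dg-enhanced nilHecke algebra~\cite[Corollary 3.9]{naissevaz2} (the latter entering through the injection $\eta_\rho$ of \cref{lem:injectstoANH}), together with the freeness of $\Pol_b$ over monomials in the $x$- and $\omega$-variables and the standard triangularity of left-adjusted expressions with respect to the length filtration on ${}_\kappa S_\rho$. This forces all $c_{w,\underline{l},\underline{a}}=0$.

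The hard part will be the bookkeeping in the spanning step: one must verify carefully that pulling a nail leftwards past black--black and black--red crossings, and then raising it above all crossings, produces only correction terms with strictly fewer crossings and no new nails, so that the induction is genuinely well-founded, and that this reduction is compatible with the subsequent appeal to the affine KLRW basis. A secondary, milder, obstacle is that the transpositions act nontrivially on the exterior generators, $\sigma_i(\omega_j)=\omega_j+\delta_{i,j}(x_i-x_{i+1})\omega_{i+1}$, so in the independence argument the separation of the $\omega$-part from the polynomial part of the leading term must be carried out through the degree filtration rather than through a naive bidegree splitting.
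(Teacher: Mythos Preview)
Your proposal follows essentially the same strategy as the paper: spanning via induction on the number of crossings using \cref{lem:anticom_theta} (the paper cites the analogous~\cite[Proposition~3.13]{naissevaz3}), and linear independence via the polynomial action on $\Pol_b^r$ (the paper phrases this as ``we apply \cref{lem:injectstoANH}'', whose proof rests on exactly the faithfulness results you invoke).

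One caution on your independence argument: you write ``evaluate on $\varepsilon_\rho$'', but applying $b_{w,\underline{l},\underline{a}}$ to the single element $\varepsilon_\rho$ does not suffice, since any black/black crossing at the bottom of $w$ acts as a divided difference and kills the constant $1$, so many $b_{w,\underline{l},\underline{a}}(\varepsilon_\rho)$ vanish. What you need, and what your subsequent talk of ``leading operators'' indicates you actually intend, is linear independence of the $b_{w,\underline{l},\underline{a}}$ as operators $\Pol_b\varepsilon_\rho\to\Pol_b\varepsilon_\kappa$; your $\omega$-degree and length filtrations then separate them as you outline. Just tighten the wording so the argument is unambiguously about operators rather than values at a single vector.
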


\begin{proof}
  By \cref{lem:anticom_theta}, with arguments similar to \cite[Proposition 3.13]{naissevaz3}, one shows that this set generates $1_\kappa T^{\lambda,\underline{N}}_m 1_\rho$ as a $\Bbbk$-module. The proof consists in an induction on the number of crossings, allowing to apply braid-moves in order to reduce diagrams. 
  In order to show that this set is linearly independent over $\Bbbk$, we apply \cref{lem:injectstoANH}.
\end{proof}

In the following, we draw $T_b^{\lambda,\underline{N}} 1_{\rho}$ with $\rho = (b_0, \dots, b_r)$ as a box diagram
\[
\tikzdiag[xscale=1.25]{
	\draw [vstdhl] (-.25,-.5) node[below,yshift={-1ex}]{\small $\lambda$} -- (-.25,1);
	\draw (0,-.5) -- (0,1);
	\node at(.25,0) {\tiny $\dots$};
	\draw (.5,-.5) -- (.5,1);
	\draw[decoration={brace,mirror,raise=-8pt},decorate]  (-.1,-.85) -- node {\small $b_0$} (.6,-.85);
	\draw [stdhl] (.75,-.5)  node[below,yshift={-1ex}]{\small $N_1$} -- (.75,1);
	\node[red] at(1.125,0) { $\dots$};
	\draw [stdhl] (1.5,-.5)  node[below,yshift={-1ex}]{\small $N_{r-1}$} -- (1.5,1);
	\draw (1.75,-.5) -- (1.75,1);
	\node at(2,0) {\tiny $\dots$};
	\draw (2.25,-.5) -- (2.25,1);
	\draw[decoration={brace,mirror,raise=-8pt},decorate]  (1.65,-.85) -- node {\small $b_{r-1}$} (2.35,-.85);
	\draw [stdhl] (2.5,-.5)  node[below,yshift={-1ex}]{\small $N_{r}$} -- (2.5,1);
	\draw (2.75,-.5) -- (2.75,1);
	\node at(3,0) {\tiny $\dots$};
	\draw (3.25,-.5) -- (3.25,1);
	\draw[decoration={brace,mirror,raise=-8pt},decorate]  (2.65,-.85) -- node {\small $b_r$} (3.35,-.85);
	\filldraw [fill=white, draw=black] (-.375,.5) rectangle (3.375,1.25) node[midway] { $T_b^{\lambda,\underline{N}}$};
}
\]
Moreover, when we draw something like
\[
\tikzdiag[xscale=1.25]{
	\draw[fill=white, color=white] (-.35,0) circle (.15cm);
	\draw [vstdhl] (-.25,-.5) node[below,yshift={-1ex}]{\small $\lambda$} -- (-.25,1);
	\draw (0,-.5) -- (0,1);
	\node at(.25,-.35) {\tiny $\dots$};
	\draw (.5,-.5) -- (.5,1);
	\draw[decoration={brace,mirror,raise=-8pt},decorate]  (-.1,-.85) -- node {\small $b_0$} (.6,-.85);
	\draw [stdhl] (.75,-.5)  node[below,yshift={-1ex}]{\small $N_1$} -- (.75,1);
	\node[red] at(1.125,-.35) { $\dots$};
	\draw [stdhl] (1.5,-.5)  node[below,yshift={-1ex}]{\small $N_i$} -- (1.5,1);
	\draw (1.75,-.5) -- (1.75,1);
	\node at(2,-.35) {\tiny $\dots$};
	\draw (2.25,-.5) -- (2.25,1);
	\draw[decoration={brace,mirror,raise=-8pt},decorate]  (1.65,-.85) -- node {\small $t$} (2.35,-.85);
	\draw (2.5,-.5) .. controls (2.5,0) and (5.25,0) ..  (5.25,.5) -- (5.25,1.25) node[midway,tikzdot]{} node[midway, xshift=1.5ex, yshift=1ex]{\small $p$};
	\draw (2.75,-.5) .. controls (2.75,0) and (2.5,0) .. (2.5,.5);
	\node at(3,-.35) {\tiny $\dots$};
	\draw (3.25,-.5) .. controls (3.25,0) and (3,0) .. (3,.5);
	\draw [stdhl]  (3.5,-.5)  node[below,yshift={-1ex}]{\small $N_{i+1}$} .. controls (3.5,0) and (3.25,0) .. (3.25,.5);
	\node[red] at(3.875,-.35) { $\dots$};
	\draw [stdhl]  (4.25,-.5)  node[below,yshift={-1ex}]{\small $N_{r}$} .. controls (4.25,0) and (4,0) .. (4,.5);
	\draw (4.5,-.5) .. controls (4.5,0) and (4.25,0) .. (4.25,.5);
	\node at(4.75,-.35) {\tiny $\dots$};
	\draw (5,-.5) .. controls (5,0) and (4.75,0) .. (4.75,.5);
	\draw[decoration={brace,mirror,raise=-8pt},decorate]  (4.4,-.85) -- node {\small $b_r$} (5.1,-.85);
	\filldraw [fill=white, draw=black] (-.375,.5) rectangle (4.875,1.25) node[midway] { $T_{b-1}^{\lambda,\underline{N}}$};
}
\]
with $p \geq 0$ and $0 \leq t < b_i$, it means we consider the subset of $T_b^{\lambda,\underline{N}} 1_{\rho}$ given replacing the box labeled $T_{b-1}^{\lambda,\underline{N}}$ with any diagram of $T_{b-1}^{\lambda,\underline{N}}$ in the diagram above, and consider it as a diagram of $T_b^{\lambda,\underline{N}} 1_{\rho}$.

\begin{cor}\label{prop:Tdecomp}
As a $\bZ\times\bZ^2$-graded $\Bbbk$-module, $T_b^{\lambda,\underline{N}} 1_{\rho}$ decomposes as a direct sum
\begin{align*}
\tikzdiag[xscale=1.25]{
	\draw [vstdhl] (-.25,-.5) node[below,yshift={-1ex}]{\small $\lambda$} -- (-.25,1);
	\draw (0,-.5) -- (0,1);
	\node at(.25,0) {\tiny $\dots$};
	\draw (.5,-.5) -- (.5,1);
	\draw[decoration={brace,mirror,raise=-8pt},decorate]  (-.1,-.85) -- node {\small $b_0$} (.6,-.85);
	\draw [stdhl] (.75,-.5)  node[below,yshift={-1ex}]{\small $N_1$} -- (.75,1);
	\node[red] at(1.125,0) { $\dots$};
	\draw [stdhl] (1.5,-.5)  node[below,yshift={-1ex}]{\small $N_{r-1}$} -- (1.5,1);
	\draw (1.75,-.5) -- (1.75,1);
	\node at(2,0) {\tiny $\dots$};
	\draw (2.25,-.5) -- (2.25,1);
	\draw[decoration={brace,mirror,raise=-8pt},decorate]  (1.65,-.85) -- node {\small $b_{r-1}$} (2.35,-.85);
	\draw [stdhl] (2.5,-.5)  node[below,yshift={-1ex}]{\small $N_{r}$} -- (2.5,1);
	\draw (2.75,-.5) -- (2.75,1);
	\node at(3,0) {\tiny $\dots$};
	\draw (3.25,-.5) -- (3.25,1);
	\draw[decoration={brace,mirror,raise=-8pt},decorate]  (2.65,-.85) -- node {\small $b_r$} (3.35,-.85);
	\filldraw [fill=white, draw=black] (-.375,.5) rectangle (3.375,1.25) node[midway] { $T_b^{\lambda,\underline{N}}$};
}
\ \cong& \ 
\tikzdiag[xscale=1.25]{
	\draw (0,-.5) -- (0,1);
	\node at(.25,-.35) {\tiny $\dots$};
	\draw (.5,-.5) -- (.5,1);
	\draw[decoration={brace,mirror,raise=-8pt},decorate]  (-.1,-.85) -- node {\small $b_0$} (.6,-.85);
	\draw [stdhl] (.75,-.5)  node[below,yshift={-1ex}]{\small $N_1$} -- (.75,1);
	\node[red] at(1.125,-.35) { $\dots$};
	\draw [stdhl] (1.5,-.5)  node[below,yshift={-1ex}]{\small $N_{r-1}$} -- (1.5,1);
	\draw (1.75,-.5) -- (1.75,1);
	\node at(2,-.35) {\tiny $\dots$};
	\draw (2.25,-.5) -- (2.25,1);
	\draw[decoration={brace,mirror,raise=-8pt},decorate]  (1.65,-.85) -- node {\small $b_{r-1}$} (2.35,-.85);
	\draw (2.75,-.5) .. controls (2.75,0) and (2.5,0) .. (2.5,.5) -- (2.5,1);
	\node at(3,-.35) {\tiny $\dots$};
	\draw (3.25,-.5) .. controls (3.25,0) and (3,0) .. (3,.5) -- (3,1);
	\draw[decoration={brace,mirror,raise=-8pt},decorate]  (2.65,-.85) -- node {\small $b_r$} (3.35,-.85);
	\draw [stdhl] (2.5,-.5)  node[below,yshift={-1ex}]{\small $N_{r}$} .. controls (2.5,0) and (3.5,0) .. (3.5,.5) -- (3.5,1.25);
	\draw [vstdhl] (-.25,-.5) node[below,yshift={-1ex}]{\small $\lambda$} -- (-.25,1);
	\filldraw [fill=white, draw=black] (-.375,.5) rectangle (3.125,1.25) node[midway] { $T_b^{\lambda,\underline{N'}}$};
}
\\
&\oplus
\bigoplus_{i=0}^r
\ssbigoplus{0 \leq t < b_i \\ p \geq 0}
\tikzdiag[xscale=1.25]{
	\draw[fill=white, color=white] (-.35,0) circle (.15cm);
	\draw [vstdhl] (-.25,-.5) node[below,yshift={-1ex}]{\small $\lambda$} -- (-.25,1);
	\draw (0,-.5) -- (0,1);
	\node at(.25,-.35) {\tiny $\dots$};
	\draw (.5,-.5) -- (.5,1);
	\draw[decoration={brace,mirror,raise=-8pt},decorate]  (-.1,-.85) -- node {\small $b_0$} (.6,-.85);
	\draw [stdhl] (.75,-.5)  node[below,yshift={-1ex}]{\small $N_1$} -- (.75,1);
	\node[red] at(1.125,-.35) { $\dots$};
	\draw [stdhl] (1.5,-.5)  node[below,yshift={-1ex}]{\small $N_i$} -- (1.5,1);
	\draw (1.75,-.5) -- (1.75,1);
	\node at(2,-.35) {\tiny $\dots$};
	\draw (2.25,-.5) -- (2.25,1);
	\draw[decoration={brace,mirror,raise=-8pt},decorate]  (1.65,-.85) -- node {\small $t$} (2.35,-.85);
	\draw (2.5,-.5) .. controls (2.5,0) and (5.25,0) ..  (5.25,.5) -- (5.25,1.25) node[midway,tikzdot]{} node[midway, xshift=1.5ex, yshift=1ex]{\small $p$};
	\draw (2.75,-.5) .. controls (2.75,0) and (2.5,0) .. (2.5,.5);
	\node at(3,-.35) {\tiny $\dots$};
	\draw (3.25,-.5) .. controls (3.25,0) and (3,0) .. (3,.5);
	\draw [stdhl]  (3.5,-.5)  node[below,yshift={-1ex}]{\small $N_{i+1}$} .. controls (3.5,0) and (3.25,0) .. (3.25,.5);
	\node[red] at(3.875,-.35) { $\dots$};
	\draw [stdhl]  (4.25,-.5)  node[below,yshift={-1ex}]{\small $N_{r}$} .. controls (4.25,0) and (4,0) .. (4,.5);
	\draw (4.5,-.5) .. controls (4.5,0) and (4.25,0) .. (4.25,.5);
	\node at(4.75,-.35) {\tiny $\dots$};
	\draw (5,-.5) .. controls (5,0) and (4.75,0) .. (4.75,.5);
	\draw[decoration={brace,mirror,raise=-8pt},decorate]  (4.4,-.85) -- node {\small $b_r$} (5.1,-.85);
	\filldraw [fill=white, draw=black] (-.375,.5) rectangle (4.875,1.25) node[midway] { $T_{b-1}^{\lambda,\underline{N}}$};
}
\\
&\oplus
\bigoplus_{i=0}^r
\ssbigoplus{0 \leq t < b_i \\ p \geq 0}
\tikzdiag[xscale=1.25]{
	\draw (0,-.5) -- (0,1);
	\node at(.25,-.35) {\tiny $\dots$};
	\draw (.5,-.5) -- (.5,1);
	\draw[decoration={brace,mirror,raise=-8pt},decorate]  (-.1,-.85) -- node {\small $b_0$} (.6,-.85);
	\node[red] at(1.125,-.35) { $\dots$};
	\draw (1.75,-.5) -- (1.75,1);
	\node at(2,-.35) {\tiny $\dots$};
	\draw (2.25,-.5) -- (2.25,1);
	\draw[decoration={brace,mirror,raise=-8pt},decorate]  (1.65,-.85) -- node {\small $t$} (2.35,-.85);
	\draw (2.5,-.5) .. controls (2.5,-.25) ..  (-.5,0) .. controls (5.25,.25) ..  (5.25,.5) -- (5.25,1.25) node[midway,tikzdot]{} node[midway, xshift=1.5ex, yshift=1ex]{\small $p$};
	\draw (2.75,-.5) .. controls (2.75,0) and (2.5,0) .. (2.5,.5);
	\node at(3,-.35) {\tiny $\dots$};
	\draw (3.25,-.5) .. controls (3.25,0) and (3,0) .. (3,.5);
	\draw [stdhl]  (3.5,-.5)  node[below,yshift={-1ex}]{\small $N_{i+1}$} .. controls (3.5,0) and (3.25,0) .. (3.25,.5);
	\node[red] at(3.875,-.35) { $\dots$};
	\draw [stdhl]  (4.25,-.5)  node[below,yshift={-1ex}]{\small $N_{r}$} .. controls (4.25,0) and (4,0) .. (4,.5);
	\draw (4.5,-.5) .. controls (4.5,0) and (4.25,0) .. (4.25,.5);
	\node at(4.75,-.35) {\tiny $\dots$};
	\draw (5,-.5) .. controls (5,0) and (4.75,0) .. (4.75,.5);
	\draw[decoration={brace,mirror,raise=-8pt},decorate]  (4.4,-.85) -- node {\small $b_r$} (5.1,-.85);
	\draw [stdhl] (1.5,-.5)  node[below,yshift={-1ex}]{\small $N_i$} -- (1.5,1);
	\draw [stdhl] (.75,-.5)  node[below,yshift={-1ex}]{\small $N_1$} -- (.75,1);
	\draw[fill=white, color=white] (-.35,0) circle (.15cm);
	\draw [vstdhl] (-.25,-.5) node[below,yshift={-1ex}]{\small $\lambda$} -- (-.25,1)  node[nail,pos=.33]{};
	\filldraw [fill=white, draw=black] (-.375,.5) rectangle (4.875,1.25) node[midway] { $T_{b-1}^{\lambda,\underline{N}}$};
}
\end{align*}
where $\und N' = (N_1,\dots, N_{r-1})$, and the isomorphism is given by inclusion. 
\end{cor}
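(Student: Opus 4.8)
The plan is to read the decomposition off from the basis theorem \cref{thm:Tbasis}, since the statement only concerns the underlying $\bZ\times\bZ^2$-graded $\Bbbk$-module structure. First I would write $T_b^{\lambda,\underline{N}} 1_{\rho} = \bigoplus_{\kappa \in \cP_b^r} 1_\kappa T_b^{\lambda,\underline{N}} 1_\rho$, so that by \cref{thm:Tbasis} it is free over $\Bbbk$ on $\coprod_{\kappa} {}_\kappa B_\rho$, where ${}_\kappa B_\rho = \{b_{w,\underline{l},\underline{a}}\}$. I would then check that every summand on the right-hand side is also free over $\Bbbk$: the term $T_b^{\lambda,\underline{N'}}$ with $\underline{N'}=(N_1,\ldots,N_{r-1})$ by \cref{thm:Tbasis} applied to $T_b^{\lambda,\underline{N'}}$, and the two families indexed by $(i,t,p)$ by \cref{thm:Tbasis} applied to $T_{b-1}^{\lambda,\underline{N}}$ together with the free choice of the number $p$ of dots on the distinguished black strand (its nail, in the last family, being forced to sit on the blue strand). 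Since the maps asserted in the statement are the evident inclusions of the $\Bbbk$-submodules spanned by diagrams of the prescribed shapes, it then suffices to partition $\coprod_{\kappa}{}_\kappa B_\rho$ into three families of basis elements, the latter two indexed further by $(i,t,p)$, whose $\Bbbk$-spans are exactly the images of the corresponding inclusions.

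Next I would carry out the combinatorial sorting. Given a basis element $b_{w,\underline{l},\underline{a}}$ with $w\in {}_\kappa S_\rho$, I would look at $\kappa_r$, the number of black strands lying to the right of the red strand $N_r$ at the top of the diagram. If $\kappa_r=0$, then no black strand is to the right of $N_r$ at the top, and sliding $N_r$ rightward past the $b_r$ bottommost black strands using \eqref{eq:redR2} and \eqref{eq:crossingslidered} rewrites $b_{w,\underline{l},\underline{a}}$ as a diagram of the shape of the first summand; conversely every diagram of that shape arises this way. If $\kappa_r\ge 1$, I would take $s$ to be the black strand ending at the rightmost black position at the top and trace it down to the bottom: it originates in some group $b_i$ with $0\le t<b_i$ of that group's bottom strands to its right, carries $a_s=p$ dots, and is either nailed ($l_s=1$) or not. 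Deleting $s$ leaves a diagram of $T_{b-1}^{\lambda,\underline{N}}$ with bottom idempotent $(b_0,\ldots,b_i-1,\ldots,b_r)$, while $s$, pulled to the top-right and dragging its nail along the blue strand when $l_s=1$, is precisely the distinguished strand of the $(i,t,p)$-summand. Thus $b_{w,\underline{l},\underline{a}}$ falls in the second family when $s$ is unnailed and in the third when it is nailed, and these three cases are mutually exclusive and exhaust $\coprod_{\kappa}{}_\kappa B_\rho$.

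It remains to see that each inclusion is injective with the claimed image, and here lies the main obstacle: the basis of \cref{thm:Tbasis} is written using left-adjusted expressions, in which strands are pulled as far to the \emph{left} as possible, whereas the distinguished strand $s$ (resp. the red strand $N_r$) in the $(i,t,p)$-shapes (resp. the first shape) has been pulled to the \emph{right}. I would handle this exactly as in the proof of \cref{thm:Tbasis}: bringing $s$ or $N_r$ into left-adjusted position is an induction on the number of crossings, applying braid moves together with \eqref{eq:nhdotslide}, \eqref{eq:redR3}, \eqref{eq:relNail} and \cref{lem:anticom_theta}, which produces the matching left-adjusted basis element with leading coefficient $\pm 1$ plus a $\Bbbk$-combination of diagrams with strictly fewer crossings lying in the same family. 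This makes each change of basis triangular, hence invertible, and it preserves the $\bZ\times\bZ^2$-grading since all relations invoked are homogeneous. The only genuinely new ingredient compared to the proof of \cref{thm:Tbasis} and \cite[Proposition 3.13]{naissevaz3} is the need to carry the nail along with $s$, for which \cref{lem:anticom_theta} and \eqref{eq:relNail} are exactly what is required. Assembling the three families then yields the asserted direct-sum decomposition of graded $\Bbbk$-modules.
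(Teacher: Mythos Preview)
Your approach is correct and is exactly the paper's: the paper's entire proof is the sentence ``The claim follows immediately from \cref{thm:Tbasis},'' and you have unpacked what that means. The sorting by $\kappa_r$ and by the trajectory of the rightmost top black strand is the right bijection, and the triangular change-of-basis argument to pass between left-adjusted basis elements and the right-pulled shapes is the standard way to make ``immediately'' precise. One small slip: in your second paragraph, $t$ counts the black strands of group $b_i$ lying to the \emph{left} of the distinguished strand $s$ at the bottom, not to its right (compare the brace labeled $t$ in the diagram of the second summand); this does not affect the argument.
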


\begin{proof}
The claim follows immediately from \cref{thm:Tbasis}.
\end{proof}

\subsection{Dg-enhancement}\label{sec:dgenh}

For each $N \in \bN$, we want to define a non-trivial differential $d_N$ on $T_b^{\lambda, \underline{N}}$. 
First, we collapse the $\bZ^2$-grading into a single $\bZ$-grading, which we also call $q$-degree, through the map $\bZ^2 \rightarrow \bZ, (a,b) \mapsto a + bN$ (i.e. specializing $\lambda = q^N$). Then, we put
\[
d_N\left(
	\tikzdiagh{0}{
		\draw (.5,-.5) .. controls (.5,-.25)  .. 
			(0,0) .. controls (.5,.25)  .. (.5,.5);
	           \draw[vstdhl] (0,-.5) node[below]{\small $\lambda$} -- (0,.5) node [midway,nail]{};
  	}
  	\right) 
\ := \ 
	\tikzdiagh{0}{
		\draw (.5,-.5) -- (.5,.5) node[midway,tikzdot]{} node[midway,xshift=1.75ex,yshift=.75ex]{\small $N$};
	           \draw[vstdhl] (0,-.5) node[below]{\small $\lambda$} -- (0,.5);
  	}
\]
and $d_N(t) := 0$ for
all element $t$ of $\widetilde T_b^{\underline N} \subset T_b^{\lambda, \underline N}$,
and extending by the graded Leibniz rule w.r.t. the homological grading. 
A straightforward computation shows that $d_N$ respects all the defining relations of $T_b^{\lambda, \underline N}$, and therefore is well-defined.

\begin{thm}\label{thm:dNformal}
The $\bZ$-graded dg-algebra $(T^{\lambda,\underline{N}}_b,d_N)$ is formal with
\[
	H(T^{\lambda,\underline{N}}_b,d_N) \cong T^{(N,\underline{N})}_b , 
\]
were $(N,\underline{N}) := (N, N_1, \dots, N_r) \in \bN^{r+1}$.
\end{thm}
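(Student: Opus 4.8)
The plan is to adapt the strategy of~\cite{naissevaz2,naissevaz3} for dg-enhanced (cyclotomic) nilHecke algebras. The starting observation is purely about the homological grading: the nail is the only generator in nonzero homological degree, it sits in degree $1$, and it can be neither created nor destroyed by graded braid-like isotopy, so a diagram with $k$ nails lies in homological degree $k$. Hence $(T_b^{\lambda,\underline{N}},d_N)$ is a chain complex concentrated in non-negative homological degrees, its degree-$0$ part is exactly the nail-free subalgebra $\widetilde T_b^{\underline{N}}$, and every element of $\widetilde T_b^{\underline{N}}$ is a cycle. Writing $A^k$ for the homological degree $k$ piece, the subspace $A^{\geq 1}\oplus d_N(A^1)$ is a two-sided $d_N$-stable ideal (using that $d_N$ is a derivation vanishing on $\widetilde T_b^{\underline{N}}$), so the quotient map $(T_b^{\lambda,\underline{N}},d_N)\to(\widetilde T_b^{\underline{N}}/d_N(A^1),0)$ is a morphism of dg-algebras, and it is a quasi-isomorphism as soon as $H^{\geq 1}(T_b^{\lambda,\underline{N}},d_N)=0$; in that case $(T_b^{\lambda,\underline{N}},d_N)$ is automatically formal with homology $H^0=\widetilde T_b^{\underline{N}}/d_N(A^1)$. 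So it remains to (i) vanish the higher homology and (ii) identify $H^0$ with $T_b^{(N,\underline{N})}$.

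For (ii) I would exhibit an explicit dg-algebra map $\psi\colon(T_b^{\lambda,\underline{N}},d_N)\to(T_b^{(N,\underline{N})},0)$ sending a nail-free diagram to the same diagram with the blue strand reinterpreted as a red strand labeled $N$, and sending every diagram with a nail to $0$. Well-definedness is immediate except for compatibility with $d_N$; by a straightening argument as in the proof of \cref{thm:Tbasis} it is enough to check that $\psi(d_N(\theta_k))=0$ for the tightened nails $\theta_k$, and $\psi(d_N(\theta_k))$ is the diagram of $T_b^{(N,\underline{N})}$ in which the $k$-th black strand has been dragged into the left-most region carrying $N$ dots; applying \eqref{eq:redR2} locally to this strand and the $N$-labeled red strand rewrites it as a diagram in which that strand crosses to the left of the $N$-labeled red strand, which vanishes by the violating condition. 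The map $\psi$ is visibly surjective, so it descends to a surjection $H^0=\widetilde T_b^{\underline{N}}/d_N(A^1)\twoheadrightarrow T_b^{(N,\underline{N})}$; to see it is an isomorphism I would compare graded dimensions, computing $\gdim H^0$ as the Euler characteristic $\sum_k(-1)^k\gdim A^k$ (legitimate once (i) is known) from the basis of \cref{thm:Tbasis} after the specialization $\lambda=q^N$, and matching it with Webster's graded dimension of the KLRW algebra $T_b^{(N,\underline{N})}$.

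The main obstacle is (i), which is a Koszul-regularity statement. I would filter $T_b^{\lambda,\underline{N}}$ by the number of crossings; this is a filtration by $d_N$-stable subspaces, since replacing a nail by $N$ dots and straightening never increases the number of crossings. On the associated graded the tightened nails $\theta_1,\dots,\theta_b$ anticommute and square to zero honestly (\cref{lem:anticom_theta} and the remark following it), so $\mathrm{gr}\,T_b^{\lambda,\underline{N}}\cong\mathrm{gr}\,\widetilde T_b^{\underline{N}}\otimes\bV^{\bullet}(\theta_1,\dots,\theta_b)$ and the induced differential is the Chevalley--Eilenberg/Koszul differential $\theta_k\mapsto\mathrm{gr}\,d_N(\theta_k)$, where $\mathrm{gr}\,d_N(\theta_k)$ is the top crossing-degree part of ``drag the $k$-th black strand into the left-most region with $N$ dots''. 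Using the faithful polynomial action (\cref{lem:injectstoANH} and the discussion around $\eta_\rho$) this associated-graded complex is identified with a Koszul complex over $\Pol_b$ for the sequence obtained from the $\mathrm{gr}\,d_N(\theta_k)$, which after clearing the exterior factor of $\Pol_b$ reduces to the sequence $x_1^N,\dots$ on the relevant polynomial rings; as in~\cite{naissevaz2,naissevaz3} this is a regular sequence, so the Koszul homology vanishes in positive degrees, and since the filtration is exhaustive and bounded in each fixed $q$-degree a spectral sequence argument gives $H^{\geq 1}(T_b^{\lambda,\underline{N}},d_N)=0$. The delicate points are computing $\mathrm{gr}\,d_N(\theta_k)$ explicitly after all nilHecke and black/red straightenings, and verifying regularity of the resulting sequence under the polynomial action — precisely the computations carried out for the empty- and single-red-strand cases in~\cite{naissevaz2,naissevaz3}.
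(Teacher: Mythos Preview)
Your outline is correct, and the explicit quotient map $\psi$ in (ii) is a clean way to produce the comparison with $T_b^{(N,\underline N)}$. Your route for (i), however, differs from the paper's. The paper does not filter by crossing number; it uses \cref{prop:Tdecomp} directly. That decomposition splits $T_b^{\lambda,\underline N}1_\rho$ into a summand built from $T_b^{\lambda,\underline N'}$ (one fewer red strand) together with, for each triple $(i,t,p)$, a matched pair of $T_{b-1}^{\lambda,\underline N}$-shaped pieces, one carrying a nail on the distinguished strand and one not. Tracking $d_N$ through this decomposition and inducting on $b$ (and on $r$ via the first summand) computes the homology and identifies it with $T_b^{(N,\underline N)}$ in one stroke, exactly as in \cite[Theorem~4.4]{naissevaz3}; no spectral sequence or associated-graded passage is needed. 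Your filtration approach is a legitimate alternative but carries more overhead: the isomorphism $\mathrm{gr}\,T_b^{\lambda,\underline N}\cong\mathrm{gr}\,\widetilde T_b^{\underline N}\otimes\bigwedge(\theta_k)$ is only a $\Bbbk$-module statement, and since the $\mathrm{gr}\,d_N(\theta_k)$ are not central in $\mathrm{gr}\,\widetilde T_b^{\underline N}$ the standard commutative-Koszul criterion does not apply directly---one must genuinely pass through the polynomial action to reach an honest Koszul complex over $\Pol_b$ and verify regularity there, which is precisely the work the inductive use of \cref{prop:Tdecomp} has already packaged into the basis theorem.
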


\begin{proof}
The proof follows by similar arguments as in~\cite[Theorem 4.4]{naissevaz3}, by using \cref{prop:Tdecomp}. We leave the details to the reader. 
\end{proof}




\section{A categorification of $M(\lambda)\otimes V(\protect\underline{N})$}\label{sec:catTensProd}

In this section we explain how derived categories of $(T^{\lambda,\underline{N}}_b,0)$-dg-modules categorify the $U_q(\slt)$-module $M(\lambda)\otimes V(\underline{N})$. 
Since the construction is very similar to the one in~\cite{naissevaz2} and~\cite{naissevaz3}, 
we will assume some familiarity with~\cite{naissevaz2} and~\cite{naissevaz3}, and we will refer to these papers for several details.

\smallskip

We introduce the notations 
\begin{align*}
\oplus_{[k]_q} (-)& := \bigoplus_{p = 0}^{k-1} q^{k-1-2p} (-), 
\\
\oplus_{[\beta+k]_q} (-) & := \bigoplus_{p \geq 0} \lambda q^{1+2p+k} (-) [1] \oplus \lambda^{-1} q^{1+2p-k}(-),
\end{align*}
where we recall that $q^a \lambda^b(-)$ is a shift up by $(a,b)$ in the $\bZ^2$-grading, and $(-)[1]$ is a shift up by $1$ in the homological grading. 
We write $\otimes$ for $\otimes_\Bbbk$, and $\otimes_b$ for $\otimes_{(T^{\lambda,\underline{N}}_b,0)}$. 
We also write $\cD_{dg}(T^{\lambda,\underline{N}}_b, 0)$ for the dg-enhanced derived category of $\bZ^2$-graded dg-modules over $(T^{\lambda,\underline{N}}_b, 0)$ (see \cref{sec:dgdercat} for a precise definition).

\subsection{Categorical action}

Let $1_{b,1} \in T^{\lambda,\underline{N}}_{b+1}$ be the idempotent given by
\[
1_{b,1} := \sum_{\rho \in  \mathcal{P}_b^r}  \ 
\tikzdiagh{0}{
	\draw[vstdhl] (0,0) node[below]{\small $\lambda$} --(0,1);
	\draw (.5,0) -- (.5,1);
	\node at(1,.5) {\tiny$\dots$};
	\draw (1.5,0) -- (1.5,1);
	\draw[decoration={brace,mirror,raise=-8pt},decorate]  (.4,-.35) -- node {$b_0$} (1.6,-.35);
	\draw[stdhl] (2,0)  node[below]{\small $N_1$} --(2,1);
	\draw (2.5,0) -- (2.5,1);
	\node at(3,.5) {\tiny$\dots$};
	\draw (3.5,0) -- (3.5,1);
	\draw[decoration={brace,mirror,raise=-8pt},decorate]  (2.4,-.35) -- node {$b_1$} (3.6,-.35);
	\draw[stdhl] (4,0)  node[below]{\small $N_2$} --(4,1);
	\node[red] at  (5,.5) {\dots};
	\draw[stdhl] (6,0)  node[below]{\small $N_{r}$} --(6,1);
	\draw (6.5,0) -- (6.5,1);
	\node at(7,.5) {\tiny$\dots$};
	\draw (7.5,0) -- (7.5,1);
	\draw[decoration={brace,mirror,raise=-8pt},decorate]  (6.4,-.35) -- node {$b_r$} (7.6,-.35);
	\draw (8,0) -- (8, 1);
}
\]
There is a (non-unital) map of algebras 
$
T^{\lambda,\underline{N}}_b \rightarrow T^{\lambda,\underline{N}}_{b+1}
$
 given by adding a vertical black strand to the right of a diagram from $T^{\lambda,\underline{N}}_b$:
\begin{equation}\label{eq:addblackstrand}
\tikzdiagh[xscale=1.25]{0}{
	\draw [vstdhl] (-.25,0) node[below]{\small $\lambda$} -- (-.25,1);
	\draw (0,0) -- (0,1);
	\node at(.25,.125) {\tiny $\dots$};
	\node at(.25,.875) {\tiny $\dots$};
	\draw (.5,0) -- (.5,1);
	\draw [stdhl] (.75,0)  node[below]{\small $N_1$} -- (.75,1);
	\node[red] at(1.125,.125) { $\dots$};
	\node[red] at(1.125,.875) { $\dots$};
	\draw [stdhl] (1.5,0)  node[below]{\small $N_{r-1}$} -- (1.5,1);
	\draw (1.75,0) -- (1.75,1);
	\node at(2,.125) {\tiny $\dots$};
	\node at(2,.875) {\tiny $\dots$};
	\draw (2.25,0) -- (2.25,1);
	\draw [stdhl] (2.5,0)  node[below]{\small $N_{r}$} -- (2.5,1);
	\draw (2.75,0) -- (2.75,1);
	\node at(3,.125) {\tiny $\dots$};
	\node at(3,.875) {\tiny $\dots$};
	\draw (3.25,0) -- (3.25,1);
	\filldraw [fill=white, draw=black] (-.375,.25) rectangle (3.375,.75) node[midway] { $D$};
}
\ \mapsto \ 
\tikzdiagh[xscale=1.25]{0}{
	\draw [vstdhl] (-.25,0) node[below]{\small $\lambda$} -- (-.25,1);
	\draw (0,0) -- (0,1);
	\node at(.25,.125) {\tiny $\dots$};
	\node at(.25,.875) {\tiny $\dots$};
	\draw (.5,0) -- (.5,1);
	\draw [stdhl] (.75,0)  node[below]{\small $N_1$} -- (.75,1);
	\node[red] at(1.125,.125) { $\dots$};
	\node[red] at(1.125,.875) { $\dots$};
	\draw [stdhl] (1.5,0)  node[below]{\small $N_{r-1}$} -- (1.5,1);
	\draw (1.75,0) -- (1.75,1);
	\node at(2,.125) {\tiny $\dots$};
	\node at(2,.875) {\tiny $\dots$};
	\draw (2.25,0) -- (2.25,1);
	\draw [stdhl] (2.5,0)  node[below]{\small $N_{r}$} -- (2.5,1);
	\draw (2.75,0) -- (2.75,1);
	\node at(3,.125) {\tiny $\dots$};
	\node at(3,.875) {\tiny $\dots$};
	\draw (3.25,0) -- (3.25,1);
	\filldraw [fill=white, draw=black] (-.375,.25) rectangle (3.375,.75) node[midway] { $D$};
	\draw (3.5,0) -- (3.5,1);
}
\end{equation}
sending the unit $1 \in T^{\lambda,\underline{N}}$ to the idempotent $1_{b,1}$.
This map gives rise to derived induction and restriction dg-functors
\begin{align*}
\Ind_b^{b+1} &: \cD_{dg}(T^{\lambda,\underline{N}}_{b},0) \rightarrow \cD_{dg}(T^{\lambda,\underline{N}}_{b+1},0), 
 &&\Ind_b^{b+1}(-) := (T^{\lambda,\underline{N}}_{b+1},0) 1_{b,1} \Lotimes_b (-),\\[1ex]
\Res_b^{b+1} &: \cD_{dg}(T^{\lambda,\underline{N}}_{b+1},0) \rightarrow \cD_{dg}(T^{\lambda,\underline{N}}_{b},0), 
 &&\Res_b^{b+1}(-) :=  \RHOM_{b}(-,1_{b,1}(T^{\lambda,\underline{N}}_{b+1},0)), 
\end{align*}
which are adjoint (see \cref{sec:deriveddghomtensor}). By \cref{prop:Tdecomp}, we know that $(T^{\lambda,\underline{N}}_{b+1},0)$ is a cofibrant dg-module over $(T^{\lambda,\underline{N}}_{b},0)$, so that we can replace derived tensor products (resp. derived homs) by usual tensor products
\begin{align*}
\Ind_b^{b+1}(-) &\cong (T^{\lambda,\underline{N}}_{b+1},0) 1_{b,1} \otimes_b (-),
&
\Res_b^{b+1}(-) &\cong  1_{b,1} (T^{\lambda,\underline{N}}_{b+1},0) \otimes_{b+1} (-).
\end{align*}
 Then, we define
\begin{align*}
\F_b &:= \Ind_b^{b+1},
&
\E_b &:= \lambda^{-1} q^{1+2b-|\underline{N}|} \Res_b^{b+1},
\end{align*}
and $\id_b$ is the identity dg-functor on $\cD_{dg}(T^{\lambda,\underline{N}}_{b},0)$.

\begin{thm}\label{thm:sl2comqi}
There is a quasi-isomorphism
\[
\cone(\F_{b-1}\E_{b-1} \rightarrow \E_b\F_b) \xrightarrow{\cong} \oplus_{[\beta+|\underline{N}|-2b]_q} \id_b,
\]
 of dg-functors.  
\end{thm}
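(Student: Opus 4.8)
The plan is to mimic the categorical $\mathfrak{sl}_2$ relation as it is proved for cyclotomic KLR algebras and for the Verma categorification in \cite{naissevaz2, naissevaz3}, working at the level of bimodules and then passing to derived functors. The key object is the $(T^{\lambda,\underline{N}}_{b-1}, T^{\lambda,\underline{N}}_{b-1})$-bimodule computing $\Res\circ\Ind$ versus the $(T^{\lambda,\underline{N}}_{b}, T^{\lambda,\underline{N}}_{b})$-bimodule computing $\Ind\circ\Res$. Concretely, one must analyze $1_{b,1}(T^{\lambda,\underline{N}}_{b+1})1_{b,1}$ as a $(T^{\lambda,\underline{N}}_{b}, T^{\lambda,\underline{N}}_{b})$-bimodule: this is the bimodule underlying $\F_b\E_b$ (up to the grading shift defining $\E_b$), and similarly $1_{b,1}(T^{\lambda,\underline{N}}_{b+1})1_{b,1}$ viewed appropriately gives $\E_{b}\F_{b}$. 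First I would use the basis theorem \cref{thm:Tbasis} together with the decomposition in \cref{prop:Tdecomp} to get an explicit direct-sum decomposition of this bimodule, tracking the blue strand, the nails, and the extra black strand. The decomposition should split off, on one side, a copy of the bimodule giving $\F_{b-1}\E_{b-1}$ (the piece where the two ``new'' black strands — the one added by $\Ind$ and the one singled out by $\Res$ — interact away from the nailing region) and, on the other side, a sum of shifted copies of the identity bimodule $T^{\lambda,\underline{N}}_b$, indexed by the number of dots on the distinguished strand and by whether that strand is nailed. The nailing gives rise to the homological shift $[1]$ and the $\lambda$-shift, while the dots give the $q^{2p}$ factors — exactly reproducing the two summands $\bigoplus_{p\geq 0}\lambda q^{\cdots}(-)[1]$ and $\bigoplus_{p\geq 0}\lambda^{-1}q^{\cdots}(-)$ in the definition of $\oplus_{[\beta+|\underline{N}|-2b]_q}$.

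The second step is to promote this bimodule decomposition to a statement about dg-functors on the derived categories. Because $(T^{\lambda,\underline{N}}_{b+1},0)$ is cofibrant as a one-sided dg-module over $(T^{\lambda,\underline{N}}_{b},0)$ (this is noted in the text, following \cref{prop:Tdecomp}), the derived tensor products computing $\F_b\E_b$ and $\E_b\F_b$ can be replaced by ordinary tensor products, so the adjunction map $\F_{b-1}\E_{b-1}\to\E_b\F_b$ is induced by an honest bimodule map, and its cone is computed by the ordinary mapping cone \cref{eq:cone}. One then identifies the cokernel of this bimodule map with the sum of shifted identity bimodules described above; since the map is a split injection of graded $\Bbbk$-modules, the short exact sequence of bimodules gives that the cone is quasi-isomorphic (in fact isomorphic) to that cokernel bimodule, which represents $\oplus_{[\beta+|\underline{N}|-2b]_q}\id_b$. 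The bookkeeping of the grading shifts must be done carefully: the shift $\lambda^{-1}q^{1+2b-|\underline{N}|}$ hidden in the definition of $\E_b$ has to combine with the degrees of the tightened nail $\theta_k$ (degree $(1, -4(k-1)+2(N_1+\dots+N_i), 2)$) and of dots to yield precisely the exponents in $\oplus_{[\beta+k]_q}$ with $k=|\underline{N}|-2b$.

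The third step is to actually construct the comparison map $\F_{b-1}\E_{b-1}\to\E_b\F_b$ and verify it is the ``right'' one (i.e.\ the unit/counit-type map that one expects from the $\mathfrak{sl}_2$ formalism), so that the cone really is the claimed infinite coproduct and not some other extension. Here I would follow the template of \cite[\S5]{naissevaz3}: the map comes from resolving the identity strand through a crossing, and the relevant algebraic input is the ``black/red'' relation \cref{eq:redR3} together with the nilHecke relations \cref{eq:nhR2andR3, eq:nhdotslide}, which govern how a black strand slides past the red strands and past the nail, producing the dot-power terms. One should double-check that there are no hidden contributions from the violating condition (absent here, since we work with the affine/non-cyclotomic $\widetilde T^{\underline N}_b$ plus the nail), so the coproduct is genuinely infinite — this is the structural reason we get $M(\lambda)\otimes V(\underline{N})$ rather than an integrable module.

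\textbf{Main obstacle.} The hard part will be the grading/homological bookkeeping in the bimodule decomposition: separating the $\F_{b-1}\E_{b-1}$ summand cleanly from the identity summands requires a careful choice of basis elements adapted to the extra strand and the nail, and then matching, shift by shift, the two infinite families of identity summands (nailed vs.\ non-nailed, with $p$ dots) against the two series in $\oplus_{[\beta+|\underline{N}|-2b]_q}$. A secondary subtlety is ensuring the relevant cofibrancy so that the derived cone equals the naive cone — but since this is already asserted via \cref{prop:Tdecomp}, the real work is the explicit combinatorial identification of the cokernel bimodule, which I expect to be deferred (as the paper says elsewhere) to the computational appendix.
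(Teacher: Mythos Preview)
Your proposal is correct and follows essentially the same route as the paper: construct the bimodule map $\psi\colon q^{-2}\,T^{\lambda,\underline{N}}_b 1_{b-1,1}\otimes_{b-1}1_{b-1,1}T^{\lambda,\underline{N}}_b\to 1_{b,1}T^{\lambda,\underline{N}}_{b+1}1_{b,1}$ by inserting the crossing $\tau_b$, then use \cref{prop:Tdecomp} to identify the cokernel with the infinite direct sum of shifted identity bimodules (nailed and non-nailed pieces giving the two series in $\oplus_{[\beta+|\underline{N}|-2b]_q}$), with exactness coming from a dimension count. The one subtlety the paper singles out that you underemphasize is that the projection $\phi$ onto the identity summands is \emph{a priori} only a map of left modules, and showing it is a bimodule map requires a separate argument as in \cite[Lemma~5.4]{naissevaz3}; your ``main obstacle'' of grading bookkeeping is real but secondary to this.
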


\begin{proof}
Consider the map 
\[
\psi : q^{-2} (T^{\lambda,\underline{N}}_{b}  1_{b-1,1}\otimes_{b-1} 1_{b-1,1} T^{\lambda,\underline{N}}_b) \rightarrow1_{b,1}  T^{\lambda,\underline{N}}_{b+1} 1_{b,1},
\]
given by 
\[
x \otimes_{b-1} y \mapsto x \tau_b y,
\]
where $\tau_b$ is a crossing between the $b$-th and $(b+1)$-th black strands. Diagrammatically, one can picture it as
\[
\tikzdiag[xscale=.75,yscale=.75]{
	\draw (0,-1.25) -- (0,1.25);
	\draw (.5,-1.25) -- (.5,1.25);
	\draw (1.5,-1.25) -- (1.5,1.25);
	\draw (2,-1.25) -- (2,-.5) .. controls (2,-.25) .. (2.25,-.25);
	\draw (2,1.25) -- (2,.5) .. controls (2,.25) .. (2.25,.25);
	\node at(1,1.2) {\small $\dots$};
	\filldraw [fill=white, draw=black] (-.25,-1) rectangle (2.25,-.5);
	\node at(1,0) {\small $\dots$}; 
	\filldraw [fill=white, draw=black] (-.25,.5) rectangle (2.25,1);
	\node at(1,-1.2) {\small $\dots$};
}
\ \mapsto \  
\tikzdiag[xscale=.75,yscale=.75]{
	\draw (0,-1.25) -- (0,1.25);
	\draw (.5,-1.25) -- (.5,1.25);
	\draw (1.5,-1.25) -- (1.5,1.25);
	\draw (2,-1.25) -- (2,-.5) .. controls (2,0) and (2.5,0) .. (2.5,.5) -- (2.5,1.25);
	\draw (2,1.25) -- (2,.5) .. controls (2,0) and (2.5,0) .. (2.5,-.5) -- (2.5,-1.25);
	\node at(1,1.2) {\small $\dots$};
	\filldraw [fill=white, draw=black] (-.25,-1) rectangle (2.25,-.5);
	\node at(1,0) {\small $\dots$};
	\filldraw [fill=white, draw=black] (-.25,.5) rectangle (2.25,1);
	\node at(1,-1.2) {\small $\dots$};
}
\]
where the bent black strands informally depict the induction/restriction functors. 
Then, as in \cite[Theorem 5.1]{naissevaz3}, we obtain an exact sequence of $(T^{\lambda,\underline{N}}_{b},T^{\lambda,\underline{N}}_{b})$-bimodules
\begin{align*}
0 \rightarrow q^{-2} (T^{\lambda,\underline{N}}_{b}  1_{b-1,1}\otimes_{b-1} 1_{b-1,1} T^{\lambda,\underline{N}}_b) &\xrightarrow{\ \psi\ }1_{b,1}  T^{\lambda,\underline{N}}_{b+1} 1_{b,1}
\\
 &\xrightarrow{\ \phi\ } \bigoplus_{p\geq 0} q^{2p}  (T^{\lambda,\underline{N}}_{b}) \oplus \lambda^2 q^{2p+2|\underline{N}|-4b} (T^{\lambda,\underline{N}}_b) \rightarrow 0,
\end{align*}
where $\phi$ is the projection onto the following summands 
\begin{align*}
\bigoplus_{p \geq 0} \ 
\tikzdiag[xscale=1.25]{
	\draw (0,-.5) -- (0,1);
	\node at(.25,-.35) {\tiny $\dots$};
	\draw (.5,-.5) -- (.5,1);
	\draw[decoration={brace,mirror,raise=-8pt},decorate]  (-.1,-.85) -- node {\small $b_0$} (.6,-.85);
	\draw [stdhl] (.75,-.5)  node[below,yshift={-1ex}]{\small $N_1$} -- (.75,1);
	\node[red] at(1.125,-.35) { $\dots$};
	\draw [stdhl] (1.5,-.5)  node[below,yshift={-1ex}]{\small $N_{r}$} -- (1.5,1);
	\draw (1.75,-.5) -- (1.75,1);
	\node at(2,-.35) {\tiny $\dots$};
	\draw (2.25,-.5) -- (2.25,1);
	\draw[decoration={brace,mirror,raise=-8pt},decorate]  (1.65,-.85) -- node {\small $b_{r}-1$} (2.35,-.85);
	\draw (2.75, -.5) -- (2.75, 1.25) node[pos=.825,tikzdot]{} node[pos=.825, xshift=1.5ex, yshift=1ex]{\small $p$};
	%
	\draw [vstdhl] (-.25,-.5) node[below,yshift={-1ex}]{\small $\lambda$} -- (-.25,1);
	\filldraw [fill=white, draw=black] (-.375,.5) rectangle (2.375,1.25) node[midway] { $T_{b-1}^{\lambda,\underline{N}}$};
}
\oplus
 \ 
\tikzdiag[xscale=1.25]{
	\draw (0,-.5) -- (0,1);
	\node at(.25,-.35) {\tiny $\dots$};
	\draw (.5,-.5) -- (.5,1);
	\draw[decoration={brace,mirror,raise=-8pt},decorate]  (-.1,-.85) -- node {\small $b_0$} (.6,-.85);
	\draw (2.75, -.5) .. controls (2.75,-.25) .. (-.5,0) .. controls (2.75,.25) ..  (2.75, .75) ;
	\draw [stdhl] (.75,-.5)  node[below,yshift={-1ex}]{\small $N_1$} -- (.75,1);
	\node[red] at(1.125,-.35) { $\dots$};
	\draw [stdhl] (1.5,-.5)  node[below,yshift={-1ex}]{\small $N_{r}$} -- (1.5,1);
	\draw (1.75,-.5) -- (1.75,1);
	\node at(2,-.35) {\tiny $\dots$};
	\draw (2.25,-.5) -- (2.25,1);
	\draw[decoration={brace,mirror,raise=-8pt},decorate]  (1.65,-.85) -- node {\small $b_{r}-1$} (2.35,-.85);
	\draw (2.75, .75) -- (2.75, 1.25) node[pos=.5,tikzdot]{} node[midway, xshift=1.5ex, yshift=1ex]{\small $p$};
	%
	\draw[fill=white, color=white] (-.35,0) circle (.15cm);
	\draw [vstdhl] (-.25,-.5) node[below,yshift={-1ex}]{\small $\lambda$} -- (-.25,1) node[pos=.33, nail]{};
	\filldraw [fill=white, draw=black] (-.375,.5) rectangle (2.375,1.25) node[midway] { $T_{b-1}^{\lambda,\underline{N}}$};
}
\end{align*}
of \cref{prop:Tdecomp} (i.e. when $i=r$ and $t=b_r-1$). 
Note that, a priori, this only defines a map of left modules. Fortunately, by applying similar arguments as in~\cite[Lemma 5.4]{naissevaz3}, it is possible to show that it defines a map of bimodules. Exactness follows from a dimensional argument using \cref{prop:Tdecomp}. 
\end{proof}

\subsubsection{Recovering $V(N)\otimes V(\protect\underline{N})$}

Introducing the differential $d_N$ from \cref{sec:dgenh} in the picture, the map \cref{eq:addblackstrand} lifts to a map of dg-algebras $(T^{\lambda,\underline{N}}_b,d_N) \rightarrow (T^{\lambda,\underline{N}}_{b+1},d_N)$. 
Then we define dg-functors
\begin{align*}
\F_b^N(-) &:= (T^{\lambda,\underline{N}}_{b+1},d_N) 1_{b,1} \otimes_b (-),
&
\E_b^N(-) &:=  q^{2b-|\underline{N}|-N} 1_{b,1}(T^{\lambda,\underline{N}}_{b+1},d_N) \otimes_{b+1} (-).
\end{align*}
These corresponds with derived induction and (shifted) derived restriction dg-functors along \cref{eq:addblackstrand}, by \cref{prop:Tdecomp} again. 

Recall the notion of a strongly projective dg-module from  \cite{moore}  (or see \cref{sec:stronglyproj}). 

\begin{prop}\label{prop:stronglyproj}
As $(T^{\lambda,\underline{N}}_b,d_N)$-module, $(T^{\lambda,\underline{N}}_{b+1},d_N)$ is strongly projective.  
\end{prop}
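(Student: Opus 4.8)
The statement to prove is \cref{prop:stronglyproj}: as a $(T^{\lambda,\underline{N}}_b,d_N)$-dg-module, the algebra $(T^{\lambda,\underline{N}}_{b+1},d_N)$ is strongly projective. The plan is to leverage the decomposition of \cref{prop:Tdecomp}, which expresses $T_b^{\lambda,\underline{N}} 1_\rho$ — and hence, summing over $\rho$, the whole algebra $T_{b+1}^{\lambda,\underline{N}}$ viewed as a right-acting copy of $T_b^{\lambda,\underline{N}}$ on the left — as a direct sum of pieces, each of which is (up to grading shift) either a copy of a smaller dg-enhanced KLRW algebra with the blue/red data adjusted, or a free $T_{b-1}^{\lambda,\underline{N}}$-module-type summand with a dot-power tail, or such a summand capped off with a nail. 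The key observation is that each of these summands is a direct summand of a \emph{finitely generated free} module over $(T^{\lambda,\underline{N}}_b, d_N)$, possibly after a filtration argument for the dot tails which contribute an infinite (but exhaustive) increasing filtration.

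**Main steps in order.**

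First I would recall the definition of strongly projective from \cite{moore} (cf. \cref{sec:stronglyproj}): a dg-module $P$ over $(A,d_A)$ is strongly projective if it is a direct summand of a dg-module admitting an exhaustive filtration $0 = F_0 \subset F_1 \subset \cdots$ whose successive quotients $F_{i+1}/F_i$ are direct sums of shifts of $(A,d_A)$ itself (equivalently, $P$ is cofibrant and its underlying graded module is projective, or "property (P)" holds). Second, I would apply \cref{prop:Tdecomp} with the differential $d_N$ turned on: the crucial point is that the isomorphism there is an isomorphism of $(T_b^{\lambda,\underline{N}}, d_N)$-$\Bbbk$-modules that is compatible with $d_N$, since $d_N$ is defined by the Leibniz rule and acts on a nail by replacing it with a dotted strand — so the decomposition is a decomposition of \emph{dg}-modules once we track how $d_N$ moves between the "nail" summands and the "dot" summands. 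Third, for the summands of the form $T_b^{\lambda,\underline{N'}}$ (resp. the $T_{b-1}^{\lambda,\underline{N}}$-boxes), I would observe these are, as left $(T_b^{\lambda,\underline{N}}, d_N)$-modules, obtained by restriction of scalars along a nonunital algebra map and are themselves free of finite rank over the relevant smaller algebra; combined with induction these give honest free modules, so they are strongly projective. Fourth, for the infinite families $\bigoplus_{p \geq 0}$ of dotted summands, I would set up the standard exhaustive filtration by the dot-degree $p$: the subquotient at level $p$ is a shift of a free $(T_b^{\lambda,\underline{N}},d_N)$-module, and $d_N$ is compatible with this filtration because adding a dot to the tail strand does not decrease $p$ (indeed $d_N$ either kills a dotted diagram or, on a nailed one, produces a diagram with a dot, landing in an appropriate filtration level). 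Assembling these, $(T_{b+1}^{\lambda,\underline{N}}, d_N)$ is a direct sum of strongly projective dg-modules, hence strongly projective.

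**Expected main obstacle.**

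The technical heart is making precise that the direct sum decomposition of \cref{prop:Tdecomp} is compatible with the differential $d_N$ — in the dg-enhanced setting $d_N$ mixes the nail summands with the dotted-strand summands, so the decomposition is \emph{not} a direct sum of sub-dg-modules but only a direct sum of the underlying graded modules, and one has to organize the pieces into a filtration (by number of nails plus dot-degree, say) whose subquotients split off as free dg-modules. This is exactly the kind of argument used in \cite[Theorem 4.4]{naissevaz3} and in the proof of \cref{thm:dNformal}, so the strategy is to mimic it: build the filtration $F_\bullet$ on $(T_{b+1}^{\lambda,\underline{N}}, d_N)$ so that $d_N(F_i) \subseteq F_i$ and each $F_{i+1}/F_i$ is a finite direct sum of grading-shifted copies of $(T_b^{\lambda,\underline{N}}, d_N)$ with zero induced differential on that subquotient, then invoke the definition of strongly projective directly. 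The remaining bookkeeping — checking that the induction/restriction identifications are $d_N$-equivariant and that grading shifts are bounded below so the filtration is exhaustive and degreewise finite — is routine given the basis \cref{thm:Tbasis} and \cref{prop:Tdecomp}, and I would leave those verifications to the reader as the paper does elsewhere.
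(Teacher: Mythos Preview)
Your overall instinct to use the decomposition of \cref{prop:Tdecomp} is correct, and this is indeed what the argument referenced in \cite[Proposition~5.15]{naissevaz3} rests on. However, you have misidentified the definition of \emph{strongly projective}. The notion used in this paper (see \cref{def:stronglyproj}, following \cite{moore} and \cite[Definition~8.17]{sixdgmodels}) is not ``direct summand of something with property~(P)''; rather, $(P,d_P)$ is strongly projective over $(A,d_A)$ if it is a direct summand of a dg-module of the form $(A,d_A)\otimes_\Bbbk (Q,d_Q)$ where $(Q,d_Q)$ is a dg-$\Bbbk$-module with both $H(Q,d_Q)$ and $\Image(d_Q)$ projective over $\Bbbk$. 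Your filtration argument would establish that $(T_{b+1}^{\lambda,\underline N},d_N)$ is cofibrant (semi-free), which is a different property; over a general commutative ring $\Bbbk$ (as allowed here) this does not imply strong projectivity, and in any case the conditions on $H(Q)$ and $\Image(d_Q)$ are never checked in your outline.

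The repair is direct. Use \cref{prop:Tdecomp} (applied with $b$ replaced by $b+1$) to identify $(T^{\lambda,\underline N}_{b+1},d_N)$, as a left $(T^{\lambda,\underline N}_{b},d_N)$-dg-module, with $(T^{\lambda,\underline N}_{b},d_N)\otimes_\Bbbk (Q,d_Q)$, where $Q$ is the free $\Bbbk$-module spanned by the ``rightmost strand data'' appearing in the decomposition (position of the extra black strand, number of dots $p$, and presence or absence of a nail), and $d_Q$ records the effect of $d_N$ on that nail. One then checks from this explicit description that $H(Q,d_Q)$ and $\Image(d_Q)$ are free $\Bbbk$-modules, which is the actual content one must verify. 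Your observation that $d_N$ mixes nailed and dotted summands is exactly what makes $d_Q$ nontrivial; but the conclusion to draw from it is a tensor-product identification, not a filtration.
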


\begin{proof}
As in \cite[Proposition 5.15]{naissevaz3}, and omitted.
\end{proof}

By \cref{prop:stronglyproj}, \cref{thm:sl2comqi} can be seen as a quasi-isomorphism of mapping cones
\[
\cone\bigl(\F_{b-1}^N\E_{b-1}^N \rightarrow \E_b^N\F_b^N\bigr) \xrightarrow{\simeq} \cone\bigl(\bigoplus_{p \geq 0} q^{1+2p+N+|\underline{N}|-2b} \id_b \xrightarrow{h_N} \bigoplus_{p \geq 0} q^{1+2p-N-|\underline{N}|+2b} \id_b \bigr),
\]
where $h_N$ is given by multiplication by the element
\[
\tikzdiag[xscale=1.25]{
	\draw (0,-1) -- (0,1);
	\node at(.25,-.85) {\tiny $\dots$};
	\node at(.25,.85) {\tiny $\dots$};
	\draw (.5,-1) -- (.5,1);
	\node[red] at(1.125,-.85) { $\dots$};
	\node[red] at(1.125,.85) { $\dots$};
	\draw (1.75,-1) -- (1.75,1);
	\node at(2,-.85) {\tiny $\dots$};
	\node at(2,.85) {\tiny $\dots$};
	\draw (2.25,-1) -- (2.25,1);
	\draw (2.5,-1) .. controls (2.5,-.25) and (-.5,-.25) ..  (-.25,0) node[pos=1,tikzdot]{} node[pos=1,xshift=-1.5ex,yshift=.75ex]{\small $N$} .. controls (-.5,.25) and (2.5,.25) ..  (2.5,1);
	\draw [stdhl] (1.5,-1)  node[below,yshift={-1ex}]{\small $N_r$} -- (1.5,1);
	\draw [stdhl] (.75,-1)  node[below,yshift={-1ex}]{\small $N_1$} -- (.75,1);
	\draw [vstdhl] (-.75,-1) node[below,yshift={-1ex}]{\small $\lambda$} -- (-.75,1) ;
	}
\]

\begin{prop}\label{prop:actionN}
There is a quasi-isomorphism
\[
\cone\bigl(\bigoplus_{p \geq 0} q^{1+2p+N+|\underline{N}|-2b} \id_b \xrightarrow{h_N} \bigoplus_{p \geq 0} q^{1+2p-N-|\underline{N}|+2b} \id_b \bigr)
\xrightarrow{\cong}
\oplus_{[N+|\underline{N}|-2b]_q} \id_b,
\]
where $\oplus_{[-k]_q} M := \oplus_{[k]_q} M[1]$.
\end{prop}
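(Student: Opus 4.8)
The plan is to compute the mapping cone of $h_N$ explicitly and identify it with $\oplus_{[N+|\underline{N}|-2b]_q}\id_b$, following the corresponding step in \cite{naissevaz3}. Put $m := N + |\underline{N}| - 2b$. Since $(T^{\lambda,\underline{N}}_b, d_N)$ is formal with homology the cyclotomic KLRW algebra $T^{(N,\underline{N})}_b$ (\cref{thm:dNformal}) and, by \cref{prop:stronglyproj}, $(T^{\lambda,\underline{N}}_{b+1}, d_N)$ is strongly projective over $(T^{\lambda,\underline{N}}_b, d_N)$, one may carry out the computation inside $\cD_{dg}(T^{(N,\underline{N})}_b, 0)$, where $h_N$ becomes multiplication by the image $c$ of the depicted element --- the leftmost black strand dragged around the red strand labelled $N$ and carrying $N$ dots --- which is a central element.

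First I would make $h_N$ explicit on the underlying graded modules. Using the decomposition of \cref{prop:Tdecomp} (equivalently its cyclotomic shadow), the $p$-th summand $q^{1+2p\pm m}\id_b$ of each of the two infinite coproducts is realized by the standard generator carrying $p$ extra dots on the dragged strand, the overall shifts being arranged so that multiplication by $c$ is degree-preserving. Pushing the $N$ dots of $c$ through the two crossings of the dragged strand against the first red strand by means of the black/red relations \eqref{eq:redR2} and \eqref{eq:redR3}, one checks that $c$ sends the generator of the $p$-th summand of the source to $\varepsilon$ times the generator of the $(p+m)$-th summand of the target, up to terms supported on summands of strictly smaller index, where $\varepsilon = \pm 1$ is a fixed sign and the $(p+m)$-th summand is understood to be $0$ when $p+m<0$. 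In short, $h_N$ is ``lower triangular with invertible leading coefficient'' for the filtration by this index.

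The cone is then read off from triangularity together with the shape of the infinite coproduct. If $m \geq 0$, the map $h_N$ is injective, so $\cone(h_N) \simeq \cok(h_N)$; the cokernel is the span of the target summands of indices $0,\dots,m-1$, which carry the shifts $q^{1-m},q^{3-m},\dots,q^{m-1}$, so $\cok(h_N) \cong \oplus_{[m]_q}\id_b$. If $m < 0$, write $m = -k$ with $k>0$; then $h_N$ is surjective, $\cone(h_N) \simeq \ker(h_N)[1]$, and the kernel is the span of the source summands of indices $0,\dots,k-1$, carrying the shifts $q^{1-k},\dots,q^{k-1}$, so $\cone(h_N) \simeq \oplus_{[k]_q}\id_b[1] = \oplus_{[-k]_q}\id_b = \oplus_{[m]_q}\id_b$. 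For $m = 0$, $h_N$ is an isomorphism and $\cone(h_N) \simeq 0$. In all cases $\cone(h_N) \simeq \oplus_{[N+|\underline{N}|-2b]_q}\id_b$, as claimed. As a sanity check, the class of $\cone(h_N)$ in the asymptotic Grothendieck group is the formal difference $\sum_{p\geq 0}q^{1+2p-m} - \sum_{p\geq 0}q^{1+2p+m}$, which sums to $[m]_q$, matching the right-hand side under the convention $\oplus_{[-k]_q}M := \oplus_{[k]_q}M[1]$.

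The one genuinely computational ingredient, and the main obstacle, is the triangularity statement: that after realizing the coproducts via \cref{prop:Tdecomp}, multiplication by $c$ is lower triangular with a unit leading coefficient. This I would prove by induction on $b$, peeling off the rightmost block of black strands with \cref{prop:Tdecomp} and passing to the cyclotomic algebra via \cref{thm:dNformal}, thereby reducing to the case of cyclotomic nilHecke (or KLR) algebras, where the required statement is precisely the $\mathfrak{sl}_2$ categorical commutator relation already established in \cite{webster}. Everything else --- the degree bookkeeping and the passage from injectivity (resp.\ surjectivity) of $h_N$ to the quasi-isomorphism with $\cok(h_N)$ (resp.\ $\ker(h_N)[1]$) --- is formal.
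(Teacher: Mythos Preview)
Your overall strategy --- show that $h_N$ is triangular with unit leading entry and read off the cone --- is correct and is precisely the line of argument in \cite[Proposition~5.9]{naissevaz3} to which the paper defers. The cone analysis in your middle paragraph is fine as written.

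However, your description of the element $c$ is inaccurate in ways that would derail the actual computation. It is the \emph{rightmost} (extra, $(b{+}1)$-th) black strand that is dragged to the left around \emph{all} the other black and red strands, with the $N$ dots at the innermost position next to the blue strand; after formality the blue becomes the leftmost red labelled $N$, and the dragged strand does \emph{not} cross it. Moreover $c$ is not central in $T^{(N,\underline{N})}_{b+1}$: already in $\nh_2^N$ one finds $c=h_{N-1}(x_1,x_2)\tau$, which fails to commute with $x_1$. The map $h_N$ is not ``multiplication by a central element''; it is a matrix of $T_b$-bimodule endomorphisms (each of those is multiplication by a central element of $T_b$), obtained by applying $d_N$ to the nailed generator with $p$ dots and projecting to the quotient basis of \cref{prop:Tdecomp}. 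With the correct picture, unwinding the wrap via \eqref{eq:nhdotslide}, \eqref{eq:redR2}, \eqref{eq:redR3} shows that modulo $\im(\psi)$ the image of the $p$-th nailed generator is $\pm x_{b+1}^{p+m}$ plus terms with strictly fewer dots on the extra strand, and your cone argument then applies.

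One stylistic remark: your proposed inductive proof of triangularity ends by invoking Webster's cyclotomic $\mathfrak{sl}_2$ commutator relation, but once you are willing to cite that, the triangularity detour is unnecessary. The SES in the proof of \cref{thm:sl2comqi} already identifies $\cone(h_N)$ with the quotient $Q$; formality together with strong projectivity (\cref{prop:stronglyproj}) identifies $H(Q)$ with the cyclotomic $\cok(\psi)$ (or $\ker(\psi)[1]$ when $m<0$), which Webster computes directly as $\oplus_{[m]_q}\id_b$. Either route is valid; the direct triangularity computation is more hands-on, the SES route cleaner.
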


\begin{proof}
As in \cite[Proposition 5.9]{naissevaz3}, and omitted.
\end{proof}

\subsubsection{Induction along red strands}\label{sec:redind}

Take $\underline{N} = (N_1, \dots, N_r)$ and $\underline N ' = (\underline N, N_{r+1})$. 
Consider the (non-unital) map of algebras $T^{\lambda,\underline{N}}_b \rightarrow T^{\lambda,\underline{N'}}_{b}$ that consists in adding a vertical red strand labeled $N_{r+1}$ at the right a diagram:
\begin{equation*}
\tikzdiagh[xscale=1.25]{0}{
	\draw [vstdhl] (-.25,0) node[below]{\small $\lambda$} -- (-.25,1);
	\draw (0,0) -- (0,1);
	\node at(.25,.125) {\tiny $\dots$};
	\node at(.25,.875) {\tiny $\dots$};
	\draw (.5,0) -- (.5,1);
	\draw [stdhl] (.75,0)  node[below]{\small $N_1$} -- (.75,1);
	\node[red] at(1.125,.125) { $\dots$};
	\node[red] at(1.125,.875) { $\dots$};
	\draw [stdhl] (1.5,0)  node[below]{\small $N_{r-1}$} -- (1.5,1);
	\draw (1.75,0) -- (1.75,1);
	\node at(2,.125) {\tiny $\dots$};
	\node at(2,.875) {\tiny $\dots$};
	\draw (2.25,0) -- (2.25,1);
	\draw [stdhl] (2.5,0)  node[below]{\small $N_{r}$} -- (2.5,1);
	\draw (2.75,0) -- (2.75,1);
	\node at(3,.125) {\tiny $\dots$};
	\node at(3,.875) {\tiny $\dots$};
	\draw (3.25,0) -- (3.25,1);
	\filldraw [fill=white, draw=black] (-.375,.25) rectangle (3.375,.75) node[midway] { $D$};
}
\ \mapsto \ 
\tikzdiagh[xscale=1.25]{0}{
	\draw [vstdhl] (-.25,0) node[below]{\small $\lambda$} -- (-.25,1);
	\draw (0,0) -- (0,1);
	\node at(.25,.125) {\tiny $\dots$};
	\node at(.25,.875) {\tiny $\dots$};
	\draw (.5,0) -- (.5,1);
	\draw [stdhl] (.75,0)  node[below]{\small $N_1$} -- (.75,1);
	\node[red] at(1.125,.125) { $\dots$};
	\node[red] at(1.125,.875) { $\dots$};
	\draw [stdhl] (1.5,0)  node[below]{\small $N_{r-1}$} -- (1.5,1);
	\draw (1.75,0) -- (1.75,1);
	\node at(2,.125) {\tiny $\dots$};
	\node at(2,.875) {\tiny $\dots$};
	\draw (2.25,0) -- (2.25,1);
	\draw [stdhl] (2.5,0)  node[below]{\small $N_{r}$} -- (2.5,1);
	\draw (2.75,0) -- (2.75,1);
	\node at(3,.125) {\tiny $\dots$};
	\node at(3,.875) {\tiny $\dots$};
	\draw (3.25,0) -- (3.25,1);
	\filldraw [fill=white, draw=black] (-.375,.25) rectangle (3.375,.75) node[midway] { $D$};
	\draw [stdhl]  (3.5,0)  node[below]{\small $N_{r+1}$}  -- (3.5,1);
}
\end{equation*}
Let $\mathfrak{I} : \cD_{dg}(T^{\lambda,\underline{N}}_{b},0) \rightarrow \cD_{dg}(T^{\lambda,\underline{N}'}_{b},0)$ be the corresponding induction dg-functor, and let $ \mathfrak{\bar I} : \cD_{dg}(T^{\lambda,\underline{N}'}, 0) \rightarrow \cD_{dg}(T^{\lambda,\underline{N}}_{b},0) $ be the restriction dg-functor. 

\begin{prop}\label{prop:indresredstrand}
There is an isomorphism $ \mathfrak{\bar I} \circ \mathfrak{I} \cong \id$. 
\end{prop}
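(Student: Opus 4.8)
The plan is to reduce the statement to the assertion that adding the red strand labeled $N_{r+1}$ induces an isomorphism between $T^{\lambda,\underline{N}}_b$ and a corner of $T^{\lambda,\underline{N'}}_b$, after which $\mathfrak{\bar I}\circ\mathfrak I\cong\id$ follows by a formal manipulation of derived tensor products.

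Write $A := T^{\lambda,\underline{N}}_b$ and $B := T^{\lambda,\underline{N'}}_b$, and let $\iota\colon A\to B$ be the (non-unital) algebra map above; it sends the unit of $A$ to the idempotent $e := \sum_{\rho\in\mathcal P_b^r}1_{(\rho,0)}\in B$, where $(\rho,0) := (b_0,\dots,b_r,0)\in\mathcal P_b^{r+1}$ for $\rho=(b_0,\dots,b_r)$. Unwinding the definitions, $\mathfrak I(-) = Be\Lotimes_A(-)$ while $\mathfrak{\bar I}(M)=eM$. Since the differentials are trivial, $eB$ is a direct summand of $B$ as a right $B$-module, hence cofibrant, so $\mathfrak{\bar I}(M)=eB\otimes_B M=eB\Lotimes_B M$ and likewise $eB\Lotimes_B Be=eB\otimes_B Be=eBe$. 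Associativity of the derived tensor product therefore yields, naturally in $N$,
\[
\mathfrak{\bar I}\circ\mathfrak I(N)\ \simeq\ eB\Lotimes_B\bigl(Be\Lotimes_A N\bigr)\ \simeq\ \bigl(eB\Lotimes_B Be\bigr)\Lotimes_A N\ \simeq\ eBe\Lotimes_A N .
\]

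\textbf{Key step.} I would next show that $\iota$ restricts to an isomorphism of $\bZ\times\bZ^2$-graded algebras $A\xrightarrow{\ \cong\ }eBe$. Since $eBe=\bigoplus_{\kappa,\rho\in\mathcal P_b^r}1_{(\kappa,0)}B1_{(\rho,0)}$, it is enough to see that $\iota$ maps $1_\kappa A1_\rho$ bijectively onto $1_{(\kappa,0)}B1_{(\rho,0)}$ preserving all gradings. By \cref{thm:Tbasis} these spaces have bases ${}_\kappa B_\rho$ and ${}_{(\kappa,0)}B_{(\rho,0)}$ respectively. In both $1_{(\rho,0)}$ and $1_{(\kappa,0)}$ every black strand lies to the left of the rightmost red strand, the one labeled $N_{r+1}$; as red strands never cross, this rightmost red strand is crossed by no other strand in any reduced expression of an element of ${}_{(\kappa,0)}S_{(\rho,0)}$, and deleting it gives a bijection ${}_{(\kappa,0)}S_{(\rho,0)}\cong{}_\kappa S_\rho$ which carries left-adjusted expressions to left-adjusted expressions and preserves the homological, $q$- and $\lambda$-degrees (the deleted strand contributes nothing, precisely because no black strand crosses it). This bijection is moreover compatible with the choice of nails $\underline l\in\{0,1\}^b$ and dots $\underline a\in\bN^b$, which live on the black strands, so it upgrades to a grading-preserving bijection ${}_\kappa B_\rho\cong{}_{(\kappa,0)}B_{(\rho,0)}$, and by construction it sends $b_{w,\underline l,\underline a}$ to $\iota(b_{w,\underline l,\underline a})$. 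Hence $\iota\colon A\to eBe$ is a grading-preserving linear isomorphism, and being an algebra map, an isomorphism of graded algebras.

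Finally, transporting along $\iota$ identifies the $(A,A)$-bimodule $eBe$ (the left and right actions being given by $\iota$) with the regular bimodule ${}_AA_A$, so $eBe\Lotimes_A N\simeq A\Lotimes_A N\simeq N$, naturally in $N$. Combined with the display above this proves $\mathfrak{\bar I}\circ\mathfrak I\cong\id$. The only genuinely non-formal point is the basis comparison in the key step: one must be certain that in a reduced (equivalently, left-adjusted) expression for a diagram in ${}_{(\kappa,0)}S_{(\rho,0)}$ no black strand crosses the $N_{r+1}$-strand, so that it may be deleted without affecting the degree. This holds because a strand both of whose endpoints lie on the same side of a strand it need not pass has no crossing with it in a reduced word; but it is the step that uses the combinatorics of $T^{\lambda,\underline{N}}_b$ rather than abstract nonsense, and it is where I would be careful.
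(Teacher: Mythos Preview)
Your proof is correct and follows essentially the same route as the paper. The paper's proof is the single line ``The statement follows from \cref{prop:Tdecomp}'', and the content of that corollary, when applied to $B\,1_{(\rho,0)}$, is precisely that the first summand (the one where the rightmost red strand is not crossed) is $\iota(A\,1_\rho)$ while all remaining summands land in $(1-e)B$; this is exactly your basis comparison via \cref{thm:Tbasis}, just repackaged. Your worry at the end is harmless: since every black strand lies to the left of the $N_{r+1}$-strand at both endpoints and red strands never cross, no reduced word can involve a crossing with that strand.
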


\begin{proof}
The statement follows from \cref{prop:Tdecomp}. 
\end{proof}

\subsection{Categorification theorem}

In this section we suppose that $\Bbbk$ is a field. 
Recall the notion of an asymptotic Grothendieck group $\bKO^\Delta$ from \cite[\S 8]{asympK0} (or see \cref{sec:asympK0}). 
Since $(T^{\lambda,\underline{N}}_{b},0)$ is a positive c.b.l.f. dimensional $\bZ^2$-graded dg-algebra (see \cref{def:positivecblfdgalg}), we have by \cref{thm:triangtopK0genbyPi} that $\bKO^\Delta(T^{\lambda,\underline{N}}_{b},0)$ is a free $\bZ\pp{q,\lambda}$-module generated by the classes of projective $T^{\lambda,\underline{N}}_{b}$-modules with a trivial differential. 
Let ${}_\bQ \bKO^\Delta(-) := \bKO^\Delta(-) \otimes_{\bZ\pp{q,\lambda}} \bQ\pp{q,\lambda}$.

\smallskip

For each $\rho \in \mathcal{P}_{b}^{r}$, there is a projective $T^{\lambda,\underline{N}}_{b}$-module given by 
\[
\BP_{\rho} := 
T^{\lambda,\underline{N}}_{b}1_\rho.
\]
Recall the inclusion 
$\eta_\rho \colon A_{b_0}\otimes \nh_{b_1} \otimes \cdots \otimes \nh_{b_r} \hookrightarrow T_b^{\lambda,\underline{N}}$ 
defined in \cref{eq:defetainclusion}. 
It is well-known (see for example~\cite[\S~2.2.3]{KL1}) that $\nh_n$ admits a unique primitive idempotent up to equivalence given by
\[
e_{n} := \tau_{\vartheta_n} x_1^{n-1} x_2^{n-2} \cdots x_{n-1} \in \nh_n,
\]
where $\vartheta_n \in S_n$ is the longest element, $\tau_{w_1w_2\cdots w_k} := \tau_{w_1}\tau_{w_2}\cdots\tau_{w_k}$, with $\tau_i$ being a crossing between the $i$-th and $(i+1)$-th strands, and $x_i$ is a dot on the $i$-th strand. 
There is a similar result for $\nh_{b_0} \subset A_{b_0}$ (see \cite[\S 2.5.1]{naissevaz2}).
Moreover, for degree reasons, any primitive idempotent of $T^{\lambda,\underline{N}}_{b}$ is the image of a collection of idempotents under the inclusion $\eta_\rho $ for some $\rho$, and thus is of the form
\[
e_{\rho} := \eta_\rho \left( e_{b_1} \otimes \cdots \otimes e_{b_n} \right).
\]

It is also well-known (see for example~\cite[\S~2.2.3]{KL1}) that there is a decomposition
\[
\nh_n \cong q^{n(n-1)/2} \bigoplus_{[n]_q!} \nh_n e_n,
\]
as left $\nh_n$-modules. For the same reasons, we obtain
\[
\BP_\rho \cong q^{\sum_{i=0}^r b_i(b_i-1)/2}  \bigoplus_{\prod_{i=0}^r [b_i]_q!} T^{\lambda,\underline{N}}_{b}e_\rho.
\]

\subsubsection{Categorifed Shapovalov form}
Let $T^{\lambda,\underline{N}} := \bigoplus_{b \geq 0} T_b^{\lambda,\underline{N}}$. 
As in \cite[\S2.5]{KL1}, let $\psi : T^{\lambda,\underline{N}} \rightarrow \opalg{(T^{\lambda,\underline{N}})}$ be the map that takes the mirror image of diagrams along the horizontal axis.
Given a left $(T^{\lambda,\underline{N}},0)$-module $M$, we obtain a right $(T^{\lambda,\underline{N}},0)$-module $M^\psi$ with action given by $m^\psi \cdot r := (-1)^{\deg_h(r) \deg_h(m)} \psi(r) \cdot m$ for $m \in M$ and $r \in T^{\lambda,\underline{N}}$. 
Then we define the dg-bifunctor
\begin{align*}
(-,-) &:  \cD_{dg}(T^{\lambda,\underline{N}},0)  \times \cD_{dg}(T^{\lambda,\underline{N}},0)   \rightarrow \cD_{dg}(\Bbbk, 0), 
&
(W,W') := W^\psi \Lotimes_{(T^{\lambda,\underline{N}},0)} W'.
\end{align*}

\begin{prop}\label{prop:catshap}
The dg-bifunctor defined above satisfies:
\begin{itemize}
\item $((T^{\lambda,\underline{N}}_0,d_N),(T^{\lambda,\underline{N}}_0,d_N)) \cong (\Bbbk,0)$;
\item $(\Ind_b^{b+1} M,M') \cong (M, \Res_b^{b+1} M')$ for all $M,M' \in \cD_{dg}(T^{\lambda,\underline{N}},0)$; 
\item $(\oplus_f M,M') \cong (M, \oplus_f M') \cong \oplus_f (M,M')$ for all $f \in \bZ\pp{q,\lambda}$;
\item $(M,M') \cong (\mathfrak{I}(M), \mathfrak{I}(M'))$.
\end{itemize}
\end{prop}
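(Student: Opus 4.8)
The four items are all formal consequences of the definition $(W,W') := W^\psi \Lotimes_{(T^{\lambda,\underline N},0)} W'$, the adjunction $\Ind \dashv \Res$, and the basis-type decompositions already at our disposal. The plan is to treat each bullet in turn.

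\emph{First bullet.} Since $T_0^{\lambda,\underline N}$ consists only of the blue strand and the red strands (no black strands, hence no nails and no dots), one has $T_0^{\lambda,\underline N}\cong\Bbbk$ concentrated in homological degree $0$, and therefore $(T_0^{\lambda,\underline N},d_N)\cong(\Bbbk,0)$ as a dg-algebra. Then $(T_0^{\lambda,\underline N})^\psi\cong\Bbbk$ as a right module, and $\Bbbk\Lotimes_\Bbbk\Bbbk\cong\Bbbk$ (no derivation needed since $\Bbbk$ is flat over itself). So this is essentially immediate.

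\emph{Second bullet.} This is the key point and the one requiring the most care. I would first observe that $(M^\psi)\Lotimes\Ind_b^{b+1}$-type manipulations reduce to an isomorphism of dg-bimodules
\[
1_{b,1}(T^{\lambda,\underline N}_{b+1},0)^\psi \cong \bigl((T^{\lambda,\underline N}_{b+1},0)1_{b,1}\bigr)^\psi,
\]
induced by the mirror map $\psi$, which sends the diagrammatic generator ``$(T_{b+1},0)1_{b,1}$'' of $\Ind$ to the generator ``$1_{b,1}(T_{b+1},0)$'' of $\Res$ (up to the sign twist built into the definition of $(-)^\psi$; here there is nothing to check since $1_{b,1}$ and the added black strand are in homological degree $0$). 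Concretely, for $M\in\cD_{dg}(T^{\lambda,\underline N}_b,0)$ and $M'\in\cD_{dg}(T^{\lambda,\underline N}_{b+1},0)$,
\begin{align*}
(\Ind_b^{b+1}M,M') &= \bigl((T^{\lambda,\underline N}_{b+1},0)1_{b,1}\Lotimes_b M\bigr)^\psi\Lotimes_{b+1}M'\\
&\cong M^\psi\Lotimes_b\bigl(1_{b,1}(T^{\lambda,\underline N}_{b+1},0)\bigr)\Lotimes_{b+1}M' \cong (M,\Res_b^{b+1}M'),
\end{align*}
using associativity of the (derived) tensor product and $\psi\bigl((T_{b+1},0)1_{b,1}\bigr)=1_{b,1}(T_{b+1},0)$. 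To make the derived tensor products behave, I would invoke \cref{prop:Tdecomp}, which tells us $(T^{\lambda,\underline N}_{b+1},0)$ is cofibrant as a left (and, by the mirror symmetry of the decomposition, as a right) dg-module over $(T^{\lambda,\underline N}_b,0)$, so that the relevant $\Lotimes_b$ may be replaced by ordinary $\otimes_b$ and the chain of isomorphisms is honest rather than merely $A_\infty$. The main obstacle is precisely bookkeeping the module structures under $\psi$ and making sure no Koszul sign is dropped; but as noted, the strands involved sit in homological degree $0$, so this is light.

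\emph{Third bullet.} The shift $\oplus_f$ for $f\in\bZ\pp{q,\lambda}$ is a (possibly infinite) direct sum of $\bZ^2$- and homological-degree shifts, and $\Lotimes$ commutes with arbitrary direct sums in each variable; the homological-degree shift $[1]$ passes freely through $W^\psi\Lotimes W'$ because $(-)^\psi$ twists the right action by a sign that is compatible with the sign rule for $[1]$ on the right (as recorded in \cref{sec:conventions}, the homological shift does not twist the right action). One gets $(\oplus_f M,M')\cong\oplus_f(M,M')\cong(M,\oplus_f M')$ by pulling the shifts out.

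\emph{Fourth bullet.} Let $\mathfrak I$ be the red-strand induction functor of \cref{sec:redind}, derived tensoring with the $(T^{\lambda,\underline N'}_b,0)$-$(T^{\lambda,\underline N}_b,0)$-bimodule obtained from the algebra map adding a red strand labeled $N_{r+1}$; the right adjoint $\mathfrak{\bar I}$ is the corresponding restriction. As in the proof of \cref{prop:indresredstrand}, \cref{prop:Tdecomp} shows the relevant bimodules are cofibrant on the appropriate side, and the mirror map $\psi$ intertwines the generator of $\mathfrak I$ with that of $\mathfrak{\bar I}$ (again all generators in homological degree $0$), giving $\psi$-compatible identifications. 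Then
\[
(\mathfrak I(M),\mathfrak I(M')) = \mathfrak I(M)^\psi\Lotimes_{b,\underline N'}\mathfrak I(M')\cong M^\psi\Lotimes_{b,\underline N}\bigl(\mathfrak{\bar I}\mathfrak I(M')\bigr)\cong(M,M'),
\]
the last step by \cref{prop:indresredstrand}. The only subtlety is checking that the inner contraction of the two red-strand bimodules is indeed the identity bimodule and not merely a summand thereof, which is exactly what \cref{prop:Tdecomp} (applied to the red-strand decomposition) provides. I would present all four verifications in a single short proof, doing the second bullet in detail and flagging the other three as the analogous manipulations.
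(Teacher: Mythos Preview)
Your proposal is correct and follows essentially the same approach as the paper: the paper dismisses the first three bullets as ``straightforward'' and handles the fourth via \cref{prop:indresredstrand} together with the adjunction $\mathfrak{I}\vdash\mathfrak{\bar I}$, which is exactly the argument you spell out (the $\psi$-compatibility you use is what makes the adjunction apply to the pairing $(-,-)$). Your version is considerably more explicit than the paper's one-line proof, but the content is the same.
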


\begin{proof}
Straightforward, except for the last point which follows from \cref{prop:indresredstrand}, together with the adjunction $\mathfrak{I} \vdash  \mathfrak{\bar I}$.
\end{proof}

Comparing \cref{prop:catshap} to \cref{sec:shepfortensor}, we deduce 
that $(-,-)$ has the same properties on the asymptotic Grothendieck group of $(T^{\lambda,r},0)$ as the Shapovalov form on $M\otimes V^r$.

\subsubsection{The categorification theorem}

Let $\E := \bigoplus_{b \geq 0} \E_b$ and $\F := \bigoplus_{b \geq 0} \F_b$. 
By \cref{thm:sl2comqi} and \cref{prop:cblfbiminduceKO}, we know that ${}_\bQ\bKO^\Delta(T^{\lambda,\underline{N}},0)$ is an $U_q(\slt)$-module, with action given by the pair $[\F], [\E]$.

\begin{lem} \label{lem:rankcompare}
We have $\dim_{\bQ\pp{q,\lambda}} \bigl( {}_\bQ\bKO^\Delta(T^{\lambda,\underline{N}}_b,0) \bigr) \leq \dim_{\bQ\pp{q,\lambda}} \bigl( M(\lambda) \otimes V(\underline{N})_{\lambda q^{r-2b}} \bigr)$. Moreover,  ${}_\bQ\bKO^\Delta(T^{\lambda,\underline{N}},0)$ is spanned by the classes $\{ [P_\rho] \}_{\rho \in \cP_b^{r,\und N}}$.
\end{lem}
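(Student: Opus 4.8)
The statement has two parts: a dimension inequality in each weight space, and a spanning statement for the asymptotic Grothendieck group. I will establish the spanning statement first and deduce the inequality from it.

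For the spanning statement, recall from \cref{thm:triangtopK0genbyPi} (invoked just above the lemma) that $\bKO^\Delta(T^{\lambda,\underline{N}}_b,0)$ is free over $\bZ\pp{q,\lambda}$ on the classes of the indecomposable projectives, i.e. on the classes $[T^{\lambda,\underline{N}}_b e]$ as $e$ runs over a complete set of primitive idempotents up to equivalence and grading shift. By the discussion preceding the lemma, every primitive idempotent of $T^{\lambda,\underline{N}}_b$ is, up to conjugation, of the form $e_\rho = \eta_\rho(e_{b_0}\otimes e_{b_1}\otimes\cdots\otimes e_{b_r})$ for some $\rho=(b_0,\dots,b_r)\in\cP_b^r$, and the identity
\[
\BP_\rho \cong q^{\sum_{i=0}^r b_i(b_i-1)/2}\ \bigoplus_{\prod_{i=0}^r [b_i]_q!} T^{\lambda,\underline{N}}_b e_\rho
\]
shows that $[\BP_\rho]$ and $[T^{\lambda,\underline{N}}_b e_\rho]$ differ by an invertible element of $\bZ\pp{q,\lambda}$ (each $[b_i]_q!$ is a unit in $\bZ\pp{q,\lambda}$). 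Hence $\{[\BP_\rho]\}_{\rho\in\cP_b^r}$ already spans ${}_\bQ\bKO^\Delta(T^{\lambda,\underline{N}}_b,0)$. To cut this down to $\rho\in\cP_b^{r,\underline{N}}$ we use the relations \eqref{eq:dotredstrand}–\eqref{eq:redR3}: if some $b_i > N_i$ for $1\le i\le r$, then the idempotent $1_\rho$ has more than $N_i$ black strands sitting between the red strands labeled $N_i$ and $N_{i+1}$, and the same argument as in the cyclotomic quotient (using that $\nh_{b_i}$ acting on $\Bbbk[x_1,\dots,x_{b_i}]/(\text{symmetric functions})$ of rank $b_i>N_i$ gives relations allowing one to absorb the projective onto a summand indexed by a smaller composition) shows that $[\BP_\rho]$ lies in the span of the $[\BP_\kappa]$ with $\kappa\in\cP_b^{r,\underline{N}}$. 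Concretely one imitates \cite[\S2.2.3]{KL1} together with the black/red relations to write $T^{\lambda,\underline{N}}_b e_\rho$, modulo the dg-structure (which is trivial here since $d=0$), as an iterated extension of modules $\BP_\kappa$ with $\kappa$ having all entries past the first bounded by the corresponding $N_i$; passing to $\bKO^\Delta$ collapses the extension to a sum of classes.

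Granting the spanning statement, the dimension bound is immediate: ${}_\bQ\bKO^\Delta(T^{\lambda,\underline{N}}_b,0)$ is spanned over $\bQ\pp{q,\lambda}$ by $\{[\BP_\rho]\}_{\rho\in\cP_b^{r,\underline{N}}}$, hence has dimension at most $|\cP_b^{r,\underline{N}}|$, which by the basis discussion in \S\ref{sec:qsltTLB} (the vectors $\{v_\rho\}_{\rho\in\cP_b^{r,\underline{N}}}$ form a basis of $M\otimes V(\underline{N})_{\lambda q^{|\underline{N}|-2b}}$) equals $\dim_{\bQ\pp{q,\lambda}}\bigl(M(\lambda)\otimes V(\underline{N})_{\lambda q^{|\underline{N}|-2b}}\bigr)$.

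**Main obstacle.** The one genuinely nontrivial point is the reduction from $\cP_b^r$ to $\cP_b^{r,\underline{N}}$: one must show that a projective $\BP_\rho$ with $b_i>N_i$ is, in the Grothendieck group, a combination of the "admissible" ones. This is the Grothendieck-group shadow of the cyclotomic quotient relation, and the cleanest route is probably to invoke the formal quasi-isomorphism $(T^{\lambda,\underline{N}}_b,d_N)\simeq T^{(N,\underline{N})}_b$ of \cref{thm:dNformal} together with the known module-theoretic structure of KLRW algebras, or else to argue directly with the polynomial action: an idempotent $e_\rho$ with $b_i>N_i$, after pushing all $b_i$ black strands through the red strand $N_i$ via \eqref{eq:redR2} and \eqref{eq:redR3}, produces diagrams with a dotted strand that can be rewritten via the nilHecke action, and the resulting module is filtered by $\BP_\kappa$'s with smaller entries. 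I expect this bookkeeping to be the part that needs care, but it is entirely parallel to the standard cyclotomic argument and to \cite[\S4]{webster}; everything else is formal.
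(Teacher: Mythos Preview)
Your overall strategy matches the paper's: first span by $\{[\BP_\rho]\}_{\rho\in\cP_b^r}$, then reduce to $\rho\in\cP_b^{r,\underline{N}}$, then count. The first and last steps are fine. The reduction step, which you correctly flag as the main obstacle, is where your proposal is imprecise, and one of your suggested routes is actually wrong.

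The paper's argument for the reduction is a one-liner citing a specific nilHecke fact (\cite[Lemma~7.2]{naissevaz1}): whenever $k>n$, the unit $1\in\nh_k$ can be rewritten as a combination of elements each containing $x_1^n$ (i.e.\ $n$ dots on the leftmost strand) as a factor. Applied locally to the $\nh_{b_i}$ living between the $i$-th and $(i{+}1)$-th red strands when $b_i>N_i$, this produces $N_i$ dots on the black strand adjacent to the red strand labeled $N_i$; by relation~\eqref{eq:redR2} those $N_i$ dots are exactly a double crossing through that red strand, so $1_{\rho'}$ factors through $1_\rho$ with $\rho$ having one fewer black strand in slot $i$. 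Iterating gives the result. Note this yields \emph{direct summand} statements, not merely filtrations, so there is no need for iterated extensions or asymptotic subtleties.

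Your description has the logic backwards (you propose to push strands through first and then use the nilHecke action on the resulting dots; it is the other way around), and the ``cyclotomic quotient'' framing is misleading since $T^{\lambda,\underline{N}}_b$ is not cyclotomic. More seriously, your alternative route via \cref{thm:dNformal} does not apply: that formality is for the differential $d_N$, whereas the lemma concerns $(T^{\lambda,\underline{N}}_b,0)$; the two dg-algebras have different derived categories and different Grothendieck groups, so you cannot transport the KLRW spanning statement along that quasi-isomorphism.
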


\begin{proof}
It is well-known (see for example \cite[Lemma 7.2]{naissevaz1}) that whenever $k > n$, then the unit element in $\nh_k$ can be rewritten as a combination of elements having $n$ consecutive dots somewhere on the left-most strand. Thus, for any $\rho' \in \cP_b^r$, we obtain that $1_{\rho'}$ can be rewritten as a combination of elements factorizing through elements in $\{1_{\rho}\}_{\rho \in \cP_b^{r,\und N}}$.  
\end{proof}

We consider $ M(\lambda) \otimes V(\underline{N})$ over the ground ring $\bQ\pp{q,\lambda}$ instead of $\bQ(q,\lambda)$. 

\begin{thm}\label{thm:K0}
There are isomorphisms of $U_q(\slt)$-modules
\[
{}_\bQ\bKO^\Delta(T^{\lambda,\underline{N}},0) \cong M(\lambda) \otimes V(\underline{N}),
\]
and
\[
{}_\bQ\bKO^\Delta(T^{\lambda,\underline{N}},d_N)\cong V(N) \otimes V(\underline{N}),
\]
for all $N \in \bN$. 
\end{thm}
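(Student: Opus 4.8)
The plan is to follow the now-standard scheme for categorification theorems of this type (as in~\cite{webster,naissevaz2,naissevaz3}), namely: (i) show the $U_q(\slt)$-action on the asymptotic Grothendieck group is well-defined and matches the one on $M(\lambda)\otimes V(\underline N)$; (ii) produce a surjective $U_q(\slt)$-module map; (iii) show it is an isomorphism by a rank comparison rank-by-rank in each weight space. The first point is already in hand: by~\cref{thm:sl2comqi} and~\cref{prop:cblfbiminduceKO}, the pair $([\F],[\E])$ equips ${}_\bQ\bKO^\Delta(T^{\lambda,\underline N},0)$ with a $U_q(\slt)$-module structure, and the relation in~\cref{thm:sl2comqi} categorifies precisely the Serre-type commutator $EF-FE = [\beta+|\underline N|-2b]_q$ acting on the weight space $M\otimes V(\underline N)_{\lambda q^{|\underline N|-2b}}$.

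For the module map, I would send the class of the projective $\BP_\rho = T^{\lambda,\underline N}_b 1_\rho$ to the basis vector $v_\rho \in M\otimes V(\underline N)$, for $\rho \in \mathcal P_b^r$; one checks this is compatible with the $\E,\F$-actions by comparing the combinatorics of adding/removing a black strand (the idempotents $1_{b,1}$ and the decomposition of~\cref{prop:Tdecomp}) with the explicit formulas $F\cdot v_{\rho}$ and $E\cdot v_\rho$ in the $v_\rho$-basis. Because $\BP_\rho \cong q^{\sum b_i(b_i-1)/2}\bigoplus_{\prod[b_i]_q!} T^{\lambda,\underline N}_b e_\rho$, the classes of the indecomposable projectives $[T^{\lambda,\underline N}_b e_\rho]$ also give elements of the Grothendieck group, and by~\cref{lem:rankcompare} the classes $\{[\BP_\rho]\}_{\rho\in\mathcal P_b^{r,\underline N}}$ already span ${}_\bQ\bKO^\Delta(T^{\lambda,\underline N}_b,0)$. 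Hence the map is surjective onto $M\otimes V(\underline N)_{\lambda q^{|\underline N|-2b}}$, whose basis is exactly $\{v_\rho\}_{\rho\in\mathcal P_b^{r,\underline N}}$ (the cardinality of $\mathcal P_b^{r,\underline N}$). Combined with the inequality $\dim_{\bQ\pp{q,\lambda}}\bigl({}_\bQ\bKO^\Delta(T^{\lambda,\underline N}_b,0)\bigr) \leq \dim_{\bQ\pp{q,\lambda}}\bigl(M(\lambda)\otimes V(\underline N)_{\lambda q^{r-2b}}\bigr)$ from~\cref{lem:rankcompare}, surjectivity of a map between free $\bQ\pp{q,\lambda}$-modules of the same finite rank forces it to be an isomorphism in each weight space, hence globally.

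For the second isomorphism, the differential $d_N$ from~\cref{sec:dgenh} and the formality statement of~\cref{thm:dNformal} ($H(T^{\lambda,\underline N}_b,d_N)\cong T^{(N,\underline N)}_b$) identify ${}_\bQ\bKO^\Delta(T^{\lambda,\underline N},d_N)$ with ${}_\bQ\bKO^\Delta(T^{(N,\underline N)},0)$; one then either re-runs the same argument with $\mathcal P_b^{r,(N,\underline N)}$ in place of $\mathcal P_b^{r,\underline N}$, or — more cheaply — observes that turning on $d_N$ corresponds on Grothendieck groups to specializing $\lambda = q^N$ and imposing the cyclotomic quotient, via~\cref{prop:actionN} (which replaces $[\beta+|\underline N|-2b]_q$ by the finite $[N+|\underline N|-2b]_q$), so that the first isomorphism degenerates to $V(N)\otimes V(\underline N)$. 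Either way the $U_q(\slt)$-equivariance is inherited from the first part. The main obstacle I anticipate is not the rank count (which is purely combinatorial, comparing $\mathcal P_b^{r,\underline N}$ with a basis of a weight space) but checking carefully that the assignment $[\BP_\rho]\mapsto v_\rho$ actually intertwines the categorical and algebraic actions — this requires tracking the various grading shifts in the definitions of $\E_b,\F_b$ against the structure constants in $F\cdot v_\rho$ and $E\cdot v_\rho$, and using~\cref{prop:catshap} to match the Shapovalov pairing so that the map is forced to be the ``right'' one; the infinite direct sums $\oplus_{[\beta+k]_q}$ also need the asymptotic Grothendieck group formalism of~\cite{asympK0} to make sense of the Laurent-series coefficients, which is why the ground ring is extended to $\bQ\pp{q,\lambda}$.
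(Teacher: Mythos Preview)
Your overall strategy matches the paper's: use \cref{lem:rankcompare} for the spanning set, \cref{thm:sl2comqi} for the $U_q(\slt)$-action, \cref{prop:catshap} for the Shapovalov form, and \cref{thm:dNformal} plus Webster for the $d_N$ case. However, there is one genuine difference and one small gap worth flagging.

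You define the map in the direction ${}_\bQ\bKO^\Delta \to M(\lambda)\otimes V(\underline N)$ by $[\BP_\rho]\mapsto v_\rho$ for all $\rho\in\mathcal P_b^r$. The problem is that the classes $[\BP_\rho]$ are not known a priori to be linearly independent (indeed \cref{lem:rankcompare} says the opposite: they span a space of dimension at most $|\mathcal P_b^{r,\underline N}|$), so this assignment needs a well-definedness check---you would have to verify that the relations among the $[\BP_\rho]$ match those among the $v_\rho$. The paper sidesteps this by defining the map in the \emph{opposite} direction, $v_\rho \mapsto [\BP_\rho]$, on the basis $\{v_\rho\}_{\rho\in\mathcal P_b^{r,\underline N}}$, which is automatically well-defined. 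In that direction, surjectivity follows from \cref{lem:rankcompare} but the rank inequality goes the wrong way to conclude injectivity; instead the paper uses \cref{prop:catshap} essentially: the map intertwines the (non-degenerate) Shapovalov form on $M(\lambda)\otimes V(\underline N)$ with the categorified form, and non-degeneracy forces injectivity. Your rank argument would work cleanly in your direction \emph{if} well-definedness were established, so the two approaches are dual---each trades one verification for another. A second, smaller difference: the paper only checks $K^{\pm1}$- and $E$-compatibility directly (via \cref{prop:Tdecomp}), then deduces $F$-compatibility from the form and the anti-involution $\antimapslt$, rather than checking both $\E$ and $\F$ by hand as you propose. Your plan for the $d_N$ case is fine; the paper simply cites \cref{thm:dNformal} and Webster's~\cite[Theorem~4.38]{webster}.
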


\begin{proof}
We have a $\bQ\pp{q,\lambda}$-linear map 
\[
M(\lambda) \otimes V(\underline{N}) \rightarrow {}_\bQ\bKO^\Delta(T^{\lambda,\underline{N}},0), \quad
v_\rho \mapsto [\BP_\rho].
\]
By \cref{lem:rankcompare}, this map is surjective. 
It commutes with the action of $K^{\pm 1}$ and $E$ because of \cref{prop:Tdecomp}. 
By \cref{prop:catshap}, the map intertwines the Shapovalov form with the bilinear form induced by the bifunctor $(-,-)$ on ${}_\bQ\bKO^\Delta(T^{\lambda,\underline{N}},0)$. Thus, it is a $\bQ\pp{q,\lambda}$-linear isomorphism.
Since the map intertwines the Shapovalov form with the bifunctor $(-,-)$, and commutes with the action of $E$ and $K^{\pm 1}$, we deduce by non-degeneracy of the Shapovalov form that it also commutes with the action of $F$. Thus, it is a map of $U_q(\slt)$-modules. 

The case ${}_\bQ\bKO^\Delta(T^{\lambda,\underline{N}},d_N) $ follows from \cref{thm:dNformal} together with \cite[Theorem 4.38]{webster}.
\end{proof}




\section{Cups, caps and double braiding functors}\label{sec:bimod}

Throughout this section, we fix $\underline{N} = (1, 1, \dots, 1) \in \bN^r$ and write $T^{\lambda,r} := T^{\lambda,\underline{N}}$, and $\otimes_T := \otimes_{(\oplus_{r\geq 0} T^{\lambda,r})}$. 
Also, when we will talk about (bi)modules, we will generally mean $\bZ^2$-graded dg-(bi)module, assuming it is clear from the context.

\subsection{Cup and cap functors}\label{sec:Tlaction}

Following \cite[\S 7]{webster} (see also \cite[\S4.3]{webstersl2}), we define the \emph{cup bimodule $B_i$} for $1 \leq i \leq r+1$ as the $(T^{\lambda,r+2},0)$-$(T^{\lambda,r},0)$-bimodule 
 generated by the diagrams 
 \begin{equation}\label{eq:cupbim}
	 \tikzdiagh{0}{
		\draw[vstdhl] (0,0) node[below]{\small $\lambda$} --(0,1);
		\draw (.5,0) -- (.5,1);
		\node at(1,.5) {\tiny$\dots$};
		\draw (1.5,0) -- (1.5,1);
		\draw[decoration={brace,mirror,raise=-8pt},decorate]  (.4,-.35) -- node {$b_0$} (1.6,-.35);
		\draw[stdhl] (2,0)  node[below]{\small $1$} --(2,1);
		\draw (2.5,0) -- (2.5,1);
		\node at(3,.5) {\tiny$\dots$};
		\draw (3.5,0) -- (3.5,1);
		\draw[decoration={brace,mirror,raise=-8pt},decorate]  (2.4,-.35) -- node {$b_1$} (3.6,-.35);
		\draw[stdhl] (4,0)  node[below]{\small $1$} --(4,1);
		\node[red] at  (5,.5) {\dots};
		\draw[stdhl] (6,0)  node[below]{\small $1$} --(6,1);
		\draw (6.5,0) -- (6.5,1);
		\node at(7,.5) {\tiny$\dots$};
		\draw (7.5,0) -- (7.5,1);
		\draw[decoration={brace,mirror,raise=-8pt},decorate]  (6.4,-.35) -- node {$b_{i-1}$} (7.6,-.35);
		\draw (8.5,.5) -- (8.5,1);
		\draw [stdhl] (8,1) .. controls (8,.25) and (9,.25) .. (9,1);
		\draw (9.5,0) -- (9.5,1);
		\node at(10,.5) {\tiny$\dots$};
		\draw (10.5,0) -- (10.5,1);
		\draw[decoration={brace,mirror,raise=-8pt},decorate]  (9.4,-.35) -- node {$b_{i-1}'$} (10.6,-.35);
		\draw[stdhl] (11,0)  node[below]{\small $1$} --(11,1);
		\node[red] at  (12,.5) {\dots};
		\draw[stdhl] (13,0)  node[below]{\small $1$} --(13,1);
		\draw (13.5,0) -- (13.5,1);
		\node at(14,.5) {\tiny$\dots$};
		\draw (14.5,0) -- (14.5,1);
		\draw[decoration={brace,mirror,raise=-8pt},decorate]  (13.4,-.35) -- node {$b_{r}$} (14.6,-.35);
	}
\end{equation}
for all $(b_0, \dots ,b_{i-2}, b_{i-1}, b_{i-1}', b_{i}, \dots, b_r) \in \bN^{r+2}$. 
Here, generated means that elements of $B_i$ are given by taking the diagram above and gluing any diagram of $T^{\lambda,r+2}$ on the top, and any diagram of $T^{\lambda,r}$ on the bottom.
The diagrams in $B_i$ are considered up to graded braid-like planar isotopy, with the cup being in homological degree $0$, 
and subject to the same local relations as the dg-enhanced KLRW algebra \eqref{eq:dotredstrand}-\eqref{eq:redR3} and \eqref{eq:relNail}, together with the following extra local relations:
\begin{align}
	\tikzdiag[xscale=-1]{		
		\draw (3.5,1.25) .. controls (3.5,.875) and (4.5,.875) .. (4.5,.5) ;
		\draw [stdhl]  (4,1.25) -- (4,1) .. controls (4,.25) and (5,.25) .. (5,1) -- (5,1.25);
	}
\ &= \  0,
&
	\tikzdiag{		
		\draw (3.5,1.25) .. controls (3.5,.875) and (4.5,.875) .. (4.5,.5) ;
		\draw [stdhl]  (4,1.25) -- (4,1) .. controls (4,.25) and (5,.25) .. (5,1) -- (5,1.25);
	}
\ &= \  0,
\label{eq:killcup}
\\
	\tikzdiag{
		\draw (4.5,.5) -- (4.5,1.25);
		\draw (3.5,0) .. controls (3.5,.75) and (5.5,1) .. (5.5,1.25);
		\draw [stdhl]  (4,1.25) -- (4,1) .. controls (4,.25) and (5,.25) .. (5,1) -- (5,1.25);
	}
\ &= \  
	\tikzdiag{
		\draw (4.5,.75) -- (4.5,1.25);
		\draw (3.5,0) .. controls (3.5,.25) and (5.5,.5) .. (5.5,1.25);
		\draw [stdhl]  (4,1.25)  .. controls (4,.5) and (5,.5) .. (5,1.25);
	}
&
	\tikzdiag[xscale=-1]{
		\draw (4.5,.5) -- (4.5,1.25);
		\draw (3.5,0) .. controls (3.5,.75) and (5.5,1) .. (5.5,1.25);
		\draw [stdhl]  (4,1.25) -- (4,1) .. controls (4,.25) and (5,.25) .. (5,1) -- (5,1.25);
	}
\ &= - \  
	\tikzdiag[xscale=-1]{
		\draw (4.5,.75) -- (4.5,1.25);
		\draw (3.5,0) .. controls (3.5,.25) and (5.5,.5) .. (5.5,1.25);
		\draw [stdhl]  (4,1.25)  .. controls (4,.5) and (5,.5) .. (5,1.25);
	}
	\label{eq:cupstrandslides}
\end{align}
We set the $\bZ^2$-degree of the generator in~\eqref{eq:cupbim} as
\[
\deg_{q,\lambda}\left(
	\tikzdiag{
		\draw (4.5,.5) -- (4.5,1);
		\draw [stdhl] (4,1) .. controls (4,.25) and (5,.25) .. (5,1);
	}
\right)
 \ := \ (0,0).
\]

\smallskip

Similarly, we define the \emph{cap bimodule $\overline{B}_i$} by taking the mirror along the horizontal axis of $B_i$. However, we declare that the cap is in homological degree $-1$, and with $\bZ^2$-degree given by
\[
\deg_{q,\lambda}\left(
	\tikzdiag[yscale=-1]{
		\draw (4.5,.5) -- (4.5,1);
		\draw [stdhl] (4,1) .. controls (4,.25) and (5,.25) .. (5,1);
	}
\right)
 \ := \ (-1,0).
\]
Note that since the red cap has a $-1$ homological degree, it anticommutes with the nails when applying a graded planar isotopy. 

\smallskip

From this, one defines the \emph{coevaluation} and \emph{evaluation dg-functors} as
\begin{align*}
\B_i := B_i \Lotimes_T - : \cD_{dg}(T^{\lambda,r},0) \rightarrow \cD_{dg}(T^{\lambda,r+2},0), \\
\overline{\B}_i := \overline{B}_i \Lotimes_T - : \cD_{dg}(T^{\lambda,r+2},0) \rightarrow \cD_{dg}(T^{\lambda,r},0). 
\end{align*}

\subsubsection{Biadjointness}
Note that
\[
\overline{\B}_i\cong  q \RHOM_T( B_i ,-) [1],
\]
by \cref{prop:cofBi} below. 
 Thus, $q^{-1}\overline{\B}_i [-1]$ is right adjoint to $\B_i$. Similarly, we obtain that $q \overline{\B}_i [1]$ is left-adjoint to $\B_i$. 

The unit and counit of $\B_i \dashv q^{-1}\overline{\B}_i [-1]$ gives a pair of maps of bimodules
\begin{align*}
\eta_i &: q(T^{\lambda,r})[1] \rightarrow \oB_i \Lotimes_T B_i,
&
\varepsilon_i &: B_i \Lotimes \oB_i \rightarrow q(T^{\lambda,r})[1],
\intertext{and similarly $q \overline{\B}_i [1] \dashv \B_i$ gives}
\overline{\eta}_i &: q^{-1} (T^{\lambda,r})[-1] \rightarrow B_i \Lotimes \oB_i, 
&
\overline{\varepsilon}_i &: \oB_i \Lotimes_T B_i \rightarrow q^{-1} (T^{\lambda,r})[-1].
\end{align*}

\subsubsection{Tightened basis}\label{sec:tightbasisBi}

Take $\kappa=(b_0,\ldots,b_{r+2}) \in \cP_{b}^{r+2}$ and $\rho \in \cP_{b}^{r}$. 
Let $\bar \kappa^i$ be given by $(b_0,  b_1, \dots, b_{i-2}, b_{i-1} + b_i - 1 + b_{i+1} , \widehat b_i, \widehat b_{i+1}, b_{i+2}, \dots b_r)\in\cP_{b}^{r}$.
For each $1 \leq \ell \leq b_i$, consider the map 
\[
g_\ell : q^{b_i+1-2\ell} (1_{\bar \kappa^i} T^{\lambda,r} 1_\rho)  \rightarrow 1_\kappa B_i  1_\rho,
\]
 given by gluing on the top the following element:
\[
\tikzdiag{
	\draw[vstdhl] (-6,-.5) node[below]{$\lambda$} -- (-6,1);
	\draw (-5.5,1) -- (-5.5,-.5);
	\node at(-5,.85){\small $\dots$};
	\node at(-5,-.35){\small $\dots$};
	\draw (-4.5,1) -- (-4.5,-.5);
	\tikzbraceop{-5.5}{-4.5}{1}{\small $b_{0}$};
	\draw[stdhl] (-4,-.5) node[below]{1} -- (-4,1);
	\node[red] at(-3,.125) {$\dots$};
	\draw[stdhl] (-2,-.5) node[below]{1} -- (-2,1);
	\draw (-1.5,1) -- (-1.5,-.5);
	\node at(-1,.85){\small $\dots$};
	\node at(-1,-.35){\small $\dots$};
	\draw (-.5,1) -- (-.5,-.5);
	\tikzbraceop{-1.5}{-.5}{1}{\small $b_{i-1}$};
	\draw (3.5,1) .. controls (3.5,.25) and (2.5,.25) .. (2.5,-.5);
	\node at(3,.85){\small $\dots$};
	\node at(2,-.35){\small $\dots$};
	\draw (2.5,1) .. controls (2.5,.25) and (1.5,.25) .. (1.5,-.5);
	\tikzbraceop{2.5}{3.5}{1}{\small $\ell - 1$};
	\draw (2,1) .. controls (2,.75) and (3.5,.75) .. (3.5,-.35);
	\draw (1.5,1) .. controls (1.5,.25) and (1,.25) .. (1,-.5);
	\node at(1,.85){\small $\dots$};
	\node at(.5,-.35){\small $\dots$};
	\draw (.5,1) .. controls (.5,.25) and (0,.25) .. (0,-.5);
	\tikzbraceop{.5}{1.5}{1}{\small $b_i - \ell$};
	\draw[stdhl] (4,1) -- (4,0) .. controls (4,-.5) and (3,-.5) .. (3,0) .. controls (3,.5) and (0,.5) .. (0,1);
	%
	%
	\draw (4.5,1) -- (4.5,-.5);
	\node at(5,.85){\small $\dots$};
	\node at(5,-.35){\small $\dots$};
	\draw (5.5,1) -- (5.5,-.5);
	\tikzbraceop{4.5}{5.5}{1}{\small $b_{i+1}$};
	\draw[stdhl] (6,-.5) node[below]{1} -- (6,1);
	\node[red] at(7,.125) {$\dots$};
	\draw[stdhl] (8,-.5) node[below]{1} -- (8,1);
	\draw (8.5,1) -- (8.5,-.5);
	\node at(9,.85){\small $\dots$};
	\node at(9,-.35){\small $\dots$};
	\draw (9.5,1) -- (9.5,-.5);
	\tikzbraceop{8.5}{9.5}{1}{\small $b_{r}$};
}
\]
Recall the basis ${}_\kappa B_\rho$ of \cref{thm:Tbasis}. 
We claim that
\begin{equation}\label{eq:basisBi}
	\bigsqcup_{\ell = 1}^{b_i} g_\ell({}_{\overline \kappa^i} B_\rho), 
\end{equation}
is a basis for $1_\kappa B_i 1_\rho$. We postpone the proof of this for later.

\subsection{Cofibrant replacement of $B_i$}\label{sec:cofBi}

As explained in~\cite[\S4.3]{webstersl2}, $B_i$ admits an easily describable cofibrant replacement as a left module. But before describing it, let us introduce some extra notations. Let 
$T_{i, \ \tikzRBR}$
be the left $(T^{\lambda,r},0)$-module generated by the elements
\[
\tikzdiagh{0}{
	\draw[vstdhl] (0,0)  node[below]{\small $\lambda$} --(0,1);
	\draw (.5,0) -- (.5,1);
	\node at(1,.5) {\tiny$\dots$};
	\draw (1.5,0) -- (1.5,1);
	\draw[decoration={brace,mirror,raise=-8pt},decorate]  (.4,-.35) -- node {$b_0$} (1.6,-.35);
	\draw[stdhl] (2,0)  node[below]{\small $1$} --(2,1);
	\draw (2.5,0) -- (2.5,1);
	\node at(3,.5) {\tiny$\dots$};
	\draw (3.5,0) -- (3.5,1);
	\draw[decoration={brace,mirror,raise=-8pt},decorate]  (2.4,-.35) -- node {$b_1$} (3.6,-.35);
	\draw[stdhl] (4,0)  node[below]{\small $1$} --(4,1);
	\node[red] at  (5,.5) {\dots};
	\draw[stdhl] (6,0)  node[below]{\small $1$} --(6,1);
	\draw[stdhl] (6.5,0)  node[below]{\small $1$} --(6.5,1);
	\draw (7,0)  node[below]{\small $1$} --(7,1);
	\draw[stdhl] (7.5,0)  node[below]{\small $1$} --(7.5,1);
	\draw (8,0) -- (8,1);
	\node at(8.5,.5) {\tiny$\dots$};
	\draw (9,0) -- (9,1);
	\draw[decoration={brace,mirror,raise=-8pt},decorate]  (7.9,-.35) -- node {$b_{i-1}$} (9.1,-.35);
	\draw[stdhl] (9.5,0)  node[below]{\small $1$} --(9.5,1);
	\draw (10,0) -- (10,1);
	\node at(10.5,.5) {\tiny$\dots$};
	\draw (11,0) -- (11,1);
	\draw[decoration={brace,mirror,raise=-8pt},decorate]  (9.9,-.35) -- node {$b_{i}$} (11.1,-.35);
	\draw[stdhl] (11.5,0)  node[below]{\small $1$} --(11.5,1);
	\node[red] at  (12.5,.5) {\dots};
	\draw[stdhl] (13.5,0)  node[below]{\small $1$} --(13.5,1);
	\draw (14,0) -- (14,1);
	\node at(14.5,.5) {\tiny$\dots$};
	\draw (15,0) -- (15,1);
	\draw[decoration={brace,mirror,raise=-8pt},decorate]  (13.9,-.35) -- node {$b_{n}$} (15.1,-.35);
}
\]
for all $(b_0, b_1, \dots, b_{r})$. 
We define similarly 
$T_{i,\  \tikzBRR}$ 
and
$T_{i,\  \tikzRRB}$.

\smallskip

Let $\br B_i$ be the left $(T^{\lambda,r+2},0)$-module given by the dg-module
\[
\br B_i := 
\begin{tikzcd}[row sep = 1ex]
 & q (T_{i,\  \tikzBRR}) [1]
 \ar{dr} \ar[no head]{dr}{
	 {\tikzdiag[scale=.5]{
	 	\draw (1,0) .. controls (1,.5) and (0,.5) .. (0,1);
	 	\draw[stdhl] (0,0) .. controls (0,.5) and (1,.5) .. (1,1);
	 	\draw[stdhl] (2,0) -- (2,1);
	 }}} 
 & \\
q^2 (T_{i,\  \tikzRBR}) [2]
\ar{ur} 
 \ar[no head]{ur}{
	 {\tikzdiag[scale=.5]{
	 	\draw (0,0) .. controls (0,.5) and (1,.5) .. (1,1);
	 	\draw[stdhl] (1,0) .. controls (1,.5) and (0,.5) .. (0,1);
	 	\draw[stdhl] (2,0) -- (2,1);
	 }}} 
\ar{dr}
\ar[no head,swap]{dr}{
	 -\ {\tikzdiag[scale=.5]{
	 	\draw (1,0) .. controls (1,.5) and (0,.5) .. (0,1);
	 	\draw[stdhl] (-1,0) -- (-1,1);
	 	\draw[stdhl] (0,0) .. controls (0,.5) and (1,.5) .. (1,1);
	 }}}
 & \oplus &  
 T_{i,\  \tikzRBR} 
 \\
 &  q (T_{i,\  \tikzRRB}) [1]
 \ar{ur}
\ar[no head,swap]{ur}{
	 {\tikzdiag[scale=.5]{
	 	\draw (0,0) .. controls (0,.5) and (1,.5) .. (1,1);
	 	\draw[stdhl] (1,0) .. controls (1,.5) and (0,.5) .. (0,1);
	 	\draw[stdhl] (-1,0) -- (-1,1);
	 }}}
	  & 
\end{tikzcd}
\]
where the differential is given by the arrows, which are 
 the maps given by adding the term in the label at the bottom of $\tikzRBR$ , $\tikzRRB$ or $\tikzBRR$. 
Similarly, we define a right cofibrant replacement $\oB_i \rb \overset{\simeq}{\twoheadrightarrow}\oB_i$ by taking the symmetric along the horizontal line and shifting everything by $q^{-1}(-) [-1]$. 

\begin{prop}\label{prop:cofBi}
There is a surjective quasi-isomorphism of left $\bZ^2$-graded $(T^{\lambda,r+2},0)$-modules
\[
\br B_i \overset{\simeq}{\twoheadrightarrow} B_i.
\]
\end{prop}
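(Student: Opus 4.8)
The plan is to exhibit the surjection explicitly, verify it is a chain map, and then prove it is a quasi-isomorphism by reducing exactness of the associated complex to a Koszul-type computation, following \cite[\S4.3]{webstersl2} and \cite{naissevaz2,naissevaz3}.

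First I would define $\pi\colon \br B_i \to B_i$ to vanish on the summands $q^{2}(T_{i,\tikzRBR})[2]$, $q(T_{i,\tikzBRR})[1]$ and $q(T_{i,\tikzRRB})[1]$, and on the remaining summand $T_{i,\tikzRBR}$ to be the left $(T^{\lambda,r+2},0)$-module map that attaches the cup of \eqref{eq:cupbim} on top of a generator, i.e.\ caps the two new red strands around the black strand threading them. Since the relations \eqref{eq:dotredstrand}--\eqref{eq:redR3} and \eqref{eq:relNail} already hold in $T_{i,\tikzRBR}$ and \eqref{eq:killcup}--\eqref{eq:cupstrandslides} are the only additional relations defining $B_i$, this $\pi$ is well defined on the head, and it is surjective because $B_i$ is generated by \eqref{eq:cupbim} under top-gluing while \eqref{eq:killcup}--\eqref{eq:cupstrandslides} let one evacuate any black strand from inside the cup. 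As $B_i$ carries the zero differential and $\pi$ kills the middle terms, the chain-map condition $\pi\circ d_{\br B_i}=0$ reduces to the single identity $\pi\circ(\gamma,\delta)=0$, where $(\gamma,\delta)$ denotes the pair of arrows $q(T_{i,\tikzBRR})[1]\oplus q(T_{i,\tikzRRB})[1]\to T_{i,\tikzRBR}$ in $\br B_i$; unwinding their labels this is exactly the $N_i=1$ instance of \eqref{eq:redR3} together with \eqref{eq:cupstrandslides}, a short diagram chase.

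Next, since $\pi$ is a surjection onto a dg-module with trivial differential, I would establish the quasi-isomorphism by proving that the augmented complex
\[
0 \longrightarrow q^{2}(T_{i,\tikzRBR})[2] \longrightarrow q(T_{i,\tikzBRR})[1]\oplus q(T_{i,\tikzRRB})[1] \longrightarrow T_{i,\tikzRBR} \xrightarrow{\ \pi\ } B_i \longrightarrow 0
\]
is exact. By \cref{thm:Tbasis} each of the four modules is a free $\bZ\times\bZ^{2}$-graded $\Bbbk$-module with an explicit diagrammatic basis: a variant of the basis constructed in \cref{ssec:basisTl} for the three auxiliary modules, and the tightened basis \eqref{eq:basisBi} for $B_i$. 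These bases are all built on top of the same ``outer'' data, namely the part of the diagram lying away from the single black strand that threads the two new red strands; using the inclusions $\eta_{\rho}$ of \eqref{eq:defetainclusion} one splits this outer data off and reduces exactness, as a statement about complexes of graded $\Bbbk$-modules, to a small complex governed only by that through-strand and its dots. That small complex is Koszul-type: the $N_i=1$ instance of \eqref{eq:redR3} — describing a black strand crossing a single red strand twice — forces the two arrows out of $q^{2}(T_{i,\tikzRBR})[2]$ to be crossings around the left, resp.\ right, red leg (the sign in $\br B_i$ making $d^{2}=0$), which makes the complex exact in positive homological degrees with degree-$0$ cohomology $B_i$. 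Hence $H_{*}(\br B_i)$ is concentrated in homological degree $0$ and $\pi$ induces an isomorphism onto $B_i$. The corresponding statement for the right cofibrant replacement $\oB_i\rb$ then follows by applying the horizontal mirror functor and the shift $q^{-1}(-)[-1]$, exactly as above.

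The main obstacle I anticipate is this exactness step, together with its mild circularity with the (still-unproven) basis claim \eqref{eq:basisBi}: the cleanest route establishes \emph{both} simultaneously, proving the spanning part exactly as in \cref{thm:Tbasis} — induction on the number of crossings, reducing diagrams by braid moves and the cup relations — and the linear-independence-cum-exactness part by means of the faithful polynomial action of \cref{ssec:basisTl} on the relevant $\Pol_b^{r}$-type modules. A secondary but genuine source of friction is sign bookkeeping: nails carry homological degree $1$ and caps homological degree $-1$, so the two composites compared in the chain-map check, and every application of graded braid-like planar isotopy, must be tracked with their Koszul signs.
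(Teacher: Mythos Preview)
Your outline is broadly correct and defines the same map $\pi$ as the paper, but two points deserve comment.

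First, a small slip: the chain-map condition $\pi\circ(\gamma,\delta)=0$ is exactly \eqref{eq:killcup}, not ``the $N_i=1$ instance of \eqref{eq:redR3} together with \eqref{eq:cupstrandslides}.'' Gluing the crossing label at the bottom of an element of $T_{i,\tikzBRR}$ or $T_{i,\tikzRRB}$ and then closing with the cup produces precisely a black strand entering the cup region from one side, which is killed by \eqref{eq:killcup}.

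Second, and more substantively, your exactness argument runs through the basis \eqref{eq:basisBi} of $B_i$, which you correctly flag as circular, and you propose to break the circularity via the polynomial action. The paper avoids this detour entirely. It uses only \cref{thm:Tbasis} (already established) to argue directly: adding a black/red crossing is injective, so exactness at $q^2 T_{i,\tikzRBR}$ is immediate; the kernel of the map into $T_{i,\tikzRBR}$ is the intersection of the images of the two crossing maps, and \cref{thm:Tbasis} shows this intersection consists of elements that can be rewritten with a dot on the through-strand, which is precisely the image of the leftmost map; finally, $B_i \cong T_{i,\tikzRBR}/(\text{both crossing images})$ is established by writing down an explicit inverse (pulling the cup to the bottom) and checking it respects the relations. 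No basis of $B_i$ is needed, and the dimension argument you sketch is instead used \emph{afterwards} to deduce \cref{thm:basisBi} from the proposition. Your route via a local Koszul reduction and polynomial action would work in principle, but it requires extending the polynomial representation to $B_i$, which is extra infrastructure the paper never builds.
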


\begin{proof}
Consider the surjective map $T_{i,\  \tikzRBR} \twoheadrightarrow B_i$ that closes the elements $\tikzRBR$ at the bottom by a cup:
 \[
\tikzdiag{
\draw[stdhl](0,0) -- (0,1);\draw (.5,0) -- (.5,1);\draw[stdhl](1,0) -- (1,1);
}
\quad \mapsto \quad  
 	\tikzdiag{
		\draw (4.5,.5) -- (4.5,1);
		\draw [stdhl] (4,1) .. controls (4,.25) and (5,.25) .. (5,1);
	}
 \]
 This map is indeed surjective since any black strand going to the left of the cap factors through a black strand going to the right, using \cref{eq:cupstrandslides}. 
 Then the claim follows by observing that 
 \[
 \begin{tikzcd}[row sep = 1ex]
& & q T_{i,\  \tikzBRR} \ar[hookrightarrow]{dr} \ar[no head]{dr}{
	 {\tikzdiag[scale=.5]{
	 	\draw (1,0) .. controls (1,.5) and (0,.5) .. (0,1);
	 	\draw[stdhl] (0,0) .. controls (0,.5) and (1,.5) .. (1,1);
	 	\draw[stdhl] (2,0) -- (2,1);
	 }}} 
 & & &\\
0 \ar{r} & q^2 T_{i,\  \tikzRBR} \ar[hookrightarrow]{ur} 
 \ar[no head]{ur}{
	 {\tikzdiag[scale=.5]{
	 	\draw (0,0) .. controls (0,.5) and (1,.5) .. (1,1);
	 	\draw[stdhl] (1,0) .. controls (1,.5) and (0,.5) .. (0,1);
	 	\draw[stdhl] (2,0) -- (2,1);
	 }}} 
\ar[hookrightarrow]{dr}
\ar[no head,swap]{dr}{
	 -\ {\tikzdiag[scale=.5]{
	 	\draw (1,0) .. controls (1,.5) and (0,.5) .. (0,1);
	 	\draw[stdhl] (-1,0) -- (-1,1);
	 	\draw[stdhl] (0,0) .. controls (0,.5) and (1,.5) .. (1,1);
	 }}}
 & \oplus & T_{i,\  \tikzRBR} \ar[twoheadrightarrow]{r} 
 \ar[no head]{r}{
	 {\tikzdiag[scale=.5]{
		\draw (4.5,.5) -- (4.5,1);
		\draw [stdhl] (4,1) .. controls (4,.25) and (5,.25) .. (5,1);
	 }}}
	  & B_i \ar{r} & 0,\\
 && q T_{i,\  \tikzRRB} \ar[hookrightarrow]{ur}
\ar[no head,swap]{ur}{
	 {\tikzdiag[scale=.5]{
	 	\draw (0,0) .. controls (0,.5) and (1,.5) .. (1,1);
	 	\draw[stdhl] (1,0) .. controls (1,.5) and (0,.5) .. (0,1);
	 	\draw[stdhl] (-1,0) -- (-1,1);
	 }}}
	  & &  & 
\end{tikzcd}
\]
is an exact sequence. 

Indeed, by \cref{thm:Tbasis}, we know that adding a black/red crossing is an injective operation, and thus the sequence is exact on $q^2 T_{i,\  \tikzRBR}$. For the same reason we also have that 
\[
\ker\left( 
 \begin{tikzcd}[row sep = 1ex]
  q T_{i,\  \tikzBRR} \ar[hookrightarrow]{dr} \ar[no head]{dr}{
	 {\tikzdiag[scale=.5]{
	 	\draw (1,0) .. controls (1,.5) and (0,.5) .. (0,1);
	 	\draw[stdhl] (0,0) .. controls (0,.5) and (1,.5) .. (1,1);
	 	\draw[stdhl] (2,0) -- (2,1);
	 }}} 
 & 
 \\
 \oplus & T_{i,\  \tikzRBR}
	  \\
q T_{i,\  \tikzRRB} \ar[hookrightarrow]{ur}
\ar[no head,swap]{ur}{
	 {\tikzdiag[scale=.5]{
	 	\draw (0,0) .. controls (0,.5) and (1,.5) .. (1,1);
	 	\draw[stdhl] (1,0) .. controls (1,.5) and (0,.5) .. (0,1);
	 	\draw[stdhl] (-1,0) -- (-1,1);
	 }}}
	  & 
\end{tikzcd}
\right)
\cong 
 T_{i,\  \stikzdiag{
	 	\draw (1,0) .. controls (1,.5) and (0,.5) .. (0,1);
	 	\draw[stdhl] (0,0) .. controls (0,.5) and (1,.5) .. (1,1);
	 	\draw[stdhl] (2,0) -- (2,1);
 }}
 \cap 
 T_{i,\  \stikzdiag{
	 	\draw (0,0) .. controls (0,.5) and (1,.5) .. (1,1);
	 	\draw[stdhl] (1,0) .. controls (1,.5) and (0,.5) .. (0,1);
	 	\draw[stdhl] (-1,0) -- (-1,1);
 }}.
\]
By \cref{thm:Tbasis}, we know that if an element can be written as a diagram with a black strand crossing a red strand on the left, and as a different diagram with the same black strand crossing a red strand on the right, then it can be rewritten as a diagram with the same strand going straight, but carrying a dot.
These elements correspond exactly with the image of the preceding map in the complex, which is thus exact at the second position. 
Finally, we observe that 
\[
B_i
\cong 
T_{i,\  \tikzRBR}
\ /\ \bigl(
 T_{i,\  \stikzdiag{
	 	\draw (1,0) .. controls (1,.5) and (0,.5) .. (0,1);
	 	\draw[stdhl] (0,0) .. controls (0,.5) and (1,.5) .. (1,1);
	 	\draw[stdhl] (2,0) -- (2,1);
 }}
 + 
 T_{i,\  \stikzdiag{
	 	\draw (0,0) .. controls (0,.5) and (1,.5) .. (1,1);
	 	\draw[stdhl] (1,0) .. controls (1,.5) and (0,.5) .. (0,1);
	 	\draw[stdhl] (-1,0) -- (-1,1);
 }}
 \bigr),
\]
by constructing an inverse of the map that adds a cup on the bottom, by pulling the cup to the bottom. It is not hard, but a bit lengthy, to check that it respects the defining relations of $B_i$ in the quotient $T_{i,\  \tikzRBR}
\ /\ \bigl(
 T_{i,\  \stikzdiag{
	 	\draw (1,0) .. controls (1,.5) and (0,.5) .. (0,1);
	 	\draw[stdhl] (0,0) .. controls (0,.5) and (1,.5) .. (1,1);
	 	\draw[stdhl] (2,0) -- (2,1);
 }}
 + 
 T_{i,\  \stikzdiag{
	 	\draw (0,0) .. controls (0,.5) and (1,.5) .. (1,1);
	 	\draw[stdhl] (1,0) .. controls (1,.5) and (0,.5) .. (0,1);
	 	\draw[stdhl] (-1,0) -- (-1,1);
 }}
 \bigr)$. 
\end{proof}

\begin{cor}\label{thm:basisBi}
The elements in \cref{eq:basisBi} form a $\bZ\times\bZ^2$-graded $\Bbbk$-basis for $1_\kappa B_i 1_\rho$. 
\end{cor}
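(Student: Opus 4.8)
The strategy is to read off the basis from the presentation of $B_i$ that already appears inside the proof of \cref{prop:cofBi}. There it is shown that capping off the two distinguished red strands at the bottom of a diagram in $T_{i,\ \tikzRBR}$ induces an isomorphism of left $\bZ^2$-graded dg-modules between $B_i$ and the quotient of $T_{i,\ \tikzRBR}$ by the submodule generated by the images of the two black/red crossing maps $T_{i,\ \tikzRBR}\to T_{i,\ \tikzBRR}$ and $T_{i,\ \tikzRBR}\to T_{i,\ \tikzRRB}$ (the distinguished black strand crossing the red strand on its left, respectively on its right). Multiplying by $1_\kappa$ on the left and $1_\rho$ on the right, it therefore suffices to exhibit a $\bZ\times\bZ^2$-graded $\Bbbk$-basis of $1_\kappa T_{i,\ \tikzRBR} 1_\rho$ modulo these two crossing submodules, and to identify it with \eqref{eq:basisBi}.

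First I would fix the basis of $1_\kappa T_{i,\ \tikzRBR} 1_\rho$ provided by (the analogue for these auxiliary modules of) \cref{thm:Tbasis}: left-reduced permutation diagrams, with the blue strand to the left and no red/red crossing, decorated by dots on the top and by nails on the left. Denote by $s$ the distinguished black strand sitting between the two middle red strands at the top of $T_{i,\ \tikzRBR}$. Passing to the quotient, \eqref{eq:killcup} kills every diagram in which $s$ runs straight between the two capped red strands, while \eqref{eq:cupstrandslides} lets one trade, at the cup, a black strand passing to its left for a black strand passing to its right — which by \cref{thm:Tbasis} costs only additional dots on that strand. Applying these moves to a left-reduced diagram, one is left with diagrams in which exactly one black strand — the $\ell$-th of the $b_i$ black strands of $\kappa$ lying in the relevant gap, for some $1\le \ell\le b_i$ — loops over the red cup, while the whole part of the diagram lying above this local picture is an arbitrary left-reduced diagram with dots and nails on the $r$-red configuration $\bar\kappa^i$; the black/red and black/black crossings produced by the looping strand account for the grading shift $q^{b_i+1-2\ell}$ in the definition of $g_\ell$. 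By \cref{thm:Tbasis} once more, that top part is $\Bbbk$-spanned by ${}_{\bar\kappa^i}B_\rho$, so this reduction shows both that each element of \eqref{eq:basisBi} is of the form $g_\ell(b)$ with $b\in{}_{\bar\kappa^i}B_\rho$, and that \eqref{eq:basisBi} $\Bbbk$-spans $1_\kappa B_i 1_\rho$.

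For linear independence I would run a graded rank count. The computation carried out in the proof of \cref{prop:cofBi} produces a four-term exact sequence
\[
0 \to q^2\, T_{i,\ \tikzRBR} \to q\, T_{i,\ \tikzBRR}\ \oplus\ q\, T_{i,\ \tikzRRB} \to T_{i,\ \tikzRBR} \to B_i \to 0
\]
of $\bZ\times\bZ^2$-graded $\Bbbk$-modules; applying $1_\kappa(-)1_\rho$ and using that each $1_\kappa T_{i,\ \bullet} 1_\rho$ is graded free over $\Bbbk$ with graded rank determined by \cref{thm:Tbasis}, one obtains the graded rank of $1_\kappa B_i 1_\rho$ as the alternating sum
\[
(1+q^2)\,\operatorname{grk}_\Bbbk\!\bigl(1_\kappa T_{i,\ \tikzRBR} 1_\rho\bigr)-q\,\operatorname{grk}_\Bbbk\!\bigl(1_\kappa T_{i,\ \tikzBRR} 1_\rho\bigr)-q\,\operatorname{grk}_\Bbbk\!\bigl(1_\kappa T_{i,\ \tikzRRB} 1_\rho\bigr).
\]
On the other hand, by \cref{thm:Tbasis} again, the set \eqref{eq:basisBi} has graded cardinality $\sum_{\ell=1}^{b_i} q^{b_i+1-2\ell}\operatorname{grk}_\Bbbk\!\bigl(1_{\bar\kappa^i} T^{\lambda,r} 1_\rho\bigr)$. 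Checking that the two expressions agree shows that \eqref{eq:basisBi} is a spanning set of the correct graded size, hence a basis; equivalently, each $g_\ell$ is injective and the images of the $g_\ell$ for distinct $\ell$ are in direct sum.

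I expect the reduction in the second paragraph to be the main obstacle: one has to verify that the iteration of \eqref{eq:cupstrandslides} and of the dot-straightening of \cref{thm:Tbasis} terminates and lands in a unique normal form, with the dot and nail decorations correctly bookkept and with no residual relations among the surviving diagrams — the graded rank count of the third paragraph then merely certifies that no collisions were overlooked. An alternative, possibly cleaner, route to linear independence would be to realize $1_\kappa B_i 1_\rho$ faithfully by $\Bbbk$-linear operators between polynomial modules of the type used for \cref{lem:injectstoANH}, on which the $g_\ell(b_{w,\underline l,\underline a})$ act by manifestly independent maps.
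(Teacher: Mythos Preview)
Your two-step strategy --- spanning via the cup relations and the quotient presentation of $B_i$ from the proof of \cref{prop:cofBi}, then linear independence by a graded rank count using the four-term exact sequence --- is exactly the paper's approach.

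Two points deserve correction or sharpening.

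First, your description of the spanning reduction has the role of \eqref{eq:killcup} inverted, and there is a top/bottom confusion. The idempotent $\tikzRBR$ sits at the \emph{bottom} of $T_{i,\ \tikzRBR}$ (it is a left module), and the quotient kills precisely those diagrams in which $s$ crosses one of the two distinguished reds at the bottom; equivalently, \eqref{eq:killcup} forbids the cup's black strand from \emph{leaving} the cup region. So the surviving diagrams are those in which $s$ stays between the red legs --- not the other way round. From there one uses the nilHecke relations together with \eqref{eq:redR3} (which the paper also invokes) to push $s$ to a definite position $\ell$ among the $b_i$ strands at the top, landing on $g_\ell$; the relation \eqref{eq:cupstrandslides} is used to normalise the \emph{other} black strands passing the cup, not $s$ itself.

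Second, for the rank comparison the paper does not verify the identity
\[
(1+q^2)\operatorname{grk}(1_\kappa T_{i,\ \tikzRBR}1_\rho) - q\operatorname{grk}(1_\kappa T_{i,\ \tikzBRR}1_\rho) - q\operatorname{grk}(1_\kappa T_{i,\ \tikzRRB}1_\rho) \;=\; [b_i]_q\cdot\operatorname{grk}(1_{\bar\kappa^i}T^{\lambda,r}1_\rho)
\]
by a direct combinatorial manipulation, which would be tedious. Instead it observes that both sides are computing the same thing at the decategorified level: the left-hand side is the class of $[\bar\B_i(\BP_\kappa)]$ read off from $\br B_i$, while the right-hand side is $[b_i]_q\,[\BP_{\bar\kappa^i}]$, and these agree by the explicit cap formula \eqref{eq:caponk} of \cref{lem:explicitaction}. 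This shortcut is worth adopting.
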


\begin{proof}
As in \cref{thm:Tbasis}, one can show that the elements in \cref{eq:basisBi} span the space $1_\kappa B_i 1_\rho$, mainly using \cref{eq:cupstrandslides} and \cref{eq:redR3}. 
Linear independence follows from a dimensional argument, using \cref{prop:cofBi} and \cref{thm:Tbasis}. The computation of the dimensions can be done at the non-categorified level, and thus is a consequence of \cref{eq:caponk} of \cref{lem:explicitaction}.
\end{proof}

Therefore, the map $\sum g_\ell : \oplus_{\ell = 1}^{b_i} q^{b_i+1-2\ell} (1_{\bar \rho^i} T^{\lambda,r}) \xrightarrow{\simeq} 1_\rho B_i$ of right modules is an isomorphism, where $\bar \rho^{i}$ and $g_\ell$  are as in \cref{sec:tightbasisBi}. 
In particular, $B_i $ is a cofibrant right dg-module. 

With \cref{thm:K0} in mind, this means that $\overline{\B}_i$ acts on ${}_\bQ\bKO^\Delta(T^{\lambda,\underline{N}},0)$ as the cap of $\cB$ on $M \otimes V^r$ (see \cref{eq:caponk}), and \cref{prop:cofBi} means that $\B_i$ acts as the cup (see \cref{eq:cuponk}).

\subsection{Double braiding functor}\label{sec:dbbraiding}

Inspired by the definition of the braiding functor in~\cite[\S6]{webster} (see also~\cite[\S4.1]{webstersl2}), we introduce a double braiding functor that will play the role of a categorification of the action of $\xi$ on $M \otimes V^r$.

\begin{defn}\label{def:dbbraiding}
The \emph{double braiding  bimodule $X$} 
(see \cref{rem:dbbraiding} for an explanation about the terminology) 
 is the $(T^{\lambda,r},0)$-$(T^{\lambda,r},0)$-bimodule generated by the diagrams
 \[
 \tikzdiagh{0}{
	\draw[stdhl] (0,0) node[below]{\small $1$} .. controls (0,.25) .. (-1,.5)
			.. controls (0,.75) .. (0,1);
	\draw[fill=white, color=white] (-1.1,.5) circle (.1cm);
	\draw[vstdhl] (-1,0)  node[below]{\small $\lambda$} -- (-1,1);
 	%
	\draw (.5,0) -- (.5,1);
	\node at(1,.5) {\tiny$\dots$};
	\draw (1.5,0) -- (1.5,1);
	\draw[decoration={brace,mirror,raise=-8pt},decorate]  (.4,-.35) -- node {$b_0$} (1.6,-.35);
	\draw[stdhl] (2,0)  node[below]{\small $1$} --(2,1);
	\draw (2.5,0) -- (2.5,1);
	\node at(3,.5) {\tiny$\dots$};
	\draw (3.5,0) -- (3.5,1);
	\draw[decoration={brace,mirror,raise=-8pt},decorate]  (2.4,-.35) -- node {$b_1$} (3.6,-.35);
	\draw[stdhl] (4,0)  node[below]{\small $1$} --(4,1);
	\node[red] at  (5,.5) {\dots};
	\draw[stdhl] (6,0)  node[below]{\small $1$} --(6,1);
	\draw (6.5,0) -- (6.5,1);
	\node at(7,.5) {\tiny$\dots$};
	\draw (7.5,0) -- (7.5,1);
	\draw[decoration={brace,mirror,raise=-8pt},decorate]  (6.4,-.35) -- node {$b_r$} (7.6,-.35);
}
 \]
for all $(b_0, \dots, b_r) \in \bN^{r+1}$. 
We consider diagrams in $X$ up to graded braid-like planar isotopy with the generators being in homological degree $0$, and subject to the relations\eqref{eq:dotredstrand}-\eqref{eq:redR3} and \eqref{eq:relNail}, and the extra local relations
\begin{align}\label{eq:nailslidedcross}
	\tikzdiagh{0}{
		\draw (.5,-.5) .. controls (.5,-.3) .. (0,-.1) .. controls (.5,.1) ..  (.5,.3) -- (.5,1.5);
		\draw[stdhl] (1,-.5) node[below]{\small $1$} -- (1,0) .. controls (1,.25) .. (0,.5)
				.. controls (1,.75) .. (1,1) -- (1,1.5);
		\draw[fill=white, color=white] (-.1,.5) circle (.1cm);
		\draw[vstdhl] (0,-.5)  node[below]{\small $\lambda$} -- (0,1.5) node[pos=.2,nail]{};
	}
\ &= \ 
	\tikzdiagh[yscale=-1]{0}{
		\draw (.5,-.5) .. controls (.5,-.3) .. (0,-.1) .. controls (.5,.1) ..  (.5,.3) -- (.5,1.5);
		\draw[stdhl] (1,-.5)-- (1,0) .. controls (1,.25) .. (0,.5)
				.. controls (1,.75) .. (1,1) -- (1,1.5) node[below]{\small $1$} ;
		\draw[fill=white, color=white] (-.1,.5) circle (.1cm);
		\draw[vstdhl] (0,-.5)   -- (0,1.5) node[pos=.2,nail]{} node[below]{\small $\lambda$};
	}
&
	\tikzdiagh{0}{
		\draw (1,-.5) .. controls (1,-.3) .. (0,-.1) .. controls (1,.1) ..  (1,.3) -- (1,1.5);
		\draw[stdhl] (.5,-.5) node[below]{\small $1$} -- (.5,0) .. controls (.5,.25) .. (0,.5)
				.. controls (.5,.75) .. (.5,1) -- (.5,1.5);
		\draw[fill=white, color=white] (-.1,.5) circle (.1cm);
		\draw[vstdhl] (0,-.5)  node[below]{\small $\lambda$} -- (0,1.5) node[pos=.2,nail]{};
	}
\ &= \ 
	\tikzdiagh[yscale=-1]{0}{
		\draw (1,-.5) .. controls (1,-.3) .. (0,-.1) .. controls (1,.1) ..  (1,.3) -- (1,1.5);
		\draw[stdhl] (.5,-.5) -- (.5,0) .. controls (.5,.25) .. (0,.5)
				.. controls (.5,.75) .. (.5,1) -- (.5,1.5)  node[below]{\small $1$};
		\draw[fill=white, color=white] (-.1,.5) circle (.1cm);
		\draw[vstdhl] (0,-.5)  -- (0,1.5) node[pos=.2,nail]{}  node[below]{\small $\lambda$};
	}
\end{align}
We set the $\bZ^2$-degree of the generator as 
\begin{align*}
\deg_{q,\lambda}\left(
	\tikzdiag{
		\draw[stdhl] (1,0) node[below]{\small $1$} .. controls (1,.25) .. (0,.5)
				.. controls (1,.75) .. (1,1);
		\draw[fill=white, color=white] (-.1,.5) circle (.1cm);
		\draw[vstdhl] (0,0)  node[below]{\small $\lambda$} -- (0,1);
	}
\right) &:= (0,-1).
\end{align*}
\end{defn}

We define the \emph{double braiding functor} as
\[
\Xi := X \Lotimes_T - : \cD_{dg}(T^{\lambda,r},0) \rightarrow \cD_{dg}(T^{\lambda,r},0). 
\]

\subsubsection{Tightened basis}

Let us now describe a basis of the bimodule $X$, similar as the basis of $T^{\lambda,r}_b$ given in \cref{thm:Tbasis}. We fix $\kappa$ and $\rho$ two elements of $\mathcal{P}^r_b$ and recall the set ${}_\kappa S_\rho$ defined in \cref{ssec:basisTl}. For each $w\in {}_\kappa S_\rho,\ \underline{l}=(l_1,\ldots,l_b)\in \{0,1\}^b$ and $\underline{a}=(a_1,\ldots,a_b)\in\mathbb{N}^b$ we define an element $x_{w,\underline{l},\underline{a}}\in 1_\kappa X 1_\rho$ as follows:
\begin{enumerate}
\item choose a left-reduced expression of $w$ in terms of diagrams as above,
\item for each $1\leq i \leq b$, if $l_i=1$, nail the $i$-th black strand (counting on the top from the left) on the blue strand by pulling it from its leftmost position, 
\item for each $1\leq i \leq b$, add $a_i$ dots on the $i$-th black strand at the top,
\item finally, attach the first red strand to the blue strand by pulling it from its leftmost position.
\end{enumerate}

\begin{defn}\label{def:unbraidingmap}
Define the \emph{unbraiding map}
  \[
u : \lambda  X \rightarrow T^{\lambda,r}, 
\]
as the map given by removing the double braiding
\[
	\tikzdiagh{0}{
		\draw[stdhl] (1,0) node[below]{\small $1$} .. controls (1,.25) .. (0,.5)
				.. controls (1,.75) .. (1,1);
		\draw[fill=white, color=white] (-.1,.5) circle (.1cm);
		\draw[vstdhl] (0,0)  node[below]{\small $\lambda$} -- (0,1);
	}
	\mapsto
	\tikzdiagh{0}{
		\draw[stdhl] (1,0) node[below]{\small $1$} .. controls (1,.25) and (.25,.25)  .. (.25,.5)
				.. controls (.25,.75) and (1,.75) .. (1,1);
		\draw[vstdhl] (0,0)  node[below]{\small $\lambda$} -- (0,1);
	}
        \]
        \end{defn}
 Note that the unbraiding map is a map of $(T_b^{\lambda,r},0)$-$(T_b^{\lambda,r},0)$-bimodules.

\begin{thm}\label{thm:X0basis}
  The set $\left\{x_{w,\underline{l},\underline{a}}\ \middle\vert w\in {}_\kappa S_\rho,\ \underline{l}\in \{0,1\}^b,\ \underline{a}\in\mathbb{N}^b\right\}$ is a $\bZ\times\bZ^2$-graded $\mathbb{\Bbbk}$-basis of $1_\kappa X 1_\rho$.
\end{thm}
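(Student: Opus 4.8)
The plan is to mimic the proof of \cref{thm:Tbasis}: first prove that the set $\{x_{w,\underline{l},\underline{a}}\}$ spans $1_\kappa X 1_\rho$ over $\Bbbk$, and then obtain linear independence by comparing graded dimensions with those of $1_\kappa T^{\lambda,r}_b 1_\rho$ through the unbraiding map of \cref{def:unbraidingmap}.

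For the spanning statement I would take a diagram representing an arbitrary element of $1_\kappa X 1_\rho$ — the generator of $X$ (the first red strand braided around the blue strand) with a diagram of $T^{\lambda,r}$ glued below and a diagram of $T^{\lambda,r}$ glued above — and argue by induction on the number of crossings. The defining relations of $X$ allow two kinds of moves: pushing the double braiding of the first red strand down to the bottom of the diagram, the crucial point being that the nail-slide relations \eqref{eq:nailslidedcross} let nailed black strands pass through it; and resolving any redundant black/red or black/black/red crossing created in the process via \eqref{eq:redR2} and \eqref{eq:redR3} (all red labels are $1$ here), which strictly lowers the number of crossings and is absorbed by the induction hypothesis. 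One also needs the analogue of \cref{lem:anticom_theta} in $X$, namely that tightened nails anticommute up to diagrams with fewer crossings, whose proof is the same. After these reductions the portion of the diagram above the double braiding is an honest diagram of $T^{\lambda,r}_b$, so the spanning part of \cref{thm:Tbasis} (left-adjust the permutation, slide all dots to the top, nail strands from their leftmost positions) rewrites it as a $\Bbbk$-combination of the $b_{w,\underline{l},\underline{a}}$; reinstating the double braiding below then yields the $x_{w,\underline{l},\underline{a}}$, up to lower-crossing terms.

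For linear independence I would use the fact that the unbraiding map $u : \lambda X \to T^{\lambda,r}$ of \cref{def:unbraidingmap} is a morphism of $\bZ\times\bZ^2$-graded $\Bbbk$-modules which is surjective in each graded degree: it sends the generator of $X$ at a composition $\rho$ to the idempotent $1_\rho \in T^{\lambda,r}_b$, so every $t \in 1_\kappa T^{\lambda,r}_b 1_\rho$ is the image of $t$ acting on that generator. On the other hand, $\{x_{w,\underline{l},\underline{a}}\}$ and the basis $\{b_{w,\underline{l},\underline{a}}\}$ of $1_\kappa T^{\lambda,r}_b 1_\rho$ from \cref{thm:Tbasis} are indexed by the same set, and each $x_{w,\underline{l},\underline{a}}$ has the same degree as $b_{w,\underline{l},\underline{a}}$ up to the fixed $\lambda^{-1}$-shift carried by the double braiding, which is exactly the shift appearing in $u$. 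Since all graded components of $T^{\lambda,r}_b$ are finite-dimensional, the spanning bound $\dim_\Bbbk (1_\kappa X 1_\rho)_e \le \#\{x_{w,\underline{l},\underline{a}} \text{ of degree } e\}$ together with surjectivity of $u$ forces equality in every degree, so $\{x_{w,\underline{l},\underline{a}}\}$ is a basis.

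\emph{The main obstacle} I expect is the spanning step: one must show that the double braiding can always be normalized to the bottom of the diagram while every intermediate diagram is either of the claimed form or strictly smaller for a suitable well-ordering — by number of crossings first, then number of nails, then number of dots — compatible with all of \eqref{eq:redR2}, \eqref{eq:redR3} and \eqref{eq:nailslidedcross}. Once this normal-form reduction is in place, the rest is a routine adaptation of the arguments already used for \cref{thm:Tbasis} and for the dimension count in \cref{thm:basisBi}.
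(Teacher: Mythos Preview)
Your spanning argument is fine and matches the paper's. The linear-independence half, however, rests on two claims that are both false.

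First, the degree shift between $x_{w,\underline{l},\underline{a}}$ and $b_{w,\underline{l},\underline{a}}$ is \emph{not} the fixed $\lambda^{-1}$ you assert. Step~(4) attaches the first red strand to the blue \emph{from its leftmost position in the diagram}; if there are $m$ black strands between the red strand and the blue at that height, step~(4) adds $2m$ black/red crossings in addition to the double braiding, contributing $q^{2m}\lambda^{-1}$ rather than $\lambda^{-1}$. The integer $m$ depends on $w$ (and on $\underline{l}$): for $\kappa=\rho=(1,0)$ and $w=\mathrm{id}$ one has $m=1$, whereas for $\kappa=(0,1),\ \rho=(1,0)$ and $w=\sigma_1$ one has $m=0$.

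Second, and more seriously, $u$ is \emph{not} surjective. The generators of $X$ all sit at idempotents whose first entry is zero (the wrapped red strand is adjacent to the blue at both endpoints), so for $\rho$ with $\rho_0>0$ every element of $1_\kappa X1_\rho$ factors through some $1_{\rho'}$ with $\rho'_0=0$. Applying $u$ then lands in $1_\kappa T1_{\rho'}\cdot 1_{\rho'}T1_\rho$, and the second factor always contains $\rho_0$ black/red crossings; by \eqref{eq:redR2} the resulting double crossings force at least $\rho_0$ dots. Concretely, for $\kappa=\rho=(1,0)$ the idempotent $1_\rho$ is not in the image of $u$. So your inequality $\dim X_e\geq\dim T_{e+(0,0,1)}$ fails, and the dimension count collapses.

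The paper's argument uses $u$ in the opposite direction: rather than surjectivity, it exploits that $u$ is \emph{injective on the proposed basis} via a triangularity argument. Applying $u$ and then pulling the red strand back to its pre-step-(4) position turns the $2m$ extra crossings into $m$ dots by \eqref{eq:redR2}, yielding $b_{w,\underline{l},\underline{a}'}$ (with $\underline{a}'=\underline{a}+\underline{\delta}_{w,\underline{l}}$ for a fixed nonnegative offset) plus terms with strictly fewer crossings. Since $(w,\underline{l},\underline{a})\mapsto(w,\underline{l},\underline{a}')$ is injective, the highest-crossing terms of any relation $\sum\alpha\,x_{w,\underline{l},\underline{a}}=0$ are forced to vanish by \cref{thm:Tbasis}, and one descends by induction on the number of crossings. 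This is what replaces your dimension count.
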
 

\begin{proof}
  Showing that this set generates $1_\kappa X 1_\rho$ is similar to \cite[Proposition 3.13]{naissevaz3} and we leave the details to the reader.

To show that the elements $(x_{w,\underline{l},\underline{a}})_{w,\underline{l},\underline{a}}$ are linearly independant  
we consider a linear combination $\sum_{w,\underline{l},\underline{a}}\alpha_{w,\underline{l},\underline{a}}x_{w,\underline{l},\underline{a}}=0$ and apply the unbraiding map $u$. We now pull the first red strand to its original position before the last step of the construction of $x_{w,\underline{l},\underline{a}}$. This has the effect of adding dots on some black strands because of \cref{eq:redR2}.

      We now rewrite $u\left(\sum_{w,\underline{l},\underline{a}}\alpha_{w,\underline{l},\underline{a}}x_{w,\underline{l}\underline{a}}\right)=0$ in terms of the tightened basis of $T^{\lambda,r}_b$. We carefully look at the terms with the highest number of crossings: by pulling the dots at the top, we obtain different elements of the tightened basis of $T^{\lambda,r}_b$ plus terms with a lower number of crossings. From the freeness of the tightened basis of $T^{\lambda,r}_b$, we deduce that the coefficient of the terms with the highest number of crossings must be zero and we can proceed by a descending induction on the number of crossings.
\end{proof}

\begin{cor}\label{cor:uinj}
  The unbraiding map $u : \lambda  X \rightarrow T^{\lambda,r}$ is injective.
\end{cor}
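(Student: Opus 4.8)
The statement $\cref{cor:uinj}$ is an immediate consequence of the basis description in $\cref{thm:X0basis}$, so the plan is simply to track what the unbraiding map $u$ does to the basis elements and check that the images are linearly independent in $T^{\lambda,r}$.

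Here is the approach I would take. Recall that $u$ is a map of $(T_b^{\lambda,r},0)$-$(T_b^{\lambda,r},0)$-bimodules (as noted just after \cref{def:unbraidingmap}), and that $1_\kappa X 1_\rho$ has the $\Bbbk$-basis $\{x_{w,\underline l,\underline a}\}$ by \cref{thm:X0basis}. Fix $\kappa,\rho\in\mathcal{P}_b^r$ and suppose $\sum_{w,\underline l,\underline a}\alpha_{w,\underline l,\underline a}\,x_{w,\underline l,\underline a}$ lies in $\ker u$. First I would apply $u$ and then perform the bookkeeping step already described in the proof of \cref{thm:X0basis}: pull the first red strand back from its attached position on the blue strand to its original slot. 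By \cref{eq:redR2} this reintroduces the missing black/red crossings at the cost of adding some dots on the black strands that had crossed the first red strand, and otherwise does not change the underlying permutation or the nails. Thus $u(x_{w,\underline l,\underline a})$ becomes a diagram of $T_b^{\lambda,r}$ whose underlying permutation is exactly $w$ (the same $w\in{}_\kappa S_\rho$), whose nailing pattern is exactly $\underline l$, and whose dot exponents are $\underline a$ shifted by a fixed amount depending only on $w$ and $\rho$ (the number of dots coming from the red-strand moves).

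Next I would expand each $u(x_{w,\underline l,\underline a})$ in the tightened basis ${}_\kappa B_\rho$ of $T_b^{\lambda,r}$ from \cref{thm:Tbasis}. Exactly as in the linear-independence argument for \cref{thm:X0basis}, one isolates the terms with the maximal number of crossings: after pulling the extra dots to the top of the diagram, these leading terms are genuine tightened-basis elements $b_{w,\underline l,\underline a'}$ with $w$, $\underline l$ unchanged and $\underline a'$ the shifted dot-tuple, plus lower-order terms with strictly fewer crossings. Since distinct triples $(w,\underline l,\underline a)$ give distinct shifted triples $(w,\underline l,\underline a')$, these leading tightened-basis vectors are pairwise distinct, hence $\Bbbk$-linearly independent. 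A descending induction on the number of crossings (the standard ``highest crossing number survives'' argument) then forces all $\alpha_{w,\underline l,\underline a}=0$. Running over all $\kappa,\rho$ shows $\ker u = 0$, i.e. $u$ is injective.

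I do not expect any real obstacle here: the entire content is already isolated in \cref{thm:X0basis}, whose proof contains precisely the unbraiding-plus-dot-pushing manipulation needed. The only point requiring a little care is verifying that the dot-shift induced by moving the first red strand past the relevant black strands is the same for all basis elements sharing a given $(w,\underline l,\underline a)$ — but this shift depends only on which black strands cross the first red strand in the diagram of $w$, which is determined by $w$ and $\rho$ alone, so the leading-term map $(w,\underline l,\underline a)\mapsto(w,\underline l,\underline a')$ really is injective. Hence the result follows.
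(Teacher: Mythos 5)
Your proof is correct and is essentially the paper's argument unpacked: the paper's one-line proof ("the matrix of $u$ in terms of tightened bases can be made in column echelon form with pivots being $1$") is precisely the observation that the leading (highest-crossing-number) term of $u(x_{w,\underline l,\underline a})$, after pushing the dots arising from \cref{eq:redR2} to the top, is the tightened basis element $b_{w,\underline l,\underline a'}$ with coefficient $1$, and that the assignment $(w,\underline l,\underline a)\mapsto(w,\underline l,\underline a')$ is injective.
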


\begin{proof}
  The matrix of $u$ in terms of tightened bases can be made in column echelon form with pivots being $1$. 
\end{proof}

\subsection{Cofibrant replacement of $X$}\label{sec:cofX}

We now want to construct a left cofibrant replacement for $X$.  
Take $\rho = (b_2, \dots, b_{r}) \in \cP_b^{r-2}$ and consider the idempotent $1_{k,\ell,\rho} := 1_{k,\ell,b_2,\dots,b_{r}}$. 
We also write
\[
\bar 1_{\ell,\rho} :=  
\tikzdiagh{0}{
	\draw (.5,0) -- (.5,1);
	\node at(1,.5) {\tiny$\dots$};
	\draw (1.5,0) -- (1.5,1);
	\draw[decoration={brace,mirror,raise=-8pt},decorate]  (.4,-.35) -- node {$\ell$} (1.6,-.35);
	\draw[stdhl] (2,0)  node[below]{\small $1$} --(2,1);
	\draw (2.5,0) -- (2.5,1);
	\node at(3,.5) {\tiny$\dots$};
	\draw (3.5,0) -- (3.5,1);
	\draw[decoration={brace,mirror,raise=-8pt},decorate]  (2.4,-.35) -- node {$b_2$} (3.6,-.35);
	\draw[stdhl] (4,0)  node[below]{\small $1$} --(4,1);
	\node[red] at  (5,.5) {\dots};
	\draw[stdhl] (6,0)  node[below]{\small $1$} --(6,1);
	\draw (6.5,0) -- (6.5,1);
	\node at(7,.5) {\tiny$\dots$};
	\draw (7.5,0) -- (7.5,1);
	\draw[decoration={brace,mirror,raise=-8pt},decorate]  (6.4,-.35) -- node {$b_{r}$} (7.6,-.35);
}
\]
so that for example
\[
1_{0,k+\ell,\rho} =
\tikzdiagh{0}{
	\draw[vstdhl] (-.5,0) node[below]{\small $\lambda$} --(-.5,1);
	\draw[stdhl] (0,0) node[below]{\small $1$} --(0,1);
	\draw (.5,0) -- (.5,1);
	\node at(1,.5) {\tiny$\dots$};
	\draw (1.5,0) -- (1.5,1);
	\draw[decoration={brace,mirror,raise=-8pt},decorate]  (.4,-.35) -- node {$k$} (1.6,-.35);
}
\otimes \bar 1_{\ell,\rho}.
\]
For $k \geq 0, \ell \geq 0$ and $\rho \in \cP_b^{r-2}$, we define
\begin{align*}
Y^1_{k,\ell,\rho} &:= \bigoplus_{t=0}^{k-1} Y^{1,t}_k, & Y^{1,t}_{k,\ell,\rho} &:= \lambda q^{k-2t+1} (T_b^{\lambda,r} 1_{1,k+\ell-1,\rho})[1], 
\end{align*}
\begin{align*}
Y^0_{k,\ell,\rho} &:= {Y'}^0_{k,\ell,\rho} \oplus  \bigoplus_{t = 0}^{k-1} Y^{0,t}_{k,\ell,\rho}, &
{Y'}^0_{k,\ell,\rho} :=  \lambda^{-1} q^{k} (T_b^{\lambda,r}  1_{0,k+\ell,\rho}), \quad Y^{0,t}_{k,\ell,\rho} &:= \lambda q^{k-2t}  (T_b^{\lambda,r} 1_{0,k+\ell,\rho})[1].
\end{align*}
Note that $Y^1_0 = 0$ and $Y^0_0 =  \lambda^{-1} (T_b^{\lambda,r}  1_{0,\ell,\rho})$. 

We write
\begin{align*}
X_k &:= \bigoplus_{\ell \geq 0, \rho \in  \cP_b^{r-2}} X 1_{k,\ell,\rho},
&
Y^1_{k} &:= \bigoplus_{\ell \geq 0, \rho \in  \cP_b^{r-2}} Y^1_{k,\ell,\rho},
&
Y^0_{k}  &:= \bigoplus_{\ell \geq 0, \rho \in  \cP_b^{r-2}} Y^0_{k,\ell,\rho},
\end{align*}
and similarly for $Y^{1,t}_{k}$, ${Y'}^0_{k}$ and $Y^{0,t}_{k}$.

Define the cofibrant $(T^{\lambda,r}_b,0)$-module $\br X_k$ given by the mapping cone
\[
\br X_k :=
\cone\bigl(
Y^1_k
\xrightarrow{\ \imath_k\ }
Y^0_k
\bigr),
\]
where $\imath_k := \sum_{t = 0}^{k-1} \imath_k^t$ for
\begin{align*}
\imath_k^t :& Y^{1,t}_k \rightarrow {Y'}^0_k \oplus Y^{0,t}_k, \\
&
\tikzdiagh{0}{
	\draw[vstdhl] (-.5,0) node[below]{\small $\lambda$} --(-.5,1);
	\draw (0,0) -- (0,1);
	\draw[stdhl] (.5,0) node[below]{\small $1$} --(.5,1);
}
\otimes \bar 1_{k+\ell-1,\rho}
\ \mapsto \ 
\left(
-\ 
\tikzdiagh{0}{
	\draw (1.25,0) .. controls (1.25,.5) .. (-.5,.75) .. controls (0,.875) .. (0,1);
	\draw[stdhl] (0,0) node[below]{\small $1$} .. controls(0,.5) and (.5,.5) .. (.5,1);
	\draw[vstdhl] (-.5,0) node[below]{\small $\lambda$} --(-.5,1) node[pos=.75,nail]{};
	\draw (.5,0) .. controls (.5,.5) and (.75,.5) .. (.75,1);
	\node at (.75,.15){\tiny $\dots$};
	\draw (1,0) .. controls (1,.5) and (1.25,.5)..  (1.25,1);
	\draw[decoration={brace,mirror,raise=-8pt},decorate]  (.4,-.35) -- node { \small $t$} (1.1,-.35);
}
\otimes \bar  1_{\ell+k-1-t,\rho},
\ 
\tikzdiagh{0}{
	\draw[vstdhl] (-.5,0) node[below]{\small $\lambda$} --(-.5,1);
	\draw (.5,0) .. controls (.5,.5) and (0,.5) .. (0,1);
	\draw[stdhl] (0,0) node[below]{\small $1$} .. controls(0,.5) and (.5,.5) .. (.5,1);
}
\otimes \bar  1_{\ell+k-1,\rho}
\right)
\end{align*}
Note that each $\imath_k^t$ is injective, and therefore so is $\imath_k$. 
Then, consider the left module map
\[
\gamma_k : \br X_k \rightarrow X_k, 
\]
given by $\gamma_k := \gamma_k' + \sum_{t = 0}^{k-1} \gamma_k^t$ where
\begin{align*}
\gamma_k' : & {Y'}^0_k \rightarrow X_k, \\
&
\tikzdiagh{0}{
	\draw[vstdhl] (-.5,0) node[below]{\small $\lambda$} --(-.5,1);
	\draw[stdhl] (0,0) node[below]{\small $1$} --(0,1);
	\draw (.5,0) -- (.5,1);
	\node at(.75,.5) {\tiny$\dots$};
	\draw (1,0) -- (1,1);
	\draw[decoration={brace,mirror,raise=-8pt},decorate]  (.4,-.35) -- node {$k$} (1.1,-.35);
}
\otimes \bar  1_{\ell,\rho}
\ \mapsto \ 
\tikzdiagh{0}{
	\draw (0,0) .. controls (0,.5) and (.5,.5) .. (.5,1);
	\node at(.75,.75) {\tiny$\dots$};
	\draw (.5,0) .. controls (.5,.5) and (1,.5) ..  (1,1);
	\draw[decoration={brace,mirror,raise=-8pt},decorate]  (-.1,-.35) -- node {$k$} (.6,-.35);
	\draw[stdhl] (1,0) node[below]{\small $1$} .. controls (1,.25) .. (-.5,.5)
			.. controls (0,.75) .. (0,1);
	\draw[fill=white, color=white] (-.6,.5) circle (.1cm);
	\draw[vstdhl] (-.5,0)  node[below]{\small $\lambda$} -- (-.5,1);
}
\otimes \bar  1_{\ell,\rho},
\end{align*}
and
\begin{align*}
\gamma_k^t : & Y^{0,t}_k \rightarrow X_k, \\
&
\tikzdiagh{0}{
	\draw[vstdhl] (-.5,0) node[below]{\small $\lambda$} --(-.5,1);
	\draw[stdhl] (0,0) node[below]{\small $1$} --(0,1);
	\draw (.5,0) -- (.5,1);
	\node at(.75,.5) {\tiny$\dots$};
	\draw (1,0) -- (1,1);
	\draw[decoration={brace,mirror,raise=-8pt},decorate]  (.4,-.35) -- node {$k$} (1.1,-.35);
}
\otimes  \bar 1_{\ell,\rho}
\ \mapsto \ 
\tikzdiagh{0}{
	\draw (0,-.5) .. controls (0,0) and (.75,0) .. (.75,1);
	\node at(1,.75) {\tiny$\dots$};
	\draw (.5,-.5) .. controls (.5,0) and (1.25,0) .. (1.25,1);
	\draw (.75,-.5) .. controls (.75,-.25) .. (-.5,0) .. controls (.5,.5) ..  (.5,1);
	\draw (1,-.5) .. controls (1,0) and (1.5,0) .. (1.5,1);
	\node at(1.75,.75) {\tiny$\dots$};
	\draw (1.5,-.5) .. controls (1.5,0) and (2,0) .. (2,1);
	\draw[decoration={brace,mirror,raise=-8pt},decorate]  (-.1,-.85) -- node {$t$} (.6,-.85);
	\draw[stdhl] (2,-.5) node[below]{\small $1$}  -- (2,0) .. controls (2,.25) .. (-.5,.5)
			.. controls (0,.75) .. (0,1);
	\draw[fill=white, color=white] (-.6,.5) circle (.1cm);
	\draw[vstdhl] (-.5,-.5)  node[below]{\small $\lambda$} -- (-.5,1) node[pos=.33,nail]{};
}
\otimes \bar  1_{\ell,\rho},
\end{align*}
for all $0 \leq t \leq k-1$. 

\begin{lem}\label{lem:gammasurjective}
The map $\gamma_k : \br X_k \rightarrow X_k$ is surjective.
\end{lem}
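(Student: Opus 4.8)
The plan is to show surjectivity of $\gamma_k$ by exhibiting explicit preimages for a spanning set of $X_k$. By \cref{thm:X0basis}, the bimodule $X 1_{k,\ell,\rho}$ is spanned, as a left $(T^{\lambda,r}_b,0)$-module, by elements $x_{w,\underline{l},\underline{a}}$ whose underlying diagram consists of a left-reduced permutation $w \in {}_\kappa S_\rho$, some nailed black strands recorded by $\underline{l}$, some dots recorded by $\underline{a}$, and finally the first red strand pulled across to the blue strand. The key observation is that, up to multiplication on the left by an element of $T^{\lambda,r}_b$, each such generator can be brought into one of two normal forms according to the geometry near the double braiding: either the first red strand crosses back over \emph{all} $k$ black strands living in the $0$-th block (so it is the ``clean'' double braiding with no black strand trapped to its left), or exactly some number $t$ of these black strands are pulled to the far left and nailed, with the red strand crossing only over the remaining ones. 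These two cases are precisely the images of $\gamma_k'$ (applied to the generator $1_{0,k+\ell,\rho}$ of ${Y'}^0_k$, possibly after acting on the left) and of $\gamma_k^t$ (applied to the generator of $Y^{0,t}_k$), respectively.

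Concretely, first I would fix $\kappa$ and $\rho$ and run through the basis $\{x_{w,\underline{l},\underline{a}}\}$ of $1_\kappa X 1_{k,\ell,\rho}$ from \cref{thm:X0basis}. I would use the defining relations of $X$ — in particular the nail-slide relations \eqref{eq:nailslidedcross}, together with \eqref{eq:relNail} and the black/red relations \eqref{eq:dotredstrand}--\eqref{eq:redR3} — to rewrite each basis element. The crucial move is that any black strand lying to the \emph{left} of the double braiding can be slid across the crossed red strand; each such slide either pulls the strand all the way to the far left (where, by \eqref{eq:nailslidedcross}, it can be nailed) or introduces dots via \eqref{eq:redR2}. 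Iterating, every generator of $X_k$ is expressed as a left $T^{\lambda,r}_b$-multiple of $\gamma_k'(\text{generator of }{Y'}^0_k)$ or of $\gamma_k^t(\text{generator of }Y^{0,t}_k)$ for some $0 \le t \le k-1$; the dots absorbed during the slide are accounted for by the left action. Since $\gamma_k = \gamma_k' + \sum_{t=0}^{k-1}\gamma_k^t$ and each of these maps is $(T^{\lambda,r}_b,0)$-linear on the left, this shows every left-module generator of $X_k$ is in the image, hence $\gamma_k$ is surjective.

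The main obstacle I anticipate is bookkeeping the dot-and-nail shuffling carefully enough to be sure the image genuinely contains \emph{all} basis elements of $X_k$, and not merely a spanning set modulo lower-order (fewer-crossing) terms. This is the same kind of descending induction on the number of crossings used in the proof of \cref{thm:X0basis}: one first hits the top-crossing-degree part of a given element, subtracts its preimage under $\gamma_k$, and is left with a combination of strictly fewer crossings, to which the inductive hypothesis applies. The relations \eqref{eq:cupstrandslides}-type braid moves (here the analogous slide relations for $X$) guarantee that the correction terms live in lower filtration, so the induction terminates. I would therefore structure the argument as: (i) identify the two normal forms and check they are $\gamma_k'$, $\gamma_k^t$ of the stated generators; (ii) show by the crossing induction that these normal forms span $X_k$ over $T^{\lambda,r}_b$; (iii) conclude surjectivity. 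Steps (i) and (iii) are essentially formal once the diagrammatics in \cref{def:dbbraiding} and the maps $\gamma_k'$, $\gamma_k^t$ are on the table; step (ii) is where the real (but routine, and parallel to \cite[Proposition 3.13]{naissevaz3}) work lies, and I would likely relegate its details to \cref{sec:computations}.
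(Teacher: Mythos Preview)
Your proposal is correct and follows essentially the same approach as the paper: both reduce to showing that $X_k$ is generated as a left $(T^{\lambda,r}_b,0)$-module by the images of $\gamma_k'$ and of the $\gamma_k^t$, from which surjectivity is immediate. One small correction: in the image of $\gamma_k^t$ only \emph{one} black strand is nailed---the index $t$ records its position among the $k$ strands, not a count of nailed strands---so the delicate step (which the paper handles via the explicit identities \eqref{eq:nhdoublecrossingid} and a derived dotted-nail slide, and which your crossing induction would have to reproduce) is rewriting an element with \emph{two} nails near the double braiding as a left $T^{\lambda,r}_b$-combination of the single-nail generators.
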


\begin{proof}
The statement can be proved by observing that $X_k$ is generated as a left $(T_b^{\lambda,r},0)$-module by the elements
\begin{align*}
\tikzdiagh{0}{
	\draw (0,0) .. controls (0,.5) and (.5,.5) .. (.5,1);
	\node at(.75,.75) {\tiny$\dots$};
	\draw (.5,0) .. controls (.5,.5) and (1,.5) ..  (1,1);
	\draw[decoration={brace,mirror,raise=-8pt},decorate]  (-.1,-.35) -- node {$k$} (.6,-.35);
	\draw[stdhl] (1,0) node[below]{\small $1$} .. controls (1,.25) .. (-.5,.5)
			.. controls (0,.75) .. (0,1);
	\draw[fill=white, color=white] (-.6,.5) circle (.1cm);
	\draw[vstdhl] (-.5,0)  node[below]{\small $\lambda$} -- (-.5,1);
  }
&\otimes \bar  1_{\ell,\rho},
&
\tikzdiagh{0}{
	\draw (0,-.5) .. controls (0,0) and (.75,0) .. (.75,1);
	\node at(1,.75) {\tiny$\dots$};
	\draw (.5,-.5) .. controls (.5,0) and (1.25,0) .. (1.25,1);
	\draw (.75,-.5) .. controls (.75,-.25) .. (-.5,0) .. controls (.5,.5) ..  (.5,1);
	\draw (1,-.5) .. controls (1,0) and (1.5,0) .. (1.5,1);
	\node at(1.75,.75) {\tiny$\dots$};
	\draw (1.5,-.5) .. controls (1.5,0) and (2,0) .. (2,1);
	\draw[decoration={brace,mirror,raise=-8pt},decorate]  (-.1,-.85) -- node {$t$} (.6,-.85);
	\draw[stdhl] (2,-.5) node[below]{\small $1$}  -- (2,0) .. controls (2,.25) .. (-.5,.5)
			.. controls (0,.75) .. (0,1);
	\draw[fill=white, color=white] (-.6,.5) circle (.1cm);
	\draw[vstdhl] (-.5,-.5)  node[below]{\small $\lambda$} -- (-.5,1) node[pos=.33,nail]{};
}
&\otimes \bar  1_{\ell,\rho},
\end{align*}
for all $0 \leq t \leq k-1$. 
The details can be found in \cref{sec:proofsofsecbimod}.
\end{proof}

\begin{lem}\label{lem:sesX0}
The sequence
\[
0 \rightarrow Y^1_k \xrightarrow{\imath_k} Y^0_k \xrightarrow{\gamma_k} X_k \rightarrow 0,
\]
is a short exact sequence of left $\bZ^2$-graded $(T^{\lambda,r}, 0)$-modules. 
\end{lem}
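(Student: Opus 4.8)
The plan is the following. Since $\imath_k$ is injective (each $\imath_k^t$ is built from the injective operation of adding a black/red crossing, injective by \cref{thm:Tbasis}, together with the nail relations), and $\gamma_k$ is surjective by \cref{lem:gammasurjective}, and since $\gamma_k$ vanishes on the summand $Y^1_k[1]$ of $\br X_k = \cone(\imath_k)$, proving the statement reduces to two things: that $\gamma_k$ is a morphism of dg-modules, and that $\ker\bigl(\gamma_k \colon Y^0_k \to X_k\bigr) = \Image(\imath_k)$. As $Y^1_k$, $Y^0_k$ and $X_k$ all carry the zero differential, the first point is equivalent to the identity $\gamma_k \circ \imath_k = 0$.

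First I would verify $\gamma_k \circ \imath_k = 0$ by a direct diagrammatic computation on each generator of $Y^{1,t}_k$. Writing out $\imath_k^t$ on a generator produces a pair: a term in ${Y'}^0_k$ carrying a nail, with $t$ strands pulled to the left and a global sign $-1$, and a term in $Y^{0,t}_k$. Applying $\gamma_k'$ to the first term yields a diagram with a double braiding sitting below a nail, and repeatedly applying the nail-slide relations \eqref{eq:relNail} and \eqref{eq:nailslidedcross} turns it into exactly $\gamma_k^t$ applied to the second term; because the nail has homological degree $1$, sliding it past the red strand introduces a sign that, together with the $-1$ built into $\imath_k^t$, makes the two contributions cancel. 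This step is routine, but the sign bookkeeping must be done with care.

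For the remaining inclusion $\ker\gamma_k \subseteq \Image\imath_k$ I would argue by a graded dimension count. Since $(T^{\lambda,r}_b,0)$ is positive c.b.l.f. dimensional (\cref{def:positivecblfdgalg}), all the modules involved are locally finite as $\bZ\times\bZ^2$-graded $\Bbbk$-modules; the graded dimensions of $Y^1_k$ and $Y^0_k$ are read off from the basis theorem \cref{thm:Tbasis}, and that of $X_k = \bigoplus_{\ell,\rho} X 1_{k,\ell,\rho}$ from the basis theorem \cref{thm:X0basis}. One then checks the numerical identity $\gdim Y^0_k = \gdim Y^1_k + \gdim X_k$ in each multidegree; this computation can be carried out, or at least cross-checked, after passing to the asymptotic Grothendieck group, where it becomes exactly the decategorification of formula \eqref{eq:xionk} of \cref{lem:explicitaction} for the action of $\xi$. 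Granting this identity, surjectivity of $\gamma_k$ gives $\gdim\ker\gamma_k = \gdim Y^0_k - \gdim X_k = \gdim Y^1_k = \gdim\Image\imath_k$ in every multidegree, and since $\Image\imath_k \subseteq \ker\gamma_k$ this forces $\ker\gamma_k = \Image\imath_k$. I expect the main obstacle to be this last step, namely matching the combinatorics of the tightened basis of $X$ from \cref{thm:X0basis} with the explicit direct-sum definitions of $Y^0_k$ and $Y^1_k$ so that the graded-dimension identity becomes manifest; the diagrammatic cancellation in the previous step, while fiddly with signs, is otherwise straightforward.
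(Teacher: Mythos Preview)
Your approach matches the paper's: both reduce to injectivity of $\imath_k$, surjectivity of $\gamma_k$ (\cref{lem:gammasurjective}), the complex condition $\gamma_k\circ\imath_k=0$, and a graded-dimension count. The paper simply asserts the complex condition (``we already have a complex''), so your diagrammatic sketch of that step is an addition rather than a gap to fill.

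Where your proposal has a genuine gap is the dimension count. Your appeal to \eqref{eq:xionk} is circular: by construction of the $Y$'s one does read off $[Y^0_k]-[Y^1_k]=\xi\,v_{(k,\ell,\rho)}$ in the Grothendieck group, but to conclude the desired identity you still need $\gdim X_k$ independently, and the identification $[X\,1_\rho]=\xi\,v_\rho$ is deduced \emph{from} this lemma (see the remark following \cref{prop:gammaqi}), not available as input to it. The basis theorem \cref{thm:X0basis} does pin down $\gdim X_k$, but extracting it in usable form is the substance of the argument. The paper proves $\gdim X_k=\gdim Y^0_k-\gdim Y^1_k$ by a direct induction on $k$: first for $r=1$ and $\ell=0$, using an explicit recursive decomposition of $X\,1_{k,0}$ drawn from \cref{thm:X0basis}, combined with several quantum-number identities; then bootstrapping to general $\ell$ and $r$ via analogous decompositions. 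That induction, carried out in the appendix, is where the real content lies.
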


\begin{proof}
Since we already have a complex with an injection and a surjection, it is enough to show that 
\[
 \gdim X_k = 
\gdim Y^0_k - \gdim Y^1_k,
\]
where $\gdim$ is the graded dimension in the form of a Laurent series in $\bN\llbracket h^{\pm 1}, \lambda^{\pm 1}, q^{\pm 1} \rrbracket$. 
This can be shown by induction on $k$, and the details are in~\cref{sec:proofsofsecbimod}.
\end{proof}

From that, we induce a right $(T^{\lambda,r},0)$-$A_\infty$-action on $\br X := \bigoplus_{k \geq 0} \br X_k$ (see \cref{sec:Ainftyaction}), turning it into a $\bZ^2$-graded  $(T^{\lambda,r},0)$-$(T^{\lambda,r},0)$-$A_\infty$-bimodule, and we obtain:

\begin{prop}\label{prop:gammaqi}
The map $\gamma := \sum_{k \geq 0} \gamma_k : \br X \twoheadrightarrow X$ is a quasi-isomorphism of $\bZ^2$-graded $(T^{\lambda,r},0)$-$(T^{\lambda,r},0)$-$A_\infty$-bimodules.  
\end{prop}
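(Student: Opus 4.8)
The plan is to deduce \cref{prop:gammaqi} from \cref{lem:sesX0} and \cref{lem:gammasurjective} together with the homotopy transfer of module structures recorded in \cref{sec:Ainftyaction}, the only genuinely new observation being the elementary fact that the mapping cone of an injective chain map computes its cokernel. First I would record that $\br X_k = \cone(\imath_k)$ is cofibrant as a left $(T^{\lambda,r}_b,0)$-module: it is a finite mapping cone of a map between finite direct sums of grading shifts of the modules $T^{\lambda,r}_b 1_{1,k+\ell-1,\rho}$ and $T^{\lambda,r}_b 1_{0,k+\ell,\rho}$, each of which is projective with zero differential hence cofibrant; taking the direct sum over $k$, over $\ell\ge 0$ and over $\rho\in\cP_b^{r-2}$, the module $\br X$ is again cofibrant as a left dg-module.

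The core point is that $\gamma_k:\br X_k\to X_k$ is a quasi-isomorphism of left dg-modules. I would first recall the general fact that for any injective chain map $f:M\to N$ of dg-modules the canonical projection $\cone(f)\twoheadrightarrow\cok(f)$, $(m,n)\mapsto\bar n$, is a quasi-isomorphism: its kernel is, using injectivity of $f$, isomorphic to the contractible complex $\cone(\id_M)$. By construction each $\imath_k^t$ is injective, hence so is $\imath_k$, and therefore $\br X_k=\cone(\imath_k)\twoheadrightarrow\cok(\imath_k)$ is a quasi-isomorphism. On the other hand $\gamma_k$ is a chain map that vanishes on $\imath_k(Y^1_k)$ — this is exactly the complex condition contained in \cref{lem:sesX0} — so it factors through $\cok(\imath_k)$, and by \cref{lem:sesX0} the induced map $\cok(\imath_k)\xrightarrow{\ \cong\ }X_k$ is an isomorphism; composing, $\gamma_k$ is a quasi-isomorphism. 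Since homology commutes with arbitrary direct sums, $\gamma=\bigoplus_{k\ge 0}\gamma_k:\br X\twoheadrightarrow X$ is a surjective quasi-isomorphism of $\bZ^2$-graded left $(T^{\lambda,r},0)$-modules, surjectivity being \cref{lem:gammasurjective}.

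Finally I would invoke the transfer procedure of \cref{sec:Ainftyaction}: since $\gamma$ is a surjective quasi-isomorphism from the cofibrant left dg-module $\br X$ onto the dg-bimodule $X$, the right $(T^{\lambda,r},0)$-action on $X$ transfers, along a homotopy inverse of $\gamma$, to a right $A_\infty$-action on $\br X$ making it a $(T^{\lambda,r},0)$-$(T^{\lambda,r},0)$-$A_\infty$-bimodule and promoting $\gamma$ to a morphism of $A_\infty$-bimodules; such a morphism is a quasi-isomorphism precisely when its linear component is a quasi-isomorphism of underlying complexes, which is exactly what the previous paragraph establishes. The real content has already been spent in \cref{lem:sesX0} and \cref{lem:gammasurjective} (the graded-dimension count); what remains here is bookkeeping — identifying $\gamma_k$ with the cone-to-cokernel projection under \cref{lem:sesX0}, checking that the homological shifts built into $Y^1_k$ and $Y^0_k$ make $\imath_k$ an honest degree-zero chain map, and citing the $A_\infty$-transfer formalism correctly — and I expect no essential difficulty beyond that.
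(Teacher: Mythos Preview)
Your proof is correct and follows the same approach as the paper: the paper's proof is simply ``It is an immediate consequence of \cref{lem:sesX0},'' and you have unpacked this by spelling out the cone-to-cokernel argument and by making explicit the cofibrancy of $\br X$ and the $A_\infty$-transfer (which the paper handles in the sentence immediately preceding the proposition rather than inside the proof). Nothing is missing or wrong; your version is just more detailed than the paper's one-line justification.
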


\begin{proof}
It is an immediate consequence of \cref{lem:sesX0}.
\end{proof}

Again, having \cref{thm:K0} in mind, it means $\Xi$ acts on ${}_\bQ\bKO^\Delta(T^{\lambda,\underline{N}},0)$ as the element $\xi$ of $\cB$ on $M \otimes V^r$ (see \cref{eq:xionk}). 




\section{A categorification of the blob algebra}\label{sec:catTLB}

As in~\cite[\S7]{webster}, the cup and cap functors respect a categorical instance of the Temperley--Lieb algebra relations \eqref{eq:TLrels}--\eqref{eq:TLloopremov}. 
We additionally show that the double braiding functor respects a categorical version of the blob relations \eqref{eq:TLBloopremov} and \eqref{eq:TLBdoublebraid}. 
Note that Webster also proves that the cup and cap functors intertwine the categorical $U_q(\slt)$-action, which
categorifies the fact that the Temperley--Lieb algebra describes morphisms of $U_q(\slt)$-
modules. We start by proving the same for these functors in the dg-setting as well as for
the double braiding functors:

\begin{prop}\label{prop:catactioncommutes}
We have natural isomorphisms $\E \circ \Xi \cong \Xi \circ \E$ and $\F \circ \Xi \cong \Xi \circ \F$, and also $\E \circ\B_i  \cong \B_i \circ\E$, $\F \circ\B_i \cong \B_i \circ\F$, and similarly for $\overline\B_i$.
\end{prop}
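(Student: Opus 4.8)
The plan is to prove the commutation at the level of dg-bimodules over the algebras $T^{\lambda,r}$, i.e. to exhibit isomorphisms of the relevant $(T^{\lambda,r\pm\text{stuff}})$-bimodules, and then pass to derived tensor product functors. Recall that $\E$ and $\F$ are (up to grading shift) the restriction and induction functors along the algebra map adding a black strand on the right, realized by tensoring with the bimodules $T^{\lambda,\bullet}_{b+1}1_{b,1}$ and $1_{b,1}T^{\lambda,\bullet}_{b+1}$; while $\Xi$, $\B_i$, $\overline\B_i$ are given by tensoring with $X$, $B_i$, $\overline B_i$. So the statement amounts to natural isomorphisms of bimodules such as
\[
X \otimes_T \bigl(T^{\lambda,\bullet}_{b+1}1_{b,1}\bigr) \;\cong\; \bigl(T^{\lambda,\bullet}_{b+1}1_{b,1}\bigr)\otimes_T X,
\]
and similarly with $B_i$, $\overline B_i$ in place of $X$, together with the analogue for the restriction side. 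The key geometric input is that the generators of $X$ (the double braiding of the first red strand around the blue strand, with the extra nail-slide relations \eqref{eq:nailslidedcross}) and of $B_i$, $\overline B_i$ (a red cup/cap well away from the right end of the diagram), both live in a region that is \emph{disjoint} from where $\E$, $\F$ act, namely the addition/removal of a black strand at the far right. One should therefore be able to "slide the rightmost black strand past" the double braiding, cup, or cap by a braid-like planar isotopy, exactly as in Webster's proof that braiding and cup/cap functors intertwine the categorical action (\cite[\S6, \S7]{webster}, and \cite[\S4.1, \S4.3]{webstersl2}).

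Concretely, the main steps are: (1) For $\B_i$ and $\overline\B_i$: these are already treated in \cite{webster} at the additive level; in the dg-setting one simply notes that adding/removing the rightmost black strand commutes with the cup/cap bimodule structure, since neither the nail nor the red cup/cap interacts with the rightmost black strand (it is $0$ by the violating condition that the \emph{leftmost} region is empty, but the rightmost strand is unobstructed). One writes the explicit bimodule isomorphism induced by the evident diagram identification and checks it is a chain map — this is immediate since all relevant generators are in homological degree $0$ and no sign issue arises from sliding a degree-$0$ strand past anything. (2) For $\Xi$: use the cofibrant replacement $\predgCyclicMod X \twoheadrightarrow X$ from \cref{prop:gammaqi} (or, for the $\F$ side, note $T^{\lambda,\bullet}_{b+1}1_{b,1}$ is cofibrant by \cref{prop:Tdecomp}, so one tensor product is underived and the composite $X\otimes_T(T^{\lambda,\bullet}_{b+1}1_{b,1})$ computes $\Xi\circ\F$ directly). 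Then the commutation $X\otimes_T(T 1_{b,1})\cong (T1_{b,1})\otimes_T X$ is given by the diagrammatic map that takes a double-braiding diagram with a rightmost black strand and isotopes that black strand from "above the double braiding" to "below the double braiding"; the relations \eqref{eq:nailslidedcross} and the tightened basis \cref{thm:X0basis} guarantee this is a well-defined bimodule isomorphism. For the $\E$ side one uses instead the adjunction / $\RHOM$ description and the fact established in \cref{sec:cofBi}, \cref{sec:cofX} that $X$ and $B_i$ are cofibrant as right modules, so $\RHOM_b(-,1_{b,1}T^{\lambda,\bullet}_{b+1})$ can be computed without further replacement and commutes with $X\otimes_T-$.

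**Expected main obstacle.** The delicate point is the interaction of the \emph{homological} gradings and signs with the $A_\infty$-structure. Because $X$ only admits a left (resp. right) cofibrant replacement whose opposite-side structure is $A_\infty$ rather than strict (as emphasized in the introduction, following \cite[\S2.3]{MW}), the naive diagrammatic isomorphism $X\otimes_T(T1_{b,1})\cong (T1_{b,1})\otimes_T X$ a priori only respects one of the two module structures strictly, and one must promote it to an $A_\infty$-bimodule quasi-isomorphism — or, better, arrange the computation so that one of the two tensor factors is genuinely cofibrant as a bimodule. The cleanest route is probably to observe that $T^{\lambda,\bullet}_{b+1}1_{b,1}$ is cofibrant as a \emph{bimodule} (it is a summand of $T\otimes_{T'} T$-type free bimodules by the basis theorems \cref{thm:Tbasis}, \cref{prop:Tdecomp}), so $\Xi\circ\F$ and $\F\circ\Xi$ are both computed by honest tensor products and the diagrammatic isotopy map is a strict bimodule iso; for $\overline\B_i$ and the restriction functor one then dualizes using biadjointness (\cref{sec:cofBi}) to avoid ever needing $A_\infty$-bimodule maps. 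Once the $\F$/$\B_i$ cases are done strictly, the $\E$/$\overline\B_i$ cases follow formally by taking right adjoints, using that adjoint functors compose and that $\Xi$ is an autoequivalence with $\Xi^{-1}=\RHOM_T(X,-)$ (cf.\ the statement summarized from \cref{cor:qi-Xquadratic}, \cref{prop:Xi-autoequiv}, \cref{cor:qi-bubbleremv}), so $\E\circ\Xi\cong(\Xi^{-1}\circ\F)^{\vee}$-type manipulations reduce everything to the already-established $\F\circ\Xi\cong\Xi\circ\F$.
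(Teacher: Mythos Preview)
Your overall strategy is correct and matches the paper's: reduce to an isomorphism of dg-bimodules, then use the basis theorems (\cref{thm:X0basis} for $X$, \cref{thm:basisBi} for $B_i$) together with \cref{prop:Tdecomp} to see that the ``slide the rightmost black strand past the local generator'' map is an isomorphism. However, you significantly overcomplicate the homological-algebra side.

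The paper's proof is three lines long. The key observation you are missing as a \emph{starting point} (you do eventually arrive near it, but frame it as a workaround to an obstacle) is this: the bimodules realizing $\E$ and $\F$, namely $T^{\lambda,r}_{b+1}1_{b,1}$ and $1_{b,1}T^{\lambda,r}_{b+1}$, are cofibrant both as left and as right dg-modules, by \cref{prop:Tdecomp}. Once you know this, \emph{every} composition appearing in the statement is computed by an ordinary (underived) tensor product of dg-bimodules, on both sides. So there is no $A_\infty$-structure to worry about anywhere: you are simply comparing honest dg-bimodules like $1_{b,1}(T^{\lambda,r}_{b+1})\otimes_{b+1} X$ and $X\otimes_b 1_{b,1}(T^{\lambda,r}_{b+1})$, and the diagrammatic isotopy map is a strict bimodule isomorphism by the basis theorems. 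The cases of $\E$ and $\F$ are handled symmetrically and in the same breath; there is no need for a separate adjunction argument for $\E$, and in particular no need to invoke that $\Xi$ is an autoequivalence (which is \cref{prop:Xi-autoequiv}, proved later in the paper --- your proposed reduction would be a forward reference, even if not strictly circular). Your ``expected main obstacle'' is therefore not an obstacle at all.
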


\begin{proof}
Since $\E$ and $\F$ are given by derived tensor product with a dg-bimodule that is cofibrant both as left and as right module, all compositions are given by usual tensor product of dg-bimodules. Then, the first isomorphism is equivalent to
\[
1_{b,1}(T^{\lambda,r}_{b+1}) \otimes_{b+1} X_{b+1} \cong   X_{b}  \otimes_{b}1_{b,1}(T^{\lambda,r}_{b+1}), 
\]
which in turn follows from \cref{thm:X0basis} and \cref{prop:Tdecomp}. The case with $\F$ is identical, and so is the proof for $\B_i$ using \cref{thm:basisBi}.
\end{proof}

Then, we use all this to show that compositions of the functors $\B_i,\bar \B_i$ and $\Xi$ realize a categorification of $\cB$.

\subsection{Temperley--Lieb relations}\label{sec:catTLaction}
This section is an extension of Webster's results~\cite[\S7]{webster} for the dg-enhanced KLRW algebra $T^{\lambda,r}$. 

\begin{prop}
There is an isomorphism 
\[
 \bar B_{i \pm 1} \Lotimes_T B_i \cong T^{\lambda,r},
\]
of $\bZ^2$-graded $(T^{\lambda,r}, 0)$-$(T^{\lambda,r},0)$-$A_\infty$-bimodules.
\end{prop}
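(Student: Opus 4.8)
The plan is to prove the isomorphism $\bar B_{i\pm1}\Lotimes_T B_i\cong T^{\lambda,r}$ by replacing $B_i$ with its left cofibrant replacement $\br B_i$ from \cref{prop:cofBi}, so that the derived tensor product becomes an honest tensor product $\bar B_{i\pm1}\otimes_T\br B_i$, and then showing this complex is quasi-isomorphic (in fact homotopy equivalent as an $A_\infty$-bimodule) to $T^{\lambda,r}$. First I would write out $\bar B_{i\pm1}\otimes_T\br B_i$ explicitly using the three-term description of $\br B_i$ in terms of $T_{i,\ \tikzRBR}$, $T_{i,\ \tikzBRR}$, $T_{i,\ \tikzRRB}$: tensoring $\bar B_{i\pm1}$ over $T$ with a module of the form $T_{i,\ ?}$ just glues the cap of $\bar B_{i\pm1}$ onto the top of the corresponding diagram, so one obtains a small complex of explicit dg-modules whose underlying $\Bbbk$-modules are described by the tightened bases of \cref{thm:basisBi}. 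The key diagrammatic input is that when the cap labelled $i\pm1$ meets the cup/crossing data of $B_i$ at position $i$, the ``$S$-move'' (zig-zag) straightening — which here amounts to applying \eqref{eq:killcup}, \eqref{eq:cupstrandslides} and the nilHecke/black-red relations — forces most terms to die, leaving essentially the identity bimodule.

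The core computation is therefore: identify inside $\bar B_{i\pm1}\otimes_T\br B_i$ an explicit sub-dg-bimodule (or quotient) isomorphic to $T^{\lambda,r}$ via a zig-zag straightening map, together with an explicit contracting homotopy on the complementary summands. Concretely I expect the complex $\bar B_{i\pm1}\otimes_T\br B_i$ to split, up to homotopy, as $T^{\lambda,r}$ plus an acyclic complex built from the ``$\tikzBRR$'' and ``$\tikzRRB$'' pieces paired against part of the ``$\tikzRBR$'' piece; the relevant cancellations are governed by \cref{thm:Tbasis} (adding a black/red crossing is injective, and a strand crossing a red on both sides equals a straight strand with a dot). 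This is exactly the pattern appearing in Webster's proof of the Temperley--Lieb relations \cite[\S7]{webster}, and the novelty here is only bookkeeping the blue strand, the nails, and the homological shifts: one checks that the degree shifts $q,\lambda,[1]$ attached to the cup ($(0,0)$, degree $0$), the cap ($(-1,0)$, degree $-1$) and the extra internal shifts in $\br B_i$ conspire so that the surviving summand carries degree $(0,0)$ and homological degree $0$, matching $T^{\lambda,r}$ exactly.

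To upgrade the resulting quasi-isomorphism of left (or right) dg-modules to an isomorphism of $A_\infty$-bimodules, I would invoke the standard homological perturbation / homotopy transfer argument (as in the discussion following \cite[\S2.3]{MW} cited in the introduction): since $\br B_i$ is left cofibrant, $\bar B_{i\pm1}\otimes_T\br B_i$ carries a natural right $(T^{\lambda,r},0)$-$A_\infty$-structure, and the homotopy equivalence onto $T^{\lambda,r}$ transports this structure; because $T^{\lambda,r}$ is concentrated in the homotopy-trivial part, the transported higher products vanish and one gets an honest bimodule isomorphism in the derived category. Equivalently, one can run the symmetric argument with a right cofibrant replacement $B_i\rb$ and a bimodule cofibrant replacement, exactly as described for $\mathfrak B_r$ in the introduction, to avoid $A_\infty$-subtleties altogether.

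The main obstacle I anticipate is the careful diagrammatic verification that the zig-zag straightening map is well-defined on the quotient and inverse to the obvious inclusion — i.e. that it respects the defining relations \eqref{eq:killcup}--\eqref{eq:cupstrandslides} of $B_i$ and the cap relations of $\bar B_{i\pm1}$ — and, hand in hand with that, producing the explicit null-homotopy on the acyclic part with the correct signs dictated by the graded braid-like planar isotopy (the nail is in homological degree $1$, the cap in degree $-1$, so several Koszul signs appear). This sign-and-relation bookkeeping, rather than any conceptual difficulty, is where the work lies; everything else is a direct adaptation of \cite[\S7]{webster} combined with \cref{prop:cofBi}, \cref{thm:basisBi} and \cref{thm:Tbasis}.
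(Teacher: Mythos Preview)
Your overall strategy---replace $B_i$ by $\br B_i$ and analyze the resulting four-term complex---is the same one the paper uses, and it works. But you overcomplicate the endgame in two places.

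First, the paper begins with a local reduction: using \cref{prop:catactioncommutes} and $\B_i \circ \mathfrak{I} \cong \mathfrak{I} \circ \B_i$, one may assume $i=r-1$ and that there are no black strands in the relevant region. This is not strictly necessary, but it makes the computation a one-line check rather than a bookkeeping exercise.

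Second, and more importantly, you expect $\bar B_{i\pm1}\otimes_T\br B_i$ to decompose up to homotopy as $T^{\lambda,r}$ plus an acyclic piece, and you plan to produce a contracting homotopy. In fact no homotopy is needed: after tensoring with the cap, three of the four terms of $\br B_i$ become \emph{identically zero}, not merely acyclic. The mechanism is the relation \eqref{eq:killcup} for the cap bimodule (a black strand cannot escape the cap), which kills $\bar B_{i-1}\otimes_T T_{i,\ \tikzRBR}$ and $\bar B_{i-1}\otimes_T T_{i,\ \tikzRRB}$ outright; the surviving summand $\bar B_{i-1}\otimes_T q(T_{i,\ \tikzBRR})[1]$ is isomorphic to $T^{\lambda,r}$ by the zig-zag straightening you describe, with the cap's built-in shift $q^{-1}(-)[-1]$ cancelling the $q(-)[1]$. (For the $i+1$ case it is the $\tikzRRB$ term that survives.) Your expectation that the $\tikzRBR$ piece carries the answer is reversed.

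Consequently, there is no need for homological perturbation or homotopy transfer to handle the $A_\infty$-structure. Once the complex is literally concentrated in a single homological degree, all higher $A_\infty$-operations vanish for degree reasons, and the identification with $T^{\lambda,r}$ is already one of honest dg-bimodules. Your approach would reach the same conclusion, but through unnecessary machinery.
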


\begin{proof} 
We prove $\bar B_{i - 1} \Lotimes_T B_i \cong T^{\lambda,r}$, the other case follows similarly. 
Using \cref{prop:catactioncommutes} and the fact that $\B_i \circ  \mathfrak{I} \cong \mathfrak{I} \circ \B_i$ for $ i < r-1$ (where we recall $\mathfrak{I}$ is the induction along a red strand defined in \cref{sec:redind}), we can work locally, supposing that $i = r-1$ and $b_i = 0$. Then, we have that $\bar B_{i - 1} \otimes_T \br B_i $ looks like
\[
\begin{tikzcd}[row sep = 1ex]
 & 
 q\left(
\tikzdiag[scale=.75]{
	\draw (1,0) -- (1,1.5);
	\filldraw [fill=white, draw=black] (-.25,.3) rectangle (2.25,1);
	\filldraw [fill=white, draw=white, dotted] (-.25,.3) rectangle (.15,1);
	\draw [stdhl] (.5,0) -- (.5,1)  .. controls (.5,1.75) and (1.5,1.75) .. (1.5,1)--(1.5,0);
	\draw [stdhl] (2,0) -- (2,1.5);
	%
}
\right)[1]
 \ar{dr} \ar[no head]{dr}{
	 {\tikzdiag[scale=.5]{
	 	\draw (1,0) .. controls (1,.5) and (0,.5) .. (0,1);
	 	\draw[stdhl] (0,0) .. controls (0,.5) and (1,.5) .. (1,1);
	 	\draw[stdhl] (2,0) -- (2,1);
	 }}} 
 & \\
 q^2\left(
\tikzdiag[scale=.75]{
	\draw (1.5,0) -- (1.5,.5) (1,1) -- (1,1.5);
	\filldraw [fill=white, draw=black] (-.25,.3) rectangle (2.25,1);
	\filldraw [fill=white, draw=white, dotted] (-.25,.3) rectangle (.15,1);
	\draw [stdhl] (.5,0) -- (.5,1)  .. controls (.5,1.75) and (1.5,1.75) .. (1.5,1) .. controls (1.5,.5) and (1,.5) .. (1,0);
	\draw [stdhl] (2,0) -- (2,1.5);
	%
}
\right)[2]
\ar{ur} 
 \ar[no head]{ur}{
	 {\tikzdiag[scale=.5]{
	 	\draw (0,0) .. controls (0,.5) and (1,.5) .. (1,1);
	 	\draw[stdhl] (1,0) .. controls (1,.5) and (0,.5) .. (0,1);
	 	\draw[stdhl] (2,0) -- (2,1);
	 }}} 
\ar{dr}
\ar[no head,swap]{dr}{
	 -\ {\tikzdiag[scale=.5]{
	 	\draw (1,0) .. controls (1,.5) and (0,.5) .. (0,1);
	 	\draw[stdhl] (-1,0) -- (-1,1);
	 	\draw[stdhl] (0,0) .. controls (0,.5) and (1,.5) .. (1,1);
	 }}}
 & \oplus &  
\tikzdiag[scale=.75]{
	\draw (1.5,0) -- (1.5,.5) (1,1) -- (1,1.5);
	\filldraw [fill=white, draw=black] (-.25,.3) rectangle (2.25,1);
	\filldraw [fill=white, draw=white, dotted] (-.25,.3) rectangle (.15,1);
	\draw [stdhl] (.5,0) -- (.5,1)  .. controls (.5,1.75) and (1.5,1.75) .. (1.5,1) .. controls (1.5,.5) and (1,.5) .. (1,0);
	\draw [stdhl] (2,0) -- (2,1.5);
	%
}
 \\
 &  
 q\left(
\tikzdiag[scale=.75]{
	\draw (2,0) -- (2,.5) (1,1) -- (1,1.5);
	\filldraw [fill=white, draw=black] (-.25,.3) rectangle (2.25,1);
	\filldraw [fill=white, draw=white, dotted] (-.25,.3) rectangle (.15,1);
	\draw [stdhl] (.5,0) -- (.5,1)  .. controls (.5,1.75) and (1.5,1.75) .. (1.5,1) .. controls (1.5,.5) and (1,.5) ..(1,0);
	\draw [stdhl] (1.5,0) .. controls (1.5,.5) and (2,.5) .. (2,1) -- (2,1.5);
	%
}
\right)[1]
 \ar{ur}
\ar[no head,swap]{ur}{
	 {\tikzdiag[scale=.5]{
	 	\draw (0,0) .. controls (0,.5) and (1,.5) .. (1,1);
	 	\draw[stdhl] (1,0) .. controls (1,.5) and (0,.5) .. (0,1);
	 	\draw[stdhl] (-1,0) -- (-1,1);
	 }}}
	  & 
\end{tikzcd}
\]
which is isomorphic to
\[
\begin{tikzcd}[row sep = 1ex]
 & 
T^{\lambda,r}
 \ar{dr}
 & \\
0
\ar{ur} 
\ar{dr}
 & \oplus &  
0
 \\
 &  
0
 \ar{ur}
	  & 
\end{tikzcd}
\]
because of \cref{eq:killcup}. 
Note that it is an isomorphism of dg-bimodules, since all the higher composition maps of the $A_\infty$-structure must be zero by degree reasons, concluding the proof. 
\end{proof}

\begin{cor}\label{cor:TLbiadj}
There is a natural isomorphism $\bar \B_{i \pm 1} \circ \B_i \cong \id$. 
\end{cor}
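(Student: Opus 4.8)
The plan is to deduce \cref{cor:TLbiadj} directly from the preceding proposition, using the relationship between the derived tensor product and the composition of the functors $\B_i$ and $\overline{\B}_{i\pm1}$. The proposition established that there is an isomorphism of $A_\infty$-bimodules $\overline{B}_{i\pm1}\Lotimes_T B_i\cong T^{\lambda,r}$, where $T^{\lambda,r}$ is viewed as the identity bimodule. The point is that $\overline{\B}_{i\pm1}\circ\B_i$ is, by definition, the functor given by deriving the tensor product against $\overline{B}_{i\pm1}\Lotimes_T B_i$ — that is, first apply $B_i\Lotimes_T-$ and then $\overline{B}_{i\pm1}\Lotimes_T-$, which by associativity of the derived tensor product is the same as tensoring with $\overline{B}_{i\pm1}\Lotimes_T B_i$.

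First I would make precise the passage from a quasi-isomorphism of bimodules to a natural isomorphism of the induced functors on the derived dg-category. Concretely: for any dg-$A_\infty$-bimodule $M$ over $(T^{\lambda,r},0)$-$(T^{\lambda,r},0)$, tensoring gives a dg-functor on $\cD_{dg}(T^{\lambda,r},0)$, and a quasi-isomorphism $M\xrightarrow{\simeq}M'$ of such bimodules induces a natural isomorphism $M\Lotimes_T-\xrightarrow{\cong}M'\Lotimes_T-$ of functors on the derived category (this is the standard fact that derived tensor product is a well-defined functor, invariant under quasi-isomorphism of the bimodule; one replaces $M$ and $M'$ by the same cofibrant replacement). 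Here I would invoke the cofibrant replacements already constructed in \cref{sec:cofBi}, namely $\br B_i$ from \cref{prop:cofBi}, to guarantee that the derived composition is computed correctly and that the $A_\infty$-subtleties flagged in the introduction are handled exactly as in the proof of the proposition.

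Second, I would spell out that $\overline{\B}_{i\pm1}\circ\B_i\cong(\overline{B}_{i\pm1}\Lotimes_T B_i)\Lotimes_T-$ as functors, which is associativity of $\Lotimes_T$; combined with the proposition this gives $\overline{\B}_{i\pm1}\circ\B_i\cong T^{\lambda,r}\Lotimes_T-\cong\id$, the last isomorphism because $T^{\lambda,r}$ is the unit for $\Lotimes_T$ (it is cofibrant as a bimodule over itself, being free). This chain of natural isomorphisms is exactly the claimed statement.

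The main obstacle, such as it is, is bookkeeping rather than mathematics: one must be careful that the isomorphism in the proposition is genuinely one of $A_\infty$-bimodules and not merely of one-sided modules, since otherwise the induced map would only be a natural transformation on the nose and not obviously a natural isomorphism of the composite functors. The proposition's proof already addresses this — it notes that all higher $A_\infty$-composition maps vanish for degree reasons — so the honest content here is just to record that a bimodule quasi-isomorphism descends to a natural isomorphism of derived tensor functors, which is routine. Thus the proof of \cref{cor:TLbiadj} should read roughly: ``This is immediate from the preceding proposition together with the fact that $\overline{\B}_{i\pm1}\circ\B_i\cong(\overline{B}_{i\pm1}\Lotimes_T B_i)\Lotimes_T-$ and that $T^{\lambda,r}\Lotimes_T-\cong\id$.''
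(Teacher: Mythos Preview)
Your proposal is correct and matches the paper's approach exactly: the paper states \cref{cor:TLbiadj} as an immediate corollary of the preceding proposition with no further proof, since a quasi-isomorphism of $A_\infty$-bimodules $\bar B_{i\pm 1}\Lotimes_T B_i \cong T^{\lambda,r}$ directly yields the natural isomorphism of functors via associativity of $\Lotimes_T$ and the fact that $T^{\lambda,r}$ acts as the identity. Your explanation of why this passage is legitimate (vanishing higher $A_\infty$-maps, invariance of derived tensor under bimodule quasi-isomorphism) is exactly the routine bookkeeping the paper leaves implicit.
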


\begin{prop}\label{prop:distinguishedBitriangle}
There is a distinguished triangle
\[
q (T^{\lambda,r})[1] \xrightarrow{\eta_i} \overline{B}_i \Lotimes_T B_i \xrightarrow{\overline \varepsilon_i} q^{-1} (T^{\lambda,r})[-1]  \xrightarrow{0} 
\]
of $\bZ^2$-graded  $(T^{\lambda,r}, 0)$-$(T^{\lambda,r},0)$-$A_\infty$-bimodules.
\end{prop}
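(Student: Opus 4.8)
The plan is to exhibit the distinguished triangle concretely at the level of (cofibrant) dg-bimodules, using the explicit left cofibrant replacement $\br B_i$ from \cref{prop:cofBi} and the biadjointness maps from \cref{sec:cofBi}. First I would replace $B_i$ by $\br B_i$ and compute $\overline B_i \otimes_T \br B_i$ termwise. As in the proof of \cref{cor:TLbiadj}, \cref{prop:catactioncommutes} and the fact that $\B_i$ commutes with the induction $\mathfrak I$ along a red strand let me reduce to the local situation where $i=r-1$ and $b_i=0$; then $\overline B_i\otimes_T \br B_i$ is a totalization of a small complex built from the three pieces $T_{i,\ \tikzRBR}$, $T_{i,\ \tikzBRR}$, $T_{i,\ \tikzRRB}$ with the cap glued on. The key point, exactly as in \cref{cor:TLbiadj}, is that the cap kills any strand turning back to the left via \cref{eq:killcup}, so most terms collapse; but \emph{in contrast} to the $\overline B_{i\pm1}$ case, the ``same colour'' cap $\overline B_i$ applied to the $\tikzBRR$ and $\tikzRRB$ pieces does not vanish identically — instead, after using \cref{eq:redR2} to absorb the red/black crossings as dots, it produces two copies of $T^{\lambda,r}$ shifted by $q^{\pm1}[\pm1]$, which assemble into the cone in the statement.

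Concretely, the second step is to identify $\overline B_i \Lotimes_T B_i$ with a mapping cone $\cone\bigl(q(T^{\lambda,r})[1]\xrightarrow{f} q^{-1}(T^{\lambda,r})[-1]\bigr)[?]$, and then to show the connecting map $f$ is zero for degree reasons. This last point is where I would argue: a bimodule map $q(T^{\lambda,r})[1]\to q^{-1}(T^{\lambda,r})[-1]$ is determined by the image of $1$, which must be a central element of $T^{\lambda,r}$ of homological degree $2$ and $(q,\lambda)$-bidegree $(2,0)$; by the basis \cref{thm:Tbasis} (a central element of homological degree $2$ would have to be a combination of products of two tightened nails $\theta_k\theta_\ell$, which by \cref{lem:anticom_theta} are antisymmetric and, in the relevant weight, one checks there is no central combination of the correct bidegree), there is no such nonzero element, so $f=0$. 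I would then invoke the standard fact that the cone on a zero map splits, giving the asserted distinguished triangle with the maps $\eta_i$ and $\overline\varepsilon_i$ the unit and counit of \cref{sec:cofBi}: the identification of the two maps with $\eta_i$ and $\overline\varepsilon_i$ follows because the inclusion $q(T^{\lambda,r})[1]\hookrightarrow \overline B_i\Lotimes_T B_i$ and the projection onto $q^{-1}(T^{\lambda,r})[-1]$ are, up to invertible scalars, the only bimodule maps of the correct degree, hence match the adjunction (co)unit.

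Since all higher $A_\infty$-operations on $\overline B_i\otimes_T \br B_i$ vanish by the same degree count (there is simply no room, in the relevant homological/$q$/$\lambda$ tridegrees, for a nonzero higher multiplication landing in $T^{\lambda,r}$), the triangle is in fact a triangle of honest dg-bimodules, or at worst of $A_\infty$-bimodules with vanishing higher terms — which is all that is claimed. So the proof is: (i) localize via \cref{prop:catactioncommutes} and $\mathfrak I$-equivariance to $i=r-1$, $b_i=0$; (ii) compute $\overline B_i\otimes_T\br B_i$ termwise using \cref{eq:killcup} and \cref{eq:redR2}; (iii) recognize the result as $\cone(f)$ with $f:q(T^{\lambda,r})[1]\to q^{-1}(T^{\lambda,r})[-1]$; (iv) show $f=0$ by the degree/centrality argument using \cref{thm:Tbasis} and \cref{lem:anticom_theta}; (v) identify the structure maps with $\eta_i$, $\overline\varepsilon_i$.

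The main obstacle I expect is step (iv)–(v): verifying that $f=0$ requires pinning down exactly which central elements of $T^{\lambda,r}$ exist in homological degree $2$ and the right $q,\lambda$-degrees, and then confirming that the surviving termwise maps really are the adjunction unit and counit rather than some other multiple. This is essentially a bookkeeping argument with the tightened basis, but it is the place where one must be careful — in particular the nonvanishing of $\overline B_i\otimes_T\br B_i$ (as opposed to the $\overline B_{i\pm1}$ case, where everything dies) means one cannot simply quote the previous proof and has to track the ``bubble'' contributions explicitly.
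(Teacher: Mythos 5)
Your high-level plan — localize via $\mathfrak{I}$-equivariance, replace one side by a cofibrant resolution, compute the tensor product termwise, identify it as a sum of two shifted copies of $T^{\lambda,r}$, and then match the structure maps with the adjunction (co)unit — is the same as the paper's (the paper works with $\bar B_i\rb\otimes_T B_i$ rather than your $\overline B_i\otimes_T\br B_i$, an immaterial choice). However, the central computational claim in your step (ii) is wrong, and this propagates.

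You claim that the cap applied to the $T_{i,\tikzBRR}$ and $T_{i,\tikzRRB}$ pieces does \emph{not} vanish and that these produce the two shifted copies of $T^{\lambda,r}$. In fact it is exactly the other way around: those two middle pieces are killed (by the mirror version of \cref{eq:killcup} — a red strand sliding past the cap is zero — just as in the paper's computation), and it is the two outer $T_{i,\tikzRBR}$ pieces that survive and give $q(B_i^{\tikzRBR})[1]\oplus q^{-1}(B_i^{\tikzRBR})[-1]$ with $B_i^{\tikzRBR}\cong T^{\lambda,r}$. Once the middle term is $0$, the differential of the resulting complex is identically zero, so the dg-bimodule is already a direct sum on the nose; there is no nontrivial cone, and your steps (iii)–(iv) are moot. (The bookkeeping there is also off: the shift in your proposed $f$ does not yield $q(T^{\lambda,r})[1]\oplus q^{-1}(T^{\lambda,r})[-1]$ as its cone, and the appeal to antisymmetry of $\theta_k\theta_\ell$ from \cref{lem:anticom_theta} is a red herring — the $\lambda$-grading alone rules out a central element of the degree you name.) For step (v), your uniqueness-of-bimodule-maps argument still requires the extra input that $\eta_i$ and $\overline\varepsilon_i$ are nonzero; the paper avoids this by computing them explicitly from the formulas for the (co)unit of the derived $\otimes$–$\HOM$ adjunction, showing for instance that the unit is the map $T^{\lambda,r}\to\HOM_T(\br B_i,B_i)$ landing in the $B_i^{\tikzRBR}$-component as the cup-on-top map, which directly identifies $\eta_i$ with the inclusion of $q(T^{\lambda,r})[1]$ and $\overline\varepsilon_i$ with the projection onto $q^{-1}(T^{\lambda,r})[-1]$.
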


\begin{proof}
We have
\[
\bar B_{i} \Lotimes_T B_i \cong \bar B_{i} \rb \otimes_T  B_i,
\]
which looks like
\[
\begin{tikzcd}[row sep = 1ex]
 & 0
 \ar{dr}
 & \\
q ( B_{i}^{\tikzRBR})[1] \ar{ur} 
\ar{ur}
\ar{dr}
 & \oplus &  q^{-1}( B_{i}^{\tikzRBR})[-1]. \\
 &  0
\ar{ur}
	  & \\
\end{tikzcd}
\]
Thus, since $B_{i}^{\tikzRBR} \cong  T^{\lambda,r}$, we have that
\[
H(\bar B_{i} \Lotimes_T B_i) \cong q(T^{\lambda,r})[1] \oplus q^{-1}(T^{\lambda,r})[-1].
\]
In order to compute $\eta_i$, recall (or see \cref{sec:unitandcounit}) that the unit of the adjunction $(B_i \Lotimes_T -) \vdash (\RHOM_T(B_i, -))$ is given by 
\[
\eta_i' : T^{\lambda,r} \rightarrow \RHOM_T( B_i, B_i \Lotimes_T T^{\lambda,r}) \cong \HOM_T( \br B_i, B_i), 
\quad
t \mapsto \left[ x \mapsto \overline{x} \cdot t \right],
\]
where $\overline{x}$ is the image of $x$ under the map $\br B_i \twoheadrightarrow B_i$. Moreover, $\HOM_T( \br B_i, B_i)$ is given by
\[
\begin{tikzcd}[row sep = 1ex]
 & 0
 \ar[leftarrow]{dr}
 & \\
q^{-2} \HOM_T(T_{i,\  \tikzRBR}, B_i)[-2] \ar[leftarrow]{ur} 
\ar[leftarrow]{ur}
\ar[leftarrow]{dr}
 & \oplus &  \HOM(T_{i,\  \tikzRBR}, B_i), \\
 &  0
\ar[leftarrow]{ur}
	  & \\
\end{tikzcd}
\]
and then, $\eta_i'$ is the map $T^{\lambda,r} \xrightarrow{\simeq} \HOM(T_{i,\  \tikzRBR}, B_i) \cong B_i^{\tikzRBR}$ that adds a cup on the top. 
Thus, $\eta_i$ identifies $q(T^{\lambda,r})[1]$ with 
$q(B_i^{\tikzRBR})[1] \subset H(\bar B_{i} \Lotimes_T B_i)$ in homology. 
Similarly, the counit of the adjunction $(\oB_i \Lotimes -) \vdash (\RHOM_T(\oB_i, -))$ is
\[
\overline \varepsilon' : \oB_i \Lotimes_T \RHOM_T (\oB_i, T^{\lambda,r})  \cong \oB_i \rb \otimes_T \HOM_T(\oB_i, T^{\lambda,r})
\rightarrow T^{\lambda,r}, 
\quad 
t \otimes f \mapsto 
f(\overline t).
\]
Then, we obtain that $\oB_i \rb \otimes_T \HOM_T(\oB_i, T^{\lambda,r})$ is isomorphic to
\[
\begin{tikzcd}[row sep = 1ex]
 & 0
 \ar{dr}
 & \\
q^2 ( B_{i}^{\tikzRBR})[2] \ar{ur} 
\ar{ur}
\ar{dr}
 & \oplus &  B_{i}^{\tikzRBR}, \\
 &  0
\ar{ur}
	  & \\
\end{tikzcd}
\]
and thus, $\overline \varepsilon'$ is the isomorphism $B_{i}^{\tikzRBR} \xrightarrow{\simeq} T^{\lambda,r}$. Therefore, $\overline \varepsilon$ identifies $ q^{-1} (T^{\lambda,r})[-1] $ with $q^{-1}(B_i^{\tikzRBR})[-1] \subset H(\bar B_{i} \Lotimes_T B_i)$ in homology.   
\end{proof}

Because the connecting morphism in \cref{prop:distinguishedBitriangle} is zero, the triangle splits and we have
\[
 \overline{B}_i \Lotimes_T B_i \cong q (T^{\lambda,r})[1] \oplus q^{-1} (T^{\lambda,r})[-1].
\]

\begin{cor}\label{cor:TLloop}
There is a natural isomorphism
\[
 \overline \B_i \circ \B_i \cong q \id [1]  \oplus q^{-1} \id [-1]. 
\]
\end{cor}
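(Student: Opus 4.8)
The statement to be proved is \cref{cor:TLloop}, namely the natural isomorphism $\overline{\B}_i \circ \B_i \cong q\,\id[1] \oplus q^{-1}\,\id[-1]$.

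The plan is to obtain this as an immediate consequence of \cref{prop:distinguishedBitriangle} together with the observation recorded just before the corollary. First I would invoke \cref{prop:distinguishedBitriangle}, which gives a distinguished triangle
\[
q(T^{\lambda,r})[1] \xrightarrow{\eta_i} \overline{B}_i \Lotimes_T B_i \xrightarrow{\overline{\varepsilon}_i} q^{-1}(T^{\lambda,r})[-1] \xrightarrow{0}
\]
of $\bZ^2$-graded $(T^{\lambda,r},0)$-$(T^{\lambda,r},0)$-$A_\infty$-bimodules, in which the connecting morphism is the zero map. Since the connecting morphism vanishes, the triangle splits, so one gets an isomorphism of $A_\infty$-bimodules
\[
\overline{B}_i \Lotimes_T B_i \cong q(T^{\lambda,r})[1] \oplus q^{-1}(T^{\lambda,r})[-1].
\]
The only subtlety worth spelling out is why a distinguished triangle with zero connecting morphism splits in this setting: in the homotopy category of dg- (or $A_\infty$-) bimodules, a distinguished triangle $A \to B \to C \xrightarrow{0} A[1]$ forces $B \cong A \oplus C$, because the identity map $C \to C$ lifts through $B \to C$ (the obstruction to lifting is precisely the composite with the connecting morphism $C \to A[1]$, which is zero here), giving a splitting $C \to B$ of $\overline{\varepsilon}_i$, and dually for $\eta_i$.

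Finally, I would translate this bimodule isomorphism into the claimed isomorphism of functors. By definition $\B_i = B_i \Lotimes_T -$ and $\overline{\B}_i = \overline{B}_i \Lotimes_T -$, and since (by \cref{prop:cofBi} and the discussion in \cref{sec:cofBi}) $B_i$ is cofibrant as a right dg-module, the composite $\overline{\B}_i \circ \B_i$ is computed by the derived tensor product $\overline{B}_i \Lotimes_T B_i \Lotimes_T -$; that is, it is the functor associated to the $A_\infty$-bimodule $\overline{B}_i \Lotimes_T B_i$. Applying the functor $(-) \Lotimes_T -$ to the displayed splitting, and noting that $(T^{\lambda,r},0) \Lotimes_T - \cong \id$ while the grading shifts $q^{\pm 1}$ and homological shifts $[\pm 1]$ commute with derived tensor product, yields $\overline{\B}_i \circ \B_i \cong q\,\id[1] \oplus q^{-1}\,\id[-1]$ as dg-functors, which is the corollary.

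I do not anticipate a serious obstacle here: the real content has already been front-loaded into \cref{prop:distinguishedBitriangle} (the computation of $H(\overline{B}_i \Lotimes_T B_i)$ via the cofibrant replacement $\br B_i$, the identification of $B_i^{\tikzRBR} \cong T^{\lambda,r}$, and the explicit description of the unit $\eta_i$ and counit $\overline{\varepsilon}_i$ showing that the connecting map is zero). The one point requiring a little care is the splitting argument in the $A_\infty$ (rather than strictly dg) setting, but this is standard since the homotopy category of $A_\infty$-bimodules over a dg-algebra is triangulated and split triangles behave as usual; alternatively one can sidestep it entirely by noting, as in the proof of \cref{prop:distinguishedBitriangle}, that for degree reasons all higher $A_\infty$-structure maps vanish, so $\overline{B}_i \rb \otimes_T B_i$ is genuinely a dg-bimodule and the splitting is that of an ordinary triangle of dg-bimodules.
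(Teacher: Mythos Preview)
Your proposal is correct and matches the paper's approach exactly: the paper records, immediately before the corollary, that the triangle of \cref{prop:distinguishedBitriangle} splits because its connecting morphism is zero, and then states \cref{cor:TLloop} without further proof. Your write-up simply spells out the (standard) splitting argument and the passage from bimodules to functors that the paper leaves implicit.
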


\subsection{Blob relations}

Proving the blob relations requires some preparation. 

\subsubsection{Quadratic relation}
We define recursively the following element by setting $z_0 := 0$, 
\begin{align}\label{eq:defzn}
z_1 &:= \ 
\tikzdiag{
	\draw (0,-.5) -- (0,1.5);
	\filldraw [fill=white, draw=black,rounded corners] (-.25,.3) rectangle (0.25,.8) node[midway] { $z_1$};
}
\ :=\ 
\tikzdiag{
	\draw (0,-.5) -- (0,1.5);
}\ ,
&
z_{t+2} &:=\ 
\tikzdiagh{0}{
	\draw (0,-.5) -- (0,1.5);
	\node at (.5,-.25) {\tiny $\dots$};
	\node at (.5,1.25) {\tiny $\dots$};
	\draw (1,-.5) -- (1,1.5);
	\draw (1.5,-.5) -- (1.5,1.5);
	\draw (2,-.5) -- (2,1.5);
	\filldraw [fill=white, draw=black,rounded corners] (-.25,.3) rectangle (2.25,.8) node[midway] { $z_{t+2}$};
	\draw[decoration={brace,mirror,raise=-8pt},decorate]  (-0.15,-.85) -- node {$t$} (1.15,-.85)
}
\ := \ 
\tikzdiagh{0}{
	\draw (0,-.5) -- (0,1.5);
	\node at (.5,-.25) {\tiny $\dots$};
	\node at (.5,1.25) {\tiny $\dots$};
	\draw (1,-.5) -- (1,1.5);
	\draw (1.5,-.5) -- (1.5,1) .. controls (1.5,1.25) and (2,1.25) .. (2,1.5);
	\draw (2,-.5) -- (2,1) .. controls (2,1.25) and (1.5,1.25) .. (1.5,1.5);
	\filldraw [fill=white, draw=black,rounded corners] (-.25,.3) rectangle (1.75,.8) node[midway] { $z_{t+1}$};
	\draw[decoration={brace,mirror,raise=-8pt},decorate]  (-0.15,-.85) -- node {$t$} (1.15,-.85)
}
\ + \ 
\tikzdiagh{0}{
	\draw (0,-.5) -- (0,1.5);
	\node at (.5,-.25) {\tiny $\dots$};
	\node at (.5,1.25) {\tiny $\dots$};
	\draw (1,-.5) -- (1,1.5);
	\draw (2,-.5) .. controls (2,-.25) and (1.5,-.25) .. (1.5,0) -- (1.5,1) .. controls (1.5,1.25) and (2,1.25) .. (2,1.5);
	\draw (1.5,-.5) .. controls (1.5,-.25) and (2,-.25) .. (2,0) -- (2,1) .. controls (2,1.25) and (1.5,1.25) .. (1.5,1.5) node [near end, tikzdot]{};
	\filldraw [fill=white, draw=black,rounded corners] (-.25,.3) rectangle (1.75,.8) node[midway] { $z_{t+1}$};
	\draw[decoration={brace,mirror,raise=-8pt},decorate]  (-0.15,-.85) -- node {$t$} (1.15,-.85)
}
\end{align}
 for all $t \geq 0$. Note that $z_2$ is given by a single crossing
\[
\tikzdiag{
	\draw (0,0) -- (0,1);
	\draw (.5,0) -- (.5,1);
	\filldraw [fill=white, draw=black,rounded corners] (-.25,.3) rectangle (0.75,.8) node[midway] { $z_2$};
}
\ =\ 
\tikzdiag{
	\draw (0,0) .. controls (0,.5) and (.5,.5) .. (.5,1);
	\draw (.5,0) .. controls (.5,.5) and (0,.5) .. (0,1);
}
\]
since the second term is zero in this case. 
One easily sees that $\deg_q(z_t) = 2-2t$.

Define a map of left modules
\[
\varphi_k^1 : \lambda q^2 (X_k)[1] \rightarrow X \otimes_T Y^1_k ,
\]
as $\varphi_k^1 :=   \sum_{t = 0}^{k-1} \varphi_k^{1,t}$, where each
\begin{align*}
\varphi_k^{1,t}: &\lambda q^2  (X_k) [1] \rightarrow  X \otimes_T Y^{1,t}_k \qquad \bigl(\cong \bigoplus_{\ell,\rho} \lambda q^{k-2t+1} (X 1_{1,k+\ell-1,\rho})[1] \bigr) ,
\end{align*}
is given by multiplication on the bottom by
\[
	\tikzdiag{
		\draw (.5,0)  .. controls (.5,.5) and (1.25,.5) .. (1.25,1) -- (1.25,1.5);
		\draw (1.5,0)  .. controls (1.5,.5) and (.5,.5) .. (.5,1) -- (.5,1.5) node[midway,tikzdot]{};
		\node at(1.75,.15) {\tiny $\dots$};
		\draw (2,0)  .. controls (2,.5) and (1,.5) .. (1,1) -- (1,1.5) node[midway,tikzdot]{}; 
		\draw (2.25,0)  .. controls (2.25,.5) and (1.5,.5) .. (1.5,1) -- (1.5,1.5);
		\node at(2.5,.15) {\tiny $\dots$};
		\draw (2.75,0)  .. controls (2.75,.5) and (2,.5) .. (2,1) -- (2,1.5);
		\draw[decoration={brace,mirror,raise=-8pt},decorate]  (1.35,-.35) -- node {$t$} (2.15,-.35);
		\draw[decoration={brace,raise=-8pt},decorate]  (.4,1.85) -- node {$k$} (2.125,1.85);
		\draw[stdhl] (1,0) node[below]{\small $1$}  .. controls (1,.5) and (2.75,.5) .. (2.75,1) -- (2.75,1.5);
		\draw[vstdhl] (0,0)  node[below]{\small $\lambda$} -- (0,1.5);
		\filldraw [fill=white, draw=black,rounded corners] (1.125,.9) rectangle (2.125,1.4) node[midway] { $z_{k-t}$};
	}
	\otimes \bar 1_{\ell,\rho}.
\]

Also define a map of left modules
\[
\varphi_k^0 : \bigoplus_{\ell,\rho}q^2 (T_b^{\lambda,r}1_{k,\ell,\rho} )[1]  \rightarrow X \otimes_T Y^0_k,
\]
as $\varphi_k^0 :=  {\varphi_k^0}' + \sum_{t = 0}^{k-1} \varphi_k^{0,t}$, where each
\begin{align*}
	{\varphi_k^0}' &: \bigoplus_{\ell,\rho}q^2 (T_b^{\lambda,r}1_{k,\ell,\rho} )[1] \rightarrow X \otimes_T {Y'}^0_k \qquad \bigl( \cong  \bigoplus_{\ell, \rho }\lambda^{-1} q^k  (X 1_{0,k+\ell,\rho}) \bigr), \\
	&\tikzdiagh{0}{
		\draw[stdhl] (1.5,0) node[below]{\small $1$}  -- (1.5,1);
		\draw[vstdhl] (0,0)  node[below]{\small $\lambda$} -- (0,1);
		\draw (.5,0) -- (.5,1);
		\node at (.75,.5) {\tiny $\dots$};
		\draw (1,0)  --(1,1);
		\draw[decoration={brace,mirror,raise=-8pt},decorate]  (.4,-.35) -- node {$k$} (1.1,-.35);
	}
	\otimes \bar 1_{\ell,\rho} \mapsto 
	- \sum_{t=0}^{k-1} 
	\tikzdiag{
		\draw (.5,-.5) .. controls (.5,.25) and (0,.25) .. (0,1.5) node[pos=.9,tikzdot]{}; 
		\node at(.75,-.25) {\tiny $\dots$};
		\draw (1,-.5) .. controls (1,.25) and (.5,.25) .. (.5,1.5) node[pos=.9,tikzdot]{}; 
		\draw (1.25,-.5) .. controls (1.25,.25) .. (-.5,.375) .. controls (.75,.5) .. (.75,1.5);
		\draw (1.5,-.5) .. controls (1.5,.5) and (1,.5) .. (1,1.5); 
		\node at(1.75,-.25) {\tiny $\dots$};
		\draw (2,-.5) .. controls (2,.5) and (1.5,.5) .. (1.5,1.5); 
		\filldraw [fill=white, draw=black,rounded corners] (.625,.9) rectangle (1.625,1.4) node[midway] { $z_{k-t}$};
		\draw[stdhl] (0,-.5) node[below]{\small $1$}   .. controls (0,-.25) .. (-.5,0)
			.. controls (2,.0) ..  (2,1) -- (2,1.5) ;
		\draw[fill=white, color=white] (-.6,0) circle (.1cm);
		\draw[vstdhl] (-.5,-.5) node[below]{\small $\lambda$} -- (-.5,1.5)  node[pos=.45,nail]{};
		\draw[decoration={brace,mirror,raise=-8pt},decorate]  (.4,-.85) -- node {$t$} (1.1,-.85);
		\draw[decoration={brace,raise=-8pt},decorate]  (-.1,1.85) -- node {$k$} (1.6,1.85);
	}
	\otimes \bar 1_{\ell,\rho},
\end{align*}
and where
\begin{align*}
	\varphi_k^{0,t} &: \bigoplus_{\ell,\rho}q^2 (T_b^{\lambda,r}1_{k,\ell,\rho} )[1]  \rightarrow X \otimes_T Y^{0,t}_k \qquad  \bigl(\cong \bigoplus_{\ell,\rho} \lambda q^{k-2t}  (X 1_{0,k+\ell,\rho})[1] \bigr), \\
	&\tikzdiagh{0}{
		\draw[stdhl] (1.5,0) node[below]{\small $1$}  -- (1.5,1);
		\draw[vstdhl] (0,0)  node[below]{\small $\lambda$} -- (0,1);
		\draw (.5,0) -- (.5,1);
		\node at (.75,.5) {\tiny $\dots$};
		\draw (1,0)  --(1,1);
		\draw[decoration={brace,mirror,raise=-8pt},decorate]  (.4,-.35) -- node {$k$} (1.1,-.35);
	}
	\otimes \bar 1_{\ell,\rho} \mapsto 
	\tikzdiag{
		\draw (.5,-.5) .. controls (.5,0) and (0,0) .. (0,.5) .. controls (0,.75) and (.75,.75) .. (.75,1) -- (.75, 1.5);
		\draw (.75,-.5) .. controls (.75,.25) and (0,.75) .. (0,1) -- (0,1.5) node[midway, tikzdot]{};
		\node at(1,-.35) {\tiny $\dots$};
		\draw (1.25,-.5) .. controls (1.25,.25) and (.5,.75) .. (.5,1) -- (.5,1.5) node[midway, tikzdot]{};
		\draw (1.5,-.5) .. controls (1.5,.25) and (1,.75) .. (1,1) -- (1,1.5);
		\node at(1.75,-.35) {\tiny $\dots$};
		\draw (2,-.5) .. controls (2,.25) and (1.5,.75) .. (1.5,1) -- (1.5,1.5);
		\filldraw [fill=white, draw=black,rounded corners] (.625,.9) rectangle (1.625,1.4) node[midway] { $z_{k-t}$};
		\draw[stdhl] (0,-.5) node[below]{\small $1$}   .. controls (0,-.25) .. (-.5,0)
				.. controls (2,.25) ..  (2,.5) -- (2,1.5) ;
		\draw[fill=white, color=white] (-.6,0) circle (.1cm);
		\draw[vstdhl] (-.5,-.5) node[below]{\small $\lambda$} -- (-.5,1.5)  ;
		\draw[decoration={brace,mirror,raise=-8pt},decorate]  (.65,-.85) -- node {$t$} (1.35,-.85);
		\draw[decoration={brace,raise=-8pt},decorate]  (-.1,1.85) -- node {$k$} (1.6,1.85); 
	}
	\otimes \bar 1_{\ell,\rho}.
\end{align*}

Recall that the unbraiding map (\cref{def:unbraidingmap})
\[
u : \lambda  X \hookrightarrow T_b^{\lambda,r}, 
\]
is given by 
\[
	\tikzdiagh{0}{
		\draw[stdhl] (1,0) node[below]{\small $1$} .. controls (1,.25) .. (0,.5)
				.. controls (1,.75) .. (1,1);
		\draw[fill=white, color=white] (-.1,.5) circle (.1cm);
		\draw[vstdhl] (0,0)  node[below]{\small $\lambda$} -- (0,1);
	}
	\mapsto
	\tikzdiagh{0}{
		\draw[stdhl] (1,0) node[below]{\small $1$} .. controls (1,.25) and (.25,.25)  .. (.25,.5)
				.. controls (.25,.75) and (1,.75) .. (1,1);
		\draw[vstdhl] (0,0)  node[below]{\small $\lambda$} -- (0,1);
	}
\]

\begin{lem}The diagram
\[
\begin{tikzcd}
X \otimes_T Y^1_k \ar[hookrightarrow]{r}{1 \otimes \imath_k} & X \otimes_T Y^0_k \ar[twoheadrightarrow]{r}{u \otimes \gamma_k} & \lambda^{-1} X \\
 \lambda q^2 (X_k)[1]  \ar{u}{\varphi_k^1} \ar[hookrightarrow]{r}{u} &  \bigoplus_{\ell,\rho} q^2 (T_b^{\lambda,r}1_{k,\ell,\rho})[1] \ar{u}{\varphi_k^0} \ar{r} & 0 \ar{u}
\end{tikzcd}
\]
commutes.
\end{lem}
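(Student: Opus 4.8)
The plan is to verify the two squares of the diagram separately, using throughout that every arrow in it is a morphism of left $(T^{\lambda,r}_b,0)$-modules; it therefore suffices to check each identity on the left-module generators of the two objects in the bottom row, namely the double-braiding generators of $\lambda q^2(X_k)[1]$ (one for each admissible $(\ell,\rho)$, as in the description of $X_k$) and the idempotent elements $1_{k,\ell,\rho}$ generating $\bigoplus_{\ell,\rho} q^2(T^{\lambda,r}_b 1_{k,\ell,\rho})[1]$. The right square amounts to the single assertion that $(u\otimes\gamma_k)\circ\varphi^0_k = 0$, since its bottom-right corner is $0$.

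For the left square one must show $(1\otimes\imath_k)\circ\varphi^1_k = \varphi^0_k\circ u$. Evaluated on a generator, $u$ first pulls the first red strand off the blue strand, turning the double braiding into a plain red strand and destroying the potential nail; $\varphi^0_k$ then glues its defining picture on the bottom, producing the ${\varphi^0_k}'$-term together with the $\varphi^{0,t}_k$-terms for $0\le t\le k-1$. On the other side, $\varphi^1_k$ glues the picture carrying the boxes $z_{k-t}$ and $1\otimes\imath_k=1\otimes\sum_t\imath^t_k$ records the outcome inside $X\otimes_T Y^0_k$ via the two components of each $\imath^t_k$ (the ``$-$''-component landing in ${Y'}^0_k$, the straight one in $Y^{0,t}_k$). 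Matching the two sides is then a term-by-term computation in $t$: one resolves the black/black crossings created by stacking the $z$-boxes with the nilHecke relations \eqref{eq:nhR2andR3} and \eqref{eq:nhdotslide}, slides dots across red strands and the nail using \eqref{eq:redR2}, \eqref{eq:relNail} and \eqref{eq:nailslidedcross}, and absorbs the extra dotted summands produced along the way using \eqref{eq:redR3}. The key structural point is that the two-term recursion \eqref{eq:defzn} for $z_{t+2}$ is built to mirror exactly the two-component shape of $\imath^t_k$ (and of $\varphi^{0,t}_k$), so the rewriting closes up; an induction on $k$, or equivalently a bookkeeping over $0\le t\le k-1$, organises this. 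The genuinely delicate point is signs: the nail sits in homological degree $1$ and there are $[1]$-shifts throughout the definitions of $Y^1_k$, $Y^{0,t}_k$ and of $\imath^t_k$, ${\varphi^0_k}'$, $\varphi^{0,t}_k$, so the Koszul sign rule must be threaded carefully — the explicit minus signs in those definitions are precisely what makes this consistent.

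For the right square one feeds a generator through $\varphi^0_k$, then applies $1\otimes\gamma_k$ — which re-closes the $t$ rightmost black strands onto the blue strand, reconstructing a double braiding (and, for the $Y^{0,t}_k$-components, a nail) — and finally the unbraiding map $u$, which undoes that braiding. I expect the vanishing to emerge as a telescoping cancellation in $t$: expanding each $z_{k-t}$ once via \eqref{eq:defzn} pairs the $Y^{0,t}_k$-contribution against the $Y^{0,t+1}_k$-contribution, the alternating sum collapses to its boundary terms, the $t=0$ term cancels against the ${\varphi^0_k}'$-summand, and the surviving $t=k-1$ term is killed by the turnback-between-two-nails relation in \eqref{eq:relNail} together with \eqref{eq:nailslidedcross}. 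Alternatively, one may observe that $(1\otimes u)\circ(1\otimes\gamma_k)$ factors through the unbraiding of $X$ and that $\varphi^0_k$ was designed precisely so that its image lands in the resulting kernel, controlling what survives by the injectivity of $u$ (\cref{cor:uinj}).

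The main obstacle is not any single relation but the combined bookkeeping: keeping the Koszul signs from the homological shifts consistent and running the recursion \eqref{eq:defzn} for $z_t$ in lock-step with the two-component structure of $\imath_k$, $\gamma_k$ and the $\varphi$'s across all $t$. Since this lemma lives entirely at the level of honest maps of dg-modules — no $A_\infty$ corrections enter yet — these sign and indexing issues are the only real difficulty, and the argument is otherwise a direct diagram chase; the explicit computation is deferred to the appendix.
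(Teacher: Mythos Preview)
Your opening strategy---reduce to left-module generators and verify each square separately---is exactly right, and so is the observation that the right square amounts to $(u\otimes\gamma_k)\circ\varphi^0_k=0$. But the mechanism you propose for both squares is off.

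The paper's proof uses only three relations: \eqref{eq:dotredstrand}, \eqref{eq:crossingslidered}, and \eqref{eq:nailslidedcross}. The point is that the box $z_{k-t}$ consists solely of black crossings and dots, so the red strand slides straight through it via \eqref{eq:dotredstrand} and \eqref{eq:crossingslidered} without ever unpacking the recursion \eqref{eq:defzn}. Concretely, for the left square one writes out $(1\otimes\imath^t_k)\circ\varphi^{1,t}_k$ on the double-braiding generator: this is the generator (double braiding at the top) with the $\varphi^{1,t}_k$-diagram and then the $\imath^t_k$-diagram glued below. To compare with $\varphi^0_k$ applied to $u(\text{generator})=1_{k,\ell,\rho}$, one simply pushes the double braiding down through the intervening black dots and crossings and past the nail; this is term-by-term in $t$, with the $z_{k-t}$ box sliding past the red strand as a single unit. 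The right square is handled the same way: for each $t$, the contribution from ${\varphi^0_k}'$ (which carries the nail) composed with $\gamma'_k$ (which carries the double braiding) matches the contribution from $\varphi^{0,t}_k$ (no nail) composed with $\gamma^t_k$ (nail and double braiding), and the match comes from sliding the nail past the double braiding via \eqref{eq:nailslidedcross}.

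By contrast, your claim that ``the two-term recursion \eqref{eq:defzn} for $z_{t+2}$ is built to mirror exactly the two-component shape of $\imath^t_k$'' is simply false: the two components of $\imath^t_k$ land in the distinct summands ${Y'}^0_k$ and $Y^{0,t}_k$, while the two terms of the $z$-recursion live in the same space and involve no nails. Your proposed telescoping in $t$ for the right square likewise does not work as stated, since $\gamma^t_k$ acts only on $Y^{0,t}_k$ and cannot pair $Y^{0,t}_k$-terms against $Y^{0,t+1}_k$-terms; and there is no ``turnback-between-two-nails'' configuration anywhere in this computation. You have imported machinery that belongs to the genuinely harder Theorem~\ref{thm:catdoublebraidphiiso} and its appendix proofs into what is here just a red-strand-sliding check.
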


\begin{proof}
The proof is a straightforward computation using \cref{eq:dotredstrand} and \cref{eq:crossingslidered} together with \cref{eq:nailslidedcross}. We leave the details to the reader. 
\end{proof}

Thus, there is an induced map 
\[
\varphi_k :  \cone(\lambda q^2 (X_k) [1] \xrightarrow{u} \bigoplus_{\ell,\rho} q^2 (T_b^{\lambda,r} 1_{k,\ell,\rho})[1] )[1] \rightarrow \cone(X \Lotimes_T X_k \xrightarrow{1\otimes u} \lambda^{-1} X_k),
\]
as left modules. 

\begin{thm}\label{thm:catdoublebraidphiiso}
The map 
\[
\varphi := \sum_{k = 0}^{m} (-1)^k \varphi_k  : \cone(\lambda q^2 X [1] \xrightarrow{u} q^2 T_b^{\lambda,r} [1] )[1] \rightarrow \cone(X \Lotimes_T X \xrightarrow{1\otimes u} \lambda^{-1} X),
\]
is a quasi-isomorphism. 
\end{thm}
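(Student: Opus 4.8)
The plan is to verify that $\varphi$ is a quasi-isomorphism by reducing the statement, via the five-lemma on the long exact sequences of the two mapping cones, to a homology computation for each $\varphi_k$. Concretely, $\cone(\lambda q^2 X[1] \xrightarrow{u} q^2 T_b^{\lambda,r}[1])[1]$ is, as a $\bZ$-graded object, the direct sum of the $\cone(\lambda q^2 (X_k)[1] \xrightarrow{u} \bigoplus_{\ell,\rho} q^2(T_b^{\lambda,r}1_{k,\ell,\rho})[1])[1]$ over $k \geq 0$ (using that $X$ and $T_b^{\lambda,r}$ both decompose according to the idempotents $1_{k,\ell,\rho}$), and similarly the target decomposes after we use the cofibrant replacement $\br X \twoheadrightarrow X$ of \cref{prop:gammaqi} to write $X \Lotimes_T X$ as $\bigoplus_k \br X \otimes_T X_k$. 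The sign $(-1)^k$ is exactly what is needed for $\varphi = \sum_k (-1)^k \varphi_k$ to be a chain map on the total complex given the way the differential of $\br X$ mixes adjacent $k$'s; I would check this compatibility first, as it is the only place the global (as opposed to $k$-local) structure enters.

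Next I would compute the homology of each term. On the source side, $u : \lambda q^2 (X_k)[1] \to \bigoplus_{\ell,\rho} q^2(T_b^{\lambda,r}1_{k,\ell,\rho})[1]$ is injective by \cref{cor:uinj}, so its cone (before the shift) has homology concentrated in one degree, isomorphic to the cokernel of $u$ restricted to these summands; by the echelon-form description in the proof of \cref{cor:uinj} this cokernel is explicitly a sum of $T_b^{\lambda,r}$'s with controlled shifts. On the target side, \cref{lem:sesX0} gives the short exact sequence $0 \to Y^1_k \to Y^0_k \to X_k \to 0$, which after applying $X \Lotimes_T -$ (equivalently $\br X \otimes_T -$) and using \cref{prop:gammaqi} shows that $X \Lotimes_T X_k$ is quasi-isomorphic to $\cone(X\otimes_T Y^1_k \to X \otimes_T Y^0_k)$; combined with the $1 \otimes u$ map this identifies $H_*$ of the target cone with a similar explicit sum. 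Then $\varphi_k$ induces a map between these two explicitly computed homology objects, and I would check it is an isomorphism degree by degree — the content being that the diagrams defining $\varphi_k^1$, $\varphi_k^{0,t}$ and ${\varphi_k^0}'$, which involve the elements $z_t$ of \cref{eq:defzn}, match the change-of-basis between the tightened basis of \cref{thm:X0basis} and the one coming from the cofibrant replacement. This is where the commuting-square lemma just proved is used to guarantee $\varphi_k$ is well defined at the level of cones.

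The main obstacle I expect is the bookkeeping in the homology computation of the target: $\br X \otimes_T X_k$ is built from the cone $\br X_k := \cone(Y^1_k \xrightarrow{\imath_k} Y^0_k)$, each $Y^j_k$ is itself a direct sum indexed by $t$ (and $\ell,\rho$) with nontrivial $q$- and $\lambda$-shifts, and tensoring with $X$ introduces yet another layer because $X$ itself has the two-term resolution structure of $\br X$. Untangling all of this to see that the Euler characteristic (graded dimension) matches on both sides — which by the positivity/finiteness of the algebras reduces an isomorphism claim to a rank count, exactly as in the proofs of \cref{thm:basisBi} and \cref{lem:sesX0} — and then promoting the rank count to an actual isomorphism via the explicit map $\varphi_k$, is the technically heavy part. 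A useful simplification, parallel to \cref{prop:catactioncommutes}, is that one may first reduce to the local situation where the relevant red label configuration is as simple as possible using the induction functor $\mathfrak I$ along red strands of \cref{sec:redind}, so that the combinatorics of $\cP_b^{r-2}$ collapses; I would set this reduction up at the start of the proof and then carry out the remaining computation in that reduced generality, deferring the most explicit diagrammatic identities to the appendix \cref{sec:computations} as is done elsewhere in the paper.
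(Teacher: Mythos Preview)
Your five-lemma plan does not apply as stated. For the five-lemma to compare the long exact sequences of the two cones, $\varphi_k$ would have to arise from a commutative square whose vertical arrows go from $\lambda q^2 X_k[2]$ to $X\Lotimes_T X_k$ and from $q^2 T_b^{\lambda,r}1_{k,\ell,\rho}[2]$ to $\lambda^{-1}X_k$. It does not: the components $\varphi_k^1$ and $\varphi_k^0$ land in $X\otimes_T Y^1_k$ and $X\otimes_T Y^0_k$, the two \emph{internal} pieces of the cofibrant model for $X\Lotimes_T X_k$, not in $X\Lotimes_T X_k$ and $\lambda^{-1}X_k$ themselves. Said differently, $\cone(\varphi_k)$ is the four-term complex
\[
\lambda q^2 X_k[1] \xrightarrow{\ \varphi_k^1 - u\ } (X\otimes_T Y^1_k)\oplus q^2 T_b^{\lambda,r}1_{k,\ell,\rho}[1] \xrightarrow{\ (1\otimes\imath_k)+\varphi_k^0\ } X\otimes_T Y^0_k \xrightarrow{\ u\otimes\gamma_k\ } \lambda^{-1}X_k,
\]
and there is no filtration of this into two two-term cones for which both vertical maps are separately quasi-isomorphisms. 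Your explanation of the sign $(-1)^k$ is also off: the summands over $k$ are orthogonal under the idempotents $1_{k,\ell,\rho}$, so the differential does not mix them; the sign is there for the bimodule-map statement in the next theorem, not for the chain-map condition.

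What the paper actually does is check exactness of that four-term complex directly. The outer maps are easy (injectivity of $u$ gives the left, surjectivity of $u\otimes\gamma_k$ the right); the substance is showing $\ker(u\otimes\gamma_k)=\Image\bigl((1\otimes\imath_k)+\varphi_k^0\bigr)$ and $\ker\bigl((1\otimes\imath_k)+\varphi_k^0\bigr)=\Image(\varphi_k^1-u)$. Both require producing explicit generators of the kernels as left modules and then exhibiting diagrammatic preimages under $\varphi_k^0$ and $\varphi_k^1$, using nontrivial identities among the elements $z_t$ (a closed form for $z_{t+2}$ and a telescoping relation involving dotted crossings). Your ``rank count'' idea cannot substitute for this: a graded-dimension match only upgrades to an isomorphism once you already know injectivity or surjectivity at the relevant spot, and that is precisely the content of these two propositions. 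The reduction via the red-strand induction functor $\mathfrak{I}$ that you propose is not used here and would not collapse the dependence on $k$, which is where all the work lies.
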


\begin{proof}
The statement can be proven by showing that $\cone(\varphi)$ has a trivial homology, and thus is acyclic. This is done in details in~\cref{sec:proofofacyclicity}.
\end{proof}
 The next step is to prove that $\varphi$ defines a map of $A_\infty$-bimodules. Luckily, by the following proposition, we do not need to use any $A_\infty$-structure here.

\begin{prop}\label{prop:XoLXisXoX}
The map
\[
X \otimes_T \br X \xrightarrow{1 \otimes \gamma} X \otimes_T X,
\]
is a quasi-isomorphism of $A_\infty$-bimodules. 
\end{prop}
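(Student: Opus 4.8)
The plan is to reduce the statement to a vanishing of higher $\operatorname{Tor}$ groups together with one diagrammatic injectivity check. The starting point is that $\br X$ is genuinely cofibrant as a \emph{left} $(T^{\lambda,r},0)$-module: by construction $\br X=\bigoplus_{k\geq 0}\br X_k$ with $\br X_k=\cone(Y^1_k\xrightarrow{\imath_k}Y^0_k)$, and each of $Y^0_k$, $Y^1_k$ is a direct sum of grading shifts of the projective module $T^{\lambda,r}_b1_{\bullet}$ carrying the trivial differential, so $\br X_k$ is a bounded complex of projective left modules. Hence $X\otimes_T\br X$ already computes the derived tensor product $X\Lotimes_T\br X$, regardless of the fact that the right action on $\br X$ is only $A_\infty$. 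Since $\gamma\colon\br X\to X$ is a quasi-isomorphism (\cref{prop:gammaqi}), we get $X\otimes_T\br X\simeq X\Lotimes_T X$, and under this identification $1\otimes\gamma$ becomes the canonical map $X\Lotimes_T X\to X\otimes_T X$. So the proposition is equivalent to the vanishing of the higher $\operatorname{Tor}$ groups of $X$ with $X$ over $T^{\lambda,r}$. Decomposing $X$ along the idempotents $1_{k,\ell,\rho}$ acting on the right, it suffices to treat each summand and show that $1\otimes\gamma_k\colon X\otimes_T\br X_k\to X\otimes_T X_k$ is a quasi-isomorphism, where $X_k=\bigoplus_{\ell,\rho}X1_{k,\ell,\rho}$.

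Fix $k$. Because $Y^0_k$ and $Y^1_k$ carry trivial internal differentials, $X\otimes_T\br X_k=\cone\bigl(X\otimes_T Y^1_k\xrightarrow{1\otimes\imath_k}X\otimes_T Y^0_k\bigr)$ is again a two-term complex, and its homology is built from $\ker(1\otimes\imath_k)$ and $\cok(1\otimes\imath_k)$. Applying the right-exact functor $X\otimes_T(-)$ to the short exact sequence $0\to Y^1_k\xrightarrow{\imath_k}Y^0_k\xrightarrow{\gamma_k}X_k\to 0$ of \cref{lem:sesX0} identifies $\cok(1\otimes\imath_k)$ with $X\otimes_T X_k$, and the map induced by $1\otimes\gamma_k$ is precisely this isomorphism. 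Consequently $1\otimes\gamma_k$ is a quasi-isomorphism if and only if $1\otimes\imath_k$ is injective.

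To see that $1\otimes\imath_k\colon X\otimes_T Y^1_k\to X\otimes_T Y^0_k$ is injective, note that since $Y^1_k$ and $Y^0_k$ are sums of shifts of $T^{\lambda,r}_b1_{\bullet}$, the domain and codomain are the corresponding sums of shifts of $X1_{\bullet}$, and $1\otimes\imath_k$ is right multiplication by the explicit elements defining $\imath_k=\sum_t\imath_k^t$. The component of $\imath_k^t$ landing in the summand $Y^{0,t}_k$ is just the addition of a black/red crossing, which is injective on $X$ exactly as adding such a crossing is injective on $T^{\lambda,r}$ by \cref{thm:Tbasis}, now using the basis of $X$ from \cref{thm:X0basis}; since only $\imath_k^t$ contributes to $Y^{0,t}_k$ and these summands are indexed by distinct $t$, this block structure forces $1\otimes\imath_k$ to be injective. (This is the diagrammatic computation underlying the monomorphism displayed in the top row of the lemma preceding \cref{thm:catdoublebraidphiiso}.) Thus $1\otimes\gamma_k$ induces an isomorphism on homology, and summing over $k$ shows $1\otimes\gamma$ is a quasi-isomorphism. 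It is moreover a morphism of $A_\infty$-bimodules: on both sides the left $(T^{\lambda,r},0)$-action is the strict one inherited from $X$, while on $X\otimes_T\br X$ the right action is the $A_\infty$-action induced from $\br X$ (as in \cref{sec:Ainftyaction}) and on $X\otimes_T X$ it is strict, and $1\otimes\gamma$ intertwines them because $\gamma$ is a map of $A_\infty$-bimodules (\cref{prop:gammaqi}) and $X\otimes_T(-)$ is compatible with $A_\infty$-module morphisms. The only genuine work is the injectivity of $1\otimes\imath_k$ — a bookkeeping exercise with the diagrammatic bases — everything else being formal from cofibrancy of $\br X$ and right-exactness of $X\otimes_T(-)$.
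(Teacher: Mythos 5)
Your proof is correct and follows essentially the same route as the paper's: apply right-exactness of $X\otimes_T(-)$ to the short exact sequence of \cref{lem:sesX0}, then observe that $1\otimes\imath_k$ is injective so that $X\otimes_T\br X_k=\cone(1\otimes\imath_k)$ has homology concentrated in degree $0$ equal to $X\otimes_T X_k$. The paper simply asserts that this injectivity is ``not hard to see,'' while you supply the block-diagonal argument (each $\imath_k^t$ is the only component hitting $Y^{0,t}_k$, and its projection there is right-multiplication by a black/red crossing, injective on $X$) — a useful elaboration, but not a different approach.
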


\begin{proof}
Tensoring to the left is a right-exact functor, thus \cref{lem:sesX0} gives us an exact sequence
\[
X \otimes_T Y^1_k \xrightarrow{1 \otimes \imath_k} X \otimes_T Y^0_k \xrightarrow{1 \otimes \gamma_k} X_k \rightarrow 0.
\]
It is not hard to see that $1 \otimes \imath_k$ is injective, and thus we have a short exact sequence
\[
0 \rightarrow X \otimes_T Y^1_k \xrightarrow{1 \otimes \imath} X \otimes_T Y^0_k \xrightarrow{1 \otimes \gamma_k} X_k \rightarrow 0,
\]
so that $1 \otimes \gamma_k$ is a quasi-isomorphism.
\end{proof}

Taking a mapping cone preserves quasi-isomorphisms. Thus, we have a quasi-isomorphism
\begin{equation}\label{eq:qimappingconegamma}
\cone(X \Lotimes_T X \xrightarrow{1\otimes u} \lambda^{-1} X) \xrightarrow{\simeq} \cone(X \otimes_T X \xrightarrow{1\otimes u} \lambda^{-1} X). 
\end{equation}
Let 
\[
\tilde \varphi : \cone\bigl(\lambda q^2 X [1] \xrightarrow{u} q^2 T_b^{\lambda,r}[1]\bigr)[1] \rightarrow \cone(X \otimes_T X \xrightarrow{1\otimes u} \lambda^{-1} X)
\]
be the map given by composing $\varphi$ with the quasi-isomorphism in~\cref{eq:qimappingconegamma}. We also write $\tilde \varphi^0 := (1\otimes \gamma) \circ \varphi^0$. Therefore, by \cref{lem:indAinftymap}, proving that $\varphi$ is a map of $A_\infty$-bimodules ends up being the same as proving that $\tilde \varphi^0$ is a map of dg-bimodules. 

\begin{thm}\label{thm:phiisAinfty}
The map $\varphi$ is a map of $\bZ^2$-graded $(T^{\lambda,r},0)$-$(T^{\lambda,r},0)$-$A_\infty$-bimodules. 
\end{thm}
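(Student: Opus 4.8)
The statement to be proven is \cref{thm:phiisAinfty}: the map $\varphi$ is a map of $\bZ^2$-graded $(T^{\lambda,r},0)$-$(T^{\lambda,r},0)$-$A_\infty$-bimodules. The strategy is to reduce the $A_\infty$ statement to a strictly dg statement, as anticipated by the discussion right before the theorem. Concretely, I would first invoke \cref{prop:XoLXisXoX} together with the quasi-isomorphism \cref{eq:qimappingconegamma} to replace the target $\cone(X \Lotimes_T X \to \lambda^{-1}X)$ by the honest tensor product cone $\cone(X \otimes_T X \xrightarrow{1\otimes u} \lambda^{-1}X)$, and correspondingly replace $\varphi$ by $\tilde\varphi = (1\otimes\gamma)\circ\varphi$ with component $\tilde\varphi^0 = (1\otimes\gamma)\circ\varphi^0$. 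Then, by \cref{lem:indAinftymap} (the statement referenced in the paper for exactly this purpose), it suffices to check that $\tilde\varphi^0$ is a genuine map of dg-bimodules: that is, that it is a map of left $(T^{\lambda,r},0)$-modules (which is already part of the construction of $\varphi_k$, since each $\varphi_k^{1,t}$, ${\varphi_k^0}'$, $\varphi_k^{0,t}$ is built by left-module-compatible operations) and, crucially, that it is compatible with the strict \emph{right} $(T^{\lambda,r},0)$-action and with the differentials on both sides.

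\textbf{Key steps.} The plan breaks into: (1) Unwind the definitions of $\varphi_k^1$, $\varphi_k^0$ and $\gamma_k$ to get an explicit diagrammatic formula for $\tilde\varphi_k^0 = (1\otimes\gamma_k)\circ\varphi_k^0$ on the generators of the mapping cone $\cone(\lambda q^2 X_k[1]\xrightarrow{u} \bigoplus_{\ell,\rho}q^2(T_b^{\lambda,r}1_{k,\ell,\rho})[1])[1]$. The output lives in $\cone(X\otimes_T X_k \to \lambda^{-1}X_k)$, where the composition $1\otimes\gamma_k$ turns the pieces $Y^1_k, Y^0_k$ (which carry the braiding implicitly via the $\imath_k, \gamma_k$-structure) into actual double-braiding diagrams in $X\otimes_T X$. (2) Check right-$T^{\lambda,r}$-linearity: for this one verifies that the diagrammatic operations defining $\tilde\varphi^0_k$ — attaching the $z_{k-t}$ box, the cups/caps, the nail, and sliding dots — all commute with multiplying an arbitrary element of $T^{\lambda,r}$ on the bottom, using the defining relations \cref{eq:nhR2andR3}--\cref{eq:redR3}, \cref{eq:relNail}, and the extra relations \cref{eq:nailslidedcross} in $X$. (3) Check compatibility with the differentials: the only nonzero differential on the source is the internal $\imath_k$-part of the cone combined with the $(-1)^k$ alternating sign in $\varphi = \sum(-1)^k\varphi_k$, and on the target the differentials of $\br X$ and $X$ are all zero (trivial differential on $T^{\lambda,r}$), so one must confirm $d\circ\tilde\varphi^0 = \pm\tilde\varphi^0\circ d$ reduces to the commutativity of the square in the Lemma preceding \cref{thm:catdoublebraidphiiso} together with the behavior of $\imath_k$ across consecutive $k$'s. (4) Assemble: conclude via \cref{lem:indAinftymap} that $\varphi$ itself is an $A_\infty$-bimodule map.

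\textbf{Main obstacle.} The delicate point is step (2), the strict right-module compatibility of $\tilde\varphi^0$. The issue is the one flagged in the introduction (following \cite{MW}): passing to the cofibrant replacement $\br X$ of $X$ preserves the left-module structure strictly but only the right-module structure up to $A_\infty$, so a priori one expects $\tilde\varphi^0$ to respect the right action only up to homotopy. The resolution — and the content of the theorem — is that, because of the degree constraints (the relevant hom-spaces vanish in the degrees where higher homotopies would live, much as in the proof that $\bar B_{i-1}\Lotimes_T B_i \cong T^{\lambda,r}$ as honest dg-bimodules), the $A_\infty$-corrections must vanish, so that $\tilde\varphi^0$ can be taken to be strictly right-linear. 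Making this degree-count precise, i.e. identifying exactly which graded components of $\HOM_T(\br X\otimes_T\br X, \lambda^{-1}X)$ (or the appropriate bar-construction hom-complex) could support the higher $A_\infty$ terms and verifying they are zero, is where the real work lies; once that vanishing is in hand, the remaining verifications in steps (2) and (3) are direct though somewhat lengthy diagrammatic manipulations, which I would relegate to \cref{sec:computations} with only a sketch here.
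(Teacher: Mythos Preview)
Your reduction strategy is correct and matches the paper: invoke \cref{prop:XoLXisXoX} and \cref{eq:qimappingconegamma} to pass from $\cone(X\Lotimes_T X\to\lambda^{-1}X)$ to $\cone(X\otimes_T X\to\lambda^{-1}X)$, and then use \cref{lem:indAinftymap} so that it suffices to show $\tilde\varphi^0=(1\otimes\gamma)\circ\varphi^0$ is a strict map of dg-bimodules. So far so good.

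However, your ``Main obstacle'' section misidentifies where the difficulty lies. Once you are looking at $\tilde\varphi^0:q^2(T_b^{\lambda,r})[1]\to X\otimes_T X$, both source and target are honest dg-bimodules (with trivial differential), and the question is simply whether this particular map of complexes commutes with the left and right $T^{\lambda,r}$-actions. There are \emph{no} higher $A_\infty$ corrections to $\tilde\varphi^0$ that need to vanish by a degree argument; that is precisely the point of \cref{lem:indAinftymap}, which packages all the $A_\infty$ content into the hypothesis ``$(1\otimes p_V)\circ f$ is a dg-bimodule map''. Your proposed degree-count in $\HOM_T(\br X\otimes_T\br X,\lambda^{-1}X)$ is therefore addressing a non-issue. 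Likewise, your step (3) on differential compatibility is essentially vacuous here since both differentials are zero; the commutative square preceding \cref{thm:catdoublebraidphiiso} is about the cone structure, not about nontrivial differentials.

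The actual work, which your step (2) gestures at but does not pin down, is a direct diagrammatic verification. Since $\tilde\varphi^0$ is left-linear by construction, right-linearity is equivalent to the condition that for each generator $r\in T^{\lambda,r}$, the left and right actions of $r$ on the element $\tilde\varphi^0(1_\rho)$ coincide in $X\otimes_T X$. The paper makes this tractable by first deriving a clean recursive description of $\tilde\varphi^0(1_{k,\ell,\rho})$ in terms of explicitly defined elements $\tilde\varphi(k)\in X\otimes_T X$ (this is \cref{prop:tildevarphi0rec}, proved by induction on $k$ from \cref{eq:gammavarphi0rec}), and then checking, one generator at a time (dots near and away from the red strand, black/black crossings in two positions, black/red crossings, and nails), that $r\cdot\tilde\varphi(k)=\tilde\varphi(k)\cdot r$ up to the appropriate shifts in $k$. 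These are Lemmas~\ref{lem:varphitdotright}--\ref{lem:varphitredblackcrossing}, each an induction on $k$ using the recursion for $\tilde\varphi(k)$. Without that explicit recursive formula, your plan to ``verify that the diagrammatic operations commute with multiplying on the bottom'' has no concrete object to manipulate.
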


\begin{proof}
The statement follows by proving that  $\tilde \varphi^0$ is a map of dg-bimodules, which is done in details in \cref{sec:proofofbimodulemap}.
\end{proof}

\begin{cor}
There is an exact sequence
\[
0 \rightarrow \lambda q^2 (X) [1] \xrightarrow{u} q^2 (T_b^{\lambda,r})[1] \xrightarrow{\tilde \varphi_0} X \otimes_T X  \xrightarrow{1\otimes u} \lambda^{-1} X \rightarrow 0,
\]
 of dg-bimodules.
\end{cor}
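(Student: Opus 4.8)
The plan is to deduce the statement formally from the quasi-isomorphism $\tilde\varphi$ together with the injectivity of the unbraiding map. By \cref{cor:uinj} the map $u\colon \lambda q^2 X[1]\to q^2 T_b^{\lambda,r}[1]$ is injective, which gives exactness at the left end; moreover $\cok(u)$ is then an honest $\bZ\times\bZ^2$-graded bimodule carrying the zero differential, since both $X$ and $T_b^{\lambda,r}$ do. Next, combining \cref{thm:catdoublebraidphiiso} (that $\varphi$ is a quasi-isomorphism), \cref{thm:phiisAinfty} (that $\varphi$, hence $\tilde\varphi$, is a map of $A_\infty$-bimodules), and the quasi-isomorphism \eqref{eq:qimappingconegamma}, I obtain that
\[
\tilde\varphi\colon \cone\bigl(\lambda q^2 X[1]\xrightarrow{u}q^2 T_b^{\lambda,r}[1]\bigr)[1]\xrightarrow{\ \simeq\ }\cone\bigl(X\otimes_T X\xrightarrow{1\otimes u}\lambda^{-1}X\bigr)
\]
is a quasi-isomorphism; its underlying strict chain map has components $\tilde\varphi^1$ and $\tilde\varphi_0=\tilde\varphi^0$, where $\tilde\varphi_0\colon q^2 T_b^{\lambda,r}[1]\to X\otimes_T X$ is the honest dg-bimodule map appearing in \cref{thm:phiisAinfty}, and $u$, $1\otimes u$ are honest maps too. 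Only this underlying chain map enters the statement.

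From here I would observe that the cone of a quasi-isomorphism is acyclic and unwind $\cone(\tilde\varphi)$ in terms of the four dg-bimodules involved: since $u$ is injective, the source mapping cone is quasi-isomorphic to a single homological shift of $\cok(u)$, so the target mapping cone $\cone(1\otimes u)$ has the same (formal) homology. Feeding this into the long exact homology sequence attached to the triangle $X\otimes_T X\xrightarrow{1\otimes u}\lambda^{-1}X\to\cone(1\otimes u)\xrightarrow{+1}$, and using that $X\otimes_T X$ and $\lambda^{-1}X$ also carry the zero differential (so their homology is just the underlying graded module), pins down that $1\otimes u$ is surjective (exactness at $\lambda^{-1}X$) and that $\tilde\varphi_0$ induces an isomorphism $\cok(u)\xrightarrow{\ \cong\ }\ker(1\otimes u)$ with $\ker(1\otimes u)=\Image(\tilde\varphi_0)$ (exactness at $q^2 T_b^{\lambda,r}[1]$ and at $X\otimes_T X$). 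Equivalently, one rotates both defining triangles of the two mapping cones, identifies their common third term via $\tilde\varphi$, and reads off that the displayed four-term complex is precisely $\cone(\tilde\varphi)$ up to an overall shift, hence acyclic, hence exact.

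The main obstacle I anticipate is the bookkeeping of the homological shifts: one has to verify that the various $[1]$'s occurring in \cref{thm:catdoublebraidphiiso} and in the cones line up so that $\cone(\tilde\varphi)$ is exactly the four-term complex of the statement (with the correct signs), and that the connecting map in the triangle for $\cone(1\otimes u)$ really is $1\otimes u$ on the nose, so that the "acyclic three-term piece" argument indeed yields exactness in the claimed positions. A secondary point deserving a remark is that $\tilde\varphi$ is a priori only an $A_\infty$-bimodule map, but its degree-$0$ component $(\tilde\varphi^1,\tilde\varphi_0)$ is a strict chain map between the honest dg-bimodules $\cone(u)[1]$ and $\cone(1\otimes u)$; this is what guarantees that the exact sequence obtained is one of dg-bimodules, as asserted, and not merely of $A_\infty$-bimodules.
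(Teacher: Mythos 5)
Your approach is essentially the one the paper intends: the corollary carries no printed proof and is meant to be read off from \cref{thm:catdoublebraidphiiso} and \cref{thm:phiisAinfty}, exactly as you do. The shift bookkeeping you flag but defer does, however, hide the one genuine point that must be made explicit, namely that the other component $\tilde\varphi^1$ of the chain map vanishes. Recall $\tilde\varphi=(1\otimes\gamma)\circ\varphi$, where the quasi-isomorphism $1\otimes\gamma\colon X\otimes_T\br X\to X\otimes_T X$ is zero on the summand $X\otimes_T Y^1_k$ of $\br X_k=\cone(\imath_k)$ (since $\gamma_k$ is only defined on $Y^0_k$), while $\varphi^1$ takes values precisely in $X\otimes_T Y^1_k$; hence $\tilde\varphi^1=(1\otimes\gamma)\circ\varphi^1=0$. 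Once this is noted, the differential of $\cone(\tilde\varphi)$ has exactly the three consecutive nonzero components $u$, $\tilde\varphi_0$ and $1\otimes u$ and nothing else, so $\cone(\tilde\varphi)$ literally is the displayed four-term complex and its acyclicity (the cone of a quasi-isomorphism) gives exactness. Without the vanishing of $\tilde\varphi^1$ there would be an extra arrow out of $\lambda q^2 X[1]$ into $X\otimes_T X$ and acyclicity of $\cone(\tilde\varphi)$ would not translate into the exactness of the stated four-term sequence. Your secondary remark is also correct, and worth retaining: the statement asserts exactness as a sequence of dg-\emph{bimodules}, and that is exactly what \cref{thm:phiisAinfty} buys through the assertion that $\tilde\varphi^0$ is a strict dg-bimodule map.
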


\begin{cor}\label{cor:qi-Xquadratic}
There is a quasi-isomorphism 
\[
\cone\bigl(\lambda q^2 \Xi [1] \rightarrow q^2 \id [1]\bigr)[1] \xrightarrow{\simeq} \cone( \Xi \circ \Xi \rightarrow \lambda^{-1} \Xi),
\]
of dg-functors. 
\end{cor}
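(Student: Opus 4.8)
The plan is to read off \cref{cor:qi-Xquadratic} as the functorial shadow of the bimodule statement already assembled in \cref{thm:catdoublebraidphiiso} and \cref{thm:phiisAinfty}. Recall that by \cref{thm:catdoublebraidphiiso} the map
\[
\varphi : \cone\bigl(\lambda q^2 X [1] \xrightarrow{u} q^2 T_b^{\lambda,r}[1]\bigr)[1] \longrightarrow \cone\bigl(X \Lotimes_T X \xrightarrow{1\otimes u} \lambda^{-1} X\bigr)
\]
is a quasi-isomorphism, and by \cref{thm:phiisAinfty} it is a morphism of $\bZ^2$-graded $(T^{\lambda,r},0)$-$(T^{\lambda,r},0)$-$A_\infty$-bimodules. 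So essentially all the work is done, and what remains is to transport this across the passage from $A_\infty$-bimodules to dg-endofunctors of $\cD_{dg}(T^{\lambda,r},0)$.

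First I would recall, from the appendix material on $A_\infty$-bimodules and derived tensor products (\cref{sec:dgcat}, and the discussion around \cref{sec:Ainftyaction}), that tensoring with a $\bZ^2$-graded $A_\infty$-bimodule induces a well-defined dg-endofunctor of $\cD_{dg}(T^{\lambda,r},0)$, that a quasi-isomorphism of such $A_\infty$-bimodules induces a natural isomorphism of the associated dg-functors, and that this assignment carries mapping cones (resp. grading and homological shifts) of $A_\infty$-bimodules to mapping cones (resp. shifts) of the corresponding functors. Applying this to $\varphi$, the source becomes $\cone\bigl(\lambda q^2(X\Lotimes_T-)[1]\xrightarrow{u} q^2(T_b^{\lambda,r}\Lotimes_T-)[1]\bigr)[1]$; since $T_b^{\lambda,r}\Lotimes_T-\cong\id$ and $X\Lotimes_T-=\Xi$ by definition, this is exactly $\cone(\lambda q^2\Xi[1]\to q^2\id[1])[1]$. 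The target becomes $\cone\bigl((X\Lotimes_T X)\Lotimes_T-\xrightarrow{1\otimes u}\lambda^{-1}(X\Lotimes_T-)\bigr)$, and associativity of the derived tensor product, together with \cref{prop:XoLXisXoX} (which lets one represent $X\Lotimes_T X$ by the left-cofibrant $X\otimes_T\br X$ when composing), identifies $(X\Lotimes_T X)\Lotimes_T-$ with $X\Lotimes_T(X\Lotimes_T-)=\Xi\circ\Xi$; hence the target is $\cone(\Xi\circ\Xi\to\lambda^{-1}\Xi)$. Chaining these natural isomorphisms then yields the claimed quasi-isomorphism of dg-functors.

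I expect the only genuinely delicate point to be that first step: because $\varphi$ is merely an $A_\infty$-morphism and not a strict dg-bimodule map, one must use the derived tensor product of $A_\infty$-bimodules rather than the naive one, and be slightly careful that the resulting natural transformation is defined on all of $\cD_{dg}(T^{\lambda,r},0)$ and is compatible with the cone decompositions on both sides. Here \cref{prop:XoLXisXoX} and \cref{lem:indAinftymap} are exactly what make this harmless, since they already reduce $A_\infty$-level compositions of these particular bimodules to honest dg-level ones. Everything else is formal bookkeeping with shifts and mapping cones, so I would keep that part brief.
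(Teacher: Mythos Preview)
Your proposal is correct and matches the paper's approach: the paper states \cref{cor:qi-Xquadratic} as an immediate corollary (with no written proof) of \cref{thm:catdoublebraidphiiso}, \cref{prop:XoLXisXoX}, and \cref{thm:phiisAinfty}, and your argument spells out precisely the passage from the $A_\infty$-bimodule quasi-isomorphism $\varphi$ to the dg-functor level that the paper leaves implicit. The only minor remark is that the paper also records, just before this corollary, the four-term exact sequence of honest dg-bimodules involving $\tilde\varphi^0$ and $X\otimes_T X$, so one could equivalently phrase the functorial statement via that dg-level exact sequence together with \cref{prop:XoLXisXoX}; but this is the same content repackaged, not a different argument.
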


\subsubsection{Inverse of $\Xi$}

Recall the notations from \cref{sec:cofX}. 

\begin{lem}\label{lem:Xklgenerated}
As a right $(T^{\lambda,r},0)$-module, $1_{1,k+\ell-1,\rho}X$ is generated by the elements
\begin{align}\label{eq:Xklgenerator}
\tikzdiagh{0}{
	\draw (1.25,0) .. controls (1.25,.5) .. (-.5,.75) .. controls (0,.875) .. (0,1);
	\draw[stdhl] (0,0) node[below]{\small $1$} .. controls (0,.1) .. (-.5,.25) .. controls (.5,.7) .. (.5,1);
	\draw[fill=white, color=white] (-.6,.25) circle (.1cm);
	\draw[vstdhl] (-.5,0) node[below]{\small $\lambda$} --(-.5,1) node[pos=.75,nail]{};
	\draw (.5,0) .. controls (.5,.5) and (.75,.5) .. (.75,1);
	\node at (.75,.15){\tiny $\dots$};
	\draw (1,0) .. controls (1,.5) and (1.25,.5)..  (1.25,1);
	\draw[decoration={brace,mirror,raise=-8pt},decorate]  (.4,-.35) -- node { \small $k-1$} (1.1,-.35);
}
\otimes \bar  1_{\ell,\rho},
&&\text{and} &&
\tikzdiagh{0}{
	\draw (.5,0) .. controls (.5,.5) and (0,.5) .. (0,1);
	\draw[stdhl] (0,0) node[below]{\small $1$} .. controls (0,.1) .. (-.5,.25) .. controls (.5,.7) .. (.5,1);
	\draw[fill=white, color=white] (-.6,.25) circle (.1cm);
	\draw[vstdhl] (-.5,0) node[below]{\small $\lambda$} --(-.5,1);
}
\otimes \bar  1_{\ell+k-1,\rho}.
\end{align}
\end{lem}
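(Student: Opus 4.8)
\textbf{Proof proposal for \cref{lem:Xklgenerated}.}

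The plan is to exploit the tightened basis of $X$ from \cref{thm:X0basis} together with the local relations \eqref{eq:redR2}, \eqref{eq:redR3}, \eqref{eq:relNail} and \eqref{eq:nailslidedcross} in order to push every generator of $1_{1,k+\ell-1,\rho}X$ onto one of the two stated elements, modulo the right action of $T^{\lambda,r}$. First I would note that, by \cref{thm:X0basis}, the space $1_{1,k+\ell-1,\rho}X$ is spanned as a $\Bbbk$-module by elements $x_{w,\underline l,\underline a}$ whose bottom idempotent is $1_{1,k+\ell-1,\rho}$; thus it suffices to show that each such $x_{w,\underline l,\underline a}$ lies in the right $T^{\lambda,r}$-submodule generated by the two displayed elements. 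The source idempotent has exactly one black strand to the left of the first red strand, so the ``$1$'' labelling the first black group in \eqref{eq:Xklgenerator} is forced. The key dichotomy is whether, in the reduced diagram of $x_{w,\underline l,\underline a}$, the first black strand (the one that is bound into the idempotent block of size $1$ at the bottom) is nailed at the top or not: this is recorded by the first entry $l_1$ of $\underline l$. If $l_1=1$ we aim for the first displayed element; if $l_1=0$ we aim for the second.

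The core of the argument is then a pull-down procedure. Given a diagram representing $x_{w,\underline l,\underline a}$, one slides all crossings and dots downward toward the bottom idempotent, using graded braid-like planar isotopy to move distant singularities past each other (picking up signs via the homological degree of nails, which is harmless here) and using \eqref{eq:redR2} to push black strands past the first red strand at the cost of adding dots, \eqref{eq:redR3} to resolve the resulting double black/red crossings into dotted straight strands, and \eqref{eq:relNail} to move dots through nails. What remains, after stripping off a diagram of $T^{\lambda,r}$ glued on the bottom, is precisely the ``skeleton'' in which the first black strand wraps once around the blue strand (through the braiding of $X$) and is either nailed (if $l_1=1$) or not (if $l_1=0$), while the remaining $k-1$ black strands of the first block either stay in the first block or get absorbed into the block of size $\ell$ on the right; the first red strand is pulled back to its original position. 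In the nailed case this skeleton is exactly the left-hand element of \eqref{eq:Xklgenerator} (with $k-1$ strands below the brace); in the un-nailed case the single nailless black strand forces the first block below to have size $0$ and the $k-1$ remaining strands to join the right block, giving the right-hand element with bottom idempotent $\bar 1_{\ell+k-1,\rho}$. This is morally the same bookkeeping as in the proof of \cref{prop:cofBi} and the spanning part of \cref{thm:X0basis}, so I would phrase it as ``by the same argument as in the proof of \cref{thm:X0basis}'' rather than redoing every isotopy.

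The main obstacle I anticipate is the careful case analysis in the pull-down: one must check that no singularity gets ``stuck'' and that the reduction genuinely terminates with one of the two stated elements, rather than with some third configuration. In particular, when the first black strand is not nailed but some other black strand is, one has to make sure that, after sliding, the nailed strands can all be absorbed into the right-hand block without obstructing the first black strand — this uses that nails only live on the leftmost strand and that \eqref{eq:nailslidedcross} lets the braiding pass through a nailed configuration. A secondary (but routine) point is tracking the degree shifts, which I would not belabour since the statement is about generation, not about an isomorphism of graded modules. Since linear independence is not claimed here, no dimension count is needed; the spanning statement alone is what \cref{sec:cofX} requires.
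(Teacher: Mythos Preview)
The paper proceeds by induction on $k$. The case $k=1$ is immediate. For the inductive step one observes that, by the result for $k-1$ applied with $\ell$ replaced by $\ell+1$, the right module $1_{1,k+\ell-1,\rho}X$ is already generated by the second element of \eqref{eq:Xklgenerator} together with the first element \emph{for parameter $k-1$} tensored with one extra vertical black strand on the right. Hence the whole inductive step reduces to showing that this single ``obstruction'' element lies in the right submodule generated by the two elements of \eqref{eq:Xklgenerator} for parameter $k$. This is done by an explicit diagrammatic computation: one creates a dot via \eqref{eq:nhdotslide}, slides it leftwards (producing correction terms handled with \eqref{eq:nhR2andR3}), invokes the mirror of the identity \eqref{eq:dottednailslide}, and closes with a short descending subinduction on the position of an auxiliary crossing. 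No appeal to the tightened basis of $X$ is made.

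Your proposal is a genuinely different route, and as written it has a gap. The pull-down you describe uses only isotopy, relations internal to $T^{\lambda,r}$, and \eqref{eq:nailslidedcross}; you have not explained why, after this process, the residual top skeleton is \emph{exactly} one of the two displayed elements rather than, say, the first generator for some smaller parameter with extra vertical strands attached, or some more complicated configuration when $w$ permutes the first block of black strands nontrivially or when several $l_i$ with $i>1$ equal $1$. Your dichotomy on $l_1$ identifies which generator to \emph{aim} for but does not by itself produce the factorisation. The analogy with \cref{prop:cofBi} and the spanning part of \cref{thm:X0basis} does not carry this weight: those arguments show that certain elements span, not that every element factors through two fixed elements under the right action. (There is also a geometric slip in your description: in the double braiding of $X$ it is the \emph{red} strand that goes to the blue and back, not the first black strand; the first black touches the blue only when it is nailed.) If you want to salvage the tightened-basis route, you will in effect need to re-derive the paper's inductive reduction in order to collapse a general skeleton to the two canonical shapes.
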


\begin{proof}
The statement can be proven using an induction on $k$, as done in details in \cref{sec:proofsofcatTLB}.
\end{proof}

\begin{lem}\label{lem:surjcircimath}
The map
\[
(- \circ \imath_k) : \HOM_T(Y^0_k, X) \twoheadrightarrow \HOM_T(Y^1_k, X),
\]
is surjective.
\end{lem}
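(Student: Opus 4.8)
The plan is to first replace the assertion about dg-$\HOM$-spaces by a lifting statement for ordinary graded modules, and then to verify that lifting statement diagrammatically with the help of \cref{lem:Xklgenerated}. By construction $Y^{0}_{k}$ and $Y^{1}_{k}$ carry the trivial differential and are direct sums of grading shifts of the projective modules $T_b^{\lambda,r}1_{0,k+\ell,\rho}$, respectively $T_b^{\lambda,r}1_{1,k+\ell-1,\rho}$. Hence, using $\HOM_T(T_b^{\lambda,r}1_\rho,X)\cong 1_\rho X$, the underlying graded modules of $\HOM_T(Y^{0}_{k},X)$ and $\HOM_T(Y^{1}_{k},X)$ are products of shifted copies of $1_{0,k+\ell,\rho}X$, respectively $1_{1,k+\ell-1,\rho}X$, and $(-\circ\imath_k)$ is the associated transpose matrix of left-multiplications by the diagrams defining $\imath_k$. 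A chain map between complexes whose underlying graded modules are such products is surjective as a map of complexes as soon as it is surjective on underlying graded modules; moreover $\imath_k$ is \emph{not} split as a map of graded modules, so we cannot simply dualise a splitting. It therefore suffices to show by hand that every homomorphism of graded left $T_b^{\lambda,r}$-modules $f\colon Y^{1}_{k}\to X$ factors through $\imath_k$.

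Such an $f$ is determined by the elements $v^{(t)}_{\ell,\rho}:=f(1_{1,k+\ell-1,\rho})\in 1_{1,k+\ell-1,\rho}X$, and by \cref{lem:Xklgenerated} each $v^{(t)}_{\ell,\rho}$ is a right $T_b^{\lambda,r}$-linear combination of the two generators displayed in \eqref{eq:Xklgenerator}: the nailed one and the braided, unnailed one. It is thus enough to realise each of these two generators as a value of a homomorphism factoring through $\imath_k$. The braided generator of $1_{1,k+\ell-1,\rho}X$ is obtained, up to diagrams with strictly fewer crossings, by left-multiplying the corresponding braided generator of $1_{0,k+\ell,\rho}X$ by the black/red crossing occurring in the second component of $\imath_k^t$: one slides the leftmost black strand back across the red strand using \eqref{eq:crossingslidered}, \eqref{eq:redR2} (for $N_i=1$ a black/red double crossing equals a dot) and \eqref{eq:nailslidedcross}. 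The nailed generator is, similarly, the left-multiple of an element of $1_{0,k+\ell,\rho}X$ by the nailed first component of $\imath_k^t$, this time using the nail relations \eqref{eq:relNail} and \eqref{eq:nailslidedcross}. Feeding the braided contributions into the summands $Y^{0,t}_{k}$ and the nailed contributions into the common summand ${Y'}^{0}_{k}$ defines a candidate lift $g$, and the error terms with fewer crossings are removed by a downward induction on the number of crossings, exactly as in the proof of \cref{thm:X0basis}.

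The main obstacle is the coupling caused by ${Y'}^{0}_{k}$ being a target common to all the maps $\imath_k^t$, $0\le t\le k-1$ (together with the recursive structure of the elements $z_{k-t}$ appearing in the relevant diagrams): the system of equations defining $g$ does not split into independent pieces indexed by $t$, which is the same phenomenon that forces the graded-dimension bookkeeping in \cref{lem:sesX0}. One disposes of it by ordering the tightened basis of \cref{thm:X0basis} first by $t$ and then by number of crossings, so that in these bases the matrix of $(-\circ\imath_k)$ becomes triangular with respect to the crossing filtration, with diagonal contributions given by left-multiplication by a single black/red crossing, which is unitriangular by \cref{thm:Tbasis}; surjectivity of $(-\circ\imath_k)$ then follows by the usual convergent-geometric-series argument for positive c.b.l.f. gradings, as in the proof of \cref{lem:sesX0}, combined with \cref{lem:Xklgenerated} to guarantee that the nailed generators are reached via the ${Y'}^{0}_{k}$-columns. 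The remaining bookkeeping is carried out in \cref{sec:proofsofcatTLB}.
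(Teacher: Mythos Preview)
Your overall skeleton is the same as the paper's: reduce to showing that each summand $1_{1,k+\ell-1,\rho}X$ of $\HOM_T(Y^1_k,X)$ is hit, and use \cref{lem:Xklgenerated} to reduce further to reaching the two displayed generators. The paper also splits the task between the $Y^{0,t}_k$-summand (giving the unnailed, braided generator by gluing the black/red crossing on top) and the common summand ${Y'}^0_k$ (giving the nailed generator). Where your argument diverges is in how you handle the coupling through ${Y'}^0_k$.

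The paper does \emph{not} use a crossing-number filtration or any c.b.l.f.\ geometric-series argument. Instead it exhibits, for each $0\le s\le k-1$, an explicit element of $\HOM_T({Y'}^0_k,X)\cong 1_{0,k+\ell,\rho}X$ (the braided diagram with the strand in position $s$ pulled to the far right) and computes its image componentwise: at position $t$ the image is $0$ when $s<t$, the nailed generator when $s=t$, and some nailed diagram when $s>t$. This is a \emph{finite} triangular system indexed by $(s,t)$, so a straightforward induction on $t$ (from $t=0$ upwards) gives the pure nailed generator at each position $t$, with zeros elsewhere. Combined with the unnailed generator coming from $Y^{0,t}_k$ and the right-module generation of \cref{lem:Xklgenerated}, this finishes the proof.

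Two specific issues with your writeup. First, the claim that the matrix is ``triangular with respect to the crossing filtration, with diagonal contributions given by left-multiplication by a single black/red crossing'' is not correct as stated: left-multiplication by the black/red crossing (the $Y^{0,t}_k$-contribution) is \emph{not} surjective onto $1_{1,k+\ell-1,\rho}X$---it misses precisely the elements where the leftmost black strand goes to the nail rather than across the red strand---so it cannot serve as the full diagonal. The nailed generators must come from ${Y'}^0_k$, and the relevant triangularity is the one in the finite index $t$, not in crossing number. Second, no infinite induction or positive-grading convergence argument is needed here: both $Y^0_k$ and $Y^1_k$ are \emph{finite} direct sums of shifted projectives, so once you have the $(s,t)$-triangularity the argument is a finite back-substitution. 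Your references to ``up to fewer crossings'' for the braided generator are also superfluous: the unnailed generator of \eqref{eq:Xklgenerator} is \emph{exactly} the black/red crossing glued on top of the braiding generator of $X$, no error terms.
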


\begin{proof}
We have
\begin{align*}
 \HOM_T(Y^0_k, X) &\cong \bigoplus_{\ell,\rho}\left( \lambda q^{-k} (1_{0,k+\ell,\rho} X) \oplus \bigoplus_{t=0}^{k-1} \lambda^{-1}q^{-(k-2t)}(1_{0,k+\ell,\rho}X)[-1]\right),
 \\
 \HOM_T(Y^1_k, X) &\cong \bigoplus_{\ell,\rho}\left( \bigoplus_{t=0}^{k-1}  \lambda^{-1} q^{-(k-2t+1)} (1_{1,k+\ell-1,\rho}X)[-1]\right).
\end{align*}
Then, the map
\[
(- \circ \imath_k) : \lambda q^{-k} (1_{0,k+\ell,\rho} X) \oplus \lambda^{-1}q^{-(k-2t)}(1_{0,k+\ell,\rho}X)[-1] \rightarrow \lambda^{-1} q^{-(k-2t+1)} (1_{1,k+\ell-1,\rho}X)[-1]
\]
is given by gluing 
\[
\left(
-\ 
\tikzdiagh{0}{
	\draw (1.25,0) .. controls (1.25,.5) .. (-.5,.75) .. controls (0,.875) .. (0,1);
	\draw[stdhl] (0,0) node[below]{\small $1$} .. controls(0,.5) and (.5,.5) .. (.5,1);
	\draw[vstdhl] (-.5,0) node[below]{\small $\lambda$} --(-.5,1) node[pos=.75,nail]{};
	\draw (.5,0) .. controls (.5,.5) and (.75,.5) .. (.75,1);
	\node at (.75,.15){\tiny $\dots$};
	\draw (1,0) .. controls (1,.5) and (1.25,.5)..  (1.25,1);
	\draw[decoration={brace,mirror,raise=-8pt},decorate]  (.4,-.35) -- node { \small $t$} (1.1,-.35);
}
\otimes \bar  1_{\ell+k-1-t,\rho},
\ 
\tikzdiagh{0}{
	\draw[vstdhl] (-.5,0) node[below]{\small $\lambda$} --(-.5,1);
	\draw (.5,0) .. controls (.5,.5) and (0,.5) .. (0,1);
	\draw[stdhl] (0,0) node[below]{\small $1$} .. controls(0,.5) and (.5,.5) .. (.5,1);
}
\otimes \bar  1_{\ell+k-1,\rho}
\right)
\]
on the top of diagrams, 
for all $0 \leq t \leq k-1$. 

Then we observe that the map $(- \circ \imath_k) :  \lambda q^{-k} (1_{0,k+\ell,\rho} X) \rightarrow  \lambda^{-1} q^{-(k-2t+1)} (1_{1,k+\ell-1,\rho}X)[-1]$ sends
\[
	\tikzdiag[xscale=.5]{
		\draw (1,0) -- (1,1);
		\node at(1.5,.5) {\tiny $\dots$};
		\draw (2,0) -- (2,1);
		\draw (2.5,0) .. controls (2.5,.5) and (3,.5) .. (3,1);
		\node at(3,.15) {\tiny $\dots$};
		\node at(3.5,.85) {\tiny $\dots$};
		\draw (3.5,0) .. controls (3.5,.5) and (4,.5) .. (4,1);
		\draw (4,0) .. controls (4,.5) and (2.5,.5) .. (2.5,1);
		\draw[stdhl] (.5,0) node[below]{\small $1$} .. controls (.5,.25) .. (-.5,.5)
				.. controls (.5,.75) .. (.5,1);
		\draw[fill=white, color=white] (-.6,.5) circle (.1cm);
		\draw[vstdhl] (-.5,0)  node[below]{\small $\lambda$} -- (-.5,1);
		\tikzbrace{1}{2}{0}{\small $s$};
		\tikzbraceop{1}{4}{1}{\small $k$};
	}
	\otimes \bar 1_{\ell,\rho},
	\ \mapsto \ 
	\begin{cases}
		0, & \text{if $ s< t$}, \\
		\tikzdiagh{0}{
			\draw (1.25,0) .. controls (1.25,.5) .. (-.5,.75) .. controls (0,.875) .. (0,1);
			\draw[stdhl] (0,0) node[below]{\small $1$} .. controls (0,.1) .. (-.5,.25) .. controls (.5,.7) .. (.5,1);
			\draw[fill=white, color=white] (-.6,.25) circle (.1cm);
			\draw[vstdhl] (-.5,0) node[below]{\small $\lambda$} --(-.5,1) node[pos=.75,nail]{};
			\draw (.5,0) .. controls (.5,.5) and (.75,.5) .. (.75,1);
			\node at (.75,.15){\tiny $\dots$};
			\draw (1,0) .. controls (1,.5) and (1.25,.5)..  (1.25,1);
			\draw[decoration={brace,mirror,raise=-8pt},decorate]  (.4,-.35) -- node { \small $k-1$} (1.1,-.35);
		}
		\otimes \bar  1_{\ell,\rho}, & \text{if $s = t$}, \\
		\tikzdiag{
			\draw (1.25,0) .. controls (1.25,.5) .. (-.5,.75) .. controls (0,.875) .. (0,1);
			\draw[stdhl] (0,0) node[below]{\small $1$} .. controls (0,.1) .. (-.5,.25) .. controls (.5,.7) .. (.5,1);
			\draw[fill=white, color=white] (-.6,.25) circle (.1cm);
			\draw[vstdhl] (-.5,0) node[below]{\small $\lambda$} --(-.5,1) node[pos=.75,nail]{};
			\draw (.5,0) .. controls (.5,.5) and (.75,.5) .. (.75,1);
			\node at (.75,.15){\tiny $\dots$};
			\draw (1,0) .. controls (1,.5) and (1.25,.5)..  (1.25,1);
			\draw (1.5,0) -- (1.5,1);
			\node at (1.75,.5){\tiny $\dots$};
			\draw (2,0) -- (2,1);
			\draw (2.25,0) .. controls (2.25,.5) and (2.5,.5) .. (2.5,1);
			\node at (2.5,.15){\tiny $\dots$};
			\draw (2.75,0) .. controls (2.75,.5) and (3,.5) .. (3,1);
			\draw (3,0) .. controls (3,.5) and (2.25,.5) .. (2.25,1);
			\draw[decoration={brace,mirror,raise=-8pt},decorate]  (.4,-.35) -- node { \small $t$} (1.1,-.35);
			\tikzbraceop{.75}{2}{1}{\small $s-1$};
		}
		\otimes \bar  1_{\ell,\rho}, &\text{if $s> t$},
	\end{cases}
\]
for all $0 \leq s \leq k-1$. 
Thus, $(- \circ \imath_k) : \HOM_T(Y^0_k, X) \twoheadrightarrow \HOM_T(Y^1_k, X)$ has a triangular form when applied to the elements above, and is surjective by \cref{lem:surjcircimath}. 
\end{proof}

\begin{prop}\label{prop:Xi-autoequiv}
The functor $\Xi :  \cD_{dg}(T^{\lambda,r},0) \rightarrow \cD_{dg}(T^{\lambda,r},0)$ is an autoequivalence, with inverse given by $\Xi^{-1} := \RHOM_T(X,-):  \cD_{dg}(T^{\lambda,r},0) \rightarrow \cD_{dg}(T^{\lambda,r},0).$
\end{prop}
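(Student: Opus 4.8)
The plan is to reduce the statement to the assertion that $\Xi = X\Lotimes_T -$ is both fully faithful and essentially surjective, after which $\Xi$ is an equivalence and its right adjoint is automatically a quasi-inverse. That $\RHOM_T(X,-)$ is right adjoint to $\Xi$ is the hom--tensor adjunction \eqref{eq:homtensajd} applied to the dg-bimodule $X$, so the identification $\Xi^{-1}\cong\RHOM_T(X,-)$ follows once $\Xi$ is shown to be an equivalence.

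First I would prove that $\Xi$ is fully faithful. Since $\cD_{dg}(T^{\lambda,r},0)$ is generated by the compact projectives $\BP_\rho$, each a direct summand of the free module, and both $\Xi$ and $\RHOM_T(X,-)$ commute with the (locally finite) coproducts of this derived category, it suffices to check that the unit
\[
\eta_{T^{\lambda,r}}\colon (T^{\lambda,r},0)\longrightarrow\RHOM_T\bigl(X,\,X\Lotimes_T(T^{\lambda,r},0)\bigr)=\RHOM_T(X,X)
\]
is a quasi-isomorphism. I would compute the right-hand side using the left cofibrant replacement $\gamma\colon\br X\to X$ of \cref{sec:cofX}, so that $\RHOM_T(X,X)\simeq\HOM_T(\br X,X)$. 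As $\br X_k=\cone(\imath_k\colon Y^1_k\to Y^0_k)$, applying $\HOM_T(-,X)$ yields, for each $k$, a two-term complex $\HOM_T(Y^0_k,X)\xrightarrow{-\circ\imath_k}\HOM_T(Y^1_k,X)$ (up to shift), which is surjective by \cref{lem:surjcircimath}; hence $H\bigl(\HOM_T(\br X,X)\bigr)\cong\bigoplus_k\ker(-\circ\imath_k)$, concentrated in a single homological degree. Inserting the explicit descriptions of $\HOM_T(Y^0_k,X)$ and $\HOM_T(Y^1_k,X)$ and the triangular shape of $-\circ\imath_k$ recorded in the proof of \cref{lem:surjcircimath}, together with the lists of right-module generators of $1_{1,k+\ell-1,\rho}X$ and $1_{0,k+\ell,\rho}X$ from \cref{lem:Xklgenerated}, one checks that every contribution with $k\geq 1$ is cancelled and that the surviving kernel is exactly the image of $\eta_{T^{\lambda,r}}$, which sends $t$ to right multiplication by $t$. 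This identification — in essence a careful comparison of bases — is the step I expect to be the main obstacle.

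Finally I would prove essential surjectivity. Once $\Xi$ is fully faithful, its essential image is a triangulated subcategory closed under the relevant coproducts, hence localizing, and is moreover closed under applying $\Xi$. The four-term exact sequence of dg-bimodules established above (the corollary immediately preceding \cref{cor:qi-Xquadratic}),
\[
0\to\lambda q^2(X)[1]\xrightarrow{\ u\ }q^2(T_b^{\lambda,r})[1]\xrightarrow{\ \tilde\varphi_0\ }X\otimes_T X\xrightarrow{\ 1\otimes u\ }\lambda^{-1}X\to0,
\]
splits into two short exact sequences; using injectivity of $u$ (\cref{cor:uinj}) and the quasi-isomorphism $X\otimes_T X\simeq X\Lotimes_T X$ of \cref{prop:XoLXisXoX}, it exhibits $q^2(T_b^{\lambda,r})[1]$ as an iterated cone built from $X=\Xi(T^{\lambda,r})$ and $X\Lotimes_T X=\Xi\bigl(\Xi(T^{\lambda,r})\bigr)$, both in the essential image of $\Xi$. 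Therefore $T^{\lambda,r}$ lies in the essential image of $\Xi$, and since it generates $\cD_{dg}(T^{\lambda,r},0)$, that essential image is the whole category. Hence $\Xi$ is an equivalence and its right adjoint $\RHOM_T(X,-)$ is its quasi-inverse, which is the claim.
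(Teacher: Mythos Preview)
Your overall architecture is right: show $\Xi$ is fully faithful by proving the unit $T\to\RHOM_T(X,X)$ is a quasi-isomorphism, then deduce essential surjectivity. The first reduction, using \cref{lem:surjcircimath} and the cofibrant replacement $\br X$ to see that $\RHOM_T(X,X)\simeq\HOM_T(\br X,X)$ has homology concentrated in a single degree, matches the paper exactly.

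Where you diverge is in the kernel computation. Your plan is a direct basis comparison, and you correctly flag it as the main obstacle. Note that your phrasing ``every contribution with $k\geq 1$ is cancelled'' is not accurate: for each $k$ the kernel of $(-\circ\imath_k)$ contributes a piece isomorphic to $\bigoplus_{\ell,\rho}1_{k,\ell,\rho}T^{\lambda,r}$, not zero; these pieces assemble to all of $T^{\lambda,r}$. The paper bypasses this computation entirely with a two-line argument: the map $1_\rho T^{\lambda,r}1_{\rho'}\to\HOM_T(X1_\rho,X1_{\rho'})$ sending $t$ to $\id_X\otimes t$ is injective (compose with the map induced by the unbraiding $u$ of \cref{cor:uinj} and observe the result is injective), and the two sides have equal graded dimension because $\Xi$ decategorifies to $\xi$ and the sesquilinear Shapovalov form satisfies $(\xi w,\xi w')=(w,w')$. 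This decategorification trick is the idea you are missing; it replaces your basis chase with a one-step dimension count.

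Your essential-surjectivity argument via the four-term exact sequence (the corollary before \cref{cor:qi-Xquadratic}) is correct and more explicit than what the paper writes; the paper simply concludes ``$\Xi$ is an autoequivalence'' once the unit is shown to be an isomorphism, leaving the generation argument implicit. Your use of the quadratic relation to exhibit $T^{\lambda,r}$ as an iterated cone of $X$ and $X\Lotimes_T X$ makes this step transparent.
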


\begin{proof}
By \cref{lem:surjcircimath} and \cref{prop:gammaqi}, we have
\[
\RHOM_T(X 1_\rho, X  1_{\rho'})  \cong \HOM_T(X 1_\rho, X  1_{\rho'}). 
\]
Then, we compute
\[
\gdim \HOM_T(X 1_\rho, X  1_{\rho'}) = \gdim \HOM_T(\BP_\rho, \BP_{\rho'}),
\]
using the fact that $\Xi$ decategorifies to the action of $\xi$. 
More precisely, as in \cite[\S4.7]{webster}, the bifunctor $\RHOM_T(-,-)$ decategorifies to a sesquilinear version of the Shapovalov form when restricted to a particular subcategory of $\cD_{dg}(T^{\lambda,r},0)$, and this sesquilinear form respects $(\xi w, \xi w') = (w,w')$. 
Finally, we observe that the map
\[
\HOM_T(\BP_\rho, \BP_{\rho'}) \xhookrightarrow{\id_X \otimes (-)} \HOM_T(X 1_\rho, X 1_{\rho'}),
\]
is injective, since the map $\BP_\rho \rightarrow X 1_\rho$ 
given by gluing
\[
	\tikzdiag{
		\draw (.5,0) .. controls (.5,.25) and (.75,.25) .. (.75,.5) .. controls (.75,.75) and (.5,.75) .. (.5,1);
		\node at (1,.15) {\small $\dots$};
		\node at (1,.85) {\small $\dots$};
		\draw (1.5,0) .. controls (1.5,.25) and (1.75,.25) .. (1.75,.5) .. controls (1.75,.75) and (1.5,.75) .. (1.5,1);
		\draw[stdhl] (2,0) node[below]{\small $1$} .. controls (2,.25) .. (0,.5)
				.. controls (2,.75) .. (2,1);
		\draw[fill=white, color=white] (-.25,.5) circle (.2cm);
		\draw[vstdhl] (0,0)  node[below]{\small $\lambda$} -- (0,1);
		\node[red] at(2.5,.5) {$\dots$}
	}
\]
on the top of diagrams 
is injective. This can be seen by composing the above map $\BP_\rho \rightarrow X 1_\rho$ with the injection $u :  X 1_\rho \rightarrow \BP_\rho$, and observing it yields an injective map. 
Therefore,  $\RHOM_T(X 1_\rho, X  1_{\rho'})  \cong 1_\rho T^{\lambda,r} 1_{\rho'}$, and $\Xi$ is an autoequivalence.  
\end{proof}

\subsubsection{Categorification of relation \cref{eq:TLBloopremov}.}

\begin{lem}\label{lem:XB1qi}
There is a quasi-isomorphism 
\[
X \Lotimes_T B_1 \cong X \otimes_T \br B_1 \xrightarrow{\simeq} X \otimes_T B_1,
\]
of $A_\infty$-bimodules.
\end{lem}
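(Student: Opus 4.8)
The statement is a ``derived $=$ underived up to cofibrant replacement'' lemma: we want to show that the canonical map $X \otimes_T \br B_1 \to X \otimes_T B_1$ induced by the cofibrant replacement $\br B_1 \xrightarrow{\simeq} B_1$ of \cref{prop:cofBi} is a quasi-isomorphism, and that it carries a natural $A_\infty$-bimodule structure. The first isomorphism $X \Lotimes_T B_1 \cong X \otimes_T \br B_1$ is by definition of the derived tensor product, since $\br B_1$ is a cofibrant replacement of $B_1$ as a \emph{left} $(T^{\lambda,r+2},0)$-module (hence in particular cofibrant-enough to compute the derived tensor product on the right; more precisely one uses that $\br B_1$ is built from copies of representable modules $T_{i,\,\tikzRBR}$, $T_{i,\,\tikzBRR}$, $T_{i,\,\tikzRRB}$, which are cofibrant as right modules as well by the basis \cref{thm:Tbasis}). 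So the content is the second map.

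\textbf{Key steps.} First I would recall from \cref{prop:cofBi} the explicit three-term complex computing $\br B_1$, with outer terms $q^2(T_{1,\,\tikzRBR})[2]$ and $q(T_{1,\,\tikzBRR})[1] \oplus q(T_{1,\,\tikzRRB})[1]$ mapping to the middle term $T_{1,\,\tikzRBR}$ by black/red crossing maps, and the fact proved there that this complex is a resolution of $B_1$ (i.e. the augmentation $\br B_1 \twoheadrightarrow B_1$ is a surjective quasi-isomorphism with acyclic kernel). Tensoring to the left with $X$ is right-exact, so it suffices to check that $X \otimes_T (-)$ kills the higher homology of the kernel complex; equivalently, that the maps $X \otimes_T(\text{black/red crossing})$ remain injective after tensoring. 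By \cref{thm:X0basis}, adding a black/red crossing on top of a diagram in $X$ is still an injective operation on the $\Bbbk$-module $X$ (the tightened basis of $X$ maps to a linearly independent set), exactly as adding a crossing was injective on $T^{\lambda,r}$ in the proof of \cref{prop:cofBi}. This gives exactness of $0 \to X\otimes_T(\text{outer})\to X\otimes_T(\text{middle outer})\to \ker\to 0$ termwise and hence that $X\otimes_T\br B_1 \to X\otimes_T B_1$ is a quasi-isomorphism. For the $A_\infty$-statement: since $B_1$ and $\br B_1$ are dg-bimodules and $\br B_1 \to B_1$ is a map of left modules that respects the right action only up to homotopy (as in \cite[\S2.3]{MW}), the right $(T^{\lambda,r},0)$-action transported to $\br B_1$ — and then to $X\otimes_T\br B_1$ — is naturally an $A_\infty$-action, and the comparison map is a map of $A_\infty$-bimodules by the general transfer/homotopy-transfer machinery (cf. the construction of the $A_\infty$-action on $\br X$ in \cref{sec:Ainftyaction} and \cref{prop:gammaqi}). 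I would then simply invoke that same framework.

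\textbf{Main obstacle.} The genuinely delicate point is bookkeeping the $A_\infty$-bimodule structure rather than the quasi-isomorphism itself: one must make sure the right-module structure on $X \otimes_T \br B_1$ is the \emph{correct} one (the transferred $A_\infty$-structure making $\gamma$-type maps strict on the nose in low arity and $A_\infty$ in higher arity), and that the statement ``$X\Lotimes_T B_1 \cong X\otimes_T\br B_1$'' is read with this structure. As the authors note in the introduction, this is precisely the recurring technical nuisance of the paper; here it is mild because $B_1$ already has small homological amplitude, so degree reasons (as in the proof that $\overline B_{i\pm1}\Lotimes_T B_i\cong T^{\lambda,r}$) force most higher $A_\infty$-components to vanish. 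I expect the proof in the text to consist of one line deferring to \cref{prop:cofBi} for the quasi-isomorphism and to the $A_\infty$-transfer discussion (as in \cref{prop:XoLXisXoX}/\cref{prop:gammaqi}) for the bimodule structure, with the injectivity input coming from \cref{thm:X0basis}.
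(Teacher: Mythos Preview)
Your proposal is correct and follows essentially the same approach as the paper: write out $X \otimes_T \br B_1$ as the three-term complex obtained by tensoring the resolution of \cref{prop:cofBi} with $X$, and then check exactness directly by verifying that the first map is injective and its image coincides with the kernel of the second. The paper's proof is terser than yours---it simply asserts injectivity and the kernel--image equality without citing \cref{thm:X0basis} explicitly---but the underlying reason is indeed the tightened basis for $X$, as you identify; your discussion of the $A_\infty$-bimodule structure via transfer (parallel to \cref{prop:gammaqi} and \cref{prop:XoLXisXoX}) is more explicit than the paper's, which leaves that part implicit.
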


\begin{proof}
Let us write $X_{\tikzRRB} := X \otimes_T T_{1,\tikzRRB}$. Then we have
\[
X \otimes_T \br B_1 \cong 
\begin{tikzcd}[row sep = 1ex]
 & q (X_{\tikzBRR}) [1]
 \ar{dr} \ar[no head]{dr}{
	 {\tikzdiag[scale=.5]{
	 	\draw (1,0) .. controls (1,.5) and (0,.5) .. (0,1);
	 	\draw[stdhl] (0,0) .. controls (0,.5) and (1,.5) .. (1,1);
	 	\draw[stdhl] (2,0) -- (2,1);
	 }}} 
 & \\
q^2 (X_{\tikzRBR}) [2]
\ar{ur} 
 \ar[no head]{ur}{
	 {\tikzdiag[scale=.5]{
	 	\draw (0,0) .. controls (0,.5) and (1,.5) .. (1,1);
	 	\draw[stdhl] (1,0) .. controls (1,.5) and (0,.5) .. (0,1);
	 	\draw[stdhl] (2,0) -- (2,1);
	 }}} 
\ar{dr}
\ar[no head,swap]{dr}{
	 -\ {\tikzdiag[scale=.5]{
	 	\draw (1,0) .. controls (1,.5) and (0,.5) .. (0,1);
	 	\draw[stdhl] (-1,0) -- (-1,1);
	 	\draw[stdhl] (0,0) .. controls (0,.5) and (1,.5) .. (1,1);
	 }}}
 & \oplus &  
 X_{\tikzRBR} 
 \\
 &  q (X_{\tikzRRB}) [1]
 \ar{ur}
\ar[no head,swap]{ur}{
	 {\tikzdiag[scale=.5]{
	 	\draw (0,0) .. controls (0,.5) and (1,.5) .. (1,1);
	 	\draw[stdhl] (1,0) .. controls (1,.5) and (0,.5) .. (0,1);
	 	\draw[stdhl] (-1,0) -- (-1,1);
	 }}}
	  & \\
\end{tikzcd}
\]
The statement follows by observing that the first map is injective, and its image coincides with the kernel of the second one. 
\end{proof}

Our goal will be to show the following:
\begin{prop}\label{prop:blobbubble}
There is a quasi-isomorphism
\[
\lambda q (T^{\lambda,r})[1] \oplus \lambda^{-1} q^{-1} (T^{\lambda,r}) [-1] \xrightarrow{\simeq} \bar B_1 \Lotimes_T X \Lotimes_T B_1,
\]
of $A_\infty$-bimodules. 
\end{prop}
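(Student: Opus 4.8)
The plan is to compute the composite $\bar B_1 \Lotimes_T X \Lotimes_T B_1$ by replacing each bimodule with an appropriate cofibrant resolution and then doing explicit bookkeeping with the resulting complexes. Concretely, first I would use \cref{lem:XB1qi} to replace $X \Lotimes_T B_1$ by the honest tensor product $X \otimes_T \br B_1$, and then apply $\bar B_1 \Lotimes_T (-)$, using the right cofibrant replacement $\oB_1 \rb$ so that $\bar B_1 \Lotimes_T X \Lotimes_T B_1 \cong \oB_1 \rb \otimes_T X \otimes_T \br B_1$. Since $\br B_1$ is built out of the modules $T_{1,\tikzRBR}, T_{1,\tikzBRR}, T_{1,\tikzRRB}$ with explicit differentials, and likewise for $\oB_1\rb$, the whole object becomes a finite complex whose terms are of the form $\oB_1\rb \otimes_T X \otimes_T T_{1,?}$. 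As in the proof of \cref{cor:TLbiadj}, I would work locally: using \cref{prop:catactioncommutes} and the commutation of $\B_i$ with the red-strand induction $\mathfrak I$, it suffices to treat the case $i = 1$ with $b_1 = 0$, so that the cup/cap act on the first two red strands only.

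The key computational input is to identify $X \otimes_T T_{1,\tikzRBR}$, i.e. the double-braiding bimodule with a $\tikzRBR$-type strand configuration glued on the bottom; here I expect to use the tightened basis of $X$ from \cref{thm:X0basis} together with the cofibrant replacement $\br X$ of \cref{sec:cofX}, which expresses $X$ (hence $X\otimes_T(-)$) in terms of the summands $Y^0_k, Y^1_k$ and ultimately in terms of $T^{\lambda,r}$-modules $T^{\lambda,r}_b 1_{k,\ell,\rho}$. Then the outer tensor with $\oB_1\rb$ collapses by the same mechanism as in \cref{cor:TLbiadj}: the $\tikzRBR$-decorated pieces survive (because $B_1^{\tikzRBR}\cong T^{\lambda,r}$) while the $\tikzBRR$ and $\tikzRRB$ pieces are killed by \cref{eq:killcup}. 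After this reduction the remaining complex should be quasi-isomorphic to a two-term (split) complex. To pin down which summands and which shifts appear, I would decategorify: by \cref{thm:K0} the functor $\oB_1\circ\Xi\circ\B_1$ acts on $M\otimes V^r$, and by \cref{lem:explicitaction} (formulas \cref{eq:caponk}, \cref{eq:cuponk}, \cref{eq:xionk}) one computes $\bar\B_1\circ\Xi\circ\B_1$ explicitly; comparing with the blob relation \cref{eq:TLBloopremov}, $u_1\xi u_1 = -(\lambda q+\lambda^{-1}q^{-1})u_1$, tells us the graded Euler characteristic must be $\lambda q + \lambda^{-1}q^{-1}$ times the identity, which matches $\lambda q (T^{\lambda,r})[1]\oplus\lambda^{-1}q^{-1}(T^{\lambda,r})[-1]$ once one accounts for the homological shifts attached to the cap. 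Having guessed the answer, I would construct the explicit map $\lambda q (T^{\lambda,r})[1]\oplus\lambda^{-1}q^{-1}(T^{\lambda,r})[-1]\to \oB_1\rb\otimes_T X\otimes_T\br B_1$ out of the unit/counit maps $\eta_1,\bar\eta_1,\varepsilon_1,\bar\varepsilon_1$ (as in \cref{prop:distinguishedBitriangle}), precomposed with the unbraiding map $u$ of \cref{def:unbraidingmap}, and verify it is a quasi-isomorphism by a graded-dimension count using the bases from \cref{thm:Tbasis}, \cref{thm:basisBi} and \cref{thm:X0basis}.

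A technical point I need to handle carefully is that, as emphasized in the introduction and in \cite{MW}, the right module structure on left cofibrant replacements (and vice versa) is only preserved up to $A_\infty$; hence what I actually get is a quasi-isomorphism of $A_\infty$-bimodules. To upgrade this — or rather, to make the statement clean — I would argue as in \cref{thm:phiisAinfty} and \cref{prop:XoLXisXoX}: tensoring is right exact, so the short exact sequences defining $\br X$ and $\br B_1$ give honest tensor products that compute the derived ones, and the higher $A_\infty$-maps on the relevant terms vanish for degree reasons (the target two-term complex is concentrated in a narrow band of homological and $q$-degrees). The main obstacle, I expect, will be the explicit identification of $X \otimes_T B_1^{\tikzRBR}$ and the verification that, after passing through the cofibrant replacement $\br X$, the differentials assemble so that only the two claimed summands survive in homology — this is where the bulk of the diagrammatic computation lives, and it will likely need to be deferred to the appendix (\cref{sec:proofsofcatTLB} or \cref{sec:proofsofsecbimod}), paralleling how \cref{lem:gammasurjective} and \cref{lem:sesX0} were handled. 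The degree/shift bookkeeping for the cap (homological degree $-1$, $q$-degree $-1$) combined with the $\lambda$-degree $-1$ of the double-braiding generator is the place where sign and shift errors are easiest to make, so I would cross-check the final shifts against the decategorified computation via \cref{eq:xionk} before claiming the result.
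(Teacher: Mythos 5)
Your overall scaffolding matches the paper's: use \cref{lem:XB1qi} to replace $X\Lotimes_T B_1$ by the honest tensor $X\otimes_T B_1$, resolve $\bar B_1$ on the right by $\oB_1\rb$, reduce to a local computation, and use the decategorified formulas \eqref{eq:caponk}--\eqref{eq:xionk} to predict the grading and homological shifts. The endgame, however, is different in the paper: there is no construct-and-verify step. The paper writes out the local form of the complex $\oB_1\rb\otimes_T X\otimes_T B_1$ diagrammatically, uses \eqref{eq:killcup} to kill the $\tikzBRR$- and $\tikzRRB$-type pieces, performs a Gaussian elimination on an acyclic two-term subcomplex, and reads off the two surviving summands. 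This is more economical than building an explicit comparison map out of $\eta_1,\varepsilon_1,u$ and counting graded dimensions, and it does not pass through $\br X$ at any point --- the middle slot needs no cofibrant replacement once the two outer ones are in place.

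The concrete gap is in your handling of the $A_\infty$-structure. You claim the higher $A_\infty$-multiplications all vanish for degree reasons because the answer lives in a narrow band of degrees. The paper states explicitly that this is false: after the elimination, every higher multiplication vanishes for degree reasons \emph{except} the two maps $T\otimes T\otimes(\oB_1\rb\otimes_T X\otimes_T B_1)\to(\oB_1\rb\otimes_T X\otimes_T B_1)$ and $T\otimes(\oB_1\rb\otimes_T X\otimes_T B_1)\otimes T\to(\oB_1\rb\otimes_T X\otimes_T B_1)$. These are exactly the corrections induced by the nail relation \eqref{eq:relNail} on $\oB_1\rb$, and showing they vanish on the surviving complex requires the explicit computation of the left $A_\infty$-action on $\oB_1\rb$ --- the action of the nail on each of its three summands and the resulting higher-composition terms --- which is precisely the long diagrammatic calculation that the paper carries out immediately before the proof of \cref{prop:blobbubble}. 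Your plan assumes for free the fact whose verification is the technical heart of the argument, so as written there is a hole exactly where the real work lives.
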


For this, we will need to understand the left $A_\infty$-action on $B_1 \rb$:
\[
B_1 \rb := 
\begin{tikzcd}[row sep = 1ex]
 & T^{\ \tikzBRR}
 \ar{dr} \ar[no head]{dr}{
	 {\tikzdiag[scale=.5,yscale=-1]{
	 	\draw (1,0) .. controls (1,.5) and (0,.5) .. (0,1);
	 	\draw[stdhl] (0,0) .. controls (0,.5) and (1,.5) .. (1,1);
	 	\draw[stdhl] (2,0) -- (2,1);
	 }}} 
 & \\
q (T^{\ \tikzRBR}) [1]
\ar{ur} 
 \ar[no head]{ur}{
	 {\tikzdiag[scale=.5,yscale=-1]{
	 	\draw (0,0) .. controls (0,.5) and (1,.5) .. (1,1);
	 	\draw[stdhl] (1,0) .. controls (1,.5) and (0,.5) .. (0,1);
	 	\draw[stdhl] (2,0) -- (2,1);
	 }}} 
\ar{dr}
\ar[no head,swap]{dr}{
	 -\ {\tikzdiag[scale=.5,yscale=-1]{
	 	\draw (1,0) .. controls (1,.5) and (0,.5) .. (0,1);
	 	\draw[stdhl] (-1,0) -- (-1,1);
	 	\draw[stdhl] (0,0) .. controls (0,.5) and (1,.5) .. (1,1);
	 }}}
 & \oplus &  
q^{-1}(T^{\ \tikzRBR})[-1].
 \\
 &  T^{\ \tikzRRB}
 \ar{ur}
\ar[no head,swap]{ur}{
	 {\tikzdiag[scale=.5,yscale=-1]{
	 	\draw (0,0) .. controls (0,.5) and (1,.5) .. (1,1);
	 	\draw[stdhl] (1,0) .. controls (1,.5) and (0,.5) .. (0,1);
	 	\draw[stdhl] (-1,0) -- (-1,1);
	 }}}
	  & \\
\end{tikzcd}
\]
We start by constructing a composition map $T \otimes B_1 \rb \rightarrow B_1 \rb$, by defining it on each generator of $T$. We extend it by first rewriting elements in $T$ as basis elements and then applying recursively the definition in terms of generating elements (so that it is well-defined). Dots and crossings act on each of the summand by simply adding the three missing vertical strands between the $\lambda$-strand and the remaining of the diagram, and gluing on top. For example in $q^{-1} (T^{\ \tikzRBR}) [-1]$, we have
\[
\tikzdiagh[xscale=1.25]{0}{
	\draw [vstdhl] (-.25,0) node[below]{\small $\lambda$} -- (-.25,1);
	\draw (0,0) -- (0,1);
	\node at(.25,.125) {\tiny $\dots$};
	\node at(.25,.875) {\tiny $\dots$};
	\draw (.5,0) -- (.5,1);
	\draw [stdhl] (.75,0)  node[below]{\small $1$} -- (.75,1);
	\node[red] at(1.125,.125) { $\dots$};
	\node[red] at(1.125,.875) { $\dots$};
	\draw [stdhl] (1.5,0)  node[below]{\small $1$} -- (1.5,1);
	\draw (1.75,0) -- (1.75,1);
	\node at(2,.125) {\tiny $\dots$};
	\node at(2,.875) {\tiny $\dots$};
	\draw (2.25,0) -- (2.25,1);
	\draw [stdhl] (2.5,0)  node[below]{\small $1$} -- (2.5,1);
	\draw (2.75,0) -- (2.75,1);
	\node at(3,.125) {\tiny $\dots$};
	\node at(3,.875) {\tiny $\dots$};
	\draw (3.25,0) -- (3.25,1);
	\filldraw [fill=white, draw=black] (-.125,.25) rectangle (3.375,.75) node[midway] { $D$};
}
\ \mapsto \ 
\tikzdiagh[xscale=1.25]{0}{
	\draw [vstdhl] (-1,0) node[below]{\small $\lambda$} -- (-1,1);
	\draw[stdhl] (-.75,0)  node[below]{\small $1$}  -- (-.75,1);
	\draw (-.5,0)  -- (-.5,1);
	\draw[stdhl] (-.25,0)  node[below]{\small $1$}  -- (-.25,1);
	\draw (0,0) -- (0,1);
	\node at(.25,.125) {\tiny $\dots$};
	\node at(.25,.875) {\tiny $\dots$};
	\draw (.5,0) -- (.5,1);
	\draw [stdhl] (.75,0)  node[below]{\small $1$} -- (.75,1);
	\node[red] at(1.125,.125) { $\dots$};
	\node[red] at(1.125,.875) { $\dots$};
	\draw [stdhl] (1.5,0)  node[below]{\small $1$} -- (1.5,1);
	\draw (1.75,0) -- (1.75,1);
	\node at(2,.125) {\tiny $\dots$};
	\node at(2,.875) {\tiny $\dots$};
	\draw (2.25,0) -- (2.25,1);
	\draw [stdhl] (2.5,0)  node[below]{\small $1$} -- (2.5,1);
	\draw (2.75,0) -- (2.75,1);
	\node at(3,.125) {\tiny $\dots$};
	\node at(3,.875) {\tiny $\dots$};
	\draw (3.25,0) -- (3.25,1);
	\filldraw [fill=white, draw=black] (-.125,.25) rectangle (3.375,.75) node[midway] { $D$};
}
\]
The action of the nail is a bit trickier. On $q (T^{\ \tikzRBR}) [1]$ and on $q^{-1} (T^{\ \tikzRBR}) [-1]$ it acts by gluing 
\[
\tikzdiagh{0}{
	\draw (.5,-.5) .. controls (.5,-.25)  .. 
		(0,0) .. controls (.5,.25)  .. (.5,.5);
           \draw[vstdhl] (0,-.5) node[below]{\small $\lambda$} -- (0,.5) node [midway,nail]{};
  }
  \ \mapsto \ 
\tikzdiagh{0}{
	\draw (2,-.5) .. controls (2,-.25)  .. 
		(0,0) .. controls (2,.25)  .. (2,.5);
	\draw[stdhl] (.5,-.5) -- (.5,.5);
	\draw (1,-.5) -- (1,.5);
	\draw[stdhl] (1.5,-.5) -- (1.5,.5);
	\draw[fill=white, color=white] (-.1,0) circle (.1cm);
           \draw[vstdhl] (0,-.5) node[below]{\small $\lambda$} -- (0,.5) node [midway,nail]{};
  }
\]
on the top of the diagrams. 
On $T^{\ \tikzBRR}$ it acts by
\[
\tikzdiagh{0}{
	\draw (.5,-.5) .. controls (.5,-.25)  .. 
		(0,0) .. controls (.5,.25)  .. (.5,.5);
           \draw[vstdhl] (0,-.5) node[below]{\small $\lambda$} -- (0,.5) node [midway,nail]{};
  	}
  	\ \mapsto \ 
  	\left(
	\tikzdiagh{0}{
		\draw (2,-.5) .. controls (2,-.25)  .. 
			(0,0) .. controls (2,.25)  .. (2,.5);
		\draw (.5,-.5) -- (.5,.5);
		\draw[stdhl] (1,-.5) -- (1,.5);
		\draw[stdhl] (1.5,-.5) -- (1.5,.5);
		\draw[fill=white, color=white] (-.1,0) circle (.1cm);
	           \draw[vstdhl] (0,-.5) node[below]{\small $\lambda$} -- (0,.5) node [midway,nail]{};
	  }
	  \ - \ 
	  \tikzdiagh{0}{
		\draw (.5,-.5) .. controls (.5,-.25)  .. 
			(0,0) .. controls (.5,.25)  .. (.5,.5);
		\draw (2,-.5) -- (2,.5);
		\draw[stdhl] (1,-.5) -- (1,.5);
		\draw[stdhl] (1.5,-.5) -- (1.5,.5);
		\draw[fill=white, color=white] (-.1,0) circle (.1cm);
	           \draw[vstdhl] (0,-.5) node[below]{\small $\lambda$} -- (0,.5) node [midway,nail]{};
	  }
  	\ ,\ 
  	\tikzdiagh{0}{
		\draw (.5,-.5) .. controls (.5,-.25)  .. 
			(0,0) .. controls (2,.25)  .. (2,.5);
		\draw[stdhl] (1,-.5) .. controls (1,0) and (.5,0) .. (.5,.5);
		\draw[stdhl] (1.5,-.5) .. controls (1.5,0) and (1,0) .. (1,.5);
		\draw (2,-.5) .. controls (2,0) and (1.5,0) .. (1.5,.5);
		\draw[fill=white, color=white] (-.1,0) circle (.1cm);
	           \draw[vstdhl] (0,-.5) node[below]{\small $\lambda$} -- (0,.5) node [midway,nail]{};
	  }
  	\right)
  	\ \in T^{\ \tikzBRR} \oplus T^{\ \tikzRRB},
\]
and on $T^{\ \tikzRRB}$ by
\[
\tikzdiagh{0}{
	\draw (.5,-.5) .. controls (.5,-.25)  .. 
		(0,0) .. controls (.5,.25)  .. (.5,.5);
           \draw[vstdhl] (0,-.5) node[below]{\small $\lambda$} -- (0,.5) node [midway,nail]{};
  	}
  	\ \mapsto \ 
  	\left(
  	\tikzdiagh[yscale=-1]{0}{
		\draw (.5,-.5) .. controls (.5,-.25)  .. 
			(0,0) .. controls (2,.25)  .. (2,.5);
		\draw[stdhl] (1,-.5) .. controls (1,0) and (.5,0) .. (.5,.5);
		\draw[stdhl] (1.5,-.5) .. controls (1.5,0) and (1,0) .. (1,.5);
		\draw (2,-.5) .. controls (2,0) and (1.5,0) .. (1.5,.5);
		\draw[fill=white, color=white] (-.1,0) circle (.1cm);
	           \draw[vstdhl] (0,-.5)  -- (0,.5) node [midway,nail]{} node[below]{\small $\lambda$};
	  }
  	\ ,\ 
	\tikzdiagh{0}{
		\draw (2,-.5) .. controls (2,-.25)  .. 
			(0,0) .. controls (2,.25)  .. (2,.5);
		\draw[stdhl] (.5,-.5) -- (.5,.5);
		\draw[stdhl] (1,-.5) -- (1,.5);
		\draw (1.5,-.5) -- (1.5,.5);
		\draw[fill=white, color=white] (-.1,0) circle (.1cm);
	           \draw[vstdhl] (0,-.5) node[below]{\small $\lambda$} -- (0,.5) node [midway,nail]{};
	  }
  	\right)
  	\ \in T^{\ \tikzBRR} \oplus T^{\ \tikzRRB}.
\]
One can easily verify that this respects the differential in $B_1 \rb$. The higher multiplication maps $T \otimes B_1 \rb \otimes T \rightarrow B_1 \rb$ and $T \otimes T \otimes B_1 \rb$ compute the defect of the map $T \otimes B_1 \rb \rightarrow B_1 \rb$ for being a left $T$-action. 
Concretely, it means that we can compute these higher multiplication maps by looking how both side of each defining relation of $T$ act on $B_1 \rb$. 

For example, the relation
\[
	\tikzdiagh{0}{
		\draw (.5,-.5) .. controls (.5,-.25)  ..  
			(0,0) node[midway, tikzdot]{}
			 .. controls (.5,.25)  .. (.5,.5);
	           \draw[vstdhl] (0,-.5) node[below]{\small $\lambda$} -- (0,.5) node [midway,nail]{};
  	}
  	\ = \ 
	\tikzdiagh{0}{
		\draw (.5,-.5) .. controls (.5,-.25)  .. 
			(0,0) 
			.. controls (.5,.25)  .. (.5,.5) node[midway, tikzdot]{};
	           \draw[vstdhl] (0,-.5) node[below]{\small $\lambda$} -- (0,.5) node [midway,nail]{};
  	}
\]
is respected on $q^{-1} (T^{\ \tikzRBR}) [-1]$ up to adding the elements appearing in the right of the following equation:
\[
\tikzdiagh{0}{
	\draw (2,-.5) .. controls (2,-.25)  .. 
		(0,0)  node[pos=.3,tikzdot]{} .. controls (2,.25)  .. (2,.5); 
	\draw[stdhl] (.5,-.5) -- (.5,.5);
	\draw (1,-.5) -- (1,.5);
	\draw[stdhl] (1.5,-.5) -- (1.5,.5);
	\draw[fill=white, color=white] (-.1,0) circle (.1cm);
           \draw[vstdhl] (0,-.5) node[below]{\small $\lambda$} -- (0,.5) node [midway,nail]{};
  }
\ = \ 
\tikzdiagh{0}{
	\draw (2,-.5) .. controls (2,-.25)  .. 
		(0,0) .. controls (2,.25)  .. (2,.5) node[pos=.7,tikzdot]{}; 
	\draw[stdhl] (.5,-.5) -- (.5,.5);
	\draw (1,-.5) -- (1,.5);
	\draw[stdhl] (1.5,-.5) -- (1.5,.5);
	\draw[fill=white, color=white] (-.1,0) circle (.1cm);
           \draw[vstdhl] (0,-.5) node[below]{\small $\lambda$} -- (0,.5) node [midway,nail]{};
  }
  \ + \ 
	  \tikzdiagh{0}{
		\draw (2,-.5) .. controls (2,-.125)  .. 
			(0,.25) .. controls (1,.375)  .. (1,.5);
		\draw (1,-.5) .. controls (1,0) and (2,0) .. (2,.5);
		\draw[stdhl] (.5,-.5) -- (.5,.5);
		\draw[stdhl] (1.5,-.5) .. controls (1.5,-.25) and (1,-.25) .. (1,0) .. controls (1,.25) and (1.5,.25) .. (1.5,.5);
		\draw[fill=white, color=white] (-.1,.25) circle (.1cm);
	           \draw[vstdhl] (0,-.5) node[below]{\small $\lambda$} -- (0,.5) node [pos=.75,nail]{};
	  }
 \ - \ 
	  \tikzdiagh[yscale=-1]{0}{
		\draw (2,-.5) .. controls (2,-.125)  .. 
			(0,.25) .. controls (1,.375)  .. (1,.5);
		\draw (1,-.5) .. controls (1,0) and (2,0) .. (2,.5);
		\draw[stdhl] (.5,-.5) -- (.5,.5);
		\draw[stdhl] (1.5,-.5) .. controls (1.5,-.25) and (1,-.25) .. (1,0) .. controls (1,.25) and (1.5,.25) .. (1.5,.5);
		\draw[fill=white, color=white] (-.1,.25) circle (.1cm);
	           \draw[vstdhl] (0,-.5)  -- (0,.5) node [pos=.75,nail]{} node[below]{\small $\lambda$};
	  }
\]
so that the higher multiplication map $T \otimes T \otimes q^{-1} (T^{\ \tikzRBR}) [-1] \rightarrow B_1 \rb$ gives
\[
	\tikzdiagh{0}{
		\draw (.5,-.5) .. controls (.5,-.25)  ..  
			(0,0) 
			 .. controls (.5,.25)  .. (.5,.5);
	           \draw[vstdhl] (0,-.5) node[below]{\small $\lambda$} -- (0,.5) node [midway,nail]{};
  	}
\ \otimes \ 
	\tikzdiagh{0}{
		\draw (.5,-.5) --  
			(.5,.5) node[midway, tikzdot]{};
	           \draw[vstdhl] (0,-.5) node[below]{\small $\lambda$} -- (0,.5);
  	}
\  \otimes \  1  \ \mapsto \ 
\left(
	  \tikzdiagh{0}{
		\draw (2,-.5) .. controls (2,-.125)  .. 
			(0,.25) .. controls (.5,.375)  .. (.5,.5);
		\draw (1,-.5) .. controls (1,0) and (2,0) .. (2,.5);
		\draw[stdhl] (.5,-.5) .. controls (.5,0) and (1,0) .. (1,.5);
		\draw[stdhl] (1.5,-.5) .. controls (1.5,-.25) and (1.125,-.25) .. (1.125,0) .. controls (1.125,.25) and (1.5,.25) .. (1.5,.5);
		\draw[fill=white, color=white] (-.1,.25) circle (.1cm);
	           \draw[vstdhl] (0,-.5) node[below]{\small $\lambda$} -- (0,.5) node [pos=.75,nail]{};
	  }
\ , - \ 
	  \tikzdiagh[yscale=-1]{0}{
		\draw (2,-.5) .. controls (2,-.125)  .. 
			(0,.25) .. controls (1,.375)  .. (1,.5);
		\draw (1.5,-.5) .. controls (1.5,0) and (2,0) .. (2,.5);
		\draw[stdhl] (.5,-.5) -- (.5,.5);
		\draw[stdhl] (1,-.5) .. controls (1,0) and (1.5,0) .. (1.5,.5);
		\draw[fill=white, color=white] (-.1,.25) circle (.1cm);
	           \draw[vstdhl] (0,-.5)  -- (0,.5) node [pos=.75,nail]{} node[below]{\small $\lambda$};
	  }
\right)
  \ \in T^{\ \tikzBRR} \oplus T^{\ \tikzRRB}.
\]
Note that it means the higher maps only involve elements coming from \cref{eq:relNail}. Also, one can easily verify that the other two relations in \cref{eq:relNail} are already respected for the multiplication map $T \otimes  q^{-1} (T^{\ \tikzRBR}) [-1] \rightarrow B_1 \rb$, so that our computation above completely determine $T \otimes T \otimes q^{-1} (T^{\ \tikzRBR}) [-1] \rightarrow B_1 \rb$. There is a similar higher multiplication map $T \otimes q^{-1} (T^{\ \tikzRBR}) [-1] \otimes T \rightarrow B_1 \rb$, which is non-trivial in the case 
\[
\tikzdiagh{0}{
		\draw (.5,-.5) .. controls (.5,-.25)  ..  
			(0,0) 
			 .. controls (.5,.25)  .. (.5,.5);
	           \draw[vstdhl] (0,-.5) node[below]{\small $\lambda$} -- (0,.5) node [midway,nail]{};
  	}
\ \otimes \  1  \ \otimes  \ 
	\tikzdiagh{0}{
		\draw (.5,-.5) --  
			(.5,.5) node[midway, tikzdot]{};
	           \draw[vstdhl] (0,-.5) node[below]{\small $\lambda$} -- (0,.5);
  	}
\]
for similar reasons. 
We will not need to compute the other higher composition maps. 

\begin{proof}[Proof of \cref{prop:blobbubble}]
Tensoring $B_1 \rb$ with $X \otimes_T B_1$ gives a complex 
where the elements are locally of the form
\[
\begin{tikzcd}
&0 \ar{dr}&
\\
q\left(
\tikzdiag[scale=.75]{
	\draw (1,.5) -- (1,2);
	\draw[stdhl] (.5,1) .. controls (.5,1.25) .. (0,1.5)
			.. controls (.5,1.75) .. (.5,2);
	\draw [stdhl] (.5,1)  .. controls (.5,.25) and (1.5,.25) .. (1.5,1)--(1.5,2);
	\draw[fill=white, color=white] (-.1,1.5) circle (.1cm);
	\draw[vstdhl] (0,.5)  node[below]{\small $\lambda$} -- (0,2);
}
\right)[1]
\ar{ur}
\ar{dr}
\ar[no head,swap]{dr}{
	 -\ {\tikzdiag[scale=.5,yscale=-1]{
	 	\draw (1,0) .. controls (1,.5) and (0,.5) .. (0,1);
	 	\draw[stdhl] (-1,0) -- (-1,1);
	 	\draw[stdhl] (0,0) .. controls (0,.5) and (1,.5) .. (1,1);
	 }}}
&&
q^{-1}\left(
\tikzdiag[scale=.75]{
	\draw (1,.5) -- (1,2);
	\draw[stdhl] (.5,1) .. controls (.5,1.25) .. (0,1.5)
			.. controls (.5,1.75) .. (.5,2);
	\draw [stdhl] (.5,1)  .. controls (.5,.25) and (1.5,.25) .. (1.5,1)--(1.5,2);
	\draw[fill=white, color=white] (-.1,1.5) circle (.1cm);
	\draw[vstdhl] (0,.5)  node[below]{\small $\lambda$} -- (0,2);
}
\right)[-1]
\\
&
{
\tikzdiagh[yscale=-1,scale=.75]{0}{
	\draw[stdhl] (1.5,0) -- (1.5,1)  ;
	\draw (.5,0)	.. controls (.5,.25) and (1,.25) .. (1,1) -- (1,1.5);
	\draw[stdhl] (1,0).. controls (1,.25) .. (0,.5)
			.. controls (.5,.75) .. (.5,1)   ;
	\draw [stdhl] (.5,1)  .. controls (.5,1.75) and (1.5,1.75) .. (1.5,1);
	\draw[fill=white, color=white] (-.1,.5) circle (.1cm);
	\draw[vstdhl] (0,0) -- (0,1.5)   node[below]{\small $\lambda$};
}\ ,
\tikzdiagh[yscale=-1,scale=.75]{0}{
	\draw[stdhl] (1.5,0) -- (1.5,1)  ;
	\draw (.5,0)	.. controls (.5,.125) .. (0,.25)
		.. controls (1,.5) .. (1,1) -- (1,1.5);
	\draw[stdhl] (1,0).. controls (1,.25) .. (0,.5)
			.. controls (.5,.75) .. (.5,1)  ;
	\draw [stdhl] (.5,1)  .. controls (.5,1.75) and (1.5,1.75) .. (1.5,1);
	\draw[fill=white, color=white] (-.1,.5) circle (.1cm);
	\draw[vstdhl] (0,0) -- (0,1)    node[nail, pos=.25]{} -- (0,1.5)node[below]{\small $\lambda$} ;
}
}
\ar[swap]{ur}{0}
&
\end{tikzcd}
\]
which, after eliminating the acyclic subcomplex, yields
\[
\tikzdiagh[yscale=-1,scale=.75]{0}{
	\draw[stdhl] (1.5,0) -- (1.5,1)  ;
	\draw (.5,0)	.. controls (.5,.125) .. (0,.25)
		.. controls (1,.5) .. (1,1) -- (1,1.5);
	\draw[stdhl] (1,0).. controls (1,.25) .. (0,.5)
			.. controls (.5,.75) .. (.5,1)  ;
	\draw [stdhl] (.5,1)  .. controls (.5,1.75) and (1.5,1.75) .. (1.5,1);
	\draw[fill=white, color=white] (-.1,.5) circle (.1cm);
	\draw[vstdhl] (0,0) -- (0,1)    node[nail, pos=.25]{} -- (0,1.5)node[below]{\small $\lambda$} ;
}
\ \oplus \ 
q^{-1}\left(
\tikzdiag[scale=.75]{
	\draw (1,.5) -- (1,2);
	\draw[stdhl] (.5,1) .. controls (.5,1.25) .. (0,1.5)
			.. controls (.5,1.75) .. (.5,2);
	\draw [stdhl] (.5,1)  .. controls (.5,.25) and (1.5,.25) .. (1.5,1)--(1.5,2);
	\draw[fill=white, color=white] (-.1,1.5) circle (.1cm);
	\draw[vstdhl] (0,.5)  node[below]{\small $\lambda$} -- (0,2);
}
\right)[-1]
\]
All higher multiplications maps vanish: except for  $T \otimes T \otimes (B_1\rb \otimes_T X \otimes B_1) \rightarrow (B_1\rb \otimes_T X \otimes B_1)$ and $T \otimes ( B_1\rb \otimes_T X \otimes B_1) \otimes T \rightarrow (B_1\rb \otimes_T X \otimes B_1)$, all of these are zero for degree reasons, and the remaining two are zero by the calculations above. 
Therefore, what remains is isomorphic to $\lambda q (T^{\lambda,r})[1] \oplus \lambda^{-1} q^{-1} (T^{\lambda,r}) [-1]$, as dg-bimodules. 
We conclude by applying \cref{lem:XB1qi}. 
\end{proof}

\begin{cor}\label{cor:qi-bubbleremv}
There is a quasi-isomorphism 
\[
\lambda q (\id)[1] \oplus \lambda^{-1} q^{-1} (\id) [-1] \xrightarrow{\simeq} \bar \B_1 \circ \Xi \circ \B_1,
\]
of dg-functors.
\end{cor}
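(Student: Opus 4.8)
The plan is to obtain \cref{cor:qi-bubbleremv} as the functor-level consequence of \cref{prop:blobbubble}, exactly the same passage from bimodules to represented functors that yields \cref{cor:TLbiadj} from its preceding proposition and \cref{cor:TLloop} from \cref{prop:distinguishedBitriangle}. Recall that $\B_1$, $\Xi$ and $\bar\B_1$ are by definition the dg-functors given by $B_1\Lotimes_T-$, $X\Lotimes_T-$ and $\overline{B}_1\Lotimes_T-$. First I would identify the composite $\bar\B_1\circ\Xi\circ\B_1$ with the dg-functor represented by the $A_\infty$-bimodule $\overline{B}_1\Lotimes_T X\Lotimes_T B_1$. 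This is just associativity of the derived tensor product read off at the level of bimodules: composing the three functors amounts to tensoring together one-sided cofibrant replacements of $B_1$, $X$, $\overline{B}_1$, and---as recalled in \cref{sec:bimod} following \cite[\S2.3]{MW}---such a composite keeps the opposite-sided module structure only up to coherent homotopy, i.e.\ as an $A_\infty$-bimodule. This is precisely the object treated in the proof of \cref{prop:blobbubble}, where it is computed by tensoring the cofibrant replacement $B_1\rb$ against $X\otimes_T B_1$ (itself quasi-isomorphic to $X\Lotimes_T B_1$ by \cref{lem:XB1qi}).

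Next I would feed in \cref{prop:blobbubble}, which supplies a quasi-isomorphism of $A_\infty$-bimodules
\[
\lambda q (T^{\lambda,r})[1] \oplus \lambda^{-1} q^{-1} (T^{\lambda,r})[-1] \xrightarrow{\simeq} \overline{B}_1 \Lotimes_T X \Lotimes_T B_1 .
\]
Since derived tensoring sends a quasi-isomorphism of (possibly $A_\infty$-)bimodules to a natural quasi-isomorphism of the represented dg-functors---both sides being computed through cofibrant replacements, cf.\ \cite{keller}---and since the regular bimodule $(T^{\lambda,r},0)$ represents $\id$, a direct sum of bimodules represents the direct sum of functors, and a shift $\lambda^{a}q^{b}(-)[c]$ of a bimodule corresponds to the same shift of the functor, the displayed quasi-isomorphism transports to the asserted
\[
\lambda q (\id)[1] \oplus \lambda^{-1} q^{-1} (\id)[-1] \xrightarrow{\simeq} \bar\B_1 \circ \Xi \circ \B_1 .
\]

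The one place that needs care is the first step: one must make sure that the identification of $\bar\B_1\circ\Xi\circ\B_1$ with $(\overline{B}_1\Lotimes_T X\Lotimes_T B_1)\Lotimes_T-$ uses the same $A_\infty$-bimodule structure on $\overline{B}_1\Lotimes_T X\Lotimes_T B_1$ as the one appearing in \cref{prop:blobbubble}, so that the $A_\infty$-quasi-isomorphism there genuinely descends to a natural transformation of functors. Everything else---computing $(-)\Lotimes_T(T^{\lambda,r},0)\cong\id$ and keeping track of the homological and $\bZ^2$-degree shifts---is formal, so in the final text this argument occupies at most a short paragraph.
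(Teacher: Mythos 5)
Your proposal is correct and follows the same route the paper takes implicitly: \cref{cor:qi-bubbleremv} is stated without a separate proof, being exactly the transport of the bimodule-level quasi-isomorphism from \cref{prop:blobbubble} to the represented dg-functors via the dictionary $B\mapsto B\Lotimes_T-$, just as \cref{cor:TLbiadj} and \cref{cor:TLloop} follow from their preceding bimodule statements. You merely spell out what the paper leaves implicit, including the correct caveat about the $A_\infty$-bimodule structure.
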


\subsection{The blob 2-category}\label{sec:blob2cat}
In this section, we suppose $\Bbbk$ is a field. 

Let $\mathfrak B(r,r')$ be the subcategory of dg-functors $\cD_{dg}(T^{\lambda,r}, 0) \rightarrow \cD_{dg}(T^{\lambda,r'}, 0)$ c.b.l.f. generated by all compositions of $\Xi, \B_i$ and $\bar \B_i$, and identity functor whenever $r = r'$, where c.b.l.f. generated means it is given by certain (potentially infinite) iterated extensions of these objects (see \cref{def:cblfgenerated} for a precise definition). 
As explained in \cref{sec:cblfdgfunctors}, there is an induced morphism
\[
 {}_\bQ \bKO^\Delta(\mathfrak B(r,r'))  \xrightarrow{\eqref{eq:K0homK0}}  \Hom_{\bQ\pp{q,\lambda}}(  {}_\bQ\bKO^\Delta(T^{\lambda,r}, 0),   {}_\bQ\bKO^\Delta(T^{\lambda,r'}, 0)),
\]
sending the equivalence class of an exact dg-functor to its induced map on the asymptotic Grothendieck groups of its source and target (this is similar to the fact that an exact functor between triangulated categories induces a map on their triangulated Grothendieck groups).

Recall the blob category $\cB$, but consider it as defined over $\bQ\pp{q,\lambda}$ instead of $\bQ(q,\lambda)$.  

\begin{thm}
There is an isomorphism
\[
\Hom_\cB(r,r') \cong {}_\bQ \bKO^\Delta(\mathfrak B(r,r')).
\]
\end{thm}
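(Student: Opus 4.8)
The plan is to compare the category $\mathfrak{B}(r,r')$ with $\Hom_\cB(r,r')$ by first showing the decategorification map lands in the right place and is compatible with composition, then establishing surjectivity via the categorified blob relations, and finally establishing injectivity by a dimension count on weight spaces.

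First I would record that all the generating dg-bimodules ($X^{\pm1}$, $B_i$, $\oB_i$) are cofibrant as left and as right modules (\cref{prop:cofBi}, \cref{cor:uinj} together with \cref{prop:Xi-autoequiv}, and the echelon-form argument of \cref{sec:cofBi}), so that composition of the associated dg-functors is computed by ordinary tensor product of bimodules and the induced map on asymptotic Grothendieck groups is a genuine ring/category homomorphism. Then I would assemble the categorified relations already proven: \cref{cor:TLbiadj} and \cref{cor:TLloop} give the type-$A$ Temperley--Lieb relations \eqref{eq:TLrels}--\eqref{eq:TLloopremov}; \cref{prop:catactioncommutes} plus \cref{prop:indresredstrand} give the commutativity relation \eqref{eq:TLBrels}; \cref{cor:qi-bubbleremv} gives \eqref{eq:TLBloopremov}; and \cref{cor:qi-Xquadratic} (unwound into the exact sequence just before it, then passed to the asymptotic Grothendieck group where the homological shift contributes a sign) gives the quadratic relation \eqref{eq:TLBdoublebraid}. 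This produces a well-defined $\bQ\pp{q,\lambda}$-linear functor $\cB \to {}_\bQ\bKO^\Delta(\mathfrak{B})$ sending $u_i \mapsto [\oB_i\circ\B_i]$ (up to the grading shift appearing in \cref{cor:TLloop}, which one absorbs into the normalization as in the definition of $\E_b$), $\xi \mapsto [\Xi]$, and it is surjective on each $\Hom$-space essentially by definition of the c.b.l.f.-generated subcategory $\mathfrak{B}(r,r')$ together with the fact that $\bKO^\Delta$ of an iterated extension is spanned by the classes of the building blocks (\cref{thm:triangtopK0genbyPi}).

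For injectivity, the idea is to apply the decategorified action on ${}_\bQ\bKO^\Delta(T^{\lambda,r'},0)\cong M\otimes V^{r'}$ from \cref{thm:K0} and use the faithfulness of the blob category action from \cref{thm:BcongMV} (and its category version, the last theorem of \cref{sec:qsltTLB}): if a $\bQ\pp{q,\lambda}$-combination of compositions of generating functors induces the zero map on $M\otimes V^{r'}$ for all inputs, then the corresponding element of $\Hom_\cB(r,r')$ is zero, hence it was the zero morphism in $\cB$ to begin with. Combined with surjectivity this forces the functor $\cB \to {}_\bQ\bKO^\Delta(\mathfrak{B})$ to be an isomorphism on $\Hom$-spaces, provided one also checks that the asymptotic Grothendieck group $\Hom$-spaces are not \emph{larger} than expected — which follows from the rank bound in \cref{lem:rankcompare} applied to the relevant bimodule categories, i.e. the classes of the finitely many composite bimodules realizing a basis of $\Hom_\cB(r,r')$ already span ${}_\bQ\bKO^\Delta(\mathfrak{B}(r,r'))$.

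The main obstacle I expect is the bookkeeping of grading shifts and signs when translating the quasi-isomorphisms of dg-functors into relations in ${}_\bQ\bKO^\Delta$: \cref{cor:qi-Xquadratic} is phrased as a quasi-isomorphism of mapping cones with several $\lambda$- and $q$-shifts and a $[1]$, and one must verify that its image in the asymptotic Grothendieck group is exactly \eqref{eq:TLBdoublebraid} (with the correct sign coming from $[1]$ acting as $-1$ on $\bKO^\Delta$), rather than a scalar multiple of it; similarly \cref{cor:qi-bubbleremv} must be matched against \eqref{eq:TLBloopremov} after the normalization identifying $[\oB_i\circ\B_i]$ with $u_i$. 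A secondary point requiring care is that ${}_\bQ\bKO^\Delta(\mathfrak{B}(r,r'))$ a priori only knows about objects built as c.b.l.f. iterated extensions, so one must argue (as in \cref{sec:cblfdgfunctors}) that this class is closed under the finitely many compositions needed and that no spurious classes appear; once the dimension bound from \cref{lem:rankcompare} is in place this is routine, but it is the step where the c.b.l.f. hypotheses are genuinely used.
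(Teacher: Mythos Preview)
Your approach is essentially the paper's: construct the surjection $f\colon \Hom_\cB(r,r') \to {}_\bQ\bKO^\Delta(\mathfrak{B}(r,r'))$ using the categorified relations, then deduce injectivity of $f$ from the faithful action of $\cB$ on $M\otimes V^r$ via \cref{thm:K0} and \cref{thm:BcongMV}. The paper packages this into a single commutative square with the top arrow injective and the right arrow an isomorphism, from which injectivity of $f$ is immediate.

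Two small corrections. First, your claim that the generating bimodules are cofibrant as \emph{left} modules is false: $X$ and $B_i$ are not left-cofibrant, which is exactly why the paper builds the explicit replacements $\br X$ and $\br B_i$. This does not damage the argument, since the functors are defined via derived tensor products and the induced map on $\bKO^\Delta$ is a homomorphism regardless. Second, your closing appeal to \cref{lem:rankcompare} to bound the size of ${}_\bQ\bKO^\Delta(\mathfrak{B}(r,r'))$ is both unnecessary and misplaced: once $f$ is surjective and injective you are done, and in any case that lemma bounds module categories, not bimodule categories, so it does not apply to $\mathfrak{B}(r,r')$. Drop that step and the argument is clean.
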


\begin{proof}
Comparing the action of $\cB$ on $M \otimes V^r$ from \cref{sec:blobalgebra} with the cofibrant replacement $\br X$ from \cref{sec:cofX}, and $\br B_i$ and $\br \bar B_i$ from  \cref{sec:cofBi}, we deduce there is a commutative diagram
\[
\begin{tikzcd}
\Hom_\cB(r,r') 
\ar[hookrightarrow]{r}{\eqref{eq:BcongMV}}
\ar[twoheadrightarrow,swap]{d}{f}
&
 \Hom_{\bQ\pp{q,\lambda}}(M \otimes V^r, M \otimes V^{r'})
\\
 {}_\bQ \bKO^\Delta(\mathfrak B(r,r'))  \ar[swap]{r}{\eqref{eq:K0homK0}}& \Hom_{\bQ\pp{q,\lambda}}(  {}_\bQ\bKO^\Delta(T^{\lambda,r}, 0),   {}_\bQ\bKO^\Delta(T^{\lambda,r'}, 0) ) \ar[sloped]{u}{\simeq}
\end{tikzcd}
\]
where the arrow $f$ is the obvious surjective one, sending $\xi$ to $[\Xi]$, and cup/caps to $[\B_i]$/$[\bar \B_i]$. 
Because the diagram commutes and using \cref{thm:BcongMV}, we deduce that $f$ is injective, and thus it is an isomorphism. 
\end{proof}

In particular, if we write $\mathfrak B_r := \mathfrak B(r,r)$, then we have:

\begin{cor}\label{cor:twoblobalg}
There is an isomorphism of $\bQ\pp{q,\lambda}$-algebras
\[
{}_\bQ\bKO^\Delta(\mathfrak{B}_r) \cong \cB_r.
\]
\end{cor}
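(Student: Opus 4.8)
The plan is to deduce \cref{cor:twoblobalg} from the theorem immediately preceding it by specializing to the case $r=r'$. Setting $r'=r$ in that theorem gives an isomorphism of $\bQ\pp{q,\lambda}$-modules $\Hom_\cB(r,r)\cong {}_\bQ\bKO^\Delta(\mathfrak B(r,r))$, and by definition $\Hom_\cB(r,r)=\cB_r$ (the blob category evaluated on a single object is the blob algebra, as recorded in \cref{sec:blobalgebra}) while $\mathfrak B_r:=\mathfrak B(r,r)$. So the only content left is to check that this bijection is multiplicative, i.e. an isomorphism of $\bQ\pp{q,\lambda}$-algebras, not merely of modules.

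First I would recall how the two sides are endowed with ring structures. On the blob side, multiplication in $\cB_r=\Hom_\cB(r,r)$ is composition of endomorphisms in $\cB$, which under \cref{eq:BcongMV} corresponds to composition in $\End_{U_q(\slt)}(M\otimes V^r)$. On the categorical side, as explained in \cref{sec:blob2cat}, ${}_\bQ\bKO^\Delta(\mathfrak B_r)$ carries a product coming from composition of dg-functors: given two dg-endofunctors of $\cD_{dg}(T^{\lambda,r},0)$ lying in $\mathfrak B_r$, one replaces the associated dg-bimodules by bimodule cofibrant replacements and takes the usual tensor product, which computes their derived composition and hence descends to a bilinear pairing on asymptotic Grothendieck groups. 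The map $f$ from the proof of the preceding theorem is built by sending the generators $\xi,u_i,\xi^{-1}$ of $\cB_r$ to the classes $[\Xi],[\B_i\circ\bar\B_i],[\Xi^{-1}]$ (equivalently the cup/cap classes), and by construction of the categorical action it intertwines the action of $\cB_r$ on $M\otimes V^r$ with the induced action of $\mathfrak B_r$ on ${}_\bQ\bKO^\Delta(T^{\lambda,r},0)\cong M\otimes V^r$.

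Next I would argue multiplicativity. Since $\cB_r$ is generated as an algebra by $u_1,\dots,u_{r-1}$ and $\xi^{\pm1}$, it suffices to check $f(ab)=f(a)f(b)$ for $a,b$ ranging over these generators, and this is precisely the content of the quasi-isomorphisms of dg-functors already established: \cref{cor:TLbiadj} and \cref{cor:TLloop} realize \cref{eq:TLrels}--\cref{eq:TLloopremov} (the Temperley--Lieb relations among the $\B_i,\bar\B_i$), \cref{cor:qi-bubbleremv} realizes \cref{eq:TLBloopremov}, and \cref{cor:qi-Xquadratic} together with \cref{prop:Xi-autoequiv} realizes \cref{eq:TLBdoublebraid} and the invertibility of $\xi$; the relations \cref{eq:TLBrels} expressing that $\Xi$ commutes with $\B_i\circ\bar\B_i$ for $i\ge 2$ follow from \cref{prop:catactioncommutes} combined with the locality of the bimodules (the double braiding and a cup--cap pair far from the blue strand involve disjoint strands). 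Passing these functorial quasi-isomorphisms to classes in the asymptotic Grothendieck group shows that the classes $[\Xi],[\B_i\circ\bar\B_i]$ satisfy the defining relations of $\cB_r$, so the assignment on generators extends to a ring homomorphism $\cB_r\to {}_\bQ\bKO^\Delta(\mathfrak B_r)$; this homomorphism agrees with $f$ on generators, hence equals $f$. Combined with the bijectivity of $f$ from the preceding theorem, this gives the desired algebra isomorphism.

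The main obstacle I anticipate is purely bookkeeping rather than conceptual: one must make sure that the product on ${}_\bQ\bKO^\Delta(\mathfrak B_r)$ defined via bimodule cofibrant replacements is well-defined and associative, and that composition of dg-functors corresponds under ${}_\bQ\bKO^\Delta$ to this product — this is exactly what is set up in \cref{sec:cblfdgfunctors} and \cref{sec:blob2cat}, so I would invoke it rather than reprove it. A secondary subtlety is that the functor quasi-isomorphisms above are statements about \emph{derived} compositions, and one should note that, because each bimodule in sight ($X$, $B_i$, $\bar B_i$ and their composites) admits the explicit cofibrant replacements of \cref{sec:cofX} and \cref{sec:cofBi}, the derived compositions compute the actual composites in $\mathfrak B_r$; hence their classes in ${}_\bQ\bKO^\Delta$ are literally the products $f(a)f(b)$, and the cited corollaries do give $f(ab)=f(a)f(b)$ on the nose.
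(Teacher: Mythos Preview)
Your proposal is correct and matches the paper's approach; the paper states the corollary as an immediate specialisation of the preceding theorem and does not give a separate proof, so your write-up supplies the details the paper leaves implicit.

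One small remark: the multiplicativity of $f$ is in fact already built into the paper's setup. The map $f$ in the theorem's proof is defined by sending a diagram (a word in the generators $\xi$, cup, cap) to the class of the corresponding composition of dg-functors, so it is multiplicative by construction on the free level; the commutative square then simultaneously shows that $f$ descends to $\cB_r$ (well-definedness) and that it is injective. Your relation-by-relation verification via \cref{cor:TLbiadj}, \cref{cor:TLloop}, \cref{cor:qi-Xquadratic}, \cref{prop:Xi-autoequiv}, \cref{cor:qi-bubbleremv} is an equivalent, more hands-on route to the same conclusion. The only slip is the citation of \cref{prop:catactioncommutes} for the commutation $\xi u_i = u_i \xi$ ($i\geq 2$): that proposition concerns commutation with $\E,\F$, not with $\Xi$; the relation you want is purely the locality argument you mention (the bimodules $X$ and $B_i\otimes\bar B_i$ for $i\geq 2$ touch disjoint strands).
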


By Faonte~\cite{faonte}, we know that $A_\infty$-categories form an $(\infty,2)$-category, where the hom-spaces are given by  Lurie's dg-nerves \cite{lurie} of the dg-categories of $A_\infty$-functors (or equivalently quasi-functors, see \cref{sec:dgfunctors}). Thus, we can define the following:

\begin{defn}
Let  $\mathfrak{B}$  be the $(\infty,2)$-category defined by
\begin{itemize}
\item objects are non-negative integers $r \in \bN$ (corresponding to $\cD_{dg}(T^{\lambda,r},0)$);
\item $\Hom_{\mathfrak{B}}(r,r')$ is Lurie's dg-nerve of the dg-category $\mathfrak B(r,r')$.
\end{itemize}
We refer to $\mathfrak{B}$  as the \emph{blob 2-category}.
\end{defn}

We define ${}_\bQ\bKO^\Delta(\mathfrak{B})$ to be the category with objects being non-negative integers $r \in \bN$ and homs are given by asymptotic Grothendieck groups of the homotopy categories of $\Hom_{\mathfrak{B}}(r,r')$. These homs are equivalent to  ${}_\bQ \bKO^\Delta(\mathfrak B(r,r'))$. 

\begin{cor}\label{cor:twoblob}
There is an equivalence of categories
\[
{}_\bQ\bKO^\Delta(\mathfrak{B}) \cong \cB.
\]
\end{cor}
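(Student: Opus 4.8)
The plan is to bootstrap \cref{cor:twoblob} from \cref{cor:twoblobalg} and the already-established structural results, essentially by checking that all the hom-spaces and all the compositions match up across the equivalence. First I would recall that the blob category $\cB$ is, by \cref{thm:BcongMV} and the theorem immediately following it, the module category over $\cF und(\mathfrak{sl}_2)\simeq \cT\cL$ whose objects are the $M\otimes V^r$ and whose morphism spaces are $\End_{U_q(\slt)}$-intertwiners; in particular $\Hom_\cB(r,r')$ is identified with $\Hom_{\bQ\pp{q,\lambda}}(M\otimes V^r, M\otimes V^{r'})$ after extending scalars. On the categorified side, I would invoke \cite{faonte} and \cite{toen} (as indicated in the introduction) to identify the homotopy category of $\Hom_{\mathfrak B}(r,r')$ with the full subcategory of the derived category of dg-bimodules c.b.l.f.\ generated by the bimodules for $\Xi^{\pm1}$, $\B_i$, $\bar\B_i$ and the identity, so that ${}_\bQ\bKO^\Delta(\mathfrak B)(r,r')\cong {}_\bQ\bKO^\Delta(\mathfrak B(r,r'))$. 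This reduces the statement to producing a functor $\cB\to {}_\bQ\bKO^\Delta(\mathfrak B)$ which is bijective on objects and on each hom-space, and compatible with composition.

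Next I would construct the functor on objects by $r\mapsto r$ (i.e.\ $M\otimes V^r \mapsto \cD_{dg}(T^{\lambda,r},0)$), and on morphisms using the assignment already appearing in the proof of the theorem preceding \cref{cor:twoblobalg}: send $u_i$ to $[\bar\B_{i}\circ\B_{i}]$-type compositions (more precisely, the generator realizing a cup-cap), $\xi$ to $[\Xi]$, and identities to identities. That this is well-defined on hom-spaces is exactly the content of the theorem in \cref{sec:blob2cat}: for each pair $(r,r')$ there is a commutative square whose vertical arrows are the surjection $f\colon \Hom_\cB(r,r')\twoheadrightarrow {}_\bQ\bKO^\Delta(\mathfrak B(r,r'))$ and the map \eqref{eq:K0homK0}, and whose horizontal arrows are the faithful embedding \eqref{eq:BcongMV} and an isomorphism coming from \cref{thm:K0}; commutativity plus faithfulness of \eqref{eq:BcongMV} forces $f$ to be injective, hence an isomorphism. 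The only genuinely new point beyond \cref{cor:twoblobalg} is functoriality: I must check that the assignment respects composition of morphisms in $\cB$ (gluing of flat tangle diagrams), not merely the algebra structure for $r=r'$. This follows because derived composition of the dg-bimodules in $\mathfrak B(r,r')$ is computed by bimodule-cofibrant replacement and tensor product (as explained in the introduction, and made possible by \cref{prop:cofBi}, \cref{prop:gammaqi}, \cref{prop:XoLXisXoX} and the biadjointness package), and because the same commutative-square argument, run now for a composable pair $(r,r',r'')$, shows that the induced map on ${}_\bQ\bKO^\Delta$ intertwines horizontal composition with the $U_q(\slt)$-intertwiner composition on the $M\otimes V^r$.

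I would then assemble these pieces: bijectivity on objects is clear, bijectivity on each hom follows from the theorem in \cref{sec:blob2cat}, and compatibility with composition follows from the functoriality check together with the module-category structure of $\cB$ over $\cT\cL$ (the cup and cap functors $\B_i,\bar\B_i$ satisfy the Temperley--Lieb relations by \cref{cor:TLbiadj} and \cref{cor:TLloop}, and the blob relations hold by \cref{cor:qi-Xquadratic}, \cref{prop:Xi-autoequiv} and \cref{cor:qi-bubbleremv}), so the images of the generators satisfy all defining relations of $\cB$ and the functor is essentially surjective and fully faithful.

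The main obstacle I anticipate is not any single calculation but the bookkeeping needed to legitimately pass from the $(\infty,2)$-categorical statement to a statement about ordinary derived categories of dg-bimodules: one must be careful that ${}_\bQ\bKO^\Delta$ of the homotopy category of $\Hom_{\mathfrak B}(r,r')$ really does coincide with ${}_\bQ\bKO^\Delta(\mathfrak B(r,r'))$, i.e.\ that Lurie's dg-nerve and the quasi-functor description give the same triangulated (hence asymptotic Grothendieck) data. Granting the cited results of \cite{faonte,toen,lurie}, this is formal, but it is where the argument has to be stated with care; everything downstream is then a routine consequence of \cref{cor:twoblobalg} applied hom-space by hom-space together with the composition-compatibility already embedded in \cref{prop:catactioncommutes} and the Temperley--Lieb/blob relation corollaries.
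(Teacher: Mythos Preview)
Your proposal is correct and follows essentially the same approach as the paper. In fact the paper gives no explicit proof of \cref{cor:twoblob}: it is stated as an immediate consequence of the preceding theorem (the hom-space isomorphism $\Hom_\cB(r,r') \cong {}_\bQ \bKO^\Delta(\mathfrak B(r,r'))$) together with the definition of ${}_\bQ\bKO^\Delta(\mathfrak{B})$ and the remark that its hom-spaces are equivalent to ${}_\bQ \bKO^\Delta(\mathfrak B(r,r'))$ via \cite{faonte,toen}. Your elaboration of the functoriality check and the $(\infty,2)$-categorical bookkeeping is exactly the content the paper leaves implicit.
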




\section{Variants and generalizations}\label{sec:losends}


\subsection{Zigzag algebras}\label{sec:zigazag}

In~\cite[\S4]{qi-sussan} it was proven that for $\g=\slt$ the KLRW algebra $T_1^{1,\dotsc,1}$ with $r$ red strands and only one black strand is isomorphic to 
a preprojective algebra $A^!_r$  of type $A$. 
It is a Koszul algebra, whose quadratic dual was used by Khovanov--Seidel in~\cite{khovanov-seidel} to construct a categorical braid group action. 

Let $\Bbbk$ be a field of any characteristic and let $\cQ_r$ be the following quiver
\[
\begin{tikzcd}[column sep = 1.5cm]
0 \arrow[r,bend left,"0\vert 1"]   \arrow[out=210,in=140,loop,"\theta"]
  &		 
1 \arrow[l,bend left,"1\vert0"] \arrow[r,bend left, "1\vert 2"] 
&
2 \arrow[l,bend left,"2\vert 1"]  \arrow[r,bend left] 
&
\dotsm \arrow[r,bend left] \arrow[l,bend left] 
&
r \arrow[l,bend left]
\end{tikzcd}
\]
and $\Bbbk\cQ_r$ its path algebra. 
We endow $\Bbbk\cQ_r$ with a $\bZ \times \bZ^2$-grading by declaring that
\[
\deg( i\vert i\pm 1) := (0,1,0) , \mspace{40mu} \deg(\theta) := (1,0,2) .
\]
We consider the first grading as homological, and the second and third gradings are called the $q$-grading and the $\lambda$-grading respectively. 
We denote the straight path that starts on $i_1$ and ends at $i_n$ by $(i_1\vert i_2\vert\dotsc \vert i_{n-1}\vert i_n)$ and
the constant path on $i$ by $(i)$. The set $\{(0),\dotsm,(r)\}$ forms a complete set of primitive orthogonal idempotents in $\Bbbk\cQ_r$.
\begin{defn}
Let $\cA_r^!$ be algebra given by 
   the quotient of  the path algebra $\Bbbk\cQ_r$ by the relations 
  \begin{align*}
    (i\vert i-1\vert i) &= (i\vert i+1\vert i), \rlap{\hspace{6ex}for $i>0$,}\\
   \theta(0\vert 1\vert 0) &= (0\vert 1\vert 0)\theta ,   \\
    \theta ^2 &= 0.
   \end{align*}
\end{defn}

We usually consider $\cA_r^!$ as a dg-algebra $(\cA_r^!, 0)$ with zero differential. 
We can also consider a version of  $\cA_r^!$ with a non-trivial differential $d$ given by 
\[
d(X) := \begin{cases}
(0\vert 1\vert 0), & \text{if }X=\theta,
\\
  0 , & \text{otherwise},
\end{cases}
\]
of which one easily checks that it is well-defined.

\begin{prop} \label{prop:isozigzag}
The $\bZ\times\bZ^2$ algebra $\cA_r^!$ is isomorphic to the $\bZ \times \bZ^2$ algebra $T_1^{\lambda, r}$ in $r$ red strands and $1$ black strand
by the map sending
\begin{align*}\allowdisplaybreaks
  (i) &\mapsto \tikzdiagh{0}{
	\draw[vstdhl] (0,0) node[below]{\small $\lambda$} --(0,1);
	\draw[stdhl] (.5,0)  --(.5,1);
	\node at(1,.5) {\tiny$\dots$};
	\draw[stdhl] (1.5,0)  --(1.5,1);
	\draw (2,0) -- (2,1);
	\draw[stdhl] (2.5,0)  --(2.5,1);
	\node at(3,.5) {\tiny$\dots$};
	\draw[stdhl] (3.5,0)  --(3.5,1);
	\tikzbrace{.5}{1.5}{0}{$i$}
  }
\intertext{where the black strand  comes right after the $i$th red, and}
  (i-1\vert i) &\mapsto \tikzdiagh{0}{
	\draw[vstdhl] (0,0) node[below]{\small $\lambda$} --(0,1);
	\draw[stdhl] (.5,0)  --(.5,1);
	\node at(1,.5) {\tiny$\dots$};
	\draw (2,0)  ..controls (2,.3) and (1.5,.7) ..  (1.5,1);
	\draw[stdhl] (1.5,0)  ..controls (1.5,.3) and (2,.7) .. (2,1);
	\draw[stdhl] (2.5,0)  --(2.5,1);
	\node at(3,.5) {\tiny$\dots$};
	\draw[stdhl] (3.5,0)  --(3.5,1); 
	\tikzbrace{.5}{1.5}{0}{$i$}
}\\
(i+1\vert i) &\mapsto \tikzdiagh{0}{
	\draw[vstdhl] (0,0) node[below]{\small $\lambda$} --(0,1);
	\draw[stdhl] (.5,0)  --(.5,1);
	\node at(1,.5) {\tiny$\dots$};
	\draw[stdhl] (1.5,0)  --(1.5,1);
	\draw (2,0)  ..controls (2,.3) and (2.5,.7) .. (2.5,1);
	\draw[stdhl] (2.5,0)  ..controls (2.5,.3) and (2,.7) .. (2,1);
	\node at(3,.5) {\tiny$\dots$};
	\draw[stdhl] (3.5,0)  --(3.5,1); 
	\tikzbrace{.5}{1.5}{0}{$i$}
}\\
  \theta &\mapsto  \tikzdiagh{0}{
	\draw (.5,0) .. controls (.5,.25) .. (0,.5) .. controls (.5,.75) .. (.5,1);
        \draw[vstdhl] (0,0) node[below]{\small $\lambda$} -- (0,1) node [midway,nail]{};
	\draw[stdhl] (1,0)  --(1,1);
	\node at(1.75,.5) {\tiny$\dots$};
	\draw[stdhl] (2.5,0)  --(2.5,1);
	\node at(3,.5) {\tiny$\dots$};
	\draw[stdhl] (3.5,0)  --(3.5,1); 
} 
\end{align*}
Furthermore, the isomorphism upgrades to isomorphisms of dg-algebras $(\cA_r^!, 0) \cong (T_1^{\lambda, r},0)$ and $(\cA_r^!, d) \cong (T_1^{\lambda, r},d_{1})$.
\end{prop}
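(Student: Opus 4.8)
The plan is to verify that the proposed assignment extends to a well-defined graded algebra homomorphism $\Phi\colon\cA_r^!\to T_1^{\lambda,r}$, that it is a bijection by comparing bases, and finally that it intertwines the differentials. First I would check that $\Phi$ respects the defining relations of $\cA_r^!$. The two relations $(i\vert i-1\vert i)=(i\vert i+1\vert i)$ for $i>0$ translate into equalities of diagrams in $T_1^{\lambda,r}$ obtained by sliding the single black strand across a red strand on one side versus the other; these follow from the black/red relations \eqref{eq:redR2} together with \eqref{eq:dotredstrand}, since each composite produces the black strand carrying one dot between the $(i-1)$-th and $i$-th (resp. $i$-th and $(i+1)$-th) red strands, and \eqref{eq:redR2} shows both equal the straight strand with a dot. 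The relation $\theta(0\vert 1\vert 0)=(0\vert 1\vert 0)\theta$ is the statement that a nail commutes with the loop that sends the black strand around the first red strand and back; this is checked using \eqref{eq:relNail} (the dot-slide past the nail) combined with \eqref{eq:redR2} to convert the $(0\vert 1\vert 0)$-loop into a dot. Finally $\theta^2=0$ is exactly the third relation in \eqref{eq:relNail} (two nails on the black strand with a turnback vanish), after using the second relation in \eqref{eq:relNail} to bring the two nails into the required configuration; here one must be slightly careful that in the one-black-strand situation the "double nail with turnback" diagram of \eqref{eq:relNail} is precisely what $\theta^2$ produces.

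Next I would check grading compatibility: the homological, $q$- and $\lambda$-degrees assigned to the generators of $\cA_r^!$ match those of their images. The generators $(i\vert i\pm 1)$ have $\deg=(0,1,0)$ and map to a single black/red crossing, which by \eqref{eq:KLRWgrading} has $q$-degree $N_i=1$ and homological and $\lambda$-degree $0$; the generator $\theta$ has $\deg=(1,0,2)$ and maps to a nail, which by \cref{def:dgwebsteralg} and the $\bZ^2$-grading conventions has homological degree $1$, $q$-degree $0$ and $\lambda$-degree $2$. So $\Phi$ is a morphism of $\bZ\times\bZ^2$-graded algebras. To see it is an isomorphism, I would exhibit compatible bases on both sides. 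On the $\cA_r^!$ side a basis is easy to write down directly from the relations: in each idempotent block $(j)\,\cA_r^!\,(i)$ the paths are the unique shortest path from $i$ to $j$ along the $A_{r+1}$-line, possibly decorated by a power of the central-on-the-$0$-vertex element $(0\vert 1\vert 0)$ and possibly one factor of $\theta$ (using $\theta^2=0$ and the commutation of $\theta$ with $(0\vert 1\vert 0)$). On the $T_1^{\lambda,r}$ side, \cref{thm:Tbasis} with $b=1$ gives a basis $\,{}_\kappa B_\rho$: for $b=1$ the set ${}_\kappa S_\rho$ has a single element (no black/black crossings possible), $\underline\ell\in\{0,1\}$ records whether the black strand is nailed, and $\underline a\in\bN$ records the number of dots; translating dots back via \eqref{eq:redR2} into loops $(0\vert 1\vert 0)$ around the first red strand, this matches the path basis of $\cA_r^!$ block-by-block. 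Hence $\Phi$ sends a basis to a basis and is an isomorphism.

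For the dg-statements, with the zero differentials there is nothing more to check, so $(\cA_r^!,0)\cong(T_1^{\lambda,r},0)$ is immediate. For $(\cA_r^!,d)\cong(T_1^{\lambda,r},d_1)$ I would verify that $\Phi$ commutes with the differentials on generators, which suffices since both differentials are extended by the graded Leibniz rule: $d\theta=(0\vert 1\vert 0)$ maps under $\Phi$ to the loop of the black strand around the first red strand, which by \eqref{eq:redR2} equals the black strand carrying a single dot, and this is exactly $d_1$ applied to the nail by the definition of $d_N$ in \cref{sec:dgenh} with $N=1$; on all other generators both differentials vanish and $\Phi$ trivially commutes with them.

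\textbf{Main obstacle.} The genuinely delicate point is the basis comparison in the presence of the nail: one must be sure that the relation $\theta^2=0$ on the algebra side corresponds correctly to the third relation of \eqref{eq:relNail} in the one-black-strand case (so that there are no "extra" basis elements with two nails), and conversely that \cref{thm:Tbasis} does not produce diagrams unaccounted for by the path basis of $\cA_r^!$ — in particular that every dot on the black strand can be absorbed, via \eqref{eq:redR2}, into loops $(0\vert 1\vert 0)$ only and does not interact with the nail in an unexpected way. Once the dictionary "dot on the black strand between red strands $\leftrightarrow$ loop in the quiver" and "nail $\leftrightarrow\theta$" is pinned down precisely, the remaining verifications are routine diagrammatic manipulations.
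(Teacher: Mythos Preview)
Your approach is correct and closely parallels the paper's brief sketch: verify that $\Phi$ respects the relations of $\cA_r^!$, establish bijectivity, then match the differentials. The difference lies in how bijectivity is proved. The paper constructs an explicit inverse $T_1^{\lambda,r}\to\cA_r^!$ by converting every dot into a double crossing via \eqref{eq:redR2} (so dots become quiver loops, black/red crossings become arrows, and the nail becomes $\theta$), and then checks this respects the defining relations of $T_1^{\lambda,r}$. You instead invoke \cref{thm:Tbasis} for $b=1$ and match bases. Your route avoids having to re-verify the relations of $T_1^{\lambda,r}$ for the inverse, at the cost of relying on \cref{thm:Tbasis}; the paper's route is more self-contained but leaves those checks to the reader.

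Two minor points in your basis description of $\cA_r^!$. First, the loop recording a dot on the black strand sitting after the $i$-th red strand is $(i\vert i-1\vert i)=(i\vert i+1\vert i)$, not literally $(0\vert 1\vert 0)$; the relations let you slide this loop along any path, so you may place it at an endpoint of your shortest path, but not at $0$ unless the path passes through $0$. Second, do not assert that your proposed set is a basis of $\cA_r^!$ a priori: linear independence is part of what you are proving. The cleanest way to run your argument is to show (from the relations) that the set spans, then show $\Phi$ carries it bijectively onto the basis ${}_\kappa B_\rho$ of \cref{thm:Tbasis}; this simultaneously forces the spanning set to be a basis and $\Phi$ to be an isomorphism. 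Your ``main obstacle'' paragraph already correctly identifies this matching as the step requiring care.
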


\begin{proof}
First, one can show by a straightforward computation that the map defined above respects all defining relations of $\cA_r^!$. 
Moreover, by turning any dot in $T_1^{\lambda, r}$ to a double crossings using~\cref{eq:redR2}, it is not hard to construct an inverse of the map defined above. 
We leave the details to the reader.
\end{proof}

\begin{cor}
The homology of $(\cA_r^!,d)$ is concentrated in homological degree $0$ and is isomorphic to the preprojective algebra $A_r^!$. 
\end{cor}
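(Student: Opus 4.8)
The plan is to combine \cref{prop:isozigzag} with \cref{thm:dNformal}. By \cref{prop:isozigzag} we have an isomorphism of dg-algebras $(\cA_r^!, d) \cong (T_1^{\lambda, r}, d_1)$, which is the case $b=1$, $\underline N = (1,\dots,1)$ and $N=1$ of the dg-enhanced KLRW algebra. Therefore $H(\cA_r^!, d) \cong H(T_1^{\lambda,r}, d_1)$ as $\bZ\times\bZ^2$-graded algebras. Now \cref{thm:dNformal} (with $N=1$ and $\underline N = (1,\dots,1)$) tells us that $(T_1^{\lambda,r}, d_1)$ is formal with $H(T_1^{\lambda,r}, d_1) \cong T_1^{(1,1,\dots,1)}$, the KLRW algebra with $r+1$ red strands all labeled $1$ and a single black strand. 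As recalled at the start of \cref{sec:zigazag}, it is proved in~\cite[\S4]{qi-sussan} that this algebra $T_1^{1,\dots,1}$ (now with $r+1$ red strands) is isomorphic to the preprojective algebra $A_{r}^!$ of type $A$. Chaining these isomorphisms gives $H(\cA_r^!, d) \cong A_r^!$.

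For the degree statement, the point is that $d$ decreases homological degree by $1$, the only generator of $\cA_r^!$ in nonzero homological degree is $\theta$ (in degree $1$), and $d(\theta) = (0\vert1\vert0)$ which lies in homological degree $0$. So every element of $\cA_r^!$ in homological degree $\geq 2$ is a sum of products involving at least two $\theta$'s, hence is zero by the relation $\theta^2 = 0$ together with $\theta(0\vert1\vert0) = (0\vert1\vert0)\theta$ (which lets one slide the two $\theta$'s next to each other); in fact $\cA_r^!$ is already concentrated in homological degrees $0$ and $1$. After taking homology the degree-$1$ part, spanned by $\theta$-containing paths, is killed: $\theta$ itself is not a cycle since $d(\theta) = (0\vert1\vert0)\neq 0$, and more generally the homological-degree-$1$ cycles are exactly the kernel of $d$ restricted there, which a short computation shows is exhausted by boundaries. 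Concretely one can just invoke formality from \cref{thm:dNformal} transported along the isomorphism of \cref{prop:isozigzag}: formality of $(T_1^{\lambda,r},d_1)$ means $H(T_1^{\lambda,r},d_1)$, hence $H(\cA_r^!,d)$, is quasi-isomorphic to its homology with zero differential, and since $T_1^{(1,\dots,1)}$ sits in homological degree $0$, so does $H(\cA_r^!,d)$.

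I do not expect a genuine obstacle here, as this corollary is essentially a bookkeeping consequence of the two cited theorems. The one place to be slightly careful is matching conventions: \cref{prop:isozigzag} identifies $\cA_r^!$ with $T_1^{\lambda,r}$ having $r$ red strands, while $d_1$ on $T_1^{\lambda,r}$ produces, via \cref{thm:dNformal}, the KLRW algebra $T_1^{(1,\underline N)}$ with $\underline N = (1,\dots,1)\in\bN^r$, i.e.\ $r+1$ red strands total; and the Qi--Sussan result quoted at the opening of \cref{sec:zigazag} is stated for $T_1^{1,\dots,1}$ with (in their indexing) $r$ red strands giving $A_r^!$. One should check that the index shift lines up so that $r+1$ red strands indeed yields the preprojective algebra denoted $A_r^!$ in the statement of this corollary — this is just a matter of reconciling how the subscript on $A^!$ counts vertices of the type $A$ Dynkin diagram versus the number of red strands, and is consistent with the $r$ used throughout \cref{sec:zigazag}. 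Beyond that, the proof is: apply \cref{prop:isozigzag}, then \cref{thm:dNformal}, then~\cite[\S4]{qi-sussan}, and read off that the homology is concentrated in homological degree $0$ from formality.
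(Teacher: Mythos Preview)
Your proposal is correct and is exactly the argument the paper has in mind: the corollary is stated without proof precisely because it follows immediately from \cref{prop:isozigzag} (transporting $(\cA_r^!,d)$ to $(T_1^{\lambda,r},d_1)$), \cref{thm:dNformal} (formality and identification of the homology with the KLRW algebra $T_1^{(1,1,\dots,1)}$), and the Qi--Sussan result recalled at the opening of \cref{sec:zigazag}. Your extra direct argument about homological degrees is not needed once formality is invoked, but it is not wrong; and your caution about the index shift (from $r$ to $r+1$ red strands after turning on $d_1$) is well placed and is indeed just a matter of matching how the subscript on $A^!$ counts vertices.
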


Moreover, by \cref{prop:isozigzag}, the results in \cref{sec:catTLB} can be pulled to the derived category of $\bZ^2$-graded $(\cA_r^!,0)$-modules, endowing $\cD_{dg}(\cA_r^!,0) \cong \cD_{dg}(T_1^{\lambda, r}, 0)$ with a categorical action of $\cB_r$.


\subsection{Dg-enhanced KLRW algebras: the general case}\label{sec:dgWebstergeneral}

Fix a symmetrizable Kac--Moody algebra $\g$ with set of simple roots $I$ and dominant integral weights
$\underline{\mu}:=(\mu_1,\dotsc ,\mu_d)$.

\subsubsection{Dg-enhanced KLRW algebras: $\g$ symmetrizable}

Recall that the KLRW algebra~\cite[\S4]{webster} $T_b^{\underline{\mu}}(\g)$ on $b$ strands is the diagrammatic $\Bbbk$-algebra generated by braid-like diagrams on $b$ black strands and $r$ red strands. 
Red strands are labeled from left to right by $\mu_1, \dots, \mu_r$ and cannot intersect each other,
while black strands are labeled by simple roots and can intersect red strands transversally, they can intersect transversally among themselves and can carry dots. 
Diagrams are taken up to braid-like planar isotopy and satisfy the following local relations:
\begin{itemize}
\item the KLR local relations (2.5a)-(2.5g) in~\cite[Definition~2.4]{webster};
\item the local black/red relations~\eqref{eq:gdotredstrand}-\eqref{eq:gredR3} for all $\nu\in\underline{\mu}$ and for all $\alpha_j$, $\alpha_k\in I$, given below;
\item a black strand in the leftmost region is $0$.
\end{itemize}
\begin{align}
\tikzdiagh{0}{
	\draw (1,0)node[below]{\small $\alpha_j$} ..controls (1,.5) and (0,.5) .. (0,1) node [near end,tikzdot]{};
	\draw[stdhl] (0,0) node[below]{\small $\nu$} ..controls (0,.5) and (1,.5) .. (1,1);
}
\ &= \ 
\tikzdiagh{0}{
	\draw (1,0)node[below]{\small $\alpha_j$} ..controls (1,.5) and (0,.5) .. (0,1) node [near start,tikzdot]{};
	\draw[stdhl] (0,0) node[below]{\small $\nu$} ..controls (0,.5) and (1,.5) .. (1,1);
}
&
\tikzdiagh{0}{
	\draw (0,0)node[below]{\small $\alpha_j$} ..controls (0,.5) and (1,.5) .. (1,1) node [near start,tikzdot]{};
	\draw[stdhl] (1,0) node[below]{\small $\nu$} ..controls (1,.5) and (0,.5) .. (0,1);
}
\ &= \ 
\tikzdiagh{0}{
	\draw (0,0)node[below]{\small $\alpha_j$} ..controls (0,.5) and (1,.5) .. (1,1) node [near end,tikzdot]{};
	\draw[stdhl] (1,0) node[below]{\small $\nu$} ..controls (1,.5) and (0,.5) .. (0,1);
} 
\label{eq:gdotredstrand}
\\
\tikzdiagh{0}{
	\draw (1,0)node[below]{\small $\alpha_j$} ..controls (1,.25) and (0,.25) .. (0,.5)..controls (0,.75) and (1,.75) .. (1,1)  ;
	\draw[stdhl] (0,0) node[below]{\small $\nu$} ..controls (0,.25) and (1,.25) .. (1,.5) ..controls (1,.75) and (0,.75) .. (0,1)  ;
} 
\ &= \ 
\tikzdiagh{0}{
	\draw[stdhl] (0,0) node[below]{\small $\nu$} -- (0,1)  ;
	\draw (1,0)node[below]{\small $\alpha_j$} -- (1,1)  node[midway,tikzdot]{}  node[midway,xshift=1.75ex,yshift=.75ex]{\small $\nu_j$} ;
} 
&
\tikzdiagh{0}{
	\draw (0,0)node[below]{\small $\alpha_j$} ..controls (0,.25) and (1,.25) .. (1,.5) ..controls (1,.75) and (0,.75) .. (0,1)  ;
	\draw[stdhl] (1,0) node[below]{\small $\nu$} ..controls (1,.25) and (0,.25) .. (0,.5)..controls (0,.75) and (1,.75) .. (1,1)  ;
} 
\ &= \ 
\tikzdiagh{0}{
	\draw (0,0)node[below]{\small $\alpha_j$} -- (0,1)  node[midway,tikzdot]{}   node[midway,xshift=1.75ex,yshift=.75ex]{\small $\nu_j$} ;
	\draw[stdhl] (1,0) node[below]{\small $\nu$} -- (1,1)  ;
} 
\label{eq:gredR2}
\\
\tikzdiagh{0}{
	\draw  (0.5,0)node[below]{\small $\alpha_j$} .. controls (0.5,0.25) and (0, 0.25) ..  (0,0.5)
		 	  .. controls (0,0.75) and (0.5, 0.75) ..  (0.5,1);
	\draw (1,0)node[below]{\small $\alpha_k$}  .. controls (1,0.5) and (0, 0.75) ..  (0,1);
	\draw  [stdhl] (0,0) node[below]{\small $\nu$} .. controls (0,0.25) and (1, 0.5) ..  (1,1);
} 
\ &= \ 
\tikzdiagh[xscale=-1]{0}{
	\draw  (0,0)node[below]{\small $\alpha_k$} .. controls (0,0.25) and (1, 0.5) ..  (1,1);
	\draw  (0.5,0)node[below]{\small $\alpha_j$} .. controls (0.5,0.25) and (0, 0.25) ..  (0,0.5)
		 	  .. controls (0,0.75) and (0.5, 0.75) ..  (0.5,1);
	\draw [stdhl] (1,0) node[below]{\small $\nu$} .. controls (1,0.5) and (0, 0.75) ..  (0,1);
} 
&
\tikzdiagh{0}{
	\draw  (0,0)node[below]{\small $\alpha_j$} .. controls (0,0.25) and (1, 0.5) ..  (1,1);
	\draw  (0.5,0)node[below]{\small $\alpha_k$} .. controls (0.5,0.25) and (0, 0.25) ..  (0,0.5)
		 	  .. controls (0,0.75) and (0.5, 0.75) ..  (0.5,1);
	\draw [stdhl] (1,0) node[below]{\small $\nu$} .. controls (1,0.5) and (0, 0.75) ..  (0,1);
} 
\ &= \ 
\tikzdiagh[xscale=-1]{0}{
	\draw  (0.5,0)node[below]{\small $\alpha_k$} .. controls (0.5,0.25) and (0, 0.25) ..  (0,0.5)
		 	  .. controls (0,0.75) and (0.5, 0.75) ..  (0.5,1);
	\draw (1,0)node[below]{\small $\alpha_j$}  .. controls (1,0.5) and (0, 0.75) ..  (0,1);
	\draw  [stdhl] (0,0) node[below]{\small $\nu$} .. controls (0,0.25) and (1, 0.5) ..  (1,1);
} 
\label{eq:gcrossingslidered}
\\
\tikzdiagh{0}{
	\draw  (0,0)node[below]{\small $\alpha_j$} .. controls (0,0.25) and (1, 0.5) ..  (1,1);
	\draw  (1,0)node[below]{\small $\alpha_k$} .. controls (1,0.5) and (0, 0.75) ..  (0,1);
	\draw [stdhl] (0.5,0)node[below]{\small $\nu$}  .. controls (0.5,0.25) and (0, 0.25) ..  (0,0.5)
		 	  .. controls (0,0.75) and (0.5, 0.75) ..  (0.5,1);
} 
\ &= \ 
\tikzdiagh[xscale=-1]{0}{
	\draw  (0,0)node[below]{\small $\alpha_k$} .. controls (0,0.25) and (1, 0.5) ..  (1,1);
	\draw (1,0)node[below]{\small $\alpha_j$}  .. controls (1,0.5) and (0, 0.75) ..  (0,1);
	\draw [stdhl]  (0.5,0) node[below]{\small $\nu$} .. controls (0.5,0.25) and (0, 0.25) ..  (0,0.5)
		 	  .. controls (0,0.75) and (0.5, 0.75) ..  (0.5,1);
} 
\ + \delta_{j,k}\sssum{a+b= \nu_j-1} \ 
\tikzdiagh{0}{
	\draw  (0,0)node[below]{\small $\alpha_j$} -- (0,1) node[midway,tikzdot]{} node[midway,xshift=-1.5ex,yshift=.75ex]{\small $a$};
	\draw  (1,0)node[below]{\small $\alpha_k$} --  (1,1) node[midway,tikzdot]{} node[midway,xshift=1.5ex,yshift=.75ex]{\small $b$};
	\draw [stdhl] (0.5,0)node[below]{\small $\nu$}  --  (0.5,1);
} \label{eq:gredR3}
\end{align}
Multiplication is given by concatenation of diagrams that are read from bottom to top, and it is zero if the labels do not match.  
The algebra $T_b^{\underline{\mu}}(\g)$ is finite-dimensional and can be endowed with a $\bZ$-grading (we refer to~\cite[Definition~4.4]{webster} for the definition of the grading).  

In the case of $ \underline{\mu}=\nu$ the algebra $T_b^{\nu}(\g)$ contains a single red strand labeled $\nu$ and is isomorphic to the cyclotomic KLR algebra $R^{\nu}(b)$ for $\g$ in $b$ strands.


\begin{defn}
Fix a $\g$-weight $\lambda=(\lambda_1,\dotsc ,\lambda_{\vert I\vert})$ with each $\lambda_i$ being a formal parameter. 
  The \emph{dg-enhanced KLRW algebra}  $T_b^{\lambda,\underline{\mu}}(\g)$ 
  is defined as in \cref{def:dgwebsteralg}, with a blue strand labeled by $\lambda$ and with the $r$ red strands labeled by $\mu_1, \dots, \mu_r$ and the black strands labeled by simple roots. 
  The black strands can carry dots and be nailed on the blue strand:
\[
	\tikzdiagh{0}{
		\draw (.5,-.5) .. controls (.5,-.25)  .. 
			(0,0) .. controls (.5,.25)  .. (.5,.5);\node at (.5,-.88) {$\alpha_j$};
	           \draw[vstdhl] (0,-.5) node[below]{\small $\lambda$} -- (0,.5) node [midway,nail]{};
  	}
\]
with everything in homological degree $0$, except that a nail is in homological degree $1$. 
The diagrams are taken up to graded braid-like planar isotopy, and are required to satisfy the same local relations as $T_b^{\underline{\mu}}(\g)$, together with the following extra local relations:
\begin{align*}
	\tikzdiagh{0}{
		\draw (.5,-.5) .. controls (.5,-.25)  ..  
			(0,0) node[midway, tikzdot]{}
			 .. controls (.5,.25)  .. (.5,.5); \node at (.5,-.88) {$\alpha_j$};
	           \draw[vstdhl] (0,-.5) node[below]{\small $\lambda$} -- (0,.5) node [midway,nail]{};
  	}
  	 &= \ 
	\tikzdiagh{0}{
		\draw (.5,-.5) .. controls (.5,-.25)  .. 
			(0,0) 
			.. controls (.5,.25)  .. (.5,.5) node[midway, tikzdot]{}; \node at (.5,-.88) {$\alpha_j$};
	           \draw[vstdhl] (0,-.5) node[below]{\small $\lambda$} -- (0,.5) node [midway,nail]{};
  	}
&
	\tikzdiagh{0}{
		\draw (.5,-1) .. controls (.5,-.75) .. (0,-.4)
				.. controls (.5,-.05) .. (.5,.2)
				-- (.5,1);\node at (.5,-1.38) {$\alpha_j$};
		\draw (1,-1) .. controls (1,0) .. (0, .4)
			.. controls (1,.75) .. (1,1); \node at (1,-1.38) {$\alpha_k$};
	     	\draw [vstdhl]  (0,-1) node[below]{\small $\lambda$} --  (0,1) node [pos=.3,nail] {} node [pos=.7,nail] {} ;
	 }  &= \ -
	\tikzdiagh[yscale=-1]{0}{
		\draw (.5,-1) .. controls (.5,-.75) .. (0,-.4)
				.. controls (.5,-.05) .. (.5,.2)
				-- (.5,1);\node at (.5,1.38) {$\alpha_j$};
		\draw (1,-1) .. controls (1,0) .. (0, .4)
			.. controls (1,.75) .. (1,1);  \node at (1,1.38) {$\alpha_k$};
	     	\draw [vstdhl]  (0,-1)  --  (0,1) node[below]{\small $\lambda$} node [pos=.3,nail] {} node [pos=.7,nail] {} ;
	 }
&
	\tikzdiagh{0}{
		\draw (.5,-1) .. controls (.5,-.75) .. (0,-.4)
			.. controls (.5,-.4) and (.5,.4) ..
			(0,.4) .. controls (.5,.75) .. (.5,1);\node at (.5,-1.38) {$\alpha_j$};
	     	\draw [vstdhl]  (0,-1) node[below]{\small $\lambda$} --  (0,1) node [pos=.3,nail] {} node [pos=.7,nail] {} ;
	 }
	 &= \ 
	0 , 
\end{align*}
for all $\alpha_j$, $\alpha_k\in I$.
\end{defn}

Note that we have an inclusion $T_b^{\underline{\mu}}(\g)\subset T_b^{\lambda,\underline{\mu}}(\g)$ by adding a vertical blue strand at the left of a diagram. 
The algebra $T_b^{\lambda,\underline{\mu}}(\g)$ can be endowed with the $q$-grading inherited from $T_b^{\underline{\mu}}(\g)$. It can be additionally endowed with
a $\lambda_k$-grading for each $\alpha_k\in I$ such that
$T_b^{\underline{\mu}}(\g)\subset T_b^{\lambda,\underline{\mu}}(\g)$ sits in $\lambda_k$-degree zero for all $k$, and
\begin{align*}
\deg_{q,\lambda_k}  \left(
	\tikzdiagh{-1ex}{
		\draw (.5,-.5) .. controls (.5,-.25)  .. 
			(0,0) .. controls (.5,.25)  .. (.5,.5);\node at (.5,-.88) {$\alpha_j$};
	           \draw[vstdhl] (0,-.5) node[below]{\small $\lambda$} -- (0,.5) node [midway,nail]{};
  	}
  	\right) &:= (0,2\delta_{k,j}). 
\end{align*}

We usually consider $T_b^{\lambda,\underline{\mu}}(\g)$ as a $\bZ^{1+|I|}$-graded dg-algebra $(T_b^{\lambda,\underline{\mu}}(\g), 0)$ with trivial differential. 
In the case of $ \underline{\mu}=\varnothing$ the algebra $T_b^{\lambda,\varnothing}(\g)$ contains a blue strand labeled $\lambda$ and is isomorphic to the $\mathfrak{b}$-KLR algebra $R_{\mathfrak{b}}(b)$ introduced in~\cite[\S3.1]{naissevaz3}.

The results of~\cref{sec:dgWebster} can be generalized to $T_b^{\lambda,\underline{\mu}}(\g)$.
In particular, one can prove it is free over $\Bbbk$ and that it admits a basis similar to the one in Theorem~\ref{thm:Tbasis}. 
Moreover, by using induction and restriction functors that add a black strand, we obtain a categorical action of $\g$ on $\cD_{dg}(T_b^{\lambda,\underline{\mu}}(\g),0)$ (in the sense of \cite{naissevaz3}), which categorifies the $U_q(\g)$-action on the tensor product of a universal Verma module and several integrable modules.

\smallskip

Fix an integrable dominant weight $\kappa$ of $\g$ and define a differential $d_\kappa$ on $T_b^{\lambda,\underline{\mu}}(\g)$
(after specialization of the $\lambda_j$-grading to $q^{\kappa_j}$) by setting
\[
d_{\kappa}\Biggl(
	\tikzdiagh{-1ex}{
		\draw (.5,-.5) .. controls (.5,-.25)  .. 
			(0,0) .. controls (.5,.25)  .. (.5,.5);\node at (.5,-.88) {$\alpha_j$};
	           \draw[vstdhl] (0,-.5) node[below]{\small $\lambda$} -- (0,.5) node [midway,nail]{};
  	}
  	\Biggr) 
\ = \ 
	\tikzdiagh{-1ex}{
		\draw (.5,-.5) -- (.5,.5) node[midway,tikzdot]{} node[midway,xshift=1.75ex,yshift=.75ex]{\small $\kappa_j$};\node at (.5,-.88) {$\alpha_j$};
	           \draw[vstdhl] (0,-.5) node[below]{\small $\lambda$} -- (0,.5);
  	}
\]
and $d_\kappa(t) = 0$ for all $t \in T_b^{\underline{\mu}}(\g) \subset T_b^{\lambda,\underline{\mu}}(\g)$, and extending by the graded Leibniz rule w.r.t. the homological grading. 
A straightforward computation shows that $d_\kappa$ is well-defined. 

\begin{prop}
The dg-algebra $(T^{\lambda,\underline{\mu}}_b(\g),d_\kappa)$ is formal with
\[
H(T^{\lambda,\underline{\mu}}_b(\g),d_\kappa) \cong T^{(\kappa,\underline{\mu})}_b(\g) . 
\]
\end{prop}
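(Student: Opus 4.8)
The statement is the exact analogue of \cref{thm:dNformal} for a general symmetrizable $\g$, so the plan is to reduce to the same mechanism that proves \cref{thm:dNformal}: exhibit an explicit decomposition of $T_b^{\lambda,\underline{\mu}}(\g)$ as a module over $T_b^{\underline\mu}(\g)$ (or over a smaller $T_{b-1}$), in the spirit of \cref{prop:Tdecomp}, and then run the spectral-sequence / mapping-cone argument of \cite[Theorem 4.4]{naissevaz3}. First I would invoke the basis theorem for $T_b^{\lambda,\underline{\mu}}(\g)$ announced in~\cref{sec:dgWebstergeneral} (the analogue of \cref{thm:Tbasis}), which gives a PBW-type $\Bbbk$-basis built from a left-reduced permutation, a choice of which black strands are nailed, and dots. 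Using this basis, I would write down a decomposition analogous to \cref{prop:Tdecomp}: $T_b^{\lambda,\underline{\mu}}(\g)1_\rho$ splits, as a $\bZ^{1+|I|}$-graded $\Bbbk$-module, into a summand where no nail sits on the rightmost black strand (which is a free module over a smaller dg-enhanced KLRW algebra), a summand obtained by pulling the rightmost black strand out to the right with dots (free over $T_{b-1}^{\lambda,\underline\mu}(\g)$), and a summand where that strand is nailed (again free over $T_{b-1}$, carrying the homological degree $1$).

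Second, with that decomposition in hand, I would set up the differential $d_\kappa$ and observe that $d_\kappa$ restricted to the ``nailed rightmost strand'' summand maps it isomorphically onto the corresponding ``dotted strand with $\kappa_j$ dots'' piece of the no-nail summand, up to lower-order terms. Concretely, filtering $T_b^{\lambda,\underline{\mu}}(\g)$ by the number of nails gives a finite filtration (bounded by $b$) compatible with $d_\kappa$; the associated graded differential is, by the relations $d_\kappa(\mathrm{nail})=x_1^{\kappa_j}$, exactly a Koszul-type differential killing pairs (nailed strand, strand with $\kappa_j$ dots). The homology of the associated graded is then, strand by strand, the quotient enforcing the cyclotomic relation $x_1^{\kappa_j}=0$ on the leftmost black strand of each colour, i.e.\ precisely $T_b^{(\kappa,\underline\mu)}(\g)$. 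A degree/dimension count — done at the level of graded dimensions as in \cref{lem:sesX0} — shows the spectral sequence degenerates and that the homology has the right size, so $H(T_b^{\lambda,\underline{\mu}}(\g),d_\kappa)\cong T_b^{(\kappa,\underline{\mu})}(\g)$ as graded algebras. Formality then follows, as in \cite[Theorem 4.4]{naissevaz3}, by constructing an explicit quasi-isomorphism $(T_b^{\lambda,\underline{\mu}}(\g),d_\kappa)\xrightarrow{\simeq}(T_b^{(\kappa,\underline{\mu})}(\g),0)$: send a diagram with no nail to its image under the cyclotomic quotient, and send every diagram containing a nail to $0$; one checks this respects the differential and induces the identity on homology.

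\textbf{Main obstacle.} The genuinely new point compared to the $\slt$ case is bookkeeping with the KLR local relations (2.5a)--(2.5g) of \cite{webster} and the colour-dependence of the nail relations: the anticommutation of tightened nails (\cref{lem:anticom_theta}) and the compatibility of $d_\kappa$ with braid-like moves now involve different simple roots $\alpha_j,\alpha_k$, and one must check that the ``lower-order terms'' appearing when sliding a nailed strand past a crossing or another nail do not disrupt the filtration argument — i.e.\ that they are genuinely of lower nail-degree (or lower crossing number) and hence invisible on the associated graded. This is exactly the place where \cite[Lemma 3.12]{naissevaz3} is generalized, and it is the only step I expect to require real work; everything else is a formal transcription of the $\slt$ arguments of \cref{sec:dgWebster} and \cite{naissevaz3}. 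I would therefore state the decomposition and the nail-anticommutation lemma for general $\g$ explicitly (sketching their proofs by the usual induction on the number of crossings), and then simply cite \cite[Theorem 4.4]{naissevaz3} for the formality conclusion, leaving the routine verifications to the reader as is done for \cref{thm:dNformal}.
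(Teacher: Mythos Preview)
Your proposal is correct and follows essentially the same approach as the paper: the paper's proof simply says ``The proof follows by similar arguments as in~\cite[Theorem 4.4]{naissevaz3}'', which is exactly the mechanism you have spelled out in detail (basis theorem, decomposition \`a la \cref{prop:Tdecomp}, filtration by nails, and the explicit quasi-isomorphism sending nailed diagrams to zero). Your write-up is in fact more explicit than both the paper's one-line proof here and its proof of the $\slt$ case in \cref{thm:dNformal}.
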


\begin{proof}
The proof follows by similar arguments as in~\cite[Theorem 4.4]{naissevaz3}. 
\end{proof}

\subsubsection{Dg-enhanced KLRW algebras for parabolic subalgebras}
Let $\p\subseteq\g$ be a parabolic subalgebra with partition $I=I_f\sqcup I_r$ of the set of simple roots,
and $(\lambda,n)=(\lambda_i)_{i\in I}$, with $\lambda_i$ a formal parameter if $i\in I_r$, and $\lambda_i=q^{n_i}$ with $n_i\in\bN$
if $i\in I_f$.

Introduce a differential $d_{\lambda,n}$ on $T_b^{\lambda,\underline{\mu}}(\g)$
(after specialization of the $\lambda_j$-grading to $q^{n_j}$ for each $\alpha_j\in I_r$) by setting
\[
d_{\lambda,n}\Biggl(
	\tikzdiagh{-1ex}{
		\draw (.5,-.5) .. controls (.5,-.25)  .. 
			(0,0) .. controls (.5,.25)  .. (.5,.5);\node at (.5,-.88) {$\alpha_j$};
	           \draw[vstdhl] (0,-.5) node[below]{\small $\lambda$} -- (0,.5) node [midway,nail]{};
  	}
  	\Biggr) 
        \ = \
        \begin{cases}
        \ \ \  0, & \text{if $\alpha_j\in I_r$}, \\[1ex]
	\tikzdiagh{-1ex}{
		\draw (.5,-.5) -- (.5,.5) node[midway,tikzdot]{} node[midway,xshift=1.75ex,yshift=.75ex]{\small $n_j$};\node at (.5,-.88) {$\alpha_j$};
	           \draw[vstdhl] (0,-.5) node[below]{\small $\lambda$} -- (0,.5);
  	} &\text{if $\alpha_j\in I_f$},
        \end{cases}
        \]
        and $d_{\lambda,n}(t) = 0$ for all $t \in T_b^{\underline{\mu}}(\g) \subset T_b^{\lambda,\underline{\mu}}(\g)$, and extending by the graded Leibniz rule w.r.t. the homological grading. As before, a straightforward computation shows that it is well-defined. 
        
\begin{prop} \label{prop:pKLRWformal}
The dg-algebra $(T^{\lambda,\underline{\mu}}_b(\g),d_{\lambda,n})$ is formal. 
\end{prop}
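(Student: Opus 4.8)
The plan is to follow the template of the proof of \cref{thm:dNformal}, which itself follows \cite[Theorem 4.4]{naissevaz3}, once the parabolic analogues of the structural results of \cref{sec:dgWebster} are in place. So the first step is to record, exactly as in \cref{thm:Tbasis}, that $T_b^{\lambda,\underline{\mu}}(\g)$ is free over $\Bbbk$ with a basis of diagrams obtained from left-adjusted permutation diagrams by adding nails and dots, the linear independence coming from the faithful polynomial action together with the injectivity of the analogue of the inclusion $\eta_\rho$ (which uses faithfulness of the polynomial action of $A_{b_0}$ from \cite{naissevaz2} and of the cyclotomic KLR algebras $R^{\mu_i}(b_i)$). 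From this one deduces the parabolic version of the decomposition of \cref{prop:Tdecomp}: writing $T_b^{\lambda,\underline{\mu}}(\g)1_\rho$ according to which black strand is pulled to the leftmost position, the summands are a single copy of $T_b^{\lambda,\underline{\mu}'}(\g)$ (the rightmost red strand absorbed into dots), copies of $T_{b-1}^{\lambda,\underline{\mu}}(\g)$ carrying $p\geq 0$ extra dots on the pulled strand, and copies of $T_{b-1}^{\lambda,\underline{\mu}}(\g)$ with a nail and $p\geq 0$ extra dots on the pulled strand.

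The second step is to analyse $d_{\lambda,n}$ through this decomposition. Since $d_{\lambda,n}$ annihilates all of $T_b^{\underline{\mu}}(\g)$ and all nails on strands labeled by $I_r$, the only nontrivial term is the replacement of a nail on a strand labeled $\alpha_j\in I_f$ by the $n_j$-th power of a dot; after the dot-slide relation \cref{eq:gredR2} this is precisely the differential used in \cite{naissevaz2,naissevaz3} to pass from the dg-enhanced nilHecke block $A_{b_0}$ to its cyclotomic quotient. Working locally through the analogue of $\eta_\rho$, the $I_f$-nails and the dots they produce assemble into Koszul-type subcomplexes, so that in the decomposition above the summands carrying an $I_f$-nail pair off with dotted, $I_f$-nail-free summands into cones of isomorphisms and are therefore acyclic, whereas the $I_f$-nail-free summands survive with trivial induced differential. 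An induction on $b$ (with the trivial base cases $b=0$, where the algebra carries only blue and red strands and no differential, or $\underline{\mu}=\varnothing$, where it is the $\mathfrak{b}$-KLR algebra) then shows that $H(T_b^{\lambda,\underline{\mu}}(\g),d_{\lambda,n})$ is spanned by the classes of the $I_f$-nail-free basis diagrams and is itself a dg-enhanced KLRW algebra with trivial differential, namely the one built from a blue strand carrying only the $I_r$-components of $\lambda$ together with extra red strands recording the cyclotomic parameters $n_j$, $\alpha_j\in I_f$; only the formality, not this identification, is needed for the statement.

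Finally, to obtain genuine formality one checks that the span of diagrams with at least one $I_f$-nail is a two-sided ideal (the number of $I_f$-nails being preserved by all defining relations up to terms with strictly fewer crossings, by the analogue of \cref{lem:anticom_theta}), so that quotienting by it yields an algebra map $(T_b^{\lambda,\underline{\mu}}(\g),d_{\lambda,n})\xrightarrow{\simeq}(H(T_b^{\lambda,\underline{\mu}}(\g),d_{\lambda,n}),0)$; this is a chain map because the only thing to verify is that $d_{\lambda,n}$ of a diagram with one $I_f$-nail produces $n_j$ dots on a strand in the leftmost region, i.e.\ a boundary, and it is a quasi-isomorphism by the homology computation of the second step. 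As in the $\slt$ case, the main obstacle is exactly this bookkeeping: because the tightened nails only anticommute up to diagrams with fewer crossings, $d_{\lambda,n}$ does not split off the $I_f$-part on the nose, so one must run the induction simultaneously on the number of crossings and the number of $I_f$-nails — or pass to the associated graded for the crossing filtration and control the resulting spectral sequence — and check that the error terms coming from the $\delta_{j,k}$-corrections in \cref{eq:gredR3} and from the mixed anticommutation of $I_f$- and $I_r$-nails land in the acyclic part of the decomposition. Once this is carried out the argument is formally identical to that of \cite[Theorem 4.4]{naissevaz3} and \cref{thm:dNformal}, and we refer there for the details.
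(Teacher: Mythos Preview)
Your outline follows the path the paper indicates (its own proof is a one-line reference to \cite[Theorem~4.4]{naissevaz3}), and the structural reductions in your steps~1 and~2 are the right ones, modulo a small slip: the tensor factors in the analogue of $\eta_\rho$ are affine KLR algebras, not cyclotomic ones, and in the general $\g$ setting the first factor is the $\mathfrak{b}$-KLR algebra of \cite{naissevaz3} rather than $A_{b_0}$.

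There is, however, a genuine confusion in your step~3. Quotienting by the ideal $I$ of diagrams carrying at least one $I_f$-nail yields $A/I$, the $\deg_{I_f}=0$ piece of the algebra, \emph{not} the homology: $d_{\lambda,n}$ applied to a diagram with a single $I_f$-nail is a nonzero element of $A/I$ (no cyclotomic relation kills $n_j$ dots there), so $A/I\neq H(A,d_{\lambda,n})$ and the projection $A\to(A/I,0)$ is not a chain map. Your remark ``i.e.\ a boundary'' is exactly the fix, but for a different target: one must further quotient $A/I$ by the image of $d_{\lambda,n}$ coming from $\deg_{I_f}=1$, and it is \emph{this} quotient that equals $H(A,d_{\lambda,n})$ once your step~2 establishes the vanishing of higher $I_f$-homology. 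The composite is an algebra map because $d_{\lambda,n}$ is a derivation vanishing on $\deg_{I_f}=0$, so its image from $\deg_{I_f}=1$ is a two-sided ideal of $A/I$; and it is a chain map to a zero-differential target precisely because boundaries die in homology. Incidentally, the number of $I_f$-nails is an honest $\bZ$-grading preserved exactly by all defining relations, so the appeal to ``up to fewer crossings'' via \cref{lem:anticom_theta} is unnecessary for the ideal claim.
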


\begin{proof}
The proof follows by similar arguments as in~\cite[Theorem 4.4]{naissevaz3}. 
\end{proof}

  \begin{defn}
        We define the \emph{dg-enhanced $\p$-KLRW algebra} as 
\[
T_b^{\lambda,\underline{\mu}}(\g,\p) :=  H(T^{\lambda,\underline{\mu}}_b(\g),d_{\lambda,n}).
\]
\end{defn}

Note that by \cref{prop:pKLRWformal} we have a quasi-isomorphism  $(T_b^{\lambda,\underline{\mu}}(\g,\p), 0) \cong (T^{\lambda,\underline{\mu}}_b(\g),d_{\lambda,n})$.

Similarly as above,  $\cD_{dg} (T^{\lambda,\underline{\mu}}_b(\g),d_{\lambda,n})$  categorifies the tensor product of a parabolic Verma module and several integrable modules, and comes with a categorical action of $\g$.


\subsection{Dg-enhanced quiver Schur algebras}\label{sec:dgqSchur}

In order to define a quiver Schur algebra of type $A_1$,  we follow the approach of~\cite{KSY}, which best suits our goals.
 We actually use a slightly different definition because theirs corresponds to a thick version of KLRW algebra (see \cite[\S9.2]{KSY}), and we want to relate it to the version we use.

\subsubsection{Cyclic modules and quiver Schur algebras}

Recall that $\nh_b^N \cong T_b^{(N)}$ is the $N$-cyclotomic nilHecke algebra on $b$ strands. Fix $ r \geq 0$ and $\und N = (N_0, N_1, \dots, N_r) \in \bN^r$ such that $\sum_i N_i = N$. 
For $\rho = (b_0, b_1, \dots, b_r)$ such that $\sum_i b_i  = b$, we define the element
\[
x_{\rho}^{\und N} :=  
\prod_{i=1}^{r} (x_{b_r+ \cdots + b_{i+1}+1}^{N_{i}+ \cdots + N_1} \cdots x_{b_r + \cdots + b_{i+1} + b_i}^{N_{i}+ \cdots + N_1})
\in \nh_b^N,
\]
where we recall that $x_b$ is a dot on the $b$th black strand. 
Then, we consider the cyclic right $\nh_b^N$-module defined as
\[
\cyclicMod^{\und N}_{\rho} := 
x_{\rho}^{\und N}
\nh_b^N.
\]

The \emph{quiver Schur algebra} (of type $A_1$) is defined as the $\bZ$-graded algebra:
\[
\quiverSchur^{\und N}_b := \END_{\nh_b^N}\left( \bigoplus_{\rho \in \mathcal{P}_b^r} q^{-\deg_q(x_\rho^{\und N})/2} \cyclicMod^{\und N}_{\rho} \right),
\]
where $\END$ means the ($\bZ$-)graded endomorphism ring.  
The $\bZ$-graded algebra $\quiverSchur^{\und N}_b$ is isomorphic to $T^{\und N}_b$~\cite[Proposition 5.33]{webster}.
The \emph{reduced quiver Schur algebra} (of type $A_1$) is defined as
\[
\minquiverSchur :=  \END_{\nh_b^N}\left( \bigoplus_{\rho \in \mathcal{P}_b^{r, \und N}} q^{-\deg_q(x_\rho^{\und N})/2} \cyclicMod^{\und N}_{\rho} \right),
\]
where $
\mathcal{P}_{b}^{r, \und N} := \left\{ (b_0, b_1, \dots, b_r) | b_i \leq N_i  \text{ for $0 \leq i \leq r$}\right\} \subset \mathcal{P}_{b}^{r}
$. 
It is Morita equivalent to $\quiverSchur^{\und N}_b$ (this can be shown by observing that if $b_i > N_i$ for some $i$, then $\cyclicMod^{\und N}_{\rho}$ is isomorphic to a direct sum of elements in $ \{ q^{-\deg_q(x_{\rho'}^{\und N})/2} \cyclicMod^{\und N}_{\rho'} | {\rho' \in \mathcal{P}_b^{r, \und N}} \}$ ), and thus to $T^{\und N}_b$.

\subsubsection{Dg-enhanced cyclic modules}

Our goal is to construct a dg-enhancement of $\cyclicMod^{\und N}_{\rho}$ over $(T_b^{\lambda, \emptyset}, d_N)$, the dg-enhanced KLRW algebra without red strands. We will simply write $T_b^\lambda$ for $T_b^{\lambda, \emptyset}$. Recall from \cref{thm:dNformal} that $(T_b^\lambda, d_N)$ is quasi-isomorphic to $T_b^{(N)} \cong \nh_b^N$ . 

\smallskip

Let $T_b^{q^{\ell}\lambda}$ for $\ell \in \bZ$ be the algebra defined similarly as $T_b^\lambda$ (see \cref{def:dgwebsteralg}) except that the blue strand is labeled by $q^{\ell}\lambda$, and the nail is  in $\bZ^2$-degree:
\begin{align*}
\deg_{q,\lambda}  \left(
	\tikzdiagh{-1ex}{
		\draw (.5,-.5) .. controls (.5,-.25)  .. 
			(0,0) .. controls (.5,.25)  .. (.5,.5);
	           \draw[vstdhl] (0,-.5) node[below]{\small $q^{\ell}\lambda$} -- (0,.5) node [midway,nail]{};
  	}
  	\right) &= (2\ell,2). 
\end{align*}
Whenever $\ell \geq \ell'$ and $b \leq b'$, there is an inclusion of algebras
\begin{equation}\label{eq:inclusionTlshift}
T^{q^{\ell} \lambda}_{b} \hookrightarrow T^{q^{\ell'}\lambda}_{b'},
\end{equation}
given by first turning any $q^{\ell}\lambda$-nail into a $q^{\ell'}\lambda$-nail by adding dots:
\[
\tikzdiagh{0}{
		\draw (.5,-.5) .. controls (.5,-.25)  .. 
			(0,0) .. controls (.5,.25)  .. (.5,.5);
	           \draw[vstdhl] (0,-.5) node[below]{\small $q^{\ell}\lambda$} -- (0,.5) node [midway,nail]{};
  	}
\ \mapsto  \ 
\tikzdiagh{0}{
		\draw (.5,-.5) .. controls (.5,-.25)  .. 
			(0,0) .. controls (.5,.25)  .. (.5,.5) node[midway,tikzdot]{} node[midway, xshift=4ex,yshift=.5ex]{\small $\ell - \ell'$};
	           \draw[vstdhl] (0,-.5) node[below]{\small $q^{\ell'}\lambda$} -- (0,.5) node [midway,nail]{};
  	}
\]
so that the blue strand labeled $q^{\ell} \lambda$ becomes labeled $q^{\ell'} \lambda$, 
and then adding $b'- b$ vertical black strands at the right:
\[
\tikzdiagh[xscale=2]{0}{
	\draw [vstdhl] (-.25,0) node[below]{\small $q^{\ell'}\lambda$} -- (-.25,1);
	\draw (0,0) -- (0,1);
	\node at(.25,.125) {\tiny $\dots$};
	\node at(.25,.875) {\tiny $\dots$};
	\draw (.5,0) -- (.5,1);
	\tikzbrace{0}{.5}{0}{\small $b$};
	\filldraw [fill=white, draw=black] (-.375,.25) rectangle (.625,.75) node[midway] { $D$};
}
\mapsto 
\tikzdiagh[xscale=2]{0}{
	\draw [vstdhl] (-.25,0) node[below]{\small $q^{\ell'}\lambda$} -- (-.25,1);
	\draw (0,0) -- (0,1);
	\node at(.25,.125) {\tiny $\dots$};
	\node at(.25,.875) {\tiny $\dots$};
	\draw (.5,0) -- (.5,1);
	\tikzbrace{0}{.5}{0}{\small $b$};
	\draw (.75,0) -- (.75,1);
	\node at(1,.5) {\tiny $\dots$};
	\draw (1.25,0) -- (1.25,1);
	\tikzbrace{.75}{1.25}{0}{\small $b'-b$};
	\filldraw [fill=white, draw=black] (-.375,.25) rectangle (.625,.75) node[midway] { $D$};
}
\]
A straightforward computation shows that the map in \cref{eq:inclusionTlshift} is well-defined, and \cref{thm:Tbasis} shows that the map is injective.

By restriction, the inclusion $T^{q^{\ell} \lambda}_{b} \hookrightarrow T^{q^{\ell'}\lambda}_{b'}$ defines a left action of $T^{q^{\ell} \lambda}_{b}$ on any $T^{q^{\ell'} \lambda}_{b'}$-module. 

\begin{defn}
We define the right $T_b^\lambda$-modules 
\begin{align*}
\predgCyclicMod_{\rho}^{\und N} &:= 
T_{b_r}^{q^{-N_r-\cdots-N_1} \lambda}
\otimes_{T_{b_r}^{q^{-N_{r-1}-\cdots-N_1} \lambda}}
\cdots 
\otimes_{T_{b_r+\cdots+b_2}^{ q^{-N_1} \lambda}}
T_{b_r+\cdots+b_1}^{q^{-N_1} \lambda} \otimes_{T_{b_r+\cdots+b_1}^{\lambda}}
T_b^{\lambda},
\intertext{and}
\dgCyclicMod_{\rho}^{\und N}  &:=
x_{\rho}^{\und N}
\predgCyclicMod_{\rho}^{\und N}.
\end{align*}
\end{defn}

Note that we can endow $\dgCyclicMod_{\rho}^{\und N} $ with either a differential of the form $d_N$ (as in \cref{sec:dgenh}) or a trivial one, making it a right dg-module over $(T_b^\lambda, d_N)$ or $(T_b^\lambda,0)$ respectively. 

\begin{exe}
Take for example $r=2$. Then, we picture  $\dgCyclicMod_{\rho}^{\und N}$ in terms of diagrams as:
\[
\tikzdiag[xscale=1.25]{
 	\draw [vstdhl] (-.5,0) node[below]{\small $q^{-N_2-N_1}\lambda$} -- (-.5,4);
 	\draw (0,0) -- (0,2.25) -- (0,2.75) node[midway, tikzdot]{} node[midway, xshift=1.5ex, yshift=.5ex]{\tiny $N_1$}
 		-- (0,3.5) -- (0,4) node[midway, tikzdot]{} node[midway, xshift=1.5ex, yshift=.5ex]{\tiny $N_2$};
 	\node 	at(.5,3.625) {\tiny $\dots$};
 	\node 	at(.5,.125) {\tiny $\dots$};
 	\draw (1,0) -- (1,2.25) -- (1,2.75) node[midway, tikzdot]{} node[midway, xshift=1.5ex, yshift=.5ex]{\tiny $N_1$}
 		-- (1,3.5) -- (1,4) node[midway, tikzdot]{}  node[midway, xshift=1.5ex, yshift=.5ex]{\tiny $N_2$};
 	\draw (1.5,0) -- (1.5,2.25) -- (1.5,2.75) node[midway, tikzdot]{} node[midway, xshift=1.5ex, yshift=.5ex]{\tiny $N_1$}
 		 -- (1.5,4);
 	\node 	at(2,3.625) {\tiny $\dots$};
 	\node 	at(2,.125) {\tiny $\dots$};
 	\draw (2.5,0)  -- (2.5,2.25) -- (2.5,2.75) node[midway, tikzdot]{} node[midway, xshift=1.5ex, yshift=.5ex]{\tiny $N_1$}
 		-- (2.5,4);
 	\draw (3,0) -- (3,4);
 	\node 	at(3.5,3.625) {\tiny $\dots$};
 	\node 	at(3.5,.125) {\tiny $\dots$};
 	\draw (4,0) -- (4,4);
	\filldraw [fill=white, draw=black] (-.75,2.75) rectangle (1.25,3.5) node[midway] { $T_{b_2}^{q^{-N_2-N_1}\lambda}$};
	\filldraw [fill=white, draw=black] (-.75,1.5) rectangle (2.75,2.25) node[midway] { $T_{b_1+b_2}^{q^{-N_1}\lambda}$};
	\filldraw [fill=white, draw=black] (-.75,.25) rectangle (4.25,1) node[midway] { $T_{b}^{\lambda}$};
	\tikzbraceop{0}{1}{4}{\small $b_2$};
	\tikzbraceop{1.5}{2.5}{4}{\small $b_1$};
}
\]
\end{exe}

Note that whenever $N + \ell \geq 0$ we can equip $T^{q^\ell \lambda}_b$ with a differential $d_N$ given by 
\[
d_N\left(
	\tikzdiagh{-1ex}{
		\draw (.5,-.5) .. controls (.5,-.25)  .. 
			(0,0) .. controls (.5,.25)  .. (.5,.5);
	           \draw[vstdhl] (0,-.5) node[below]{\small $q^{\ell} \lambda$} -- (0,.5) node [midway,nail]{};
  	}
  	\right) 
\ := \ 
	\tikzdiagh{-1ex}{
		\draw (.5,-.5) -- (.5,.5) node[midway,tikzdot]{} node[midway,xshift=4ex,yshift=.75ex]{\small $N+\ell$};
	           \draw[vstdhl] (0,-.5) node[below]{\small $q^{\ell} \lambda$} -- (0,.5);
  	}
\]
and it is compatible with the inclusion in \cref{eq:inclusionTlshift}. 

We conjecture the following:

\begin{conj}
There is a quasi-isomorphism
\[
(\dgCyclicMod_{\rho}^{\und N}, d_{N}) \xrightarrow{\simeq} (\cyclicMod_{\rho}^{\und N}, 0).
\]
\end{conj}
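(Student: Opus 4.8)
\textbf{Proof strategy for the conjecture $(\dgCyclicMod_{\rho}^{\und N}, d_{N}) \xrightarrow{\simeq} (\cyclicMod_{\rho}^{\und N}, 0)$.}

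The plan is to mimic the proof of \cref{thm:dNformal} (following \cite[Theorem 4.4]{naissevaz3}), showing that $(\dgCyclicMod_{\rho}^{\und N}, d_N)$ is formal with homology $\cyclicMod_{\rho}^{\und N}$. First I would use the iterated-tensor structure together with a basis-type theorem analogous to \cref{thm:Tbasis}: the inclusions \cref{eq:inclusionTlshift} are injective by the basis theorem, so $\predgCyclicMod_{\rho}^{\und N}$ (and hence $\dgCyclicMod_{\rho}^{\und N}$) is free over $\Bbbk$, with an explicit basis indexed by left-adjusted reduced expressions, nail data $\underline l \in \{0,1\}^b$, and dot data $\underline a \in \bN^b$, subject to the constraint that a black strand can only be nailed to the blue strand in the region where its label is compatible. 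The key point is that each nail at ``level $i$'' (between the $i$-th and $(i+1)$-st groups of red-equivalents) carries homological degree $1$ and, under $d_N$, is sent to a dot of the appropriate total degree $N_i + \cdots + N_1$ — exactly reproducing the generator $x_{\rho}^{\und N}$ after the reindexing.

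Next I would exhibit $\cyclicMod_{\rho}^{\und N}$ as a sub-dg-module of $(\dgCyclicMod_{\rho}^{\und N},d_N)$ (equivalently, as the $d_N=0$ model sitting inside the nail-free part, via the inclusion $T_b^{\und N}_b \hookrightarrow T_b^{\lambda,\und N}$ specialized along $\lambda = q^N$) and construct an explicit contracting homotopy on a complement. Concretely, decompose $\dgCyclicMod_{\rho}^{\und N}$ using the analogue of \cref{prop:Tdecomp}: every basis element either has no nail (these span a copy of $\cyclicMod_{\rho}^{\und N}$, since $x_{\rho}^{\und N}$ already appears as the product of dots obtained from $d_N$ of the nails) or has at least one nail, and the ``leftmost nail'' gives a pairing between nailed and dotted basis elements. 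This pairing is precisely the chain-contraction datum: $d_N$ sends a basis element with leftmost nail at position $k$ (level $i$) to the same element with that nail replaced by $N_i + \cdots + N_1$ dots, plus strictly lower-crossing correction terms controlled by \cref{lem:anticom_theta}. An upper-triangularity argument (induction on number of crossings, as in \cref{thm:Tbasis} and \cite[Proposition 3.13]{naissevaz3}) then shows the induced differential on the nailed/dotted-pair part is acyclic, so the homology is exactly the nail-free span, i.e.\ $H(\dgCyclicMod_{\rho}^{\und N}, d_N) \cong \cyclicMod_{\rho}^{\und N}$, concentrated in homological degree $0$. Finally, formality (the quasi-isomorphism $(\dgCyclicMod_{\rho}^{\und N}, d_N) \xrightarrow{\simeq} (\cyclicMod_{\rho}^{\und N},0)$ itself) follows because the homology sits in a single homological degree, so the projection onto the nail-free summand is a chain map realizing the quasi-isomorphism, and it is manifestly $T_b^\lambda$-linear (respecting $d_N$ on both sides).

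\textbf{Main obstacle.} The delicate step is proving that the inclusions \cref{eq:inclusionTlshift} interact correctly with the right $T_b^\lambda$-action on the iterated tensor product so that $\dgCyclicMod_{\rho}^{\und N}$ really is free over $\Bbbk$ with the expected basis — in other words, establishing the analogue of \cref{thm:Tbasis} for $\predgCyclicMod_{\rho}^{\und N}$. The tensor factors are taken over the \emph{smaller} algebras $T^{q^{-N_j-\cdots-N_1}\lambda}_{b_r + \cdots + b_{j+1}}$ acting on the \emph{left} of the next factor, so one must check that no unexpected relations collapse the naive spanning set; this requires a careful dimension count, balancing the graded dimensions of the tensor factors against the claimed basis, and verifying it is consistent with the decategorified statement — namely that $\cyclicMod_{\rho}^{\und N}$ has the graded rank predicted by the change-of-basis formulas in \cref{sec:qsltTLB}. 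Once freeness and the basis are in hand, the homotopy/acyclicity argument is routine by the same bookkeeping as in \cite{naissevaz3}.
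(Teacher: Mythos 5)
This statement is a \emph{conjecture} in the paper (the authors write ``We conjecture the following:'' immediately before it and do not prove it), so there is no proof to compare against; your proposal is an attack on an open problem, and the fact that the authors left it open should temper the claim that the remaining steps are ``routine.''

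Beyond the obstacle you acknowledge, there are two concrete errors. A nail on the blue strand labeled $q^{-N_i-\cdots-N_1}\lambda$ is sent by $d_N$ to $N_0+N_{i+1}+\cdots+N_r$ dots (since $N=\sum_j N_j$), not to $N_1+\cdots+N_i$ dots; the latter exponents are those in the premultiplier $x_\rho^{\und N}$, which encodes the classical quiver Schur idempotent and is not an output of $d_N$, so the claim that $d_N$ on nails ``reproduces'' $x_\rho^{\und N}$ conflates two different things. And the nail-free basis elements do \emph{not} span a copy of $\cyclicMod_\rho^{\und N}$: already for $r=0$, $\dgCyclicMod_\rho^{\und N}=T_{b_0}^\lambda$ has nail-free part the affine $\nh_{b_0}$, whereas $\cyclicMod_\rho^{\und N}=\nh_{b_0}^{N_0}$ is the strictly smaller cyclotomic quotient. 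The cyclotomic relation is imposed precisely because some nail-free elements are hit by $d_N$, so the quasi-isomorphism you want is a projection onto a quotient, not onto a summand of the underlying graded module, and your ``dotted'' elements are themselves nail-free, so the split you describe double-counts them. Finally, the obstacle you name understates the real difficulty: one needs the label-shifting inclusions \cref{eq:inclusionTlshift} (which shift the $\lambda$-grading between tensor factors with the same number of strands) to be flat or strongly projective so that homology commutes with the iterated tensor -- the paper establishes this only for the strand-adding inclusions, in Section~\ref{sec:catTensProd} -- and the nail/dot pairing must then be shown compatible with the identifications imposed by the tensor products over the intermediate algebras, since $d_N$ acts on every factor simultaneously. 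This is where a genuinely new argument is required, not bookkeeping.
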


\begin{lem}\label{lem:Tkndecomp}
There is a decomposition as graded vector spaces
\[
\tikzdiag[xscale=1]{
 	\draw [vstdhl] (-.5,0) node[below]{\small $q^{-n}\lambda$} -- (-.5,2.75);
 	\draw (0,0) -- (0,2.25) -- (0,2.75) node[midway, tikzdot]{} node[midway, xshift=1.5ex, yshift=.5ex]{\tiny $n$};
 	\node 	at(.5,2.375) {\tiny $\dots$};
 	\node 	at(.5,.125) {\tiny $\dots$};
 	\draw (1,0) -- (1,2.25) -- (1,2.75) node[midway, tikzdot]{} node[midway, xshift=1.5ex, yshift=.5ex]{\tiny $n$};
 	\draw (1.5,0) -- (1.5,2.25) -- (1.5,2.75) node[midway, tikzdot]{} node[midway, xshift=1.5ex, yshift=.5ex]{\tiny $n$};
	\filldraw [fill=white, draw=black] (-.75,1.5) rectangle (1.75,2.25) node[midway] { $T_{k+1}^{q^{-n}\lambda}$};
}
 \ \cong \ 
\tikzdiag[xscale=1]{
 	\draw [vstdhl] (-.5,0) node[below]{\small $q^{-n}\lambda$} -- (-.5,2.75);
 	\draw (0,0) -- (0,2.25) -- (0,2.75) node[midway, tikzdot]{} node[midway, xshift=1.5ex, yshift=.5ex]{\tiny $n$};
 	\node 	at(.5,2.375) {\tiny $\dots$};
 	\node 	at(.5,1.25) {\tiny $\dots$};
 	\node 	at(.5,.125) {\tiny $\dots$};
 	\draw (1,0) -- (1,2.25) -- (1,2.75) node[midway, tikzdot]{} node[midway, xshift=1.5ex, yshift=.5ex]{\tiny $n$};
 	\draw (1.5,0) -- (1.5,2.25) -- (1.5,2.75) node[midway, tikzdot]{} node[midway, xshift=1.5ex, yshift=.5ex]{\tiny $n$};
	\filldraw [fill=white, draw=black] (-.75,1.5) rectangle (1.25,2.25) node[midway] { $T_{k}^{q^{-n}\lambda}$};
	\filldraw [fill=white, draw=black] (-.25,.25) rectangle (1.75,1) node[midway] { $\nh_{k+1}$};
}
\ \oplus \ 
\tikzdiag[xscale=1]{
 	\draw (.5,0) -- (.5,1) .. controls (.5,1.25) and (0,1.25) .. (0,1.5)  -- (0,2.25) -- (0,2.75) node[midway, tikzdot]{} node[midway, xshift=1.5ex, yshift=.5ex]{\tiny $n$};
 	\node 	at(.5,2.375) {\tiny $\dots$};
 	\node 	at(.75,.125) {\tiny $\dots$};
 	\draw (1.5,0) -- (1.5,1) .. controls (1.5,1.25) and (1,1.25) .. (1,1.5)  -- (1,2.25) -- (1,2.75) node[midway, tikzdot]{} node[midway, xshift=1.5ex, yshift=.5ex]{\tiny $n$};
 	\draw (0,0) -- (0,1).. controls(0,1.125) .. (-.5,1.25) .. controls (1.5,1.375) ..(1.5,1.5)-- (1.5,2.25) -- (1.5,2.75) node[midway, tikzdot]{} node[midway, xshift=1.5ex, yshift=.5ex]{\tiny $n$};
 	\draw [vstdhl] (-.5,0) node[below]{\small $q^{-n}\lambda$} -- (-.5,2.75) node[pos=.45,nail]{};
	\filldraw [fill=white, draw=black] (-.75,1.5) rectangle (1.25,2.25) node[midway] { $T_{k}^{q^{-n}\lambda}$};
	\filldraw [fill=white, draw=black] (-.25,.25) rectangle (1.75,1) node[midway] { $\nh_{k+1}$};
}
\]
\end{lem}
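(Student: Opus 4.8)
The plan is to prove this decomposition by an analysis of the left-most black strand in the algebra $T_{k+1}^{q^{-n}\lambda}$, completely parallel to the decomposition in \cref{prop:Tdecomp} but specialized to the case of no red strands and with a shifted blue label. First I would fix a generating set of diagrams: by \cref{thm:Tbasis} (applied to $T_{k+1}^{q^{-n}\lambda}$, which is the algebra of \cref{def:dgwebsteralg} with $\und N = \varnothing$ up to the relabeling $\lambda \mapsto q^{-n}\lambda$), a basis is given by left-adjusted permutation diagrams decorated with dots and with a subset of black strands nailed on the blue strand. The right-hand side of the picture in the statement is precisely the subset of $T_{k+1}^{q^{-n}\lambda}$ where we isolate the $(k+1)$-st black strand (counting from the left) at the top of the diagram: either that strand goes straight down past all the other black strands and the box labeled $\nh_{k+1}$ (the $\nh_{k+1}$ being the nilHecke algebra on all $k+1$ strands, with the extra strand allowed to interact freely), which gives the first summand $T_k^{q^{-n}\lambda} \otimes \nh_{k+1}$; or that strand is pulled all the way to the left and nailed on the blue strand, which gives the second summand, where after the nail the remaining $k$ strands form a diagram in $T_k^{q^{-n}\lambda}$ and below we again have freedom in $\nh_{k+1}$.

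The key steps, in order, would be: (1) write down the two inclusion maps from the right-hand summands into $T_{k+1}^{q^{-n}\lambda}$ explicitly (they are the obvious ones suggested by the pictures—glue the $\nh_{k+1}$-diagram at the bottom, and on top either leave the last strand straight or bend it to the left and nail it); (2) check these maps are well-defined, i.e. respect the relations \eqref{eq:nhR2andR3}--\eqref{eq:nhdotslide} and \eqref{eq:relNail}—this is immediate since we are only restricting along algebra inclusions; (3) show the sum of the two images is all of $T_{k+1}^{q^{-n}\lambda}$ by taking an arbitrary basis element from \cref{thm:Tbasis} and, using braid moves and the anticommutation of tightened nails (\cref{lem:anticom_theta}), pushing the last black strand either to a straight position at the top or to a nailed position on the left, modulo terms with fewer crossings, then inducting on the number of crossings; (4) show the sum is direct, i.e. the two images intersect trivially, by a graded-dimension count: both sides are free graded $\Bbbk$-modules and one compares Hilbert series using the basis of \cref{thm:Tbasis} together with the well-known graded dimension $\gdim \nh_{k+1} = q^{-k(k+1)/2}[k+1]_q!\cdot(\text{normalization})$ and the fact that nailing costs a homological and $\lambda$-shift. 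Alternatively, step (4) can be folded into step (3): once one knows the union spans and that a basis of each summand maps to a linearly independent set (again via \cref{thm:Tbasis}), directness follows.

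The main obstacle I anticipate is bookkeeping in step (3): unlike the red-strand case of \cref{prop:Tdecomp}, here all strands are black and can cross each other freely, so when one pulls the last black strand to the left it must cross the $\nh_{k+1}$-box, and one has to argue carefully—using the nilHecke relations—that whatever it picks up on the way can be absorbed into the $\nh_{k+1}$-factor at the bottom, or produces dot terms on the straightened strand (the $\theta_k^2 = 0$-type phenomenon and the relation \eqref{eq:relNail} that lets a dot slide through a nail). This is morally the same computation as in \cite[Proposition 3.13]{naissevaz3} and \cref{prop:Tdecomp}, so I would cite those and indicate the modifications rather than redo it in full. Once the spanning and linear-independence statements are in hand the decomposition follows, and—as in \cref{prop:Tdecomp}—the isomorphism is realized by the inclusion maps, so it is a decomposition of graded $\Bbbk$-modules (and in fact of left $T_k^{q^{-n}\lambda}$-modules, which is what will be needed downstream).
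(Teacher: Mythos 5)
Your proposal is correct and follows essentially the same route as the paper, which simply invokes \cref{thm:Tbasis}: peel off the rightmost black strand at the top, sort basis elements $b_{w,\underline{l},\underline{a}}$ according to whether $l_{k+1}=0$ or $l_{k+1}=1$, and observe that the two resulting families of basis elements are exactly parametrized by the two pictured summands, which gives spanning and directness simultaneously (your ``alternative'' version of step (4), which is the right one to use). Your step (3)/(4) split and the aside about $\gdim\nh_{k+1}$ are unnecessary once you phrase things directly in terms of \cref{thm:Tbasis}, but they do not introduce any gap.
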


\begin{proof}
The claim follows from \cref{thm:Tbasis}. 
\end{proof}

\begin{prop}\label{prop:incdgcyclmod1}
Suppose $\rho$ and $\rho'$ are such that $b_i = b_i'$ for all $0 \leq i \leq m$ except $i = j$ and $i = j+1$ where they respect $b_j = b_j' - 1$ and $b_{j+1} = b_{j+1}' + 1$. 
Then there is an inclusion of right dg-modules
\[
\dgCyclicMod_{\rho}^{\und N}  \hookrightarrow \dgCyclicMod_{\rho'}^{\und N}.
\]
\end{prop}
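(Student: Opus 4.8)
The statement asserts an inclusion of right dg-modules $\dgCyclicMod_{\rho}^{\und N} \hookrightarrow \dgCyclicMod_{\rho'}^{\und N}$ when $\rho'$ is obtained from $\rho$ by moving one black strand from block $j$ to block $j+1$. The idea is to construct this map diagrammatically: an element of $\dgCyclicMod_{\rho}^{\und N}$ is represented by a stacked diagram with black strands grouped into blocks $b_0, \dots, b_m$, where the block $b_{j+1}$ of strands (counting appropriately) carries $x_\rho^{\und N}$-type dots (powers $N_{j+1} + \cdots + N_1$) and is placed below the tensor factor $T^{q^{-N_j - \cdots - N_1}\lambda}$. To map into $\dgCyclicMod_{\rho'}^{\und N}$, I will take the left-most black strand of the $j$-th block and pull it past the appropriate red-free region, sliding it down so that it becomes the right-most strand of the $(j+1)$-th block, while adding the extra dots (a power of $x$ equal to $N_j + \cdots + N_1$) needed to land in the correct cyclic submodule. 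The defining relation \eqref{eq:redR2} (which exchanges a double crossing for a dot to the power $N_i$) is exactly what makes this bookkeeping consistent: pulling a black strand through the ``virtual'' red strand labeled $N_j$ contributes $x^{N_j}$, matching the difference between $x_\rho^{\und N}$ and $x_{\rho'}^{\und N}$ restricted to that strand.

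\textbf{Key steps, in order.} First I would set up notation precisely: write down the generator of $\dgCyclicMod_{\rho}^{\und N}$ as the image of $x_\rho^{\und N} \otimes 1 \otimes \cdots \otimes 1$ under the composite of the inclusions $T^{q^\ell \lambda}_b \hookrightarrow T^{q^{\ell'}\lambda}_{b'}$ from \cref{eq:inclusionTlshift}, and identify explicitly which dots sit on which strands. Second, I would define the candidate map $\iota$ on this generator by the diagrammatic ``pull the strand down one block'' move described above, multiplying by the needed power of $x$ on the relevant strand; then extend $\iota$ as a right $T_b^\lambda$-module map (this is automatic once defined on the cyclic generator, provided it is well-defined). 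Third, I would check well-definedness: the only relations that could be violated are the nilHecke and black/red relations plus \eqref{eq:relNail}, and since the map is built from the honest algebra inclusions \eqref{eq:inclusionTlshift} composed with multiplication by a polynomial, this reduces to verifying compatibility of the dot-shifting with \eqref{eq:redR2} and the dot-slide relation \eqref{eq:nhdotslide} — a short computation. Fourth, I would verify $\iota$ is injective: appeal to the basis theorem \cref{thm:Tbasis} (together with \cref{lem:Tkndecomp}) to see that the spanning set of $\dgCyclicMod_{\rho}^{\und N}$ maps to part of the analogous spanning set of $\dgCyclicMod_{\rho'}^{\und N}$, with the dot-pulling move inducing a triangular (unitriangular after normalization) change of coordinates. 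Fifth, I would check $\iota$ commutes with the differential: $d_N$ acts nontrivially only on nails via \cref{sec:dgenh}, and since $\iota$ does not create or destroy nails (it only relocates a black strand and adds dots, both $d_N$-closed operations), the compatibility $d_N \circ \iota = \iota \circ d_N$ follows from the Leibniz rule; I should also note it is compatible with the trivial differential, handling both cases at once.

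\textbf{Main obstacle.} The delicate point will be step three combined with the precise combinatorics of which strand to pull and which dots to add so that the image actually lands in $\dgCyclicMod_{\rho'}^{\und N} = x_{\rho'}^{\und N}\predgCyclicMod_{\rho'}^{\und N}$ rather than in a larger module. The powers $x_{\rho}^{\und N}$ versus $x_{\rho'}^{\und N}$ differ on exactly the strand being moved, and the tensor-factor structure of $\predgCyclicMod$ changes (the moved strand now passes through one fewer of the nested algebras $T^{q^{-N_{i}-\cdots-N_1}\lambda}$), so I need to track carefully that the exponent discrepancy is precisely absorbed by the crossings introduced when sliding the strand, using \eqref{eq:redR2}. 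I expect this to be a bit lengthy but entirely mechanical; once the move is pinned down correctly, well-definedness and injectivity fall out of \cref{thm:Tbasis}, and differential-compatibility is essentially formal. I would leave the routine verifications to the reader in the final write-up, as is done for the analogous statements earlier in the paper.
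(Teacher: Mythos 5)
You have the direction reversed, and that reversal hides the real difficulty. The hypothesis reads $b_j = b_j' - 1$ and $b_{j+1} = b_{j+1}' + 1$, so $\rho'$ has \emph{one more} strand in block $j$ and \emph{one fewer} in block $j+1$ than $\rho$; passing from $\rho$ to $\rho'$, a strand moves from block $j+1$ to block $j$, i.e.\ toward \emph{fewer} dots, not the other way. The dot power on that strand therefore \emph{decreases} by $N_{j+1}$ (not increases by $N_j + \cdots + N_1$ as in your plan, nor by $N_j$ as in your "key steps"), and the inclusion is degree-preserving: there is no strand to slide and no dot to add. The map you describe — moving a strand into a deeper block and multiplying by extra dots — is precisely \cref{prop:incdgcyclmod2}, which goes the \emph{other} way $\dgCyclicMod_{\rho'}^{\und N} \to \dgCyclicMod_{\rho}^{\und N}$, has nonzero $q$-degree $2N_{j+1}$, and is not claimed to be injective. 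Moreover \eqref{eq:redR2}, which you invoke to account for the dot discrepancy, is simply not available: the algebras $T^{q^\ell\lambda}_k$ appearing in $\predgCyclicMod_\rho^{\und N}$ contain no red strands, and the extra dots in this section come from the definition of the inclusion \eqref{eq:inclusionTlshift}, not from a red/black double crossing.

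The actual content of the proposition is that, after identifying both modules with their images in $T^\lambda_b$ under the evaluation of the nested tensor product, every element of $\dgCyclicMod_\rho^{\und N}$ already lies in $\dgCyclicMod_{\rho'}^{\und N}$; the inclusion is the identity on elements, and the work is to verify containment of a generating set. The entire difficulty sits in the nails — which your step five waves away ("$\iota$ does not create or destroy nails, so differential-compatibility is formal"). In the local picture, the top factor of $\predgCyclicMod_\rho^{\und N}$ is $T_{k+1}^{q^{-n}\lambda}$, which permits a $q^{-n}\lambda$-labeled nail on the $(k+1)$-th black strand, while $\predgCyclicMod_{\rho'}^{\und N}$ has top factor only $T_k^{q^{-n}\lambda}$; the only way to produce a nail on that strand in $\dgCyclicMod_{\rho'}^{\und N}$ is from the bottom factor $T_b^\lambda$, a $\lambda$-nail, which \eqref{eq:inclusionTlshift} converts to a $q^{-n}\lambda$-nail together with $n$ extra dots. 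Reconciling these $n$ dots \emph{is} the proof: the paper applies \cref{lem:Tkndecomp} to split $T_{k+1}^{q^{-n}\lambda}$ into a nail-free summand (landing obviously in $\dgCyclicMod_{\rho'}^{\und N}$) and a nailed summand, then uses the dot-slide relation \eqref{eq:nhdotslide} to migrate the $n$ dots onto the nailed strand and to push the correction crossings up into $T_k^{q^{-n}\lambda}$. Your plan does not touch this step, and without it there is no proof.
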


\begin{proof}
We can work locally, and thus we want to prove that
\[
G_2 :=
\tikzdiag[xscale=1.25]{
 	\draw [vstdhl] (-.5,1.25) node[below]{\small $q^{-n} \lambda $} -- (-.5,4);
 	\draw (0,1.25) -- (0,2.25) -- (0,2.75)
 		-- (0,3.5) -- (0,4) node[midway, tikzdot]{} node[midway, xshift=1.5ex, yshift=.5ex]{\tiny $n$};
 	\node 	at(.5,3.625) {\tiny $\dots$};
 	\node 	at(.5,1.375) {\tiny $\dots$};
 	\draw (1,1.25) -- (1,2.25) -- (1,2.75) 
 		-- (1,3.5) -- (1,4) node[midway, tikzdot]{}  node[midway, xshift=1.5ex, yshift=.5ex]{\tiny $n$};
 	\draw (1.5,1.25) -- (1.5,2.25) -- (1.5,2.75)
 		-- (1.5,3.5) -- (1.5,4) node[midway, tikzdot]{}  node[midway, xshift=1.5ex, yshift=.5ex]{\tiny $n$};
 	\draw (2,1.25) -- (2,2.25) -- (2,2.75)
 		 -- (2,4);
 	\node 	at(2.5,3.625) {\tiny $\dots$};
 	\node 	at(2.5,1.375) {\tiny $\dots$};
 	\draw (3,1.25)  -- (3,2.25) -- (3,2.75) 
 		-- (3,4);
	\filldraw [fill=white, draw=black] (-.75,2.75) rectangle (1.75,3.5) node[midway] { $T_{k+1}^{q^{-n}\lambda}$};
	\filldraw [fill=white, draw=black] (-.75,1.5) rectangle (3.25,2.25) node[midway] { $T_{b}^{\lambda}$};
	\tikzbraceop{0}{1}{4}{\small $k$};
}
 \ \subset  \ 
\tikzdiag[xscale=1.25]{
 	\draw [vstdhl] (-.5,1.25) node[below]{\small $q^{-n}\lambda$} -- (-.5,4);
 	\draw (0,1.25) -- (0,2.25) -- (0,2.75) 
 		-- (0,3.5) -- (0,4) node[midway, tikzdot]{} node[midway, xshift=1.5ex, yshift=.5ex]{\tiny $n$};
 	\node 	at(.5,3.625) {\tiny $\dots$};
 	\node 	at(.5,1.375) {\tiny $\dots$};
 	\draw (1,1.25) -- (1,2.25) -- (1,2.75)
 		-- (1,3.5) -- (1,4) node[midway, tikzdot]{}  node[midway, xshift=1.5ex, yshift=.5ex]{\tiny $n$};
 	\draw (1.5,1.25) -- (1.5,2.25) -- (1.5,2.75)
 		-- (1.5,3.5) -- (1.5,4);
 	\draw (2,1.25) -- (2,2.25) -- (2,2.75)
 		 -- (2,4);
 	\node 	at(2.5,3.625) {\tiny $\dots$};
 	\node 	at(2.5,1.375) {\tiny $\dots$};
 	\draw (3,1.25)  -- (3,2.25) -- (3,2.75) 
 		-- (3,4);
	\filldraw [fill=white, draw=black] (-.75,2.75) rectangle (1.25,3.5) node[midway] { $T_{k}^{q^{-n}\lambda}$};
	\filldraw [fill=white, draw=black] (-.75,1.5) rectangle (3.25,2.25) node[midway] { $T_{b}^{\lambda}$};
	\tikzbraceop{0}{1}{4}{\small $k$};
}
=: G_1
\]
We apply \cref{lem:Tkndecomp} on $T^{q^{-n}\lambda}_{k+1}$ inside $G_2$. The left summand is clearly in $G_1$. For the right summand, it is less clear since the nails in $T^{\lambda}_{b}$ all acts by adding a nail and $n$ dots on the blue strand labeled $q^{-n}\lambda$. Thus, we want to show that 
\[
\tikzdiagh{0}{
	\draw (0,-.5) .. controls (0,-.375) .. (-.5,-.25) .. controls (0,-.125) .. (0,0) .. controls (0,.5) and (1.5,.5) .. (1.5,1) -- (1.5,1.25) node[pos=.2,tikzdot]{} node[pos=.2, xshift=1ex,yshift=.5ex]{\tiny $n$};
	\draw (.5,-.5)--(.5,0) .. controls (.5,.5) and (0,.5) .. (0,1) -- (0,1.25)  node[pos=.2,tikzdot]{} node[pos=.2, xshift=1ex,yshift=.5ex]{\tiny $n$};
	\node at (1,-.25) {\small $\cdots$};
	\node at (.65,.75) {\small $\cdots$};
	\draw (1.5,-.5)--(1.5,0) .. controls (1.5,.5) and (1,.5) .. (1,1) -- (1,1.25)  node[pos=.2,tikzdot]{} node[pos=.2, xshift=1ex,yshift=.5ex]{\tiny $n$};
	\tikzbrace{.5}{1.5}{-.5}{\small $k$};
	\draw (2,-.5) -- (2,1.25);
	\node at (2.5,.375) {\small $\cdots$};
	\draw (3,-.5) -- (3,1.25);
	\draw[vstdhl] (-.5,-.5) node[below]{\small $q^{-n}\lambda$} -- (-.5,0) node[midway,nail]{} -- (-.5,1.25);
}
\ \in G_1.  
\]
By \cref{eq:nhdotslide}, we have
\begin{align*}
\tikzdiagh{0}{
	\draw (0,-.5) .. controls (0,-.375) .. (-.5,-.25) .. controls (0,-.125) .. (0,0) .. controls (0,.5) and (1.5,.5) .. (1.5,1) -- (1.5,1.25) node[pos=.2,tikzdot]{} node[pos=.2, xshift=1ex,yshift=.5ex]{\tiny $n$};
	\draw (.5,-.5)--(.5,0) .. controls (.5,.5) and (0,.5) .. (0,1) -- (0,1.25)  node[pos=.2,tikzdot]{} node[pos=.2, xshift=1ex,yshift=.5ex]{\tiny $n$};
	\node at (1,-.25) {\small $\cdots$};
	\node at (.65,.75) {\small $\cdots$};
	\draw (1.5,-.5)--(1.5,0) .. controls (1.5,.5) and (1,.5) .. (1,1) -- (1,1.25)  node[pos=.2,tikzdot]{} node[pos=.2, xshift=1ex,yshift=.5ex]{\tiny $n$};
	\tikzbrace{.5}{1.5}{-.5}{\small $k$};
	\draw[vstdhl] (-.5,-.5) node[below]{\small $q^{-n}\lambda$} -- (-.5,0) node[midway,nail]{} -- (-.5,1.25);
}
& \ =  \ 
\tikzdiagh{0}{
	\draw (0,-.5) .. controls (0,-.375) .. (-.5,-.25) .. controls (0,-.125) .. (0,0) .. controls (0,.5) and (1.5,.5) .. (1.5,1) node[pos=.1,tikzdot]{} node[pos=.1, xshift=-1ex,yshift=.5ex]{\tiny $n$}  -- (1.5,1.25) ;
	\draw (.5,-.5)--(.5,0) .. controls (.5,.5) and (0,.5) .. (0,1) -- (0,1.25)  node[pos=.2,tikzdot]{} node[pos=.2, xshift=1ex,yshift=.5ex]{\tiny $n$};
	\node at (1,-.25) {\small $\cdots$};
	\node at (.65,.75) {\small $\cdots$};
	\draw (1.5,-.5)--(1.5,0) .. controls (1.5,.5) and (1,.5) .. (1,1) -- (1,1.25)  node[pos=.2,tikzdot]{} node[pos=.2, xshift=1ex,yshift=.5ex]{\tiny $n$};
	\tikzbrace{.5}{1.5}{-.5}{\small $k$};
	\draw[vstdhl] (-.5,-.5) node[below]{\small $q^{-n}\lambda$} -- (-.5,0) node[midway,nail]{} -- (-.5,1.25);
}
\ - \sum_{\ell = 0}^k \sssum{r+s \\= n-1} \ 
\tikzdiagh{0}{
	\draw (0,-.5) .. controls (0,-.375) .. (-.5,-.25) .. controls (0,-.125) .. (0,0) .. controls (0,.5) and (1.5,.5) .. (1.5,1) node[pos=.8,tikzdot]{} node[pos=.8, xshift=1ex,yshift=-.5ex]{\tiny $r$}
	 -- (1.5,1.25) node[pos=.2,tikzdot]{} node[pos=.2, xshift=1ex,yshift=.5ex]{\tiny $n$};
	\draw (.5,-.5)--(.5,0) .. controls (.5,.5) and (0,.5) .. (0,1) -- (0,1.25)  node[pos=.2,tikzdot]{} node[pos=.2, xshift=1ex,yshift=.5ex]{\tiny $n$};
	\node at (1,-.25) {\small $\cdots$};
	\node at (.65,.75) {\small $\cdots$};
	\draw (1.5,-.5)--(1.5,0) .. controls (1.5,.5) and (1,.5) .. (1,1) -- (1,1.25)  node[pos=.2,tikzdot]{} node[pos=.2, xshift=1ex,yshift=.5ex]{\tiny $n$};
	\tikzbrace{.5}{1.5}{-.5}{\small $\ell$};
	\draw (2,-.5)  -- (2,0) .. controls (2,.5) and (3.5,.5) .. (3.5,1)  node[pos=.1,tikzdot]{} node[pos=.1, xshift=-1ex,yshift=.5ex]{\tiny $s$} -- (3.5,1.25);
	\draw (2.5,-.5)--(2.5,0) .. controls (2.5,.5) and (2,.5) .. (2,1) -- (2,1.25)  node[pos=.2,tikzdot]{} node[pos=.2, xshift=1ex,yshift=.5ex]{\tiny $n$};
	\node at (3,-.25) {\small $\cdots$};
	\node at (2.65,.75) {\small $\cdots$};
	\draw (3.5,-.5)--(3.5,0) .. controls (3.5,.5) and (3,.5) .. (3,1) -- (3,1.25)  node[pos=.2,tikzdot]{} node[pos=.2, xshift=1ex,yshift=.5ex]{\tiny $n$};
	\draw[vstdhl] (-.5,-.5) node[below]{\small $q^{-n}\lambda$} -- (-.5,0) node[midway,nail]{} -- (-.5,1.25);
}
\end{align*}
The term of the left is clearly in $G_1$ since there are $n$ dots next to the nail, so that it can be obtained from a nail in $T^{\lambda}_{b}$. The terms on the right are also in $G_1$ since we can slide the nail and crossings on the left to the top, into $T_k^{q^{-n}\lambda}$. 
\end{proof}

Consider $(x_1^n \cdots x_{k}^n) T^{q^{-n}\lambda}_k \otimes_{T^{\lambda}_k} T^{\lambda}_b$. We obtain an inclusion 
\[
(x_1^n \cdots x_{k}^n) T^{q^{-n}\lambda}_k \hookrightarrow (x_1^n \cdots x_k^n x_{k+1}^n) T^{q^{-n}\lambda}_{k+1},
\]
 of q-degree $2n$ by adding a vertical strand on the right on which we put $n$ dots (again, the fact it is an inclusion follows immediatly from \cref{thm:Tbasis}). In turns, it gives rise to a map of right (dg-)modules $(x_1^n \cdots x_{k}^n) T^{q^{-n}\lambda}_k \otimes_{T^{\lambda}_k} T^{\lambda}_b \rightarrow (x_1^n \cdots x_{k}^n x_{k+1}^n) T^{q^{-n}\lambda}_{k+1} \otimes_{T^{\lambda}_{k+1}} T^{\lambda}_b$. 
In terms of diagrams, we can picture the inclusion above as:
\[
\tikzdiag[xscale=1]{
 	\draw [vstdhl] (-.5,0) node[below]{\small $q^{-n}\lambda$} -- (-.5,2.75);
 	\draw (0,0) -- (0,2.25) -- (0,2.75) node[midway, tikzdot]{} node[midway, xshift=1.5ex, yshift=.5ex]{\tiny $n$};
 	\node 	at(.5,2.375) {\tiny $\dots$};
 	\node 	at(.5,.125) {\tiny $\dots$};
 	\draw (1,0) -- (1,2.25) -- (1,2.75) node[midway, tikzdot]{} node[midway, xshift=1.5ex, yshift=.5ex]{\tiny $n$};
 	\draw (1.5,0) -- (1.5,2.25) -- (1.5,2.75);
 	\draw (2,0) -- (2,2.25) -- (2,2.75);
 	\node 	at(2.5,2.375) {\tiny $\dots$};
 	\node 	at(2.5,.125) {\tiny $\dots$};
 	\draw (3,0)  -- (3,2.25) -- (3,2.75);
	\filldraw [fill=white, draw=black] (-.75,1.5) rectangle (1.25,2.25) node[midway] { $T_{k}^{q^{-n}\lambda}$};
	\filldraw [fill=white, draw=black] (-.75,.25) rectangle (3.25,1) node[midway] { $T_{b}^{\lambda}$};
	\tikzbraceop{0}{1}{2.75}{\small $k$};
}
\ \mapsto 
\tikzdiag[xscale=1]{
 	\draw [vstdhl] (-.5,0) node[below]{\small $q^{-n}\lambda$} -- (-.5,2.75);
 	\draw (0,0) -- (0,2.25) -- (0,2.75) node[midway, tikzdot]{} node[midway, xshift=1.5ex, yshift=.5ex]{\tiny $n$};
 	\node 	at(.5,2.375) {\tiny $\dots$};
 	\node 	at(.5,.125) {\tiny $\dots$};
 	\draw (1,0) -- (1,2.25) -- (1,2.75) node[midway, tikzdot]{} node[midway, xshift=1.5ex, yshift=.5ex]{\tiny $n$};
 	\draw (1.5,0) -- (1.5,2.25) -- (1.5,2.75)  node[midway, tikzdot]{} node[midway, xshift=1.5ex, yshift=.5ex]{\tiny $n$};
 	\draw (2,0) -- (2,2.25) -- (2,2.75);
 	\node 	at(2.5,2.375) {\tiny $\dots$};
 	\node 	at(2.5,.125) {\tiny $\dots$};
 	\draw (3,0)  -- (3,2.25) -- (3,2.75);
	\filldraw [fill=white, draw=black] (-.75,1.5) rectangle (1.25,2.25) node[midway] { $T_{k}^{q^{-n}\lambda}$};
	\filldraw [fill=white, draw=black] (-.75,.25) rectangle (3.25,1) node[midway] { $T_{b}^{\lambda}$};
	\tikzbraceop{0}{1}{2.75}{\small $k$};
}
\ \subset \ 
\tikzdiag[xscale=1]{
 	\draw [vstdhl] (-.5,0) node[below]{\small $q^{-n}\lambda$} -- (-.5,2.75);
 	\draw (0,0) -- (0,2.25) -- (0,2.75) node[midway, tikzdot]{} node[midway, xshift=1.5ex, yshift=.5ex]{\tiny $n$};
 	\node 	at(.5,2.375) {\tiny $\dots$};
 	\node 	at(.5,.125) {\tiny $\dots$};
 	\draw (1,0) -- (1,2.25) -- (1,2.75) node[midway, tikzdot]{} node[midway, xshift=1.5ex, yshift=.5ex]{\tiny $n$};
 	\draw (1.5,0) -- (1.5,2.25) -- (1.5,2.75)  node[midway, tikzdot]{} node[midway, xshift=1.5ex, yshift=.5ex]{\tiny $n$};
 	\draw (2,0) -- (2,2.25) -- (2,2.75);
 	\node 	at(2.5,2.375) {\tiny $\dots$};
 	\node 	at(2.5,.125) {\tiny $\dots$};
 	\draw (3,0)  -- (3,2.25) -- (3,2.75);
	\filldraw [fill=white, draw=black] (-.75,1.5) rectangle (1.75,2.25) node[midway] { $T_{k+1}^{q^{-n}\lambda}$};
	\filldraw [fill=white, draw=black] (-.75,.25) rectangle (3.25,1) node[midway] { $T_{b}^{\lambda}$};
	\tikzbraceop{0}{1}{2.75}{\small $k$};
}
\]
This generalizes into the following proposition:

\begin{prop}\label{prop:incdgcyclmod2}
Under the same hypothesis as in \cref{prop:incdgcyclmod1}, we obtain a map of right dg-modules
\[
\dgCyclicMod_{\rho'}^{\und N} \rightarrow \dgCyclicMod_{\rho}^{\und N},
\]
of $q$-degree $2N_{j+1}$, diagrammatically given by gluing on top the dots $x_{b_r' + \cdots + b_{j+1}'+1} ^{N_{j+1}}$.
\end{prop}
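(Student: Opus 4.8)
The plan is to reduce the statement, exactly as in the proof of \cref{prop:incdgcyclmod1}, to a purely local claim about the tensored-up dg-enhanced nilHecke algebras $T_k^{q^{-n}\lambda}$ and $T_b^\lambda$, and then to check that the map described diagrammatically is well-defined, $T_b^\lambda$-linear, compatible with the differential $d_N$, and has the asserted $q$-degree.

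First I would set up the local picture. Writing $\rho$ and $\rho'$ as in the hypothesis, the two cyclic modules $\dgCyclicMod_{\rho}^{\und N}$ and $\dgCyclicMod_{\rho'}^{\und N}$ differ only in the tensor factors between positions $j$ and $j+1$: concretely they are both obtained from the same outer tensor factors by inserting either $T_{k}^{q^{-n}\lambda}\otimes_{T_k^\lambda}(\cdots)$ or $T_{k+1}^{q^{-n}\lambda}\otimes_{T_{k+1}^\lambda}(\cdots)$ in the relevant spot, where $k = b_r' + \cdots + b_{j+1}'$ and $n = N_j + \cdots + N_1$. So it suffices to produce a map of right dg-modules
\[
(x_1^n\cdots x_k^n x_{k+1}^n)\,T_{k+1}^{q^{-n}\lambda}\otimes_{T_{k+1}^\lambda} T_b^\lambda \;\longrightarrow\; (x_1^n\cdots x_k^n)\,T_{k}^{q^{-n}\lambda}\otimes_{T_{k}^\lambda} T_b^\lambda
\]
of $q$-degree $2N_{j+1}$, and then tensor it on both sides with the fixed outer factors (tensoring preserves $q$-degree and commutes with the differentials, using that these outer inclusions are flat by \cref{thm:Tbasis}).

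The map itself is given by gluing the dots $x_{k+1}^{N_{j+1}}$ on top of the $(k{+}1)$-st black strand — equivalently, by the composite of the inclusion $T_{k+1}^{q^{-n}\lambda}\hookrightarrow T_{k+1}^{q^{-(n-N_{j+1})}\lambda}$ from \cref{eq:inclusionTlshift} (which adds $N_{j+1}$ dots to every nail, hence has $q$-degree $2N_{j+1}$), followed by the identification of $(x_1^n\cdots x_k^n x_{k+1}^n)T_{k+1}^{q^{-(n-N_{j+1})}\lambda}$ with a submodule of $(x_1^n\cdots x_k^n)T_k^{q^{-(n-N_{j+1})}\lambda}\otimes_{T_k^\lambda}T_{k+1}^\lambda$ in the style of \cref{lem:Tkndecomp}, and finally the outer $T_k^\lambda$-base change — the bookkeeping here is essentially the same as in \cref{prop:incdgcyclmod1} and I would spell it out by the same diagrammatic manipulations. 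Well-definedness (that the gluing respects the defining relations of $T_{k+1}^{q^{-n}\lambda}$ in the quotient) follows because adding dots on the right-most strand commutes with all the generators acting on the other strands, and right $T_b^\lambda$-linearity is immediate since we glue on the \emph{top}. Degree count: $x_{k+1}^{N_{j+1}}$ contributes $2N_{j+1}$ to the $q$-grading by \cref{eq:KLRWgrading}, as claimed.

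The main obstacle I expect is verifying compatibility with the differential $d_N$. The differential is non-trivial only on nails, and a nail on the blue $q^{-n}\lambda$-strand of $T_{k+1}^{q^{-n}\lambda}$ maps under $d_N$ to $N - n$ dots on the leftmost black strand (with the appropriate specialisation of the $\lambda$-grading). One must check that the square
\[
d_N\bigl(\,\cdot\, x_{k+1}^{N_{j+1}}\bigr) \;=\; \bigl(d_N(\,\cdot\,)\bigr)\, x_{k+1}^{N_{j+1}}
\]
commutes on a generating set, which comes down to the fact that $d_N$ is $T_b^\lambda$-linear on the right and that sliding $x_{k+1}^{N_{j+1}}$ past the image $d_N(\text{nail})$ (a dot on a \emph{black} strand) is free of corrections — here one uses \cref{eq:nhdotslide} and the observation that the extra strand carrying the $N_{j+1}$ dots is exactly the strand created by \cref{eq:inclusionTlshift}, so no new crossings with the nail arise. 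This is a short but slightly delicate computation, and once it is done the proposition follows; I would present it by the same reduction-plus-local-check format used in \cref{prop:incdgcyclmod1} and \cref{prop:incdgcyclmod2}'s statement, and I expect the conjectural quasi-isomorphism of the preceding conjecture is \emph{not} needed for this statement.
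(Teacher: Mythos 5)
Your high-level idea — gluing $N_{j+1}$ dots at position $b_r'+\cdots+b_{j+1}'+1$, reducing to a local statement, the degree count, and the Leibniz-rule verification of $d_N$-compatibility — is correct, and you are right that the conjecture preceding the proposition is not needed. (The paper gives no formal proof of this proposition, only the descriptive paragraph and illustration above it.) However, your local set-up contains two genuine errors.

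First, the direction of the local map is reversed. With $k=b_r'+\cdots+b_{j+1}'$, the module $\dgCyclicMod_{\rho'}^{\und N}$ involves the factor $T_{k}^{q^{-n}\lambda}$ (smaller) and $\dgCyclicMod_{\rho}^{\und N}$ involves $T_{k+1}^{q^{-n}\lambda}$ (larger); compare the proof of \cref{prop:incdgcyclmod1}, where $G_2$ (using $T_{k+1}$) is $\dgCyclicMod_\rho$ and $G_1$ (using $T_k$) is $\dgCyclicMod_{\rho'}$. Since the proposition asks for a map $\dgCyclicMod_{\rho'}\to\dgCyclicMod_\rho$, the local map must go from the $T_k$-side to the $T_{k+1}$-side — exactly the direction of the paper's displayed inclusion $(x_1^n\cdots x_k^n)T_k^{q^{-n}\lambda}\hookrightarrow(x_1^n\cdots x_{k+1}^n)T_{k+1}^{q^{-n}\lambda}$. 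You wrote the map from the $T_{k+1}$-side to the $T_k$-side, which is the direction of \cref{prop:incdgcyclmod1}, not of this proposition. As written your map cannot even be ``gluing $x_{k+1}^{N_{j+1}}$ on top'': the proposed codomain $(x_1^n\cdots x_k^n)T_k^{q^{-n}\lambda}\otimes T_b^\lambda$ has \emph{fewer} dots than the domain, while dot multiplication only adds them.

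Second, the claimed ``equivalent'' factorisation through \cref{eq:inclusionTlshift} is incorrect. The inclusion $T_b^{q^\ell\lambda}\hookrightarrow T_{b'}^{q^{\ell'}\lambda}$ exists only for $\ell\geq\ell'$; the arrow $T_{k+1}^{q^{-n}\lambda}\hookrightarrow T_{k+1}^{q^{-(n-N_{j+1})}\lambda}$ you propose requires $-n\geq -(n-N_{j+1})$, i.e.\ $N_{j+1}\leq 0$, so it does not exist (the existing inclusion goes the other way). More importantly, that inclusion is \emph{degree-preserving} in the $\bZ^2$-grading: replacing a $q^\ell\lambda$-nail by a $q^{\ell'}\lambda$-nail together with $\ell-\ell'$ dots leaves the total degree $(2\ell,2)$ unchanged. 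So it cannot account for the $q$-degree $2N_{j+1}$. That degree comes from the genuinely separate step of left-multiplying by the explicit dots $x_{b_r'+\cdots+b_{j+1}'+1}^{N_{j+1}}$ (each dot contributing $(2,0)$), combined with the degree-preserving strand-adding inclusion $T_k^{q^{-n}\lambda}\hookrightarrow T_{k+1}^{q^{-n}\lambda}$ of the differing factor. Keeping these two steps separate — and orienting the whole thing from $T_k$ to $T_{k+1}$ — gives the correct description; the Leibniz-rule check you sketch then goes through as you say.
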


\subsubsection{Dg-quiver Schur algebra}

\begin{defn}
We define the \emph{dg-quiver Schur algebras} as
\[
(\dgQuiverSchur^{\und N}_b, d_N) 
:= 
\END^{dg}_{(T^{\lambda}_b, d_N)} \left(
\bigoplus_{\rho \in \mathcal{P}_b^r} q^{-\deg_q(x_\rho^{\und N})/2}(\dgCyclicMod_{\rho}^{\und N} , d_N)
\right),
\]
and
\[
(\dgQuiverSchur^{\und N}_b,0) := \END^{dg}_{(T^{\lambda}_b, 0)} \left(
\bigoplus_{\rho \in \mathcal{P}_b^r} q^{-\deg_q(x_\rho^{\und N})/2}(\dgCyclicMod_{\rho}^{\und N}, 0)
\right),
\]
where $\END^{dg}$ is the $\bZ^2$-graded ($\bZ$-graded in the first case) dg-endomorphism ring (see \cref{sec:classicalhomandtensor}).  
We also define a reduced version as
\[
(\mindgQuiverSchur^{\und N}_b,0) := \END^{dg}_{(T^{\lambda}_b, 0)} \left(
\bigoplus_{\rho \in \mathcal{P}_b^{r, \und N}} q^{-\deg_q(x_\rho^{\und N})/2}(\dgCyclicMod_{\rho}^{\und N}, 0) 
\right).
\]
\end{defn}

\begin{conj}\label{conj:dgSchurCycSchur}
There is a quasi-isomorphism
\[
(\dgQuiverSchur^{\und N}_b, d_N) 
\xrightarrow{\simeq}
(\quiverSchur^{\und N}_b,0). 
\]
\end{conj}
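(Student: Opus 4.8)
\textbf{Proof plan for Conjecture~\ref{conj:dgSchurCycSchur}.}
The strategy is to reduce the statement about endomorphism dg-algebras to the (conjectural) quasi-isomorphism $(\dgCyclicMod_{\rho}^{\und N}, d_N) \xrightarrow{\simeq} (\cyclicMod_{\rho}^{\und N}, 0)$ at the level of the generating dg-modules, and then show that this quasi-isomorphism is compatible enough with the maps in \cref{prop:incdgcyclmod1} and \cref{prop:incdgcyclmod2} to induce an isomorphism after passing to $\END^{dg}$. Concretely, write $\dgCyclicMod^{\und N} := \bigoplus_{\rho \in \mathcal{P}_b^r} q^{-\deg_q(x_\rho^{\und N})/2}(\dgCyclicMod_{\rho}^{\und N}, d_N)$ and $\cyclicMod^{\und N} := \bigoplus_{\rho \in \mathcal{P}_b^r} q^{-\deg_q(x_\rho^{\und N})/2}(\cyclicMod_{\rho}^{\und N},0)$. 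The first step is to establish (or cite, once the first conjecture is granted) that each $(\dgCyclicMod_{\rho}^{\und N}, d_N)$ is strongly projective as a right $(T^{\lambda}_b, d_N)$-module, by the same kind of argument as in \cref{prop:stronglyproj}, using the decomposition of \cref{lem:Tkndecomp} iteratively; this guarantees that $\END^{dg}_{(T^{\lambda}_b,d_N)}$ computes the derived $\RHOM$, so that a quasi-isomorphism of the modules indeed induces a quasi-isomorphism of endomorphism dg-algebras.

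The second step is to upgrade the componentwise quasi-isomorphisms $f_\rho : (\dgCyclicMod_{\rho}^{\und N}, d_N) \xrightarrow{\simeq} (\cyclicMod_{\rho}^{\und N}, 0)$ to a statement about $\HOM$-complexes. Since $(T^{\lambda}_b, d_N)$ is formal with homology $\nh_b^N$ (\cref{thm:dNformal}) and each generating module is strongly projective, one gets
\[
\RHOM_{(T^{\lambda}_b, d_N)}(\dgCyclicMod_{\rho}^{\und N}, \dgCyclicMod_{\rho'}^{\und N}) \simeq \RHOM_{\nh_b^N}(\cyclicMod_{\rho}^{\und N}, \cyclicMod_{\rho'}^{\und N}) = \HOM_{\nh_b^N}(\cyclicMod_{\rho}^{\und N}, \cyclicMod_{\rho'}^{\und N}),
\]
the last equality because $\cyclicMod_{\rho}^{\und N}$ is a projective $\nh_b^N$-module and the Hom between projectives over an ordinary algebra is concentrated in homological degree $0$. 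Summing over $\rho, \rho'$ and taking homology gives $H(\dgQuiverSchur^{\und N}_b, d_N) \cong \quiverSchur^{\und N}_b$ as graded vector spaces. One then checks that the identification is multiplicative: the composition in $\dgQuiverSchur^{\und N}_b$ is the usual composition of $\HOM$-complexes, the $f_\rho$ assemble into a morphism of the whole generating object, and on homology this intertwines composition on both sides; concretely the canonical maps of \cref{prop:incdgcyclmod1} and \cref{prop:incdgcyclmod2} are sent, under $f$, to the corresponding morphisms of the projective $\nh_b^N$-modules $\cyclicMod_{\rho}^{\und N}$ (adding a crossing, resp. adding the block of dots $x^{N_{j+1}}$), and these generate $\quiverSchur^{\und N}_b$ by \cite[Proposition 5.33]{webster}. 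Finally, one invokes formality: since $(\dgQuiverSchur^{\und N}_b, d_N)$ has homology concentrated in homological degree $0$ (again by the strong projectivity together with the degree~$0$ vanishing above), it is automatically formal, and the quasi-isomorphism $(\dgQuiverSchur^{\und N}_b, d_N) \xrightarrow{\simeq} (H(\dgQuiverSchur^{\und N}_b, d_N), 0) \cong (\quiverSchur^{\und N}_b, 0)$ follows.

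The main obstacle is, unsurprisingly, the first conjecture itself: establishing the quasi-isomorphism $(\dgCyclicMod_{\rho}^{\und N}, d_N) \xrightarrow{\simeq} (\cyclicMod_{\rho}^{\und N}, 0)$, which requires carefully analyzing the iterated tensor product defining $\predgCyclicMod_{\rho}^{\und N}$ and showing that turning on $d_N$ kills exactly the extra homology coming from the nails, leaving the cyclic module $x_\rho^{\und N} \nh_b^N$. This is a genuine computation in the spirit of \cref{thm:dNformal} but complicated by the stacked tower of algebras $T_{b_r}^{q^{-N_r-\cdots-N_1}\lambda} \hookrightarrow \cdots \hookrightarrow T_b^\lambda$; one would want an analogue of \cref{prop:Tdecomp} for these towers, proved by induction on $r$ using \cref{lem:Tkndecomp}, and then a filtration/spectral-sequence argument to identify the homology. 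A secondary difficulty is checking that $f$ is not merely an additive isomorphism but an algebra map — this is where one must be careful about signs and about the $A_\infty$-subtleties that can appear when composing in derived $\HOM$-complexes, though here, because everything is strongly projective and the target is an ordinary algebra, no higher $A_\infty$-operations survive and the check reduces to a finite diagrammatic verification on the generators listed in \cref{prop:incdgcyclmod1} and \cref{prop:incdgcyclmod2}.
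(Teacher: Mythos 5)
This statement is labeled a conjecture in the paper, and no proof is given; there is therefore no paper argument to compare against. What the paper does give, via \cref{prop:mapdgquiverschur} and \cref{conj:WebQSchur}, is an implicit alternative route: if the map $(T_b^{\lambda,\und N'},d_{N_0}) \to (\dgQuiverSchur^{\und N}_b, d_N)$ is an isomorphism (the content of \cref{conj:WebQSchur}), then \cref{conj:dgSchurCycSchur} follows immediately from the already-proved formality statement \cref{thm:dNformal}, which gives $H(T_b^{\lambda,\und N'},d_{N_0}) \cong T_b^{\und N} \cong \quiverSchur^{\und N}_b$. Your plan goes a different route, through derived $\RHOM$ and a comparison with the Ext-algebra over $\nh_b^N$, and that route has a genuine flaw.

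The flaw is the assertion that $\cyclicMod_{\rho}^{\und N}$ is a projective $\nh_b^N$-module, which you use to collapse $\RHOM_{\nh_b^N}$ to $\HOM_{\nh_b^N}$. This is false in general. Take $b=1$, $\und N = (1,1)$, $N=2$, $\rho=(0,1)$: then $\nh_1^2 = \Bbbk[x_1]/(x_1^2)$ and $x_\rho^{\und N} = x_1$, so $\cyclicMod_{\rho}^{\und N} = x_1\nh_1^2 \cong \Bbbk$ is the simple module, which is not projective. More structurally, $\quiverSchur^{\und N}_b \cong T_b^{\und N}$ is a quasi-hereditary cover of $\nh_b^N$ and is not Morita equivalent to it in general, so the $\cyclicMod_{\rho}^{\und N}$ cannot all be projective. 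This is not just a technical gap but cuts against the whole strategy: if $\bigoplus_\rho \dgCyclicMod_{\rho}^{\und N}$ were cofibrant (strongly projective) over $(T^\lambda_b,d_N)$ as you propose, then $\END^{dg}$ would compute the derived endomorphism algebra, which is the full Ext-algebra $\operatorname{Ext}^*_{\nh_b^N}(\bigoplus_\rho\cyclicMod_{\rho}^{\und N},\bigoplus_\rho\cyclicMod_{\rho}^{\und N})$; since the modules are not projective, this has nonzero higher Ext and hence cannot be quasi-isomorphic to $(\quiverSchur^{\und N}_b,0)$. So the conjecture, if true, must hold precisely because $\END^{dg}$ is being computed with a \emph{non}-cofibrant source, and your cofibrancy hypothesis would actually disprove the statement rather than prove it. Beyond this, as you acknowledge, the plan also rests on the earlier unnumbered conjecture about $(\dgCyclicMod_{\rho}^{\und N},d_N) \simeq (\cyclicMod_{\rho}^{\und N},0)$, which is itself unproved, so what remains even after repairing the projectivity issue is a transfer of difficulty rather than a proof.
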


Our goal is to construct a graded map of algebras 
\[
T_b^{\lambda, \und N} \rightarrow \dgQuiverSchur^{\und N}_b.
\]
For $\rho=(b_0,b_1,\dots,b_r)\in \mathcal{P}_b^r$, we send
\[
1_\rho \mapsto \id \in \END_{T_b^\lambda}(G_{\rho}^{\und N}) \subset \dgQuiverSchur^{\und N}_b.
\]
Dots on the $i$th black strand (resp. black/black crossings on the $i$th and $(i+1)$th black strands) on $1_\rho$ is sent to multiplication on the left (i.e. gluing on top) by a dot on the $i$th black strand (resp. crossing) on $G_{\rho}^{\und N}$. These are indeed maps of right $T_b^\lambda$-modules since the dots and crossing commutes with $x_i^{n} x_{i+1}^n$ for all $n \geq 0$.  Similarly, a nail on the blue strand labeled $\lambda$ in $T_b^{\lambda, \und N}$ is sent to multiplication on the left by a nail on the blue strand labeled $q^{-N_r-\cdots-N_1}\lambda$ in $G_{\rho}^{\und N}$. 

For black/red crossing $\tau_i$, if the red strand goes from bottom left to top right, then we have $1_{\rho'} \tau_i 1_{\rho}$ where $\rho$ and $\rho'$ are as in \cref{prop:incdgcyclmod1}. Then, we associate to it the map $G_{\rho}^{\und N} \rightarrow G_{\rho'}^{\und N}$ of \cref{prop:incdgcyclmod1}. If the red strand goes from bottom right to top left, then we have $1_{\rho} \tau_i 1_{\rho'}$, and we associate to it the map $G_{\rho'}^{\und N} \rightarrow G_{\rho}^{\und N}$ of \cref{prop:incdgcyclmod2}.

\begin{prop}\label{prop:mapdgquiverschur}
The map defined above gives rise to maps of $\bZ$-graded dg-algebras
\[
(T_b^{\lambda, \und N}, d_{N_0}) \rightarrow ( \dgQuiverSchur^{\und N}_b, d_N),
\]
and of $\bZ^2$-graded dg-algebras
\[
(T_b^{\lambda, \und N}, 0) \rightarrow ( \dgQuiverSchur^{\und N}_b, 0 ).
\]
\end{prop}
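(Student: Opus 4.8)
The plan is to verify directly that the assignment described just before \cref{prop:mapdgquiverschur} respects all the defining relations of $T_b^{\lambda,\underline N}$, using the explicit descriptions of the modules $\dgCyclicMod_\rho^{\underline N}$ and the inclusion maps from \cref{prop:incdgcyclmod1} and \cref{prop:incdgcyclmod2}. First I would observe that the black-strand generators (dots, black/black crossings, and the nail on the $\lambda$-strand) all act by ``gluing on top'' an element of the appropriate shifted KLRW-like algebra, and that this action commutes with the right-multiplication by the $x_\rho^{\underline N}$-type idempotents; hence the KLR-type relations \eqref{eq:nhR2andR3}--\eqref{eq:nhdotslide} and the nail relations \eqref{eq:relNail} are immediate consequences of the same relations holding in $T_k^{q^\ell\lambda}$ for the relevant $k,\ell$. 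The grading claim is then a bookkeeping matter: one checks that a dot, crossing, black/red crossing, and nail are sent to maps of $q$-degree $2$, $-2$, $N_i$, and $\lambda$-degree (resp.\ $q^{N}$-degree after specialization) matching \cref{eq:KLRWgrading} and \cref{def:dgwebsteralg}, which holds because the inclusion in \cref{prop:incdgcyclmod2} has $q$-degree $2N_{j+1}$ as stated, and splitting off the degree-$(-N_{j+1})$ factor $q^{-\deg_q(x_\rho^{\underline N})/2}$ accounts for the discrepancy exactly as in the non-dg case \cite[Proposition 5.33]{webster}.

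The substantive relations to check are the black/red relations \eqref{eq:dotredstrand}--\eqref{eq:redR3}, which involve the interaction between black strands and the ``internal'' red strands encoded in the tower of inclusions $T_{b_r+\cdots+b_{i+1}}^{q^{-N_i-\cdots-N_1}\lambda}\hookrightarrow T_{b_r+\cdots+b_i}^{q^{-N_{i-1}-\cdots-N_1}\lambda}$. Here I would argue locally, reducing to a single red strand between two blocks of black strands, exactly as in the proof of \cref{prop:incdgcyclmod1}, where the key computation using \cref{eq:nhdotslide} already appears. Relation \eqref{eq:redR2} (a black/red double crossing equals $N_i$ dots) follows because composing the inclusion $\dgCyclicMod_\rho^{\underline N}\hookrightarrow\dgCyclicMod_{\rho'}^{\underline N}$ of \cref{prop:incdgcyclmod1} with the degree-$2N_{i+1}$ map $\dgCyclicMod_{\rho'}^{\underline N}\to\dgCyclicMod_\rho^{\underline N}$ of \cref{prop:incdgcyclmod2} glues on top precisely $x_{b_r'+\cdots+b_{j+1}'+1}^{N_{j+1}}$, i.e.\ $N_i$ dots on the strand that crossed; relation \eqref{eq:dotredstrand} (sliding a dot past a black/red crossing) and \eqref{eq:crossingslidered} (sliding a black/black crossing past a black/red crossing) are compatibility statements for these inclusion maps that follow from \cref{thm:Tbasis} applied to the larger algebra; and \eqref{eq:redR3} is the most delicate, since it produces the sum $\sum_{k+\ell=N_i-1}$ of dotted terms, which I expect to extract from the same dot-slide identity \eqref{eq:nhdotslide} used inside \cref{prop:incdgcyclmod1}, now applied in both directions. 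Finally, the violating condition (a black strand in the leftmost region is zero) is automatic because there is no region to the left of the blue strand in $\dgCyclicMod_\rho^{\underline N}$.

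Once all relations are verified over the trivial differential, the second map in the statement, $(T_b^{\lambda,\underline N},0)\to(\dgQuiverSchur_b^{\underline N},0)$, is a map of $\bZ^2$-graded dg-algebras for free. For the first map, $(T_b^{\lambda,\underline N},d_{N_0})\to(\dgQuiverSchur_b^{\underline N},d_N)$, it remains to check compatibility with differentials: on the source, $d_{N_0}$ sends a nail to $N_0$ dots on the leftmost black strand, while on $\dgCyclicMod_\rho^{\underline N}$ the differential $d_N$ sends the nail on the $q^{-N_r-\cdots-N_1}\lambda=q^{-N}\cdot q^{N_0}\lambda$-strand to $N$ dots, but the collapse of the $\lambda$-grading identifying $\lambda=q^{N_0}$ turns this into the correct number of dots, and since everything else is in homological degree $0$ with $d_N=0=d_{N_0}$ there, the graded Leibniz rule forces compatibility. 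The only real obstacle is the bookkeeping in \eqref{eq:redR3}: one must be careful that the sum of dotted corrections coming from pushing a black/black crossing through the red strand on one side matches the sum coming from the other side, and I would handle this by choosing the local model carefully (as in \cref{prop:incdgcyclmod1}) so that both sides are expressed in terms of the same dot-slide expansion before comparing coefficients.
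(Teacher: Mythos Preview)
Your overall strategy matches the paper's: verify the defining relations of $T_b^{\lambda,\underline N}$ directly, using that black/red crossings correspond to the inclusion of \cref{prop:incdgcyclmod1} (black moving right-to-left across red) and to multiplication by $x^{N_{j+1}}$ from \cref{prop:incdgcyclmod2} (black moving left-to-right). The nilHecke relations and the nail relations \eqref{eq:relNail} are indeed immediate, and your argument for \eqref{eq:redR2} is correct. Two points deserve correction.

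First, your differential-compatibility argument is confused. You write that $d_N$ sends the nail on the $q^{-N_r-\cdots-N_1}\lambda$-strand to $N$ dots, and then invoke a ``collapse $\lambda=q^{N_0}$'' to fix the count. This is not what happens: by the definition of $d_N$ on $T_k^{q^\ell\lambda}$ given just before the conjecture, the nail on a $q^\ell\lambda$-strand is sent to $N+\ell$ dots, so with $\ell=-(N_r+\cdots+N_1)$ one gets $N-(N_r+\cdots+N_1)=N_0$ dots directly. No further grading collapse is involved, and the specialization $\lambda=q^{N_0}$ on the source has nothing to do with the dot count on the target. The paper's proof states exactly this computation in one line.

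Second, two minor points. There is no ``violating condition'' in $T_b^{\lambda,\underline N}$ to check: the blue strand sits on the far left by definition, so that paragraph is superfluous. And \eqref{eq:redR3} is not as delicate as you suggest. On each side of the relation, one black/red crossing is the inclusion and the other is multiplication by $x_i^{N_{j+1}}$; the two sides therefore differ by commuting $x_i^{N_{j+1}}$ past a single black/black crossing, which by iterating \eqref{eq:nhdotslide} produces exactly the sum $\sum_{a+b=N_{j+1}-1} x_i^a x_{i+1}^b$. No careful choice of local model is required.
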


\begin{proof}
We show the assignment given above is a map of algebras, the commutation with the differentials being obvious since the image by $d_{N_0}$ of a nail on a blue strand labeled $\lambda$ consists of $N_0$ dots on the first black strand; and the image by $d_N$ of a nail on a blue strand labeled $q^{-N_r-\cdots-N_1} \lambda$ consists of $N-N_r-\cdots-N_1 = N_0$ dots. 

Thus, we need to prove the map respects all the defining relations in \cref{def:dgwebsteralg}. Relations in \cref{eq:nhR2andR3} and \cref{eq:nhdotslide} are immediate by construction. The relations in \cref{eq:dotredstrand} follow from commutations of dots. Since the map in \cref{prop:incdgcyclmod2} is multiplication by $n_{j+1}$ dots and the map in \cref{prop:incdgcyclmod1} is an inclusion, we have the relations in \cref{eq:redR2}. 
For the left side of \cref{eq:crossingslidered} both black/red crossings are given by an inclusion, and thus commutes with the multiplication on the left by the black/black crossing. For the right side, the black/red crossings give a multiplication by $x_i^{N_{j+1}} x_{i+1}^{N_{j+1}}$, which commutes with the black/black crossing. 
For \cref{eq:redR3}, one the black/red crossing is an inclusion and the other one is multiplication by $x_i^{N_{j+1}}$ on both side of the equality, so that the relation follows from \cref{eq:nhdotslide}. 
Finally, the relation in \cref{eq:relNail} is immediate by construction. 
\end{proof}

\begin{conj}\label{conj:WebQSchur}
The maps in \cref{prop:mapdgquiverschur} are isomorphisms. 
\end{conj}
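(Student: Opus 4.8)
\textbf{Proof proposal for Conjecture~\ref{conj:WebQSchur}.} The plan is to show that the map $\Phi_b\colon T_b^{\lambda,\underline{N}}\to \dgQuiverSchur^{\und N}_b$ from \cref{prop:mapdgquiverschur} is an isomorphism by a rank/dimension comparison, exactly in the spirit of Webster's identification $\quiverSchur^{\und N}_b\cong T^{\und N}_b$~\cite[Proposition 5.33]{webster} and of the basis arguments already used in \cref{thm:Tbasis} and \cref{thm:X0basis}. First I would prove \emph{injectivity}: by \cref{thm:Tbasis} the source has the explicit $\Bbbk$-basis ${}_\kappa B_\rho$, so it suffices to check that the images $\Phi_b(b_{w,\underline l,\underline a})$ are linearly independent in $\dgQuiverSchur^{\und N}_b$. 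Since $\dgCyclicMod_\rho^{\und N}$ is (by construction, as an iterated tensor product of the free modules $T^{q^\ell\lambda}_k$ along the inclusions \cref{eq:inclusionTlshift}, each of which is injective by \cref{thm:Tbasis}) a free $T_b^\lambda$-module with an explicit diagrammatic basis, and since the maps of \cref{prop:incdgcyclmod1,prop:incdgcyclmod2} are realized respectively by a basis inclusion and by left multiplication by an explicit monomial in dots, the composite endomorphisms attached to distinct basis elements act in ``triangular'' fashion with respect to the number of crossings — the same mechanism as in the proof of \cref{thm:X0basis}, where applying the unbraiding map and reading off the top-dots reduces to freeness of the tightened basis. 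So injectivity follows by a descending induction on the crossing number.

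For \emph{surjectivity}, I would argue that $\dgQuiverSchur^{\und N}_b$ is generated as an algebra by the elementary endomorphisms in the image of $\Phi_b$: idempotents $\id_{\dgCyclicMod^{\und N}_\rho}$, multiplication by a dot or a black/black crossing, the nail, and the two ``adjacent red-strand'' maps of \cref{prop:incdgcyclmod1,prop:incdgcyclmod2}. This is where the graded-dimension input enters. Because $(T_b^{\lambda,\underline N},0)$ and all the $\dgCyclicMod^{\und N}_\rho$ are positive c.b.l.f. $\bZ^2$-graded, their graded dimensions are honest Laurent series, and I can compute $\gdim\HOM^{dg}_{(T_b^\lambda,0)}(\dgCyclicMod^{\und N}_\kappa,\dgCyclicMod^{\und N}_\rho)$ via \cref{lem:Tkndecomp} (iterated, peeling off one $T^{q^{-n}\lambda}_{k+1}$ at a time, exactly as in \cref{prop:Tdecomp}) and compare it term-by-term with $\gdim\,1_\kappa T_b^{\lambda,\underline N}1_\rho$ read off from \cref{thm:Tbasis}. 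An equivalent and possibly cleaner route: decategorify. By \cref{thm:K0}, $\bKO^\Delta$ of the module category over $(T_b^{\lambda,\underline N},0)$ is the correct weight space of $M(\lambda)\otimes V(\underline N)$; by Webster's work the Grothendieck group of the quiver Schur side is the same space (the $\cyclicMod^{\und N}_\rho$ decategorify to the $v_\rho$-type vectors), and the explicit change-of-basis formulae in \S\ref{sec:qsltTLB} show the two bases match up to the predicted $q$-power shifts. This forces the graded ranks to agree, so the injection $\Phi_b$ is an isomorphism.

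Once $\Phi_b$ is an isomorphism for the trivial differential, the dg-statement $(T_b^{\lambda,\und N},d_{N_0})\cong(\dgQuiverSchur^{\und N}_b,d_N)$ is automatic: \cref{prop:mapdgquiverschur} already records that $\Phi_b$ intertwines $d_{N_0}$ with $d_N$ (the image of a $\lambda$-nail's differential is $N_0$ dots on the first strand, matching $N-N_r-\cdots-N_1=N_0$ dots on the $q^{-N_r-\cdots-N_1}\lambda$ side), and an isomorphism of graded algebras commuting with differentials is an isomorphism of dg-algebras. Combined with \cref{conj:dgSchurCycSchur} (resp. with \cref{thm:dNformal}), this would also identify $H(\dgQuiverSchur^{\und N}_b,d_N)$ with the quiver Schur algebra $\quiverSchur^{\und N}_b\cong T^{(N_0,\underline N)}_b$, confirming that $\dgQuiverSchur$ is the sought dg-enhancement.

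\textbf{Main obstacle.} The delicate point is not the decategorified bookkeeping but making the freeness/basis claim for $\dgCyclicMod^{\und N}_\rho$ precise enough to run the triangularity argument for injectivity: one must show that in the iterated tensor product $T^{q^{-N_r-\cdots-N_1}\lambda}_{b_r}\otimes\cdots\otimes T^\lambda_b$ the relevant ``straightening'' (sliding nails and crossings toward the top, absorbing dots via \cref{eq:nhdotslide,eq:redR2}) produces a genuine basis indexed compatibly with ${}_\kappa S_\rho\times\{0,1\}^b\times\bN^b$, and that left multiplication by $x_\rho^{\und N}$ does not collapse it — i.e. that $\dgCyclicMod^{\und N}_\rho\hookrightarrow\predgCyclicMod^{\und N}_\rho$ is injective onto the expected submodule, which is essentially the content hidden in \cref{prop:incdgcyclmod1}'s local computation extended globally. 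This is routine in the sense of the rest of the paper (it parallels \cref{thm:Tbasis} and \cref{thm:basisBi}) but it is the one genuinely laborious verification, and it is why the statement is left as a conjecture rather than a proposition.
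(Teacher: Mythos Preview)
The paper does not prove this statement: it is explicitly recorded as a \emph{conjecture} and no proof, sketch, or further argument is offered beyond the construction of the map in \cref{prop:mapdgquiverschur}. So there is nothing in the paper to compare your proposal against. You yourself correctly identify this at the end of your write-up, noting that the basis/freeness verification for $\dgCyclicMod^{\und N}_\rho$ is the laborious missing step and is ``why the statement is left as a conjecture rather than a proposition.''

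Your outline is a plausible strategy and mirrors the pattern of proof used elsewhere in the paper (basis theorems plus graded-dimension counts), but as written it is a proof \emph{plan}, not a proof: the injectivity argument rests on a triangularity claim for the action of basis elements on $\dgCyclicMod^{\und N}_\rho$ that you do not establish, and the surjectivity/dimension-count step assumes a freeness statement for $\dgCyclicMod^{\und N}_\rho$ as a right $T_b^\lambda$-module that the paper neither states nor proves. Until those are carried out, the conjecture remains open, exactly as the paper records.
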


We also conjecture that the reduced dg-quiver Schur algebra $(\mindgQuiverSchur^{\und N}_b,0)$ is dg-Morita equivalent to the non-reduced one $(\dgQuiverSchur^{\und N}_b,0)$.



\appendix


\section{Detailed proofs and computations}\label{sec:computations}

We give the detailed computations used to prove various results of the paper. 

\subsection{Proofs of \cref{sec:qsltTLB}}

\begin{citelem}{lem:explicitaction}
  The action of $\cB_r$ translates in terms of $v_\rho$-vectors of $M \otimes V^r$ as
  \begin{align}
    \tag{\ref*{eq:caponk}}
    \tikzdiagh[scale=0.75]{2}{
	\draw[ultra thick,myblue] (-.5,0) -- (-.5,1);
	\draw[red] (0,0) -- (0,1);
	\node at (.5,.5){\small $\dots$};
	\draw[red] (1,0)  -- (1,1);
	\draw[red] (1.5, 0)  .. controls (1.5,.5) and (2,.5) .. (2,0)  ;
	\draw[red] (2.5,0) -- (2.5,1);
	\node at (3,.5){\small $\dots$};
	\draw[red] (3.5,0)  -- (3.5,1);
	\tikzbrace{-.5}{1}{-0.2}{$i$};
}
    :& v_{(\dots, b_{i-1}, b_i, b_{i+1}, b_{i+2}, \dots)} \mapsto -q^{-1} [b_i]_q  v_{(\dots, b_{i-1} + b_i + b_{i+1} - 1, b_{i+2}, \dots)},
    \\      
    \tag{\ref*{eq:cuponk}}
    \tikzdiagh[scale=0.75,yscale=-1]{2}{
	\draw[ultra thick,myblue] (-.5,0) -- (-.5,1);
	\draw[red] (0,0) -- (0,1);
	\node at (.5,.5){\small $\dots$};
	\draw[red] (1,0)  -- (1,1);
	\draw[red] (1.5, 0)  .. controls (1.5,.5) and (2,.5) .. (2,0)  ;
	\draw[red] (2.5,0) -- (2.5,1);
	\node at (3,.5){\small $\dots$};
	\draw[red] (3.5,0) -- (3.5,1);
	\tikzbrace{-.5}{1}{1.9}{$i$};
}
    :&v_{\rho} \mapsto q[2]_q v_{(\dots, b_{i-1},1 ,0 ,b_{i}, \dots)} - q v_{(\dots, b_{i-1}+1, 0, 0,b_{i}, \dots)} -q v_{(\dots, b_{i-1},0 , 1,b_{i}, \dots)},
    \\
    \tag{\ref*{eq:xionk}}
    \tikzdiag[scale=0.75]{
    \draw[red] (0,0) .. controls (0,.25) .. (-.5,.5) .. controls (0,.75) ..  (0,1);
    \draw[fill=white, color=white] (-.52,.5) circle (.02cm);
    \draw[red] (0.5,0) -- (0.5,1);
    \node at (1,0.5){\small $\dots$};
    \draw[red] (1.5,0) -- (1.5,1); 
    \draw[ultra thick,myblue] (-.5,0) -- (-.5,1);
    }
    :&v_{(b_0 , b_1 ,\dots)} \mapsto (\lambda^{-1}q^{b_0} - \lambda q [b_0]_q) v_{(0,b_0+b_1,\dots)} + \lambda q^2 [b_0]_q v_{(1,b_0+b_1-1,\dots)}.
  \end{align}
\end{citelem}
\begin{proof}
  We start with the cap. We have
  \[
    v_{(\ldots,b_{i-1},b_i,b_{i+1},b_{i+2},\ldots)} = [b_i]_q v_{(\ldots,b_{i-1}+b_i-1,1,b_{i+1},b_{i+2},\ldots)} - [b_i-1]_q v_{(\ldots,b_{i-1}+b_i,0,b_{i+1},b_{i+2},\ldots)}, 
  \]
  and we easily check that $v_{(\ldots,b_{i-1}+b_i-1,1,b_{i+1},b_{i+2},\ldots)}$ is sent to $-q^{-1}v_{(\ldots,b_{i-1}+b_i+b_{i+1}-1,b_{i+2},\ldots)}$ and $v_{(\ldots,b_{i-1}+b_i,0,b_{i+1},b_{i+2},\ldots)}$ is sent to $0$.

  We now turn to the cup. It suffices to do the computation for $i=r+1$ because of the recursive definition of $v_\rho$. By definition, $v_{(b_0,\ldots,b_n)}$ is sent to $-qv_{(b_0,\ldots,b_n)} \otimes v_{1,0}\otimes F v_{1,0}+v_{(b_0,\ldots,b_n)}\otimes Fv_{1,0}\otimes v_{1,0}$. Since
  \[
    v_{(b_0,\ldots,b_n)} \otimes v_{1,0}\otimes F v_{1,0} = v_{(b_0,\ldots,b_n,0,1)} - q^2v_{(b_0,\ldots,b_n+1,0,0)} - q v_{(b_0,\ldots,b_n)} \otimes F v_{1,0}\otimes  v_{1,0}, 
  \]
  and
  \[
    v_{(b_0,\ldots,b_n)} \otimes F v_{1,0}\otimes  v_{1,0} = v_{(b_0,\ldots,b_n,1,0)} - q v_{(b_0,\ldots,b_n+1,0,0)}, 
  \]
  one finds the expected formula.

  Finally, we finish with $\xi$. Using the fact that $\xi$ is a morphism of $U_q(\slt)$-modules, it suffices to consider the case of the vector $v_{(b_0,b_1)}$. One may check that $v_{(b_0,b_1)}=[b_0]_q v_{(1,b_0+b_1-1)}-[b_0-1]_q v_{(0,b_0+b_1)}$ and therefore
  \[
    \xi(v_{(b_0,b_1)})=[b_0]_q F^{b_0+b_1-1}\xi(v_{(1,0)})-[b_0-1]_q F^{b_0+b_1}\xi(v_{(0,0)}).
  \]
  Using the definition of $\xi$ we have
  \[
    \xi(v_{(0,0)}) = \lambda^{-1}v_{(0,0)}, 
  \]
  and
  \[
    \xi(v_{(1,0)}) = \lambda q^2v_{(1,0)}-q(\lambda-\lambda^{-1})v_{(0,1)}.
  \]
  Hence we deduce that
  \[
    \xi(v_{(b_0,b_1)}) = \lambda q^2[b_0]_q v_{(1,b_0+b_1-1)} -(q(\lambda-\lambda^{-1})[b_0]_q+\lambda^{-1}[b_0-1]_q)v_{(0,b_0+b_1)}.
  \]
  We conclude by checking that $\lambda^{-1}q[b_0]_q-\lambda^{-1}[b_0-1]_q =\lambda^{-1}q^{b_0}$.
\end{proof}

\subsection{Proofs of \cref{sec:bimod}}\label{sec:proofsofsecbimod}

\begin{citelem}{lem:gammasurjective}
The map $\gamma_k : \br X_k \rightarrow X_k$ is surjective.
\end{citelem}

\begin{proof}
First, we recall the following well-known relation
\begin{equation}\label{eq:nhdoublecrossingid}
\tikzdiag{
	\draw (0,0) -- (0,1);
	\draw (.5,0) -- (.5,1);
}
\ = \ 
\tikzdiag{
	\draw (0,0) ..controls (0,.25) and (1,.25) .. (1,.5) node[tikzdot, near start]{} 
			 ..controls (1,.75) and (0,.75) .. (0,1)  ;
	\draw (1,0) ..controls (1,.25) and (0,.25) .. (0,.5) node[tikzdot, pos=1]{}
	..controls (0,.75) and (1,.75) .. (1,1)  ;
}
\ -  \ 
\tikzdiag{
	\draw (0,0) ..controls (0,.25) and (1,.25) .. (1,.5) 
			 ..controls (1,.75) and (0,.75) .. (0,1)  ;
	\draw (1,0) ..controls (1,.25) and (0,.25) .. (0,.5) node[tikzdot, pos=1]{}
	..controls (0,.75) and (1,.75) .. (1,1)  node[tikzdot, near end]{}  ;
}
\end{equation}
which follows easily from \cref{eq:nhR2andR3} and \cref{eq:nhdotslide}. 
We also observe 
\begin{equation}\label{eq:dottednailslide}
	\tikzdiagh{0}{
		\draw (.5,-.5) .. controls (.5,-.3) .. (0,-.1) .. controls (1,.5) ..  (1,1) -- (1,1.5) node[tikzdot,midway]{};
		\draw[stdhl] (1,-.5) node[below]{\small $1$} -- (1,0) .. controls (1,.25) .. (0,.5)
				.. controls (.5,.75) .. (.5,1) -- (.5,1.5);
		\draw[fill=white, color=white] (-.1,.5) circle (.1cm);
		\draw[vstdhl] (0,-.5)  node[below]{\small $\lambda$} -- (0,1.5) node[pos=.2,nail]{};
	}
	\ \overset{\eqref{eq:redR2}}{=} \ 
	\tikzdiagh{0}{
		\draw (.5,-.5) .. controls (.5,-.3) .. (0,-.1) .. controls (.75,.25) ..  (.75,.5) 
			.. controls (.75,.75) and (.25,.75) .. (.25,1) .. controls (.25,1.25) and (1,1.25) ..
			 (1,1.5);
		\draw[stdhl] (1,-.5) node[below]{\small $1$} -- (1,0) .. controls (1,.25) .. (0,.5)
				.. controls (.5,.75) .. (.5,1) -- (.5,1.5);
		\draw[fill=white, color=white] (-.1,.5) circle (.1cm);
		\draw[vstdhl] (0,-.5)  node[below]{\small $\lambda$} -- (0,1.5) node[pos=.2,nail]{};
	}
	\ \overset{\eqref{eq:nailslidedcross}}{=} \ 
	\tikzdiagh{0}{
		\draw (.5,-.5) .. controls (.5,0) and (.75,0) ..  (.75,.5) 
			.. controls (.75,.75)  .. (0,1) .. controls  (1,1.25) ..
			 (1,1.5);
		\draw[stdhl] (1,-.5) node[below]{\small $1$} -- (1,0) .. controls (1,.25) .. (0,.5)
				.. controls (.5,.75) .. (.5,1) -- (.5,1.5);
		\draw[fill=white, color=white] (-.1,.5) circle (.1cm);
		\draw[vstdhl] (0,-.5)  node[below]{\small $\lambda$} -- (0,1.5) node[pos=.75,nail]{};
	}
\end{equation}
Then, we compute 
\begin{align*}
\tikzdiagh{0}{
	\draw (0,-1) .. controls (0,-.875) .. (-.5,-.75) .. controls (.75,-.25) .. (.75,0) -- (.75,1);
	\draw (.25,-1) .. controls (.25,-.625) .. (-.5,-.375) .. controls (.5,-.25) .. (.5,0) -- (.5,1);
	\draw [stdhl] (.75, -1)  node[below]{\small $1$}  .. controls (.75,-.5) .. (-.5,0) .. controls (0,.5) .. (0,1);
	\draw[fill=white, color=white] (-.6,0) circle (.1cm);
	\draw[vstdhl] (-.5,-1)  node[below]{\small $\lambda$} -- (-.5,1) node[pos=.125,nail]{} node[pos=.  3125,nail]{};
}
\ \overset{\eqref{eq:nhdoublecrossingid}}{=} \ 
\tikzdiagh{0}{
	\draw (0,-1) .. controls (0,-.875) .. (-.5,-.75) .. controls (.75,-.25) .. (.75,0)
		 .. controls (.75,.25) and (.5,.25) .. (.5,.5) node[tikzdot, pos=1]{}
		 .. controls (.5,.75) and (.75,.75) .. (.75,1);
	\draw (.25,-1) .. controls (.25,-.625) .. (-.5,-.375) .. controls (.5,-.25) .. (.5,0) 
		 .. controls (.5,.25) and (.75,.25) .. (.75,.5) node[tikzdot, near start]{} 
		 .. controls (.75,.75) and (.5,.75) .. (.5,1);
	\draw [stdhl] (.75, -1)   node[below]{\small $1$}  .. controls (.75,-.5) .. (-.5,0) .. controls (0,.5) .. (0,1);
	\draw[fill=white, color=white] (-.6,0) circle (.1cm);
	\draw[vstdhl] (-.5,-1)  node[below]{\small $\lambda$} -- (-.5,1) node[pos=.125,nail]{} node[pos=.  3125,nail]{};
}
\ - \ 
\tikzdiagh{0}{
	\draw (0,-1) .. controls (0,-.875) .. (-.5,-.75) .. controls (.75,-.25) .. (.75,0)
		 .. controls (.75,.25) and (.5,.25) .. (.5,.5) node[tikzdot, pos=1]{}
		 .. controls (.5,.75) and (.75,.75) .. (.75,1) node[tikzdot, near end]{} ;
	\draw (.25,-1) .. controls (.25,-.625) .. (-.5,-.375) .. controls (.5,-.25) .. (.5,0) 
		 .. controls (.5,.25) and (.75,.25) .. (.75,.5) 
		 .. controls (.75,.75) and (.5,.75) .. (.5,1);
	\draw [stdhl] (.75, -1)   node[below]{\small $1$}  .. controls (.75,-.5) .. (-.5,0) .. controls (0,.5) .. (0,1);
	\draw[fill=white, color=white] (-.6,0) circle (.1cm);
	\draw[vstdhl] (-.5,-1)  node[below]{\small $\lambda$} -- (-.5,1) node[pos=.125,nail]{} node[pos=.  3125,nail]{};
}
\end{align*}
and 
\begin{align*}
\tikzdiagh[yscale=1.25]{0}{
	\draw (0,-1) .. controls (0,-.875) .. (-.5,-.75) .. controls (.75,-.25) .. (.75,0)
		 .. controls (.75,.25) and (.5,.25) .. (.5,.5) node[tikzdot, pos=1]{}
		 .. controls (.5,.75) and (.75,.75) .. (.75,1) node[tikzdot, near end]{} ;
	\draw (.25,-1) .. controls (.25,-.625) .. (-.5,-.375) .. controls (.5,-.25) .. (.5,0) 
		 .. controls (.5,.25) and (.75,.25) .. (.75,.5) 
		 .. controls (.75,.75) and (.5,.75) .. (.5,1);
	\draw [stdhl] (.75, -1)   node[below]{\small $1$}  .. controls (.75,-.5) .. (-.5,0) .. controls (0,.5) .. (0,1);
	\draw[fill=white, color=white] (-.6,0) circle (.1cm);
	\draw[vstdhl] (-.5,-1)  node[below]{\small $\lambda$} -- (-.5,1) node[pos=.125,nail]{} node[pos=.  3125,nail]{};
}
\ \overset{(\ref{eq:crossingslidered})}{=} \ 
\tikzdiagh[yscale=1.25]{0}{
	\draw (0,-1) .. controls (0,-.875) .. (-.5,-.75) .. controls (.25,-.5) .. (.25,-.375) 
		.. controls (.25,0) and (.5,0) .. (.5,.5)  node[tikzdot, pos=1]{}
		.. controls (.5,.75) and (.75,.75) .. (.75,1) node[tikzdot, near end]{} ;
	\draw (.25,-1) .. controls (.25,-.625) .. (-.5,-.375) .. controls (.75,-.25) .. (.75,0) -- (.75,.5)
		 .. controls (.75,.75) and (.5,.75) .. (.5,1);
	\draw [stdhl] (.75, -1)   node[below]{\small $1$}  .. controls (.75,0) .. (-.5,.25) .. controls (0,.5) .. (0,1);
	\draw[fill=white, color=white] (-.6,.25) circle (.1cm);
	\draw[vstdhl] (-.5,-1)  node[below]{\small $\lambda$} -- (-.5,1) node[pos=.125,nail]{} node[pos=.  3125,nail]{};
}
\ \overset{(\ref{eq:relNail})}{=} - \ 
\tikzdiagh[yscale=1.25]{0}{
	\draw (.25,-1) .. controls (.25,-.875) .. (-.5,-.75)
		.. controls (.75,-.25) .. (.75,.5)
		 .. controls (.75,.75) and (.5,.75) .. (.5,1);
	\draw (0,-1) -- (0,-.75) .. controls (0,-.625) .. (-.5,-.375)
		.. controls (.5,0) .. (.5,.5) node[tikzdot, pos=1]{}
		.. controls (.5,.75) and (.75,.75) .. (.75,1) node[tikzdot, near end]{} ;
	\draw [stdhl] (.75, -1)   node[below]{\small $1$}  .. controls (.75,0) .. (-.5,.25) .. controls (0,.5) .. (0,1);
	\draw[fill=white, color=white] (-.6,.25) circle (.1cm);
	\draw[vstdhl] (-.5,-1)  node[below]{\small $\lambda$} -- (-.5,1) node[pos=.125,nail]{} node[pos=.  3125,nail]{};
}
\end{align*}
Thus, using \cref{eq:dottednailslide} we obtain
\begin{align*}
\tikzdiagh{0}{
	\draw (0,-1) .. controls (0,-.875) .. (-.5,-.75) .. controls (.75,-.25) .. (.75,0) -- (.75,1.5);
	\draw (.25,-1) .. controls (.25,-.625) .. (-.5,-.375) .. controls (.5,-.25) .. (.5,0) -- (.5,1.5);
	\draw [stdhl] (.75, -1)  node[below]{\small $1$}  .. controls (.75,-.5) .. (-.5,0) .. controls (0,.5) .. (0,1.5);
	\draw[fill=white, color=white] (-.6,0) circle (.1cm);
	\draw[vstdhl] (-.5,-1)  node[below]{\small $\lambda$} -- (-.5,1) node[pos=.125,nail]{} node[pos=.  3125,nail]{} -- (-.5,1.5);
}
\ = \ 
\tikzdiagh{0}{
	\draw (0,-1) .. controls (0,-.75) .. (-.5,-.5) 
		.. controls (.75,.25) .. (.75,.5)
		.. controls (.75,.75) and (.5,.75) .. (.5,1) node[tikzdot, pos=1]{} 
		.. controls (.5,1.25) and (.75,1.25) .. (.75,1.5) ;
	\draw (.25,-1) .. controls (.25,-.5) and (.75,-.5) .. (.75,0)
		.. controls (.75,.25)  .. (-.5,.5)
		.. controls (.75,.75) .. (.75,1)
		.. controls (.75,1.25) and (.5,1.25) .. (.5,1.5);
	\draw [stdhl] (.75, -1)  node[below]{\small $1$}  .. controls (.75,-.5) .. (-.5,0) .. controls (0,.5) .. (0,1) -- (0,1.5);
	\draw[fill=white, color=white] (-.6,0) circle (.1cm);
	\draw[vstdhl] (-.5,-1)  node[below]{\small $\lambda$} -- (-.5,1) node[pos=.25,nail]{} node[pos=.75,nail]{} -- (-.5,1.5);
}
\ + \ 
\tikzdiagh{0}{
	\draw (.25,-1) .. controls (.25,-.75) .. (-.5,-.5)
		.. controls (.75,.25) .. (.75,.5)
		.. controls (.75,.75) and (.5,.75) .. (.5,1) -- (.5,1.5);
	\draw (0,-1) .. controls (0,-.5) and (.75,-.5) .. (.75,0)
		.. controls (.75,.25) .. (-.5,.5)
		.. controls (.75,.75) .. (.75,1) -- (.75,1.5) node[tikzdot, midway]{} ;
	\draw [stdhl] (.75, -1)  node[below]{\small $1$}  .. controls (.75,-.5) .. (-.5,0) .. controls (0,.5) .. (0,1) --  (0,1.5);
	\draw[fill=white, color=white] (-.6,0) circle (.1cm);
	\draw[vstdhl] (-.5,-1)  node[below]{\small $\lambda$} -- (-.5,1) node[pos=.25,nail]{} node[pos=.75,nail]{} -- (-.5,1.5);
}
\end{align*}
Consequently, using \cref{thm:X0basis}, we deduce that $X_k$ is generated as left $(T_b^{\lambda,r},0)$-module by the elements
\begin{align*}
\tikzdiagh{0}{
	\draw (0,0) .. controls (0,.5) and (.5,.5) .. (.5,1);
	\node at(.75,.75) {\tiny$\dots$};
	\draw (.5,0) .. controls (.5,.5) and (1,.5) ..  (1,1);
	\draw[decoration={brace,mirror,raise=-8pt},decorate]  (-.1,-.35) -- node {$k$} (.6,-.35);
	\draw[stdhl] (1,0) node[below]{\small $1$} .. controls (1,.25) .. (-.5,.5)
			.. controls (0,.75) .. (0,1);
	\draw[fill=white, color=white] (-.6,.5) circle (.1cm);
	\draw[vstdhl] (-.5,0)  node[below]{\small $\lambda$} -- (-.5,1);
  }
&\otimes \bar  1_{\ell,\rho},
&
\tikzdiagh{0}{
	\draw (0,-.5) .. controls (0,0) and (.75,0) .. (.75,1);
	\node at(1,.75) {\tiny$\dots$};
	\draw (.5,-.5) .. controls (.5,0) and (1.25,0) .. (1.25,1);
	\draw (.75,-.5) .. controls (.75,-.25) .. (-.5,0) .. controls (.5,.5) ..  (.5,1);
	\draw (1,-.5) .. controls (1,0) and (1.5,0) .. (1.5,1);
	\node at(1.75,.75) {\tiny$\dots$};
	\draw (1.5,-.5) .. controls (1.5,0) and (2,0) .. (2,1);
	\draw[decoration={brace,mirror,raise=-8pt},decorate]  (-.1,-.85) -- node {$t$} (.6,-.85);
	\draw[stdhl] (2,-.5) node[below]{\small $1$}  -- (2,0) .. controls (2,.25) .. (-.5,.5)
			.. controls (0,.75) .. (0,1);
	\draw[fill=white, color=white] (-.6,.5) circle (.1cm);
	\draw[vstdhl] (-.5,-.5)  node[below]{\small $\lambda$} -- (-.5,1) node[pos=.33,nail]{};
}
&\otimes \bar  1_{\ell,\rho},
\end{align*}
for all $0 \leq t \leq k-1$. In particular, $\gamma_k$ is surjective. 
\end{proof}

\begin{lem}\label{prop:X0decomp}
Suppose $r=1$ and $\ell = 0$. 
As a $\bZ\times\bZ^2$-graded $\Bbbk$-module, $X 1_{k,0}$ admits a decomposition
\[
X 1_{k,0} \cong
\lambda^{-1} q^{2k}(T_{k}^{\lambda,0}) \oplus
 \ssbigoplus{0 \leq t < k \\ p \geq 0} \bigl( 
q^{2p+1-2(k-t)} (X 1_{k-1,0}) \oplus \lambda^2 q^{2p+1-2(k+t)}(X 1_{k-1,0})[1]
\bigr).
\]
\end{lem}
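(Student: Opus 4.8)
The statement is a decomposition of the right $\Bbbk$-module (really a $(T^\lambda,T^\lambda)$-bimodule, but we only need the graded $\Bbbk$-module structure here) $X 1_{k,0}$, i.e.\ the double braiding bimodule restricted to the idempotent corresponding to $\rho = (k,0)$ in the case $r = 1$. The guiding idea is the same recursive peeling that produced \cref{prop:Tdecomp} and \cref{thm:X0basis}: look at the rightmost black strand (the one closest to the single red strand, which here is labeled $1$), and split according to what it does. Concretely, I will classify the basis elements $x_{w,\underline l,\underline a}$ of $1_\kappa X 1_{k,0}$ from \cref{thm:X0basis} according to whether, in a left-adjusted representative, the rightmost black strand at the \emph{bottom} (there are $k$ of them, since $b_1 = 0$) goes straight up to the red strand region without crossing the red strand, or crosses it.

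\textbf{Key steps.} First I would fix notation: with $r=1$, $\ell = 0$, the bimodule $X1_{k,0}$ is generated by the single diagram with $k$ black strands and one red strand labeled $1$, the red strand attached to the $\lambda$-strand by the double braiding. Second, I would set up the ``rightmost strand'' analysis exactly as in the proof of \cref{thm:Tbasis}/\cref{prop:Tdecomp}: using \cref{eq:redR2} and \cref{eq:redR3} one can always push the rightmost black strand either to go straight (possibly carrying dots, which via \cref{eq:redR2} get absorbed into a double crossing with the red strand), or to stay on the left of the red strand, possibly nailed. This gives three families of summands:
\begin{itemize}
\item the family where the rightmost black strand is pulled all the way to the left, past everything, and nailed (or dotted), contributing, after counting degree shifts, a copy of $\lambda^{-1}q^{2k}(T_k^{\lambda,0})$ — this is the ``$Y'^0$-type'' term, matching ${Y'}^0_{k,0,\varnothing}$ from \cref{sec:cofX};
\item the family where the rightmost black strand crosses the red strand and we are left with a diagram in $X 1_{k-1,0}$ with $p \ge 0$ dots on the separated strand and $t$ strands to its left ($0 \le t < k$), contributing $\bigoplus_{0\le t<k,\,p\ge 0} q^{2p+1-2(k-t)}(X1_{k-1,0})$;
\item the analogous family but with the separated strand additionally nailed, carrying a homological shift $[1]$ and $\lambda^2$, contributing $\bigoplus_{0\le t<k,\,p\ge 0}\lambda^2 q^{2p+1-2(k+t)}(X1_{k-1,0})[1]$.
\end{itemize}
Third, I would check that these three families are disjoint and jointly exhaust the basis of \cref{thm:X0basis}, and that the degree bookkeeping matches: the $q$-degree of a black/red crossing is $N_i = 1$ here, a dot is $q^2$, a nail is $\lambda^2$ in homological degree $1$, and the double braiding generator has $\deg_{q,\lambda} = (0,-1)$; combining these with the shifts already present in the definition of $X1_{k,0}$ gives precisely the exponents in the statement. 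The cleanest way to organize the exhaustiveness/disjointness is to invoke the unbraiding map $u\colon \lambda X \hookrightarrow T^{\lambda,r}$ of \cref{def:unbraidingmap}, which is injective by \cref{cor:uinj}, together with the basis \cref{thm:Tbasis} of $T^{\lambda,r}_b$: applying $u$ turns the claimed decomposition of $X1_{k,0}$ into (a shift of) the known decomposition \cref{prop:Tdecomp} of $T^{\lambda,1}_k 1_{(k,0)}$ after the first red strand is reattached, so that the decomposition can be read off at the level of $T$ and pulled back along the injection $u$.

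\textbf{Main obstacle.} The genuine work is not in identifying the three families — that is forced by the generation statement in \cref{thm:X0basis} — but in pinning down the \emph{exact} grading shifts, in particular tracking how the homological degree $+1$ and the $\lambda$-degree $+2$ of each nail interact with the $(0,-1)$ shift of the double-braiding generator and with the $[1]$ shifts already built into the target summands. A secondary subtlety is that the decomposition is stated only as $\bZ\times\bZ^2$-graded $\Bbbk$-modules, so I must be careful \emph{not} to claim it is a bimodule decomposition (the right action mixes the summands, just as in \cref{prop:Tdecomp}); the isomorphism is given summand-by-summand by inclusion of diagrams, exactly as in \cref{prop:Tdecomp}, and its well-definedness and bijectivity follow from \cref{thm:X0basis} by the same dimension-count argument used there. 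I expect this lemma to be invoked in the induction establishing \cref{lem:sesX0} (the short exact sequence $0 \to Y^1_k \to Y^0_k \to X_k \to 0$), so the proof should be phrased so that the three families line up visibly with ${Y'}^0_k$, the $Y^{1,t}_k$ and the $Y^{0,t}_k$.
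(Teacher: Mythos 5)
Your plan — read the decomposition off the tightened basis of \cref{thm:X0basis}, summand-by-summand by inclusion of diagrams, with no claim of compatibility with the right action — is exactly what the paper does, and the identification of three families is right in spirit. But the description of the first family contains a genuine error. You say the summand $\lambda^{-1}q^{2k}(T_k^{\lambda,0})$ comes from ``the rightmost black strand... pulled all the way to the left, past everything, and nailed (or dotted).'' A nail has $\deg_{q,\lambda}=(0,2)$ and homological degree $1$, and a dot has $\deg_{q,\lambda}=(2,0)$, so neither can produce a $\lambda^{-1}$; and $T_k^{\lambda,0}$ still has $k$ black strands, so no strand is removed. In fact all $k$ blacks stay intact in this summand; the $\lambda^{-1}$ is the degree of the double-braiding generator itself (the red strand wrapping around $\lambda$, with $\deg_{q,\lambda}=(0,-1)$), and the $q^{2k}$ comes from the $2k$ red--black crossings accrued as the red strand passes all $k$ black strands twice. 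The free part on top is a $T_k^{\lambda,0}$-box sitting to the left of the straight red strand.

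Relatedly, the dichotomy ``does the rightmost bottom black strand cross the red strand'' does not give the right indexing. Step (4) of \cref{thm:X0basis} pulls the first red strand to its leftmost position in \emph{every} basis element, so the red crosses all $k$ bottom black strands in every term. The paper's decomposition instead peels off the black strand that is rightmost at the \emph{top} (the one ending to the right of the red), and that strand may start at any position $t+1$ from the left at the bottom — this is what produces the parameter $t$. Two further cautions: your heuristic of pushing through the unbraiding map $u$ and reading off \cref{prop:Tdecomp} needs more work than you indicate, since $u$ is injective but far from surjective, so the decomposition of $T^{\lambda,1}_k$ does not restrict directly to the image of $u$; and the three families do \emph{not} literally match ${Y'}^0_k$, $Y^{1,t}_k$, $Y^{0,t}_k$ (those are copies of $T^{\lambda,r}_b$ at various idempotents, not $T_k^{\lambda,0}$ and $X1_{k-1,0}$) — the Lemma is used in the proof of \cref{lem:sesX0} only to compute graded dimensions, not to match summands termwise.
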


\begin{proof}
It follows from \cref{thm:X0basis} that we have a decomposition
\[
\tikzdiag[xscale=.75]{
 	\draw [vstdhl] (-.5,0) node[below]{\small $\lambda$} -- (-.5,1);
 	\draw (0,0) -- (0,1);
 	\node 	at(.5,.25) {\tiny $\dots$};
 	\draw (1,0) -- (1,1);
 	\draw [stdhl] (1.5,0) node[below]{\small $1$} -- (1.5,1);
	\filldraw [fill=white, draw=black] (-.75,.5) rectangle (1.75,1) node[midway] { $X_{k}$};
}
\ \cong \ 
\tikzdiag[xscale=.75]{
 	\draw (0,-.5) .. controls (0,-.25) and (.5,-.25) .. (.5,0) .. controls (.5,.25) and (0,.25).. (0,.5) --  (0,1);
 	\node 	at(1,0) {\tiny $\dots$};
 	\draw (1,-.5) .. controls (1,-.25) and (1.5,-.25) .. (1.5,0) .. controls (1.5,.25) and (1,.25) .. (1,.5) -- (1,1);
 	\draw [stdhl] (1.5,-.5) node[below]{\small $1$} .. controls (1.5,-.25) .. (-.5,0) .. controls (1.5,.25) .. (1.5,.5) -- (1.5,1);
	\draw[fill=white, color=white] (-.8,0) circle (.2cm);
 	\draw [vstdhl] (-.5,-.5) node[below]{\small $\lambda$} -- (-.5,1);
	\filldraw [fill=white, draw=black] (-.75,.5) rectangle (1.25,1) node[midway] { $T_{k}$};
}
\oplus 
 \ssbigoplus{0 \leq t < k \\ p \geq 0} \left( 
 \tikzdiag[xscale=.75]{
 	\draw [vstdhl] (-.5,-.5) node[below]{\small $\lambda$} -- (-.5,1);
 	\draw (0,-.5) -- (0,1);
 	\node 	at(.5,.25) {\tiny $\dots$};
 	\draw (1,-.5) -- (1,1);
	\draw[decoration={brace,mirror,raise=-8pt},decorate]  (-.1,-.85) -- node {$t$} (1.1,-.85);
 	\draw (2,-.5)  .. controls (2,0) and (1.5,0) .. (1.5,.5) -- (1.5,1);
 	\node 	at(2,.25) {\tiny $\dots$};
 	\draw (3,-.5)  .. controls (3,0) and (2.5,0) ..  (2.5,.5) -- (2.5,1);
 	\draw (1.5,-.5) .. controls (1.5,0) and (3.5,0) ..(3.5,.5) -- (3.5,1) node[midway,tikzdot]{} node[midway, xshift=1.5ex, yshift=1ex]{\small $p$};
 	\draw [stdhl] (3.5,-.5) node[below]{\small $1$} .. controls (3.5,0) and (3,0) .. (3,.5) -- (3,1);
	\filldraw [fill=white, draw=black] (-.75,.5) rectangle (3.25,1) node[midway] { $X_{k-1}$};
}
\oplus
 \tikzdiag[xscale=.75]{
 	\draw (0,-.5) -- (0,1);
 	\node 	at(.5,.25) {\tiny $\dots$};
 	\draw (1,-.5) -- (1,1);
	\draw[decoration={brace,mirror,raise=-8pt},decorate]  (-.1,-.85) -- node {$t$} (1.1,-.85);
 	\draw (2,-.5)  .. controls (2,0) and (1.5,0) .. (1.5,.5) -- (1.5,1);
 	\node 	at(2,.25) {\tiny $\dots$};
 	\draw (3,-.5)  .. controls (3,0) and (2.5,0) ..  (2.5,.5) -- (2.5,1);
 	\draw (1.5,-.5) .. controls (1.5,-.25) ..(-.5,0) .. controls (3.5,.25)  ..
 		(3.5,.5) -- (3.5,1) node[midway,tikzdot]{} node[midway, xshift=1.5ex, yshift=1ex]{\small $p$};
 	\draw [stdhl] (3.5,-.5) node[below]{\small $1$} .. controls (3.5,0) and (3,0) .. (3,.5) -- (3,1);
	\draw[fill=white, color=white] (-.8,0) circle (.2cm);
 	\draw [vstdhl] (-.5,-.5) node[below]{\small $\lambda$} -- (-.5,1) node[pos=.33,nail]{};
	\filldraw [fill=white, draw=black] (-.75,.5) rectangle (3.25,1) node[midway] { $X_{k-1}$};
}
 \right)
\]
concluding the proof. 
\end{proof}

\begin{citelem}{lem:sesX0}
The sequence
\[
0 \rightarrow Y^1_k \xrightarrow{\imath_k} Y^0_k \xrightarrow{\gamma_k} X_k \rightarrow 0,
\]
is a short exact sequence of left $(T^{\lambda,r}, 0)$-modules. 
\end{citelem}

\begin{proof}
Since we already have a complex with an injection and a surjection, it is enough to show that 
\[
 \gdim X_k = 
\gdim Y^0_k - \gdim Y^1_k,
\]
where $\gdim$ is the graded dimension in the form of a Laurent series in $\bN\llbracket h^{\pm 1}, \lambda^{\pm 1}, q^{\pm 1} \rrbracket$. 
We will show this by induction on $k$. When $k=0$, this is immediate. Suppose it is true for $k$, and we will show it for $k+1$.  

Let
\[
[\beta+t]_q^h := \frac{\lambda^{-1}q^{-t} + h \lambda q^t }{q^{-1}-q} =q \frac{\lambda^{-1}q^{-t} + h \lambda q^t }{1-q^2}.
\]
Note that
\begin{align}
[k+1]_q [\beta+t+k]_q^h &= \sum_{r=0}^{k} [\beta+t-2r]_q^h, \label{eq:defqnbrprod}
\\
[k+1]_q &= q [k]_q + q^{-k}, \label{eq:qnbrplus}
\\
[\beta-k+1]_q^h &=  q^{-1} [\beta-k]_q^h - h \lambda q^{-k}, \label{eq:defqnbrplus}
\end{align}
and 
\begin{align}
[k+1]_q [\beta-k+1]_q^h  &= [k]_q [\beta-k]_q^h + q^{-1-k}[\beta-k]_q^h - h \lambda q^{-k} [k+1]_q. \label{eq:qnbrtimesdefqnbr}
\end{align}

We first restrict to the case $\ell = 0$ and $r=1$.  
By \cref{prop:X0decomp} using \eqref{eq:defqnbrprod}, followed by the induction hypothesis, we have
\begin{align*}
\gdim X 1_{k+1,0} =&  \lambda^{-1} q^{2(k+1)} \gdim T_{k+1}^{\lambda,0} +  \lambda q^{-2k} [k+1]_q[\beta-k]^h_q  \gdim X 1_{k,0} \\
=& \lambda^{-1} q^{2(k+1)} \gdim T_{k+1}^{\lambda,0} +  \lambda q^{-2k} [k+1]_q[\beta-k]^h_q 
\\
&\times \bigl( (\lambda^{-1}q^k + h \lambda q [k]_q ) \gdim T^{\lambda,1}_{k} 1_{0,k} - h \lambda q^2 [k]_q  \gdim T_k^{\lambda,1} 1_{1,k-1} \bigr).
\end{align*}
By definition, we have
\begin{align*}
\gdim Y_{k+1,0}^0  &= \bigl( \lambda^{-1} q^{k+1} + h \lambda q  [k+1]_q \bigr)  \gdim T^{\lambda,1}_{k+1} 1_{0,k+1}, 
\\
\gdim Y_{k+1,0}^1 &= h \lambda q^2 [k+1]_q \gdim T^{\lambda,1}_{k+1} 1_{1,k}.
\end{align*}
By \cref{prop:Tdecomp}, we have
\begin{align*}
\gdim T^{\lambda,1}_{k+1} 1_{0,k+1} = & 
q^{k+1} \gdim T^{\lambda,0}_{k+1} + \lambda q^{-2k} [k+1]_q[\beta+1-k]_q^h \gdim T^{\lambda,1}_k 1_{0,k},
\\
\gdim T^{\lambda,1}_{k+1} 1_{1,k} = & 
q^{k} \gdim T^{\lambda,0}_{k+1} 
+ \lambda q^{-2k} [\beta]_q^h    \gdim T^{\lambda,1}_{k} 1_{0,k} \\
&+ \lambda q^{-2k} [k]_q [\beta-k]_q^h \gdim T^{\lambda,1}_k 1_{1,k-1}.
\end{align*}
We now gather by $ \gdim T_{k+1}^{\lambda,0}$, $ \gdim T^{\lambda,1}_{k} 1_{0,k}$ and $\gdim T^{\lambda,1}_k 1_{1,k-1}$. 
For $\gdim T_{k+1}^{\lambda,0}$, we verify that
\[
	 \lambda^{-1} q^{2(k+1)} = \bigl( \lambda^{-1} q^{k+1} + \lambda q  [k+1]_q \bigr) q^{k+1} - \lambda q^2 [k+1]_q  q^{k}.
\]
Gathering by $ \gdim T^{\lambda,1}_{k} 1_{0,k}$, we obtain on one hand
\begin{align}
\begin{split}
&\lambda q^{-2k} [k+1]_q[\beta-k]^h_q  (\lambda^{-1}q^k + h \lambda q [k]_q ) 
 \\
&= q^{-k}[k+1]_q [\beta-k]_q^h + h \lambda^2 q^{1-2k}[k]_q[k+1]_q[\beta-k]_q^h,
\end{split} \label{eq:dim0kLHS}
\end{align}
and on the other hand
\begin{align*}
&\bigl( \lambda^{-1} q^{k+1} + h\lambda q  [k+1]_q \bigr) \lambda q^{-2k} [k+1]_q[\beta+1-k]_q^h
-  h\lambda q^2 [k+1]_q   \lambda q^{-2k} [\beta]_q^h 
\\
&= q^{1-k}[k+1]_q [\beta-k+1]_q^h + h \lambda^2 q^{1-2k} [k+1]_q [k+1_q] [\beta-k+1]_q^h
\\
 &\quad - h \lambda^2 q^{2-2k}[k+1]_q [\beta]_q^h 
\\
&=
q^{-k} [\beta-k]_q^h [k+1]_q - h \lambda q^{1-2k} [k+1]_q 
\\
&\quad + h \lambda^2 q^{1-2k} [k+1]_q \bigl( [k]_q[\beta-k]_q^h + q^{-1-k}[\beta-k]_q^h - h\lambda q^{-k}[k+1]_q \bigr)
\\
&\quad - h \lambda^2 q^{2-2k}[k+1]_q [\beta]_q^h,
\end{align*}
using \eqref{eq:defqnbrplus} and \eqref{eq:qnbrtimesdefqnbr}. We remark that the first and third terms coincide with \eqref{eq:dim0kLHS}. We gather the remaining terms, putting $h\lambda q^{-2k}[k+1]_q$ in evidence, so that we obtain
\begin{align*}
 & - q  + \lambda q^{-k}[\beta-k]_q^h - h\lambda^2 q^{1-k}[k+1]_q - \lambda q^{2} [\beta]_q^h
\\
&= \frac{1}{q^{-1}-q}\bigl( -q(q^{-1}-q) + \lambda q^{-k}(\lambda^{-1}q^k + h \lambda q^{-k}) - h \lambda^2q^{1-k}(q^{-k-1}-q^{k+1}) - \lambda q^2 (\lambda^{-1}+h\lambda) \bigr)
\\&= 0.
\end{align*}

Finally, for $\gdim T^{\lambda,1}_k 1_{1,k-1}$, we verify that
\begin{align*}
 \lambda q^{-2k} [k+1]_q[\beta-k]^h_q   (- h \lambda q^2 [k]_q  )
 &= 
-  h \lambda q^2 [k+1]_q   \lambda q^{-2k} [k]_q [\beta-k]_q^h, 
\end{align*}
concluding the proof in the case $\ell = 0$ and $r=1$. 

The case $\ell > 0$ comes from an induction on $\ell$ and using the case $\ell = 0$. Using a similar decomposition as in \cref{prop:X0decomp}, we obtain
\begin{align*}
\gdim X_{k,\ell} =& \lambda^{-1} q^{2k+\ell} \gdim T_{k+\ell}^{\lambda,0} + \lambda q^{-2\ell-2k-2}  [k]_q[\beta-k+1]_q^h  \gdim X_{k-1, \ell}
\\
&+ \lambda q^{-2k-2\ell} [\ell]_q [\beta-2k-\ell]_q^h \gdim X_{k, \ell-1},
\end{align*}
where $X_{k,\ell} := X_k 1_{k,\ell}$. Similarly, one can compute
\begin{align*}
\gdim T^{\lambda,1}_{k+\ell} 1_{0,k+\ell} 
 =&
q^{k+\ell} \gdim T_{k+\ell}^{\lambda,0} + \lambda q^{-2k-2\ell} [k+\ell]_q[\beta-k-\ell]_q^h \gdim T^{\lambda,1}_{k+\ell-1} 1_{0,k+\ell-1},
\\
\gdim T^{\lambda,1}_{k+\ell} 1_{1,k+\ell-1} 
=& 
q^{k+\ell-1} \gdim T_{k+\ell}^{\lambda,0} + \lambda q^{2-2(k+\ell)} [\beta]_q^h \gdim T_{k+\ell-1}^{\lambda,1} 1_{0,k+\ell-1}
\\
& + \lambda q^{-2k-2\ell} [k+\ell-1]_q [\beta-k-\ell-1]_q^h \gdim T_{k+\ell-1}^{\lambda,1} 1_{1,k+\ell-2}.
\end{align*}
By the same reasons as above, the part in $\gdim T_{k+\ell}^{\lambda,0}$ annihilates each others.
By induction hypothesis we know that
\[
\gdim X_{k, \ell-1} = 
\bigl( \lambda^{-1} q^{k} + h \lambda q  [k]_q \bigr)  \gdim T^{\lambda,1}_{k+\ell-1} 1_{0,k+\ell-1} 
-
 h \lambda q^2 [k]_q \gdim T^{\lambda,1}_{k+\ell-1} 1_{1,k+\ell-2}.
\]
Using the fact that
\begin{align*}
[k+\ell]_q[\beta-k-\ell]_q^h &= [k]_q[\beta-k]_q^h + [\ell]_q[\beta-2k-\ell]_q^h, \\
[k+\ell-1]_q[\beta-k-\ell-1]_q^h &= [k-1]_q[\beta-k+1]_q^h + [\ell]_q[\beta-2k-\ell]_q^h,
\end{align*}
together with the induction hypothesis, 
we cancel the part given by $\gdim (X_{k, \ell-1})$ in $X_{k,\ell}$ with the part given by
\[
  \lambda q^{-2k-2\ell}  [\ell]_q[\beta-2k-\ell]_q^h  \gdim T^{\lambda,1}_{k+\ell-1} 1_{0,k+\ell-1}
\]
in $Y_{k, \ell}^0$ minus the part given by
\[
 \lambda q^{-2k-2\ell}   [\ell]_q[\beta-2k-\ell]_q^h  \gdim T_{k+\ell-1}^{\lambda,1} 1_{1,k+\ell-2}.
\]
in $Y_{k,\ell}^1$. 

The remaining terms yields the same computations as for the case $\ell = 0$ (replacing $k+1$ by $k$), but shifting everything by $q^{-2\ell}$. Thus, it concludes the case $\ell > 0$.

The general case follows from a similar argument, using the fact that $X$ decomposes similarly to $T^{\lambda,r}_b$ whenever $r > 1$, that is as in \cref{prop:Tdecomp}, replacing all $T$ by $X$. We leave the details to the reader. 
\end{proof}

\subsection{Proofs of \cref{sec:catTLB}}\label{sec:proofsofcatTLB}

\begin{citelem}{lem:Xklgenerated}
As a right $(T^{\lambda,r},0)$-module, $1_{1,k+\ell-1,\rho}X$ is generated by the elements
\begin{align}\label{eq:Xklgenerator}
\tikzdiagh{0}{
	\draw (1.25,0) .. controls (1.25,.5) .. (-.5,.75) .. controls (0,.875) .. (0,1);
	\draw[stdhl] (0,0) node[below]{\small $1$} .. controls (0,.1) .. (-.5,.25) .. controls (.5,.7) .. (.5,1);
	\draw[fill=white, color=white] (-.6,.25) circle (.1cm);
	\draw[vstdhl] (-.5,0) node[below]{\small $\lambda$} --(-.5,1) node[pos=.75,nail]{};
	\draw (.5,0) .. controls (.5,.5) and (.75,.5) .. (.75,1);
	\node at (.75,.15){\tiny $\dots$};
	\draw (1,0) .. controls (1,.5) and (1.25,.5)..  (1.25,1);
	\draw[decoration={brace,mirror,raise=-8pt},decorate]  (.4,-.35) -- node { \small $k-1$} (1.1,-.35);
}
\otimes \bar  1_{\ell,\rho},
&&\text{and} &&
\tikzdiagh{0}{
	\draw (.5,0) .. controls (.5,.5) and (0,.5) .. (0,1);
	\draw[stdhl] (0,0) node[below]{\small $1$} .. controls (0,.1) .. (-.5,.25) .. controls (.5,.7) .. (.5,1);
	\draw[fill=white, color=white] (-.6,.25) circle (.1cm);
	\draw[vstdhl] (-.5,0) node[below]{\small $\lambda$} --(-.5,1);
}
\otimes \bar  1_{\ell+k-1,\rho}.
\end{align}
\end{citelem}

\begin{proof}
We prove this claim using an induction on $k$. The case $k=1$ is obvious. We suppose it is true for $k-1$, and thus it is enough to show that we can generate the element:
\[
\tikzdiagh{0}{
	\draw (1.25,0) .. controls (1.25,.5) .. (-.5,.75) .. controls (0,.875) .. (0,1);
	\draw[stdhl] (0,0) node[below]{\small $1$} .. controls (0,.1) .. (-.5,.25) .. controls (.5,.7) .. (.5,1);
	\draw[fill=white, color=white] (-.6,.25) circle (.1cm);
	\draw[vstdhl] (-.5,0) node[below]{\small $\lambda$} --(-.5,1) node[pos=.75,nail]{};
	\draw (.5,0) .. controls (.5,.5) and (.75,.5) .. (.75,1);
	\node at (.75,.15){\tiny $\dots$};
	\draw (1,0) .. controls (1,.5) and (1.25,.5)..  (1.25,1);
	\draw (1.5,0) -- (1.5,1);
	\draw[decoration={brace,mirror,raise=-8pt},decorate]  (.4,-.35) -- node { \small $k-2$} (1.1,-.35);
}
\otimes \bar  1_{\ell,\rho}.
\]
Using \cref{eq:nhdotslide}, we have
\[
  \tikzdiagh{0}{
    \draw (1.25,0) .. controls (1.25,.5) .. (-.5,.75) .. controls (0,.875) .. (0,1);
    \draw[stdhl] (0,0) node[below]{\small $1$} .. controls (0,.1) .. (-.5,.25) .. controls (.5,.7) .. (.5,1);
    \draw[fill=white, color=white] (-.6,.25) circle (.1cm);
    \draw[vstdhl] (-.5,0) node[below]{\small $\lambda$} --(-.5,1) node[pos=.75,nail]{};
    \draw (.5,0) .. controls (.5,.5) and (.75,.5) .. (.75,1);
    \node at (.75,.15){\tiny $\dots$};
    \draw (1,0) .. controls (1,.5) and (1.25,.5)..  (1.25,1);
    \draw (1.5,0) -- (1.5,1);
    \draw[decoration={brace,mirror,raise=-8pt},decorate]  (.4,-.35) -- node { \small $k-2$} (1.1,-.35);
  }
  =
  \tikzdiagh{0}{
    \draw (1.6,0) .. controls (1.6,.5) .. (-.5,.75) node[pos=.52,tikzdot]{} .. controls (0,.875) .. (0,1);
    \draw[stdhl] (0,0) node[below]{\small $1$} .. controls (0,.1) .. (-.5,.25) .. controls (.5,.7) .. (.5,1);
    \draw[fill=white, color=white] (-.6,.25) circle (.1cm);
    \draw[vstdhl] (-.5,0) node[below]{\small $\lambda$} --(-.5,1) node[pos=.75,nail]{};
    \draw (.5,0) .. controls (.5,.5) and (.75,.5) .. (.75,1);
    \node at (.75,.15){\tiny $\dots$};
    \draw (1,0) .. controls (1,.5) and (1.25,.5)..  (1.25,1);
    \draw (1.35,0) .. controls (1.35,.5) and (1.6,.5)..  (1.6,1);
    \draw[decoration={brace,mirror,raise=-8pt},decorate]  (.4,-.35) -- node { \small $k-2$} (1.1,-.35);
  }
  -
  \tikzdiagh{0}{
    \draw (1.25,0) .. controls (1.25,.5) .. (-.5,.75) node[pos=.2,tikzdot]{} .. controls (0,.875) .. (0,1);
    \draw[stdhl] (0,0) node[below]{\small $1$} .. controls (0,.1) .. (-.5,.25) .. controls (.5,.7) .. (.5,1);
    \draw[fill=white, color=white] (-.6,.25) circle (.1cm);
    \draw[vstdhl] (-.5,0) node[below]{\small $\lambda$} --(-.5,1) node[pos=.75,nail]{};
    \draw (.5,0) .. controls (.5,.5) and (.75,.5) .. (.75,1);
    \node at (.75,.15){\tiny $\dots$};
    \draw (1,0) .. controls (1,.5) and (1.25,.5)..  (1.25,1);
    \draw[decoration={brace,mirror,raise=-8pt},decorate]  (.4,-.35) -- node { \small $k-1$} (1.1,-.35);
  }
\]
The second term on the right-hand side is generated by the second element in \cref{eq:Xklgenerator}. For the first term of the right-hand side, we slide the dot to the left using repeatedly \cref{eq:nhdotslide}:
\[
  \tikzdiagh{0}{
    \draw (1.6,0) .. controls (1.6,.5) .. (-.5,.75) node[pos=.52,tikzdot]{} .. controls (0,.875) .. (0,1);
    \draw[stdhl] (0,0) node[below]{\small $1$} .. controls (0,.1) .. (-.5,.25) .. controls (.5,.7) .. (.5,1);
    \draw[fill=white, color=white] (-.6,.25) circle (.1cm);
    \draw[vstdhl] (-.5,0) node[below]{\small $\lambda$} --(-.5,1) node[pos=.75,nail]{};
    \draw (.5,0) .. controls (.5,.5) and (.75,.5) .. (.75,1);
    \node at (.75,.15){\tiny $\dots$};
    \draw (1,0) .. controls (1,.5) and (1.25,.5)..  (1.25,1);
    \draw (1.35,0) .. controls (1.35,.5) and (1.6,.5)..  (1.6,1);
    \draw[decoration={brace,mirror,raise=-8pt},decorate]  (.4,-.35) -- node { \small $k-2$} (1.1,-.35);
  }
  =
  \tikzdiagh{0}{
    \draw (1.3,0) .. controls (1.3,.5) .. (-.5,.75) node[pos=.75,tikzdot]{} .. controls (0,.875) .. (0,1);
    \draw[stdhl] (0,0) node[below]{\small $1$} .. controls (0,.1) .. (-.5,.25) .. controls (.5,.7) .. (.5,1);
    \draw[fill=white, color=white] (-.6,.25) circle (.1cm);
    \draw[vstdhl] (-.5,0) node[below]{\small $\lambda$} --(-.5,1) node[pos=.75,nail]{};
    \draw (.55,0) .. controls (.55,.5) and (.8,.5) .. (.8,1);
    \node at (.8,.15){\tiny $\dots$};
    \draw (1.05,0) .. controls (1.05,.5) and (1.3,.5)..  (1.3,1);
    \draw[decoration={brace,mirror,raise=-8pt},decorate]  (.4,-.35) -- node { \small $k-1$} (1.1,-.35);
  }
  -
  \sum_{j=1}^{k-2}
  \tikzdiagh{0}{
    \draw (1.25,0) .. controls (1.25,.5) .. (-.5,.75) .. controls (0,.875) .. (0,1);
    \draw[stdhl] (0,0) node[below]{\small $1$} .. controls (0,.1) .. (-.5,.25) .. controls (.5,.7) .. (.5,1);
    \draw[fill=white, color=white] (-.6,.25) circle (.1cm);
    \draw[vstdhl] (-.5,0) node[below]{\small $\lambda$} --(-.5,1) node[pos=.75,nail]{};
    \draw (.5,0) .. controls (.5,.5) and (.75,.5) .. (.75,1);
    \node at (.75,.15){\tiny $\dots$};
    \draw (1,0) .. controls (1,.5) and (1.25,.5)..  (1.25,1);
    \draw (2.25,0) .. controls (2.25,.5) and (1.5,.5) .. (1.5,1);
    \draw (1.5,0) .. controls (1.5,.5) and (1.75,.5) .. (1.75,1);
    \node at (1.75,.15){\tiny $\dots$};
    \draw (2,0) .. controls (2,.5) and (2.25,.5) .. (2.25,1);
    \draw[decoration={brace,mirror,raise=-8pt},decorate]  (1.4,-.35) -- node { \small $j$} (2.1,-.35);
  }  
\]
Because of the symmetric of \cref{eq:dottednailslide}, the first term on the right-hand side is generated by the second element in \cref{eq:Xklgenerator}. We now prove that every element of the sum on the right-hand side is generated by elements in \cref{eq:Xklgenerator}.

By applying the induction hypothesis, it suffices to show that for every $1 \leq j \leq k-2$, the elements
\begin{align*}
  \tikzdiagh{0}{
  \draw (.5,0) .. controls (.5,.5) and (0,.5) .. (0,1);
  \draw[stdhl] (0,0) node[below]{\small $1$} .. controls (0,.1) .. (-.5,.25) .. controls (.5,.7) .. (.5,1);
  \draw[fill=white, color=white] (-.6,.25) circle (.1cm);
  \draw[vstdhl] (-.5,0) node[below]{\small $\lambda$} --(-.5,1);
  \draw (.75,0) -- (.75,1);
  \draw (1.25,0) -- (1.25,1);
  \node at (1,.5){\tiny $\dots$};
  \draw (2.25,0) .. controls (2.25,.5) and (1.5,.5) .. (1.5,1);
  \draw (1.5,0) .. controls (1.5,.5) and (1.75,.5) .. (1.75,1);
  \node at (1.75,.15){\tiny $\dots$};
  \draw (2,0) .. controls (2,.5) and (2.25,.5) .. (2.25,1);
  \draw[decoration={brace,mirror,raise=-8pt},decorate]  (1.4,-.35) -- node { \small $j$} (2.1,-.35);
  }
  &&
     \text{and}
  &&
     \tikzdiagh{0}{
     \draw (2,0) .. controls (2,.5) .. (-.5,.75) .. controls (0,.875) .. (0,1);
     \draw[stdhl] (0,0) node[below]{\small $1$} .. controls (0,.1) .. (-.5,.25) .. controls (.5,.7) .. (.5,1);
     \draw[fill=white, color=white] (-.6,.25) circle (.1cm);
     \draw[vstdhl] (-.5,0) node[below]{\small $\lambda$} --(-.5,1) node[pos=.75,nail]{};
     \draw (.5,0) .. controls (.5,.5) and (.75,.5) .. (.75,1);
     \node at (.75,.15){\tiny $\dots$};
     \draw (1,0) .. controls (1,.5) and (1.25,.5) .. (1.25,1);
     \draw (1.25,0) .. controls (1.25,.5) and (1.75,.5) .. (1.75,1);
     \node at (1.5,.15){\tiny $\dots$};
     \draw (1.75,0) .. controls (1.75,.5) and (2.25,.5)..  (2.25,1);
     \draw (2.25,0) -- (2.25,.4);
     \draw (2.25,.4) .. controls (2.25,.75) and (1.5,.75) .. (1.5,1);
     \draw[decoration={brace,mirror,raise=-8pt},decorate]  (1.15,-.35) -- node { \small $j$} (1.85,-.35);
     }
\end{align*}
are in the right module generated be the elements in \cref{eq:Xklgenerator}, which is clear for the first diagram. Concerning the second one, we have by \cref{eq:nhdotslide}
\[
  \tikzdiagh{0}{
    \draw (2,0) .. controls (2,.5) .. (-.5,.75) .. controls (0,.875) .. (0,1);
    \draw[stdhl] (0,0) node[below]{\small $1$} .. controls (0,.1) .. (-.5,.25) .. controls (.5,.7) .. (.5,1);
    \draw[fill=white, color=white] (-.6,.25) circle (.1cm);
    \draw[vstdhl] (-.5,0) node[below]{\small $\lambda$} --(-.5,1) node[pos=.75,nail]{};
    \draw (.5,0) .. controls (.5,.5) and (.75,.5) .. (.75,1);
    \node at (.75,.15){\tiny $\dots$};
    \draw (1,0) .. controls (1,.5) and (1.25,.5) .. (1.25,1);
    \draw (1.25,0) .. controls (1.25,.5) and (1.75,.5) .. (1.75,1);
    \node at (1.5,.15){\tiny $\dots$};
    \draw (1.75,0) .. controls (1.75,.5) and (2.25,.5)..  (2.25,1);
    \draw (2.25,0) -- (2.25,.4);
    \draw (2.25,.4) .. controls (2.25,.75) and (1.5,.75) .. (1.5,1);
    \draw[decoration={brace,mirror,raise=-8pt},decorate]  (1.15,-.35) -- node { \small $j$} (1.85,-.35);
  }
  =
  \tikzdiagh{0}{
    \draw (2.25,0) .. controls (2.25,.5) .. (-.5,.75) node[pos=.39,tikzdot]{} .. controls (0,.875) .. (0,1);
    \draw[stdhl] (0,0) node[below]{\small $1$} .. controls (0,.1) .. (-.5,.25) .. controls (.5,.7) .. (.5,1);
    \draw[fill=white, color=white] (-.6,.25) circle (.1cm);
    \draw[vstdhl] (-.5,0) node[below]{\small $\lambda$} --(-.5,1) node[pos=.75,nail]{};
    \draw (.5,0) .. controls (.5,.5) and (.75,.5) .. (.75,1);
    \node at (.75,.15){\tiny $\dots$};
    \draw (1,0) .. controls (1,.5) and (1.25,.5) .. (1.25,1);
    \draw (1.25,0) .. controls (1.25,.5) and (1.75,.5) .. (1.75,1);
    \node at (1.5,.15){\tiny $\dots$};
    \draw (1.75,0) .. controls (1.75,.5) and (2.25,.5)..  (2.25,1);
    \draw (2,0) .. controls (2,.25) and (2.25,.25) .. (2.25,.5);
    \draw (2.25,.5) .. controls (2.25,.75) and (1.5,.75) .. (1.5,1);
    \draw[decoration={brace,mirror,raise=-8pt},decorate]  (1.15,-.35) -- node { \small $j$} (1.85,-.35);
  }
  -
  \tikzdiagh{0}{
    \draw (2.25,0) .. controls (2.25,.5) .. (-.5,.75) node[pos=.1,tikzdot]{} .. controls (0,.875) .. (0,1);
    \draw[stdhl] (0,0) node[below]{\small $1$} .. controls (0,.1) .. (-.5,.25) .. controls (.5,.7) .. (.5,1);
    \draw[fill=white, color=white] (-.6,.25) circle (.1cm);
    \draw[vstdhl] (-.5,0) node[below]{\small $\lambda$} --(-.5,1) node[pos=.75,nail]{};
    \draw (.5,0) .. controls (.5,.5) and (.75,.5) .. (.75,1);
    \node at (.75,.15){\tiny $\dots$};
    \draw (1,0) .. controls (1,.5) and (1.25,.5) .. (1.25,1);
    \draw (1.25,0) .. controls (1.25,.5) and (1.75,.5) .. (1.75,1);
    \node at (1.5,.15){\tiny $\dots$};
    \draw (1.75,0) .. controls (1.75,.5) and (2.25,.5)..  (2.25,1);
    \draw (2,0) .. controls (2,.25) and (2.25,.25) .. (2.25,.5);
    \draw (2.25,.5) .. controls (2.25,.75) and (1.5,.75) .. (1.5,1);
    \draw[decoration={brace,mirror,raise=-8pt},decorate]  (1.15,-.35) -- node { \small $j$} (1.85,-.35);
  }
\]
For the first term of the right-hand side, we again slide the dot to the left using \cref{eq:nhdotslide} and obtain
\begin{align*}
  \tikzdiagh{0}{
    \draw (2.25,0) .. controls (2.25,.5) .. (-.5,.75) node[pos=.39,tikzdot]{} .. controls (0,.875) .. (0,1);
    \draw[stdhl] (0,0) node[below]{\small $1$} .. controls (0,.1) .. (-.5,.25) .. controls (.5,.7) .. (.5,1);
    \draw[fill=white, color=white] (-.6,.25) circle (.1cm);
    \draw[vstdhl] (-.5,0) node[below]{\small $\lambda$} --(-.5,1) node[pos=.75,nail]{};
    \draw (.5,0) .. controls (.5,.5) and (.75,.5) .. (.75,1);
    \node at (.75,.15){\tiny $\dots$};
    \draw (1,0) .. controls (1,.5) and (1.25,.5) .. (1.25,1);
    \draw (1.25,0) .. controls (1.25,.5) and (1.75,.5) .. (1.75,1);
    \node at (1.5,.15){\tiny $\dots$};
    \draw (1.75,0) .. controls (1.75,.5) and (2.25,.5)..  (2.25,1);
    \draw (2,0) .. controls (2,.25) and (2.25,.25) .. (2.25,.5);
    \draw (2.25,.5) .. controls (2.25,.75) and (1.5,.75) .. (1.5,1);
    \draw[decoration={brace,mirror,raise=-8pt},decorate]  (1.15,-.35) -- node { \small $j$} (1.85,-.35);
  }
  &\overset{\phantom{\eqref{eq:nhR2andR3}}}{=}
    \tikzdiagh{0}{
    \draw (2.35,0) .. controls (2.35,.5) .. (-.5,.75) node[pos=.845,tikzdot]{} .. controls (0,.875) .. (0,1);
    \draw[stdhl] (0,0) node[below]{\small $1$} .. controls (0,.1) .. (-.5,.25) .. controls (.5,.7) .. (.5,1);
    \draw[fill=white, color=white] (-.6,.25) circle (.1cm);
    \draw[vstdhl] (-.5,0) node[below]{\small $\lambda$} --(-.5,1) node[pos=.75,nail]{};
    \draw (.6,0) .. controls (.6,.5) and (.85,.5) .. (.85,1);
    \node at (.85,.15){\tiny $\dots$};
    \draw (1.1,0) .. controls (1.1,.5) and (1.35,.5) .. (1.35,1);
    \draw (1.35,0) .. controls (1.35,.5) and (1.85,.5) .. (1.85,1);
    \node at (1.6,.15){\tiny $\dots$};
    \draw (1.85,0) .. controls (1.85,.5) and (2.35,.5)..  (2.35,1);
    \draw (2.1,0) .. controls (2.1,.25) and (2.35,.25) .. (2.35,.5);
    \draw (2.35,.5) .. controls (2.35,.75) and (1.6,.75) .. (1.6,1);
    \draw[decoration={brace,mirror,raise=-8pt},decorate]  (1.25,-.35) -- node { \small $j$} (1.95,-.35);
  }
    -
    \sum_{l=0}^{k-j-3}
    \tikzdiagh{0}{
    \draw (1.25,0) .. controls (1.25,.5) .. (-.5,.75) .. controls (0,.875) .. (0,1);
    \draw[stdhl] (0,0) node[below]{\small $1$} .. controls (0,.1) .. (-.5,.25) .. controls (.5,.7) .. (.5,1);
    \draw[fill=white, color=white] (-.6,.25) circle (.1cm);
    \draw[vstdhl] (-.5,0) node[below]{\small $\lambda$} --(-.5,1) node[pos=.75,nail]{};
    \draw (.5,0) .. controls (.5,.5) and (.75,.5) .. (.75,1);
    \node at (.75,.15){\tiny $\dots$};
    \draw (1,0) .. controls (1,.5) and (1.25,.5) .. (1.25,1);
    \draw (1.5,0) .. controls (1.5,.5) and (1.75,.5) .. (1.75,1);
    \node at (1.75,.15){\tiny $\dots$};
    \draw (2,0) .. controls (2,.5) and (2.25,.5) .. (2.25,1);
    \draw (2.25,0) .. controls (2.25,.5) and (2.75,.5) .. (2.75,1);
    \node at (2.5,.15){\tiny $\dots$};
    \draw (2.75,0) .. controls (2.75,.5) and (3.25,.5) .. (3.25,1);
    \draw (3,0) .. controls (3,.25) and (3.25,.25) .. (3.25,.5);
    \draw (3.25,.5) .. controls (3.25,.75) and (2.5,.75) .. (2.5,1);
    \draw (3.25,0) .. controls (3.25,.5) and (1.5,.5) .. (1.5,1);
    \draw[decoration={brace,mirror,raise=-8pt},decorate]  (1.4,-.35) -- node { \small $l$} (2.1,-.35);
    \draw[decoration={brace,mirror,raise=-8pt},decorate]  (2.15,-.35) -- node { \small $j$} (2.85,-.35);
    }\\
  &\overset{\eqref{eq:nhR2andR3}}{=}
    \tikzdiagh{0}{
    \draw (2.35,0) .. controls (2.35,.5) .. (-.5,.75) node[pos=.845,tikzdot]{} .. controls (0,.875) .. (0,1);
    \draw[stdhl] (0,0) node[below]{\small $1$} .. controls (0,.1) .. (-.5,.25) .. controls (.5,.7) .. (.5,1);
    \draw[fill=white, color=white] (-.6,.25) circle (.1cm);
    \draw[vstdhl] (-.5,0) node[below]{\small $\lambda$} --(-.5,1) node[pos=.75,nail]{};
    \draw (.6,0) .. controls (.6,.5) and (.85,.5) .. (.85,1);
    \node at (.85,.15){\tiny $\dots$};
    \draw (1.1,0) .. controls (1.1,.5) and (1.35,.5) .. (1.35,1);
    \draw (1.35,0) .. controls (1.35,.5) and (1.85,.5) .. (1.85,1);
    \node at (1.6,.15){\tiny $\dots$};
    \draw (1.85,0) .. controls (1.85,.5) and (2.35,.5)..  (2.35,1);
    \draw (2.1,0) .. controls (2.1,.25) and (2.35,.25) .. (2.35,.5);
    \draw (2.35,.5) .. controls (2.35,.75) and (1.6,.75) .. (1.6,1);
    \draw[decoration={brace,mirror,raise=-8pt},decorate]  (1.25,-.35) -- node { \small $j$} (1.95,-.35);
    }
    -
    \sum_{l=0}^{k-j-3}
    \tikzdiagh{0}{
    \draw (1.25,0) .. controls (1.25,.5) .. (-.5,.75) .. controls (0,.875) .. (0,1);
    \draw[stdhl] (0,0) node[below]{\small $1$} .. controls (0,.1) .. (-.5,.25) .. controls (.5,.7) .. (.5,1);
    \draw[fill=white, color=white] (-.6,.25) circle (.1cm);
    \draw[vstdhl] (-.5,0) node[below]{\small $\lambda$} --(-.5,1) node[pos=.75,nail]{};
    \draw (.5,0) .. controls (.5,.5) and (.75,.5) .. (.75,1);
    \node at (.75,.15){\tiny $\dots$};
    \draw (1,0) .. controls (1,.5) and (1.25,.5) .. (1.25,1);
    \draw (1.5,0) .. controls (1.5,.5) and (1.75,.5) .. (1.75,1);
    \node at (1.75,.15){\tiny $\dots$};
    \draw (2,0) .. controls (2,.5) and (2.25,.5) .. (2.25,1);
    \draw (2.25,0) .. controls (2.25,.5) and (2.75,.5) .. (2.75,1);
    \node at (2.5,.15){\tiny $\dots$};
    \draw (2.75,0) .. controls (2.75,.5) and (3.25,.5) .. (3.25,1);
    \draw (3,0) .. controls (3,.3) and (2.3,.25) .. (2.3,.5);
    \draw (2.3,.5) .. controls (2.3,.75) and (2.5,.75) .. (2.5,1);
    \draw (3.25,0) .. controls (3.25,.6) and (1.5,.5) .. (1.5,1);
    \draw[decoration={brace,mirror,raise=-8pt},decorate]  (1.4,-.35) -- node { \small $l$} (2.1,-.35);
    \draw[decoration={brace,mirror,raise=-8pt},decorate]  (2.15,-.35) -- node { \small $j$} (2.85,-.35);
    }
\end{align*}
Another application of the symmetry of \cref{eq:dottednailslide} deals with the first term, and every term of the sum is handled trough a descending induction on $j$, noting that the sum is zero if $j=k-2$.

For the second term, we apply once again \cref{eq:nhR2andR3} and obtain
\[
  \tikzdiagh{0}{
    \draw (2.25,0) .. controls (2.25,.5) .. (-.5,.75) node[pos=.1,tikzdot]{} .. controls (0,.875) .. (0,1);
    \draw[stdhl] (0,0) node[below]{\small $1$} .. controls (0,.1) .. (-.5,.25) .. controls (.5,.7) .. (.5,1);
    \draw[fill=white, color=white] (-.6,.25) circle (.1cm);
    \draw[vstdhl] (-.5,0) node[below]{\small $\lambda$} --(-.5,1) node[pos=.75,nail]{};
    \draw (.5,0) .. controls (.5,.5) and (.75,.5) .. (.75,1);
    \node at (.75,.15){\tiny $\dots$};
    \draw (1,0) .. controls (1,.5) and (1.25,.5) .. (1.25,1);
    \draw (1.25,0) .. controls (1.25,.5) and (1.75,.5) .. (1.75,1);
    \node at (1.5,.15){\tiny $\dots$};
    \draw (1.75,0) .. controls (1.75,.5) and (2.25,.5)..  (2.25,1);
    \draw (2,0) .. controls (2,.25) and (2.25,.25) .. (2.25,.5);
    \draw (2.25,.5) .. controls (2.25,.75) and (1.5,.75) .. (1.5,1);
    \draw[decoration={brace,mirror,raise=-8pt},decorate]  (1.15,-.35) -- node { \small $j$} (1.85,-.35);
  }
  =
  \tikzdiagh{0}{
    \draw (2.25,0) .. controls (2.25,.5) .. (-.5,.75) node[pos=.1,tikzdot]{} .. controls (0,.875) .. (0,1);
    \draw[stdhl] (0,0) node[below]{\small $1$} .. controls (0,.1) .. (-.5,.25) .. controls (.5,.7) .. (.5,1);
    \draw[fill=white, color=white] (-.6,.25) circle (.1cm);
    \draw[vstdhl] (-.5,0) node[below]{\small $\lambda$} --(-.5,1) node[pos=.75,nail]{};
    \draw (.5,0) .. controls (.5,.5) and (.75,.5) .. (.75,1);
    \node at (.75,.15){\tiny $\dots$};
    \draw (1,0) .. controls (1,.5) and (1.25,.5) .. (1.25,1);
    \draw (1.25,0) .. controls (1.25,.5) and (1.75,.5) .. (1.75,1);
    \node at (1.5,.15){\tiny $\dots$};
    \draw (1.75,0) .. controls (1.75,.5) and (2.25,.5)..  (2.25,1);
    \draw (2,0) .. controls (2,.5) and (1.35,.25) .. (1.35,.5);
    \draw (1.35,.5) .. controls (1.35,.75) and (1.5,.75) .. (1.5,1);
    \draw[decoration={brace,mirror,raise=-8pt},decorate]  (1.15,-.35) -- node { \small $j$} (1.85,-.35);
  }
\]
which has the desired form.
\end{proof}

\subsubsection{Acyclicity of $\cone(\varphi)$}\label{sec:proofofacyclicity}

\begin{citethm}{thm:catdoublebraidphiiso}
The map 
\[
\varphi := \sum_{k = 0}^{m} (-1)^k \varphi_k  : \cone(\lambda q^2 X [1] \xrightarrow{u} q^2 T_b^{\lambda,r} [1] )[1] \rightarrow \cone(X \Lotimes_T X \xrightarrow{1\otimes u} \lambda^{-1} X),
\]
is a quasi-isomorphism. 
\end{citethm}

The goal of this section is to prove \cref{thm:catdoublebraidphiiso}, which we will achieve by showing that $\cone(\varphi_k)$ is acyclic. 
We have that $\cone(\varphi_k)$ is given by the complex
\[
\begin{tikzcd}[row sep = 1ex]
& X\otimes_T Y^1_k \ar[hookrightarrow]{dr}{1 \otimes \imath_k} &&
\\
 \lambda q^2 (X_k)[1] \ar{ur}{\varphi_k^1} \ar[hookrightarrow,swap]{dr}{-u}&  & X\otimes_T Y^0_k \ar[twoheadrightarrow]{r}{u \otimes \gamma_k}  & \lambda^{-1} X_k.
 \\
&  \bigoplus_{\ell,\rho} q^2 (T_b^{\lambda,r} 1_{k,\ell,\rho})[1]  \ar[swap]{ur}{\varphi_k^0} &&
\end{tikzcd}
\]
The map $\varphi_k^1 - u$ is injective since $u$ is injective by \cref{cor:uinj}, and the map $u \otimes \gamma_k$ is surjective. We want to first show that $\varphi_k^0 + 1\otimes \imath_k$ is surjective on the kernel of $u \otimes \gamma_k$. This requires some preparation.

\begin{lem}\label{lem:dotanddoubledots}
For $k \geq 2$, the local relation 
\begin{equation}\label{eq:dotanddoubledots}
\tikzdiagh[xscale=1.25]{0}{
	\draw (0,0) .. controls (0,.5) and (1.5,.5) .. (1.5,2);
	\draw (.5,0) .. controls (.5,.5) and (0,.5) .. (0,1) node[tikzdot,pos=1]{}
			 .. controls (0,1.5) and (.5,1.5) .. (.5,2);
	\draw (.75,0) .. controls (.75,.5) and (.25,.5) .. (.25,1) node[tikzdot,pos=1]{}
			 .. controls (.25,1.5) and (.75,1.5) .. (.75,2);
	\node at (.5,1) {\tiny $\dots$};
	\draw (1.25,0) .. controls (1.25,.5) and (.75,.5) .. (.75,1) node[tikzdot,pos=1]{}
			 .. controls (.75,1.5) and (1.25,1.5) .. (1.25,2);
	\draw (1.5,0) .. controls (1.5,1.5) and (0,1.5) .. (0,2);
	\draw[stdhl] (.25,0) node[below]{\small $1$} .. controls (.25,.25) and (1.5,.25) .. (1.5,1)
				 .. controls (1.5,1.75) and (.25,1.75) .. (.25,2);
	\draw[decoration={brace,mirror,raise=-8pt},decorate]  (.4,-.35) -- node {\small $k-2$} (1.35,-.35);
}
\ - \ 
\sum_{s=0}^{k-2}
(-1)^{s}\ 
\tikzdiagh[xscale=1.25]{0}{
	\draw (0,0) .. controls (0,.5) and (1.25,.5) .. (1.25,2);
	\draw (.5,0) .. controls (.5,.5) and (.25,.5) .. (.25,1) node[near start, tikzdot]{}
			 .. controls (.25,1.5) and (.5,1.5) .. (.5,2)  node[near end, tikzdot]{};
	\node at (.5,1) {\tiny $\dots$};
	\draw (1,0) .. controls (1,.5) and (.75,.5) .. (.75,1)node[near start, tikzdot]{}
			 .. controls (.75,1.5) and (1,1.5) .. (1,2)  node[near end, tikzdot]{};
	\draw (1.25,0) .. controls (1.25,1.5) and (0,1.5) .. (0,2);
	\draw[stdhl] (.25,0) node[below]{\small $1$} .. controls (.25,.25) and (0,.25) .. (0,1)
				 .. controls (0,1.75) and (.25,1.75) .. (.25,2);
	\draw (1.5,0) -- (1.5,2);
	\node at (1.75,1) {\tiny $\dots$};
	\draw (2,0) -- (2,2);
	\draw[decoration={brace,mirror,raise=-8pt},decorate]  (.4,-.35) -- node {\small $s$} (1.1,-.35);
}
\ = (-1)^{k-1} \
\tikzdiagh[xscale=1.25]{0}{
	\draw (0,0) -- (0,2);
	\draw[stdhl] (.25,0) node[below]{\small $1$} -- (.25,2);
	\draw (.5,0) -- (.5,2);
	\draw (.75,0) -- (.75,2);
	\node at (1,1) {\tiny $\dots$};
	\draw (1.25,0) -- (1.25,2);
	\draw (1.5,0) -- (1.5,2);
	\draw[decoration={brace,mirror,raise=-8pt},decorate]  (.4,-.35) -- node {\small $k-2$} (1.35,-.35);
}
\end{equation}
holds in $T^{\lambda,r}$.
\end{lem}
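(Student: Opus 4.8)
The statement is a purely local identity in the nilHecke-type part of $T^{\lambda,r}$, and it involves only black strands and a single red strand labelled $1$; the red strand plays no essential role beyond the fact that a black strand can cross it with the cost of a dot (relation~\eqref{eq:redR2}). So I would first reduce to a statement inside the nilHecke algebra $\nh_{k-1}$ (or rather the relevant cyclotomic quotient governing the red strand of label $1$), decorated by the "long" black strand that loops over the others. Concretely, on the left-hand side the first $k-2$ black strands each pick up one dot as they cross the red strand on the way left and come back; using~\eqref{eq:dotredstrand} and~\eqref{eq:redR2} one moves all the red/black crossings out to the boundary so that what remains in the bulk is a configuration of black strands only. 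After this reduction the claim becomes an identity among diagrams in the nilHecke algebra, and I would prove it by induction on $k$.

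\textbf{Induction scheme.} The base case $k=2$ is the relation~\eqref{eq:nhdoublecrossingid} (a single crossing resolves into a pair of dotted partial crossings minus another dotted term), which is already recorded in the excerpt; for $k=2$ the sum $\sum_{s=0}^{k-2}$ has the single term $s=0$, and the right-hand side is $(-1)^1$ times the identity on $k-2=0$ black strands, so the equation collapses exactly to~\eqref{eq:nhdoublecrossingid} after absorbing the red strand. For the inductive step I would peel off the right-most of the $k-2$ "returning" black strands: apply the $k=2$ identity~\eqref{eq:nhdoublecrossingid} to the crossing of the long strand with that strand, producing two terms. On one term the dot sits in a position where the induction hypothesis for $k-1$ applies directly; on the other term one must slide a dot past the remaining crossings using~\eqref{eq:nhdotslide}, which generates an error sum indexed by a position $s'$, and then recombine. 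The bookkeeping of signs $(-1)^s$ and of the ranges of summation is the content of the argument: one checks that the $s$-indexed terms produced by the dot-slide, together with the terms coming from the induction hypothesis applied at level $k-1$, reassemble into the $s$-indexed sum at level $k$ and the single identity term with sign $(-1)^{k-1}$.

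\textbf{Main obstacle.} The genuine difficulty is not conceptual but combinatorial: keeping track of exactly which strand carries which dot after repeated applications of~\eqref{eq:nhR2andR3} and~\eqref{eq:nhdotslide}, and verifying that the alternating signs telescope correctly. In particular the "mixed" term from~\eqref{eq:nhdoublecrossingid} has dots on two different strands and requires commuting one of them through the nested crossings of the long strand; each commutation spawns lower-order terms by~\eqref{eq:nhdotslide}, and one has to argue that all of these either vanish (because two dots collide on a strand in a position forced to be zero by a nilHecke relation, cf.\ $\tikzdiag{\draw (0,0) ..controls (0,.25) and (1,.25) .. (1,.5) ..controls (1,.75) and (0,.75) .. (0,1); \draw (1,0) ..controls (1,.25) and (0,.25) .. (0,.5)..controls (0,.75) and (1,.75) .. (1,1);}=0$) or get absorbed into the next index of the sum. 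I expect this to be dischargeable by a careful but routine induction, and in the write-up I would present the base case explicitly, then indicate the inductive step by performing the two resolutions from~\eqref{eq:nhdoublecrossingid} and the single dot-slide, leaving the remaining sign/index reconciliation to the reader (as is done for the analogous lemmas~\ref{lem:anticom_theta} and in~\cite[Proposition 3.13]{naissevaz3} elsewhere in the paper).
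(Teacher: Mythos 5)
Your plan to ``move all the red/black crossings to the boundary'' and reduce to a pure nilHecke identity does not work, and this is the central gap in the proposal. In the first diagram on the left of \eqref{eq:dotanddoubledots}, the red strand crosses each of the two \emph{outer} black strands (the ones that swap) exactly once — these are single crossings, not double ones, so \eqref{eq:redR2} does not apply to them. To pull the red strand from one side of the black--black crossing to the other you must use \eqref{eq:redR3}, and for a red strand labelled~$1$ that relation has a nonzero correction term (the $a=b=0$ summand, i.e. the identity on two black strands). That correction term is precisely the source of the $(-1)^{k-1}$ identity term on the right-hand side of \eqref{eq:dotanddoubledots}; it cannot arise from purely nilHecke moves, so the identity is genuinely a statement about the interaction with the red strand, not a nilHecke identity in disguise.

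Accordingly, your base case identification is also off: for $k=2$ the two diagrams on the left differ only in whether the red strand passes to the right or to the left of the black--black crossing, so their difference being $\pm$ the identity \emph{is} the $N_i=1$ instance of \eqref{eq:redR3}, not \eqref{eq:nhdoublecrossingid}. The paper's proof opens the inductive step with one application of \eqref{eq:redR3}, producing two terms; the nilHecke relations \eqref{eq:nhdotslide} and \eqref{eq:nhR2andR3} reduce one term to the $k-1$ instance, while the other term is massaged using \eqref{eq:redR2} and \eqref{eq:redR3} again before the pieces are recombined. Your proposed inductive step based on peeling off a strand and invoking \eqref{eq:nhdoublecrossingid} starts from a different (and, for the reason above, incorrect) opening move, and you have not indicated any mechanism by which the identity term on the right-hand side would appear. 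The bookkeeping you defer is therefore not merely ``routine''; the sign reconciliation you hope for cannot happen within the nilHecke algebra alone.
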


\begin{proof}
We prove the statement by induction on $k-2$. If $k-2 = 0$, then the claim follows from \cref{eq:redR3}. Suppose by induction that \eqref{eq:dotanddoubledots} holds for $k-3$. We compute
\begin{equation}\label{eq:dotanddoubledots1}
\tikzdiagh[xscale=1.25]{0}{
	\draw (0,0) .. controls (0,.5) and (1.5,.5) .. (1.5,2);
	\draw (.5,0) .. controls (.5,.5) and (0,.5) .. (0,1) node[tikzdot,pos=1]{}
			 .. controls (0,1.5) and (.5,1.5) .. (.5,2);
	\draw (.75,0) .. controls (.75,.5) and (.25,.5) .. (.25,1) node[tikzdot,pos=1]{}
			 .. controls (.25,1.5) and (.75,1.5) .. (.75,2);
	\node at (.5,1) {\tiny $\dots$};
	\draw (1.25,0) .. controls (1.25,.5) and (.75,.5) .. (.75,1) node[tikzdot,pos=1]{}
			 .. controls (.75,1.5) and (1.25,1.5) .. (1.25,2);
	\draw (1.5,0) .. controls (1.5,1.5) and (0,1.5) .. (0,2);
	\draw[stdhl] (.25,0) node[below]{\small $1$} .. controls (.25,.25) and (1.5,.25) .. (1.5,1)
				 .. controls (1.5,1.75) and (.25,1.75) .. (.25,2);
	\draw[decoration={brace,mirror,raise=-8pt},decorate]  (.4,-.35) -- node {\small $k-2$} (1.35,-.35);
}
\ \overset{\eqref{eq:redR3}}{=} \ 
\tikzdiagh[xscale=1.25]{0}{
	\draw (0,0) .. controls (0,.5) and (1.5,.25) .. (1.5,2);
	\draw (.5,0) .. controls (.5,.5) and (0,.5) .. (0,1) node[tikzdot,pos=1]{}
			 .. controls (0,1.5) and (.5,1.5) .. (.5,2);
	\draw (.75,0) .. controls (.75,.5) and (.25,.5) .. (.25,1) node[tikzdot,pos=1]{}
			 .. controls (.25,1.5) and (.75,1.5) .. (.75,2);
	\node at (.5,1) {\tiny $\dots$};
	\draw (1.25,0) .. controls (1.25,.5) and (.75,.5) .. (.75,1) node[tikzdot,pos=1]{}
			 .. controls (.75,1.5) and (1.25,1.5) .. (1.25,2);
	\draw (1.5,0) .. controls (1.5,1.75) and (0,1.5) .. (0,2);
	\draw[stdhl] (.25,0) node[below]{\small $1$} .. controls (.25,.25) and (1.25,.25) .. (1.25,.6)
			.. controls (1.25,.8) and (1,.8) .. (1,1)
			.. controls (1,1.2) and (1.25,1.2) .. (1.25,1.4)
			.. controls (1.25,1.75) and (.25,1.75) .. (.25,2);
	\draw[decoration={brace,mirror,raise=-8pt},decorate]  (.4,-.35) -- node {\small $k-2$} (1.35,-.35);
}
\ - \ 
\tikzdiagh[xscale=1.25]{0}{
	\draw (0,0) .. controls (0,.5) and (1,.25) .. (1,1)
			.. controls (1,1.75) and (0,1.5) .. (0,2);
	\draw (.5,0) .. controls (.5,.5) and (0,.5) .. (0,1) node[tikzdot,pos=1]{}
			 .. controls (0,1.5) and (.5,1.5) .. (.5,2);
	\draw (.75,0) .. controls (.75,.5) and (.25,.5) .. (.25,1) node[tikzdot,pos=1]{}
			 .. controls (.25,1.5) and (.75,1.5) .. (.75,2);
	\node at (.5,1) {\tiny $\dots$};
	\draw (1.25,0) .. controls (1.25,.5) and (.75,.5) .. (.75,1) node[tikzdot,pos=1]{}
			 .. controls (.75,1.5) and (1.25,1.5) .. (1.25,2);
	\draw (1.5,0)  -- (1.5,2);
	\draw[stdhl] (.25,0) node[below]{\small $1$} .. controls (.25,.25) and (1.25,.25) .. (1.25,.6)
			-- (1.25,1.4)
			.. controls (1.25,1.75) and (.25,1.75) .. (.25,2);
	\draw[decoration={brace,mirror,raise=-8pt},decorate]  (.4,-.35) -- node {\small $k-2$} (1.35,-.35);
}
\end{equation}
and
\begin{align}
\label{eq:dotanddoubledots2}
\tikzdiagh[xscale=1.25]{0}{
	\draw (0,0) .. controls (0,.5) and (1,.25) .. (1,1)
			.. controls (1,1.75) and (0,1.5) .. (0,2);
	\draw (.5,0) .. controls (.5,.5) and (0,.5) .. (0,1) node[tikzdot,pos=1]{}
			 .. controls (0,1.5) and (.5,1.5) .. (.5,2);
	\draw (.75,0) .. controls (.75,.5) and (.25,.5) .. (.25,1) node[tikzdot,pos=1]{}
			 .. controls (.25,1.5) and (.75,1.5) .. (.75,2);
	\node at (.5,1) {\tiny $\dots$};
	\draw (1.25,0) .. controls (1.25,.5) and (.75,.5) .. (.75,1) node[tikzdot,pos=1]{}
			 .. controls (.75,1.5) and (1.25,1.5) .. (1.25,2);
	\draw (1.5,0)  -- (1.5,2);
	\draw[stdhl] (.25,0) node[below]{\small $1$} .. controls (.25,.25) and (1.25,.25) .. (1.25,.6)
			-- (1.25,1.4)
			.. controls (1.25,1.75) and (.25,1.75) .. (.25,2);
	\draw[decoration={brace,mirror,raise=-8pt},decorate]  (.4,-.35) -- node {\small $k-2$} (1.35,-.35);
}
\ &\overset{(\ref{eq:nhdotslide}, \ref{eq:nhR2andR3})}{=} \ 
\tikzdiagh[xscale=1.25]{0}{
	\draw (0,0) .. controls (0,.5) and (1.25,.5) .. (1.25,2);
	\draw (.5,0) .. controls (.5,.5) and (0,.5) .. (0,1) node[tikzdot,pos=1]{}
			 .. controls (0,1.5) and (.5,1.5) .. (.5,2);
	\node at (.25,1) {\tiny $\dots$};
	\draw (1,0) .. controls (1,.5) and (.5,.5) .. (.5,1) node[tikzdot,pos=1]{}
			 .. controls (.5,1.5) and (1,1.5) .. (1,2);
	\draw (1.25,0) .. controls (1.25,1.5) and (0,1.5) .. (0,2);
	\draw[stdhl] (.25,0) node[below]{\small $1$} .. controls (.25,.25) and (1.25,.25) .. (1.25,1)
				 .. controls (1.25,1.75) and (.25,1.75) .. (.25,2);
	\draw[decoration={brace,mirror,raise=-8pt},decorate]  (.4,-.35) -- node {\small $k-3$} (1.1,-.35);
	\draw (1.5,0) -- (1.5,2);
}
\\
\label{eq:dotanddoubledots3}
\tikzdiagh[xscale=1.25,yscale=1.25]{0}{
	\draw (0,0) .. controls (0,.5) and (1.5,.25) .. (1.5,2);
	\draw (.5,0) .. controls (.5,.5) and (0,.5) .. (0,1) node[tikzdot,pos=1]{}
			 .. controls (0,1.5) and (.5,1.5) .. (.5,2);
	\draw (.75,0) .. controls (.75,.5) and (.25,.5) .. (.25,1) node[tikzdot,pos=1]{}
			 .. controls (.25,1.5) and (.75,1.5) .. (.75,2);
	\node at (.5,1) {\tiny $\dots$};
	\draw (1.25,0) .. controls (1.25,.5) and (.75,.5) .. (.75,1) node[tikzdot,pos=1]{}
			 .. controls (.75,1.5) and (1.25,1.5) .. (1.25,2);
	\draw (1.5,0) .. controls (1.5,1.75) and (0,1.5) .. (0,2);
	\draw[stdhl] (.25,0) node[below]{\small $1$} .. controls (.25,.25) and (1.25,.25) .. (1.25,.6)
			.. controls (1.25,.8) and (1,.8) .. (1,1)
			.. controls (1,1.2) and (1.25,1.2) .. (1.25,1.4)
			.. controls (1.25,1.75) and (.25,1.75) .. (.25,2);
	\draw[decoration={brace,mirror,raise=-8pt},decorate]  (.4,-.35) -- node {\small $k-2$} (1.35,-.35);
}
\ &\overset{\eqref{eq:redR2}}{=} \ 
\tikzdiagh[xscale=1.25,yscale=1.25]{0}{
	\draw (0,0) .. controls (0,.5) and (1.5,0) .. (1.5,2);
	\draw (.5,0) .. controls (.5,.5) and (.25,.5) .. (.25,1)
			 .. controls (.25,1.5) and (.5,1.5) .. (.5,2);
	\draw (.75,0) .. controls (.75,.5) and (.5,.5) .. (.5,1) 
			 .. controls (.5,1.5) and (.75,1.5) .. (.75,2);
	\node at (.75,1) {\tiny $\dots$};
	\draw (1.25,0) .. controls (1.25,.5) and (1,.5) .. (1,1)
			 .. controls (1,1.5) and (1.25,1.5) .. (1.25,2);
	\draw (1.5,0) .. controls (1.5,2) and (0,1.5) .. (0,2);
	\draw[stdhl] (.25,0) node[below]{\small $1$} .. controls (.25,.25) and (1.25,.25) .. (1.25,.6)
			.. controls (1.25,1) and (0,.8) .. (0,1)
			.. controls (0,1.2) and (1.25,1) .. (1.25,1.4)
			.. controls (1.25,1.75) and (.25,1.75) .. (.25,2);
	\draw[decoration={brace,mirror,raise=-8pt},decorate]  (.4,-.35) -- node {\small $k-2$} (1.35,-.35);
}
\ \overset{\eqref{eq:redR3}}{=} \ 
\tikzdiagh[xscale=1.25,yscale=1.25]{0}{
	\draw (0,0) .. controls (0,1.5) and (1.5,0) .. (1.5,2);
	\draw (.5,0) .. controls (.5,.5) and (.25,.5) .. (.25,1)
			 .. controls (.25,1.5) and (.5,1.5) .. (.5,2);
	\draw (.75,0) .. controls (.75,.5) and (.5,.5) .. (.5,1) 
			 .. controls (.5,1.5) and (.75,1.5) .. (.75,2);
	\node at (.75,1) {\tiny $\dots$};
	\draw (1.25,0) .. controls (1.25,.5) and (1,.5) .. (1,1)
			 .. controls (1,1.5) and (1.25,1.5) .. (1.25,2);
	\draw (1.5,0) .. controls (1.5,2) and (0,.5) .. (0,2);
	\draw[stdhl] (.25,0) node[below]{\small $1$} .. controls (.25,.1) and (1.375,.1) .. (1.375,.2)
			.. controls (1.375,.5) and (0,0) .. (0,1)
			.. controls (0,2) and (1.375,1.5) .. (1.375,1.8)
			.. controls (1.375,1.9) and (.25,1.9) .. (.25,2);
	\draw[decoration={brace,mirror,raise=-8pt},decorate]  (.4,-.35) -- node {\small $k-2$} (1.35,-.35);
}
\ \overset{\eqref{eq:redR2}}{=} \ 
\tikzdiagh[xscale=1.25,yscale=1.25]{0}{
	\draw (0,0) .. controls (0,1.5) and (1.5,0) .. (1.5,2);
	\draw (.5,0) .. controls (.5,.5) and (.25,.5) .. (.25,1) node[near start, tikzdot]{}
			 .. controls (.25,1.5) and (.5,1.5) .. (.5,2) node[near end, tikzdot]{};
	\draw (.75,0) .. controls (.75,.5) and (.5,.5) .. (.5,1) node[near start, tikzdot]{}
			 .. controls (.5,1.5) and (.75,1.5) .. (.75,2)node[near end, tikzdot]{};
	\node at (.75,1) {\tiny $\dots$};
	\draw (1.25,0) .. controls (1.25,.5) and (1,.5) .. (1,1)node[near start, tikzdot]{}
			 .. controls (1,1.5) and (1.25,1.5) .. (1.25,2)node[near end, tikzdot]{};
	\draw (1.5,0) .. controls (1.5,2) and (0,.5) .. (0,2);
	\draw[stdhl] (.25,0) node[below]{\small $1$} .. controls (.25,.5) and (0,.5) .. (0,1)
			.. controls (0,1.5) and (.25,1.5) .. (.25,2);
	\draw[decoration={brace,mirror,raise=-8pt},decorate]  (.4,-.35) -- node {\small $k-2$} (1.35,-.35);
}
\end{align}
Applying the induction hypothesis on \cref{eq:dotanddoubledots2}, and inserting the result together with \cref{eq:dotanddoubledots3} in \cref{eq:dotanddoubledots1} gives \cref{eq:dotanddoubledots}.
\end{proof}

\begin{lem}\label{lem:computezn}
We have
\begin{equation}\label{eq:computezn}
\tikzdiag{
	\draw (0,0) -- (0,1);
	\node at (.5,.1) {\tiny $\dots$};
	\node at (.5,.9) {\tiny $\dots$};
	\draw (1,0) -- (1,1);
	\draw (1.5,0) -- (1.5,1);
	\draw (2,0) -- (2,1);
	\filldraw [fill=white, draw=black,rounded corners] (-.25,.25) rectangle (2.25,.75) node[midway] { $z_{t+2}$};
}
\ = \ 
\tikzdiag{
	\draw (0,0) .. controls (0,.25) and (2,.25) .. (2,1); 
	\draw (2,0) .. controls (2,.75) and (0,.75) .. (0,1); 
	\draw (.5,0) .. controls (.5,.25) and (0,.25) .. (0,.5) .. controls (0,.75) and (.5,.75) .. (.5,1) node[pos=0,tikzdot]{};
	\draw (1.5,0) .. controls (1.5,.25) and (1,.25) .. (1,.5) .. controls (1,.75) and (1.5,.75) .. (1.5,1) node[pos=0,tikzdot]{};
	\node at(.5,.5){\tiny $\dots$};
}
\end{equation}
for all $t \geq 0$. 
\end{lem}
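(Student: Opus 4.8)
The claim \eqref{eq:computezn} expresses the ``thick crossing with dots'' $z_{t+2}$ as an explicit composition of a long crossing together with the element $z_{t+1}$ applied to the inner $t+1$ strands, closed up by a turnback with a dot at its bottom. My plan is to prove it by induction on $t$, mirroring the recursive definition \eqref{eq:defzn} of the $z_n$'s.

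First I would treat the base case $t=0$, where $z_2$ is a single crossing; here the right-hand side of \eqref{eq:computezn} simplifies (the ``dots on the bottom of the turnbacks'' sit on strands that immediately get crossed), and a direct application of the nilHecke relations \eqref{eq:nhR2andR3}, \eqref{eq:nhdotslide} identifies it with $z_2$. Then for the inductive step I would expand $z_{t+2}$ using its defining recursion \eqref{eq:defzn}: it is a sum of two terms, one built from $z_{t+1}$ with a plain crossing of the last two strands added on top, the other built from $z_{t+1}$ with a crossed pair carrying a dot. Into each occurrence of $z_{t+1}$ I substitute the inductive hypothesis \eqref{eq:computezn} (for $t-1$), turning the whole picture into a diagram involving only crossings and dots. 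The remaining work is to massage this diagram back into the target right-hand side: I would slide dots across crossings via \eqref{eq:nhdotslide}, use the braid and ``circle'' relations \eqref{eq:nhR2andR3} to rearrange the turnbacks, and collect terms. The two summands from \eqref{eq:defzn} should combine — the extra dot in the second summand being exactly what is needed to promote the short turnback of the inductive diagram to the long turnback with a single dot at its bottom appearing on the right-hand side of \eqref{eq:computezn}.

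The main obstacle I anticipate is purely bookkeeping: keeping track of which strand each dot lives on after repeated applications of \eqref{eq:nhdotslide}, and making sure the correction terms produced by the nilHecke dot-slide relation either cancel in pairs or telescope with the recursive sum defining $z_{t+1}$. This is the same kind of ``induction on crossings / dot-pushing'' argument used in \cref{lem:dotanddoubledots} above, and I would organize the computation so that at each stage the diagram has a single ``overcrossing strand'' threading past the $z_{t+1}$ box, exactly as in the statement, and verify that the dotted turnback terms reassemble correctly. Once \eqref{eq:computezn} is established, it will be used (as in the referenced sections) to simplify the composition $\gamma_k \circ \varphi_k^0$ and related maps when checking that the various cone maps are quasi-isomorphisms.
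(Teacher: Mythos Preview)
Your proposal is correct and follows essentially the same approach as the paper: induction on $t$, with the inductive step expanding $z_{t+2}$ via the recursion \eqref{eq:defzn}, substituting the inductive hypothesis, and simplifying with \eqref{eq:nhR2andR3} and \eqref{eq:nhdotslide}. The only minor difference is that the base case $t=0$ is immediate (both sides are literally a single crossing, since there are no middle strands), so no nilHecke manipulations are needed there; the bookkeeping you anticipate in the inductive step is real but short, and the paper dispatches it in a single display.
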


\begin{proof}
We prove the statement by induction on $t$. The claim is clearly true for $t=0$. Suppose it is true for $t$. We compute
\begin{align*}
\tikzdiag{
	\draw (0,-.5) -- (0,1.5);
	\node at (.5,-.25) {\tiny $\dots$};
	\node at (.5,1.25) {\tiny $\dots$};
	\draw (1,-.5) -- (1,1.5);
	\draw (1.5,-.5) -- (1.5,1.5);
	\draw (2,-.5) -- (2,1.5);
	\draw (2.5,-.5) -- (2.5,1.5);
	\filldraw [fill=white, draw=black,rounded corners] (-.25,.3) rectangle (2.75,.8) node[midway] { $z_{t+3}$};
}
\ &\overset{\eqref{eq:defzn}}{=}  \
\tikzdiag{
	\draw (0,-.5) -- (0,1.5);
	\node at (.5,-.25) {\tiny $\dots$};
	\node at (.5,1.25) {\tiny $\dots$};
	\draw (1,-.5) -- (1,1.5);
	\draw (1.5,-.5) -- (1.5,1.5);
	\draw (2,-.5) -- (2,1) .. controls (2,1.25) and (2.5,1.25) .. (2.5,1.5);
	\draw (2.5,-.5) -- (2.5,1) .. controls (2.5,1.25) and (2,1.25) .. (2,1.5);
	\filldraw [fill=white, draw=black,rounded corners] (-.25,.3) rectangle (2.25,.8) node[midway] { $z_{t+2}$};
}
\ + \ 
\tikzdiag{
	\draw (0,-.5) -- (0,1.5);
	\node at (.5,-.25) {\tiny $\dots$};
	\node at (.5,1.25) {\tiny $\dots$};
	\draw (1,-.5) -- (1,1.5);
	\draw (1.5,-.5) -- (1.5,1.5);
	\draw (2.5,-.5) .. controls (2.5,-.25) and (2,-.25) .. (2,0) -- (2,1) .. controls (2,1.25) and (2.5,1.25) .. (2.5,1.5);
	\draw (2,-.5) .. controls (2,-.25) and (2.5,-.25) .. (2.5,0) -- (2.5,1) .. controls (2.5,1.25) and (2,1.25) .. (2,1.5) node [near end, tikzdot]{};
	\filldraw [fill=white, draw=black,rounded corners] (-.25,.3) rectangle (2.25,.8) node[midway] { $z_{t+2}$};
}
\\
\ &\overset{\eqref{eq:computezn}}{=}  \
\tikzdiag{
	\draw (0,-.5) -- (0,0) .. controls (0,.25) and (2,.25) .. (2,1) .. controls (2,1.25) and (2.5,1.25) .. (2.5,1.5);
	\draw (2,-.5) -- (2,0) .. controls (2,.75) and (0,.75) .. (0,1) -- (0,1.5);
	\draw (.5,-.5) -- (.5,0) .. controls (.5,.25) and (0,.25) .. (0,.5) .. controls (0,.75) and (.5,.75) .. (.5,1) node[pos=0,tikzdot]{}  -- (.5,1.5);
	\draw (1.5,-.5) -- (1.5,0) .. controls (1.5,.25) and (1,.25) .. (1,.5) .. controls (1,.75) and (1.5,.75) .. (1.5,1) node[pos=0,tikzdot]{}  -- (1.5,1.5);
	\node at(.5,.5){\tiny $\dots$};
	\draw (2.5,-.5) -- (2.5,1) .. controls (2.5,1.25) and (2,1.25) .. (2,1.5);
}
\ + \ 
\tikzdiag{
	\draw (0,-.5) -- (0,0) .. controls (0,.25) and (2,.25) .. (2,1) .. controls (2,1.25) and (2.5,1.25) .. (2.5,1.5);
	\draw (2.5,-.5) .. controls (2.5,-.25) and (2,-.25) .. (2,0) .. controls (2,.75) and (0,.75) .. (0,1) -- (0,1.5);
	\draw (.5,-.5) -- (.5,0) .. controls (.5,.25) and (0,.25) .. (0,.5) .. controls (0,.75) and (.5,.75) .. (.5,1) node[pos=0,tikzdot]{}  -- (.5,1.5);
	\draw (1.5,-.5) -- (1.5,0) .. controls (1.5,.25) and (1,.25) .. (1,.5) .. controls (1,.75) and (1.5,.75) .. (1.5,1) node[pos=0,tikzdot]{}  -- (1.5,1.5);
	\node at(.5,.5){\tiny $\dots$};
	\draw (2,-.5) .. controls (2,-.25) and (2.5,-.25) .. (2.5,0) -- (2.5,1) .. controls (2.5,1.25) and (2,1.25) .. (2,1.5)  node [near end, tikzdot]{};
}
\ \overset{(\ref{eq:nhR2andR3},\ref{eq:nhdotslide})}{=} \ 
\tikzdiag{
	\draw (0,-.5) .. controls (0,0) and (2.5,0) .. (2.5,1.5); 
	\draw (2.5,-.5) .. controls (2.5,1) and (0,1) .. (0,1.5); 
	\draw (.5,-.5) .. controls (.5,0) and (0,0) .. (0,.5) .. controls (0,1) and (.5,1) .. (.5,1.5) node[pos=0,tikzdot]{};
	\draw (1.5,-.5) .. controls (1.5,0) and (1,0) .. (1,.5) .. controls (1,1) and (1.5,1) .. (1.5,1.5) node[pos=0,tikzdot]{};
	\draw (2,-.5) .. controls (2,0) and (1.5,0) .. (1.5,.5) .. controls (1.5,1) and (2,1) .. (2,1.5) node[pos=0,tikzdot]{};
	\node at(.5,.5){\tiny $\dots$};
}
\end{align*}
concluding the proof.
\end{proof}

\begin{lem}\label{lem:allcrossingzniszero}
We have
\[
\tikzdiag{
	\draw (0,0) -- (0,1) .. controls (0,1.25) and (.5,1.25) .. (.5,1.5);
	\node at (.5,.1) {\tiny $\dots$};
	\node at (.5,1) {\tiny $\dots$};
	\draw (1,0) -- (1,1) .. controls (1,1.25) and (1.5,1.25) .. (1.5,1.5);
	\draw (1.5,0) -- (1.5,1) .. controls (1.5,1.25) and (2,1.25) .. (2,1.5);
	\draw (2,0) -- (2,1) .. controls (2,1.25) and (0,1.25) .. (0,1.5);
	\draw (2.5,0) -- (2.5,1.5);
	\node at (3,.1) {\tiny $\dots$};
	\node at (3,1) {\tiny $\dots$};
	\draw (3.5,0) -- (3.5,1.5);
	\filldraw [fill=white, draw=black,rounded corners] (-.25,.3) rectangle (3.75,.8) node[midway] { $z_{t+2}$};
}
\ = \ 
0,
\]
for all $t \geq 0$. 
\end{lem}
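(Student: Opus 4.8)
The goal is to show that closing up the element $z_{t+2}$ with the ``cyclic'' permutation that moves the first $t+2$ black strands one step to the right and wraps the $(t+2)$-th strand all the way back to the first position yields zero. My plan is to argue by induction on $t$, exactly as in the two preceding lemmas, using the recursive definition \eqref{eq:defzn} of $z_{t+2}$ to peel off the last crossing and the last (dotted) term. The base case $t=0$ is the statement that $z_2$, which is a single crossing, composed with the three-cycle on three strands is zero; this should follow directly from the nilHecke relation \eqref{eq:nhR2andR3} (the square of a crossing is zero) after a braid-like planar isotopy, since the three-cycle can be written as a product of two crossings arranged so that the crossing from $z_2$ and one of them form a bigon.

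\textbf{Key steps.} First I would rewrite the diagram using \cref{lem:computezn}, which expresses $z_{t+2}$ as a ``big cycle'' on the $t+2$ black strands (sending the first strand over to the last position and the last back to the first) decorated with $t$ dots placed on alternate strands in the middle. Composing this with the additional cyclic permutation appearing in the statement of the lemma produces a diagram in which two nested cyclic permutations of the $t+2$ black strands are stacked; after a braid-like planar isotopy these combine, and two of the black strands bound a bigon, i.e. a factor $\ \stikzdiag{\draw (0,0) ..controls (0,.5) and (.5,.5) .. (.5,1) ..controls (.5,1.5) and (0,1.5) .. (0,2); \draw (.5,0) ..controls (.5,.5) and (0,.5) .. (0,1) ..controls (0,1.5) and (.5,1.5) .. (.5,2);}\ $, which vanishes by \eqref{eq:nhR2andR3}. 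One has to be a little careful because the dots from \cref{lem:computezn} sit on the strands that form the bigon; but by \eqref{eq:nhdotslide} a dot can be slid across the bigon at the cost of lower terms, and those lower terms are (after reindexing) instances of the inductive hypothesis, or can be reduced to such by the same maneuver. Alternatively, and perhaps more cleanly, I would run the induction directly on the recursion \eqref{eq:defzn}: write $z_{t+2}$ as $z_{t+1}$ (on the first $t+1$ strands, with the last strand of that block then crossed to the right) plus the dotted correction term, apply the outer cyclic permutation to each, and observe that in the first summand the extra crossing from \eqref{eq:defzn} together with part of the cyclic permutation forms a bigon (giving zero by \eqref{eq:nhR2andR3}), while in the dotted summand one slides the dot using \eqref{eq:nhdotslide} and \eqref{eq:nhR2andR3} until the $z_{t+1}$-block is confronted with a cyclic permutation of its own $t+1$ strands, whence the inductive hypothesis applies.

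\textbf{Main obstacle.} The technical heart is bookkeeping: making precise which cyclic permutation ``is'' the one in the statement, checking that after the planar isotopy the claimed bigon really appears (rather than, say, a crossing between a black and a red strand, which does not vanish), and controlling the dots. The red strand does pass through the region, so I will need to verify that the bigon I exhibit is formed by two black strands that do not enclose the red strand, which forces a specific choice of how the cyclic permutation is drawn; \eqref{eq:crossingslidered} and \eqref{eq:redR3} may be needed to move the red strand out of the way, and \eqref{eq:redR3} will contribute additional lower-order terms that must be absorbed into the induction. Keeping track of signs is not an issue here since every generator involved (crossings, dots) is in homological degree zero, so the graded braid-like isotopy introduces no sign. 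Once the diagrammatics are set up correctly the algebra is routine, so I expect the proof to be short, following verbatim the pattern of \cref{lem:dotanddoubledots} and \cref{lem:computezn}.
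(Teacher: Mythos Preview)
Your first plan --- rewrite via \cref{lem:computezn} and then invoke \eqref{eq:nhR2andR3} --- is precisely the paper's one-line proof, and it goes through with no induction and no dot-sliding. The obstacles you anticipate are not there.

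There are no red strands in this statement: $z_{t+2}$ and the overlaid permutation involve only black strands, so \eqref{eq:crossingslidered} and \eqref{eq:redR3} never enter; the extra vertical strands on the right are bystanders.

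More importantly, the dots never meet the bigon. Read the closed form from \cref{lem:computezn} bottom to top: first strand $1$ crosses the middle strands (the word $\tau_1\tau_2\cdots\tau_t$); then the dots land on the middle strands; then, \emph{strictly above all the dots}, the strand at position $t+2$ sweeps left to position $1$, contributing $\tau_{t+1}\tau_t\cdots\tau_1$. The cyclic permutation stacked on top in the lemma is the \emph{same} word $\tau_{t+1}\tau_t\cdots\tau_1$. Hence the composition contains, entirely above the dots, the factor $(\tau_{t+1}\tau_t\cdots\tau_1)^2$, a non-reduced expression in the nilCoxeter part and therefore zero by \eqref{eq:nhR2andR3}. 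No lower-order terms appear and the dots are never touched.

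Your alternative inductive scheme on \eqref{eq:defzn} would eventually succeed but is much more work than needed.
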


\begin{proof}
It is a direct consequence of \cref{lem:computezn} together with \cref{eq:nhR2andR3}.
\end{proof}

\begin{lem}\label{lem:phizeroimathgenker}
We have
\begin{equation}\label{eq:phizeroimathgenker}
\begin{aligned}
\varphi_k^0&\left(
\tikzdiagh[xscale=1.25]{0}{
	\draw[vstdhl] (-.25,.75) node[below]{\small $\lambda$} -- (-.25,2);
	\draw (0,.75) .. controls (0,1.25) and (.5,1.25) .. (.5,2);
	\node at (.25,.85) {\tiny $\dots$};
	\draw (.5,.75) .. controls (.5,1.25) and (1,1.25) .. (1,2);
	\draw (.75,.75) .. controls (.75,1.25) and (0,1.25) .. (0,2); 
	\draw[stdhl] (1,.75) node[below]{\small $1$}
				 .. controls (1,1.5) and (.25,1.5) .. (.25,2);
	\draw[decoration={brace,mirror,raise=-8pt},decorate]  (-.1,.4) -- node {\small $k-1$} (.6,.4);
}
\otimes 
\bar 1_{\ell,\rho}
\right)
-
\sum_{s = 0}^{k-2}
(-1)^s
(1\otimes\imath_k^{k-1}) \left(
\tikzdiagh[xscale=1.25]{0}{
	\draw (0,0) .. controls (0,.5) and (1.25,.5) .. (1.25,2);
	\draw (.5,0) .. controls (.5,.5) and (.25,.5) .. (.25,1) node[near start, tikzdot]{}
			 .. controls (.25,1.5) and (.5,1.5) .. (.5,2)  node[near end, tikzdot]{};
	\node at (.5,1) {\tiny $\dots$};
	\draw (1,0) .. controls (1,.5) and (.75,.5) .. (.75,1)node[near start, tikzdot]{}
			 .. controls (.75,1.5) and (1,1.5) .. (1,2)  node[near end, tikzdot]{};
	\draw (1.25,0) .. controls (1.25,1.5) and (0,1.5) .. (0,2);
	\draw[stdhl] (.25,0) node[below]{\small $1$} .. controls (.25,.5).. (-.25,1)
				 .. controls (.25,1.5)  .. (.25,2);
	\draw[fill=white, color=white] (-.35,1) circle (.1cm);
	\draw[vstdhl] (-.25,0) node[below]{\small $\lambda$} -- (-.25,2);
	\draw (1.5,0) -- (1.5,2);
	\node at (1.75,1) {\tiny $\dots$};
	\draw (2,0) -- (2,2);
	\draw[decoration={brace,mirror,raise=-8pt},decorate]  (.4,-.35) -- node {\small $s$} (1.1,-.35);
}
\otimes
\bar 1_{\ell,\rho}
\right)
\\
&= (-1)^{k-1}
\left(
-\ 
\tikzdiagh{0}{
	\draw (1.25,0) .. controls (1.25,.5) .. (-.5,.75) .. controls (0,.875) .. (0,1);
	\draw[stdhl] (0,0) node[below]{\small $1$} .. controls (0,.1) .. (-.5,.25) .. controls (.5,.7) .. (.5,1);
	\draw[fill=white, color=white] (-.6,.25) circle (.1cm);
	\draw[vstdhl] (-.5,0) node[below]{\small $\lambda$} --(-.5,1) node[pos=.75,nail]{};
	\draw (.5,0) .. controls (.5,.5) and (.75,.5) .. (.75,1);
	\node at (.75,.15){\tiny $\dots$};
	\draw (1,0) .. controls (1,.5) and (1.25,.5)..  (1.25,1);
	\draw[decoration={brace,mirror,raise=-8pt},decorate]  (.4,-.35) -- node { \small $k-1$} (1.1,-.35);
}
\otimes \bar  1_{\ell,\rho}
,
\tikzdiagh{0}{
	\draw (.5,0) .. controls (.5,.5) and (0,.5) .. (0,1);
	\draw[stdhl] (0,0) node[below]{\small $1$} .. controls (0,.1) .. (-.5,.25) .. controls (.5,.7) .. (.5,1);
	\draw[fill=white, color=white] (-.6,.25) circle (.1cm);
	\draw[vstdhl] (-.5,0) node[below]{\small $\lambda$} --(-.5,1);
}
\otimes \bar  1_{\ell+k-1,\rho}
\right)
\in
 (X\otimes_T {Y'}^0_k) 
\oplus
(X\otimes_T Y^{0,k-1}_k).
\end{aligned}
\end{equation}
\end{lem}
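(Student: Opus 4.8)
This is a direct, if somewhat lengthy, diagrammatic verification: both sides of \cref{eq:phizeroimathgenker} live in the fixed summand $(X\otimes_T {Y'}^0_k)\oplus(X\otimes_T Y^{0,k-1}_k)$, and I must compute the left-hand side explicitly using the definitions of $\varphi_k^0$ (its component ${\varphi_k^0}'$ together with $\varphi_k^{0,t}$) and of $\imath_k^{k-1}$. The strategy is to track how the element $z_{k-t}$-boxes interact with the crossings coming from the $\imath$-maps and the $\gamma$-maps, and to recognize the resulting sum as an instance of a closed-form identity in $T^{\lambda,r}$. Concretely: first I would expand ${\varphi_k^0}'$ applied to the displayed generator of $\bigoplus_{\ell,\rho}q^2(T_b^{\lambda,r}1_{k,\ell,\rho})[1]$, which by definition produces a sum over $t=0,\dots,k-1$ of diagrams with a $z_{k-t}$-box, a nail, and $t$ parallel strands passed to the left; and I would likewise expand the various $\varphi_k^{0,t}$ applied to that same generator (here I must be careful: the element fed into $\varphi_k^0$ is the image under $u$ of the generator, i.e.\ the one with the first red strand braided around the blue one, so I first straighten it using \cref{eq:redR2} and \cref{eq:crossingslidered}, which is what introduces the dotted strands that make the $z$-boxes relevant).

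The key computational inputs are \cref{lem:computezn}, which rewrites $z_{t+2}$ as an explicit product of two opposite "fan" crossings with dots, and \cref{lem:dotanddoubledots}, which is the precise $T^{\lambda,r}$-identity
\[
\tikzdiagh[xscale=1.25]{0}{
	\draw (0,0) .. controls (0,.5) and (1.5,.5) .. (1.5,2);
	\draw (.5,0) .. controls (.5,.5) and (0,.5) .. (0,1) node[tikzdot,pos=1]{}
			 .. controls (0,1.5) and (.5,1.5) .. (.5,2);
	\node at (.5,1) {\tiny $\dots$};
	\draw (1.25,0) .. controls (1.25,.5) and (.75,.5) .. (.75,1) node[tikzdot,pos=1]{}
			 .. controls (.75,1.5) and (1.25,1.5) .. (1.25,2);
	\draw (1.5,0) .. controls (1.5,1.5) and (0,1.5) .. (0,2);
	\draw[stdhl] (.25,0) node[below]{\small $1$} .. controls (.25,.25) and (1.5,.25) .. (1.5,1)
				 .. controls (1.5,1.75) and (.25,1.75) .. (.25,2);
}
\ -\ \sum_{s}(-1)^s(\cdots)\ =\ (-1)^{k-1}(\text{straight strands}),
\]
whose left-hand side is exactly the shape that $\varphi_k^0 - \sum_s (-1)^s (1\otimes\imath_k^{k-1})$ produces after applying \cref{lem:computezn} to the $z$-boxes. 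So the plan is: (1) write out ${\varphi_k^0}'(\text{generator})$ and the $\varphi_k^{0,t}(\text{generator})$ terms, (2) use \cref{lem:computezn} to unfold every $z_{k-t}$-box, (3) collect the terms so that the "dot--double-dots" pattern of \cref{lem:dotanddoubledots} appears (this is where the $(-1)^s$ signs and the range $0\le s\le k-2$ have to match up), (4) apply \cref{lem:dotanddoubledots} to collapse the whole alternating sum to the single straight-strand term with coefficient $(-1)^{k-1}$, and (5) identify that straight-strand term, after adding back the red strand braided around the blue strand and sliding the nail via \cref{eq:nailslidedcross} and \cref{eq:dottednailslide}, with the right-hand pair of generators, using \cref{lem:allcrossingzniszero} to kill the spurious terms where all strands cross over the $z$-box.

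\textbf{Main obstacle.} The genuinely delicate part is step (3): keeping the bookkeeping of which summand ($X\otimes_T{Y'}^0_k$ versus $X\otimes_T Y^{0,k-1}_k$) each diagram lands in, and verifying that the signs produced by the homological shifts in $\imath_k^{k-1}$, by the $(-1)^s$ in the definition of ${\varphi_k^0}'$, and by the graded Leibniz/graded-isotopy rules all conspire to reproduce exactly the signs of \cref{lem:dotanddoubledots}. There is also a subtlety in that $\imath_k^{k-1}$ maps into ${Y'}^0_k\oplus Y^{0,k-1}_k$ with a relative sign, so the second component of the claimed equality (the one landing in $X\otimes_T Y^{0,k-1}_k$, i.e.\ the generator with no nail and $k-1$ strands absorbed to the right) must be checked separately; fortunately for that component most of the $z$-box machinery trivializes by \cref{lem:allcrossingzniszero} and the computation reduces to a single application of \cref{eq:redR2} plus \cref{eq:dottednailslide}. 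I expect once the sign conventions are pinned down the rest is a mechanical, though long, diagram chase, and I would relegate the full pictorial computation to the appendix, stating here only that it follows by combining \cref{lem:computezn}, \cref{lem:dotanddoubledots} and \cref{lem:allcrossingzniszero} with the straightening relations \cref{eq:redR2}, \cref{eq:crossingslidered}, \cref{eq:nailslidedcross} and \cref{eq:dottednailslide}.
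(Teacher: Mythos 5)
Your overall strategy — reduce to the key algebraic identity of \cref{lem:dotanddoubledots}, and use \cref{lem:allcrossingzniszero} (which is itself a consequence of \cref{lem:computezn}) to kill extraneous terms — is exactly what the paper's proof does, and your bookkeeping of the signs and the summand decomposition is pointed in the right direction. However, there is a conceptual confusion in the middle of your plan that you should fix before writing it out. You describe the first argument of $\varphi_k^0$ in \cref{eq:phizeroimathgenker} as "the image under $u$ of the generator, i.e.\ the one with the first red strand braided around the blue one" and propose to "first straighten it using \cref{eq:redR2} and \cref{eq:crossingslidered}." That element $\star$ lives in $\bigoplus_{\ell,\rho}q^2(T^{\lambda,r}_b 1_{k,\ell,\rho})[1]$ and is drawn with a plain vertical blue strand: it is not the image of the $X$-generator under $u$, and there is no double braiding to straighten. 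It is simply a cyclic permutation of the $k$ black strands past the first red strand, chosen so that after applying $\varphi_k^0$ the shape of \cref{lem:dotanddoubledots} appears. The dots you need do not come from removing a braid; they are produced by \cref{eq:redR2} and \cref{eq:redR3} when the cyclically permuted black strands are forced through the red strand inside $\varphi_k^{0,t}$ and ${\varphi_k^0}'$, together with the explicit dots built into the defining formulas of those maps.

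With that corrected, your plan is slightly less efficient than the paper's but still viable. The paper first observes that $\varphi_k^{0,t'}(\star)=0$ for \emph{all} $t'\neq k-1$ directly by \cref{lem:allcrossingzniszero} (the cyclic structure of $\star$ pushes a strand both into and out of the $z_{k-t'}$-box, killing it via \cref{eq:nhR2andR3}), so the only surviving piece has $z_{1}=\text{id}$ and no $z$-box machinery is needed for it. You instead propose to unfold \emph{every} $z_{k-t}$-box via \cref{lem:computezn} and then reorganize the whole alternating sum into the left-hand side of \cref{lem:dotanddoubledots}; this works, but it amounts to re-proving \cref{lem:allcrossingzniszero} on each term rather than invoking it once up front. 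Either way, the final step — one application of \cref{lem:dotanddoubledots} to collapse the alternating sum to a single $(-1)^{k-1}$ term, and then \cref{eq:nailslidedcross}/\cref{eq:dottednailslide} to match the right-hand side in each of the two summands $X\otimes_T {Y'}^0_k$ and $X\otimes_T Y^{0,k-1}_k$ — is the same. So: same proof modulo the $u$-image confusion, which you should delete, and a somewhat longer route to the vanishing of the $t'\neq k-1$ terms.
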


\begin{proof}
The case $k=1$ is clear, thus we assume $k > 1$. 
First, let us write $\star$ and $\star_s$ for the inputs of $\varphi_k^0$ and of $1\otimes \imath_k^{k-1}$ in \cref{eq:phizeroimathgenker}, respectively. 
Then, on one hand, we note that $\varphi_k^{0,t'}(\star) = 0$ by \cref{lem:allcrossingzniszero} whenever $t' \neq k-1$, because of \eqref{eq:nhR2andR3}. For $t' = k-1$, we obtain
\begin{equation}
\label{eq:phizeroimathgenker1}
\varphi_k^{0,k-1}(\star) 
= \ 
\tikzdiagh[xscale=1.25]{0}{
	\draw (.25,-.5) .. controls (.25,-.1) and (0,-.1) .. (0,.15) .. controls (0,.5) and (1.5,.5) .. (1.5,2);
	\draw (.5,-.5) -- (.5,0) .. controls (.5,.5) and (0,.5) .. (0,1) node[tikzdot,pos=1]{}
			 .. controls (0,1.5) and (.5,1.5) .. (.5,2);
	\draw (.75,-.5) -- (.75,0) .. controls (.75,.5) and (.25,.5) .. (.25,1) node[tikzdot,pos=1]{}
			 .. controls (.25,1.5) and (.75,1.5) .. (.75,2);
	\node at (.5,1) {\tiny $\dots$};
	\draw (1.25,-.5) -- (1.25,0) .. controls (1.25,.5) and (.75,.5) .. (.75,1) node[tikzdot,pos=1]{}
			 .. controls (.75,1.5) and (1.25,1.5) .. (1.25,2);
	\draw (1.5,-.5) -- (1.5,0) .. controls (1.5,1.5) and (0,1.5) .. (0,2);
	\draw[stdhl] (0,-.5) node[below]{\small $1$}   .. controls (0,-.25) ..  (-.25,0)
				.. controls (1.5,.5)  .. (1.5,1)
				 .. controls (1.5,1.75) and (.25,1.75) .. (.25,2);
	\draw[fill=white, color=white] (-.35,0) circle (.1cm);
	\draw[decoration={brace,mirror,raise=-8pt},decorate]  (.4,-.85) -- node {\small $k-1$} (1.6,-.85);
	\draw[vstdhl] (-.25,-.5) node[below]{\small $\lambda$} -- (-.25,2);
}
\otimes 
\bar 1_{\ell,\rho}
\end{equation}
using \cref{eq:nhR2andR3} and \cref{eq:nhdotslide}. 
Similarly, using \cref{lem:allcrossingzniszero}, we get
\begin{equation}
\label{eq:phizeroimathgenker2}
{\varphi_k^0}'(\star) = \ 
\tikzdiagh[xscale=1.25]{0}{
	\draw (1.5,-.5) .. controls (1.5,-.4) .. (-.25,.35) .. controls (1.5,1) .. (1.5,2);
	\draw (.25,-.5) .. controls (.25,-.25) and (.5,-.25) .. (.5,0) .. controls (.5,.5) and (0,.5) .. (0,1) node[tikzdot,pos=1]{}
			 .. controls (0,1.5) and (.5,1.5) .. (.5,2);
	\draw (.5,-.5) .. controls (.5,-.25) and (.75,-.25) .. (.75,0) .. controls (.75,.5) and (.25,.5) .. (.25,1) node[tikzdot,pos=1]{}
			 .. controls (.25,1.5) and (.75,1.5) .. (.75,2);
	\node at (.5,1) {\tiny $\dots$};
	\draw (1,-.5) .. controls (1,-.25) and (1.25,-.25) .. (1.25,0) .. controls (1.25,.5) and (.75,.5) .. (.75,1) node[tikzdot,pos=1]{}
			 .. controls (.75,1.5) and (1.25,1.5) .. (1.25,2);
	\draw (1.25,-.5) .. controls (1.25,-.25) and (1.5,-.25) .. (1.5,0) .. controls (1.5,1.5) and (0,1.5) .. (0,2);
	\draw[stdhl] (0,-.5) node[below]{\small $1$}   .. controls (0,-.25) ..  (-.25,0)
				.. controls (1.5,.5)  .. (1.5,1)
				 .. controls (1.5,1.75) and (.25,1.75) .. (.25,2);
	\draw[fill=white, color=white] (-.35,0) circle (.1cm);
	\draw[decoration={brace,mirror,raise=-8pt},decorate]  (.15,-.85) -- node {\small $k-1$} (1.35,-.85);
	\draw[vstdhl] (-.25,-.5) node[below]{\small $\lambda$} -- (-.25,2) node[pos=.335,nail]{};
}
\otimes 
\bar 1_{\ell,\rho}
\end{equation}
On the other hand, we compute
\begin{equation}
\label{eq:phizeroimathgenker3}
\begin{aligned}
(1\otimes\imath_k^{k-1})(\star_s) &=
(-1)^s
 \left(
 \tikzdiagh[xscale=1.25]{0}{
	\draw (.25,-.5) .. controls (.25,-.25) and (0,-.25) .. (0,0) .. controls (0,.5) and (1.25,.5) .. (1.25,2);
	\draw (.5,-.5)  -- (.5,0) .. controls (.5,.5) and (.25,.5) .. (.25,1) node[near start, tikzdot]{}
			 .. controls (.25,1.5) and (.5,1.5) .. (.5,2)  node[near end, tikzdot]{};
	\node at (.5,1) {\tiny $\dots$};
	\draw (1,-.5)  -- (1,0) .. controls (1,.5) and (.75,.5) .. (.75,1)node[near start, tikzdot]{}
			 .. controls (.75,1.5) and (1,1.5) .. (1,2)  node[near end, tikzdot]{};
	\draw (1.25,-.5)  -- (1.25,0) .. controls (1.25,1.5) and (0,1.5) .. (0,2);
	\draw[stdhl] (0,-.5)node[below]{\small $1$}  .. controls (0,-.25) and (.25,-.25) ..(.25,0) 
				.. controls (.25,.5).. (-.25,1)
				 .. controls (.25,1.5)  .. (.25,2);
	\draw[fill=white, color=white] (-.35,1) circle (.1cm);
	\draw[vstdhl] (-.25,-.5) node[below]{\small $\lambda$} -- (-.25,2);
	\draw (1.5,-.5) -- (1.5,2);
	\node at (1.75,1) {\tiny $\dots$};
	\draw (2,-.5) -- (2,2);
	\draw[decoration={brace,mirror,raise=-8pt},decorate]  (.4,-.85) -- node {\small $s$} (1.1,-.85);
}
\otimes
\bar 1_{\ell,\rho}
,
\tikzdiagh[xscale=1.25]{0}{
	\draw (2,-.5) .. controls (2,-.25) .. (-.25,0) 
		.. controls (1.25,1) .. (1.25,2);
	\draw (.25,-.5) .. controls (.25,-.25) and (.5,-.25) .. (.5,0) 
			.. controls (.5,.5) and (.25,.5) .. (.25,1) node[near start, tikzdot]{}
			.. controls (.25,1.5) and (.5,1.5) .. (.5,2)  node[near end, tikzdot]{};
	\node at (.5,1) {\tiny $\dots$};
	\draw (.75,-.5) .. controls (.75,-.25) and (1,-.25) .. (1,0) 
			.. controls (1,.5) and (.75,.5) .. (.75,1)node[near start, tikzdot]{}
			.. controls (.75,1.5) and (1,1.5) .. (1,2)  node[near end, tikzdot]{};
	\draw (1,-.5) .. controls (1,-.25) and (1.25,-.25) .. (1.25,0) 
			.. controls (1.25,1.5) and (0,1.5) .. (0,2);
	\draw[stdhl] (0,-.5)node[below]{\small $1$}  .. controls (0,-.25) and (.25,-.25) ..(.25,0) 
				 .. controls (.25,.5).. (-.25,1)
				 .. controls (.25,1.5)  .. (.25,2);
	\draw[fill=white, color=white] (-.35,1) circle (.1cm);
	\draw[vstdhl] (-.25,-.5) node[below]{\small $\lambda$} -- (-.25,2) node[nail, pos = .2]{};
	\draw (1.25,-.5) .. controls (1.25,-.25) and (1.5,-.25) .. (1.5,0) -- (1.5,2);
	\node at (1.75,1) {\tiny $\dots$};
	\draw (1.75,-.5) .. controls (1.75,-.25) and (2,-.25) ..(2,0) -- (2,2);
	\draw[decoration={brace,mirror,raise=-8pt},decorate]  (.15,-.85) -- node {\small $s$} (.85,-.85);
}
\otimes
\bar 1_{\ell,\rho}
\right)
\\
&= 
(-1)^s
 \left(
  \tikzdiagh[xscale=1.25]{0}{
	\draw (.25,-.5) .. controls (.25,-.25) and (0,-.25) .. (0,0) .. controls (0,.5) and (1.25,.5) .. (1.25,2);
	\draw (.5,-.5)  -- (.5,0) .. controls (.5,.5) and (.25,.5) .. (.25,1) node[near start, tikzdot]{}
			 .. controls (.25,1.5) and (.5,1.5) .. (.5,2)  node[near end, tikzdot]{};
	\node at (.5,1) {\tiny $\dots$};
	\draw (1,-.5)  -- (1,0) .. controls (1,.5) and (.75,.5) .. (.75,1)node[near start, tikzdot]{}
			 .. controls (.75,1.5) and (1,1.5) .. (1,2)  node[near end, tikzdot]{};
	\draw (1.25,-.5)  -- (1.25,0) .. controls (1.25,1.5) and (0,1.5) .. (0,2);
	\draw[stdhl] (0,-.5) node[below]{\small $1$}  .. controls (0,-.375).. (-.25,-.25)
				.. controls (.25,0) .. (.25,.25)
				.. controls (.25,.5) and (0,.5) .. (0,1)
				 .. controls (0,1.5) and (.25,1.5)  .. (.25,2);
	\draw[fill=white, color=white] (-.35,-.25) circle (.1cm);
	\draw[vstdhl] (-.25,-.5) node[below]{\small $\lambda$} -- (-.25,2);
	\draw (1.5,-.5) -- (1.5,2);
	\node at (1.75,1) {\tiny $\dots$};
	\draw (2,-.5) -- (2,2);
	\draw[decoration={brace,mirror,raise=-8pt},decorate]  (.4,-.85) -- node {\small $s$} (1.1,-.85);
}
\otimes
\bar 1_{\ell,\rho}
, 
\tikzdiagh[xscale=1.25]{0}{
	\draw (2,-.5) .. controls (2,-.25) .. (-.25,.25) 
		.. controls (1.25,1) .. (1.25,2);
	\draw (.25,-.5) .. controls (.25,-.25) and (.5,-.25) .. (.5,0) 
			.. controls (.5,.5) and (.25,.5) .. (.25,1) node[near start, tikzdot]{}
			.. controls (.25,1.5) and (.5,1.5) .. (.5,2)  node[near end, tikzdot]{};
	\node at (.5,1) {\tiny $\dots$};
	\draw (.75,-.5) .. controls (.75,-.25) and (1,-.25) .. (1,0) 
			.. controls (1,.5) and (.75,.5) .. (.75,1)node[near start, tikzdot]{}
			.. controls (.75,1.5) and (1,1.5) .. (1,2)  node[near end, tikzdot]{};
	\draw (1,-.5) .. controls (1,-.25) and (1.25,-.25) .. (1.25,0) 
			.. controls (1.25,1.5) and (0,1.5) .. (0,2);
	\draw[stdhl] (0,-.5) node[below]{\small $1$}  .. controls (0,-.375).. (-.25,-.25)
				.. controls (.25,0) .. (.25,.25)
				.. controls (.25,.5) and (0,.5) .. (0,1)
				 .. controls (0,1.5) and (.25,1.5)  .. (.25,2);
	\draw[fill=white, color=white] (-.35,-.25) circle (.1cm);
	\draw[vstdhl] (-.25,-.5) node[below]{\small $\lambda$} -- (-.25,2) node[nail, pos = .3]{};
	\draw (1.25,-.5) .. controls (1.25,-.25) and (1.5,-.25) .. (1.5,0) -- (1.5,2);
	\node at (1.75,1) {\tiny $\dots$};
	\draw (1.75,-.5) .. controls (1.75,-.25) and (2,-.25) ..(2,0) -- (2,2);
	\draw[decoration={brace,mirror,raise=-8pt},decorate]  (.15,-.85) -- node {\small $s$} (.85,-.85);
}
\otimes
\bar 1_{\ell,\rho}
 \right)
\end{aligned}
\end{equation}
using \cref{eq:redR2} and \cref{eq:nailslidedcross}.
Then, we conclude by observing that \cref{eq:phizeroimathgenker} follows by applying \cref{lem:dotanddoubledots} on \cref{eq:phizeroimathgenker1}, \cref{eq:phizeroimathgenker2} and \cref{eq:phizeroimathgenker3} together. 
\end{proof}

\begin{lem}
As a left $(T^{\lambda,r},0)$-module, $\ker(u \otimes \gamma_k)$ is generated by the elements 
\begin{equation}\label{eq:kergenelem}
\left(
-\ 
\tikzdiagh{0}{
	\draw (1.25,0) .. controls (1.25,.5) .. (-.5,.75) .. controls (0,.875) .. (0,1);
	\draw[stdhl] (0,0) node[below]{\small $1$} .. controls (0,.1) .. (-.5,.25) .. controls (.5,.7) .. (.5,1);
	\draw[fill=white, color=white] (-.6,.25) circle (.1cm);
	\draw[vstdhl] (-.5,0) node[below]{\small $\lambda$} --(-.5,1) node[pos=.75,nail]{};
	\draw (.5,0) .. controls (.5,.5) and (.75,.5) .. (.75,1);
	\node at (.75,.15){\tiny $\dots$};
	\draw (1,0) .. controls (1,.5) and (1.25,.5)..  (1.25,1);
	\draw[decoration={brace,mirror,raise=-8pt},decorate]  (.4,-.35) -- node { \small $t$} (1.1,-.35);
}
\otimes \bar  1_{\ell+k-1-t,\rho}
,
\tikzdiagh{0}{
  \draw (.5,0) .. controls (.5,.5) and (0,.5) .. (0,1);
  \draw[stdhl] (0,0) node[below]{\small $1$} .. controls (0,.1) .. (-.5,.25) .. controls (.5,.7) .. (.5,1);
  \draw[fill=white, color=white] (-.6,.25) circle (.1cm);
  \draw[vstdhl] (-.5,0) node[below]{\small $\lambda$} --(-.5,1);
}
\otimes \bar  1_{\ell+k-1,\rho}
\right)
\in
 (X\otimes_T {Y'}^0_k) 
\oplus
(X\otimes_T Y^{0,t}_k)
\end{equation}
for all $0 \leq t \leq k-1$. 
\end{lem}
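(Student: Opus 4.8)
The plan is to compute $\ker(u \otimes \gamma_k)$ explicitly using the fact that $\gamma_k$ fits into the short exact sequence $0 \to Y^1_k \xrightarrow{\imath_k} Y^0_k \xrightarrow{\gamma_k} X_k \to 0$ of \cref{lem:sesX0}, and that $u$ is injective by \cref{cor:uinj}. Since tensoring on the left by $X$ is right-exact and $1 \otimes \imath_k$ is injective (this last point can be checked directly from the tightened bases, as in the proof of \cref{prop:XoLXisXoX}), we get a short exact sequence $0 \to X \otimes_T Y^1_k \xrightarrow{1 \otimes \imath_k} X \otimes_T Y^0_k \xrightarrow{1 \otimes \gamma_k} X_k \to 0$. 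Composing $1 \otimes \gamma_k$ with the injection $u : \lambda X_k \hookrightarrow \bigoplus_{\ell,\rho} q^{-\text{shift}} (T^{\lambda,r}_b 1_{k,\ell,\rho})$ keeps the kernel unchanged, so $\ker(u \otimes \gamma_k) = \ker(1 \otimes \gamma_k) = \operatorname{im}(1 \otimes \imath_k)$. Thus the task reduces to describing a generating set for $\operatorname{im}(1 \otimes \imath_k)$ as a left $(T^{\lambda,r},0)$-module.

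First I would recall that $Y^1_k = \bigoplus_{\ell, \rho} \bigoplus_{t=0}^{k-1} Y^{1,t}_{k,\ell,\rho}$ with $Y^{1,t}_{k,\ell,\rho} = \lambda q^{k-2t+1}(T^{\lambda,r}_b 1_{1,k+\ell-1,\rho})[1]$, so that $X \otimes_T Y^{1,t}_{k,\ell,\rho} \cong \lambda q^{k-2t+1}(X 1_{1,k+\ell-1,\rho})[1]$. By \cref{lem:Xklgenerated}, the right module $1_{1,k+\ell-1,\rho} X$ — hence the summand $X 1_{1,k+\ell-1,\rho}$ viewed appropriately — is generated as a right $(T^{\lambda,r},0)$-module by the two elements in \cref{eq:Xklgenerator}. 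Therefore $\operatorname{im}(1 \otimes \imath_k)$ is generated as a left $(T^{\lambda,r},0)$-module by the images under $\imath^t_k$ of (the left-tensor versions of) these two generators, for all $0 \le t \le k-1$ and all $\ell, \rho$. Applying $1 \otimes \imath^t_k$ to the first generator of \cref{eq:Xklgenerator} gluing it on the right produces — after using \cref{eq:redR2}, \cref{eq:nhR2andR3} and \cref{eq:nailslidedcross} to straighten the resulting diagram — exactly the pair in \cref{eq:kergenelem}; applying it to the second generator yields a diagram that, after the same simplifications, is a left-multiple of \cref{eq:kergenelem} (or vanishes), so it contributes nothing new. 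This is essentially the same diagrammatic bookkeeping already carried out in \cref{lem:phizeroimathgenker}, where the elements $(1 \otimes \imath^{k-1}_k)(\star_s)$ were computed in \cref{eq:phizeroimathgenker3}; the general $t$ case is parallel.

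The key steps in order: (1) identify $\ker(u \otimes \gamma_k)$ with $\operatorname{im}(1 \otimes \imath_k)$ via injectivity of $u$ and exactness of the sequence from \cref{lem:sesX0} (plus injectivity of $1 \otimes \imath_k$); (2) decompose $Y^1_k$ into its summands $Y^{1,t}_{k,\ell,\rho}$ and tensor with $X$ to reduce to understanding $X 1_{1,k+\ell-1,\rho}$ as a right module; (3) invoke \cref{lem:Xklgenerated} to get right-module generators of this, giving left-module generators of $\operatorname{im}(1\otimes\imath_k)$ as the images of those generators under $1\otimes\imath^t_k$; (4) evaluate $1 \otimes \imath^t_k$ on each generator and simplify via \cref{eq:redR2}, \cref{eq:nhR2andR3}, \cref{eq:nhdotslide}, \cref{eq:nailslidedcross}, \cref{eq:dottednailslide} to recognize the answer as the list in \cref{eq:kergenelem}, showing the second generator contributes redundant elements. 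I expect the main obstacle to be step (4): the diagrammatic simplification of $(1 \otimes \imath^t_k)$ applied to the first generator of \cref{eq:Xklgenerator}, where one must carefully track the interaction of the cup of dots with the nailed strand coming from $\imath^t_k$ — this is the same type of nontrivial manipulation as in \cref{lem:gammasurjective} and \cref{lem:phizeroimathgenker}, and the bookkeeping of which crossings and dots survive is where errors are easiest to make. The detailed computation would be deferred to the appendix (\cref{sec:proofsofcatTLB}), in line with the paper's style.
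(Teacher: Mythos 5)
The proposal contains a genuine gap at the crucial first step, and the remaining steps, while plausible in spirit, inherit that gap and also have a left/right module mismatch.

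The pivotal claim you make is that ``composing $1 \otimes \gamma_k$ with the injection $u$ keeps the kernel unchanged, so $\ker(u \otimes \gamma_k) = \ker(1 \otimes \gamma_k) = \operatorname{im}(1 \otimes \imath_k)$.'' This is where the argument breaks down. Tensoring over $T^{\lambda,r}$ is only right-exact, so the injectivity of $u$ on the underlying modules does \emph{not} imply that the corresponding map on the tensor product is injective: one needs $u \otimes 1_{X_k}$ (equivalently, $1 \otimes u$, depending on the slot convention) to be injective on $X \otimes_T X_k$, which is a purity/flatness assertion that requires its own argument. In fact, establishing precisely this injectivity is the substantive content of the lemma; \cref{cor:qi-Xquadratic} and its corollary show that the kernel of $1 \otimes u$ on $X \otimes_T X$ is governed by $\tilde\varphi_0$, so this is not a triviality. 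The paper circumvents the issue by a graded-dimension count: it shows that the submodule $K$ generated by the stated elements injects into $X \otimes_T Y^0_k$, identifies $K \cong \lambda^{-1}Y^1_k$ and $X \otimes_T Y^0_k \cong \lambda^{-1}Y^0_k$ via the tightened basis of \cref{thm:X0basis}, and then invokes the dimension identity $\gdim X_k = \gdim Y^0_k - \gdim Y^1_k$ from \cref{lem:sesX0} to force $K = \ker(u \otimes \gamma_k)$. Your reduction would become valid \emph{after} that same dimension computation (it is exactly what shows $\ker(u\otimes\gamma_k) = \ker(1\otimes\gamma_k)$), but as written you have asserted the conclusion of the hard step as if it followed formally from injectivity of $u$.

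There is also a secondary issue in steps (2)--(3): \cref{lem:Xklgenerated} gives \emph{right}-module generators of $1_{1,k+\ell-1,\rho}X$, whereas what you need are \emph{left}-module generators of $X 1_{1,k+\ell-1,\rho} \cong X \otimes_T Y^{1,t}_{k,\ell,\rho}$. These are mirror statements, not the same statement, so one would either need to prove the mirror of \cref{lem:Xklgenerated} or rephrase the argument; as written the lemma you cite does not directly apply. Once both gaps are filled, the route you propose (identify the kernel with $\operatorname{im}(1\otimes\imath_k)$ and push forward generators of $X 1_{1,\cdot}$) would give an alternative, somewhat more constructive proof than the paper's dimension-count argument, and in hindsight the two are closely related since the paper's $K \cong \lambda^{-1}Y^1_k$ is exactly $\operatorname{im}(1\otimes\imath_k)$. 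But neither of the two missing points is cosmetic.
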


\begin{proof}
Let $K \subset X \otimes_T Y^0_k$ be the submodule generated by the elements in \cref{eq:kergenelem}. A straightforward computation shows that $K \subset \ker(u \otimes \gamma_k)$. Thus, we have a complex
\begin{equation}\label{eq:SESKXTX}
0 \rightarrow K \hookrightarrow X \otimes_T Y_k^0 \overset{u \otimes \gamma_k}{\twoheadrightarrow} \lambda^{-1} X_k \rightarrow 0,
\end{equation}
where the left arrow is an injection and the right arrow is a surjection. Furthermore, by \cref{thm:X0basis}, we have that 
\begin{align*}
 K &\cong \lambda^{-1}   Y_k^1, &
 X \otimes_T Y_k^0 &\cong \lambda^{-1}  Y_k^0.
\end{align*}
Therefore, by \cref{lem:sesX0} we obtain that the sequence in \cref{eq:SESKXTX} is exact. In particular, we have $K = \ker(u \otimes \gamma_k)$. 
\end{proof}

\begin{prop}\label{prop:kerequalsimg1}
We have $\ker(u \otimes \gamma_k) = \Image(\varphi_k^0 + 1\otimes \imath_k)$. 
\end{prop}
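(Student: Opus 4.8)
�The plan is to show the two inclusions $\ker(u\otimes\gamma_k)\subseteq \Image(\varphi_k^0+1\otimes\imath_k)$ and $\Image(\varphi_k^0+1\otimes\imath_k)\subseteq\ker(u\otimes\gamma_k)$ separately, the second being routine and the first being the substantial one. For the easy inclusion, one checks that $(u\otimes\gamma_k)\circ(\varphi_k^0+1\otimes\imath_k)=0$; this is exactly the commutativity of the square in the lemma preceding \cref{thm:catdoublebraidphiiso} (whose proof was already reduced to a straightforward computation with \cref{eq:dotredstrand}, \cref{eq:crossingslidered} and \cref{eq:nailslidedcross}), combined with the fact that $u$ is injective, so the composite factors through the kernel of $u$ on the bottom row, which is zero.

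For the hard inclusion, the strategy is to use the generating set of $\ker(u\otimes\gamma_k)$ supplied by the previous lemma: as a left $(T^{\lambda,r},0)$-module it is generated by the elements in \cref{eq:kergenelem}, one for each $0\le t\le k-1$ (together with $\ell$ and $\rho$). So it suffices to exhibit, for each such $t$, a preimage under $\varphi_k^0+1\otimes\imath_k$. The point is that \cref{lem:phizeroimathgenker} does precisely this in the extreme case: it produces, up to the unit $(-1)^{k-1}$, the generator of \cref{eq:kergenelem} with $t=k-1$ as $\varphi_k^0(\star)-\sum_{s}(-1)^s(1\otimes\imath_k^{k-1})(\star_s)$, i.e.\ as $(\varphi_k^0+1\otimes\imath_k)$ applied to an explicit element of $\bigoplus_{\ell,\rho}q^2(T^{\lambda,r}_b1_{k,\ell,\rho})[1]\oplus X\otimes_T Y^1_k$. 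The plan is then to deduce the remaining cases $t<k-1$ from this one: each such generator is obtained from the $t=k-1$ generator by multiplying on the left by a suitable element of $T^{\lambda,r}$ (concretely, by the element $z_{k-t}$-type box sliding black strands through, or by adding crossings/dots that convert $\bar 1_{\ell,\rho}$-style tails), and since $\varphi_k^0+1\otimes\imath_k$ is a map of left modules, left multiples of things in the image are again in the image. Alternatively, and perhaps more cleanly, one runs \cref{lem:dotanddoubledots} and \cref{lem:computezn} with a smaller number of vertical strands to produce the $t$-th generator directly as $(\varphi_k^0+1\otimes\imath_k)$ of the corresponding element with $z_{k-t}$ in place of $z_{k}$; the computation is formally identical to the proof of \cref{lem:phizeroimathgenker} after relabelling.

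Having both inclusions, we conclude $\ker(u\otimes\gamma_k)=\Image(\varphi_k^0+1\otimes\imath_k)$. With \cref{prop:kerequalsimg1} in hand, the exactness of $\cone(\varphi_k)$ follows formally: the map $\varphi_k^1-u$ into $X\otimes_T Y^1_k\oplus\bigoplus_{\ell,\rho}q^2(T^{\lambda,r}_b1_{k,\ell,\rho})[1]$ is injective (because $u$ is, by \cref{cor:uinj}), the map $1\otimes\imath_k$ is injective (it is a sum of the injective maps $\imath_k^t$, tensored with $X$, which stays injective since $X$ is cofibrant as a right module by \cref{thm:basisBi} and the discussion after it), the map $u\otimes\gamma_k$ is surjective, exactness at the middle term $X\otimes_T Y^0_k$ is \cref{prop:kerequalsimg1}, and exactness at the remaining spot is a diagram chase using injectivity of $1\otimes\imath_k$ and of $u$ together with the commuting square. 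So $\cone(\varphi_k)$ is acyclic for every $k$, hence so is $\cone(\varphi)=\bigoplus_k(-1)^k\cone(\varphi_k)$ up to the sign bookkeeping built into the alternating sum, and therefore $\varphi$ is a quasi-isomorphism, proving \cref{thm:catdoublebraidphiiso}. The main obstacle is the bookkeeping in the hard inclusion: tracking the signs $(-1)^s$, the grading shifts $\lambda^{\pm1}q^{\bullet}[\pm1]$, and the precise diagrammatic identities (\cref{lem:dotanddoubledots}, \cref{lem:computezn}, \cref{lem:allcrossingzniszero}) needed to reduce the general $t$ to $t=k-1$ and to recognise the output as exactly the generator in \cref{eq:kergenelem}.
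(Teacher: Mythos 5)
Your easy inclusion and your reduction to showing that the generators in \cref{eq:kergenelem} lie in the image are both correct, and you have correctly identified that \cref{lem:phizeroimathgenker} supplies the $t=k-1$ case. The gap is in how you pass from $t=k-1$ to $t<k-1$, and both of your suggestions fail for the same structural reason.

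The left-multiplication idea cannot work: the generator indexed by $t$ has its second component in the direct summand $X\otimes_T Y^{0,t}_k$ of $X\otimes_T Y^0_k$, and the left $T^{\lambda,r}$-action respects the decomposition $Y^0_k={Y'}^0_k\oplus\bigoplus_{t'}Y^{0,t'}_k$, so a left multiple of the $t=k-1$ generator can never land in the $t<k-1$ summand. Your ``cleaner'' alternative --- rerun the computation of \cref{lem:phizeroimathgenker} with the $t$-strand input $\star$ and claim the output is ``directly'' the $t$-th generator --- is also not right, and this is the real subtlety. The case $t=k-1$ is special because $\star$ then has \emph{all} strands crossing, so \cref{lem:allcrossingzniszero} forces $\varphi_k^{0,t'}(\star)=0$ for every $t'\neq k-1$, leaving a single nonzero component. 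For $t<k-1$ this collapse fails: $\varphi_k^{0,t'}(\star)$ is nonzero for \emph{every} $t'\geq t$ (\cref{lem:allcrossingzniszero} only kills $t'<t$), and the ${\varphi'_k}^0$-component likewise picks up extra summands indexed by $t'>t$. These extra terms live in $X\otimes_T Y^{0,t'}_k$ for $t'>t$ and cannot be absorbed by adjusting the $1\otimes\imath_k$-part of the input, which lands in a fixed summand per term. The computation is therefore \emph{not} formally identical after relabelling.

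What the paper actually does is a \emph{backward induction on $t$}, starting at $t=k-1$ and descending. At the $t$-th step one applies $\varphi_k^0+1\otimes\imath_k$ to the input built from the $t$-strand $\star$ and reads off that the result equals the desired generator \emph{modulo} terms lying in the summands $Y^{0,t'}_k$ with $t'>t$; by the induction hypothesis these residual terms are already known to be in $\Image(\varphi_k^0+1\otimes\imath_k)$, so the $t$-th generator is too. Without invoking this induction hypothesis at each step, neither of your proposed constructions produces a preimage, so the hard inclusion is not established. Once you restructure the argument as a downward induction and absorb the $t'>t$ overflow by the inductive assumption, the remaining diagrammatic work really is the sign/grading bookkeeping you flag, via \cref{lem:dotanddoubledots}, \cref{lem:computezn}, and \cref{lem:allcrossingzniszero}.
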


\begin{proof}
We will show by backward induction on $t$ that the elements \cref{eq:kergenelem} are all in $\Image(\varphi_k^0 + 1\otimes \imath_k)$. The case $t = k-1$ is \cref{lem:phizeroimathgenker}. The induction step is essentially similar to the proof of \cref{lem:phizeroimathgenker}. 
In particular we want to show that \cref{eq:kergenelem} is in $\bigcup_{t' \geq t} \Image(\varphi_k^0 + 1\otimes \imath_k^{t'})$. 
For this, we write
\begin{align*}
\star &:=
\tikzdiagh[xscale=1.25]{0}{
	\draw[vstdhl] (-.25,0) node[below]{\small $\lambda$} -- (-.25,2);
	\draw (0,0) .. controls (0,1) and (.5,1) .. (.5,2);
	\node at (.25,.25) {\tiny $\dots$}; \node at (.75,1.75) {\tiny $\dots$};
	\draw (.5,0) .. controls (.5,1) and (1,1) .. (1,2);
	\draw (.75,0) .. controls (.75,1) and (0,1) .. (0,2);
	\draw (1,0) .. controls (1,1) and (1.25,1) .. (1.25,2);
	\node at (1.25,.25) {\tiny $\dots$}; \node at (1.5,1.75) {\tiny $\dots$};
	\draw (1.5,0) .. controls (1.5,1) and (1.75,1) .. (1.75,2);
	\draw[stdhl] (2,0) node[below]{\small $1$} 
				 .. controls (2,1) and (.25,1) .. (.25,2);
	\draw[decoration={brace,mirror,raise=-8pt},decorate]  (-.1,-.35) -- node {\small $t$} (.65,-.35);
}
\otimes 
\bar 1_{\ell,\rho},
\end{align*}
Then, we compute using \cref{lem:allcrossingzniszero} and \cref{lem:computezn} 
\[
\varphi_k^{0,t'}(\star) =
\begin{cases}
\quad 0, & \text{if $t' < t$,} \\
\tikzdiagh[xscale=1.25]{0}{
	\draw (.25,-.5) .. controls (.25,-.1) and (0,-.1) .. (0,.15) .. controls (0,.5) and (1.5,.5) .. (1.5,2);
	\draw (.5,-.5) -- (.5,0) .. controls (.5,.5) and (0,.5) .. (0,1) node[tikzdot,pos=1]{}
			 .. controls (0,1.5) and (.5,1.5) .. (.5,2);
	\draw (.75,-.5) -- (.75,0) .. controls (.75,.5) and (.25,.5) .. (.25,1) node[tikzdot,pos=1]{}
			 .. controls (.25,1.5) and (.75,1.5) .. (.75,2);
	\node at (.5,1) {\tiny $\dots$};
	\draw (1.25,-.5) -- (1.25,0) .. controls (1.25,.5) and (.75,.5) .. (.75,1) node[tikzdot,pos=1]{}
			 .. controls (.75,1.5) and (1.25,1.5) .. (1.25,2);
	\draw (1.5,-.5) -- (1.5,0) .. controls (1.5,1.5) and (0,1.5) .. (0,2);
	\draw[stdhl] (0,-.5) node[below]{\small $1$}   .. controls (0,-.25) ..  (-.25,0)
				.. controls (1.5,.5)  .. (1.5,1)
				 .. controls (1.5,1.75) and (.25,1.75) .. (.25,2);
	\draw[fill=white, color=white] (-.35,0) circle (.1cm);
	\draw[decoration={brace,mirror,raise=-8pt},decorate]  (.4,-.85) -- node {\small $k-1$} (1.6,-.85);
	\draw[vstdhl] (-.25,-.5) node[below]{\small $\lambda$} -- (-.25,2);
}
\otimes 
\bar 1_{\ell,\rho},
& \text{if $t' = t$,} \\
\tikzdiagh[xscale=1.25]{0}{
	\draw(.25,-.5) .. controls (.25,0) and (0,0) .. (0,.25) .. controls (0,.5) and (1.75,.5) .. (1.75,.75) .. controls (1.75,1.5) and (2,1.5) .. (2,2);
	\draw (.5,-.5) .. controls (.5,.25) and (0,.25) .. (0,1) .. controls (0,1.5) and (.5,1.5) .. (.5,2) node[pos=0,tikzdot]{};
	\node at (.75,-.4) {\tiny $\dots$}; \node at (.25,1) {\tiny $\dots$};
	\draw (1,-.5) .. controls (1,.25) and (.5,.25) .. (.5,1) .. controls (.5,1.5) and (1,1.5) .. (1,2) node[pos=0,tikzdot]{};
	\draw (1.25,-.5) .. controls (1.25,.25) and (.75,.25) .. (.75,1) .. controls (.75,1.5) and (0,1.5) .. (0,2)  node[pos=0,tikzdot]{};
	\draw (1.5,-.5) .. controls (1.5,.25) and (1,.25) .. (1,1) .. controls (1,1.5) and (1.25,1.5) .. (1.25,2)  node[pos=0,tikzdot]{};
	\node at (1.75,-.4) {\tiny $\dots$}; \node at (1.25,1) {\tiny $\dots$};
	\draw (2,-.5) .. controls (2,.25) and (1.5,.25) .. (1.5,1) .. controls (1.5,1.5) and (1.75,1.5) .. (1.75,2)  node[pos=0,tikzdot]{};
	\draw (2.25,-.5) .. controls (2.25,.25) and (2,.25) .. (2,1) .. controls (2,1.5) and (2.25,1.5) .. (2.25,2);
	\node at (2.5,-.4) {\tiny $\dots$}; \node at (2.5,1.9) {\tiny $\dots$};
	\draw (2.75,-.5) .. controls (2.75,.25) and (2.5,.25) .. (2.5,1) .. controls (2.5,1.5) and (2.75,1.5) .. (2.75,2);
	\filldraw [fill=white, draw=black,rounded corners] (1.625,.75) rectangle (2.625,1.25) node[midway] { $z_{k-t'}$};
	\draw[stdhl] (0,-.5) node[below]{\small $1$}   .. controls (0,-.25) ..  (-.25,0)
				.. controls (2.75,.25)  .. (2.75,1) -- (2.75,1.25)
				 .. controls (2.75,1.75) and (.25,1.75) .. (.25,2);
	\draw[fill=white, color=white] (-.35,0) circle (.1cm);
	\draw[decoration={brace,raise=-8pt},decorate]  (.4,2.35) -- node {\small $t$} (1.1,2.35);
	\draw[decoration={brace,mirror,raise=-8pt},decorate]  (.4,-.85) -- node {\small $t'$} (2.1,-.85);
	\draw[vstdhl] (-.25,-.5) node[below]{\small $\lambda$} -- (-.25,2);
}
\otimes 
\bar 1_{\ell,\rho},
& \text{if $t' > t$,}
\end{cases}
\]
 for $0 \leq t' \leq k-1$, and 
\[
{\varphi'}_k^0(\star) 
= -\ 
\tikzdiagh[xscale=1.25]{0}{
	\draw (1,-.5) .. controls (1,-.4) .. (-.25,.35) .. controls (2,1) .. (2,2);
	\draw (.25,-.5) .. controls (.25,-.25) and (.5,-.25) .. (.5,0) .. controls (.5,.5) and (0,.5) .. (0,1) node[tikzdot,pos=1]{}
			 .. controls (0,1.5) and (.5,1.5) .. (.5,2);
	\node at (.25,1) {\tiny $\dots$};
	\draw (.75,-.5) .. controls (.75,-.25) and (1,-.25) .. (1,0) .. controls (1,.5) and (.5,.5) .. (.5,1) node[tikzdot,pos=1]{}
			 .. controls (.5,1.5) and (1,1.5) .. (1,2);
	\draw (1.25,-.5)  .. controls (1.25,.5) and (.75,.5) .. (.75,1) node[tikzdot,pos=1]{}
			 .. controls (.75,1.5) and (1.25,1.5) .. (1.25,2);
	\node at (1,1) {\tiny $\dots$};
	\draw (1.75,-.5)  .. controls (1.75,.5) and (1.25,.5) .. (1.25,1) node[tikzdot,pos=1]{}
			 .. controls (1.25,1.5) and (1.75,1.5) .. (1.75,2);
	\draw (2,-.5)  .. controls (2,1.75) and (0,1.5) .. (0,2);
	\draw[stdhl] (0,-.5) node[below]{\small $1$}   .. controls (0,-.25) ..  (-.25,0)
				.. controls (2,.5)  .. (2,1)
				 .. controls (2,1.75) and (.25,1.75) .. (.25,2);
	\draw[fill=white, color=white] (-.35,0) circle (.1cm);
	\draw[decoration={brace,mirror,raise=-8pt},decorate]  (.15,-.85) -- node {\small $t$} (.85,-.85);
	\draw[vstdhl] (-.25,-.5) node[below]{\small $\lambda$} -- (-.25,2) node[pos=.335,nail]{};
}
\ \otimes 
\bar 1_{\ell,\rho}
- \sum_{t' = t+1}^{k-1} \ 
\tikzdiagh[xscale=1.25]{0}{
	\draw(2,-.5) .. controls (2,.25)  .. (-.25,.375) .. controls (1.75,.5) .. (1.75,.75) .. controls (1.75,1.5) and (2,1.5) .. (2,2);
	\draw (.25,-.5) .. controls (.25,.25) and (0,.25) .. (0,1) .. controls (0,1.5) and (.5,1.5) .. (.5,2) node[pos=0,tikzdot]{};
	\node at (.5,-.4) {\tiny $\dots$}; \node at (.25,1) {\tiny $\dots$};
	\draw (.75,-.5) .. controls (.75,.25) and (.5,.25) .. (.5,1) .. controls (.5,1.5) and (1,1.5) .. (1,2) node[pos=0,tikzdot]{};
	\draw (1,-.5) .. controls (1,.25) and (.75,.25) .. (.75,1) .. controls (.75,1.5) and (0,1.5) .. (0,2)  node[pos=0,tikzdot]{};
	\draw (1.25,-.5) .. controls (1.25,.25) and (1,.25) .. (1,1) .. controls (1,1.5) and (1.25,1.5) .. (1.25,2)  node[pos=0,tikzdot]{};
	\node at (1.5,-.4) {\tiny $\dots$}; \node at (1.25,1) {\tiny $\dots$};
	\draw (1.75,-.5) .. controls (1.75,.25) and (1.5,.25) .. (1.5,1) .. controls (1.5,1.5) and (1.75,1.5) .. (1.75,2)  node[pos=0,tikzdot]{};
	\draw (2.25,-.5) .. controls (2.25,.25) and (2,.25) .. (2,1) .. controls (2,1.5) and (2.25,1.5) .. (2.25,2);
	\node at (2.5,-.4) {\tiny $\dots$}; \node at (2.5,1.9) {\tiny $\dots$};
	\draw (2.75,-.5) .. controls (2.75,.25) and (2.5,.25) .. (2.5,1) .. controls (2.5,1.5) and (2.75,1.5) .. (2.75,2);
	\filldraw [fill=white, draw=black,rounded corners] (1.625,.75) rectangle (2.625,1.25) node[midway] { $z_{k-t'}$};
	\draw[stdhl] (0,-.5) node[below]{\small $1$}   .. controls (0,-.375) ..  (-.25,-.25)
				.. controls (2.75,0)  .. (2.75,.75) -- (2.75,1.25)
				 .. controls (2.75,1.75) and (.25,1.75) .. (.25,2);
	\draw[fill=white, color=white] (-.35,-.25) circle (.1cm);
	\draw[decoration={brace,raise=-8pt},decorate]  (.4,2.35) -- node {\small $t$} (1.1,2.35);
	\draw[decoration={brace,mirror,raise=-8pt},decorate]  (.15,-.85) -- node {\small $t'$} (1.85,-.85);
	\draw[vstdhl] (-.25,-.5) node[below]{\small $\lambda$} -- (-.25,2) node[pos=.35,nail]{};
}
         \ \otimes 
\bar 1_{\ell,\rho}
\]
Thus, since $t' > t$, by induction hypothesis we know that
\begin{align*}
\varphi_k^0(\star) \equiv \left(
- \ 
\tikzdiagh[xscale=1.25]{0}{
	\draw (1,-.5) .. controls (1,-.4) .. (-.25,.35) .. controls (2,1) .. (2,2);
	\draw (.25,-.5) .. controls (.25,-.25) and (.5,-.25) .. (.5,0) .. controls (.5,.5) and (0,.5) .. (0,1) node[tikzdot,pos=1]{}
			 .. controls (0,1.5) and (.5,1.5) .. (.5,2);
	\node at (.25,1) {\tiny $\dots$};
	\draw (.75,-.5) .. controls (.75,-.25) and (1,-.25) .. (1,0) .. controls (1,.5) and (.5,.5) .. (.5,1) node[tikzdot,pos=1]{}
			 .. controls (.5,1.5) and (1,1.5) .. (1,2);
	\draw (1.25,-.5)  .. controls (1.25,.5) and (.75,.5) .. (.75,1) node[tikzdot,pos=1]{}
			 .. controls (.75,1.5) and (1.25,1.5) .. (1.25,2);
	\node at (1,1) {\tiny $\dots$};
	\draw (1.75,-.5)  .. controls (1.75,.5) and (1.25,.5) .. (1.25,1) node[tikzdot,pos=1]{}
			 .. controls (1.25,1.5) and (1.75,1.5) .. (1.75,2);
	\draw (2,-.5)  .. controls (2,1.75) and (0,1.5) .. (0,2);
	\draw[stdhl] (0,-.5) node[below]{\small $1$}   .. controls (0,-.25) ..  (-.25,0)
				.. controls (2,.5)  .. (2,1)
				 .. controls (2,1.75) and (.25,1.75) .. (.25,2);
	\draw[fill=white, color=white] (-.35,0) circle (.1cm);
	\draw[decoration={brace,mirror,raise=-8pt},decorate]  (.15,-.85) -- node {\small $t$} (.85,-.85);
	\draw[vstdhl] (-.25,-.5) node[below]{\small $\lambda$} -- (-.25,2) node[pos=.335,nail]{};
}
\ \otimes 
\bar 1_{\ell,\rho}
,
\tikzdiagh[xscale=1.25]{0}{
	\draw (.25,-.5) .. controls (.25,-.1) and (0,-.1) .. (0,.15) .. controls (0,.5) and (1.5,.5) .. (1.5,2);
	\draw (.5,-.5) -- (.5,0) .. controls (.5,.5) and (0,.5) .. (0,1) node[tikzdot,pos=1]{}
			 .. controls (0,1.5) and (.5,1.5) .. (.5,2);
	\draw (.75,-.5) -- (.75,0) .. controls (.75,.5) and (.25,.5) .. (.25,1) node[tikzdot,pos=1]{}
			 .. controls (.25,1.5) and (.75,1.5) .. (.75,2);
	\node at (.5,1) {\tiny $\dots$};
	\draw (1.25,-.5) -- (1.25,0) .. controls (1.25,.5) and (.75,.5) .. (.75,1) node[tikzdot,pos=1]{}
			 .. controls (.75,1.5) and (1.25,1.5) .. (1.25,2);
	\draw (1.5,-.5) -- (1.5,0) .. controls (1.5,1.5) and (0,1.5) .. (0,2);
	\draw[stdhl] (0,-.5) node[below]{\small $1$}   .. controls (0,-.25) ..  (-.25,0)
				.. controls (1.5,.5)  .. (1.5,1)
				 .. controls (1.5,1.75) and (.25,1.75) .. (.25,2);
	\draw[fill=white, color=white] (-.35,0) circle (.1cm);
	\draw[decoration={brace,mirror,raise=-8pt},decorate]  (.4,-.85) -- node {\small $k-1$} (1.6,-.85);
	\draw[vstdhl] (-.25,-.5) node[below]{\small $\lambda$} -- (-.25,2);
}
\otimes 
\bar 1_{\ell,\rho}
\right)
&+
\Image(\varphi_k^0 + 1\otimes \imath_k)  \\
&\in (X\otimes_T Y'_0) \oplus (X\otimes_T Y^{0,t}_k).
\end{align*}
Then, by the same arguments as in \cref{lem:phizeroimathgenker}, that is using \cref{eq:dotanddoubledots}, we obtain \cref{eq:kergenelem}.
\end{proof}

\begin{lem}\label{lem:varphi0kinjective}
The map $\varphi^0_k$ is injective.
\end{lem}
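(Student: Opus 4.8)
The plan is to exploit the block structure of $\varphi_k^0$. By construction the codomain splits as a direct sum $(X\otimes_T {Y'}^0_k)\oplus\bigoplus_{t=0}^{k-1}(X\otimes_T Y^{0,t}_k)$, and $\varphi_k^0 = {\varphi_k^0}' + \sum_{t=0}^{k-1}\varphi_k^{0,t}$ with ${\varphi_k^0}'$ (resp.\ $\varphi_k^{0,t}$) the component landing in the corresponding summand. Hence $\ker\varphi_k^0 = \ker{\varphi_k^0}'\cap\bigcap_t\ker\varphi_k^{0,t}$, and it suffices to prove that a single component is injective. I would take $\varphi_k^{0,k-1}$: then the box $z_{k-(k-1)}=z_1$ is the trivial strand, so $\varphi_k^{0,k-1}$ is simply the left-module map obtained by gluing one fixed diagram $D$ — a fixed permutation carrying $k-1$ dots, together with the wrapping of the first red strand past the $k$ black strands and around the blue strand — on the bottom of elements of $\bigoplus_{\ell,\rho}(T_b^{\lambda,r}1_{k,\ell,\rho})[1]$, with image inside $\bigoplus_{\ell,\rho}\lambda q^{2-k}(X1_{0,k+\ell,\rho})[1]$.

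To prove that gluing $D$ is injective I would argue as in the linear-independence parts of \cref{thm:Tbasis} and \cref{thm:X0basis}, and as in \cref{cor:uinj} and \cref{lem:surjcircimath}. First I would fix the tightened basis $\{b_{w,\underline l,\underline a}\}$ of $T_b^{\lambda,r}1_{k,\ell,\rho}$ provided by \cref{thm:Tbasis} and the tightened basis $\{x_{w,\underline l,\underline a}\}$ of $X1_{0,k+\ell,\rho}$ provided by \cref{thm:X0basis}, and order basis elements first by number of crossings and then by dot content. Applying $D$ to a tightened basis element and then straightening — pulling the wrapped red strand back past the $k$ black strands with \cref{eq:redR2} (which creates one dot per strand crossed) and with \cref{eq:nailslidedcross}, resolving the resulting double crossings, and pushing the dots to the top with \cref{eq:nhR2andR3} and \cref{eq:nhdotslide} — the term with the largest number of crossings is again a tightened basis element $x_{w',\underline l',\underline a'}$ of $X$, with coefficient $\pm 1$, and the assignment $b_{w,\underline l,\underline a}\mapsto x_{w',\underline l',\underline a'}$ is injective. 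A descending induction on the number of crossings then rules out any nonzero $\Bbbk$-combination in $\ker\varphi_k^{0,k-1}$; equivalently, the matrix of $\varphi_k^{0,k-1}$ in these bases can be put in column echelon form with $\pm1$ pivots, just as was done for $u$ in \cref{cor:uinj}. This gives $\ker\varphi_k^{0,k-1}=0$, hence $\varphi_k^0$ is injective.

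The hard part will be the combinatorial bookkeeping in the straightening step: one must check that the dots generated by \cref{eq:redR2}, together with the nail relations \cref{eq:relNail} and \cref{eq:nailslidedcross}, never make the top-crossing contributions of two distinct source basis elements coincide, nor cancel among themselves, so that the induced map on leading terms really is injective. This is the same kind of analysis already carried out for \cref{thm:X0basis} and \cref{lem:Xklgenerated}, so it is routine but lengthy, and I would place the detailed computation in \cref{sec:computations} next to the proof of \cref{prop:kerequalsimg1}.
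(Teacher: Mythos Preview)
Your reduction to a single component is correct—since the codomain decomposes as a direct sum and each $\varphi_k^{0,t}$ lands in its own summand, injectivity of any one component implies injectivity of $\varphi_k^0$. But your route is genuinely different from the paper's, and the part you defer is exactly where all the content lies.

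The paper does \emph{not} show that any individual $\varphi_k^{0,t}$ is injective. Instead it first strips off the black/red crossings and the double braiding (both injective, by \cref{thm:Tbasis} and \cref{thm:X0basis}) and uses \cref{lem:computezn} to reduce to a map $T_k^{\lambda,0}\to\bigoplus_{t=0}^{k-1}q^{2(k-1-t)}T_k^{\lambda,0}$ inside the dg-enhanced nilHecke algebra. It then invokes the structural decomposition $T_k^{\lambda,0}\cong\bigoplus_{t'}P_{k,t'}$ from \cite[Proposition~2.5]{naissevaz2} and computes that the $t$-th component vanishes on $P_{k,t'}$ for $t<t'$ and is injective when $t=t'$. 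This gives a column echelon form over the pair $(t,t')$, with each pivot an injective map $\nh_{k-1}\hookrightarrow q^{2k-2}\nh_k$ (again by a basis argument). The triangularity uses \emph{all} components simultaneously; no single $t$ is asserted to be injective on its own.

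Your approach via the single component $t=k-1$ is plausible and probably works, but the leading-term analysis you sketch is subtler than you suggest. Gluing $D$ (which contains the cycle $\tau_{k-1}\cdots\tau_1$) onto a basis element $b_{w,\underline l,\underline a}$ frequently produces non-reduced expressions: whenever $w$ crosses the $k$-th bottom strand, you get double crossings that collapse via $\tau_i x_i\tau_i=-\tau_i$. The ``leading term by crossing count'' of $b_{w,\underline l,\underline a}\cdot D$ can therefore have strictly fewer than $\ell(w)+(k-1)$ crossings, and several source basis elements can land in the same crossing stratum—already for $k=3$, four of the six elements $w\in S_3$ produce leading terms on $\tau_{w_0}$, distinguished only by their dot content. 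So the injectivity of your leading-term assignment genuinely requires a (crossings, dots)-lexicographic refinement and a careful case analysis; it is not ``routine'' in the way the straightening in \cref{thm:X0basis} or \cref{cor:uinj} is, because there the diagram being glued adds at most one crossing per strand pair and never creates reductions. The paper's echelon-form argument sidesteps this entirely by using the $P_{k,t'}$ decomposition instead of the tightened basis, which makes the triangularity visible without any straightening.
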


\begin{proof}
Since adding black/red crossings is injective, it is enough because of \cref{lem:computezn} to show that the left $T_k^{\lambda,0}$-module map
\begin{equation}\label{eq:phi0injective}
T_k^{\lambda,0} \rightarrow \bigoplus_{t=0}^{k-1} q^{2(k-1-t)} T_k^{\lambda,0},
\quad
\tikzdiag
{
	\draw[vstdhl] (0,0) -- (0,1);
	\draw (.25,0) -- (.25,1);
	\node at(.5,.5) {\tiny $\dots$};
	\draw (.75,0) -- (.75,1);
}
\mapsto 
\left(
\tikzdiag
{
	\draw[vstdhl]node[below]{\small$\lambda$} (0,0) -- (0,1);
	\draw (.25,0) .. controls (.25,.25) and (2,.25) .. (2,.5) .. controls (2,.75) and (1,.75) .. (1,1);
	\draw (.5,0) .. controls (.5,.5) and (.25,.5) .. (.25,1) node[midway, tikzdot]{};
	\node at (.75,.1){\tiny $\dots$};
	\draw (1,0) .. controls (1,.5) and (.75,.5) .. (.75,1) node[midway, tikzdot]{};
	\draw[decoration={brace,mirror,raise=-8pt},decorate]  (.4,-.35) -- node {\small $t$} (1.1,-.35);
	\draw (1.25,0)-- (1.25,1)  node[midway, tikzdot]{};
	\node at(1.5,.1){\tiny $\dots$};
	\draw (1.75,0)-- (1.75,1)  node[midway, tikzdot]{};
}
\right)_{0 \leq t < k}
\end{equation}
is injective. Since $T_k^{\lambda,0}$ is isomorphic to the dg-enhanced nilHecke algebra of \cite{naissevaz2}, we know by the results in \cite[Proposition 2.5]{naissevaz2} that there is a decomposition
\begin{align}\label{eq:Andecomp}
T_k^{\lambda,0} &\cong  \bigoplus_{t' \geq 0} P_{k,t'} ,
&
P_{k,t'} := \bigoplus_{p \geq 0}
\tikzdiag{
	\draw[vstdhl]node[below]{\small$\lambda$} (-.25,0) -- (-.25,2);
	\draw (.25,0) .. controls (.25,.5) and (.5,.5) .. (.5,1) -- (.5,2); 
	\node at(.5,.1){\tiny $\dots$};
	\draw (.75,0) .. controls (.75,.5) and (1,.5) .. (1,1) -- (1,2); 
	\draw[decoration={brace,mirror,raise=-8pt},decorate]  (.15,-.35) -- node {\small $t'$} (.85,-.35);
	\draw (1,0) .. controls (1,.5) and (.25,.5) .. (.25,1) -- (.25,2) node[pos=.75,tikzdot]{}  node[pos=.75,xshift=-1ex,yshift=.75ex]{\small $p$};
	\draw (1.25,0) -- (1.25,2);
	\node at(1.5,.1){\tiny $\dots$};
	\draw (1.75,0) -- (1.75,2);
	\filldraw [fill=white, draw=black] (.375,1.15) rectangle (1.875,1.65) node[midway] { $\nh_{k-1}$};
	\filldraw [fill=white, draw=black] (2.25,1.25) circle (1.5ex) node { $k$};
}
\end{align}
where the box labeled $\nh_{k-1}$ is the nilHecke algebra, and the circle labeled $k$ is the algebra generated by labeled floating dots in the rightmost region (see \cite[\S2.4]{naissevaz2}). These floating dots correspond to combinations of nails, dots and crossings, giving elements that are in the (graded w.r.t. the homological degree) center of $T_k^{\lambda,0}$. 

Furthermore, the map
\[
\nh_{k-1} \rightarrow q^{2k-2} \nh_k, \quad
\tikzdiag{
	\draw(.25,1) -- (.25,2);
	\node at(.75,1.1){\tiny$\dots$};
	\node at(.75,1.9){\tiny$\dots$};
	\draw(1.25,1) -- (1.25,2);
	\filldraw [fill=white, draw=black] (0,1.25) rectangle (1.5,1.75) node[midway] { $\nh_{k-1}$};
}
\mapsto
\tikzdiag{
	\draw (0,0) .. controls (0,.25) and (1.5,.25) .. (1.5,.5) .. controls (1.5,.75) and (0,.75) .. (0,1)  -- (0,2);
	\draw (.5,0) .. controls (.5,.25) and (0,.25) .. (0,.5) node[pos=1,tikzdot]{} .. controls (0,.75) and (.5,.75) .. (.5,1) -- (.5,2);
	\node at(.5,.5){\tiny $\dots$};
	\draw (1.5,0) .. controls (1.5,.25) and (1,.25) .. (1,.5) node[pos=1,tikzdot]{} .. controls (1,.75) and (1.5,.75) .. (1.5,1) -- (1.5,2);
	\filldraw [fill=white, draw=black] (.25,1.25) rectangle (1.75,1.75) node[midway] { $\nh_{k-1}$};
}
\]
is injective (this can be deduced by sliding all dots to the bottom using \cref{eq:nhdotslide}, and then using a basis theorem as for example in \cite[Theorem 2.5]{KL1}  to see the map takes the form of a column echelon matrix with $1$ as pivots).

Then, applying \cref{eq:phi0injective} on $P_{k,t'}$ yields
\[
\tikzdiag{
	\draw[vstdhl]node[below]{\small$\lambda$} (-.25,0) -- (-.25,2);
	\draw (.25,0) .. controls (.25,.5) and (.5,.5) .. (.5,1) -- (.5,2); 
	\node at(.5,.1){\tiny $\dots$};
	\draw (.75,0) .. controls (.75,.5) and (1,.5) .. (1,1) -- (1,2); 
	\draw[decoration={brace,mirror,raise=-8pt},decorate]  (.15,-.35) -- node {\small $t'$} (.85,-.35);
	\draw (1,0) .. controls (1,.5) and (.25,.5) .. (.25,1) -- (.25,2) node[pos=.75,tikzdot]{}  node[pos=.75,xshift=-1ex,yshift=.75ex]{\small $p$};
	\draw (1.25,0) -- (1.25,2);
	\node at(1.5,.1){\tiny $\dots$};
	\draw (1.75,0) -- (1.75,2);
	\filldraw [fill=white, draw=black] (.375,1.15) rectangle (1.875,1.65) node[midway] { $\nh_{k-1}$};
	\filldraw [fill=white, draw=black] (2.25,1.25) circle (1.5ex) node { $k$};
}
\mapsto
\begin{cases}
0, & \text{if $t < t'$},\\
\tikzdiag{
	\draw (0,0) .. controls (0,.25) and (1.5,.25) .. (1.5,.5) .. controls (1.5,.75) and (0,.75) .. (0,1)  -- (0,2) node[pos=.75,tikzdot]{}  node[pos=.75,xshift=-1ex,yshift=.75ex]{\small $p$};
	\draw (.5,0) .. controls (.5,.25) and (0,.25) .. (0,.5) node[pos=1,tikzdot]{} .. controls (0,.75) and (.5,.75) .. (.5,1) -- (.5,2);
	\node at(.5,.5){\tiny $\dots$};
	\draw (1.5,0) .. controls (1.5,.25) and (1,.25) .. (1,.5) node[pos=1,tikzdot]{} .. controls (1,.75) and (1.5,.75) .. (1.5,1) -- (1.5,2);
	\filldraw [fill=white, draw=black] (.25,1.25) rectangle (1.75,1.75) node[midway] { $\nh_{k-1}$};
	\filldraw [fill=white, draw=black] (2.25,1.25) circle (1.5ex) node { $k$};
}, &  \text{if $t = t'$}, \\
\tikzdiag{
	\draw (0,0)  .. controls (0,.25) and (5.25,.25) .. (5.25,.5) .. controls (5.25,.75) and (3.5,.75) .. (3.5,1)  -- (3.5,2);
	\draw (.5,0) .. controls (.5,.5) and (0,.5) .. (0,1) node[midway, tikzdot]{} .. controls (0,1.25) and (.5,1.25) .. (.5,1.5)  -- (.5,2);
	\node at(.75,.5){\tiny$\dots$};
	\draw (1.5,0) .. controls (1.5,.5) and (1,.5) .. (1,1) node[midway, tikzdot]{}  .. controls (1,1.25) and (1.5,1.25) .. (1.5,1.5)  -- (1.5,2);
	\draw (2,0) .. controls (2,.5) and (1.5,.5) .. (1.5,1) node[midway, tikzdot]{}  .. controls (1.5,1.25) and (0,1.25) .. (0,1.5) -- (0,2) node[pos=.5,tikzdot]{}  node[pos=.5,xshift=-1ex,yshift=.75ex]{\small $p$};
	\draw (2.5,0) .. controls (2.5,.5) and (2,.5) .. (2,1) node[midway, tikzdot]{} -- (2,2);
	\node at(2.75,.5){\tiny$\dots$};
	\draw (3.5,0) .. controls (3.5,.5) and (3,.5) .. (3,1) node[midway, tikzdot]{}  -- (3,2);
	\draw (4,0) .. controls (4,.25) and (3.75,.25) .. (3.75,.5) .. controls (3.75,.75) and (4,.75) .. (4,1) node[pos=0, tikzdot]{} -- (4,2);
	\node at(4.25,.5){\tiny$\dots$};
	\draw (5,0) .. controls (5,.25) and (4.75,.25) .. (4.75,.5) .. controls (4.75,.75) and (5,.75) .. (5,1)  node[pos=0, tikzdot]{}-- (5,2);
	\filldraw [fill=white, draw=black] (.25,1.45) rectangle (5.25,1.95) node[midway] { $\nh_{k-1}$};
	\filldraw [fill=white, draw=black] (5.75,1.25) circle (1.5ex) node { $k$};
	\draw[decoration={brace,mirror,raise=-8pt},decorate]  (.4,-.35) -- node {\small $t$} (3.6,-.35);
	\draw[decoration={brace,raise=-8pt},decorate]  (.4,2.35) -- node {$t'$} (1.6,2.35); 
}
, & \text{if $t > t'$}.
\end{cases}
\]
Therefore, after decomposing $T^{\lambda,0}_k$,  \cref{eq:phi0injective} yields a column echelon form matrix with injective maps as pivots, and thus is injective.
\end{proof}

\begin{prop}\label{prop:kerequalsimg2}
We have $\ker((1\otimes \imath_k) + \varphi_k^0) = \Image(\varphi_k^1 - u)$.
\end{prop}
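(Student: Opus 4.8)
The plan is to show that the three maps $\varphi_k^1 - u$, $(1\otimes\imath_k) + \varphi_k^0$, and $u\otimes\gamma_k$ fit together into an exact complex, which, combined with the previously established exactness $\ker(u\otimes\gamma_k) = \Image((1\otimes\imath_k)+\varphi_k^0)$ from \cref{prop:kerequalsimg1} and the injectivity of $u$ and of $\varphi_k^0$, will pin down the kernel at the remaining spot. Concretely, $\cone(\varphi_k)$ is the four-term complex
\[
0 \rightarrow \lambda q^2(X_k)[1] \xrightarrow{\ \varphi_k^1 - u\ } (X\otimes_T Y^0_k) \oplus \bigoplus_{\ell,\rho}q^2(T_b^{\lambda,r}1_{k,\ell,\rho})[1] \xrightarrow{\ (1\otimes\imath_k)+\varphi_k^0\ } \lambda^{-1}X_k \rightarrow 0,
\]
where I have written the middle term as the mapping cone target of $1\otimes\imath_k$ assembled with the source of $\varphi_k^0$; the differential has the familiar upper-triangular cone shape. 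So proving \cref{prop:kerequalsimg2} amounts to showing this complex is exact in the middle, which is exactly the statement $\ker((1\otimes\imath_k)+\varphi_k^0) = \Image(\varphi_k^1 - u)$.

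First I would record that $\varphi_k^1 - u$ is injective: $u$ is injective by \cref{cor:uinj} (being in column echelon form with unit pivots), and the component landing in $\bigoplus q^2(T_b^{\lambda,r}1_{k,\ell,\rho})[1]$ is precisely $-u$, so a nonzero element in the source already has nonzero image there. Next, one checks $((1\otimes\imath_k)+\varphi_k^0)\circ(\varphi_k^1 - u) = 0$; this is the commutativity square that was proved just before \cref{thm:catdoublebraidphiiso} (the lemma stating that the diagram with rows $X\otimes_T Y^1_k \to X\otimes_T Y^0_k \to \lambda^{-1}X$ and $\lambda q^2(X_k)[1] \to \bigoplus q^2(T_b^{\lambda,r}1_{k,\ell,\rho})[1] \to 0$ commutes), so $\Image(\varphi_k^1 - u) \subseteq \ker((1\otimes\imath_k)+\varphi_k^0)$.

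For the reverse inclusion, the cleanest route is a dimension count. By \cref{thm:X0basis}, $X_k$ has known graded dimension, so $\gdim\bigl(\lambda q^2 (X_k)[1]\bigr)$ is known; the middle term's graded dimension is $\gdim(X\otimes_T Y^0_k) + \gdim\bigl(\bigoplus q^2(T_b^{\lambda,r}1_{k,\ell,\rho})[1]\bigr)$, and by \cref{lem:sesX0} together with \cref{thm:X0basis} both summands are expressible via graded dimensions of the $T_b^{\lambda,r}$-idempotent spaces; and $\gdim(\lambda^{-1}X_k)$ is again known. Since \cref{prop:kerequalsimg1} gives $\ker(u\otimes\gamma_k)=\Image((1\otimes\imath_k)+\varphi_k^0)$ and \cref{lem:varphi0kinjective} together with the injectivity of the black/red crossing operation shows $(1\otimes\imath_k)+\varphi_k^0$ has kernel contained in the $X\otimes_T Y^0_k$ factor — more precisely one verifies $(1\otimes\imath_k)+\varphi_k^0$ restricted to $X\otimes_T Y^0_k$ is just $1\otimes\imath_k$, which is injective as in the proof of \cref{prop:XoLXisXoX}, hence the kernel of the sum is the graph of $-\varphi_k^0(\varphi_k^0$ being injective$)$ over its domain — one obtains $\gdim\ker((1\otimes\imath_k)+\varphi_k^0) = \gdim(\text{middle}) - \gdim\Image((1\otimes\imath_k)+\varphi_k^0) = \gdim(\text{middle}) - \gdim(X_k)$ after accounting for the shift, and this equals $\gdim\Image(\varphi_k^1-u)=\gdim(X_k)$ (with the appropriate $\lambda q^2[1]$ shift). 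Since we already have the containment $\Image(\varphi_k^1-u)\subseteq\ker((1\otimes\imath_k)+\varphi_k^0)$ between free graded $\Bbbk$-modules of equal graded dimension (over a field $\Bbbk$, or at least graded-locally-finite pieces), equality follows.

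The main obstacle I anticipate is the bookkeeping in the dimension count: one must carefully track the homological and $(q,\lambda)$-shifts through the cone construction and through the identifications $X\otimes_T Y^j_k \cong \lambda^{-1}Y^j_k$ coming from \cref{thm:X0basis}, and verify that the alternating sum $\gdim Y^0_k - \gdim Y^1_k - \gdim(X_k)$ vanishes — but this is exactly the content of \cref{lem:sesX0}, so the identity is already available; the only real work is assembling the pieces and confirming that $(1\otimes\imath_k)+\varphi_k^0$ restricted to the $X\otimes_T Y^0_k$ summand is genuinely $1\otimes\imath_k$ (so that its image is $\ker(u\otimes\gamma_k)$ by \cref{lem:sesX0}), while the $\varphi_k^0$ part only contributes the "new" directions. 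I expect the proof to be two short paragraphs invoking \cref{prop:kerequalsimg1}, \cref{lem:varphi0kinjective}, \cref{cor:uinj}, \cref{lem:sesX0}, and \cref{thm:X0basis}, with the remaining verifications being routine.
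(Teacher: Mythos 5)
Your overall strategy---observing that $1\otimes\imath_k$ and $\varphi_k^0$ are both injective, so that $\ker\bigl((1\otimes\imath_k)+\varphi_k^0\bigr)\cong\Image(1\otimes\imath_k)\cap\Image(\varphi_k^0)$, and then finishing with a graded dimension count---is a legitimate alternative to what the paper does. The paper also reduces to identifying $\Image(1\otimes\imath_k)\cap\Image(\varphi_k^0)$, but instead of counting dimensions it writes down an explicit generating set for the intersection, graded piece by graded piece, and checks by a diagrammatic calculation that each generator lies in $\Image(\varphi_k^1-u)$. Your route avoids that calculation, so it is genuinely shorter if carried out correctly; what it costs you is an explicit description of the kernel, which the paper's approach provides for free.

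That said, there are several gaps that stop your argument as written from closing. First, you have a persistent $Y^0_k/Y^1_k$ confusion: the middle term of the complex is $(X\otimes_T Y^1_k)\oplus\bigoplus_{\ell,\rho}q^2(T_b^{\lambda,r}1_{k,\ell,\rho})[1]$, not $(X\otimes_T Y^0_k)\oplus\cdots$; the map $1\otimes\imath_k$ has \emph{domain} $X\otimes_T Y^1_k$ and \emph{codomain} $X\otimes_T Y^0_k$, so ``restricted to $X\otimes_T Y^0_k$'' is not a restriction to a summand of the source. Second, and more seriously, the dimension formula you state is wrong: you write $\gdim\Image\bigl((1\otimes\imath_k)+\varphi_k^0\bigr)=\gdim(X_k)$, but by \cref{prop:kerequalsimg1} and the surjectivity of $u\otimes\gamma_k$ this image has graded dimension $\gdim(X\otimes_T Y^0_k)-\gdim(\lambda^{-1}X_k)=\gdim(X\otimes_T Y^1_k)$, which is \emph{not} $\gdim(X_k)$ up to any shift. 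Third, the dimension count only closes if you explicitly invoke the key consequence of \cref{thm:X0basis}, namely $\gdim(1_\kappa X 1_\rho)=\lambda^{-1}\gdim(1_\kappa T^{\lambda,r}_b 1_\rho)$; once you have this, the correct chain is
\[
\gdim\ker\bigl((1\otimes\imath_k)+\varphi_k^0\bigr)=\gdim(\text{middle})-\gdim(X\otimes_T Y^1_k)=q^2 h\,\gdim\textstyle\bigoplus_{\ell,\rho}T_b^{\lambda,r}1_{k,\ell,\rho}=\lambda q^2 h\,\gdim X_k=\gdim\Image(\varphi_k^1-u),
\]
but none of these intermediate identities appear in your write-up, and the identity $\gdim X_k=\lambda^{-1}\gdim\bigoplus T^{\lambda,r}_b1_{k,\ell,\rho}$ is the load-bearing fact that must be stated. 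You should also be more careful than the parenthetical ``over a field $\Bbbk$, or at least graded-locally-finite pieces'': the containment-plus-equal-dimension argument requires $\Bbbk$ a field (or that the inclusion split), so at minimum you should say explicitly where that hypothesis is used.
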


\begin{proof}
First, recall that $ \imath_k$ is injective (as explained in \cref{sec:cofX}). Thus, both $(1\otimes \imath_k)$ and $\varphi_k^0$ are injective, and we get
\[
\ker((1\otimes \imath_k) + \varphi_k^0) \cong \Image(1\otimes \imath_k) \cap \Image(\varphi_k^0).
\]
We observe that $\Image(1\otimes \imath_k) \cap \Image(\varphi_k^{0}) \cap (X\otimes_T Y^{0,t}_k)$ is generated by
\[
	\varphi_k^{0,t}\left(
	\tikzdiagh{0}{
		\draw[vstdhl] (-.5,0) node[below]{\small $\lambda$} -- (-.5,1);
		\draw (0,0) .. controls (0,.5) and (.5,.5) .. (.5,1);
		\draw (.25,0) .. controls (.25,.5) and (.75,.5) .. (.75,1);
		\node at (.55,.15){\tiny $\dots$};
		\draw (.75,0) .. controls (.75,.5) and (1.25,.5) .. (1.25,1);
		\draw[stdhl] (1.25,0) node[below]{\small $1$} .. controls (1.25,.5) and (-.25,.5) .. (-.25,1);
	}
	\otimes \bar 1_{\ell,\rho}
	\right)
	=
	(1 \otimes  \imath_k^{t}) \left(
	\tikzdiag{
		\draw (0,-.5) .. controls (0,0) and (1,0) .. (1,.5) -- (1,1);
		\draw (.75,-.5) .. controls (.75,0) and (.25,0) .. (.25,.5)node[pos=.15, tikzdot]{}   -- (.25,1) node[pos=.5, tikzdot]{}  ;
		\node at(1,-.4) {\tiny $\dots$}; \node at(.55,.35) {\tiny $\dots$};
		\draw (1.25,-.5) .. controls (1.25,0) and (.75,0) .. (.75,.5)node[pos=.15, tikzdot]{}  -- (.75,1)  node[pos=.5, tikzdot]{}  ;
		\draw (1.5,-.5) .. controls (1.5,0) and (1.25,0) .. (1.25,.5)node[pos=.15, tikzdot]{}   -- (1.25,1);
		\node at(1.75,-.4) {\tiny $\dots$};  \node at(1.55,.15) {\tiny $\dots$};
		\draw (2,-.5) .. controls (2,0) and (1.75,0) .. (1.75,.5)node[pos=.15, tikzdot]{}   -- (1.75,1);
		\filldraw [fill=white, draw=black,rounded corners] (.875,.4) rectangle (1.875,.9) node[midway] { $z_{k-t}$};
		\draw[decoration={brace,mirror,raise=-8pt},decorate]  (.65,-.85) -- node {\small $t$} (1.35,-.85);
		\draw[stdhl] (.375,-.5) node[below]{\small $1$}   .. controls (.375,-.25) .. (-.5,0)
				.. controls (-.25,.25) ..  (-.25,.5) -- (-.25,1);
		\draw[fill=white, color=white] (-.65,0) circle (.15cm);
		\draw[vstdhl] (-.5,-.5) node[below]{\small $\lambda$} -- (-.5,1)  ; 
	}
	\otimes \bar 1_{\ell,\rho}
	\right)
	=
	\tikzdiag{
		\draw (.25,-.5) .. controls (.25,0) and (1,0) .. (1,.5)node[pos=.15, tikzdot]{}  -- (1,1);
		\draw (.75,-.5) .. controls (.75,0) and (.25,0) .. (.25,.5)node[pos=.15, tikzdot]{}   -- (.25,1) node[pos=.5, tikzdot]{}  ;
		\node at(1,-.4) {\tiny $\dots$}; \node at(.55,.35) {\tiny $\dots$};
		\draw (1.25,-.5) .. controls (1.25,0) and (.75,0) .. (.75,.5)node[pos=.15, tikzdot]{}  -- (.75,1)  node[pos=.5, tikzdot]{}  ;
		\draw (1.5,-.5) .. controls (1.5,0) and (1.25,0) .. (1.25,.5)node[pos=.15, tikzdot]{}   -- (1.25,1);
		\node at(1.75,-.4) {\tiny $\dots$}; \node at(1.55,.15) {\tiny $\dots$};
		\draw (2,-.5) .. controls (2,0) and (1.75,0) .. (1.75,.5)node[pos=.15, tikzdot]{}   -- (1.75,1);
		\filldraw [fill=white, draw=black,rounded corners] (.875,.4) rectangle (1.875,.9) node[midway] { $z_{k-t}$};
		\draw[decoration={brace,mirror,raise=-8pt},decorate]  (.65,-.85) -- node {\small $t$} (1.35,-.85);
		\draw[stdhl] (-.25,-.5) node[below]{\small $1$}   .. controls (-.25,-.25) .. (-.5,0)
				.. controls (-.25,.25) ..  (-.25,.5) -- (-.25,1);
		\draw[fill=white, color=white] (-.65,0) circle (.15cm);
		\draw[vstdhl] (-.5,-.5) node[below]{\small $\lambda$} -- (-.5,1)  ; 
	}
	\otimes \bar 1_{\ell,\rho}.
\]
and by
\[
	\varphi_k^{0,t}\left(
	\tikzdiagh{0}{
		\draw (0,0)  .. controls (0,.125) .. (-.5,.25) .. controls (.5,.75) .. (.5,1);
		\draw (.25,0) .. controls (.25,.5) and (.75,.5) .. (.75,1);
		\node at (.55,.15){\tiny $\dots$};
		\draw (.75,0) .. controls (.75,.5) and (1.25,.5) .. (1.25,1);
		\draw[stdhl] (1.25,0) node[below]{\small $1$} .. controls (1.25,.5) and (-.25,.5) .. (-.25,1);
		\draw[vstdhl] (-.5,0) node[below]{\small $\lambda$} -- (-.5,1) node[pos=.25,nail]{};
	}
	\otimes \bar 1_{\ell,\rho}
	\right)
	=
	(1 \otimes  \imath_k^{t}) \left(
	\tikzdiag{
		\draw (0,-.5) .. controls (0,-.375) .. (-.5,-.25) .. controls (1,0)  .. (1,.5) -- (1,1);
		\draw (.75,-.5) .. controls (.75,0) and (.25,0) .. (.25,.5)node[pos=.15, tikzdot]{}   -- (.25,1) node[pos=.5, tikzdot]{}  ;
		\node at(1,-.4) {\tiny $\dots$}; \node at(.55,.35) {\tiny $\dots$};
		\draw (1.25,-.5) .. controls (1.25,0) and (.75,0) .. (.75,.5)node[pos=.15, tikzdot]{}  -- (.75,1)  node[pos=.5, tikzdot]{}  ;
		\draw (1.5,-.5) .. controls (1.5,0) and (1.25,0) .. (1.25,.5)node[pos=.15, tikzdot]{}   -- (1.25,1);
		\node at(1.75,-.4) {\tiny $\dots$};  \node at(1.55,.15) {\tiny $\dots$};
		\draw (2,-.5) .. controls (2,0) and (1.75,0) .. (1.75,.5)node[pos=.15, tikzdot]{}   -- (1.75,1);
		\filldraw [fill=white, draw=black,rounded corners] (.875,.4) rectangle (1.875,.9) node[midway] { $z_{k-t}$};
		\draw[decoration={brace,mirror,raise=-8pt},decorate]  (.65,-.85) -- node {\small $t$} (1.35,-.85);
		\draw[stdhl] (.375,-.5) node[below]{\small $1$}   .. controls (.375,-.25) .. (-.5,0)
				.. controls (-.25,.25) ..  (-.25,.5) -- (-.25,1);
		\draw[fill=white, color=white] (-.65,0) circle (.15cm);
		\draw[vstdhl] (-.5,-.5) node[below]{\small $\lambda$} -- (-.5,1)  node[pos=.15,nail]{};
	}
	\otimes \bar 1_{\ell,\rho}
	\right)
	=
	\tikzdiag{
		\draw (.25,-.5) .. controls (.25,-.375) .. (-.5,-.25) .. controls (1,0)  .. (1,.5) -- (1,1);
		\draw (.75,-.5) .. controls (.75,0) and (.25,0) .. (.25,.5)node[pos=.15, tikzdot]{}   -- (.25,1) node[pos=.5, tikzdot]{}  ;
		\node at(1,-.4) {\tiny $\dots$}; \node at(.55,.35) {\tiny $\dots$};
		\draw (1.25,-.5) .. controls (1.25,0) and (.75,0) .. (.75,.5)node[pos=.15, tikzdot]{}  -- (.75,1)  node[pos=.5, tikzdot]{}  ;
		\draw (1.5,-.5) .. controls (1.5,0) and (1.25,0) .. (1.25,.5)node[pos=.15, tikzdot]{}   -- (1.25,1);
		\node at(1.75,-.4) {\tiny $\dots$}; \node at(1.55,.15) {\tiny $\dots$};
		\draw (2,-.5) .. controls (2,0) and (1.75,0) .. (1.75,.5)node[pos=.15, tikzdot]{}   -- (1.75,1);
		\filldraw [fill=white, draw=black,rounded corners] (.875,.4) rectangle (1.875,.9) node[midway] { $z_{k-t}$};
		\draw[decoration={brace,mirror,raise=-8pt},decorate]  (.65,-.85) -- node {\small $t$} (1.35,-.85);
		\draw[stdhl] (-.25,-.5) node[below]{\small $1$}   .. controls (-.25,0) .. (-.5,.25)
				.. controls (-.25,.375) ..  (-.25,.5) -- (-.25,1);
		\draw[fill=white, color=white] (-.65,.25) circle (.15cm);
		\draw[vstdhl] (-.5,-.5) node[below]{\small $\lambda$} -- (-.5,1) node[pos=.15,nail]{};
	}
	\otimes \bar 1_{\ell,\rho}.
\]
Moreover, we have
\begin{align*}
\varphi_k^{1,t}\left( 
	\tikzdiagh{0}{
		\draw (0,-.5) .. controls (0,0) and (.25,0) .. (.25,.5);
		\draw (.25,-.5) .. controls (.25,0) and (.5,0) .. (.5,.5);
		\node at(.75,.35) {\tiny $\dots$};
		\draw (.75,-.5) .. controls (.75,0) and (1,0) .. (1,.5);
		\draw[stdhl] (1.25,-.5) node[below]{\small $1$}   .. controls (1.25,-.25) .. (-.5,0)
				.. controls (-.25,.25) ..  (-.25,.5) ;
		\draw[fill=white, color=white] (-.65,0) circle (.15cm);
		\draw[vstdhl] (-.5,-.5) node[below]{\small $\lambda$} -- (-.5,.5)  ;
	}
	\otimes \bar 1_{\ell,\rho}
\right)
\ &= \ 
\tikzdiag{
		\draw (0,-.5) .. controls (0,0) and (1,0) .. (1,.5) -- (1,1);
		\draw (.75,-.5) .. controls (.75,0) and (.25,0) .. (.25,.5)node[pos=.15, tikzdot]{}   -- (.25,1) node[pos=.5, tikzdot]{}  ;
		\node at(1,-.4) {\tiny $\dots$}; \node at(.55,.35) {\tiny $\dots$};
		\draw (1.25,-.5) .. controls (1.25,0) and (.75,0) .. (.75,.5)node[pos=.15, tikzdot]{}  -- (.75,1)  node[pos=.5, tikzdot]{}  ;
		\draw (1.5,-.5) .. controls (1.5,0) and (1.25,0) .. (1.25,.5)node[pos=.15, tikzdot]{}   -- (1.25,1);
		\node at(1.75,-.4) {\tiny $\dots$}; \node at(1.55,.15) {\tiny $\dots$};
		\draw (2,-.5) .. controls (2,0) and (1.75,0) .. (1.75,.5)node[pos=.15, tikzdot]{}   -- (1.75,1);
		\filldraw [fill=white, draw=black,rounded corners] (.875,.4) rectangle (1.875,.9) node[midway] { $z_{k-t}$};
		\draw[decoration={brace,mirror,raise=-8pt},decorate]  (.65,-.85) -- node {\small $t$} (1.35,-.85);
		\draw[stdhl] (.375,-.5) node[below]{\small $1$}   .. controls (.375,-.25) .. (-.5,0)
				.. controls (-.25,.25) ..  (-.25,.5) -- (-.25,1);
		\draw[fill=white, color=white] (-.65,0) circle (.15cm);
		\draw[vstdhl] (-.5,-.5) node[below]{\small $\lambda$} -- (-.5,1)  ; 
	}
	\otimes \bar 1_{\ell,\rho},
\\
u\left(
	\tikzdiagh{0}{
		\draw (0,-.5) .. controls (0,0) and (.25,0) .. (.25,.5);
		\draw (.25,-.5) .. controls (.25,0) and (.5,0) .. (.5,.5);
		\node at(.75,.35) {\tiny $\dots$};
		\draw (.75,-.5) .. controls (.75,0) and (1,0) .. (1,.5);
		\draw[stdhl] (1.25,-.5) node[below]{\small $1$}   .. controls (1.25,-.25) .. (-.5,0)
				.. controls (-.25,.25) ..  (-.25,.5) ;
		\draw[fill=white, color=white] (-.65,0) circle (.15cm);
		\draw[vstdhl] (-.5,-.5) node[below]{\small $\lambda$} -- (-.5,.5)  ;
	}
	\otimes \bar 1_{\ell,\rho}
	\right)
\ &= \ 
\tikzdiagh{0}{
		\draw[vstdhl] (-.5,0) node[below]{\small $\lambda$} -- (-.5,1);
		\draw (0,0) .. controls (0,.5) and (.5,.5) .. (.5,1);
		\draw (.25,0) .. controls (.25,.5) and (.75,.5) .. (.75,1);
		\node at (.55,.15){\tiny $\dots$};
		\draw (.75,0) .. controls (.75,.5) and (1.25,.5) .. (1.25,1);
		\draw[stdhl] (1.25,0) node[below]{\small $1$} .. controls (1.25,.5) and (-.25,.5) .. (-.25,1);
	}
	\otimes \bar 1_{\ell,\rho}.
\end{align*}
The case with a nail is similar, 
concluding the proof. 
\end{proof}

\begin{proof}[Proof of \cref{thm:catdoublebraidphiiso}]
Since $\varphi_k^1 - u$ is injective, and $u \otimes \gamma_k$ is surjective, and by \cref{prop:kerequalsimg1} and \cref{prop:kerequalsimg2}, we conclude that $\cone(\varphi_k)$ is acyclic for all $k$. Consequently,  $\varphi$ is a quasi-isomorphism.
\end{proof}

\subsubsection{The bimodule map $\tilde \varphi$}\label{sec:proofofbimodulemap}

\begin{citethm}{thm:phiisAinfty}
The map $\varphi$ is a map of $\bZ^2$-graded $(T^{\lambda,r},0)$-$(T^{\lambda,r},0)$-$A_\infty$-bimodules. 
\end{citethm}

The goal of this section is to prove \cref{thm:phiisAinfty}. To this end, we first prove that the map $\tilde \varphi^0 : q^2 (T_b^{\lambda,r})[1] \rightarrow X \otimes_T X$ is a map of bimodules.

\begin{prop}\label{prop:tildevarphi0rec}
We have
\[
\tilde \varphi^0 \left( 
\tikzdiag{
	\draw[vstdhl] (0,0) node[below]{\small$\lambda$} --  (0,1);
	\draw (.5,0)  --  (.5,1);
	\node at(1,.5){\tiny $\dots$};
	\draw (1.5,0)  --  (1.5,1);
	\draw[stdhl] (2,0) node[below]{\small$1$} --  (2,1);
	\draw[decoration={brace,mirror,raise=-8pt},decorate]  (.4,-.35) -- node {$k$} (1.6,-.35);
}
\otimes \bar 1_{\ell,\rho} 
\right) =  
(-1)^k
\tikzdiagh{0}{
  \draw[vstdhl] (0,0) node[below]{\small$\lambda$} --  (0,1);
  \draw (.5,0)  --  (.5,1);
  \node at(1,.5){\tiny $\dots$};
  \draw (1.5,0)  --  (1.5,1);
  \draw[stdhl] (2,0) node[below]{\small$1$} --  (2,1);
  \node at(1,.1){\tiny $\dots$};
  \filldraw [fill=white, draw=black] (-.15,.25) rectangle (2.15,.75) node[midway] { $\tilde\varphi(k)$};
  \node at(1,.9){\tiny $\dots$};
}
\otimes \bar 1_{\ell,\rho} ,
\]
where 
\begin{align*}
\tilde\varphi(0) := \ 
\tikzdiagh{0}{
	\draw[vstdhl] (0,0) node[below]{\small$\lambda$} --  (0,1);
	\draw[stdhl] (1,0) node[below]{\small$1$} --  (1,1);
	\filldraw [fill=white, draw=black] (-.15,.25) rectangle (1.15,.75) node[midway] { $\tilde\varphi(0)$};
}
\ &:= \ 0,
\\
 \tilde\varphi(1) := \ 
 \tikzdiagh{0}{
	\draw (.5,0) -- (.5,1);
	\draw[vstdhl] (0,0) node[below]{\small$\lambda$} --  (0,1);
	\draw[stdhl] (1,0) node[below]{\small$1$} --  (1,1);
	\filldraw [fill=white, draw=black] (-.15,.25) rectangle (1.15,.75) node[midway] { $\tilde\varphi(1)$};
}
\ &:= \ 
\tikzdiagh{0}{
	\draw (.5,-.5) .. controls (.5,-.375) .. (0,-.125) .. controls (1,.25) .. (1,.5) .. controls (1,1) and (.5,1) .. (.5,1.5);
	\draw[stdhl] (1,-.5)  node[below]{\small$1$} .. controls (1,0) .. (0,.25) ..controls (.5,.25) and (.5,.75) .. (0,.75) .. controls (1,1) .. (1,1.5);
	\draw[fill=white, color=white] (-.25,.25) circle (.2cm);
	\draw[fill=white, color=white] (-.25,.75) circle (.2cm);
	\draw[vstdhl] (0,-.5) node[below]{\small$\lambda$} --  (0,1.5) node[pos=.175,nail]{};
}
\ - \ 
\tikzdiagh[yscale=-1]{0}{
	\draw (.5,-.5) .. controls (.5,-.375) .. (0,-.125) .. controls (1,.25) .. (1,.5) .. controls (1,1) and (.5,1) .. (.5,1.5);
	\draw[stdhl] (1,-.5) .. controls (1,0) .. (0,.25) ..controls (.5,.25) and (.5,.75) .. (0,.75) .. controls (1,1) .. (1,1.5) node[below]{\small$1$};
	\draw[fill=white, color=white] (-.25,.25) circle (.2cm);
	\draw[fill=white, color=white] (-.25,.75) circle (.2cm);
	\draw[vstdhl] (0,-.5)  --  (0,1.5) node[below]{\small$\lambda$} node[pos=.175,nail]{};
}
\\
\tilde\varphi(t+2) := \ 
\tikzdiagh[xscale=.75]{0}{
	\draw (.5,-.5)  --  (.5,1.5);
	\node at(1,.5){\tiny $\dots$};
	\draw (1.5,-.5)  --  (1.5,1.5);
	\draw (2,-.5)  --  (2,1.5);
	\draw (2.5,-.5)  --  (2.5,1.5);
	\draw[vstdhl] (0,-.5) node[below]{\small$\lambda$} --  (0,1.5);
	\draw[stdhl] (3,-.5) node[below]{\small$1$} --  (3,1.5);
	\node at(1,.1){\tiny $\dots$};
	\filldraw [fill=white, draw=black] (-.15,.25) rectangle (3.15,.75) node[midway] { $\tilde\varphi(t+2)$};
	\node at(1,.9){\tiny $\dots$};
}
\ &:= \ 
\tikzdiagh[xscale=.75]{0}{
	\draw (.5,-.5)  --  (.5,1.5);
	\node at(1,.5){\tiny $\dots$};
	\draw (1.5,-.5)  --  (1.5,1.5);
	\draw (2,-.5) -- (2,0)  --  (2,1) .. controls (2,1.25) and (2.5,1.25) .. (2.5,1.5);
	\draw (2.5, -.5).. controls (2.5,-.25) and(3,-.25) ..  (3,0)  --  (3,1) .. controls (3,1.25) and (2,1.25) .. (2,1.5);
	\draw[vstdhl] (0,-.5) node[below]{\small$\lambda$} --  (0,1.5);
	\draw[stdhl] (3,-.5) node[below]{\small$1$}  .. controls (3,-.25) and (2.5,-.25) ..(2.5,0) --  (2.5,1) .. controls (2.5,1.25) and (3,1.25) .. (3,1.5);
	\node at(1,.1){\tiny $\dots$};
	\filldraw [fill=white, draw=black] (-.15,.25) rectangle (2.65,.75) node[midway] { $\tilde\varphi(t+1)$};
	\node at(1,.9){\tiny $\dots$};
}
\ + \ 
\tikzdiagh[xscale=.75]{0}{
	\draw (.5,-.5)  --  (.5,1.5);
	\node at(1,.5){\tiny $\dots$};
	\draw (1.5,-.5)  --  (1.5,1.5);
	\draw (2.5,-.5) .. controls (2.5,-.25) and (2,-.25) .. (2,0)  --  (2,1) --  (2,1.5);
	\draw (2, -.5).. controls (2,-.25) and(3,-.25) ..  (3,0)  --  (3,1) .. controls (3,1.25) and (2.5,1.25) .. (2.5,1.5);
	\draw[vstdhl] (0,-.5) node[below]{\small$\lambda$} --  (0,1.5);
	\draw[stdhl] (3,-.5) node[below]{\small$1$}  .. controls (3,-.25) and (2.5,-.25) ..(2.5,0) --  (2.5,1) .. controls (2.5,1.25) and (3,1.25) .. (3,1.5);
	\node at(1,.1){\tiny $\dots$};
	\filldraw [fill=white, draw=black] (-.15,.25) rectangle (2.65,.75) node[midway] { $\tilde\varphi(t+1)$};
	\node at(1,.9){\tiny $\dots$};
}
\ +  \ 
\tikzdiagh[xscale=.75]{0}{
	\draw (.5,-.5)  --  (.5,1.5);
	\node at(1,.5){\tiny $\dots$};
	\draw (1.5,-.5)  --  (1.5,1.5);
	\draw (2.5,-.5) .. controls (2.5,-.25) and (2,-.25) .. (2,0)  --  (2,1) .. controls (2,1.25) and (2.5,1.25) .. (2.5,1.5);
	\draw (2, -.5).. controls (2,-.25) and(3,-.25) ..  (3,0)  --  (3,1) node[midway,tikzdot]{} .. controls (3,1.25) and (2,1.25) .. (2,1.5);
	\draw[vstdhl] (0,-.5) node[below]{\small$\lambda$} --  (0,1.5);
	\draw[stdhl] (3,-.5) node[below]{\small$1$}  .. controls (3,-.25) and (2.5,-.25) ..(2.5,0) --  (2.5,1) .. controls (2.5,1.25) and (3,1.25) .. (3,1.5);
	\node at(1,.1){\tiny $\dots$};
	\filldraw [fill=white, draw=black] (-.15,.25) rectangle (2.65,.75) node[midway] { $\tilde\varphi(t+1)$};
	\node at(1,.9){\tiny $\dots$};
} 
\end{align*}
for all $t \geq 0$. 
\end{prop}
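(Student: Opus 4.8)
The plan is to prove \cref{prop:tildevarphi0rec} by unwinding the definition $\tilde\varphi^0 = (1\otimes\gamma)\circ\varphi^0$ and carrying out the resulting diagrammatic simplification, with the recursion for the boxes $\tilde\varphi(k)$ extracted from the recursion \eqref{eq:defzn} for the elements $z_t$. This is a direct computation rather than a conceptual argument; the content is in keeping track of the many terms and their signs.

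First I would expand $\varphi^0_k(1_{k,\ell,\rho}\otimes\bar 1_{\ell,\rho})$ using the formulas for ${\varphi^0_k}'$ and $\varphi^{0,t}_k$ from \cref{sec:catTLB}: this is a sum, indexed by $t$, of diagrams in which a box $z_{k-t}$ is glued onto $1_{k,\ell,\rho}$, landing in $X\otimes_T{Y'}^0_k$ and in the summands $X\otimes_T Y^{0,t}_k$. I would then apply $1\otimes\gamma_k$, using that $\gamma'_k$ glues a single double-braiding generator on the ${Y'}^0_k$-part while $\gamma^t_k$ glues a \emph{nailed} double-braiding generator on the $Y^{0,t}_k$-part (see \cref{sec:cofX}). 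Sliding dots and absorbing crossings by \cref{eq:nhdotslide}, \cref{eq:nhR2andR3}, \cref{eq:redR2} and \cref{eq:nailslidedcross}, and pulling each box $z_{k-t}$ into position, one checks that the image is supported on the blue strand together with the first $k$ black strands and the first red strand, leaving $\bar 1_{\ell,\rho}$ untouched; the overall factor $(-1)^k$ collects the signs of the crossings used to reposition $z_{k-t}$ together with the homological-degree signs produced by the nails.

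Next I would establish the recursion for $\tilde\varphi(k)$ by induction on $k$. For $k=0$ the sums defining $\varphi^0_0$ are empty, so $\tilde\varphi(0)=0$; for $k=1$ only ${\varphi^0_1}'$ and $\varphi^{0,0}_1$ contribute and each $z_{1-t}$ is a single strand, and combining the two resulting nailed closures with the help of \eqref{eq:relNail} yields precisely the difference of nailed diagrams displayed as $\tilde\varphi(1)$. For the inductive step, the key point is that passing from $1_{k+1,\ell,\rho}$ to $1_{k+2,\ell,\rho}$ turns each box $z_{k+1-t}$ into $z_{k+2-t}$, and expanding the latter by \eqref{eq:defzn} splits every term of $\varphi^0_{k+2}$ into two — one from each summand of \eqref{eq:defzn}, the dot in the second summand being brought into place via \cref{eq:nhdotslide} — together with the extra boundary term $t=k+1$. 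Regrouping these and recognising the $(k+1)$-strand data produces exactly the three-term recursion for $\tilde\varphi(k+2)$ in terms of $\tilde\varphi(k+1)$.

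The step I expect to be the main obstacle is the bookkeeping of the nested summations and their signs: one has a sum over $t$, and inside each box $z_{k-t}$ a further recursive expansion, and one must reconcile the global factor $(-1)^k$, the signs in \eqref{eq:relNail} governing how nails commute, and the Koszul signs coming from the homological degree of the nails, so that the ${Y'}^0$- and $Y^{0,t}$-contributions assemble into the single box $\tilde\varphi(k)$ rather than a genuinely mixed expression. Pinning down the base case $k=1$ — where the contribution of ${\varphi^0_1}'$ must be combined with that of $\varphi^{0,0}_1$ after applying $\gamma$, using \eqref{eq:relNail} — is the delicate point, since once it is fixed the whole family $\tilde\varphi(k)$ is forced by the recursion and the inductive step is then a routine check with \cref{eq:defzn}, \cref{eq:nhdotslide} and \cref{lem:computezn}.
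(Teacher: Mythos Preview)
Your proposal is correct and follows essentially the same approach as the paper: both proceed by induction on $k$, with the inductive step driven by the recursion \eqref{eq:defzn} for $z_t$, which is precisely what makes the three-term recursion for $\tilde\varphi(k+2)$ match the passage from the explicit sum for $k+1$ to that for $k+2$. One small correction: the global factor $(-1)^k$ is not produced by sign-collection from crossings and nails but is already built into the definition $\varphi = \sum_k (-1)^k \varphi_k$; the paper's proof simply carries it along, and the induction then compares the bare sums.
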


\begin{proof}
Recall that $\tilde \varphi^0 := (1 \otimes \gamma) \circ \varphi^0$. Then, we obtain
\begin{align}
\label{eq:gammavarphi0rec}
\begin{split}
(1 \otimes \gamma) \circ \varphi^0 &\left( 
\tikzdiagh{0}{
	\draw[vstdhl] (0,0) node[below]{\small$\lambda$} --  (0,1);
	\draw (.5,0)  --  (.5,1);
	\node at(1,.5){\tiny $\dots$};
	\draw (1.5,0)  --  (1.5,1);
	\draw[stdhl] (2,0) node[below]{\small$1$} --  (2,1);
	\draw[decoration={brace,mirror,raise=-8pt},decorate]  (.4,-.35) -- node {$k$} (1.6,-.35);
}
\otimes \bar 1_{\ell,\rho} 
\right) 
\\
\ &= 
(-1)^k
\sum_{t = 0}^{k-1}
\tikzdiag{
		\draw (.5,-.5) .. controls (.5,0) and (0,0) .. (0,.5) .. controls (0,.75) and (.75,.75) .. (.75,1) -- (.75, 1.5);
		\draw (.75,-.5) .. controls (.75,.25) and (0,.75) .. (0,1) -- (0,1.5) node[midway, tikzdot]{};
		\node at(1,-.5) {\tiny $\dots$};
		\draw (1.25,-.5) .. controls (1.25,.25) and (.5,.75) .. (.5,1) -- (.5,1.5) node[midway, tikzdot]{};
		\draw (1.5,-.5) .. controls (1.5,.25) and (1,.75) .. (1,1) -- (1,1.5);
		\node at(1.75,-.5) {\tiny $\dots$};
		\draw (2,-.5) .. controls (2,.25) and (1.5,.75) .. (1.5,1) -- (1.5,1.5);
		\filldraw [fill=white, draw=black,rounded corners] (.625,.9) rectangle (1.625,1.4) node[midway] { $z_{k-t}$};
		\draw[stdhl] (0,-.5)   .. controls (0,-.25) .. (-.5,0)
				.. controls (2,.25) ..  (2,.5) -- (2,1.5) ;
		\draw[fill=white, color=white] (-.6,0) circle (.1cm);
		\draw[vstdhl] (-.5,-.5) -- (-.5,1.5)  ;
		%
		%
		%
		\draw (0,-2) .. controls (0,-1.5) and (.75,-1.5) .. (.75,-.5);
		\draw (.5,-2) .. controls (.5,-1.5) and (1.25,-1.5) .. (1.25,-.5);
		\draw (.75,-2) .. controls (.75,-1.75) .. (-.5,-1.5) .. controls (.5,-1) ..  (.5,-.5);
		\draw (1,-2) .. controls (1,-1.5) and (1.5,-1.5) .. (1.5,-.5);
		\draw (1.5,-2) .. controls (1.5,-1.5) and (2,-1.5) .. (2,-.5);
		\draw[decoration={brace,mirror,raise=-8pt},decorate]  (-.1,-2.35) -- node {$t$} (.6,-2.35);
		\draw[stdhl] (2,-2) node[below]{\small $1$}  -- (2,-1.5) .. controls (2,-1.25) .. (-.5,-1)
				.. controls (0,-.75) .. (0,-.5);
		\draw[fill=white, color=white] (-.6,-1) circle (.1cm);
		\draw[vstdhl] (-.5,-2)  node[below]{\small $\lambda$} -- (-.5,-.5) node[pos=.33,nail]{};
	}
	\ \otimes \bar 1_{\ell,\rho}
\ - \ 
\tikzdiag{
		\draw (.5,-.5) .. controls (.5,.25) and (0,.25) .. (0,1.5) node[pos=.9,tikzdot]{}; 
		\node at(.75,-.5) {\tiny $\dots$};
		\draw (1,-.5) .. controls (1,.25) and (.5,.25) .. (.5,1.5) node[pos=.9,tikzdot]{}; 
		\draw (1.25,-.5) .. controls (1.25,.25) .. (-.5,.375) .. controls (.75,.5) .. (.75,1.5);
		\draw (1.5,-.5) .. controls (1.5,.5) and (1,.5) .. (1,1.5); 
		\node at(1.75,-.5) {\tiny $\dots$};
		\draw (2,-.5) .. controls (2,.5) and (1.5,.5) .. (1.5,1.5); 
		\filldraw [fill=white, draw=black,rounded corners] (.625,.9) rectangle (1.625,1.4) node[midway] { $z_{k-t}$};
		\draw[stdhl] (0,-.5)   .. controls (0,-.25) .. (-.5,0)
			.. controls (2,.0) ..  (2,1) -- (2,1.5) ;
		\draw[fill=white, color=white] (-.6,0) circle (.1cm);
		\draw[vstdhl] (-.5,-.5) -- (-.5,1.5)  node[pos=.45,nail]{};
		%
		%
		%
		\draw (0,-2) .. controls (0,-1.5) and (.5,-1.5) .. (.5,-.5);
		\draw (.5,-2) .. controls (.5,-1.5) and (1,-1.5) .. (1,-.5);
		\draw (.75,-2) .. controls (.75,-1.5) and (1.25,-1.5)  ..  (1.25,-.5);
		\draw (1,-2) .. controls (1,-1.5) and (1.5,-1.5) .. (1.5,-.5);
		\draw (1.5,-2) .. controls (1.5,-1.5) and (2,-1.5) .. (2,-.5);
		\draw[decoration={brace,mirror,raise=-8pt},decorate]  (-.1,-2.35) -- node {$t$} (.6,-2.35);
		\draw[stdhl] (2,-2) node[below]{\small $1$}  -- (2,-1.5) .. controls (2,-1.25) .. (-.5,-1)
				.. controls (0,-.75) .. (0,-.5);
		\draw[fill=white, color=white] (-.6,-1) circle (.1cm);
		\draw[vstdhl] (-.5,-2)  node[below]{\small $\lambda$} -- (-.5,-.5);
	}
	\ \otimes \bar 1_{\ell,\rho}.
\end{split}
\end{align}
We prove the statement by induction on $k$. The claim is clearly true for $k=0$ and $k=1$. Suppose it is true for $k+1$, and we will show it is true for $k+2$. 

By definition of $\tilde\varphi(k+2)$ and using \cref{eq:nhdotslide}, we have
\begin{equation}\label{eq:altvarphik}
\tikzdiagh[xscale=.75]{0}{
	\draw (.5,-.5)  --  (.5,1.5);
	\node at(1,.5){\tiny $\dots$};
	\draw (1.5,-.5)  --  (1.5,1.5);
	\draw (2,-.5)  --  (2,1.5);
	\draw (2.5,-.5)  --  (2.5,1.5);
	\draw[vstdhl] (0,-.5) node[below]{\small$\lambda$} --  (0,1.5);
	\draw[stdhl] (3,-.5) node[below]{\small$1$} --  (3,1.5);
	\node at(1,.1){\tiny $\dots$};
	\filldraw [fill=white, draw=black] (-.15,.25) rectangle (3.15,.75) node[midway] { $\tilde\varphi(k+2)$};
	\node at(1,.9){\tiny $\dots$};
}
\ = \ 
\tikzdiagh[xscale=.75]{0}{
	\draw (.5,-.5)  --  (.5,1.5);
	\node at(1,.5){\tiny $\dots$};
	\draw (1.5,-.5)  --  (1.5,1.5);
	\draw (2,-.5) -- (2,0)  --  (2,1) .. controls (2,1.25) and (2.5,1.25) .. (2.5,1.5);
	\draw (2.5, -.5).. controls (2.5,-.25) and(3,-.25) ..  (3,0)  --  (3,1) .. controls (3,1.25) and (2,1.25) .. (2,1.5);
	\draw[vstdhl] (0,-.5) node[below]{\small$\lambda$} --  (0,1.5);
	\draw[stdhl] (3,-.5) node[below]{\small$1$}  .. controls (3,-.25) and (2.5,-.25) ..(2.5,0) --  (2.5,1) .. controls (2.5,1.25) and (3,1.25) .. (3,1.5);
	\node at(1,.1){\tiny $\dots$};
	\filldraw [fill=white, draw=black] (-.15,.25) rectangle (2.65,.75) node[midway] { $\tilde\varphi(k+1)$};
	\node at(1,.9){\tiny $\dots$};
}
\ +  \ 
\tikzdiagh[xscale=.75]{0}{
	\draw (.5,-.5)  --  (.5,1.5);
	\node at(1,.5){\tiny $\dots$};
	\draw (1.5,-.5)  --  (1.5,1.5);
	\draw (2.5,-.5) .. controls (2.5,-.25) and (2,-.25) .. (2,0)  --  (2,1) .. controls (2,1.25) and (2.5,1.25) .. (2.5,1.5);
	\draw (2, -.5).. controls (2,-.25) and(3,-.25) ..  (3,0)  --  (3,1) .. controls (3,1.25) and (2,1.25) .. (2,1.5)  node[pos=.8,tikzdot]{};
	\node at(1,.9){\tiny $\dots$};
	\draw[vstdhl] (0,-.5) node[below]{\small$\lambda$} --  (0,1.5);
	\draw[stdhl] (3,-.5) node[below]{\small$1$}  .. controls (3,-.25) and (2.5,-.25) ..(2.5,0) --  (2.5,1) .. controls (2.5,1.25) and (3,1.25) .. (3,1.5);
	\node at(1,.1){\tiny $\dots$};
	\filldraw [fill=white, draw=black] (-.15,.25) rectangle (2.65,.75) node[midway] { $\tilde\varphi(k+1)$};
} 
\end{equation}
Applying the induction hypothesis on \cref{eq:altvarphik}, we get
\begin{align*}
&\tikzdiagh[xscale=.75]{0}{
	\draw (.5,-.5)  --  (.5,1.5);
	\node at(1,.5){\tiny $\dots$};
	\draw (1.5,-.5)  --  (1.5,1.5);
	\draw (2,-.5)  --  (2,1.5);
	\draw (2.5,-.5)  --  (2.5,1.5);
	\draw[vstdhl] (0,-.5) node[below]{\small$\lambda$} --  (0,1.5);
	\draw[stdhl] (3,-.5) node[below]{\small$1$} --  (3,1.5);
	\node at(1,.1){\tiny $\dots$};
	\filldraw [fill=white, draw=black] (-.15,.25) rectangle (3.15,.75) node[midway] { $\tilde\varphi(k+2)$};
	\node at(1,.9){\tiny $\dots$};
}
\\
\ &= \sum_{t=0}^{k-1} \  
\left(
\tikzdiag{
		\draw (2, -2.5) .. controls (2,-2.25) and (2.5,-2.25) .. (2.5,-2) -- (2.5,1.5) .. controls (2.5,1.75) and (1.75,1.75) .. (1.75,2);
		\draw (.5,-.5) .. controls (.5,0) and (0,0) .. (0,.5) .. controls (0,.75) and (.75,.75) .. (.75,1) -- (.75, 1.5) -- (.75,2);
		\draw (.75,-.5) .. controls (.75,.25) and (0,.75) .. (0,1) -- (0,1.5) node[midway, tikzdot]{}  -- (0,2);
		\node at(1,-.5) {\tiny $\dots$};
		\draw (1.25,-.5) .. controls (1.25,.25) and (.5,.75) .. (.5,1) -- (.5,1.5) node[midway, tikzdot]{}  -- (.5,2);
		\draw (1.5,-.5) .. controls (1.5,.25) and (1,.75) .. (1,1) -- (1,1.5)  -- (1,2);
		\node at(1.75,-.5) {\tiny $\dots$};
		\draw (2,-.5) .. controls (2,.25) and (1.5,.75) .. (1.5,1) -- (1.5,1.5)  -- (1.5,2);
		\draw (2.25,-.5) .. controls (2.25,.25) and (1.75,.75) .. (1.75,1) -- (1.75,1.5) .. controls (1.75,1.75) and (2,1.75) .. (2,2);
		\filldraw [fill=white, draw=black,rounded corners] (.625,.9) rectangle (1.875,1.4) node[midway] { $z_{k+1-t}$};
		\draw[stdhl] (0,-.5)   .. controls (0,-.25) .. (-.5,0)
				.. controls (2.25,.25) ..  (2.25,.5) -- (2.25,1.5)
				.. controls (2.25,1.75) and (2.5,1.75) .. (2.5,2) ;
		\draw[fill=white, color=white] (-.6,0) circle (.1cm);
		\draw[vstdhl] (-.5,-.5) -- (-.5,2)  ;
		\draw (0,-2.5) -- (0,-2) .. controls (0,-1.5) and (.75,-1.5) .. (.75,-.5);
		\draw (.5,-2.5) -- (.5,-2) .. controls (.5,-1.5) and (1.25,-1.5) .. (1.25,-.5);
		\draw (.75,-2.5) -- (.75,-2) .. controls (.75,-1.75) .. (-.5,-1.5) .. controls (.5,-1) ..  (.5,-.5);
		\draw (1,-2.5) -- (1,-2) .. controls (1,-1.5) and (1.5,-1.5) .. (1.5,-.5);
		\draw (1.5,-2.5) -- (1.5,-2) .. controls (1.5,-1.5) and (2,-1.5) .. (2,-.5);
		\draw (1.75,-2.5) -- (1.75,-2) .. controls (1.75,-1.5) and (2.25,-1.5) .. (2.25,-.5);
		\draw[decoration={brace,mirror,raise=-8pt},decorate]  (-.1,-2.85) -- node {$t$} (.6,-2.85);
		\draw[stdhl] (2.5,-2.5 )node[below]{\small $1$}  .. controls (2.5,-2.25) and (2.25,-2.25) .. (2.25,-2)  -- (2.25,-1.5) .. controls (2.25,-1.25) .. (-.5,-1)
				.. controls (0,-.75) .. (0,-.5);
		\draw[fill=white, color=white] (-.6,-1) circle (.1cm);
		\draw[vstdhl] (-.5,-2.5)  node[below]{\small $\lambda$}-- (-.5,-2) -- (-.5,-.5) node[pos=.33,nail]{};
	}
\ + \ 
\tikzdiag{
		\draw (1.75, -2.5) .. controls (1.75,-2.25) and (2.5,-2.25) .. (2.5,-2) -- (2.5,1.5) .. controls (2.5,1.75) and (1.75,1.75) .. (1.75,2) node[pos=.8, tikzdot]{};
		\draw (.5,-.5) .. controls (.5,0) and (0,0) .. (0,.5) .. controls (0,.75) and (.75,.75) .. (.75,1) -- (.75, 1.5) -- (.75,2);
		\draw (.75,-.5) .. controls (.75,.25) and (0,.75) .. (0,1) -- (0,1.5) node[midway, tikzdot]{}  -- (0,2);
		\node at(1,-.5) {\tiny $\dots$};
		\draw (1.25,-.5) .. controls (1.25,.25) and (.5,.75) .. (.5,1) -- (.5,1.5) node[midway, tikzdot]{}  -- (.5,2);
		\draw (1.5,-.5) .. controls (1.5,.25) and (1,.75) .. (1,1) -- (1,1.5)  -- (1,2);
		\node at(1.75,-.5) {\tiny $\dots$};
		\draw (2,-.5) .. controls (2,.25) and (1.5,.75) .. (1.5,1) -- (1.5,1.5)  -- (1.5,2);
		\draw (2.25,-.5) .. controls (2.25,.25) and (1.75,.75) .. (1.75,1) -- (1.75,1.5) .. controls (1.75,1.75) and (2,1.75) .. (2,2);
		\filldraw [fill=white, draw=black,rounded corners] (.625,.9) rectangle (1.875,1.4) node[midway] { $z_{k+1-t}$};
		\draw[stdhl] (0,-.5)   .. controls (0,-.25) .. (-.5,0)
				.. controls (2.25,.25) ..  (2.25,.5) -- (2.25,1.5)
				.. controls (2.25,1.75) and (2.5,1.75) .. (2.5,2) ;
		\draw[fill=white, color=white] (-.6,0) circle (.1cm);
		\draw[vstdhl] (-.5,-.5) -- (-.5,2)  ;
		\draw (0,-2.5) -- (0,-2) .. controls (0,-1.5) and (.75,-1.5) .. (.75,-.5);
		\draw (.5,-2.5) -- (.5,-2) .. controls (.5,-1.5) and (1.25,-1.5) .. (1.25,-.5);
		\draw (.75,-2.5) -- (.75,-2) .. controls (.75,-1.75) .. (-.5,-1.5) .. controls (.5,-1) ..  (.5,-.5);
		\draw (1,-2.5) -- (1,-2) .. controls (1,-1.5) and (1.5,-1.5) .. (1.5,-.5);
		\draw (1.5,-2.5) -- (1.5,-2) .. controls (1.5,-1.5) and (2,-1.5) .. (2,-.5);
		\draw (2,-2.5) .. controls (2,-2.25) and (1.75,-2.25) .. (1.75,-2) .. controls (1.75,-1.5) and (2.25,-1.5) .. (2.25,-.5);
		\draw[decoration={brace,mirror,raise=-8pt},decorate]  (-.1,-2.85) -- node {$t$} (.6,-2.85);
		\draw[stdhl] (2.5,-2.5 )node[below]{\small $1$}  .. controls (2.5,-2.25) and (2.25,-2.25) .. (2.25,-2)  -- (2.25,-1.5) .. controls (2.25,-1.25) .. (-.5,-1)
				.. controls (0,-.75) .. (0,-.5);
		\draw[fill=white, color=white] (-.6,-1) circle (.1cm);
		\draw[vstdhl] (-.5,-2.5)  node[below]{\small $\lambda$}-- (-.5,-2) -- (-.5,-.5) node[pos=.33,nail]{};
	}
\right)
\ + \ 
\tikzdiag{
		\draw (1, -2.5) .. controls (1,-2.25) and (1.5,-2.25) .. (1.5,-2) -- (1.5,1.5) .. controls (1.5,1.75) and (.75,1.75) .. (.75,2);
		\draw (.5,-.5) .. controls (.5,0) and (0,0) .. (0,.5) .. controls (0,.75) and (.75,.75) .. (.75,1) -- (.75, 1.5) .. controls (.75,1.75) and (1,1.75) .. (1,2);
		\draw (.75,-.5) .. controls (.75,.25) and (0,.75) .. (0,1) -- (0,1.5) node[midway, tikzdot]{}  -- (0,2);
		\node at(1,-.5) {\tiny $\dots$};
		\draw (1.25,-.5) .. controls (1.25,.25) and (.5,.75) .. (.5,1) -- (.5,1.5) node[midway, tikzdot]{}  -- (.5,2);
		%
		%
		%
		\draw[stdhl] (0,-.5)   .. controls (0,-.25) .. (-.5,0)
				.. controls (1.25,.25) ..  (1.25,.5) -- (1.25,1.5)
				.. controls (1.25,1.75) and (1.5,1.75) .. (1.5,2) ;
		\draw[fill=white, color=white] (-.6,0) circle (.1cm);
		\draw[vstdhl] (-.5,-.5) -- (-.5,2)  ;
		\draw (0,-2.5) -- (0,-2) .. controls (0,-1.5) and (.75,-1.5) .. (.75,-.5);
		\draw (.5,-2.5) -- (.5,-2) .. controls (.5,-1.5) and (1.25,-1.5) .. (1.25,-.5);
		\draw (.75,-2.5) -- (.75,-2) .. controls (.75,-1.75) .. (-.5,-1.5) .. controls (.5,-1) ..  (.5,-.5);
		\draw[decoration={brace,mirror,raise=-8pt},decorate]  (-.1,-2.85) -- node {$k$} (.6,-2.85);
		\draw[stdhl] (1.5,-2.5 )node[below]{\small $1$}  .. controls (1.5,-2.25) and (1.25,-2.25) .. (1.25,-2)  -- (1.25,-1.5) .. controls (1.25,-1.25) .. (-.5,-1)
				.. controls (0,-.75) .. (0,-.5);
		\draw[fill=white, color=white] (-.6,-1) circle (.1cm);
		\draw[vstdhl] (-.5,-2.5)  node[below]{\small $\lambda$}-- (-.5,-2) -- (-.5,-.5) node[pos=.33,nail]{};
	}
\ + \ 
\tikzdiag{
		\draw (.75, -2.5) .. controls (.75,-2.25) and (1.5,-2.25) .. (1.5,-2) -- (1.5,1.5) .. controls (1.5,1.75) and (.75,1.75) .. (.75,2) node[pos=.8, tikzdot]{};
		\draw (.5,-.5) .. controls (.5,0) and (0,0) .. (0,.5) .. controls (0,.75) and (.75,.75) .. (.75,1) -- (.75, 1.5) .. controls (.75,1.75) and (1,1.75) .. (1,2);
		\draw (.75,-.5) .. controls (.75,.25) and (0,.75) .. (0,1) -- (0,1.5) node[midway, tikzdot]{}  -- (0,2);
		\node at(1,-.5) {\tiny $\dots$};
		\draw (1.25,-.5) .. controls (1.25,.25) and (.5,.75) .. (.5,1) -- (.5,1.5) node[midway, tikzdot]{}  -- (.5,2);
		%
		%
		%
		\draw[stdhl] (0,-.5)   .. controls (0,-.25) .. (-.5,0)
				.. controls (1.25,.25) ..  (1.25,.5) -- (1.25,1.5)
				.. controls (1.25,1.75) and (1.5,1.75) .. (1.5,2) ;
		\draw[fill=white, color=white] (-.6,0) circle (.1cm);
		\draw[vstdhl] (-.5,-.5) -- (-.5,2)  ;
		\draw (0,-2.5) -- (0,-2) .. controls (0,-1.5) and (.75,-1.5) .. (.75,-.5);
		\draw (.5,-2.5) -- (.5,-2) .. controls (.5,-1.5) and (1.25,-1.5) .. (1.25,-.5);
		\draw (1,-2.5) .. controls (1,-2.25) and (.75,-2.25) .. (.75,-2) .. controls (.75,-1.75) .. (-.5,-1.5) .. controls (.5,-1) ..  (.5,-.5);
		\draw[decoration={brace,mirror,raise=-8pt},decorate]  (-.1,-2.85) -- node {$k$} (.6,-2.85);
		\draw[stdhl] (1.5,-2.5 )node[below]{\small $1$}  .. controls (1.5,-2.25) and (1.25,-2.25) .. (1.25,-2)  -- (1.25,-1.5) .. controls (1.25,-1.25) .. (-.5,-1)
				.. controls (0,-.75) .. (0,-.5);
		\draw[fill=white, color=white] (-.6,-1) circle (.1cm);
		\draw[vstdhl] (-.5,-2.5)  node[below]{\small $\lambda$}-- (-.5,-2) -- (-.5,-.5) node[pos=.33,nail]{};
	}
\\
&
\phantom{\ = \ }\ - \ \text{(similar terms with the nail above).}
\end{align*}
Applying \cref{eq:defzn} on each pair of terms in the sum (including non-displayed terms) gives the part for $0 \leq t < k-2$ in \cref{eq:gammavarphi0rec} for $k+2$. The last two terms (including non-displayed terms) give $t=k$ and $t=k+1$, since $z_2$ is a single crossing, concluding the proof. 
\end{proof}

Having \cref{prop:tildevarphi0rec}, proving \cref{thm:phiisAinfty} boils down to proving that the left and right action by the same element of $T_b^{\lambda,r}$ on
\[
\sum_{k+\ell+|\rho|=b}  (-1)^k \tilde\varphi(k) \otimes \bar 1_{\ell,\rho}
\]
coincide. 

\begin{lem}\label{lem:varphitdotright}
We have
\[
\tikzdiagh[xscale=.75]{0}{
	\draw (.5,-.5)  --  (.5,1.5);
	\node at(1,.5){\tiny $\dots$};
	\draw (1.5,-.5)  --  (1.5,1.5);
	\draw (2,-.5)  --  (2,1.5) node[near end, tikzdot]{};
	\draw[vstdhl] (0,-.5) node[below]{\small$\lambda$} --  (0,1.5);
	\draw[stdhl] (2.5,-.5) node[below]{\small$1$} --  (2.5,1.5);
	\node at(1,.1){\tiny $\dots$};
	\filldraw [fill=white, draw=black] (-.15,.25) rectangle (2.65,.75) node[midway] { $\tilde\varphi(t+1)$};
	\node at(1,.9){\tiny $\dots$};
}
\ = - \ 
\tikzdiagh[xscale=.75]{0}{
	\draw (.5,-.5)  --  (.5,1.5);
	\node at(1,.5){\tiny $\dots$};
	\draw (1.5,-.5)  --  (1.5,1.5);
	\draw (2,-.5)  .. controls (2,-.25) and (2.5,-.25) .. (2.5,0) -- (2.5,1) .. controls (2.5,1.25) and (2,1.25) ..  (2,1.5);
	\draw[vstdhl] (0,-.5) node[below]{\small$\lambda$} --  (0,1.5);
	\draw[stdhl] (2.5,-.5) node[below]{\small$1$} .. controls (2.5,-.25) and (2,-.25) .. (2,0) -- (2,1) .. controls (2,1.25) and (2.5,1.25) ..  (2.5,1.5);
	\node at(1,.1){\tiny $\dots$};
	\filldraw [fill=white, draw=black] (-.15,.25) rectangle (2.15,.75) node[midway] { $\tilde\varphi(t)$};
	\node at(1,.9){\tiny $\dots$};
}
\ = \ 
\tikzdiagh[xscale=.75]{0}{
	\draw (.5,-.5)  --  (.5,1.5);
	\node at(1,.5){\tiny $\dots$};
	\draw (1.5,-.5)  --  (1.5,1.5);
	\draw (2,-.5)  --  (2,1.5) node[near start, tikzdot]{};
	\draw[vstdhl] (0,-.5) node[below]{\small$\lambda$} --  (0,1.5);
	\draw[stdhl] (2.5,-.5) node[below]{\small$1$} --  (2.5,1.5);
	\node at(1,.1){\tiny $\dots$};
	\filldraw [fill=white, draw=black] (-.15,.25) rectangle (2.65,.75) node[midway] { $\tilde\varphi(t+1)$};
	\node at(1,.9){\tiny $\dots$};
}
\]
for all $t \geq 0$. 
\end{lem}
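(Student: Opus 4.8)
The plan is to establish the full chain of equalities by induction on $t$, using the recursive presentation of the diagrams $\tilde\varphi(k)$ from Proposition~\ref{prop:tildevarphi0rec}. The induction hypothesis at stage $t$ is precisely the statement of the lemma for $\tilde\varphi(t+1)$; to prove it for a given $t\geq 1$ I would rewrite $\tilde\varphi(t+1)=\tilde\varphi((t-1)+2)$ through its recursion (so in terms of $\tilde\varphi(t)$, the element $z_{t+1-s}$, and a cup/cap crossing of the last two black strands), which then lets me invoke the lemma for $\tilde\varphi(t)$.

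For the base case $t=0$ I would argue directly from the explicit formula for $\tilde\varphi(1)$ together with the convention $\tilde\varphi(0)=0$: the middle term of the asserted identity vanishes identically, so it is enough to check that a dot placed near the top (resp.\ near the bottom) of the unique black strand of $\tilde\varphi(1)$ is zero. This should follow by sliding the dot along the black strand towards the nail, absorbing the black/red bigon into a dot via \cref{eq:redR2}, commuting it past the braided black/red strand with \cref{eq:nailslidedcross} and \cref{eq:nhdotslide}, and finally killing the resulting nail--dot--nail (or double-nail) configuration using \cref{eq:relNail}; the two summands of $\tilde\varphi(1)$ are handled in the same way. The case $t=1$, in which the nail of $\tilde\varphi(1)$ still appears, requires the same manipulation but with slightly more bookkeeping.

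For the inductive step ($t\geq 1$, and $t\geq 2$ after the remark above) I would insert the dot on the last black strand of the recursive expression for $\tilde\varphi(t+1)$ and push it downward past the extra cup/cap crossing and past the block $z_{t+1-s}$, using \cref{eq:nhdotslide} and the defining recursion \cref{eq:defzn} of $z_n$, together with Lemmas~\ref{lem:computezn} and \ref{lem:allcrossingzniszero} to control the terms where the dot runs into a band of crossings. The correction terms produced by \cref{eq:nhdotslide} should, after applying the induction hypothesis to $\tilde\varphi(t)$, reorganise into exactly the diagram ``$\tilde\varphi(t)$ followed by the red/black crossing'' with the claimed sign; and comparing the two ways of sliding the dot (to the top versus to the bottom of the strand) yields the equality top-dot $=$ bottom-dot, since the intermediate correction terms agree. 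The summands carrying a nail above are treated by the identical argument, again commuting the dot through the braided black/red strands with \cref{eq:nailslidedcross}.

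The main obstacle will be the diagrammatic bookkeeping rather than any new idea: in each of the three terms of the recursion one must track carefully where the dot sits relative to the crossing and to $z_{t+1-s}$, keep the signs straight, and account for the ``nail above'' contributions, then verify that the resulting sum of correction terms collapses to the single claimed diagram. This is a finite computation of the same flavour as the proof of Proposition~\ref{prop:tildevarphi0rec}, relying only on relations already used there.
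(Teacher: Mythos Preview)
Your overall strategy---induction on $t$ via the recursion of Proposition~\ref{prop:tildevarphi0rec}---is exactly the paper's approach, but two details in your plan are garbled. First, the recursion for $\tilde\varphi(t+2)$ involves only $\tilde\varphi(t+1)$ together with black/black and red/black crossings and a single dot; there are no $z_n$ blocks and no cups or caps. You appear to be conflating the \emph{statement} of the proposition with its \emph{proof} (which does pass through the $z_{k-t}$ in $\varphi^0_k$). Once the recursion is in hand, Lemmas~\ref{lem:computezn} and~\ref{lem:allcrossingzniszero} play no role in the inductive step; the paper uses only \cref{eq:nhR2andR3}, \cref{eq:nhdotslide}, and the two-term form \cref{eq:altvarphik}. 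Second, your base-case sketch is off: each summand of $\tilde\varphi(1)$ carries a single nail, so no ``nail--dot--nail'' or ``double-nail'' configuration is ever produced. The actual mechanism is \cref{eq:dottednailslide}, which trades a dot on the black strand for moving the nail across the double braiding; applied to the first summand of $\tilde\varphi(1)$ this produces the second summand and the difference vanishes. The paper records the base case as a one-line citation of \cref{eq:dottednailslide}.

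The paper also saves half the work by noting that the definition of $\tilde\varphi(k)$ is (anti)symmetric under reflection in a horizontal axis, so the equality ``dot at bottom $=$ middle term'' follows from ``dot at top $=$ middle term'' by mirroring; only the first equality needs to be proved by induction.
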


\begin{proof}
We show the first equality, and the second one follows by symmetry along the horizontal axis of the definition of $\tilde\varphi(t+1)$. 

We prove the statement by induction on $t$. 
The case $t=0$ follows from \cref{eq:dottednailslide}. Suppose the claim is true for $t \geq 0$. We compute using \cref{eq:altvarphik}
\begin{align*}
\tikzdiagh[xscale=.75]{0}{
	\draw (.5,-.5)  --  (.5,1.5);
	\node at(1,.5){\tiny $\dots$};
	\draw (1.5,-.5)  --  (1.5,1.5);
	\draw (2,-.5)  --  (2,1.5);
	\draw (2.5,-.5)  --  (2.5,1.5) node[near end, tikzdot]{};
	\draw[vstdhl] (0,-.5) node[below]{\small$\lambda$} --  (0,1.5);
	\draw[stdhl] (3,-.5) node[below]{\small$1$} --  (3,1.5);
	\node at(1,.1){\tiny $\dots$};
	\filldraw [fill=white, draw=black] (-.15,.25) rectangle (3.15,.75) node[midway] { $\tilde\varphi(t+2)$};
	\node at(1,.9){\tiny $\dots$};
}
\ &= \ 
\tikzdiagh[xscale=.75]{0}{
	\draw (.5,-.5)  --  (.5,1.5);
	\node at(1,.5){\tiny $\dots$};
	\draw (1.5,-.5)  --  (1.5,1.5);
	\draw (2,-.5) -- (2,0)  --  (2,1) .. controls (2,1.25) and (2.5,1.25) .. (2.5,1.5)   node[pos=.8,tikzdot]{};
	\draw (2.5, -.5).. controls (2.5,-.25) and(3,-.25) ..  (3,0)  --  (3,1) .. controls (3,1.25) and (2,1.25) .. (2,1.5);
	\draw[vstdhl] (0,-.5) node[below]{\small$\lambda$} --  (0,1.5);
	\draw[stdhl] (3,-.5) node[below]{\small$1$}  .. controls (3,-.25) and (2.5,-.25) ..(2.5,0) --  (2.5,1) .. controls (2.5,1.25) and (3,1.25) .. (3,1.5);
	\node at(1,.1){\tiny $\dots$};
	\filldraw [fill=white, draw=black] (-.15,.25) rectangle (2.65,.75) node[midway] { $\tilde\varphi(t+1)$};
	\node at(1,.9){\tiny $\dots$};
}
\ +  \ 
\tikzdiagh[xscale=.75]{0}{
	\draw (.5,-.5)  --  (.5,1.5);
	\node at(1,.5){\tiny $\dots$};
	\draw (1.5,-.5)  --  (1.5,1.5);
	\draw (2.5,-.5) .. controls (2.5,-.25) and (2,-.25) .. (2,0)  --  (2,1) .. controls (2,1.25) and (2.5,1.25) .. (2.5,1.5)   node[pos=.8,tikzdot]{};
	\draw (2, -.5).. controls (2,-.25) and(3,-.25) ..  (3,0)  --  (3,1) .. controls (3,1.25) and (2,1.25) .. (2,1.5)  node[pos=.8,tikzdot]{};
	\draw[vstdhl] (0,-.5) node[below]{\small$\lambda$} --  (0,1.5);
	\draw[stdhl] (3,-.5) node[below]{\small$1$}  .. controls (3,-.25) and (2.5,-.25) ..(2.5,0) --  (2.5,1) .. controls (2.5,1.25) and (3,1.25) .. (3,1.5);
	\node at(1,.1){\tiny $\dots$};
	\filldraw [fill=white, draw=black] (-.15,.25) rectangle (2.65,.75) node[midway] { $\tilde\varphi(t+1)$};
	\node at(1,.9){\tiny $\dots$};
}
\\
\ &= - \ 
\tikzdiagh[xscale=.75]{0}{
	\draw (.5,-.5)  --  (.5,1.5);
	\node at(1,.5){\tiny $\dots$};
	\draw (1.5,-.5)  --  (1.5,1.5);
	\draw (2,-.5) -- (2,0)  --  (2,1.5);
	\draw (2.5, -.5).. controls (2.5,-.25) and(3,-.25) ..  (3,0)  --  (3,1) .. controls (3,1.25) and (2.5,1.25) .. (2.5,1.5);
	\draw[vstdhl] (0,-.5) node[below]{\small$\lambda$} --  (0,1.5);
	\draw[stdhl] (3,-.5) node[below]{\small$1$}  .. controls (3,-.25) and (2.5,-.25) ..(2.5,0) --  (2.5,1) .. controls (2.5,1.25) and (3,1.25) .. (3,1.5);
	\node at(1,.1){\tiny $\dots$};
	\filldraw [fill=white, draw=black] (-.15,.25) rectangle (2.65,.75) node[midway] { $\tilde\varphi(t+1)$};
	\node at(1,.9){\tiny $\dots$};
} 
\ - \ 
\tikzdiagh[xscale=.75]{0}{
	\draw (.5,-.5)  --  (.5,1.5);
	\node at(1,.5){\tiny $\dots$};
	\draw (1.5,-.5)  --  (1.5,1.5);
	\draw (2,-.5) .. controls (2,-.25) and (2.5,-.25) ..  (2.5,0)   .. controls (2.5,.5) and (3,.5) ..   (3,1) .. controls (3,1.25) and (2.5,1.25) .. (2.5,1.5);
	\draw (2.5, -.5).. controls (2.5,-.25) and(3,-.25) ..  (3,0)  .. controls (3,.5) and (2.5,.5) .. (2.5,1) .. controls (2.5,1.25) and (2,1.25) .. (2,1.5);
	\draw[vstdhl] (0,-.5) node[below]{\small$\lambda$} --  (0,1.5);
	\draw[stdhl] (3,-.5) node[below]{\small$1$}  .. controls (3,-.25) and (2,-.25) ..(2,0) --  (2,1) .. controls (2,1.25) and (3,1.25) .. (3,1.5);
	\node at(1,.1){\tiny $\dots$};
	\filldraw [fill=white, draw=black] (-.15,.25) rectangle (2.15,.75) node[midway] { $\tilde\varphi(t)$};
	\node at(1,.9){\tiny $\dots$};
}
\ -  \ 
\tikzdiagh[xscale=.75]{0}{
	\draw (.5,-.5)  --  (.5,1.5);
	\node at(1,.5){\tiny $\dots$};
	\draw (1.5,-.5)  --  (1.5,1.5);
	\draw (2,-.5) .. controls (2,-.25) and (2.5,-.25) ..  (2.5,0)  .. controls (2.5,.25) and (3,.25) .. (3,.5) .. controls (3,.75) and (2.5,.75) ..  (2.5,1) node[tikzdot, pos=0]{} .. controls (2.5,1.25) and (2,1.25) .. (2,1.5)   ;
	\draw (2.5, -.5).. controls (2.5,-.25) and(3,-.25) ..  (3,0)   .. controls (3,.25) and (2.5,.25) .. (2.5,.5) .. controls (2.5,.75) and (3,.75) ..  (3,1) .. controls (3,1.25) and (2.5,1.25) .. (2.5,1.5);
	\draw[vstdhl] (0,-.5) node[below]{\small$\lambda$} --  (0,1.5);
	\draw[stdhl] (3,-.5) node[below]{\small$1$}  .. controls (3,-.25) and (2,-.25) ..(2,0) --  (2,1) .. controls (2,1.25) and (3,1.25) .. (3,1.5);
	\node at(1,.1){\tiny $\dots$};
	\filldraw [fill=white, draw=black] (-.15,.25) rectangle (2.15,.75) node[midway] { $\tilde\varphi(t)$};
	\node at(1,.9){\tiny $\dots$};
}
\end{align*}
where the last two terms annihilate each other, concluding the proof. 
\end{proof}

\begin{lem}\label{lem:varphitdotsecondright}
We have
\[
\tikzdiagh[xscale=.75]{0}{
	\draw (.5,-.5)  --  (.5,1.5);
	\node at(1,.5){\tiny $\dots$};
	\draw (1.5,-.5)  --  (1.5,1.5);
	\draw (2,-.5)  --  (2,1.5) node[near end, tikzdot]{};
	\draw (2.5,-.5)  --  (2.5,1.5) ;
	\draw[vstdhl] (0,-.5) node[below]{\small $\lambda$} --  (0,1.5);
	\draw[stdhl] (3,-.5) node[below]{\small $1$} --  (3,1.5);
	\node at(1,.1){\tiny $\dots$};
	\filldraw [fill=white, draw=black] (-.15,.25) rectangle (3.15,.75) node[midway] { $\tilde\varphi(t+2)$};
	\node at(1,.9){\tiny $\dots$};
}
\ = \ 
\tikzdiagh[xscale=.75]{0}{
	\draw (.5,-.5)  --  (.5,1.5);
	\node at(1,.5){\tiny $\dots$};
	\draw (1.5,-.5)  --  (1.5,1.5);
	\draw (2.5,-.5) .. controls (2.5,-.25) and (2,-.25) .. (2,0) --  (2,1) .. controls (2,1.25) and (2.5,1.25) .. (2.5,1.5);
	\draw (2, -.5).. controls (2,-.25) and(3,-.25) ..  (3,.25) node[tikzdot, pos=.2]{}   --  (3,.75) .. controls (3,1.25) and (2,1.25) .. (2,1.5) node[tikzdot, pos=.8]{};
	\draw[vstdhl] (0,-.5) node[below]{\small $\lambda$} --  (0,1.5);
	\draw[stdhl] (3,-.5) node[below]{\small $1$}  .. controls (3,-.25) and (2.5,-.25) ..(2.5,.25) --  (2.5,.75) .. controls (2.5,1.25) and (3,1.25) .. (3,1.5);
	\node at(1,.1){\tiny $\dots$};
	\filldraw [fill=white, draw=black] (-.15,.25) rectangle (2.65,.75) node[midway] { $\tilde\varphi(t+1)$};
	\node at(1,.9){\tiny $\dots$};
} 
\ - \ 
\tikzdiagh[xscale=.75]{0}{
	\draw (.5,-.5)  --  (.5,1.5);
	\node at(1,.5){\tiny $\dots$};
	\draw (1.5,-.5)  --  (1.5,1.5);
	\draw (2,-.5) .. controls (2,-.25) and (2.5,-.25) ..  (2.5,.25)   .. controls (2.5,.5) and (3,.5) ..   (3,.75) .. controls (3,1.25) and (2.5,1.25) .. (2.5,1.5);
	\draw (2.5, -.5).. controls (2.5,-.25) and(3,-.25) ..  (3,.25)  .. controls (3,.5) and (2.5,.5) .. (2.5,.75) .. controls (2.5,1.25) and (2,1.25) .. (2,1.5);
	\draw[vstdhl] (0,-.5) node[below]{\small $\lambda$} --  (0,1.5);
	\draw[stdhl] (3,-.5) node[below]{\small $1$}  .. controls (3,-.25) and (2,-.25) ..(2,0) --  (2,1) .. controls (2,1.25) and (3,1.25) .. (3,1.5);
	\node at(1,.1){\tiny $\dots$};
	\filldraw [fill=white, draw=black] (-.15,.25) rectangle (2.15,.75) node[midway] { $\tilde\varphi(t)$};
	\node at(1,.9){\tiny $\dots$};
}
\ = \ 
\tikzdiagh[xscale=.75]{0}{
	\draw (.5,-.5)  --  (.5,1.5);
	\node at(1,.5){\tiny $\dots$};
	\draw (1.5,-.5)  --  (1.5,1.5);
	\draw (2,-.5)  --  (2,1.5) node[near start, tikzdot]{};
	\draw (2.5,-.5)  --  (2.5,1.5) ;
	\draw[vstdhl] (0,-.5) node[below]{\small$\lambda$} --  (0,1.5);
	\draw[stdhl] (3,-.5) node[below]{\small$1$} --  (3,1.5);
	\node at(1,.1){\tiny $\dots$};
	\filldraw [fill=white, draw=black] (-.15,.25) rectangle (3.15,.75) node[midway] { $\tilde\varphi(t+2)$};
	\node at(1,.9){\tiny $\dots$};
}
\]
for all $t \geq 0$. 
\end{lem}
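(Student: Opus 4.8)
The plan is to prove \cref{lem:varphitdotsecondright} by reducing it to \cref{lem:varphitdotright} applied twice, together with the recursive formula for $\tilde\varphi(t+2)$ from \cref{prop:tildevarphi0rec}. The statement asserts that moving a dot from the second-to-rightmost black strand into the box $\tilde\varphi(t+2)$ produces a specific combination of diagrams with boxes $\tilde\varphi(t+1)$ and $\tilde\varphi(t)$, and that this combination is invariant under sliding the dot from just-below to just-above the box (the outer equality with the $\tilde\varphi(t+2)$-box decorated by a dot near its bottom). The dot-slide invariance is essentially automatic once we have the middle expression, since both sides equal the same thing by symmetry of the construction; the content is in the \emph{first} equality.

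First I would expand $\tilde\varphi(t+2)$ using its three-term defining recursion: the first term has a crossing of the two rightmost black strands above the $\tilde\varphi(t+1)$-box with the red strand crossing underneath, the second term has the analogous configuration with the red strand going straight on the near side, and the third term has an extra dot. Placing the new dot on the second-to-rightmost strand just below $\tilde\varphi(t+2)$ means placing it on a strand that, tracing through these three terms, ends up either feeding into the $\tilde\varphi(t+1)$-box or becoming one of the two crossing strands above it. In the first two terms this dot lands on a strand entering the box, so I can pull it inside using \cref{lem:varphitdotright} (after first using \cref{eq:nhdotslide} to relate "dot on the strand before it crosses" to "dot on the strand after it crosses" where needed). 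In the third term, the dot interacts with the existing dot and the crossing via the nilHecke relation \cref{eq:nhdotslide}, splitting off a diagram with one fewer crossing. Collecting everything and using the recursion for $\tilde\varphi(t+1)$ in reverse on the surviving pieces, the $z$-box bookkeeping should collapse to exactly the claimed two-term answer.

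The main obstacle I expect is the careful tracking of signs and of which strand a given dot sits on after the various isotopies and \cref{eq:nhdotslide} applications — in particular, the middle expression has a dot near the top on the far right strand of the $\tilde\varphi(t+1)$-term and a dot on the "inner" strand, and getting these placed correctly requires threading the dot through the black/black crossing that defines the recursion step. The appearance of \cref{lem:varphitdotright} twice (once for each of the two black strands in the second-to-rightmost slot, since the $\tilde\varphi(t+2)$-box absorbs two strands relative to $\tilde\varphi(t)$) is the key structural point: the $\tilde\varphi(t+1)$-box still has that slot as a boundary strand, so one application of \cref{lem:varphitdotright} handles the reduction from $t+2$ to $t+1$, and the residual dotted crossing together with a second application (implicit in the recursion of $\tilde\varphi(t+1)$ itself) accounts for the $\tilde\varphi(t)$-term.

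Concretely, I would proceed as follows: (i) write out the three terms of $\tilde\varphi(t+2)$ with the dot attached below; (ii) in the first term, slide the dot up along its strand into the $\tilde\varphi(t+1)$-box via \cref{lem:varphitdotright}, noting it produces a diagram whose box is $\tilde\varphi(t+1)$ with the rightmost strand-crossing-under-red configuration intact; (iii) in the second term do the same; (iv) in the third term apply \cref{eq:nhdotslide} to resolve the two dots on the crossing strands, yielding one term that collapses onto $\tilde\varphi(t+1)$ by \cref{lem:varphitdotright} and one with an extra straight strand that re-assembles, via the $\tilde\varphi(t+1)$-recursion, into a $\tilde\varphi(t)$-term; (v) add the results and simplify the $z$-boxes, observing cancellations exactly as in the proof of \cref{lem:varphitdotright}. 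Finally, for the dot-slide invariance I would note that the same computation run with the dot placed above the box gives the identical middle expression, since \cref{lem:varphitdotright} is itself stated as a two-sided (below = above) identity, so the outer equalities follow immediately. I expect this to be a routine but sign-delicate diagrammatic induction, entirely parallel to \cref{lem:varphitdotright}, and I would present only the base case $t=0$ and the inductive step in detail, leaving the bookkeeping of non-displayed symmetric terms (with the nail above) to the reader as is done elsewhere in the appendix.
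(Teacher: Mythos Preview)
Your plan would likely work but is considerably more elaborate than what the paper does, and it contains a small confusion. The paper's proof is essentially two lines: expand $\tilde\varphi(t+2)$ via \cref{eq:altvarphik} (the \emph{two-term} alternative recursion, not the three-term defining recursion from \cref{prop:tildevarphi0rec}), and attach the dot on the second-to-rightmost black strand. Of the two resulting terms, one is literally the $\tilde\varphi(t+1)$-summand of the middle expression (the outer strand now carries two dots, one from \cref{eq:altvarphik} and one newly placed), while the other has a single dot sitting on the \emph{rightmost} strand entering the $\tilde\varphi(t+1)$-box; a single application of \cref{lem:varphitdotright} then converts that term into the $\tilde\varphi(t)$-summand. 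No induction on $t$ is needed.

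The simplification you are missing is that \cref{eq:altvarphik} already absorbs one instance of \cref{eq:nhdotslide} into the recursion; starting from it rather than the original three-term formula saves precisely the extra round of dot-sliding and the case analysis you anticipate. Your reference to ``$z$-box bookkeeping'' is misplaced: the elements $z_n$ appear in the definition of $\varphi_k^0$ and in \cref{eq:gammavarphi0rec}, but the recursion defining $\tilde\varphi$ involves no $z$-boxes at all, so nothing of that sort enters here. Your final remark --- that the equality with the dot on the other side of the box follows because \cref{lem:varphitdotright} is already a two-sided identity and the middle expression is vertically symmetric --- is correct and is how the paper handles it implicitly.
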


\begin{proof}
By \cref{eq:altvarphik}, we have
\begin{align*}
\tikzdiagh[xscale=.75]{0}{
	\draw (.5,-.5)  --  (.5,1.5);
	\node at(1,.5){\tiny $\dots$};
	\draw (1.5,-.5)  --  (1.5,1.5);
	\draw (2,-.5)  --  (2,1.5) node[near start, tikzdot]{};
	\draw (2.5,-.5)  --  (2.5,1.5) ;
	\draw[vstdhl] (0,-.5) node[below]{\small $\lambda$} --  (0,1.5);
	\draw[stdhl] (3,-.5) node[below]{\small $1$} --  (3,1.5);
	\node at(1,.1){\tiny $\dots$};
	\filldraw [fill=white, draw=black] (-.15,.25) rectangle (3.15,.75) node[midway] { $\tilde\varphi(t+2)$};
	\node at(1,.9){\tiny $\dots$};
}
\ &= \ 
\tikzdiagh[xscale=.75]{0}{
	\draw (.5,-.5)  --  (.5,1.5);
	\node at(1,.5){\tiny $\dots$};
	\draw (1.5,-.5)  --  (1.5,1.5);
	\draw (2,-.5) -- (2,0)  node[pos=1, tikzdot]{}  --  (2,1) .. controls (2,1.25) and (2.5,1.25) .. (2.5,1.5);
	\draw (2.5, -.5).. controls (2.5,-.25) and(3,-.25) ..  (3,0)  --  (3,1) .. controls (3,1.25) and (2,1.25) .. (2,1.5);
	\draw[vstdhl] (0,-.5) node[below]{\small $\lambda$} --  (0,1.5);
	\draw[stdhl] (3,-.5) node[below]{\small $1$}  .. controls (3,-.25) and (2.5,-.25) ..(2.5,0) --  (2.5,1) .. controls (2.5,1.25) and (3,1.25) .. (3,1.5);
	\node at(1,.1){\tiny $\dots$};
	\filldraw [fill=white, draw=black] (-.15,.25) rectangle (2.65,.75) node[midway] { $\tilde\varphi(k+1)$};
	\node at(1,.9){\tiny $\dots$};
}
\ +  \ 
\tikzdiagh[xscale=.75]{0}{
	\draw (.5,-.5)  --  (.5,1.5);
	\node at(1,.5){\tiny $\dots$};
	\draw (1.5,-.5)  --  (1.5,1.5);
	\draw (2.5,-.5) .. controls (2.5,-.25) and (2,-.25) .. (2,0)  --  (2,1) .. controls (2,1.25) and (2.5,1.25) .. (2.5,1.5);
	\draw (2, -.5).. controls (2,-.25) and(3,-.25) ..  (3,0)   node[pos=.2,tikzdot]{}  --  (3,1) .. controls (3,1.25) and (2,1.25) .. (2,1.5)  node[pos=.8,tikzdot]{};
	\draw[vstdhl] (0,-.5) node[below]{\small $\lambda$} --  (0,1.5);
	\draw[stdhl] (3,-.5) node[below]{\small $1$}  .. controls (3,-.25) and (2.5,-.25) ..(2.5,0) --  (2.5,1) .. controls (2.5,1.25) and (3,1.25) .. (3,1.5);
	\node at(1,.1){\tiny $\dots$};
	\filldraw [fill=white, draw=black] (-.15,.25) rectangle (2.65,.75) node[midway] { $\tilde\varphi(k+1)$};
	\node at(1,.9){\tiny $\dots$};
}
\end{align*}
We conclude by applying \cref{lem:varphitdotright}.
\end{proof}

\begin{lem}\label{lem:varphitcrossingright}
We have
\[
\tikzdiagh[xscale=.75]{0}{
	\draw (.5,-.5)  --  (.5,1.5);
	\node at(1,.5){\tiny $\dots$};
	\draw (1.5,-.5)  --  (1.5,1.5);
	\draw (2,-.5)  --  (2,1) .. controls (2,1.25) and (2.5,1.25) .. (2.5,1.5) ;
	\draw (2.5,-.5)  --  (2.5,1) .. controls (2.5,1.25) and (2,1.25) .. (2,1.5);
	\draw[vstdhl] (0,-.5) node[below]{\small$\lambda$} --  (0,1.5);
	\draw[stdhl] (3,-.5) node[below]{\small$1$} --  (3,1.5);
	\node at(1,.1){\tiny $\dots$};
	\filldraw [fill=white, draw=black] (-.15,.25) rectangle (3.15,.75) node[midway] { $\tilde\varphi(t+2)$};
	\node at(1,.9){\tiny $\dots$};
}
\ = \ 
\tikzdiagh[xscale=.75]{0}{
	\draw (.5,-.5)  --  (.5,1.5);
	\node at(1,.5){\tiny $\dots$};
	\draw (1.5,-.5)  --  (1.5,1.5);
	\draw (2.5,-.5) .. controls (2.5,-.25) and (2,-.25) .. (2,0) --  (2,1) .. controls (2,1.25) and (2.5,1.25) .. (2.5,1.5);
	\draw (2, -.5).. controls (2,-.25) and(3,-.25) ..  (3,.25)   --  (3,.75) .. controls (3,1.25) and (2,1.25) .. (2,1.5);
	\draw[vstdhl] (0,-.5) node[below]{\small$\lambda$} --  (0,1.5);
	\draw[stdhl] (3,-.5) node[below]{\small$1$}  .. controls (3,-.25) and (2.5,-.25) ..(2.5,.25) --  (2.5,.75) .. controls (2.5,1.25) and (3,1.25) .. (3,1.5);
	\node at(1,.1){\tiny $\dots$};
	\filldraw [fill=white, draw=black] (-.15,.25) rectangle (2.65,.75) node[midway] { $\tilde\varphi(t+1)$};
	\node at(1,.9){\tiny $\dots$};
} 
\ = \ 
\tikzdiagh[xscale=.75]{0}{
	\draw (.5,-.5)  --  (.5,1.5);
	\node at(1,.5){\tiny $\dots$};
	\draw (1.5,-.5)  --  (1.5,1.5);
	\draw (2.5,-.5)  .. controls (2.5,-.25) and (2,-.25) .. (2,0) --(2,1.5) ;
	\draw (2,-.5)  .. controls (2,-.25) and (2.5,-.25) ..  (2.5,0) -- (2.5,1.5);
	\draw[vstdhl] (0,-.5) node[below]{\small$\lambda$} --  (0,1.5);
	\draw[stdhl] (3,-.5) node[below]{\small$1$} --  (3,1.5);
	\node at(1,.1){\tiny $\dots$};
	\filldraw [fill=white, draw=black] (-.15,.25) rectangle (3.15,.75) node[midway] { $\tilde\varphi(t+2)$};
	\node at(1,.9){\tiny $\dots$};
}
\]
for all $t \geq 0$. 
\end{lem}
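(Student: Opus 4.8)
The plan is to prove the statement by induction on $t$, running the same argument that established \cref{lem:varphitdotright} and \cref{lem:varphitdotsecondright}: start from the recursive description of $\tilde\varphi$ in \cref{prop:tildevarphi0rec}, slide the crossing through the iterated turnbacks that appear in the recursion, clean up the resulting dot-terms with the two dot-compatibility lemmas just proved, and reassemble everything using the recursion again. Together with \cref{lem:varphitdotright}, \cref{lem:varphitdotsecondright} and the analogous (easy) statement for the nail action, this lemma supplies exactly the piece needed to show that $\tilde\varphi^0$ intertwines the right $T$-action, which — since $\tilde\varphi^0 = (1\otimes\gamma)\circ\varphi^0$ is already a left module map — is what \cref{thm:phiisAinfty} requires.

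For the base case $t=0$ I would argue directly: here $z_2$ is a single crossing, so $\tilde\varphi(2)$ is the explicit three-term sum in \cref{prop:tildevarphi0rec} built from $\tilde\varphi(1)$, which itself is the explicit difference of two nailed diagrams; both equalities then follow by pushing the crossing of the two rightmost black strands downward and applying the nilHecke relations \eqref{eq:nhR2andR3}, \eqref{eq:nhdotslide}, together with the nail relations \eqref{eq:relNail} and \eqref{eq:nailslidedcross}. For the inductive step, assuming the claim for $t$, I would start from the reorganized recursion \eqref{eq:altvarphik} for $\tilde\varphi(t+2)$, place the crossing between black strands $t+1$ and $t+2$ on top, and commute it past the cap/turnback structure: by \eqref{eq:nhR2andR3} this produces a main term in which the crossing now sits above $\tilde\varphi(t+1)$ in the position appropriate to the intermediate diagram of the lemma, plus correction terms carrying a dot coming from \eqref{eq:nhdotslide}. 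The dot-corrections are eliminated by invoking \cref{lem:varphitdotright} and \cref{lem:varphitdotsecondright}, the surviving crossing-term is rewritten via the induction hypothesis, and recombining with \eqref{eq:defzn} recovers the middle expression; the symmetric computation — using the symmetry along the horizontal axis of the definition of $\tilde\varphi(t+1)$, exactly as in \cref{lem:varphitdotright} — yields the second equality.

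The main obstacle will be the bookkeeping rather than any conceptual point. Commuting the crossing past the nested turnbacks and the nail inside the recursive diagram generates a sizeable collection of intermediate diagrams with extra dots, and one must verify that these either cancel in pairs (as in the closing lines of the proofs of \cref{lem:varphitdotright} and \cref{lem:varphitdotsecondright}) or recombine into the $z_k$-boxes by means of \cref{lem:computezn} and \cref{lem:allcrossingzniszero}. Keeping track of the signs produced by sliding the homologically graded nail past the crossing, and matching the result term-by-term with the right-hand sides displayed in the statement, is the only delicate part; no new tool beyond those already used for the dot-compatibility lemmas is needed, so the argument is routine once the diagrammatic calculation is carried out carefully.
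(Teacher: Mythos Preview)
Your plan is not wrong, but it is far more elaborate than necessary, and you have missed the point of this particular lemma. The paper's proof is a single sentence: ``This is immediate by applying \eqref{eq:nhR2andR3} on the definition of $\tilde\varphi(t+2)$.'' No induction, no dot lemmas, no bookkeeping.

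Here is what you overlooked. Look directly at the three-term recursive definition of $\tilde\varphi(t+2)$ in \cref{prop:tildevarphi0rec}. In the \emph{first} and \emph{third} terms, the two rightmost black strands already cross once \emph{above} the $\tilde\varphi(t+1)$ box (in the third term this crossing carries a dot, but it is still a black/black crossing). Hence composing with an additional crossing on top produces a double black crossing, which vanishes by the first relation in \eqref{eq:nhR2andR3}. Only the \emph{second} term survives, and that term with the extra top crossing is literally the middle diagram of the lemma. Symmetrically, in the second and third terms the two rightmost black strands already cross once \emph{below} the box, so composing with a crossing at the bottom kills them and leaves only the first term, which again is the middle diagram. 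That is the entire argument.

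So while your inductive strategy, modelled on \cref{lem:varphitdotright} and \cref{lem:varphitdotsecondright}, could in principle be pushed through, it introduces correction terms and then labours to cancel them, whereas the direct route produces no correction terms at all. The moral is that the crossing case is structurally \emph{easier} than the dot cases: a dot slides through a crossing at the cost of an identity term \eqref{eq:nhdotslide}, but a crossing composed with a crossing simply dies. Recognising which of the three summands already contain the relevant crossing is the whole content of the lemma.
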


\begin{proof}
This is immediate by applying \cref{eq:nhR2andR3} on the definition of $\tilde\varphi(t+2)$. 
\end{proof}

\begin{lem}\label{lem:varphitcrossingsecondright}
We have
\[
\tikzdiagh[xscale=.75]{0}{
	\draw (.5,-.5)  --  (.5,1.5);
	\node at(1,.5){\tiny $\dots$};
	\draw (1.5,-.5)  --  (1.5,1.5);
	\draw (2,-.5)  --  (2,1) .. controls (2,1.25) and (2.5,1.25) .. (2.5,1.5) ;
	\draw (2.5,-.5)  --  (2.5,1) .. controls (2.5,1.25) and (2,1.25) .. (2,1.5);
	\draw (3,-.5) -- (3,1.5);
	\draw[vstdhl] (0,-.5) node[below]{\small$\lambda$} --  (0,1.5);
	\draw[stdhl] (3.5,-.5) node[below]{\small$1$} --  (3.5,1.5);
	\node at(1,.1){\tiny $\dots$};
	\filldraw [fill=white, draw=black] (-.15,.25) rectangle (3.65,.75) node[midway] { $\tilde\varphi(t+3)$};
	\node at(1,.9){\tiny $\dots$};
}
\ = \ 
\tikzdiagh[xscale=.75]{0}{
	\draw (.5,-.5)  --  (.5,1.5);
	\node at(1,.5){\tiny $\dots$};
	\draw (1.5,-.5)  --  (1.5,1.5);
	\draw (2.5,-.5)  .. controls (2.5,-.25) and (2,-.25) .. (2,0) --(2,1.5) ;
	\draw (2,-.5)  .. controls (2,-.25) and (2.5,-.25) ..  (2.5,0) -- (2.5,1.5);
	\draw (3,-.5) -- (3,1.5);
	\draw[vstdhl] (0,-.5) node[below]{\small$\lambda$} --  (0,1.5);
	\draw[stdhl] (3.5,-.5) node[below]{\small$1$} --  (3.5,1.5);
	\node at(1,.1){\tiny $\dots$};
	\filldraw [fill=white, draw=black] (-.15,.25) rectangle (3.65,.75) node[midway] { $\tilde\varphi(t+3)$};
	\node at(1,.9){\tiny $\dots$};
}
\]
for all $t \geq 0$. 
\end{lem}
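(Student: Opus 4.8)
The plan is to deduce this identity from the last-two-strands case \cref{lem:varphitcrossingright} by peeling off a single layer of the recursion defining $\tilde\varphi$. First I would apply the recursive formula \cref{eq:altvarphik} (with $t+2$ replaced by $t+3$, equivalently the three-term version in \cref{prop:tildevarphi0rec}) to rewrite $\tilde\varphi(t+3)$ as a sum of terms, each of which is built from an inner copy of $\tilde\varphi(t+2)$ together with the last black strand of $\tilde\varphi(t+3)$ pulled out to the right, the red strand weaving into the inner box, and, in some summands, an extra dot on that pulled-out strand. The key observation is that this recursive structure only touches the \emph{last} black strand and the red strand; hence the two black strands carrying the crossing in the statement — which are the $(t+1)$-st and $(t+2)$-nd black strands of $\tilde\varphi(t+3)$ — pass straight through the recursion structure and become the last two black strands of the inner copy of $\tilde\varphi(t+2)$, with the crossing sitting on the top of that inner box (for the left-hand side) or on its bottom (for the right-hand side).

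Next, in each summand I would invoke \cref{lem:varphitcrossingright} to slide that crossing through the inner $\tilde\varphi(t+2)$-box from top to bottom. Re-assembling the recursion \cref{eq:altvarphik} in the opposite direction then turns the resulting expression back into $\tilde\varphi(t+3)$ with the crossing glued on the bottom, which is precisely the right-hand side of \cref{lem:varphitcrossingsecondright}. No induction is needed, since \cref{eq:altvarphik} already applies to $\tilde\varphi(t+3)=\tilde\varphi((t+1)+2)$ for every $t\ge 0$ and \cref{lem:varphitcrossingright} is available for the inner $\tilde\varphi(t+2)$. Crucially, the only relations used along the way are the correction-free nilHecke moves \cref{eq:nhR2andR3} (the Reidemeister-III move, $\tau_i^2=0$, and the commutation of distant crossings): because the crossing sits between two black strands that never meet the red strand of $\tilde\varphi(t+3)$ — the last black strand separates them from it — none of the black/red relations carrying error terms is triggered, so no spurious diagrams with dots appear.

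The main obstacle I anticipate is bookkeeping rather than substance: one must go through each summand produced by the recursion \cref{prop:tildevarphi0rec} separately and verify that in every one of them the crossing of the $(t+1)$-st and $(t+2)$-nd strands genuinely commutes past the local structure around the last strand — in particular past the crossing of the last two black strands appearing in the dotted correction terms, where an application of \cref{eq:nhR2andR3} is needed — so that it truly lands on the last two strands of the inner $\tilde\varphi(t+2)$-box on the correct side. Once this diagram-chasing is carried out, \cref{lem:varphitcrossingright} applies verbatim to each summand and the desired identity follows by summing over the terms.
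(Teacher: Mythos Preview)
Your plan has a genuine gap: the ``key observation'' that the recursive structure in \cref{eq:altvarphik}/\cref{prop:tildevarphi0rec} ``only touches the \emph{last} black strand and the red strand'' is false. Inspect any of the three summands defining $\tilde\varphi(t+3)$ in \cref{prop:tildevarphi0rec}: above and/or below the inner $\tilde\varphi(t+2)$-box there is always a black--black crossing $\tau_{t+2}$ between the $(t+2)$-nd and $(t+3)$-rd strands (this is exactly the crossing that you acknowledge appears ``in the dotted correction terms'', but it is present in \emph{every} term, not only the dotted ones). Since your crossing $\tau_{t+1}$ shares the strand $t+2$ with this $\tau_{t+2}$, it does not commute past the recursion structure by distant commutation, and a single Reidemeister~III move \cref{eq:nhR2andR3} does not help either: $\tau_{t+1}\tau_{t+2}\neq\tau_{t+2}\tau_{t+1}$, and there is no third $\tau$ available to complete an R3 triple. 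Concretely, for the first summand $A'=\tau_{t+2}\sigma_{t+3}\bigl(\tilde\varphi(t+2)\otimes 1\bigr)\sigma_{t+3}$ one computes (using only distant commutation and \cref{lem:varphitcrossingright} on the inner box)
\[
\tau_{t+1}\cdot A'-A'\cdot\tau_{t+1}
=(\tau_{t+1}\tau_{t+2}-\tau_{t+2}\tau_{t+1})\,\sigma_{t+3}\bigl(\tilde\varphi(t+2)\otimes 1\bigr)\sigma_{t+3},
\]
which is nonzero. The individual summands therefore do \emph{not} commute with $\tau_{t+1}$; only their sum does, and seeing this cancellation requires more than one layer of the recursion.

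The paper's proof proceeds differently: it expands $\tilde\varphi(t+3)$ down to $\tilde\varphi(t+2)$ via \cref{eq:altvarphik}, then pushes the resulting diagrams one level further (using \cref{lem:varphitcrossingright} and the recursion again) down to $\tilde\varphi(t+1)$, and after several explicit nilHecke simplifications arrives at a four-term expression that is \emph{manifestly symmetric under the horizontal mirror}. That symmetry is what gives $\tau_{t+1}\cdot\tilde\varphi(t+3)=\tilde\varphi(t+3)\cdot\tau_{t+1}$. If you want to salvage your approach you would need to track the obstruction terms $(\tau_{t+1}\tau_{t+2}-\tau_{t+2}\tau_{t+1})\cdot(\dots)$ coming from each summand and show they cancel in the sum --- but doing so essentially reproduces the paper's two-level expansion.
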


\begin{proof}
By \cref{eq:altvarphik} we have
\begin{align*}
\tikzdiagh[xscale=.75]{0}{
	\draw (.5,-.5)  --  (.5,1.5);
	\node at(1,.5){\tiny $\dots$};
	\draw (1.5,-.5)  --  (1.5,1.5);
	\draw (2,-.5)  --  (2,1) .. controls (2,1.25) and (2.5,1.25) .. (2.5,1.5) ;
	\draw (2.5,-.5)  --  (2.5,1) .. controls (2.5,1.25) and (2,1.25) .. (2,1.5);
	\draw (3,-.5) -- (3,1.5);
	\draw[vstdhl] (0,-.5) node[below]{\small$\lambda$} --  (0,1.5);
	\draw[stdhl] (3.5,-.5) node[below]{\small$1$} --  (3.5,1.5);
	\node at(1,.1){\tiny $\dots$};
	\filldraw [fill=white, draw=black] (-.15,.25) rectangle (3.65,.75) node[midway] { $\tilde\varphi(t+3)$};
	\node at(1,.9){\tiny $\dots$};
}
\ &= \ 
\tikzdiagh[xscale=.75]{0}{
	\draw (.5,-.5)  --  (.5,1.5);
	\node at(1,.5){\tiny $\dots$};
	\draw (1.5,-.5)  --  (1.5,1.5);
	\draw (2,-.5) -- (2,1) .. controls (2,1.25) and (2.5,1.25) .. (2.5,1.5);
	\draw  (2.5,-.5) --  (2.5,1) .. controls (2.5,1.25) and (3,1.25) .. (3,1.5);
	\draw (3, -.5).. controls (3,-.25) and(3.5,-.25) ..  (3.5,.25)   --  (3.5,.75) .. controls (3.5,1.25) and (2,1.25) .. (2,1.5);
	\draw[vstdhl] (0,-.5) node[below]{\small$\lambda$} --  (0,1.5);
	\draw[stdhl] (3.5,-.5) node[below]{\small$1$}  .. controls (3.5,-.25) and (3,-.25) ..(3,.25) --  (3,.75) .. controls (3,1.25) and (3.5,1.25) .. (3.5,1.5);
	\node at(1,.1){\tiny $\dots$};
	\filldraw [fill=white, draw=black] (-.15,.25) rectangle (3.15,.75) node[midway] { $\tilde\varphi(t+2)$};
	\node at(1,.9){\tiny $\dots$};
} 
\ + \ 
\tikzdiagh[xscale=.75]{0}{
	\draw (.5,-.5)  --  (.5,1.5);
	\node at(1,.5){\tiny $\dots$};
	\draw (1.5,-.5)  --  (1.5,1.5);
	\draw (2,-.5) -- (2,1) .. controls (2,1.25) and (2.5,1.25) .. (2.5,1.5);
	\draw (3,-.5) .. controls (3,-.25) and (2.5,-.25) .. (2.5,0) --  (2.5,1) .. controls (2.5,1.25) and (3,1.25) .. (3,1.5);
	\draw (2.5, -.5).. controls (2.5,-.25) and(3.5,-.25) ..  (3.5,.25)   --  (3.5,.75) .. controls (3.5,1.25) and (2,1.25) .. (2,1.5) node[pos=.6,tikzdot]{} ;
	\draw[vstdhl] (0,-.5) node[below]{\small$\lambda$} --  (0,1.5);
	\draw[stdhl] (3.5,-.5) node[below]{\small$1$}  .. controls (3.5,-.25) and (3,-.25) ..(3,.25) --  (3,.75) .. controls (3,1.25) and (3.5,1.25) .. (3.5,1.5);
	\node at(1,.1){\tiny $\dots$};
	\filldraw [fill=white, draw=black] (-.15,.25) rectangle (3.15,.75) node[midway] { $\tilde\varphi(t+2)$};
	\node at(1,.9){\tiny $\dots$};
} 
\end{align*}
Then, we compute
\[
\tikzdiagh[xscale=.75]{0}{
	\draw (.5,-.5)  --  (.5,1.5);
	\node at(1,.5){\tiny $\dots$};
	\draw (1.5,-.5)  --  (1.5,1.5);
	\draw (2,-.5) -- (2,1) .. controls (2,1.25) and (2.5,1.25) .. (2.5,1.5);
	\draw  (2.5,-.5) --  (2.5,1) .. controls (2.5,1.25) and (3,1.25) .. (3,1.5);
	\draw (3, -.5).. controls (3,-.25) and(3.5,-.25) ..  (3.5,.25)   --  (3.5,.75) .. controls (3.5,1.25) and (2,1.25) .. (2,1.5);
	\draw[vstdhl] (0,-.5) node[below]{\small$\lambda$} --  (0,1.5);
	\draw[stdhl] (3.5,-.5) node[below]{\small$1$}  .. controls (3.5,-.25) and (3,-.25) ..(3,.25) --  (3,.75) .. controls (3,1.25) and (3.5,1.25) .. (3.5,1.5);
	\node at(1,.1){\tiny $\dots$};
	\filldraw [fill=white, draw=black] (-.15,.25) rectangle (3.15,.75) node[midway] { $\tilde\varphi(t+2)$};
	\node at(1,.9){\tiny $\dots$};
}  
\ = \ 
\tikzdiagh[xscale=.75]{0}{
	\draw (.5,-.5)  --  (.5,1.5);
	\node at(1,.5){\tiny $\dots$};
	\draw (1.5,-.5)  --  (1.5,1.5);
	\draw (2,-.5) -- (2,1) .. controls (2,1.25) and (3,1.25) .. (3,1.5);
	\draw  (2.5,-.5) .. controls (2.5,-.25) and (3,-.25) ..(3,.25) .. controls (3,.5) and (3.5,.5) .. (3.5,.75) .. controls (3.5,1.25) and (2.5,1.25) .. (2.5,1.5);
	\draw (3, -.5).. controls (3,-.25) and(3.5,-.25) ..  (3.5,.25)  .. controls (3.5,.5) and (3,.5) ..  (3,.75) .. controls (3,1.25) and (2,1.25) .. (2,1.5);
	\draw[vstdhl] (0,-.5) node[below]{\small$\lambda$} --  (0,1.5);
	\draw[stdhl] (3.5,-.5) node[below]{\small$1$}  .. controls (3.5,-.25) and (2.5,0) ..(2.5,.25) --  (2.5,.75) .. controls (2.5,1) and (3.5,1.25) .. (3.5,1.5);
	\node at(1,.1){\tiny $\dots$};
	\filldraw [fill=white, draw=black] (-.15,.25) rectangle (2.65,.75) node[midway] { $\tilde\varphi(t+1)$};
	\node at(1,.9){\tiny $\dots$};
} 
\ + \ 
\tikzdiagh[xscale=.75]{0}{
	\draw (.5,-.5)  --  (.5,1.5);
	\node at(1,.5){\tiny $\dots$};
	\draw (1.5,-.5)  --  (1.5,1.5);
	\draw (2.5,-.5) .. controls (2.5,-.25) and (2,-.25) .. (2,0) -- (2,1) .. controls (2,1.25) and (2.5,1.25) .. (2.5,1.5);
	\draw  (2,-.5) .. controls (2,-.25) and (3,-.25) ..(3,.25) .. controls (3,.5) and (3.5,.5) .. (3.5,.75) .. controls (3.5,1.25) and (3,1.25) .. (3,1.5);
	\draw (3, -.5).. controls (3,-.25) and(3.5,-.25) ..  (3.5,.25)  .. controls (3.5,.5) and (3,.5) ..  (3,.75) .. controls (3,1.25) and (2,1.25) .. (2,1.5);
	\draw[vstdhl] (0,-.5) node[below]{\small$\lambda$} --  (0,1.5);
	\draw[stdhl] (3.5,-.5) node[below]{\small$1$}  .. controls (3.5,-.25) and (2.5,0) ..(2.5,.25) --  (2.5,.75) .. controls (2.5,1) and (3.5,1.25) .. (3.5,1.5);
	\node at(1,.1){\tiny $\dots$};
	\filldraw [fill=white, draw=black] (-.15,.25) rectangle (2.65,.75) node[midway] { $\tilde\varphi(t+1)$};
	\node at(1,.9){\tiny $\dots$};
}
\ + \ 
\tikzdiagh[xscale=.75]{0}{
	\draw (.5,-.5)  --  (.5,1.5);
	\node at(1,.5){\tiny $\dots$};
	\draw (1.5,-.5)  --  (1.5,1.5);
	\draw (2.5,-.5) .. controls (2.5,-.25) and (2,-.25) .. (2,0) -- (2,1) .. controls (2,1.25) and (3,1.25) .. (3,1.5);
	\draw  (2,-.5) .. controls (2,-.25) and (3,-.25) ..(3,.25) node[tikzdot, pos=1]{} .. controls (3,.5) and (3.5,.5) .. (3.5,.75) .. controls (3.5,1.25) and (2.5,1.25) .. (2.5,1.5);
	\draw (3, -.5).. controls (3,-.25) and(3.5,-.25) ..  (3.5,.25)  .. controls (3.5,.5) and (3,.5) ..  (3,.75) .. controls (3,1.25) and (2,1.25) .. (2,1.5);
	\draw[vstdhl] (0,-.5) node[below]{\small$\lambda$} --  (0,1.5);
	\draw[stdhl] (3.5,-.5) node[below]{\small$1$}  .. controls (3.5,-.25) and (2.5,0) ..(2.5,.25) --  (2.5,.75) .. controls (2.5,1) and (3.5,1.25) .. (3.5,1.5);
	\node at(1,.1){\tiny $\dots$};
	\filldraw [fill=white, draw=black] (-.15,.25) rectangle (2.65,.75) node[midway] { $\tilde\varphi(t+1)$};
	\node at(1,.9){\tiny $\dots$};
}
\]
and
\[
\tikzdiagh[xscale=.75]{0}{
	\draw (.5,-.5)  --  (.5,1.5);
	\node at(1,.5){\tiny $\dots$};
	\draw (1.5,-.5)  --  (1.5,1.5);
	\draw (2,-.5) -- (2,1) .. controls (2,1.25) and (2.5,1.25) .. (2.5,1.5);
	\draw (3,-.5) .. controls (3,-.25) and (2.5,-.25) .. (2.5,0) --  (2.5,1) .. controls (2.5,1.25) and (3,1.25) .. (3,1.5);
	\draw (2.5, -.5).. controls (2.5,-.25) and(3.5,-.25) ..  (3.5,.25)   --  (3.5,.75) .. controls (3.5,1.25) and (2,1.25) .. (2,1.5) node[pos=.6,tikzdot]{} ;
	\draw[vstdhl] (0,-.5) node[below]{\small$\lambda$} --  (0,1.5);
	\draw[stdhl] (3.5,-.5) node[below]{\small$1$}  .. controls (3.5,-.25) and (3,-.25) ..(3,.25) --  (3,.75) .. controls (3,1.25) and (3.5,1.25) .. (3.5,1.5);
	\node at(1,.1){\tiny $\dots$};
	\filldraw [fill=white, draw=black] (-.15,.25) rectangle (3.15,.75) node[midway] { $\tilde\varphi(t+2)$};
	\node at(1,.9){\tiny $\dots$};
} 
\ = \ 
\tikzdiagh[xscale=.75]{0}{
	\draw (.5,-.5)  --  (.5,1.75);
	\node at(1,.5){\tiny $\dots$};
	\draw (1.5,-.5)  --  (1.5,1.75);
	\draw (2,-.5) -- (2,1) .. controls (2,1.25) and (3,1.25) .. (3,1.75);
	\draw  (2.5,-.5) .. controls (2.5,-.25) and (3,-.25) ..(3,.25) .. controls (3,.5) and (3.5,.5) .. (3.5,.75) .. controls (3.5,1.5)  and (2,1.5) ..(2,1.75) node[tikzdot,pos=.6]{};
	\draw (3, -.5).. controls (3,-.25) and(3.5,-.25) ..  (3.5,.25)  .. controls (3.5,.5) and (3,.5) ..  (3,.75) .. controls (3,1.25) and (2,1.25) .. (2,1.5) .. controls (2,1.625) and (2.5,1.625) .. (2.5,1.75);
	\draw[vstdhl] (0,-.5) node[below]{\small$\lambda$} --  (0,1.75);
	\draw[stdhl] (3.5,-.5) node[below]{\small$1$}  .. controls (3.5,-.25) and (2.5,0) ..(2.5,.25) --  (2.5,.75) .. controls (2.5,1) and (3.5,1.25) .. (3.5,1.5) -- (3.5,1.75);
	\node at(1,.1){\tiny $\dots$};
	\filldraw [fill=white, draw=black] (-.15,.25) rectangle (2.65,.75) node[midway] { $\tilde\varphi(t+1)$};
	\node at(1,.9){\tiny $\dots$};
}
\ + \ 
\tikzdiagh[xscale=.75]{0}{
	\draw (.5,-.5)  --  (.5,1.5);
	\node at(1,.5){\tiny $\dots$};
	\draw (1.5,-.5)  --  (1.5,1.5);
	\draw (3,-.5) .. controls (3,-.25) and (2,-.25) .. (2,0) -- (2,1) .. controls (2,1.25) and (2.5,1.25) .. (2.5,1.5);
	\draw  (2,-.5) .. controls (2,-.25) and (3,-.25) ..(3,.25) .. controls (3,.5) and (3.5,.5) .. (3.5,.75) .. controls (3.5,1.25) and (3,1.25) .. (3,1.5);
	\draw (2.5, -.5).. controls (2.5,-.25) and(3.5,-.25) ..  (3.5,.25)  .. controls (3.5,.5) and (3,.5) ..  (3,.75) node[tikzdot,pos=1]{} .. controls (3,1.25) and (2,1.25) .. (2,1.5);
	\draw[vstdhl] (0,-.5) node[below]{\small$\lambda$} --  (0,1.5);
	\draw[stdhl] (3.5,-.5) node[below]{\small$1$}  .. controls (3.5,-.25) and (2.5,0) ..(2.5,.25) --  (2.5,.75) .. controls (2.5,1) and (3.5,1.25) .. (3.5,1.5);
	\node at(1,.1){\tiny $\dots$};
	\filldraw [fill=white, draw=black] (-.15,.25) rectangle (2.65,.75) node[midway] { $\tilde\varphi(t+1)$};
	\node at(1,.9){\tiny $\dots$};
}
\ + \ 
\tikzdiagh[xscale=.75]{0}{
	\draw (.5,-.5)  --  (.5,1.75);
	\node at(1,.5){\tiny $\dots$};
	\draw (1.5,-.5)  --  (1.5,1.75);
	\draw (3,-.5) .. controls (3,-.25) and (2,-.25) .. (2,0) --  (2,1) .. controls (2,1.25) and (3,1.25) .. (3,1.75);
	\draw  (2,-.5) .. controls (2,-.25) and (3,-.25) ..(3,.25) --  (3,.75) node[midway, tikzdot]{} .. controls (3,1.25) and (2,1.25) .. (2,1.5) .. controls (2,1.625) and (2.5,1.625) .. (2.5,1.75);
	\draw (2.5, -.5).. controls (2.5,-.25) and(3.5,-.25) ..  (3.5,.25) -- (3.5,.75) .. controls (3.5,1.5)  and (2,1.5) ..(2,1.75) node[tikzdot,pos=.6]{};
	\draw[vstdhl] (0,-.5) node[below]{\small$\lambda$} --  (0,1.75);
	\draw[stdhl] (3.5,-.5) node[below]{\small$1$}  .. controls (3.5,-.25) and (2.5,0) ..(2.5,.25) --  (2.5,.75) .. controls (2.5,1) and (3.5,1.25) .. (3.5,1.5) -- (3.5,1.75);
	\node at(1,.1){\tiny $\dots$};
	\filldraw [fill=white, draw=black] (-.15,.25) rectangle (2.65,.75) node[midway] { $\tilde\varphi(t+1)$};
	\node at(1,.9){\tiny $\dots$};
}
\]
Furthermore, we compute mainly using \cref{eq:nhR2andR3} and \cref{eq:nhdotslide},
\[
\tikzdiagh[xscale=.75]{0}{
	\draw (.5,-.5)  --  (.5,1.75);
	\node at(1,.5){\tiny $\dots$};
	\draw (1.5,-.5)  --  (1.5,1.75);
	\draw (2,-.5) -- (2,1) .. controls (2,1.25) and (3,1.25) .. (3,1.75);
	\draw  (2.5,-.5) .. controls (2.5,-.25) and (3,-.25) ..(3,.25) .. controls (3,.5) and (3.5,.5) .. (3.5,.75) .. controls (3.5,1.5)  and (2,1.5) ..(2,1.75) node[tikzdot,pos=.6]{};
	\draw (3, -.5).. controls (3,-.25) and(3.5,-.25) ..  (3.5,.25)  .. controls (3.5,.5) and (3,.5) ..  (3,.75) .. controls (3,1.25) and (2,1.25) .. (2,1.5) .. controls (2,1.625) and (2.5,1.625) .. (2.5,1.75);
	\draw[vstdhl] (0,-.5) node[below]{\small$\lambda$} --  (0,1.75);
	\draw[stdhl] (3.5,-.5) node[below]{\small$1$}  .. controls (3.5,-.25) and (2.5,0) ..(2.5,.25) --  (2.5,.75) .. controls (2.5,1) and (3.5,1.25) .. (3.5,1.5) -- (3.5,1.75);
	\node at(1,.1){\tiny $\dots$};
	\filldraw [fill=white, draw=black] (-.15,.25) rectangle (2.65,.75) node[midway] { $\tilde\varphi(t+1)$};
	\node at(1,.9){\tiny $\dots$};
}
\ = - \  
\tikzdiagh[xscale=.75]{0}{
	\draw (.5,-.5)  --  (.5,1.5);
	\node at(1,.5){\tiny $\dots$};
	\draw (1.5,-.5)  --  (1.5,1.5);
	\draw (2,-.5) -- (2,1) .. controls (2,1.25) and (3,1.25) .. (3,1.5);
	\draw  (2.5,-.5) .. controls (2.5,-.25) and (3,-.25) ..(3,.25) .. controls (3,.5) and (3.5,.5) .. (3.5,.75) .. controls (3.5,1.25) and (2.5,1.25) .. (2.5,1.5);
	\draw (3, -.5).. controls (3,-.25) and(3.5,-.25) ..  (3.5,.25)  .. controls (3.5,.5) and (3,.5) ..  (3,.75) .. controls (3,1.25) and (2,1.25) .. (2,1.5);
	\draw[vstdhl] (0,-.5) node[below]{\small$\lambda$} --  (0,1.5);
	\draw[stdhl] (3.5,-.5) node[below]{\small$1$}  .. controls (3.5,-.25) and (2.5,0) ..(2.5,.25) --  (2.5,.75) .. controls (2.5,1) and (3.5,1.25) .. (3.5,1.5);
	\node at(1,.1){\tiny $\dots$};
	\filldraw [fill=white, draw=black] (-.15,.25) rectangle (2.65,.75) node[midway] { $\tilde\varphi(t+1)$};
	\node at(1,.9){\tiny $\dots$};
} 
\]
and
\[
\tikzdiagh[xscale=.75]{0}{
	\draw (.5,-.5)  --  (.5,1.75);
	\node at(1,.5){\tiny $\dots$};
	\draw (1.5,-.5)  --  (1.5,1.75);
	\draw (3,-.5) .. controls (3,-.25) and (2,-.25) .. (2,0) --  (2,1) .. controls (2,1.25) and (3,1.25) .. (3,1.75);
	\draw  (2,-.5) .. controls (2,-.25) and (3,-.25) ..(3,.25) --  (3,.75) node[midway, tikzdot]{} .. controls (3,1.25) and (2,1.25) .. (2,1.5) .. controls (2,1.625) and (2.5,1.625) .. (2.5,1.75);
	\draw (2.5, -.5).. controls (2.5,-.25) and(3.5,-.25) ..  (3.5,.25) -- (3.5,.75) .. controls (3.5,1.5)  and (2,1.5) ..(2,1.75) node[tikzdot,pos=.6]{};
	\draw[vstdhl] (0,-.5) node[below]{\small$\lambda$} --  (0,1.75);
	\draw[stdhl] (3.5,-.5) node[below]{\small$1$}  .. controls (3.5,-.25) and (2.5,0) ..(2.5,.25) --  (2.5,.75) .. controls (2.5,1) and (3.5,1.25) .. (3.5,1.5) -- (3.5,1.75);
	\node at(1,.1){\tiny $\dots$};
	\filldraw [fill=white, draw=black] (-.15,.25) rectangle (2.65,.75) node[midway] { $\tilde\varphi(t+1)$};
	\node at(1,.9){\tiny $\dots$};
}
\ = \ 
\tikzdiagh[xscale=.75]{0}{
	\draw (.5,-.5)  --  (.5,1.5);
	\node at(1,.5){\tiny $\dots$};
	\draw (1.5,-.5)  --  (1.5,1.5);
	\draw (3,-.5) .. controls (3,-.25) and (2,-.25) .. (2,0) -- (2,1) .. controls (2,1.25) and (3,1.25) .. (3,1.5);
	\draw  (2,-.5) .. controls (2,-.25) and (3,-.25) ..(3,.25) .. controls (3,.5) and (3.5,.5) .. (3.5,.75)  node[tikzdot,pos=1]{}  .. controls (3.5,1.25) and (2.5,1.25) .. (2.5,1.5);
	\draw (2.5, -.5).. controls (2.5,-.25) and(3.5,-.25) ..  (3.5,.25)  .. controls (3.5,.5) and (3,.5) ..  (3,.75) node[tikzdot,pos=1]{} .. controls (3,1.25) and (2,1.25) .. (2,1.5);
	\draw[vstdhl] (0,-.5) node[below]{\small$\lambda$} --  (0,1.5);
	\draw[stdhl] (3.5,-.5) node[below]{\small$1$}  .. controls (3.5,-.25) and (2.5,0) ..(2.5,.25) --  (2.5,.75) .. controls (2.5,1) and (3.5,1.25) .. (3.5,1.5);
	\node at(1,.1){\tiny $\dots$};
	\filldraw [fill=white, draw=black] (-.15,.25) rectangle (2.65,.75) node[midway] { $\tilde\varphi(t+1)$};
	\node at(1,.9){\tiny $\dots$};
}
\]
In conclusion, we get
\begin{align*}
&\tikzdiagh[xscale=.75]{0}{
	\draw (.5,-.5)  --  (.5,1.5);
	\node at(1,.5){\tiny $\dots$};
	\draw (1.5,-.5)  --  (1.5,1.5);
	\draw (2,-.5)  --  (2,1) .. controls (2,1.25) and (2.5,1.25) .. (2.5,1.5) ;
	\draw (2.5,-.5)  --  (2.5,1) .. controls (2.5,1.25) and (2,1.25) .. (2,1.5);
	\draw (3,-.5) -- (3,1.5);
	\draw[vstdhl] (0,-.5) node[below]{\small$\lambda$} --  (0,1.5);
	\draw[stdhl] (3.5,-.5) node[below]{\small$1$} --  (3.5,1.5);
	\node at(1,.1){\tiny $\dots$};
	\filldraw [fill=white, draw=black] (-.15,.25) rectangle (3.65,.75) node[midway] { $\tilde\varphi(t+3)$};
	\node at(1,.9){\tiny $\dots$};
} 
\\
\ &= \  
\tikzdiagh[xscale=.75]{0}{
	\draw (.5,-.5)  --  (.5,1.5);
	\node at(1,.5){\tiny $\dots$};
	\draw (1.5,-.5)  --  (1.5,1.5);
	\draw (2.5,-.5) .. controls (2.5,-.25) and (2,-.25) .. (2,0) -- (2,1) .. controls (2,1.25) and (2.5,1.25) .. (2.5,1.5);
	\draw  (2,-.5) .. controls (2,-.25) and (3,-.25) ..(3,.25) .. controls (3,.5) and (3.5,.5) .. (3.5,.75) .. controls (3.5,1.25) and (3,1.25) .. (3,1.5);
	\draw (3, -.5).. controls (3,-.25) and(3.5,-.25) ..  (3.5,.25)  .. controls (3.5,.5) and (3,.5) ..  (3,.75) .. controls (3,1.25) and (2,1.25) .. (2,1.5);
	\draw[vstdhl] (0,-.5) node[below]{\small$\lambda$} --  (0,1.5);
	\draw[stdhl] (3.5,-.5) node[below]{\small$1$}  .. controls (3.5,-.25) and (2.5,0) ..(2.5,.25) --  (2.5,.75) .. controls (2.5,1) and (3.5,1.25) .. (3.5,1.5);
	\node at(1,.1){\tiny $\dots$};
	\filldraw [fill=white, draw=black] (-.15,.25) rectangle (2.65,.75) node[midway] { $\tilde\varphi(t+1)$};
	\node at(1,.9){\tiny $\dots$};
}
\ + \ 
\tikzdiagh[xscale=.75]{0}{
	\draw (.5,-.5)  --  (.5,1.5);
	\node at(1,.5){\tiny $\dots$};
	\draw (1.5,-.5)  --  (1.5,1.5);
	\draw (2.5,-.5) .. controls (2.5,-.25) and (2,-.25) .. (2,0) -- (2,1) .. controls (2,1.25) and (3,1.25) .. (3,1.5);
	\draw  (2,-.5) .. controls (2,-.25) and (3,-.25) ..(3,.25) node[tikzdot, pos=1]{} .. controls (3,.5) and (3.5,.5) .. (3.5,.75) .. controls (3.5,1.25) and (2.5,1.25) .. (2.5,1.5);
	\draw (3, -.5).. controls (3,-.25) and(3.5,-.25) ..  (3.5,.25)  .. controls (3.5,.5) and (3,.5) ..  (3,.75) .. controls (3,1.25) and (2,1.25) .. (2,1.5);
	\draw[vstdhl] (0,-.5) node[below]{\small$\lambda$} --  (0,1.5);
	\draw[stdhl] (3.5,-.5) node[below]{\small$1$}  .. controls (3.5,-.25) and (2.5,0) ..(2.5,.25) --  (2.5,.75) .. controls (2.5,1) and (3.5,1.25) .. (3.5,1.5);
	\node at(1,.1){\tiny $\dots$};
	\filldraw [fill=white, draw=black] (-.15,.25) rectangle (2.65,.75) node[midway] { $\tilde\varphi(t+1)$};
	\node at(1,.9){\tiny $\dots$};
}
\ + \ 
\tikzdiagh[xscale=.75]{0}{
	\draw (.5,-.5)  --  (.5,1.5);
	\node at(1,.5){\tiny $\dots$};
	\draw (1.5,-.5)  --  (1.5,1.5);
	\draw (3,-.5) .. controls (3,-.25) and (2,-.25) .. (2,0) -- (2,1) .. controls (2,1.25) and (2.5,1.25) .. (2.5,1.5);
	\draw  (2,-.5) .. controls (2,-.25) and (3,-.25) ..(3,.25) .. controls (3,.5) and (3.5,.5) .. (3.5,.75) .. controls (3.5,1.25) and (3,1.25) .. (3,1.5);
	\draw (2.5, -.5).. controls (2.5,-.25) and(3.5,-.25) ..  (3.5,.25)  .. controls (3.5,.5) and (3,.5) ..  (3,.75) node[tikzdot,pos=1]{} .. controls (3,1.25) and (2,1.25) .. (2,1.5);
	\draw[vstdhl] (0,-.5) node[below]{\small$\lambda$} --  (0,1.5);
	\draw[stdhl] (3.5,-.5) node[below]{\small$1$}  .. controls (3.5,-.25) and (2.5,0) ..(2.5,.25) --  (2.5,.75) .. controls (2.5,1) and (3.5,1.25) .. (3.5,1.5);
	\node at(1,.1){\tiny $\dots$};
	\filldraw [fill=white, draw=black] (-.15,.25) rectangle (2.65,.75) node[midway] { $\tilde\varphi(t+1)$};
	\node at(1,.9){\tiny $\dots$};
}
\ + \ 
\tikzdiagh[xscale=.75]{0}{
	\draw (.5,-.5)  --  (.5,1.5);
	\node at(1,.5){\tiny $\dots$};
	\draw (1.5,-.5)  --  (1.5,1.5);
	\draw (3,-.5) .. controls (3,-.25) and (2,-.25) .. (2,0) -- (2,1) .. controls (2,1.25) and (3,1.25) .. (3,1.5);
	\draw  (2,-.5) .. controls (2,-.25) and (3,-.25) ..(3,.25) .. controls (3,.5) and (3.5,.5) .. (3.5,.75)  node[tikzdot,pos=1]{}  .. controls (3.5,1.25) and (2.5,1.25) .. (2.5,1.5);
	\draw (2.5, -.5).. controls (2.5,-.25) and(3.5,-.25) ..  (3.5,.25)  .. controls (3.5,.5) and (3,.5) ..  (3,.75) node[tikzdot,pos=1]{} .. controls (3,1.25) and (2,1.25) .. (2,1.5);
	\draw[vstdhl] (0,-.5) node[below]{\small$\lambda$} --  (0,1.5);
	\draw[stdhl] (3.5,-.5) node[below]{\small$1$}  .. controls (3.5,-.25) and (2.5,0) ..(2.5,.25) --  (2.5,.75) .. controls (2.5,1) and (3.5,1.25) .. (3.5,1.5);
	\node at(1,.1){\tiny $\dots$};
	\filldraw [fill=white, draw=black] (-.15,.25) rectangle (2.65,.75) node[midway] { $\tilde\varphi(t+1)$};
	\node at(1,.9){\tiny $\dots$};
}
\end{align*}
which is symmetric with respect to taking the mirror image along the horizontal axis. Therefore, we get the same a crossing at the bottom of $\tilde \varphi(t+3)$, finishing the proof.
\end{proof}

\begin{lem}\label{lem:varphitredblackcrossing}
We have
\[
\tikzdiagh[xscale=.75]{0}{
	\draw (.5,-.5)  --  (.5,1.5);
	\node at(1,.5){\tiny $\dots$};
	\draw (1.5,-.5)  --  (1.5,1.5);
	\draw (2,-.5)  --  (2,1) .. controls (2,1.25) and (2.5,1.25) .. (2.5,1.5);
	\draw[vstdhl] (0,-.5) node[below]{\small$\lambda$} --  (0,1.5);
	\draw[stdhl] (2.5,-.5) node[below]{\small$1$} --  (2.5,1) .. controls (2.5,1.25) and (2,1.25) .. (2,1.5);
	\node at(1,.1){\tiny $\dots$};
	\filldraw [fill=white, draw=black] (-.15,.25) rectangle (2.65,.75) node[midway] { $\tilde\varphi(t+1)$};
	\node at(1,.9){\tiny $\dots$};
}
\ = - \ 
\tikzdiagh[xscale=.75]{0}{
	\draw (.5,-.5)  --  (.5,1.5);
	\node at(1,.5){\tiny $\dots$};
	\draw (1.5,-.5)  --  (1.5,1.5);
	\draw (2,-.5)  ..controls (2,-.25) and (2.5,-.25) ..  (2.5,0) --(2.5,1.5);
	\draw[vstdhl] (0,-.5) node[below]{\small$\lambda$} --  (0,1.5);
	\draw[stdhl] (2.5,-.5) node[below]{\small$1$} .. controls (2.5,-.25) and (2,-.25) ..  (2,0) -- (2,1.5);
	\node at(1,.1){\tiny $\dots$};
	\filldraw [fill=white, draw=black] (-.15,.25) rectangle (2.15,.75) node[midway] { $\tilde\varphi(t)$};
	\node at(1,.9){\tiny $\dots$};
}
\]
for all $t \geq 0$. 
\end{lem}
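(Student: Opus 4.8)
The plan is to establish the identity by induction on $t$, following the same template as the preceding lemmas \cref{lem:varphitdotright}, \cref{lem:varphitdotsecondright} and \cref{lem:varphitcrossingsecondright}. Conceptually, the statement is exactly that $\tilde\varphi^0$ is compatible with the right action of the black/red crossing that pulls the last black strand of the leftmost region across the first red strand; the overall sign $-1$ records the change of normalisation from $(-1)^k$ to $(-1)^{k-1}$ in the formula $\tilde\varphi^0(1_{(k,\ell,\rho)}) = (-1)^k\,\tilde\varphi(k)\otimes\bar 1_{\ell,\rho}$ coming from \cref{prop:tildevarphi0rec}.

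For the base case $t=0$, note that $\tilde\varphi(0)=0$, so the right-hand side vanishes and it suffices to show that gluing a black/red crossing on the top right of $\tilde\varphi(1)$ gives zero. I would argue directly from the explicit two-term formula for $\tilde\varphi(1)$ in \cref{prop:tildevarphi0rec}: in each term the single black strand is nailed once and the red strand wraps around the blue strand, so after gluing the crossing one can pull the red strand back past the nail using \eqref{eq:nailslidedcross} and resolve the resulting black/red double crossing by \eqref{eq:redR2} and \eqref{eq:crossingslidered}. This forces the black strand into a configuration with two successive nails on the blue strand, which is annihilated by \eqref{eq:relNail}; the same reduction applies to both terms, so the sum is $0$.

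For the induction step, assume the lemma for $t$ and consider $\tilde\varphi(t+2)$. First expand it by the recursion \eqref{eq:altvarphik}, which writes it as a short sum of diagrams built from the box $\tilde\varphi(t+1)$ with a black/black crossing and a black/red crossing (and, in the last term, a dot) glued onto the two appended strands. Gluing our extra black/red crossing on top and sliding it downward through the appended crossings and strands using \eqref{eq:nhR2andR3}, \eqref{eq:nhdotslide}, \eqref{eq:redR2} and \eqref{eq:crossingslidered}, one brings it to sit directly above the box $\tilde\varphi(t+1)$; the induction hypothesis then replaces it by $-\tilde\varphi(t)$ with a black/red crossing on the bottom. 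Reading \eqref{eq:altvarphik} backwards (with $t$ in place of $t+1$) identifies the outcome with $-\tilde\varphi(t+1)$ carrying a black/red crossing absorbed at the bottom, which is the claimed right-hand side; the dot term is dealt with along the way exactly as in \cref{lem:varphitdotsecondright}, using \eqref{eq:nhdotslide}, \eqref{eq:redR2} and \cref{lem:varphitdotright}.

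The main obstacle is the bookkeeping: \eqref{eq:altvarphik} already contributes several terms, and gluing the crossing and sliding it through the appended strands multiplies the number of diagrams, while the cancellations that leave precisely $-\tilde\varphi(t+1)$, with the correct sign and without leftover dotted or nailed correction terms, are delicate, in the same way as in \cref{lem:varphitcrossingsecondright}. A secondary difficulty is the interaction of the moving black/red crossing with the nails hidden inside the box $\tilde\varphi(t+1)$; here one must combine the nail-slide relations \eqref{eq:nailslidedcross} and \eqref{eq:relNail} (via their consequence \eqref{eq:dottednailslide}) with \cref{lem:varphitdotright} to see that no nailed error term survives. Once \cref{lem:varphitredblackcrossing} is proved, it completes, together with \cref{lem:varphitdotright}--\cref{lem:varphitcrossingsecondright}, the verification that $\tilde\varphi^0$ is a morphism of dg-bimodules, and hence, by the reduction of \cref{sec:proofofbimodulemap}, that $\varphi$ is an $A_\infty$-bimodule map, i.e. \cref{thm:phiisAinfty}.
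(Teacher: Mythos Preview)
The base case is where your argument breaks. Each of the two summands of $\tilde\varphi(1)$ carries exactly one nail, and none of \eqref{eq:nailslidedcross}, \eqref{eq:redR2}, \eqref{eq:crossingslidered} produces a new one, so no double-nail configuration can appear and the third relation in \eqref{eq:relNail} is simply not applicable. The paper's base case is a one-line application of \eqref{eq:nailslidedcross}: once the black/red crossing is glued on top of $\tilde\varphi(1)$, that relation (nail slides through the double-braiding generator of $X$) makes the two summands equal, so their difference vanishes.

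For the induction step your outline is in the right spirit but diverges from the paper in organisation. The paper expands $\tilde\varphi(t+2)$ via the \emph{mirror} of \eqref{eq:altvarphik}, so that the appended crossings sit at the bottom and the glued crossing interacts directly with the top of the box; after simplifying with \eqref{eq:redR2} and \eqref{eq:crossingslidered} it obtains three terms, shows one of them vanishes by the induction hypothesis (the resulting $\tilde\varphi(t)$-term acquires a black/black double crossing and dies by \eqref{eq:nhR2andR3}), and checks that the remaining two combine to the right-hand side. Your plan of sliding the glued crossing all the way down to the $\tilde\varphi(t+1)$-box and then reading \eqref{eq:altvarphik} backwards is not obviously workable as stated: in the terms of \eqref{eq:altvarphik} the red strand enters the box one slot to the left of its outer position, so the crossing you are moving does not land where the hypothesis applies without further manipulations that you do not specify.
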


\begin{proof}
We prove the statement by induction on $t$.
The case $t=0$ follows from \cref{eq:nailslidedcross}. We suppose the claim is true for $t \geq 0$. We compute using the mirror of \cref{eq:altvarphik},
\begin{align*}
\tikzdiagh[xscale=.75]{0}{
	\draw (.5,-.5)  --  (.5,1.5);
	\node at(1,.5){\tiny $\dots$};
	\draw (1.5,-.5)  --  (1.5,1.5);
	\draw (2,-.5) -- (2,1.5);
	\draw (2.5,-.5)  --  (2.5,1) .. controls (2.5,1.25) and (3,1.25) .. (3,1.5);
	\draw[vstdhl] (0,-.5) node[below]{\small$\lambda$} --  (0,1.5);
	\draw[stdhl] (3,-.5) node[below]{\small$1$} --  (3,1) .. controls (3,1.25) and (2.5,1.25) .. (2.5,1.5);
	\node at(1,.1){\tiny $\dots$};
	\filldraw [fill=white, draw=black] (-.15,.25) rectangle (3.15,.75) node[midway] { $\tilde\varphi(t+2)$};
	\node at(1,.9){\tiny $\dots$};
}
\ &= \ 
\tikzdiagh[xscale=.75]{0}{
	\draw (.5,-.5)  --  (.5,1.75);
	\node at(1,.5){\tiny $\dots$};
	\draw (1.5,-.5)  --  (1.5,1.75);
	\draw (2.5,-.5) .. controls (2.5,-.25) and (2,-.25) .. (2,0) --  (2,1) -- (2,1.75);
	\draw (2, -.5).. controls (2,-.25) and(3,-.25) ..  (3,.25)  --  (3,.75) .. controls (3,1) and (2.5,1) .. (2.5,1.25) .. controls (2.5,1.5) and (3,1.5) ..(3,1.75);
	\draw[vstdhl] (0,-.5) node[below]{\small$\lambda$} --  (0,1.75);
	\draw[stdhl] (3,-.5) node[below]{\small$1$}  .. controls (3,-.25) and (2.5,-.25) ..(2.5,.25) --  (2.5,.75) .. controls (2.5,1) and (3,1) .. (3,1.25) .. controls (3,1.5) and (2.5,1.5) .. (2.5,1.75);
	\node at(1,.1){\tiny $\dots$};
	\filldraw [fill=white, draw=black] (-.15,.25) rectangle (2.65,.75) node[midway] { $\tilde\varphi(t+1)$};
	\node at(1,.9){\tiny $\dots$};
}
\ + \ 
\tikzdiagh[xscale=.75]{0}{
	\draw (.5,-.5)  --  (.5,1.75);
	\node at(1,.5){\tiny $\dots$};
	\draw (1.5,-.5)  --  (1.5,1.75);
	\draw (2.5,-.5) .. controls (2.5,-.25) and (2,-.25) .. (2,0) --  (2,1) .. controls (2,1.25) and (3,1.25) .. (3,1.75);
	\draw (2, -.5).. controls (2,-.25) and(3,-.25) ..  (3,.25) node[tikzdot, pos=.2]{}  --  (3,.75) .. controls (3,1.25) and (2,1.25) .. (2,1.75);
	\draw[vstdhl] (0,-.5) node[below]{\small$\lambda$} --  (0,1.75);
	\draw[stdhl] (3,-.5) node[below]{\small$1$}  .. controls (3,-.25) and (2.5,-.25) ..(2.5,.25) --  (2.5,.75) .. controls (2.5,1) and (3,1) .. (3,1.25) .. controls (3,1.5) and (2.5,1.5) .. (2.5,1.75);
	\node at(1,.1){\tiny $\dots$};
	\filldraw [fill=white, draw=black] (-.15,.25) rectangle (2.65,.75) node[midway] { $\tilde\varphi(t+1)$};
	\node at(1,.9){\tiny $\dots$};
} 
\\
\ &= \ 
\tikzdiagh[xscale=.75]{0}{
	\draw (.5,-.5)  --  (.5,1.75);
	\node at(1,.5){\tiny $\dots$};
	\draw (1.5,-.5)  --  (1.5,1.75);
	\draw (2.5,-.5) .. controls (2.5,-.25) and (2,-.25) .. (2,0) --  (2,1) -- (2,1.75);
	\draw (2, -.5).. controls (2,-.25) and(3,-.25) ..  (3,.25)  --  (3,.75) --(3,1.75) node[tikzdot,midway]{};
	\draw[vstdhl] (0,-.5) node[below]{\small$\lambda$} --  (0,1.75);
	\draw[stdhl] (3,-.5) node[below]{\small$1$}  .. controls (3,-.25) and (2.5,-.25) ..(2.5,.25) --  (2.5,.75) --(2.5,1.75);
	\node at(1,.1){\tiny $\dots$};
	\filldraw [fill=white, draw=black] (-.15,.25) rectangle (2.65,.75) node[midway] { $\tilde\varphi(t+1)$};
	\node at(1,.9){\tiny $\dots$};
}
\ + \ 
\tikzdiagh[xscale=.75]{0}{
	\draw (.5,-.5)  --  (.5,1.75);
	\node at(1,.5){\tiny $\dots$};
	\draw (1.5,-.5)  --  (1.5,1.75);
	\draw (2.5,-.5) .. controls (2.5,-.25) and (2,-.25) .. (2,0) --  (2,1) .. controls (2,1.25) and (3,1.25) .. (3,1.75);
	\draw (2, -.5).. controls (2,-.25) and(3,-.25) ..  (3,.25) node[tikzdot, pos=.2]{}  --  (3,.75) .. controls (3,1.25) and (2,1.25) .. (2,1.75);
	\draw[vstdhl] (0,-.5) node[below]{\small$\lambda$} --  (0,1.75);
	\draw[stdhl] (3,-.5) node[below]{\small$1$}  .. controls (3,-.25) and (2.5,-.25) ..(2.5,.25) --  (2.5,.75) .. controls (2.5,1) and (2,1) .. (2,1.25) .. controls (2,1.5) and (2.5,1.5) .. (2.5,1.75);
	\node at(1,.1){\tiny $\dots$};
	\filldraw [fill=white, draw=black] (-.15,.25) rectangle (2.65,.75) node[midway] { $\tilde\varphi(t+1)$};
	\node at(1,.9){\tiny $\dots$};
} 
\ - \ 
\tikzdiagh[xscale=.75]{0}{
	\draw (.5,-.5)  --  (.5,1.75);
	\node at(1,.5){\tiny $\dots$};
	\draw (1.5,-.5)  --  (1.5,1.75);
	\draw (2.5,-.5) .. controls (2.5,-.25) and (2,-.25) .. (2,0) --  (2,1.75);
	\draw (2, -.5).. controls (2,-.25) and(3,-.25) ..  (3,.25) node[tikzdot, pos=.2]{}  --  (3,1.75) ;
	\draw[vstdhl] (0,-.5) node[below]{\small$\lambda$} --  (0,1.75);
	\draw[stdhl] (3,-.5) node[below]{\small$1$}  .. controls (3,-.25) and (2.5,-.25) ..(2.5,.25) --  (2.5,.75) -- (2.5,1.75);
	\node at(1,.1){\tiny $\dots$};
	\filldraw [fill=white, draw=black] (-.15,.25) rectangle (2.65,.75) node[midway] { $\tilde\varphi(t+1)$};
	\node at(1,.9){\tiny $\dots$};
} 
\end{align*}
Then, we have
\[
\tikzdiagh[xscale=.75]{0}{
	\draw (.5,-.5)  --  (.5,1.75);
	\node at(1,.5){\tiny $\dots$};
	\draw (1.5,-.5)  --  (1.5,1.75);
	\draw (2.5,-.5) .. controls (2.5,-.25) and (2,-.25) .. (2,0) --  (2,1) .. controls (2,1.25) and (3,1.25) .. (3,1.75);
	\draw (2, -.5).. controls (2,-.25) and(3,-.25) ..  (3,.25) node[tikzdot, pos=.2]{}  --  (3,.75) .. controls (3,1.25) and (2,1.25) .. (2,1.75);
	\draw[vstdhl] (0,-.5) node[below]{\small$\lambda$} --  (0,1.75);
	\draw[stdhl] (3,-.5) node[below]{\small$1$}  .. controls (3,-.25) and (2.5,-.25) ..(2.5,.25) --  (2.5,.75) .. controls (2.5,1) and (2,1) .. (2,1.25) .. controls (2,1.5) and (2.5,1.5) .. (2.5,1.75);
	\node at(1,.1){\tiny $\dots$};
	\filldraw [fill=white, draw=black] (-.15,.25) rectangle (2.65,.75) node[midway] { $\tilde\varphi(t+1)$};
	\node at(1,.9){\tiny $\dots$};
} 
\ = - \ 
\tikzdiagh[xscale=.75]{0}{
	\draw (.5,-.5)  --  (.5,1.5);
	\node at(1,.5){\tiny $\dots$};
	\draw (1.5,-.5)  --  (1.5,1.5);
	\draw  (2.5,-.5).. controls (2.5,-.375) and (2,-.375) .. (2,-.25)  ..controls (2,0) and (2.5,0) ..  (2.5,.25) --(2.5,1) .. controls (2.5,1.25) and (3,1.25) .. (3,1.5);
	\draw (2,-.5) .. controls (2,-.25) and (3,-.25) .. (3,.25) -- (3,1) .. controls (3,1.25) and (2,1.25) .. (2,1.5);
	\draw[vstdhl] (0,-.5) node[below]{\small$\lambda$} --  (0,1.5);
	\draw[stdhl] (3,-.5) node[below]{\small$1$} .. controls (3,-.25) and (2,-.25) ..  (2,.25) -- (2,1) .. controls (2,1.25) and (2.5,1.25) .. (2.5,1.5);
	\node at(1,.1){\tiny $\dots$};
	\filldraw [fill=white, draw=black] (-.15,.25) rectangle (2.15,.75) node[midway] { $\tilde\varphi(t)$};
	\node at(1,.9){\tiny $\dots$};
}
\ = 0,
\]
by induction  hypothesis.
Finally, we obtain
\[
\tikzdiagh[xscale=.75]{0}{
	\draw (.5,-.5)  --  (.5,1.75);
	\node at(1,.5){\tiny $\dots$};
	\draw (1.5,-.5)  --  (1.5,1.75);
	\draw (2.5,-.5) .. controls (2.5,-.25) and (2,-.25) .. (2,0) --  (2,1) -- (2,1.75);
	\draw (2, -.5).. controls (2,-.25) and(3,-.25) ..  (3,.25)  --  (3,.75) --(3,1.75) node[tikzdot,midway]{};
	\draw[vstdhl] (0,-.5) node[below]{\small$\lambda$} --  (0,1.75);
	\draw[stdhl] (3,-.5) node[below]{\small$1$}  .. controls (3,-.25) and (2.5,-.25) ..(2.5,.25) --  (2.5,.75) --(2.5,1.75);
	\node at(1,.1){\tiny $\dots$};
	\filldraw [fill=white, draw=black] (-.15,.25) rectangle (2.65,.75) node[midway] { $\tilde\varphi(t+1)$};
	\node at(1,.9){\tiny $\dots$};
}
\ - \ 
\tikzdiagh[xscale=.75]{0}{
	\draw (.5,-.5)  --  (.5,1.75);
	\node at(1,.5){\tiny $\dots$};
	\draw (1.5,-.5)  --  (1.5,1.75);
	\draw (2.5,-.5) .. controls (2.5,-.25) and (2,-.25) .. (2,0) --  (2,1.75);
	\draw (2, -.5).. controls (2,-.25) and(3,-.25) ..  (3,.25) node[tikzdot, pos=.2]{}  --  (3,1.75) ;
	\draw[vstdhl] (0,-.5) node[below]{\small$\lambda$} --  (0,1.75);
	\draw[stdhl] (3,-.5) node[below]{\small$1$}  .. controls (3,-.25) and (2.5,-.25) ..(2.5,.25) --  (2.5,.75) -- (2.5,1.75);
	\node at(1,.1){\tiny $\dots$};
	\filldraw [fill=white, draw=black] (-.15,.25) rectangle (2.65,.75) node[midway] { $\tilde\varphi(t+1)$};
	\node at(1,.9){\tiny $\dots$};
} 
\ = - \ 
\tikzdiagh[xscale=.75]{0}{
	\draw (.5,-.5)  --  (.5,1.5);
	\node at(1,.5){\tiny $\dots$};
	\draw (1.5,-.5)  --  (1.5,1.5);
	\draw (2,-.5) -- (2,1.5);
	\draw (2.5,-.5)  .. controls (2.5,-.25) and (3,-.25) ..  (3,0) -- (3,1.5);
	\draw[vstdhl] (0,-.5) node[below]{\small$\lambda$} --  (0,1.5);
	\draw[stdhl] (3,-.5) node[below]{\small$1$} .. controls (3,-.25) and (2.5,-.25) ..  (2.5,0) --(2.5,1.5);
	\node at(1,.1){\tiny $\dots$};
	\filldraw [fill=white, draw=black] (-.15,.25) rectangle (2.65,.75) node[midway] { $\tilde\varphi(t+1)$};
	\node at(1,.9){\tiny $\dots$};
}
\]
finishing the proof.
\end{proof}

\begin{prop}
The map $\tilde\varphi^0$ is a map of dg-bimodules.
\end{prop}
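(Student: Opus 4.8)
The plan is to reduce the statement to a finite list of local diagrammatic identities, almost all of which are exactly the five preceding lemmas. Since every module in sight is a module over $(T^{\lambda,r},0)$, which carries the trivial differential, "dg-bimodule map" here just means "graded bimodule map": the differentials on $q^2(T^{\lambda,r}_b)[1]$ and on $X\otimes_T X$ both vanish, so commutation with differentials is automatic. Moreover $\tilde\varphi^0=(1\otimes\gamma)\circ\varphi^0$ is a composite of left $(T^{\lambda,r},0)$-module maps, hence automatically left $(T^{\lambda,r},0)$-linear; the Koszul signs forced by the homological shift $[1]$ on the source are absorbed by the fact that the image $\tilde\varphi(k)$ of an idempotent lies in homological degree $1$, matching the shift. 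So the only thing left to prove is right $(T^{\lambda,r},0)$-linearity.

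First I would record the standard reduction. A left $(T^{\lambda,r},0)$-module map $f\colon T^{\lambda,r}\to M$ is right-linear as soon as $f(1_\rho\cdot g)=f(1_\rho)\cdot g$ holds for every idempotent $1_\rho$ and every algebra generator $g$ glued at the bottom of $1_\rho$: writing $x=\sum_\rho x\,1_\rho$ and using left-linearity gives $f(xg)=\sum_\rho x\,f(1_\rho g)=\sum_\rho x\,f(1_\rho)\,g=f(x)g$, and right multiplication by an arbitrary element follows by iterating along a diagrammatic factorization into generators (bottom to top: crossings, then nails, then dots). The generators of $T^{\lambda,r}$ are dots on the black strands, black/black crossings, black/red crossings in both orientations, and the nail on the leftmost black strand.

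Next, using the explicit formula of Proposition~\ref{prop:tildevarphi0rec}, $\tilde\varphi^0(1_\rho)=(-1)^{b_0}\,\tilde\varphi(b_0)\otimes\bar 1$ with $b_0$ the number of black strands to the left of the first red strand, I would split according to where $g$ acts. If $g$ acts only on strands untouched by the box $\tilde\varphi(b_0)$ — that is, on the black strands lying to the right of the first red strand, carried by $\bar 1$, or is a black/red crossing not involving the first red strand — then gluing $g$ at the bottom visibly commutes with $\tilde\varphi^0$ and the identity is trivial (one may first pass to a local model as in the proof of Proposition~\ref{prop:catactioncommutes}). The remaining cases are a dot or a black/black crossing on the black strands adjacent to the first red strand, and a black/red crossing involving the first red strand: these are precisely Lemmas~\ref{lem:varphitdotright}, \ref{lem:varphitdotsecondright}, \ref{lem:varphitcrossingright}, \ref{lem:varphitcrossingsecondright} and~\ref{lem:varphitredblackcrossing}, each of which says exactly that $\tilde\varphi(k)$ with the relevant generator glued at the bottom equals that generator glued at the bottom of $\tilde\varphi^0(1_\rho)$; combined with the recursive definition of $\tilde\varphi(k)$ this handles every strand position. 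Finally, for the nail one notes $1_\rho\cdot\mathrm{nail}=\mathrm{nail}\cdot 1_\rho$ is the nail element $\theta$, so by left-linearity $\tilde\varphi^0(1_\rho\cdot\mathrm{nail})=\theta\cdot\tilde\varphi^0(1_\rho)$, and it remains to check that attaching the nail at the top and at the bottom of $\tilde\varphi(b_0)\otimes\bar 1$ yield the same element; this is done by sliding the bottom nail upward through the double braiding inside $\tilde\varphi(b_0)$ using~\eqref{eq:nailslidedcross} and absorbing the error terms via~\eqref{eq:relNail}.

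I expect the main obstacle to be bookkeeping rather than anything conceptual: correctly matching the strand on which each generator acts with the correspondingly indexed strand inside the recursively built box $\tilde\varphi(b_0)$, and tracking the signs $(-1)^{b_0}$ together with the homological sign conventions so that the five lemmas plug in with the right orientation. The nail case is the only one with no off-the-shelf lemma, so a short independent computation will be needed there; I would model it closely on the proof of Lemma~\ref{lem:varphitredblackcrossing}.
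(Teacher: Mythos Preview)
Your proposal is correct and follows essentially the same approach as the paper: reduce to checking that each generator of $T_b^{\lambda,r}$ acts the same on the left and on the right of $(-1)^k\tilde\varphi(k)\otimes\bar 1_{\ell,\rho}$, invoke Lemmas~\ref{lem:varphitdotright}--\ref{lem:varphitredblackcrossing} together with the recursion for $\tilde\varphi(k)$ to handle dots and crossings, and treat the nail separately by an induction on $k$ with base case given by~\eqref{eq:relNail}. Your more explicit bookkeeping (automatic left-linearity, the generator-by-idempotent reduction, the locality argument for generators away from the first red strand) is a helpful expansion of what the paper leaves implicit.
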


\begin{proof}
As already mentioned above, it is enough to show that the left and right action by the same element of $T_b^{\lambda,r}$ on
\[
\sum_{k+\ell+|\rho|=b}  (-1)^k \tilde\varphi(k) \otimes \bar 1_{\ell,\rho}
\]
coincide. We obtain commutation with dots and crossings by induction on $k$, 
using Lemmas~\ref{lem:varphitdotright}--\ref{lem:varphitredblackcrossing}.
The commutation with a nail also  comes from a straightforward induction on $k$, where the base case is immediate by \cref{eq:relNail}.
\end{proof}




\section{Homological toolbox}\label{sec:dgcat}

The goal of this section is to recall and briefly explain the tools from homological algebra which we use in this paper. 
The main references for this section are \cite{keller}, \cite{toen} and \cite{asympK0} (see also \cite{toenlectures}, \cite{kellersurvey} and \cite[Appendix A]{naissevaz3}).

\subsection{Derived category}

Let $(A, d_A)$ be a $\bZ^n$-graded dg-algebra (with the same conventions as in \cref{sec:conventions}). 

The \emph{derived category $\cD(A,d_A)$} of $(A,d_A)$ is the localization of the category $(A,d_A)\amod$ of  $\bZ^n$-graded (left) $(A,d_A)$-dg-modules along quasi-isomorphisms. It is a triangulated category with translation functor induced by the homological shift functor $[1]$, and distinguished triangles are equivalent to 
\[
(M,d_N) \xrightarrow{f} (N,d_N) \xrightarrow{\imath_N} \cone(f) \xrightarrow{\pi_{M[1]}} (M,d_N)[1],
\]
for every maps of dg-modules $f : (M,d_M) \rightarrow (N,d_N)$. 

\subsubsection{(Co)fibrant replacements}

A \emph{cofibrant} dg-module $(P,d_P)$ is a dg-module such that $P$ is projective as graded $A$-module. 
Equivalently, it is a dg-module $(P,d_P)$ such that for every surjective quasi-isomorphism $(L,d_L) \xrightarrowdbl{\simeq} (M,d_M)$, every morphism $(P,d_P) \rightarrow (M,d_M)$ factors through $(L,d_L)$.
For any dg-module $(N, d_N)$, we have
\begin{align*}
\Hom_{\cD(A,d_A)}\bigl((P,d_P), (N,d_N)\bigr) \cong H^0_0 \left(\HOM_{(A,d_A)}\bigl((P,d_P), (N,d_N) \bigr) \right).
\end{align*}
Moreover, tensoring with a cofibrant dg-module preserves quasi-isomorphisms.

Given a left (resp. right) dg-module $(M,d_M)$, there exists a cofibrant replacement $(\br M , d_{\br M })$ (resp. $( M\rb, d_{M\rb})$) together with a surjective quasi-isomorphism $\pi_M : \br M \xrightarrowdbl{\simeq}  M$ (resp. $\pi'_M :  M \rb \xrightarrowdbl{\simeq} M$). Moreover, the assignment $M \mapsto \br M$ (resp. $M \mapsto  M \rb$) is natural. Thus, we can compute $\Hom_{\cD(A,d_A)}\bigl((M,d_M), (N,d_N)\bigr)$ by taking 
\[
H^0_0\left(\HOM_{(A,d_A)}\bigl((\br M,d_{\br M}), (N,d_N) \bigr) \right) \cong \Hom_{\cD(A,d_A)}\bigl((M,d_M), (N,d_N)\bigr).
\] 

\smallskip

A  dg-module $(I,d_I)$ is \emph{fibrant} if for every injective quasi-isomorphism $(L,d_L) \xhookrightarrow{\simeq} (M, d_M)$, every morphism $(L,d_L) \rightarrow (M,d_M)$ extends to $(M,d_M)$.  Then, we have
\begin{align*}
\Hom_{\cD(A,d_A)}\bigl((M,d_M), (I,d_I)\bigr) \cong H^0_0\left(\HOM_{(A,d_A)}\bigl((M,d_M), (I,d_I) \bigr) \right).
\end{align*}
Again, for every dg-module $(M,d_M)$ there exists a fibrant replacement $(\fr M, d_{\fr M})$ with an injective quasi-isomorphism $ \imath_M : (M,d_M) \xhookrightarrow{\simeq} (\fr M, d_{\fr M})$.

\subsubsection{Strongly projective modules}\label{sec:stronglyproj}

Let $R$ be a unital commutative ring. 
The following was introduced in  \cite{moore}, but we use the definition given in \cite{sixdgmodels}.

\begin{defn}[{\cite[Definition 8.17]{sixdgmodels}}] \label{def:stronglyproj}
A dg-module $(P,d_P)$ over a dg-$R$-algebra $(A,d_A)$ is \emph{strongly projective} if it is a direct summand of some dg-module $(A, d_A) \otimes_R (Q, d_Q)$ where $(Q,d_Q)$ is a $(R,0)$-dg-module such that both $H(Q,d_Q)$ and $\Image (d_Q)$ are projective $R$-modules.
\end{defn}

\begin{prop}[{\cite[Lemma 8.23]{sixdgmodels}}]\label{prop:stronglyproj}
Let $(P,d_P)$ be a strongly projective left dg-module. For any right dg-module $(M,d_M)$, we have an isomorphism
\[
H\left( (M,d_M) \otimes_{(A,d_A)} (P,d_P) \right) \cong H(M,d_M) \otimes_{H(A,d_A)} H(P,d_P).
\]
\end{prop}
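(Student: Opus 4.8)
Proof plan for Proposition~\ref{prop:stronglyproj} (the final statement, asserting that if $(P,d_P)$ is strongly projective then $H((M,d_M)\otimes_{(A,d_A)}(P,d_P))\cong H(M,d_M)\otimes_{H(A,d_A)}H(P,d_P)$ for all right dg-modules $(M,d_M)$).

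The plan is to reduce to the two defining features of strong projectivity in turn. First I would observe that both sides of the claimed isomorphism are additive functors of $(P,d_P)$ that commute with direct summands (a retract of dg-modules induces a retract on homology, on tensor products, and on tensor products of homology), so it suffices to treat the case $(P,d_P)=(A,d_A)\otimes_R(Q,d_Q)$ with $(Q,d_Q)$ an $(R,0)$-dg-module such that $H(Q,d_Q)$ and $\Image(d_Q)$ are projective $R$-modules. In that case $(M,d_M)\otimes_{(A,d_A)}(A,d_A)\otimes_R(Q,d_Q)\cong (M,d_M)\otimes_R(Q,d_Q)$ as dg-modules (with the differential given by the graded Leibniz rule of \cref{eq:dgtens}), so I am reduced to proving a Künneth-type statement over the ground ring $R$: that $H(M\otimes_R Q)\cong H(M)\otimes_R H(Q)$, and that this is compatible with the $H(A,d_A)$-actions on both sides so as to match $H(M)\otimes_{H(A,d_A)}H(Q)=H(M)\otimes_{H(A,d_A)}(H(A,d_A)\otimes_R H(Q))\cong H(M)\otimes_R H(Q)$ (using that $H((A,d_A)\otimes_R(Q,d_Q))\cong H(A,d_A)\otimes_R H(Q)$, which is itself the same Künneth statement applied with $M=A$).

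The Künneth step is where the hypotheses on $Q$ are used. Consider the short exact sequences of $R$-complexes $0\to Z_\bullet\to Q_\bullet\to \Image(d_Q)[-1]\to 0$ where $Z_\bullet=\ker d_Q$ (with zero differential) and $0\to \Image(d_Q)\to Z_\bullet\to H(Q,d_Q)\to 0$. Since $\Image(d_Q)$ is a projective, hence flat, $R$-module, the first sequence stays exact after applying $(M,d_M)\otimes_R-$, and since $\Image(d_Q)$ consists of cycles this identifies the associated long exact sequence in homology in the usual way; since moreover $H(Q,d_Q)$ and $\Image(d_Q)$ are projective the relevant $\operatorname{Tor}$-terms vanish, and one gets the clean isomorphism $H(M\otimes_R Q)\cong H(M)\otimes_R H(Q)$. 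This is the standard proof of the Künneth formula under flatness/projectivity hypotheses, restricted to the case we need; I would cite \cite{sixdgmodels} (from which \cref{def:stronglyproj} and \cref{prop:stronglyproj} are taken) rather than reprove it in full, and simply indicate the reduction above. The multigrading by $\bZ^n$ plays no role beyond being carried along, since everything is done degree-wise.

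The only genuine subtlety — and the main obstacle — is bookkeeping the module structures: one must check that the chain of isomorphisms $(M,d_M)\otimes_{(A,d_A)}(A,d_A)\otimes_R(Q,d_Q)\cong (M,d_M)\otimes_R(Q,d_Q)$ and the Künneth isomorphism are compatible with the residual structures through which the two sides of the proposition acquire their meaning, in particular that the isomorphism $H(M\otimes_R Q)\cong H(M)\otimes_R H(Q)$ is natural enough that passing to a direct summand $(P,d_P)\mid (A,d_A)\otimes_R(Q,d_Q)$ and taking the induced idempotent on homology recovers exactly $H(M)\otimes_{H(A,d_A)}H(P)$. This is routine but needs care because cofibrant-replacement subtleties (\cref{sec:cofBi}, \cref{sec:cofX}) have already shown how easily module structures degrade to the $A_\infty$ level; here, however, no replacement is needed precisely because $(P,d_P)$ is strongly projective, so the tensor product computes the derived tensor product on the nose, and the argument stays strictly within ordinary (multigraded) homological algebra.
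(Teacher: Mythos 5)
The paper does not actually prove this proposition; it is quoted verbatim from \cite[Lemma 8.23]{sixdgmodels} with no argument supplied, so there is no ``paper proof'' to compare against. Your reconstruction is correct and follows the expected route: reduce to direct summands of $(A,d_A)\otimes_R(Q,d_Q)$ via naturality of the canonical comparison map $H(M)\otimes_{H(A)}H(P)\to H(M\otimes_A P)$, $[m]\otimes[p]\mapsto[m\otimes p]$, cancel $A$ against $\otimes_A$, and then apply a K\"unneth isomorphism over $R$, which holds with vanishing Tor because the hypotheses on $\Image(d_Q)$ and $H(Q,d_Q)$ force cycles, boundaries, and homology of $Q$ all to be projective (hence flat). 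One remark that streamlines the K\"unneth step and makes the naturality check transparent: under these hypotheses both short exact sequences $0\to \Image(d_Q)\to \ker(d_Q)\to H(Q)\to 0$ and $0\to\ker(d_Q)\to Q\to \Image(d_Q)[-1]\to 0$ split, so $(Q,d_Q)\cong (H(Q),0)\oplus C$ with $C$ contractible, and tensoring with any $(M,d_M)$ visibly kills $C$ in homology while preserving the $(H(Q),0)$ summand; this avoids chasing the long exact sequence. The ``main obstacle'' you flag at the end is in fact routine precisely for the reason you give in your last sentence: the comparison map above is defined for every $P$ and is natural in $P$, so it commutes with the idempotent cutting $P$ out of $A\otimes_R Q$; no cofibrant replacement enters, so no $A_\infty$-degradation occurs.
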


\subsubsection{$A_\infty$-action}\label{sec:Ainftyaction}

Let $(B,d_B)$ be a dg-bimodule over a pair of dg-algebras $(S,d_S)$-$(R,d_R)$. 
As explained in \cite[\S 2.3]{MW}, there is (in general) no right $(R,d_R)$-action on $\br B_i$ compatible with the left $(S,d_S)$-action. 
However, there is an induced $A_\infty$-action (defined uniquely up to homotopy), so that the quasi-isomorphism $\pi_B : \br B \xrightarrowdbl{\simeq} B$ can be upgraded to a map of $A_\infty$-bimodules. 

\begin{lem}
  \label{lem:indAinftymap}
Let $(A,d_A)$ be a dg-algebra, and let $U$ and $V$ be dg-(bi)modules over $(A,d_A)$, with a fixed cofibrant replacement $\pi_V: \br V \rightarrow V$. Suppose $(1 \otimes p_V) : U \otimes_{(A,d_A)} \br V \xrightarrow{\simeq} U \otimes_{(A,d_A)} V$ is a quasi-isomorphism. If $f : Z \rightarrow U \otimes_{(A,d_A)} \br V$ is a map of complexes of graded $\Bbbk$-spaces, and $f \circ (1 \otimes p_V)$ is a map of dg-(bi)modules, then there is an induced map $\overline f : Z \rightarrow U \Lotimes V$ of $A_\infty$-(bi)modules whose degree zero part is $f$.   
\end{lem}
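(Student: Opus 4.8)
The statement is a standard homotopy-transfer/rectification lemma, so the plan is to invoke the formalism of $A_\infty$-bimodules together with the fact that a cofibrant replacement carries a canonical (up to homotopy) $A_\infty$-bimodule structure. First I would recall that, by the discussion in~\cite[\S 2.3]{MW} (cf. \cref{sec:Ainftyaction}), the cofibrant replacement $\br V$ admits a right $(A,d_A)$-$A_\infty$-action refining its left action (if $V$ is a bimodule), and $\pi_V : \br V \to V$ upgrades to a quasi-isomorphism of $A_\infty$-bimodules. Consequently $U \otimes_{(A,d_A)} \br V$ inherits a structure of $A_\infty$-bimodule over $(A,d_A)$, and the hypothesis says that $1 \otimes \pi_V$ is an $A_\infty$-quasi-isomorphism $U \otimes_{(A,d_A)} \br V \xrightarrow{\simeq} U \otimes_{(A,d_A)} V \simeq U \Lotimes V$. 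The goal is thus reduced to producing, from the chain map $f : Z \to U \otimes_{(A,d_A)} \br V$ which becomes a strict bimodule map after composing with $1 \otimes \pi_V$, an $A_\infty$-bimodule map $\overline f : Z \to U\otimes_{(A,d_A)}\br V$ whose degree-zero component is $f$; composing with $1\otimes \pi_V$ then lands in $U\Lotimes V$ as required.

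The key steps, in order, are: (1) unwind the definition of an $A_\infty$-bimodule map $\overline f = (\overline f_{p,q})_{p,q\geq 0}$ with $\overline f_{0,0} = f$, and write down the hierarchy of equations $d(\overline f_{p,q}) = (\text{terms built from lower } \overline f_{p',q'}, \text{ the $A_\infty$-structure maps, and the differentials})$ that $\overline f$ must satisfy; (2) observe that the obstruction to extending a partial solution at bidegree $(p,q)$ is a cycle in a mapping complex of the form $\HOM_{\Bbbk}(A^{\otimes p}\otimes Z \otimes A^{\otimes q}, U\otimes_{(A,d_A)}\br V)$, and that this obstruction cycle is a coboundary precisely because of acyclicity properties of the bar-type resolution / because $1\otimes\pi_V$ is a quasi-isomorphism onto a strict bimodule; (3) conclude by a standard inductive (obstruction-theoretic) argument: $\overline f_{0,0} := f$, and at each stage choose $\overline f_{p,q}$ solving the defining equation, the higher terms existing because the relevant obstruction classes vanish. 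The vanishing in step (2) is where the hypothesis that $f\circ(1\otimes\pi_V)$ is already a strict dg-(bi)module map is used: it guarantees that the first obstruction (at bidegree $(1,0)$ and $(0,1)$, measuring the failure of $f$ itself to be a bimodule map) maps to zero in $U\otimes_{(A,d_A)}V$, hence lifts to a nullhomotopy upstairs along the quasi-isomorphism $1\otimes\pi_V$; the higher obstructions then vanish by the usual acyclicity of the $A_\infty$-structure on a cofibrant object.

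Alternatively, and perhaps more cleanly for the write-up, I would phrase this via the derived category of $A_\infty$-bimodules: since $1\otimes\pi_V : U\otimes_{(A,d_A)}\br V \to U\otimes_{(A,d_A)}V$ is a quasi-isomorphism and the target is a strict bimodule, the chain map $f$, viewed as a morphism in the derived category of complexes, together with the strict bimodule map $f\circ(1\otimes\pi_V)$, determines an element of $\Hom_{\cD_{A_\infty}}(Z, U\otimes_{(A,d_A)}\br V)$; one then appeals to the theorem (see~\cite{toen}, and the homotopy-transfer results invoked for $A_\infty$-functors in \cref{sec:dgfunctors}) that every morphism in the derived category of $A_\infty$-bimodules between cofibrant-enough objects is represented by an honest $A_\infty$-bimodule map, and that one can choose the representative with prescribed degree-zero part equal to $f$ because $f$ was already given as an actual chain map. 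The main obstacle is purely bookkeeping: carefully setting up the bidegree-graded obstruction theory for $A_\infty$-bimodule maps with the sign conventions of \cref{sec:conventions} and checking that the obstruction complexes are acyclic in the relevant range — this is routine but tedious, and in the paper it is legitimate to cite the standard references rather than reproduce it, since the statement is used only as a technical device to pass from $\tilde\varphi^0$ being a dg-bimodule map to $\varphi$ being an $A_\infty$-bimodule map in \cref{thm:phiisAinfty}.
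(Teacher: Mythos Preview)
Your proposal is correct but takes a much more laborious route than the paper. The paper's proof is essentially one line: since $(1\otimes p_V)$ is a quasi-isomorphism of $A_\infty$-bimodules, it admits an $A_\infty$-inverse $(1\otimes p_V)^{-1}$, and one simply sets
\[
\overline f := (1\otimes p_V)^{-1} \circ \bigl((1\otimes p_V)\circ f\bigr),
\]
where $(1\otimes p_V)\circ f$ is the strict dg-(bi)module map given by hypothesis (hence an $A_\infty$-map with trivial higher components), and the composition is taken in the $A_\infty$ sense. No obstruction theory, no inductive construction of $\overline f_{p,q}$, no appeal to derived-categorical representability is needed beyond the single standard fact that $A_\infty$-quasi-isomorphisms are $A_\infty$-invertible.

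Your obstruction-theoretic plan in steps (1)--(3) would work, and your ``alternative'' via the derived category of $A_\infty$-bimodules is closer in spirit to the paper's argument, but both are overpackaged: you are essentially re-deriving the invertibility of $A_\infty$-quasi-isomorphisms from scratch rather than just invoking it. The payoff of your approach is that it is in principle constructive --- one could read off the higher $\overline f_{p,q}$ explicitly --- but since the lemma is used in the paper only to transport the bimodule property of $\tilde\varphi^0$ back to $\varphi$ (\cref{thm:phiisAinfty}), this extra control is never needed.
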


\begin{proof}
We take $\overline f := (1 \otimes p_V)^{-1} \circ (f \circ  (1 \otimes p_V))$, as a composition of maps of $A_\infty$-(bi)modules, since any map of (bi)module can be considered as a map of $A_\infty$-(bi)modules with no higher composition. 
\end{proof}

Note that the equivalent statement also holds for a cofibrant replacement $U \rb \rightarrow U$ such that $(\pi_U \otimes 1) : U \rb \otimes_{(A,d_A)} V \xrightarrow{\simeq} U \otimes_{(A,d_A)} V$ is a quasi-isomorphism.

\subsection{Dg-derived categories}\label{sec:dgdercat}

One of the issues with triangulated categories is that the category of functors between triangulated categories is in general not triangulated. To fix this, we work with a dg-enhancement of the derived category. In particular, this allows us to talk about distinguished triangles of dg-functors. 

Recall that a dg-category is a category where the hom-spaces are dg-modules over $(\Bbbk,0)$, and compositions are compatible with this structure (see \cite[\S1.2]{keller} for a precise definition). Given such a dg-category $\cC$ with hom-spaces $\Hom_{\cC}(X,Y) = (\bigoplus_{h \in \bZ} \Hom^{h}(X,Y), d_{X,Y})$, we can consider its \emph{underlying category} $Z^0(\cC)$, which is given by the same objects as $\cC$ and hom-spaces
\[
\Hom_{Z^0(\cC)}(X,Y) := \ker \left( d_{X,Y} : \Hom^0(X,Y)  \rightarrow \Hom^{-1}(X,Y) \right).
\]
Similarly, the \emph{homotopy category $H^0(\cC)$} is given by
\[
\Hom_{H^0(\cC)}(X,Y) := H^0(\Hom_\cC(X,Y)). 
\]
A \emph{dg-enhancement} of a category $\cC_0$ is a dg-category $\cC$ such that $H^0(\cC) \cong \cC_0$. 

\smallskip

The \emph{dg-derived category $\cD_{dg}(A,d_A)$} of a $\bZ^n$-graded dg-algebra $(A,d_A)$ is the $\bZ^n$-graded dg-category with objects being cofibrant dg-modules over $(A,d_A)$, and hom-spaces being subspaces of the graded dg-spaces $\HOM_{(A,d_A)}$ from \cref{eq:dghom}, given by maps that preserve the $\bZ^n$-grading:
\[
\Hom_{\cD_{dg}(A,d_A)}(M,N) := \HOM_{(A,d_A)}(M,N)_0,
\]
for $(M,d_M)$ and $(N,d_N)$ cofibrant dg-modules. 
By construction, we have $H^0(\cD_{dg}(A,d_A)) \cong \cD(A,d_A)$. 
Moreover, $\cD_{dg}(A,d_A)$ is a dg-triangulated category, meaning its homotopy category is canonically triangulated (see \cite{toen} for a precise definition,  or \cite[Appendix A]{naissevaz3} for a summary oriented toward categorification), and this triangulated structure matches with the usual one on $\cD(A,d_A)$. 

\subsubsection{Dg-functors}\label{sec:dgfunctors}

A \emph{dg-functor} between dg-categories is a functor commuting with the differentials. Given a dg-functor $F : \cC \rightarrow \cC'$, it induces a functor on the homotopy categories $[F] : H^0(\cC) \rightarrow H^0(\cC')$. 
We say that a dg-functor is a \emph{quasi-equivalence} if it gives quasi-isomorphisms on the hom-spaces, and induces an equivalence on the homotopy categories. 
We want to consider dg-category up to quasi-equivalence. Let $\Hqe$ be the homotopy category of dg-categories up to quasi-equivalence , and we write $\cRHom_{\Hqe}$ for the dg-space of quasi-functors between dg-categories (see \cite{toen}, \cite{toenlectures}, or \cite[Appendix A]{naissevaz3}). These quasi-functors induce honest functors on the homotopy categories. 
Whenever $\cC'$ is dg-triangulated, then $\cRHom_{\Hqe}(\cC,\cC')$ is dg-triangulated.

\begin{rem}
The space of quasi-functors is equivalent to the space of strictly unital $A_\infty$-functors. 
\end{rem}

It is in general hard to understand the space of quasi-functors. However, by the results of Toen~\cite{toen}, if $\Bbbk$ is a field and $(A,d_A)$ and $(A',d_{A'})$ are dg-algebras, then it is possible to compute the space of `coproduct preserving' quasi-functors $\cRHom_{\Hqe}^{cop}(\cD_{dg}(A,d_A),\cD_{dg}(A',d_{A'}))$, in the same way as the category of coproduct preserving functors between categories of modules is equivalent to  the category of bimodules. Indeed, we have a quasi-equivalence
\begin{equation}\label{eq:quasifunctequiv}
\cRHom_{\Hqe}^{cop}(\cD_{dg}(A,d_A),\cD_{dg}(A',d_{A'})) \cong \cD_{dg}((A',d_{A'}), (A,d_A)),
\end{equation}
where $ \cD_{dg}((A',d_{A'}), (A,d_A))$ is the dg-derived category of dg-bimodules. Composition of functors becomes equivalent to derived tensor product. 
Then, understanding the triangulated structure of $\cRHom_{\Hqe}^{cop}(\cD_{dg}(A,d_A),\cD_{dg}(A',d_{A'}))$ becomes as easy as to understand $\cD((A,d_A), (A',d_{A'}))$. 
In particular, a short exact sequence of dg-bimodules gives a distinguished triangle of dg-functors. 

\subsection{Derived hom and tensor dg-functors}\label{sec:deriveddghomtensor}

Let $(R,d_R)$ and $(S,d_S)$ be dg-algebras. Let $M$ and $N$ be $(R,d_R)$-module and $(S,d_S)$-module respectively. Let $B$ be a dg-bimodule over $(S,d_S)$-$(R,d_R)$. 
Then, the \emph{derived tensor product} is
\[
B \Lotimes_{(R,d_R)} M := B \otimes \br M,
\]
and the \emph{derived hom space} is
\[
\RHOM_{(S,d_S)}(B, N) := \HOM_{(S,d_S)}(B, \fr N).
\]
Note that we have quasi-isomorphisms as dg-spaces $B \Lotimes_{(R,d_R)}  M \cong B \rb \otimes_{(R,d_R)}\br M \cong B \rb \otimes_{(R,d_R)} M$, and $\RHOM_{(S,d_S)}(B, N)  \cong \HOM_{(S,d_S)}(\br B,\fr N) \cong \HOM_{(S,d_S)}(\br B, N)$. 

\smallskip

This defines in turns triangulated dg-functors
\begin{align*}
B \Lotimes_{(R,d_R)} (-) &: \cD_{dg}(R,d_R) \rightarrow \cD_{dg}(S,d_S),
\intertext{and} 
\RHOM_{(S,d_S)}(B, -) &: \cD_{dg}(S,d_S) \rightarrow \cD_{dg}(R,d_R).
\end{align*}
They induce a pair of adjoint functors $B \Lotimes_{(R,d_R)} (-)  \vdash \RHOM_{(S,d_S)}(B, -)$ between the derived categories $\cD_{dg}(R,d_R)$ and $\cD_{dg}(S,d_S)$.

\subsubsection{Computing units and counits}\label{sec:unitandcounit}

The natural bijection
\[
\bar \Phi_{M,N}^{B} :  \Hom_{\cD(S,d_S)}( B \Lotimes_{(R,d_R)} M, N  ) \xrightarrow{\simeq} \Hom_{\cD(R,d_R)}(M, \RHOM_{(S,d_S)}(B,N)), 
\]
is obtained by making the following diagram commutative:
\[
\begin{tikzcd}
\Hom_{\cD(S,d_S)}( B \Lotimes_{(R,d_R)} M, N  ) \ar{r}{\bar \Phi_{M,N}^B}
\ar[sloped]{d}{\simeq}
&
\Hom_{\cD(R,d_R)}(M, \RHOM_{(S,d_S)}(B,N))
\\
\Hom_{(S,d_S)}( B \otimes_{(R,d_R)} \br M, \fr N ) 
\ar[swap]{r}{\Phi_{\br M,\fr N}^B}
&
\Hom_{(R,d_R)}(\br M, \HOM_{(S,d_S)}(B,\fr N)).
\ar[sloped]{u}{\simeq}
\end{tikzcd}
\] 
where $\Phi$ is defined in \cref{eq:homtensajd}. 

\smallskip

For the sake of keeping notations short, we will write $\HOM$  instead of  $\HOM_{(S,d_S)}$, and $\otimes$ instead of $\otimes_{(R,d_R)}$, and similarly for the derived versions. 

We are interested in computing the unit
\[
\eta_M : M \rightarrow  \RHOM(B, B \Lotimes M),  
\]
which is given by $\eta_M = \bar \Phi_{M, B \Lotimes M}^B(\id_{B \Lotimes M})$. 
Composing with the isomorphisms $\RHOM(B, B \Lotimes M) \cong \HOM(B, \fr (B \otimes \br M))$ and $\br M \cong M$, we can compute $\eta_M$ as 
\[
\eta_M' =  \Phi_{\br M,\fr(B \Lotimes M)}^B( \imath_{B \Lotimes M}) : \br M \rightarrow  \HOM(B, \fr (B \otimes \br M)),
\]
which gives
\[
\eta'_M(m) = (b \mapsto \imath_{B \otimes \br M}(b \otimes m)).
\]
Using the quasi-isomorphisms
\[
\begin{tikzcd}
\HOM(\br B, B \Lotimes M) 
\ar["\imath_{B \Lotimes M} \circ -", "\simeq"']{r}
& 
\HOM(\br B, \fr(B \Lotimes M))
&
\ar["- \circ \pi_B"', "\simeq"]{l}
\HOM(B, \fr(B \Lotimes M)),
\end{tikzcd}
\]
we can compute $\eta'_M$ through
\begin{align*}
\eta_M'' &: \br M \rightarrow \HOM(\br B, B \otimes \br M), 
&
\eta_M''(m) &:= (b \mapsto \pi_B(b) \otimes m). 
\end{align*}
This is particularly useful, since it means we do not have to compute any fibrant replacement to understand $\eta_M$. 

\smallskip

Similarly, for the counit
\[
\varepsilon_M : B \Lotimes \RHOM(B,M) \rightarrow M,
\]
we have $\varepsilon_M =  (\bar\Phi^{B}_{\RHOM(B,M),M})^{-1}(\id_{\RHOM(B,M)})$. 
We rewrite it as
\[
\varepsilon_M ' = \Phi_{\br \HOM(B, \fr M), \fr M}^{-1}(\pi_{\HOM(B, \fr M)})
: B \otimes \br \HOM(B, \fr M) \rightarrow \fr M,
\]
with $\varepsilon_M'(b \otimes f) = \bigl(\pi_{\HOM(B, \fr M)}(f)\bigr)(b)$.  
We consider the quasi-isomorphisms
\[
\begin{tikzcd}[column sep=10ex]
B \otimes \br \HOM(B, \fr M) 
&
\ar["\pi_B \otimes 1"',"\simeq"]{l}
B \rb \otimes \br \HOM(B, \fr M) 
\ar["1 \otimes \pi_{\HOM(B,\fr M)}","\simeq"']{r}
&
B \rb \otimes \HOM(B, \fr M).
\end{tikzcd}
\]
Therefore, we can compute $\varepsilon_M$ as
\begin{align*}
\varepsilon''_M &: B \rb \otimes \HOM(B, \fr M) \rightarrow \fr M,
&
\varepsilon''_M(b \otimes f) &:= f(\pi'_B(b)),
\end{align*}
where $\pi'_B : B \rb \xrightarrow{\simeq} B$. 

If in addition $B$ is already cofibrant as left dg-module, then we can suppose $\br B = B$ and $\pi_B = \id_B$, and we obtain a commutative diagram
\[
\begin{tikzcd}
B\rb \otimes \HOM(B, \fr M)
\ar{r}{\varepsilon''_M}
\ar[sloped]{d}{\simeq}
\ar[swap]{d}{1 \otimes (- \circ \pi_B)}
&
\fr M
\ar[equals]{d}
\\
B\rb \otimes \HOM(\br B, \fr M) 
\ar[sloped]{d}{\simeq}
\ar{r}
&
\fr M
\\
B\rb \otimes \HOM(\br B, M)
\ar[swap]{r}{\varepsilon'''_M}
\ar{u}{1 \otimes (\imath_M \circ -)}
&
\ar[sloped]{u}{\simeq}
\ar[swap]{u}{\imath_M}
M
\end{tikzcd}
\]
where
\begin{align*}
\varepsilon_M''' &: B \rb \otimes \HOM(B, M) \rightarrow M,
&
\varepsilon_M'''(b \otimes f) := f(\pi'(b)).
\end{align*}
This is useful, since it means we can compute $\varepsilon_M$ using $\varepsilon'''_M$, which does not require any fibrant replacement. 
\color{black}

\subsection{Asymptotic Grothendieck group} \label{sec:asympK0}

The usual definition of the Grothendieck group of a triangulated category does not take into consideration relations coming from infinite iterated extensions. 
When $\cC$ is a triangulated subcategory of a triangulated category $\cT$ admitting countable products and coproducts, and these preserves distinguished triangles, then there exists a notion of \emph{asymptotic Grothendick group $\bKO^\Delta(\cC)$} of $\cC$, given by modding out relations obtained from Milnor (co)limits (see \cref{sec:cblfitext} below) in the usual Grothendieck group $K_0(\cC)$ (see \cite[\S 8]{asympK0} for a precise definition).

\subsubsection{Ring of Laurent series}

We follow the construction of the ring of formal Laurent series given in~\cite{laurent} (see also \cite[\S5]{asympK0}). 
The \emph{ring of formal Laurent series} $\Bbbk\pp{x_1,\dots, x_n}$ is given by first choosing a total additive order $\prec$ on $\bZ^n$. 
One says that a cone $C := \{\alpha_1 v_1 + \cdots + \alpha_n v_n | \alpha_i \in \bR_{\geq 0} \} \subset \bR^n$ is compatible with $\prec$ whenever $0 \prec v_i$ for all $i \in \{1,\dots,n\}$. 
 Then, we set 
\[
\Bbbk\pp{x_1,\dots,x_n} := \bigcup_{\be \in \bZ^n} x^{\be} \Bbbk_{\prec}\llbracket x_1,\dots, x_n \rrbracket,
\]
where $\Bbbk_{\prec}\llbracket x_1,\dots, x_n \rrbracket$ consists of formal Laurent series in $\Bbbk \llbracket x_1,\dots, x_n\rrbracket$ such that the terms are contained in a cone compatible with $\prec$. 
It forms a ring when we equip $\Bbbk\pp{x_1,\dots,x_n}$ with the usual addition and multiplication of series. 

\subsubsection{C.b.l.f. structures}\label{sec:cblfstruct}

We fix an arbitrary additive total order $\prec$ on $\bZ^n$. 
We say that a $\bZ^n$-graded $\Bbbk$-vector space $M = \bigoplus_{ \bigoplus_{\bg \in \bZ^n}} M_\bg$ is \emph{c.b.l.f. (cone bounded, locally finite) dimensional}  if
\begin{itemize}
\item $\dim M_\bg < \infty$ for all $\bg \in \bZ^n$;
\item there exists a cone $C_M \subset \bR^n$ compatible with $\prec$ and $\be \in \bZ^n$ such that $M_\bg = 0$ whenever $\bg - \be \notin C_M$. 
\end{itemize}

Let $(A,d_A)$ be a $\bZ^n$-graded dg-algebra. Suppose that $(A,d)$ is concentrated in non-negative homological degrees, that is $A_\bg^h = 0$ whenever $h < 0$. 
The \emph{c.b.l.f. derived category $\cD^{cblf}(A,d_A)$} of $(A,d_A)$ is the triangulated full subcategory of $\cD(A,d_A)$ given by dg-modules having homology being c.b.l.f. dimensional for the $\bZ^n$-grading. There exists also a dg-enhanced version $\cD_{dg}^{cblf}(A,d_A)$. 
We write $\bKO^\Delta(A,d) := \bKO^{\Delta}(\cD^{cblf}(A,d_A))$.

\begin{defn}\label{def:positivecblfdgalg}
We say that $(A,d)$ is a \emph{positive c.b.l.f. dg-algebra} if 
\begin{enumerate}
\item $A$ is c.b.l.f. dimensional for the $\bZ^n$-grading;
\item $A$ is non-negative for the homological grading;
\item $A_0^0$ is semi-simple;
\item $A_0^h = 0$ for $h >0$; 
\item$(A,d_A)$ decomposes a direct sum of shifted copies of modules $P_i := A e_i$ for some idempotent $e_i \in A$, such that $P_i$ is non-negative for the $\bZ^n$-grading.
\end{enumerate}
\end{defn}

In a $\bZ^n$-graded triangulated category $\cC$, we define the notion of \emph{c.b.l.f. direct sum} as follows: 
\begin{itemize}
\item take a a finite collection of objects $\{K_1,\dots, K_m\}$ in $\cC$; 
\item consider a direct sum of the form
\begin{align*}
&\bigoplus_{\bg \in \bZ^n} x^{\bg} (K_ {1,\bg} \oplus \cdots \oplus K_{m,\bg}), &
&\text{ with }&
K_{i,\bg} &= \bigoplus_{j = 1}^{k_{i,\bg}} K_i[h_{i,j,\bg}],
\end{align*}
where $k_{i,\bg} \in \bN$ and $h_{i,j,\bg} \in \bZ$ such that:
\item there exists a cone $C$ compatible with $\prec$, and $\be \in \bZ^n$ such that for all $j$ we have $k_{j,\bg} = 0$ whenever $\bg -  \be \notin C$;
\item there exists $h \in \bZ$ such that $h_{i,j,\bg} \geq h$ for all $i,j,\bg$. 
\end{itemize}

If $\cC$ admits arbitrary c.b.l.f. direct sums, then $K_0^{\Delta}(\cC)$ has a natural structure of $\bZ\pp{x_1,\dots, x_n}$-module with
\[
\sum_{\bg \in C} a_\bg x^{\be + \bg} [X] := [\bigoplus_{\bg \in C} x^{\bg + \be} X^{\oplus a_\bg}],
\]
where $X^{\oplus a_\bg} = \bigoplus_{\ell = 1}^{|a_\bg|} X[\alpha_\bg]$ and $\alpha_\bg = 0$ if $a_\bg \geq 0$ and $\alpha_\bg =1$ if $a_\bg < 0$.

\begin{thm}[{\cite[\thmasympKO]{asympK0}}]\label{thm:triangtopK0genbyPi}
Let $(A,d)$ be a positive c.b.l.f. dg-algebra, and let $\{P_j\}_{j \in J}$ be a complete set of indecomposable cofibrant $(A,d)$-modules that are pairwise non-isomorphic (even up to degree shift). Let $\{S_j\}_{j \in J}$ be the set of corresponding simple modules. 
There is an isomorphism
\begin{align*}
\bKO^\Delta(A,d)  & \cong  \bigoplus_{j \in J} \bZ\pp{x_1, \dots, x_\ell} [P_j], 
\end{align*}
and $\bKO^\Delta(A,d)$ is also freely generated by the classes of $\{[S_j]\}_{j \in J}$.
\end{thm}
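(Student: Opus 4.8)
The statement to prove is \cref{thm:triangtopK0genbyPi} as restated at the very end (the reference to \thmasympKO in \cite{asympK0}), asserting that $\bKO^\Delta(A,d)$ is free over the Laurent series ring on the classes $[P_j]$, and equivalently freely generated by the $[S_j]$.

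The plan is to follow the standard pattern for identifying a Grothendieck group of a positively graded (dg-)algebra, adapted to the asymptotic setting. First I would set up the two candidate generating sets and the change-of-basis matrix between them. Since $(A,d)$ is a positive c.b.l.f. dg-algebra, each $P_j = Ae_j$ is cofibrant with a well-defined ``top'' $S_j = P_j / (\text{positive part})$, and by positivity the graded composition multiplicities $[P_j : S_k]$ form a matrix that is ``unitriangular up to shift'': the diagonal entries are $1$ (in degree $(0,0)$) and the off-diagonal entries lie in $x^{C}\bZ\pp{x_1,\dots,x_n}$ for a cone $C$ compatible with $\prec$, with homological shifts bounded below. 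The key algebraic fact is that such a matrix is invertible over $\bZ\pp{x_1,\dots,x_n}$ — this is exactly where the choice of Laurent series ring (cone-bounded, locally finite) is needed, since the geometric-series inverse of $(I - N)$ with $N$ strictly cone-supported converges in $\bZ\pp{x_1,\dots,x_n}$. So it suffices to prove the statement for either generating set; I would work with the $[P_j]$.

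Next I would prove surjectivity: every object of $\cD^{cblf}(A,d)$ has its class in the $\bZ\pp{x_1,\dots,x_n}$-span of the $[P_j]$. Here one takes an arbitrary c.b.l.f. dg-module $M$, replaces it by a cofibrant replacement, and builds a (possibly infinite, but c.b.l.f.) filtration whose subquotients are shifted copies of the $P_j$ — this uses condition (5) in \cref{def:positivecblfdgalg} together with positivity to guarantee the filtration is ``convergent'' in the c.b.l.f. sense, so that the associated Milnor colimit relation in $\bKO^\Delta$ expresses $[M]$ as the corresponding c.b.l.f. sum of the $[P_j]$. For injectivity/freeness, I would construct a $\bZ\pp{x_1,\dots,x_n}$-linear map in the other direction: send a module to the tuple of its graded ``multiplicity'' generating functions $\dim \Hom_{\cD}(P_j, -)$ (or equivalently the graded Euler characteristics $\sum_h (-1)^h \dim \Hom^h$), which are well-defined elements of $\bZ\pp{x_1,\dots,x_n}$ by the c.b.l.f. hypothesis, are additive on distinguished triangles, and are continuous for c.b.l.f. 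Milnor (co)limits — hence descend to $\bKO^\Delta(A,d)$. Evaluating this map on $[P_j]$ recovers the (invertible) multiplicity matrix, so the composite is an isomorphism onto a free module, forcing the $[P_j]$ to be a free basis; applying the change-of-basis matrix then gives the statement for the $[S_j]$.

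The main obstacle I expect is the injectivity/freeness half, specifically checking that the proposed multiplicity functional is genuinely well-defined on $\bKO^\Delta$ — i.e. that it kills all the Milnor-(co)limit relations, not just finite extension relations. This requires the ``continuity'' statement: for a c.b.l.f. Milnor colimit the graded dimensions converge coefficientwise (each fixed multidegree stabilizes after finitely many steps, by cone-boundedness), so the generating-function-valued invariant passes to the limit. This is exactly the technical heart of \cite[\thmasympKO]{asympK0}, and since the present statement is literally a citation of that theorem, in the paper I would simply invoke it; were I to reprove it, that convergence lemma — together with verifying the hypotheses of \cite{asympK0} hold for positive c.b.l.f. dg-algebras — is the step that needs genuine care, the rest being the routine unitriangularity argument sketched above.
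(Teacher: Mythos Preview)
The paper does not prove this statement at all: it is stated in the appendix purely as a citation of \cite[\thmasympKO]{asympK0}, with no argument given. You correctly identify this in your final paragraph, so in that sense your proposal matches the paper exactly --- the ``proof'' in the paper is simply to invoke the reference.

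Your sketch of what an actual proof would look like (unitriangular change-of-basis between the $[P_j]$ and $[S_j]$ over $\bZ\pp{x_1,\dots,x_n}$, surjectivity via c.b.l.f. filtrations realised as Milnor colimits, injectivity via a graded-dimension/multiplicity functional shown to be continuous for c.b.l.f. Milnor colimits) is a reasonable outline of the standard strategy, and you correctly isolate the only non-formal step: checking that the multiplicity functional descends to $\bKO^\Delta$, i.e.\ respects the Milnor (co)limit relations. Since this is precisely the content of the cited theorem in \cite{asympK0}, there is nothing further to compare against in the present paper.
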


\begin{prop}[{\cite[\propcblfbim]{asympK0}}]\label{prop:cblfbiminduceKO}
Let $(A,d)$ and $(A',d')$ be two c.b.l.f. positive dg-algebras. Let $B$ be a c.b.l.f. dimensional $(A',d')$-$(A,d)$-bimodule. The derived tensor product functor
\[
F : \cD^{cblf}(A,d) \rightarrow \cD^{cblf}(A',d'), \quad F(X) := B \Lotimes_{(A,d)} X,
\]
induces a map
\[
[F] : \bKO^\Delta(A,d)  \rightarrow \bKO^\Delta(A',d'),
\]
sending $[X]$ to $[F(X)]$. 
\end{prop}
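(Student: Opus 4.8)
The plan is to verify the proposition in three moves. First I would check that $F$ genuinely restricts to a functor $\cD^{cblf}(A,d) \to \cD^{cblf}(A',d')$ — this is the only place where c.b.l.f.\ dimensionality of $B$ is used, and it will be the main obstacle. Second I would record the formal properties of $F = B \Lotimes_{(A,d)} -$: it is a triangulated dg-functor, and being a left adjoint (with right adjoint $\RHOM_{(A',d')}(B,-)$) it commutes with arbitrary coproducts, with the homological shift $[1]$ and with $\cone$; together with the first move this shows $F$ preserves c.b.l.f.\ direct sums. Third I would deduce that $F$ descends to asymptotic Grothendieck groups. For this last move, recall that $\bKO^\Delta(-)$ is the quotient of the ordinary triangulated Grothendieck group $K_0$ by the relations attached to Milnor (co)limits of suitably graded towers (\cite[\S8]{asympK0}); an exact functor induces a homomorphism $K_0(F)$ on $K_0$, so it suffices to see that $K_0(F)$ carries each generating relation of $\cD^{cblf}(A,d)$ to a relation of $\cD^{cblf}(A',d')$. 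Granting that, $[F][X]=[F(X)]$ holds by construction and $\bZ\pp{x_1,\dots,x_n}$-linearity is automatic, since the module structure on $\bKO^\Delta$ is itself defined through c.b.l.f.\ direct sums, which $F$ preserves.

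For the first move, I would use \cref{thm:triangtopK0genbyPi}: positivity of $(A,d)$ allows one to present any $X$ in $\cD^{cblf}(A,d)$ — for instance via its minimal projective resolution, which serves as a cofibrant replacement $\br X$ — as a c.b.l.f.\ iterated extension (a Milnor colimit of a tower whose successive cones are c.b.l.f.\ direct sums) of shifted copies of the indecomposable projectives $P_j = Ae_j$. Applying $B \otimes_{(A,d)} -$ preserves this presentation and sends a shifted copy of $P_j$ to a shifted copy of $B \otimes_{(A,d)} Ae_j \cong Be_j$, which is a direct summand of $B$ and hence c.b.l.f.\ dimensional. So $B \otimes_{(A,d)} \br X$ is built by $[1]$, $\cone$ and c.b.l.f.\ direct sums out of c.b.l.f.\ pieces; the remaining point is that the assembly stays c.b.l.f. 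Its supporting cone is the conical hull of the supporting cones of $B$ and of $\br X$, which is again compatible with the fixed additive order $\prec$ (hence pointed), and in each $\bZ^n$-degree the dimension is finite because, for a pointed cone, the intersection of an ``upward'' translate with a ``downward'' translate contains only finitely many lattice points — this is exactly the finiteness estimate that makes multiplication in $\Bbbk\pp{x_1,\dots,x_n}$ well-defined. Taking homology then yields an object of $\cD^{cblf}(A',d')$. The hard part will be making this grading bookkeeping precise, in particular the interaction of the homological grading with the $\bZ^n$-grading along the resolution.

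For the third move, a Milnor colimit $\mcolim(X_0 \to X_1 \to \cdots)$ is by definition $\cone\bigl(\bigoplus_i X_i \xrightarrow{1-s} \bigoplus_i X_i\bigr)$ with $s$ the shift of the tower, and $F$ commutes with $\bigoplus$, with $1-s$ and with $\cone$, so $F(\mcolim_i X_i)\cong \mcolim_i F(X_i)$. Moreover $B\otimes_{(A,d)} -$ merely translates every grading bound by the supporting cone of $B$, so if the tower $(X_i)_i$ meets the boundedness hypotheses required for the defining relation of $\bKO^\Delta$, so does $(F(X_i))_i$; hence $K_0(F)$ sends each generating Milnor-colimit relation to a Milnor-colimit relation. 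The Milnor-limit relations — coming from infinite (projective) resolutions — are handled by the dual argument: the towers that occur are degreewise eventually stable, so their homotopy limits are computed by honest limits, which $B\otimes_{(A,d)} -$ respects. Therefore $K_0(F)$ induces the desired map $[F]\colon \bKO^\Delta(A,d)\to\bKO^\Delta(A',d')$ with $[F][X]=[F(X)]$.
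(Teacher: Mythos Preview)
The paper does not prove this proposition at all: it is quoted verbatim from \cite[Proposition~9.18]{asympK0} and stated without proof in the appendix as part of the homological toolbox. There is therefore no ``paper's own proof'' to compare your proposal against.

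Your outline is a plausible reconstruction of what such a proof should contain, and the three moves you identify are the right ones. A few remarks on the content itself. In the first move, the passage from ``$B\otimes_{(A,d)}\br X$ is assembled from c.b.l.f.\ pieces'' to ``the assembly is c.b.l.f.'' is the substantive step, and your cone-hull argument is the correct mechanism, but you should be careful that the homological grading does not interfere: the definition of c.b.l.f.\ dimensional concerns only the $\bZ^n$-grading, so you need the resolution $\br X$ to be c.b.l.f.\ in each homological degree separately, which is what positivity of $(A,d)$ buys. In the third move, your treatment of Milnor colimits is fine, but the claim that ``the towers that occur are degreewise eventually stable, so their homotopy limits are computed by honest limits, which $B\otimes_{(A,d)}-$ respects'' is too quick: tensor products do not in general commute with limits, and the Milnor-limit relations in $\bKO^\Delta$ are not obviously dual to the colimit ones under a left adjoint. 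You would need either to argue that the relevant Milnor limits are already determined by the colimit relations (which is how \cite{asympK0} is organized), or to give a direct argument specific to the c.b.l.f.\ setting. This is a genuine gap in your sketch.
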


\subsubsection{C.b.l.f. iterated extensions}\label{sec:cblfitext}

Recall that the \emph{Milnor colimit $\mcolim_{r  \geq 0} (f_r) $} (using the terminology of~\cite{kellerweight}) of a collection of arrows $\{X_r \xrightarrow{f_r} X_{r+1}\}_{r \in \bN}$ in a triangulated category $\cT$ is the mapping cone fitting inside the following distinguished triangle
\[
\coprod_{r \in \bN} X_r \xrightarrow{1-f_\bullet} \coprod_{r \in \bN} X_r \rightarrow \mcolim_{r  \geq 0} (f_r) \rightarrow 
\]
where the left arrow is given by the infinite matrix
\[
1-f_\bullet := 
\begin{pmatrix}
1      & 0       &  0 & 0 & \cdots \\
-f_0 & 1       & 0 & 0 & \cdots  \\
0      & -f_1  & 1 & 0  & \cdots \\
\vdots & \ddots & \ddots & \ddots & \ddots
\end{pmatrix}
\] 

\begin{defn}
Let $\{K_1, \dots, K_m\}$ be a finite collection of objects in $\cC$, and let $\{E_r\}_{r \in \bN}$ be a family of direct sums of $\{K_1, \dots, K_m\}$ such that $\bigoplus_{r \in \bN} E_r$ is a c.b.l.f. direct sum of $\{K_1, \dots, K_m\}$. Let $\{M_r\}_{r \in \bN}$  be a collection of objects in $\cC$ with $M_0 = 0$, such that they fit in distinguished triangles
\[
M_r \xrightarrow{f_r} M_{r+1} \rightarrow E_r \rightarrow 
\]
Then, we say that an object $M \in \cC$ such that $M \cong_{\cT} \mcolim_{r\geq 0} (f_r)$ in $\cT$ is a \emph{c.b.l.f. iterated extension of  $\{K_1, \dots, K_m\}$}. 
\end{defn}

Note that under the conditions above, we have
\[
[M] = \sum_{r \geq 0} [E_r],
\]
in the asymptotic Grothendieck group $\bKO(\cC)$. 

\begin{defn}\label{def:cblfgenerated}
Let $\cT$ be a $\bZ^n$-graded (dg-)triangulated (dg-)category, and $\{X_j\}_{j \in J}$ be a collection of objects in $\cT$. 
The subcategory of $\cT$ \emph{c.b.l.f. generated by $\{X_j\}_{j \in J}$} is the triangulated full subcategory $\cC \subset \cT$ given by all objects $Y \in \cT$ such that there exists a finite subset $\{X_k\}_{k \in K}$ such that $Y$ is isomorphic to a c.b.l.f. iterated extension of $\{X_k\}_{k \in K}$ in $\cT$. 
\end{defn}

Thus, under the conditions above,  $\bKO(\cC)$ is generated as a $\bZ\pp{x_1,\dots, x_n}$-module by the classes of $\{[X_j]\}_{j \in J}$. 

\subsubsection{Dg-functors}\label{sec:cblfdgfunctors}

Let $(R,d_R)$ and $(S,d_S)$ be ($\bZ^n$-graded) dg-algebras. 
The situation of \cref{eq:quasifunctequiv} in \cref{sec:dgfunctors} restricts to the c.b.l.f. version $\cD_{dg}^{cblf}$ of \cref{sec:cblfstruct}, so that 
\[
\cRHom_{\Hqe}^{cop}(\cD_{dg}^{cblf}(R,d_R),\cD_{dg}^{cblf}(S,d_{S})) \cong \cD_{dg}^{cblf}((S,d_{S}), (R,d_R)).
\] 
Then, we obtain an induced map
\begin{equation} \label{eq:K0homK0}
\bKO^\Delta(\cRHom_{\Hqe}^{cop}( \cD_{dg}^{cblf}(R,d_R), \cD_{dg}^{cblf}(S,d_{S}))) \rightarrow \Hom_{\bZ\pp{x_1,\dots,x_n}}(\bKO^\Delta(R,d_R),  \bKO^\Delta(S,d_{S})),
\end{equation}
by using \cref{prop:cblfbiminduceKO}.



\bibliographystyle{bibliography/habbrv}



\end{document}